\date{today}
\newtheorem{remark}{Remark}[section]
\newtheorem{theorem}{Theorem}[section]
\newtheorem{proposition}{Proposition}[section]
\newtheorem{lemma}{Lemma}[section]
\newtheorem{corollary}{Corollary}[section]
\newcommand{\beq}{\begin{equation}}
\newcommand{\eeq}{\end{equation}}
\newcommand{\ben}{\begin{eqnarray}}
\newcommand{\een}{\end{eqnarray}}
\newcommand{\beno}{\begin{eqnarray*}}
\newcommand{\eeno}{\end{eqnarray*}}
\numberwithin{equation}{section}
\begin{document}
\title[\vspace{-0.2cm}Suppression of blow-up for PKS-NS system]{Suppression of blow-up \\for the 3D Patlak-Keller-Segel-Navier-Stokes system \\via the Couette flow}
\author{Shikun~Cui}
\address[Shikun~Cui]{School of Mathematical Sciences, Dalian University of Technology, Dalian, 116024,  China}
\email{cskmath@163.com}
\author{Lili~Wang}
\address[Lili~Wang]{School of Mathematical Sciences, Dalian University of Technology, Dalian, 116024,  China}
\email{wanglili\_@mail.dlut.edu.cn}
\author{Wendong~Wang}
\address[Wendong~Wang]{School of Mathematical Sciences, Dalian University of Technology, Dalian, 116024,  China}
\email{wendong@dlut.edu.cn}
\date{\today}

\vspace*{-0.6cm}
\maketitle

\vspace{-0.4cm}
\begin{abstract}
	As is well known, for the 3D Patlak-Keller-Segel system, regardless of whether they are parabolic-elliptic or parabolic-parabolic forms,  finite-time blow-up  may occur for arbitrarily small values of the initial mass. In this paper, it is proved for the first time that one can prevent the finite-time blow-up when the initial mass is less than a certain critical threshold via the stabilizing effect of the moving Navier-Stokes flows.
	In details, we investigate the nonlinear stability 
	of the Couette flow $(Ay, 0, 0)$ in the
	Patlak-Keller-Segel-Navier-Stokes system 
	and \textcolor[rgb]{0,0,0}{show} that if the Couette flow is sufficiently strong (A is large enough), then the solutions for Patlak-Keller-Segel-Navier-Stokes system are global in time provided that the initial velocity is sufficiently small and  the initial cell mass is less than $\frac{24}{5} \pi^2$. 
	
	
\end{abstract}

{\small {\bf Keywords:} 	suppression of blow-up;		Patlak-Keller-Segel-Navier-Stokes; stability; 
	Couette flow}
\tableofcontents

\parskip5pt
\parindent=1.5em
\section{Introduction}

Consider the following three-dimensional parabolic-elliptic Patlak-Keller-Segel (PKS) system coupled with Navier-Stokes equations in $(x,y,z)\in\mathbb{T}\times\mathbb{R}\times\mathbb{T}$ with $ \mathbb{T}=[0,2\pi] $:
\begin{equation}\label{ini}
	\left\{
	\begin{array}{lr}
		\partial_tn+v\cdot\nabla n=\triangle n-\nabla\cdot(n\nabla c), \\
		\triangle c+n-c=0, \\
		\partial_tv+v\cdot\nabla v+\nabla P=\triangle v+n\nabla\phi, \\
		\nabla\cdot v=0, 
	\end{array}
	\right.
\end{equation}
along with initial conditions
$$(n,v)\big|_{t=0}=(n_{\rm in},v_{\rm in}),$$
where $n$ represents the cell density, $c$ denotes the chemoattractant density, and $v$ denotes the velocity of fluid. In addition, $P$ is the pressure and $\phi$ is the given potential function.

If $v=0$ and $\phi=0$,  the system (\ref{ini}) is reduced to  the classical 3D parabolic-elliptic Patlak-Keller-Segel system, which is a mathematical model used to describe the diffusion and chemotactic movement of chemical substances in a population of cells (or microorganisms),
and it was jointly developed by Patlak \cite{Patlak1}, Keller and Segel \cite{Keller1}.
This system has wide applications in the fields of biology, ecology, and medicine. It helps us understand phenomena such as cell migration, aggregation, and diffusion. 
As long as the dimension of space is higher than one, the solutions of the classical PKS system may blow up in finite time.
In the 2D space, the parabolic-parabolic PKS model ($\triangle c$ is replaced by $\triangle c-\partial_tc$ in $\eqref{ini}_2$) has a critical mass of $8\pi$, if the cell mass $M:=||n_{\rm in}||_{L^1}$ is less than $8\pi$,  the solutions of the system are global in time proved by Calvez-Corrias \cite{Calvez1},
if the cell mass is greater than $8\pi$, the solutions will blow up in finite time proved by Schweyer \cite{Schweyer1}.
In the 2D space, the parabolic-elliptic PKS system is globally well-posed if and only if the total mass $M\leq8\pi$ by Wei \cite{wei11}.
When the spatial dimension is higher than two, the PKS system is supercritical and its solutions  will blow up for any initial mass, see Nagai \cite{Na2000} or Souplet-Winkler \cite{SW2019} for the parabolic-elliptic case, and see Winkler \cite{winkler1} for the parabolic-parabolic case.  For more results on this topic,  we refer to  \cite{BCM2008,DDDMW,TW2017} and the references therein. As said in \cite{zeng}:

{\it An interesting question is to consider whether the stabilizing effect of the moving fluid can suppress the finite time blow-up?}

Firstly, 
let us recall  some of the results obtained in 2D briefly.
For the  parabolic-elliptic PKS system of $(\ref{ini})_1-(\ref{ini})_2$, Kiselev-Xu \cite{Kiselev1} suppressed the  blow-up by stationary relaxation
enhancing flows and time-dependent Yao-Zlatos flows in $\mathbb{T}^d$ with $d=2,3$.
Bedrossian-He \cite{Bedro2} also studied the suppression of blow-up by non-degenerate shear flows ($v\cdot\nabla n=A u(y)\partial_x n$) in
$\mathbb{T}^2$ for the 2D parabolic-elliptic case. He \cite{he0} investigated the suppression of blow-up by a large strictly monotone shear flow for the parabolic-parabolic PKS model in $\mathbb{T}\times\mathbb{R}$ when $A$ lies in an interval. For the coupled PKS-NS system, Zeng-Zhang-Zi \cite{zeng} firstly considered the 2D PKS-NS system near the Couette flow in $\mathbb{T}\times\mathbb{R}$, and
proved that if the Couette flow is sufficiently strong, the solution stays globally regular.
He \cite{he05} considered the blow-up suppression for the parabolic-elliptic PKS-NS system in
$\mathbb{T}\times\mathbb{R}$ with the coupling of buoyancy effects for a class of initial data with small vorticity.
The suppression of blow-up in PKS-NS system via the Poiseuille flow was obtained by 
Li-Xiang-Xu  \cite{Li0} in $\mathbb{T}\times\mathbb{R}$ and Cui-Wang in \cite{cui1} $\mathbb{T}\times I$, respectively. Besides, Hu \cite{Hu2023} proved the solution remains regular for all times in the regime of sufficiently
large buoyancy and viscosity, see also the recent results by Hu-Kiselev-Yao \cite{Hu0} and Hu-Kiselev \cite{Hu1}.

For the 3D PKS system of parabolic-elliptic case, Bedrossian-He \cite{Bedro2} investigated the suppression of blow-up by shear flows in $\mathbb{T}^3$ and $\mathbb{T}\times\mathbb{R}^2$ with the initial mass $M<8\pi$. 
Feng-Shi-Wang \cite{Feng1} used the planar helical flows as transport
flow to research the advective Kuramoto-Sivashinsky and
Keller-Segel equations. 
Shi-Wang \cite{wangweike2} considered the suppression effect of the flow $(z,z^2,0)$ in $\mathbb{T}^2\times\mathbb{R}$, and Deng-Shi-Wang \cite{wangweike1} proved the Couette flow with a sufficiently large amplitude prevents
the blow-up of solutions in the whole space for exponential decay data.
For the parabolic-parabolic PKS system, He \cite{he24-1} considered an alternating flow and proved the solution remains globally regular  in $\mathbb{T}^3$ as long as the
flow is sufficiently strong without a mass threshold.
For a time-dependent shear flow, He \cite{he24-2} demonstrated that when the total
mass of the cell density is below a specific threshold ($8\pi|\mathbb{T}|$), the solution remains globally regular  in $\mathbb{T}^3$ as long as the
flow is sufficiently strong.

For the 3D PKS-NS system, the following questions are still open:\\
{\it 1. Whether the solutions are globally well-posed provided that the amplitude of shear flows is sufficiently large;}\\
{\it 2.  Whether there exists a critical threshold for the total mass of the cell density.}

Note that the critical threshold may depend on the shear flow or the domain. Recently, the authors \cite{CWW1} investigated the linear stability of the Couette flow $(Ay,0,0)$ in the 3D PKS system coupled with the linearized NS equations. However, it's still unknown for the 3D PKS-NS system. We will investigate this issue in this paper.
At this time, this is related the stability problem of the 3D Navier-Stokes equations:
\begin{equation}\label{eq:3DNS}
	\left\{
	\begin{array}{lr}
		\partial_tu
		-\frac{1}{Re}\triangle u+u\cdot\nabla u+\nabla P=0, \\
		\nabla \cdot u=0.
	\end{array}
	\right.
\end{equation}
Due to Reynolds's famous work in 1883 \cite{Re1883}, the stability and transition
to turbulence of the laminar flows at high Reynolds number have been an important
field in fluid mechanics. It is well-known that the plane
Couette flow is linearly stable for any Reynolds number  \cite{DR1981}. However,
the experiments show that it can be unstable and transition to turbulence for
small but finite perturbations at high Reynolds number \cite{C2002}.   
Then an important mathematical question 
	formulated by Bedrossian, Germain, and
	Masmoudi  \cite{Bedro1} is that:
{\it Given a norm
	$\|\cdot\|_X$, find a $\beta=\beta(X)$ so that
	$$~\|u_{\rm in}\|_X\leq Re^{-\beta}\Longrightarrow {\rm stability},$$
	$$\quad\|u_{\rm in}\|_X\gg Re^{-\beta}\Longrightarrow {\rm instability}.$$}
The exponent $\beta$ is referred to as the transition threshold in the applied literature.
\textcolor[rgb]{0,0,0}{
	Bedrossian-Germain-Masmoudi proved $\beta\leq \frac{3}{2}$ for the 3D Couette flow \cite{Bedro1} in Sobolev space and $\beta\leq 1$ in Gevrey class \cite{BGM2022}.}
In Sobolev space, we refer to Wei-Zhang \cite{wei2} and Chen-Wei-Zhang \cite{Chen1} for  recent results of $\beta\leq 1$. More references on MHD, Boussinesq equations or other models, we refer to \cite{HSX2024-1,HSX2024-2,LISS2020,NW1,ZZZ2022} and the references therein.
For the stability of the 2D Navier-Stokes equations and related models, there are  very rich research progress on this topic, and we refer to \cite{BMV2016,BVW2018,WZ2023,MZ2022,DWZ2021,BHIW2023} and the references therein.


Motivated by the above \textcolor[rgb]{0,0,0}{transition threshold problem} \cite{Bedro1} and \cite{wei2}, our main goal is to investigate the suppression of blow-up \textcolor[rgb]{0,0,0}{and} the nonlinear stability of the system (\ref{ini}) via the 3D Couette flow. Introduce a perturbation $u=(u_1,u_2,u_3)$ around the Couette flow $( Ay,0,0 )$, which $u(t,x,y,z)=v(t,x,y,z)-( Ay,0,0 )$ satisfying $u\big|_{t=0}=u_{\rm in}=(u_{1,\rm in}, u_{2,\rm in}, u_{3,\rm in})$. Assume $\phi=y$. Then we get
\begin{equation}\label{ini1}
	\left\{
	\begin{array}{lr}
		\partial_tn+Ay\partial_x n+u\cdot\nabla n-\triangle n=-\nabla\cdot(n\nabla c), \\
		\triangle c+n-c=0, \\
		\partial_tu+Ay\partial_x u+\left(
		\begin{array}{c}
			Au_2 \\
			0 \\
			0 \\
		\end{array}
		\right)
		-\triangle u+u\cdot\nabla u+\nabla P^{N_1}+\nabla P^{N_2}+\nabla P^{N_3}=\left(
		\begin{array}{c}
			0 \\
			n \\
			0 \\
		\end{array}
		\right), \\
		\nabla \cdot u=0,
	\end{array}
	\right.
\end{equation}
where the pressure $P^{N_1}$, $P^{N_2}$ and $P^{N_3}$ are determined by
\begin{equation}\label{pressure_1}
	\left\{
	\begin{array}{lr}
		\triangle P^{N_1}=-2A\partial_xu_2, \\
		\triangle P^{N_2}=\partial_yn, \\
		\triangle P^{N_3}=-{\rm div}~(u\cdot\nabla u).
	\end{array}
	\right.
\end{equation}
After the time rescaling $t\mapsto\frac{t}{A}$, we get
\begin{equation}\label{ini11}
	\left\{
	\begin{array}{lr}
		\partial_tn+y\partial_x n+\frac{1}{A}u\cdot\nabla n-\frac{1}{A}\triangle n=-\frac{1}{A}\nabla\cdot(n\nabla c), \\
		\triangle c+n-c=0, \\
		\partial_tu+y\partial_x u+\left(
		\begin{array}{c}
			u_2 \\
			0 \\
			0 \\
		\end{array}
		\right)
		-\frac{1}{A}\triangle u+\frac{1}{A}u\cdot\nabla u+\frac{1}{A}(\nabla P^{N_1}+\nabla P^{N_2}+\nabla P^{N_3})=\left(
		\begin{array}{c}
			0 \\
			\frac{n}{A} \\
			0 \\
		\end{array}
		\right), \\
		{\nabla \cdot u=0.}
	\end{array}
	\right.
\end{equation}


Before stating the result,  define the following modes
\begin{equation*}
	\begin{aligned}
		&P_0f=f_0=\frac{1}{|\mathbb{T}|}\int_{\mathbb{T}}f(t,x,y,z)dx,~~~\qquad
		\qquad \
		P_{\neq}f=f_{\neq}=f-f_0,\\
		&P_{(0,0)}f=f_{(0,0)}=\frac{1}{|\mathbb{T}|^2}\int_{\mathbb{T}\times\mathbb{T}}
		f(t,x,y,z)dxdz,\quad
		P_{(0,\neq)}f=f_{(0,\neq)}=f_0-f_{(0,0)}.
	\end{aligned}
\end{equation*} 
Throughout this paper, $f_0$ and $f_{\neq}$ respectively  represent the zero mode and non-zero mode  of $f$. Moreover, $f_{(0,0)}$ and $f_{(0,\neq)}$ denote the z-part zero mode and the z-part non-zero mode of $f_0$,
respectively.

Our first theorem  is stated as follows.
\begin{theorem}\label{result0}
	Assume that $ u_{\rm in}(x,y,z)\in H^2
	(\mathbb{T}\times\mathbb{R}\times\mathbb{T})$.
	Moreover, the non-negative initial data $ n_{\rm in}(x,y,z)=n_{\rm in}(x,y)\in H^2\cap L^1(\mathbb{T}\times\mathbb{R}\times\mathbb{T})$  and $$M=   \int_{\mathbb{T}\times\mathbb{R}\times\mathbb{T}}n_{\rm in}dxdydz < \frac{24}{5}\pi^2.$$
	Then there exists a positive constant $C_{(0)}$ depending on $||n_{\rm in}||_{H^2\cap L^1(\mathbb{T}\times\mathbb{R}\times\mathbb{T})}$ 
	and $||u_{\rm in}||_{H^2(\mathbb{T}\times\mathbb{R}\times\mathbb{T})}$, such that if $A>C_{(0)}$ and $A^{\epsilon_0}\|u_{\rm in}\|_{H^2(\mathbb{T}\times\mathbb{R}\times\mathbb{T})}\leq C$, the solutions of {\rm (\ref{ini11})} are global in time,
	where $\epsilon_0$ is a positive constant satisfying $\epsilon_0>\frac{1}{3}$.
	
\end{theorem}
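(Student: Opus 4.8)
\medskip
\noindent\textbf{Sketch of the proof strategy.}
By standard parabolic local well-posedness, (\ref{ini11}) admits a maximal $H^2$-existence time $T^\ast\in(0,\infty]$, and the plan is to show $T^\ast=\infty$ by a bootstrap (continuity) argument: one postulates a finite family of quantitative bounds with a factor $2$ on $[0,T]$ for every $T<T^\ast$ and improves them to a factor $\tfrac32$. The bounds are organized along the decomposition $f=f_{(0,0)}+f_{(0,\neq)}+f_{\neq}$ and capture three distinct stabilizing effects: inviscid damping and enhanced dissipation (on the time scale $A^{1/3}$) from the Couette flow for the $x$-dependent part; the spectral gap of $-\partial_z^2$ together with the fact that, after the time rescaling, \emph{every} velocity self-interaction, every transport term in the $n$-equation, and the forcing $\tfrac1A(0,n,0)^{\mathrm T}$ carry a prefactor $A^{-1}$, which keeps the secondary zero modes small; and a genuinely two-dimensional Keller--Segel estimate --- the only place where the mass enters --- for the $x$-average $n_0$.

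\emph{The $x$-dependent modes.} For $(n_{\neq},u_{\neq})$ the plan is to run the now-standard three-dimensional Couette energy method: pass to coordinates moving with the shear, work with the shear-adapted unknowns (the twisted Laplacian of $u_2$ and the shifted $y$-vorticity $\omega^{(2)}=\partial_z u_1-\partial_x u_3$), and build a time-weighted $H^2$ energy carrying an enhanced-dissipation weight $e^{\delta A^{-1/3}t}$ and inviscid-damping weights for $u_{2,\neq},u_{3,\neq}$, so as to absorb the (at most) $A^{1/3}$ lift-up amplification of $u_{1,\neq}$. Because of the $A^{-1}$ prefactors, $u_{\neq}$ stays of size $O(A^{-1/3})+O(A^{1/3-\epsilon_0})$ --- small exactly when $\epsilon_0>\tfrac13$, the exponent being forced by the lift-up --- while $n_{\neq}$ decays like $\|n_{\mathrm{in},\neq}\|_{H^2}\,e^{-\delta A^{-1/3}t}$ up to an $O(A^{-2/3})$ remainder sourced by $u_{\neq}$ and by $A^{-1}n_0$.

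\emph{The secondary zero modes.} Next one verifies that $u_{2,0},u_{3,0}$ and $n_{(0,\neq)}$ stay $O(A^{-2/3})$-small. Two points are decisive: by incompressibility $u_{2,(0,0)}$ vanishes identically, and the pressure relation $\partial_y P^{N_2}_{(0,0)}=n_{(0,0)}$ exactly cancels the forcing $A^{-1}n_{(0,0)}$, so the lift-up never sees $n_{(0,0)}$ and the genuine streak $u_{1,(0,0)}$ is sourced only through $A^{-1}$-small quadratic terms; and the remaining secondary modes feel the spectral gap of $-\partial_z^2$ on $\mathbb{R}\times\mathbb{T}$, hence decay on the diffusive scale $A$ while being sourced only by $A^{-1}$-small products of a bounded factor ($n_0$, $\nabla n_0$) with a small one ($u_{\neq}$, $u_0$, $n_{(0,\neq)}$). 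The $z$-dependent streak $u_{1,(0,\neq)}$ may be lift-up amplified up to $O(A^{1/3})$, but it enters every equation with the $A^{-1}$ prefactor and so behaves as an $O(A^{-2/3})$ perturbation of the Couette profile --- harmless.

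\emph{The two-dimensional average and the main obstacle.} The heart of the argument is the uniform-in-time control of $n_0=n_0(t,y,z)$ on $\mathbb{R}\times\mathbb{T}$. Returning to the original time variable, $n_0$ obeys $\partial_t n_0+(u\cdot\nabla n)_0-\Delta n_0=-\nabla\cdot(n_0\nabla c_0)-\big(\nabla\cdot(n_{\neq}\nabla c_{\neq})\big)_0$ with $c_0=(1-\Delta)^{-1}n_0$ --- a two-dimensional parabolic-elliptic Keller--Segel equation on which the Couette flow has no direct regularizing effect, perturbed by terms that I expect to be genuinely subordinate: the transport $(u\cdot\nabla n)_0=u_0\cdot\nabla n_0+\langle u_{\neq}\cdot\nabla n_{\neq}\rangle_x$, whose first piece is divergence-free (invisible to the $L^2$ estimate and only a lower-order commutator at the $H^2$ level) and whose second piece, like the feedback $\big(\nabla\cdot(n_{\neq}\nabla c_{\neq})\big)_0$, is quadratically small and time-integrable thanks to the enhanced dissipation of $n_{\neq}$. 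Testing against $n_0$ and using $-\Delta c_0=n_0-c_0$ yields $\tfrac12\tfrac{d}{dt}\|n_0\|_{L^2}^2+\|\nabla n_0\|_{L^2}^2\le\tfrac12\|n_0\|_{L^3}^3+(\text{subordinate})$; a Gagliardo--Nirenberg-type inequality on $\mathbb{R}\times\mathbb{T}$ bounds $\|n_0\|_{L^3}^3$ by a multiple of $\|n_0\|_{L^1}\|\nabla n_0\|_{L^2}^2$, and since $\|n_0\|_{L^1}=M/(2\pi)$ is conserved the diffusion absorbs the chemotactic term and produces a uniform $L^2$ --- hence, bootstrapping, a uniform $H^2$ --- bound precisely when $M$ is below the critical value delivered by that inequality, namely $M<\tfrac{24}{5}\pi^2$ (which I expect to match the linear-stability threshold of \cite{CWW1}). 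I expect this to be the real obstruction: one must establish the functional inequality with a constant good enough that the conserved mass leaves a fixed margin, \emph{and} show that the transport and the nonzero-mode feedback --- controlled only through the enhanced-dissipation rates and the $A^{-1}$-smallness --- neither eat into that margin nor force $\epsilon_0$ above $\tfrac13$. Granting all of this, for $A$ large and $A^{\epsilon_0}\|u_{\mathrm{in}}\|_{H^2}\le C$ with $\epsilon_0>\tfrac13$ every postulated bound improves from factor $2$ to factor $\tfrac32$, so $T^\ast=\infty$ and the solution is global.
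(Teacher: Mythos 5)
Your overall architecture (bootstrap on a mode decomposition, shear-adapted unknowns $\triangle u_2$ and $\omega_2=\partial_zu_1-\partial_xu_3$ with enhanced-dissipation weights, $\epsilon_0>\tfrac13$ forced by the lift-up) is broadly the paper's route for the $x$-dependent part, but the step you yourself identify as "the heart of the argument" has a genuine gap. You propose to control the full $x$-average $n_0(t,y,z)$ as a 2D Keller--Segel problem on $\mathbb{R}\times\mathbb{T}$ via $\tfrac12\tfrac{d}{dt}\|n_0\|_{L^2}^2+\|\nabla n_0\|_{L^2}^2\le\tfrac12\|n_0\|_{L^3}^3+\dots$ and the inequality $\|n_0\|_{L^3}^3\le C\|n_0\|_{L^1}\|\nabla n_0\|_{L^2}^2$. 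That inequality is false on the cylinder: for $z$-independent profiles $f_\lambda(y)=\lambda g(\lambda y)$ of fixed mass one has $\|f_\lambda\|_{L^3}^3\sim\lambda^2$ while $\|f_\lambda\|_{L^1}\|\nabla f_\lambda\|_{L^2}^2\sim\lambda^3$, so the ratio blows up as $\lambda\to0$; at large scales $\mathbb{R}\times\mathbb{T}$ is one-dimensional and the 2D Gagliardo--Nirenberg exponents are simply not available for the $z$-mean part. Consequently the absorption of the chemotactic term by the diffusion, and with it your proposed derivation of the mass threshold, collapses exactly on the component $n_{(0,0)}$; and even on the $z$-mean-free part, where a 2D-type inequality does hold, there is no reason its constant would produce the specific number $\tfrac{24}{5}\pi^2$.

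In the paper the threshold comes from a different mechanism, which forces the split $n_0=n_{(0,0)}+n_{(0,\neq)}$ that your scheme avoids. The $(0,0)$ mode is a 1D problem and is bounded with \emph{no} mass restriction (sharp 1D Nash inequality plus an equilibrium-point/phase-plane argument, Lemmas \ref{c00_lemma1}--\ref{lemma_n001}). The constraint $M<\tfrac{24}{5}\pi^2$ enters only in the estimate of $n_{(0,\neq)}$ (Lemma \ref{lemma_n002}), carried out on $\partial_z\nabla c_{(0,\neq)}$ by testing with $\partial_z c_{(0,\neq)}$: there the terms $n_{(0,0)}\partial_yc_{(0,\neq)}$, $n_{(0,0)}\partial_zc_{(0,\neq)}$ and, crucially, $u_{2,(0,\neq)}n_{(0,0)}$, $u_{3,(0,\neq)}n_{(0,0)}$ are bounded by $\|n_{(0,0)}\|_{L^1}=M/(4\pi^2)$ times dissipation terms, and since $u_{2,0},u_{3,0}$ are themselves driven by $n_{(0,\neq)}$ (the paper's Remark 3), their space-time norms must be re-expressed through the same $c_{(0,\neq)}$ dissipation before closing; the bookkeeping closes precisely when $\tfrac56\cdot\tfrac{M}{4\pi^2}<1$. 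Your proposal declares exactly these couplings "subordinate", but they are neither quadratically small nor negligible --- they are where the number comes from. Finally, the hypothesis $n_{\rm in}=n_{\rm in}(x,y)$ in Theorem \ref{result0} is used only to guarantee $(n_{\rm in})_{(0,\neq)}=0$, i.e.\ the smallness condition on $(n_{\rm in})_{(0,\neq)}$ required by the general Theorem \ref{result}; this structural point, like the higher-weight energies $E_4,E_5$ and the quasi-linear unknown $W$ needed to close the non-zero-mode estimates, is absent from your sketch.
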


In fact, the above result is an immediate corollary of the following theorem for the general initial data $n_{\rm in}$.
\begin{theorem}\label{result}
	Assume that the non-negative initial data $ n_{\rm in}(x,y,z)\in H^2\cap L^1(\mathbb{T}\times\mathbb{R}\times\mathbb{T})$
	and $ u_{\rm in}(x,y,z)\in H^2
	(\mathbb{T}\times\mathbb{R}\times\mathbb{T})$.
	Then there exists a positive constant $C_{(0)}$ depending on $||n_{\rm in}||_{H^2\cap L^1(\mathbb{T}\times\mathbb{R}\times\mathbb{T})}$ 
	and $||u_{\rm in}||_{H^2(\mathbb{T}\times\mathbb{R}\times\mathbb{T})}$, such that if $A>C_{(0)}$, and	
	\begin{equation*}
		\left\{
		\begin{array}{lr}
			A^{\epsilon_0}\|u_{\rm in}\|_{H^2(\mathbb{T}\times\mathbb{R}\times\mathbb{T})}
			+A^{\epsilon_0}\|(n_{\rm in})_{(0,\neq)}\|_{L^2(\mathbb{T}\times\mathbb{R}\times\mathbb{T})}\leq C, \\
			\\
			M = \int_{\mathbb{T}\times\mathbb{R}\times\mathbb{T}}n_{\rm in}dxdydz < \frac{24}{5}\pi^2,
		\end{array}
		\right.
	\end{equation*}
	the solutions of {\rm (\ref{ini11})} are global in time, where $\epsilon_0$ is a positive constant satisfying $\epsilon_0>\frac13$.
\end{theorem}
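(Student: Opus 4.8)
The strategy is a standard nonlinear stability bootstrap, combining the enhanced-dissipation/inviscid-damping mechanism for the Couette flow with a dissipation-vs-chemotactic-aggregation estimate for the density. The overall architecture: fix a large time $T^\ast$ and assume a set of a priori bounds (the "bootstrap hypotheses") on $[0,T^\ast]$ involving weighted-in-time norms of $u$ and of the non-zero and $(0,\neq)$ modes of $n$, with constants that are a fixed multiple of the size of the initial data times negative powers of $A$; then show that under these hypotheses the same estimates hold with the constants \emph{halved}, which by continuity forces $T^\ast = +\infty$. The time rescaling in \eqref{ini11} means the effective viscosity is $A^{-1}$, so enhanced dissipation acts on the non-zero modes at rate $\sim A^{-1/3}$ (after rescaling, on the order of $t\mapsto t/A$ this is the familiar $e^{-c\,t/A^{1/3}}$ type decay in the original time), and the transport term $y\partial_x$ provides inviscid damping with the usual $\langle t\rangle^{-1}$, $\langle t\rangle^{-2}$ gains on $u_2$ and the associated components. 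These linear decay estimates for the Couette-linearized Navier–Stokes part should be quoted from the companion linear analysis \cite{CWW1} (or re-derived via the standard Fourier-in-$(x,z)$, Lyapunov-functional-in-$y$ method of \cite{Bedro1,wei2}).

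The key new ingredient, and the source of the mass threshold $\tfrac{24}{5}\pi^2$, is the estimate on the zero mode $n_0$, which is \emph{not} damped by the shear. For $n_0$ one runs a 2D-in-$(y,z)$ Keller–Segel type energy estimate: testing $(\ref{ini11})_1$ averaged in $x$ against appropriate powers of $n_0$ and using the elliptic relation $\triangle c + n - c = 0$ to control $\nabla c$. The gain over the classical (blow-up-prone) 3D situation is that $n_0$ lives effectively on the two-dimensional slice $\mathbb{T}\times\mathbb{R}$ (since $n_{\rm in}=n_{\rm in}(x,y)$ in Theorem \ref{result0}, and more generally the $(0,0)$-mode is one-dimensional while only $(0,\neq)$ carries $z$-dependence which is assumed small); there the relevant functional inequality is a Gagliardo–Nirenberg/log-Hardy–Littlewood–Sobolev inequality whose sharp constant produces a critical mass. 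The specific number $\tfrac{24}{5}\pi^2 = \tfrac{8\pi}{\,5/(3\pi)\,}$-style constant presumably comes from combining the $8\pi$ two-dimensional Keller–Segel threshold with a Poincaré constant on $\mathbb{T}=[0,2\pi]$ and the splitting $\|n_0\|^2 \le \|n_{(0,0)}\|^2 + \|n_{(0,\neq)}\|^2$ with the $(0,\neq)$ part absorbed using its (small, $A^{-\epsilon_0}$) size and its own dissipation. I would set up a free-energy functional $\mathcal{F}[n_0] = \int n_0\log n_0 - \tfrac12\int n_0 c_0$ (or the $L^2$-based analogue) and show it stays bounded, then upgrade to $H^2$ by parabolic regularity once $L^\infty_t L^2_x$-type control and the smallness of the coupling terms (all carrying $A^{-1}$ factors) are in hand.

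The coupling terms are then controlled by interpolating the two mechanisms. The force $n/A$ on the velocity equation is handled because of the explicit $A^{-1}$ and the boundedness of $\|n\|$; the transport term $A^{-1}u\cdot\nabla n$ and the chemotactic term $A^{-1}\nabla\cdot(n\nabla c)$ in the $n$-equation for the non-zero modes are estimated by distributing derivatives, using the enhanced-dissipation time-integrability to absorb the loss, and paying the $A^{-1}$ (which beats the $A^{1/3}$-type losses from enhanced dissipation precisely when $\epsilon_0 > 1/3$ — this is where that exponent condition enters). One must be careful with the "lift-up" term $(u_2,0,0)$ producing transient growth of $u_1$ of size $\sim A^{\,?}$; this is the standard 3D obstruction and is the reason one needs $\|u_{\rm in}\|_{H^2}$ small like $A^{-\epsilon_0}$ rather than merely bounded. \textbf{The main obstacle} I anticipate is closing the estimates for the zero mode $n_0$ \emph{sharply} enough to land at $\tfrac{24}{5}\pi^2$ rather than a smaller constant: one needs the right functional inequality with its optimal constant on $\mathbb{T}\times\mathbb{R}$, careful treatment of the $(0,\neq)$ contribution (its dissipation is only $A^{-1}$-small, not enhanced, so it must be controlled by smallness of data alone), and a delicate bookkeeping so that the nonlinear self-interaction of $n_0$ does not eat into the margin between $M$ and the critical mass. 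A secondary difficulty is the interaction between the slowly-decaying zero modes and the fast-decaying non-zero modes in the $H^2$-level estimates, where product terms like $\nabla n_0 \cdot \nabla c_{\neq}$ must be allocated their decay correctly.
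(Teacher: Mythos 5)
Your overall skeleton (bootstrap hypotheses, enhanced dissipation at rate $A^{-1/3}$ for non-zero modes, smallness $A^{-\epsilon_0}$ of $u_{\rm in}$ and of $(n_{\rm in})_{(0,\neq)}$ to tame the lift-up effect, $\epsilon_0>\tfrac13$ entering when the $A^{-1}$ prefactors must beat the losses) matches the paper's Steps 1--3 with its energy functionals $E_1,\dots,E_5$. However, there is a genuine gap at the heart of the matter: your proposed mechanism for the threshold $\tfrac{24}{5}\pi^2$ is not the one that works, and the route you sketch is precisely the one the paper argues is obstructed. You propose to treat $n_0$ as a 2D Keller--Segel problem via the free energy $\int n_0\log n_0-\tfrac12\int n_0c_0$ and a sharp log-HLS/Gagliardo--Nirenberg inequality tied to the $8\pi$ threshold. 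But in the coupled PKS--NS system the zero-mode density equation contains $\partial_y(u_{2,0}n_0)+\partial_z(u_{3,0}n_0)$, and $u_{2,0},u_{3,0}$ are themselves forced by $n_0$ (through $n_0/A$ and the pressure $P^{N_2}$), so they are not small independently of $n_0$; this is exactly why the free-energy argument (which does give $16\pi^2$ for pure shear flows, cf.\ the paper's Remark on He's work) does not close here, and why the paper does not use it.

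What the paper actually does is different in two essential ways. First, $n_{(0,0)}$ (a function of $y$ alone) is bounded \emph{without any mass threshold} by a phase-plane/equilibrium-point analysis of $\tfrac12\frac{d}{dt}\|n_{(0,0)}\|_{L^2}^2$ using the sharp 1D Nash inequality; no smallness of $(n_{\rm in})_{(0,0)}$ is required. Second, the threshold arises in the estimate of $n_{(0,\neq)}$: one observes that $\partial_z$ is a ``good'' derivative (it passes through $n_{(0,0)}$), tests $\partial_z$ of the $n_{(0,\neq)}$-equation against $\partial_z c_{(0,\neq)}$, and uses the elliptic relation $n_{(0,\neq)}=-\triangle c_{(0,\neq)}+c_{(0,\neq)}$ together with 1D interpolation of the type $\|f\|_{L^\infty_yL^2_z}^2\le\|\partial_yf\|_{L^2}\|f\|_{L^2}$ so that the dangerous terms come multiplied by $\|n_{(0,0)}\|_{L^\infty_tL^1}=M/(4\pi^2)$ in front of the dissipation $\|\partial_z\triangle c_{(0,\neq)}\|_{L^2L^2}^2+\|\partial_z\nabla c_{(0,\neq)}\|_{L^2L^2}^2$; the coupled terms $\partial_y(u_{2,(0,\neq)}n_{(0,0)})$, $\partial_z(u_{3,(0,\neq)}n_{(0,0)})$ are absorbed using the energy identities for $\partial_z u_{2,0},\partial_z u_{3,0}$, and closing requires $\tfrac56\,\|n_{(0,0)}\|_{L^\infty L^1}<1$, i.e.\ $M<\tfrac{24}{5}\pi^2$. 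Your plan also omits two further ingredients the paper needs to close the bootstrap: the splitting $u_{1,0}=\widehat{u_{1,0}}+\widetilde{u_{1,0}}$ isolating the lift-up growth, and the quasi-linearized decomposition $W=u_{2,\neq}+\kappa u_{3,\neq}=W^{(1)}+W^{(2)}$ (with $\kappa=\partial_z\widehat{u_{1,0}}/(A+\partial_y\widehat{u_{1,0}})$) used for the higher-weight velocity estimates $E_5$. As written, your proposal would not produce the stated threshold, and the step ``control $n_0$ by the 2D free energy'' would fail because of the velocity coupling described above.
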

Here are some remarks for the above results.
\begin{remark}
	The suppression of blow-up for the 3D PKS-NS system near the Couette flow was obtained in Theorem \ref{result}, which seems to be the first result for the 3D PKS-NS system.  For the 2D PKS-NS system, stability analysis or suppression of blow-up was obtained in many references, such as \cite{cui1, he05, Li0, Wanglili, zeng}, where enhanced dissipation plays a crucial role in stabilizing solutions or suppressing blow-up. However, there are significant differences in complexity and structures between the 2D and 3D PKS-NS systems. For the 3D cases, one needs a comprehensive consideration of multiple effects, including enhanced dissipation, inviscid damping, the 3D lift-up effect, nonlinear interactions and energy transfer mechanisms etc.,  which bring substantial challenges. For finite channels, the boundary layer effect can also cause some trouble, and we refer to some recent works \cite{CLTZ, Chen0, Chen1}.
	
\end{remark}

\begin{remark}
	It's an open question that  whether there exists a sharp 
	threshold for initial cell mass $M$ in the 3D PKS-NS system.
	We guess it seems to be $16\pi^2$.
	In fact, for the 3D PKS system without coupling the Navier-Stokes flow, the zero mode $n_0$ satisfies
$$\partial_t n_{0}-\frac{1}{A}\triangle n_{0}=-\frac{1}{A}\left[\nabla\cdot(n_{\neq}\nabla c_{\neq})_{0}+\partial_{y}(n_{0}\partial_{y}c_{0})+\partial_{z}(n_{0}\partial_{z}c_{0}) \right]$$
	and it is similar as the 2D  PKS system except for $\frac{1}{A}\nabla\cdot(n_{\neq}\nabla c_{\neq})_{0}$ as a perturbation.
	Recently, He in \cite{he24-2} proved the well-posed result of  $M<16\pi^2$  for the time-dependent shear flow, where the free energy functional of the zero mode plays an important role. Moreover, this critical value limit can be removed when considering other special flows, such as stationary relaxation
	enhancing flows and time-dependent Yao-Zlatos flows in \cite{Kiselev1}, and an alternating flow in \cite{he24-1}.
	
	For the 3D PKS-NS system, there are coupled terms $\partial_{y}(u_{2,0}n_{0})$ and $\partial_{z}(u_{3,0}n_{0})$ for the zero mode $n_0$, which cannot be ignored. Especially, \textcolor[rgb]{0,0,0}{the estimates of $u_{2,0}$ and $u_{3,0}$ still depend} on $n_0$, and for more details we refer to the estimates of $T_{1,4}$ and $T_{1,5}$ in Lemma \ref{lemma_n002}.
	Hence, the value of the critical mass threshold for the 3D PKS-NS system is still unknown.
	
	
\end{remark}

\begin{remark}
It is very challenging to remove the restrictions on 
$\|(u_{\rm in})_{0}\|_{H^2}$ and  $\|(n_{\rm in})_{(0,\neq)}\|_{L^2}$ due to \textcolor[rgb]{,0,0}{the 3D lift-up effect} and the energy transfer mechanism. 
In fact, it follows from (\ref{eq:u10decom}) that $u_{2,0}$ needs to be small, since $\widehat{u_{1,0}}$ should be small by the space-time estimates (Prop \ref{timespace2}, \ref{timespace4}). 
	\textcolor[rgb]{0,0,0}{
		Moreover, $u_{2,0}$ and 
		$u_{3,0}$ are related to $n_{0}$ by}
	\begin{equation*}
		\left\{
		\begin{array}{lr}
			\partial_tu_{2,0}-\frac{1}{A}\triangle u_{2,0}
			+\frac{1}{A}(u\cdot\nabla u_{2})_0
			+\frac{1}{A}\partial_yP^{N_1}_0
			+\frac{1}{A}\partial_y P^{N_2}_0+\frac{1}{A}\partial_y P^{N_3}_0=\frac{n_0}{A}, \\
			\partial_tu_{3,0}-\frac{1}{A}\triangle u_{3,0}
			+\frac{1}{A}(u\cdot\nabla u_3)_0
			+\frac{1}{A}\partial_zP^{N_1}_0
			+\frac{1}{A}\partial_z P^{N_2}_0
			+\frac{1}{A}\partial_z P^{N_3}_0=0,
		\end{array}
		\right.
	\end{equation*}	
	which imply that the norm of  $n_0$ needs to be small to close the energy estimate. 
\end{remark}


\begin{remark}
	For a three-dimensional domain with physical boundary, the stability or instability of the PKS-NS system is still open. In a recent work, the authors investigated the linear stability of the Couette flow $(Ay,0,0)$ in \cite{CWW1}. 
	An interesting question is whether one can prove the nonlinear stability of PKS-NS system in a bounded domain.
\end{remark}

	Here are some notations used in this paper.
	
	\noindent\textbf{Notations}:
	\begin{itemize}
		\item 
		For given $f$, the Fourier transform can be defined by
		\begin{equation}
			f(t,x,y,z)=\sum_{k_1,k_3\in\mathbb{Z}}\frac{1}{2\pi}\int_{k_2\in \mathbb{R}}\widehat{f}_{k_1,k_2,k_3}(t){\rm e}^{ik_2y}dk_2{\rm e}^{i(k_1x+k_3z)}, \nonumber
		\end{equation}
		where $\widehat{f}_{k_1,k_2,k_3}(t)=\frac{1}{|\mathbb{T}|^2}\int_{\mathbb{T}\times\mathbb{T}}\int_{\mathbb{R}}{f}(t,x,y,z){\rm e}^{-ik_2y}dy{\rm e}^{-i(k_1x+k_3z)}dxdz$.

		%
		\item 
		For any given function $f$,  the zero mode and the non-zero mode
		are defined by $f_0$ and $f_{\neq}.$
		Especially, we use $u_{j,0}$, and $u_{j,\neq}$ to represent the zero mode
		and non-zero mode of the velocity $u_{j} (j=1,2,3)$, respectively.
		Similarly, we use $\omega_{2,0}$ and $\omega_{2,\neq}$ to represent the zero mode
		and non-zero mode of the vorticity $\omega_2$, respectively.
		\item The z-part zero and z-part non-zero modes for $f_0$ are defined by
		$f_{(0,0)}$ and $f_{(0,\neq)}.$
		Similarly, we use $u_{j,(0,0)}$, and $u_{j,(0,\neq)}$ to represent the z-part zero mode
		and z-part non-zero mode of the velocity $u_{j,0} (j=1,2,3)$, respectively.
		\item  We denote the partial derivatives $\partial_x,$ $\partial_y,$ and $\partial_z$ by  
		$\partial_1,$ $\partial_2,$ and $\partial_3,$  respectively.
		\item The norm of the $L^p$ space is defined by
		$$\|f\|_{L^p(\mathbb{T}\times\mathbb{R}\times\mathbb{T})}=\left(\int_{\mathbb{T}\times\mathbb{R}\times\mathbb{T}}|f|^p dxdydz\right)^{\frac{1}{p}},$$
		and $\langle\cdot,\cdot\rangle$ denotes the standard $L^2$ scalar product. For simplicity, write $\|f\|_{L^p(\mathbb{T}\times\mathbb{R}\times\mathbb{T})}$ as $\|f\|_{L^p}.$
		\item The time-space norm  $\|f\|_{L^qL^p}$ is defined by
		$$\|f\|_{L^qL^p}=\big\|  \|f\|_{L^p(\mathbb{T}\times\mathbb{R}\times\mathbb{T})}\ \big\|_{L^q(0,t)}.$$
		\item We define the following norms 
		\begin{equation*}
			\begin{aligned}
				&\|f\|_{X_{a}}^2
				=\|{\rm e}^{aA^{-\frac{1}{3}}t}f\|^2_{L^{\infty}L^{2}}
				+\|{\rm e}^{aA^{-\frac{1}{3}}t}\nabla\triangle^{-1}\partial_{x} f\|^2_{L^{2}L^{2}} 
				+\frac{\|{\rm e}^{aA^{-\frac{1}{3}}t}f\|^2_{L^{2}L^{2}}}{A^{\frac{1}{3}}}
				+\frac{\|{\rm e}^{aA^{-\frac{1}{3}}t}\nabla f\|^2_{L^{2}L^{2}}}{A},\\
				&\|f\|_{Y_{0}}^2
				=\|f\|^2_{L^{\infty}L^{2}}
				+\frac{1}{A}\|\nabla f\|^2_{L^{2}L^{2}},
			\end{aligned}
		\end{equation*}
		where $a$ is a positive constant.
		
		\item The total mass $ \|n(t)\|_{L^{1}}$ is denoted by $ M .$ Clearly, $ M:=\|n(t)\|_{L^{1}}=\|n_{\rm in}\|_{L^{1}}.$
		
		\item Throughout this paper, we denote $C$ by  a positive constant independent of $A$, $t$ and the initial data, and it may be different from line to line.
	\end{itemize}
	
	\section{Key ingredients in the proof}
	
	For the 2D PKS-NS system, the enhanced dissipation plays an
	important role in stabilizing solutions and suppressing blow-up (for example, see \cite{he05,zeng}).
	In 2D finite channel, one needs to consider the impact of the boundary layer effect in addition to the enhanced dissipation  (for example, see \cite{cui1}).
	For the 3D PKS system in either unbounded or bounded domains (for example, see \cite{Bedro2,wangweike2}), it is necessary to consider the enhanced dissipation and the boundary layer effect, which are similar to the 2D case. It should be noted that the zero mode of $n$ is dependent on $y$ and $z$ in the 3D case, which is different from the 2D case and will bring new difficulties to estimate $\|n_0\|_{L^{\infty}L^2}.$
	Generally, it's more complex in the 3D PKS-NS system, and one needs to address other complex phenomena such as the 3D lift-up effect, nonlinear interactions, and  energy transfer mechanisms etc., all of which contribute to the system's behavior and stability. It is necessary to fully understand the connections between these factors in order to analyze this system clearly, which are stated as follows.

	\subsection{Some mechanisms affecting the nonlinear stability}
	\
	
	\noindent$\bullet$ \textbf{3D lift-up effect.}
	Consider a simplified equation for the zero mode part of the velocity
	\begin{equation*}
		\left\{
		\begin{array}{lr}
			\partial_tu_0-\frac{1}{A}\triangle u_0+\left(
			\begin{array}{c}
				u_{2,0} \\
				0 \\
				0 \\
			\end{array}
			\right)
			=0, \\
			\nabla \cdot u_0=0.
		\end{array}
		\right.
	\end{equation*} 
	The solution of this linear problem is given by
	\begin{equation*}
		\left\{
		\begin{array}{lr}
			u_{1,0}(t)=e^{{A}^{-1} t \triangle }\big( (u_{1,{\rm in}})_0-t(u_{2,{\rm in}})_0 \big),\\
			u_{2,0}(t)=e^{{A}^{-1} t \triangle }(u_{2,{\rm in}})_0,\\
			u_{3,0}(t)=e^{{A}^{-1} t \triangle }(u_{3,{\rm in}})_0.
		\end{array}
		\right.
	\end{equation*} 
	When $t\lesssim A,$ there is linearized growth for $u_{1,0}(t)$, which is called the ``3D lift-up effect''. It is an important factor leading to the 3D instability in the PKS-NS system and more details we refer to \cite{Bedro1} or \cite{wei2}. 
	
	\noindent$\bullet$ \textbf{Nonlinear interactions}
	
	In 3D space, there are complex nonlinear interactions among different modes of the solution, which influence the stability of the system.
	For a given function $f$, we decompose $f$ into $f=f_{\neq}+f_{(0,0)}+f_{(0,\neq)},$ then  nonlinear interactions can be classified as follows:
	\begin{itemize}
		\item $\neq \cdot \neq \rightarrow ~\neq~{\rm or}~0,$
		\item $0~\cdot \neq \rightarrow ~\neq,$
		\item $ 0\cdot 0\rightarrow~0, $
		\item $(0,\neq) \cdot (0,\neq) \rightarrow ~(0,\neq)~{\rm or}~(0,0),$
		\item $(0,0) \cdot (0,\neq) \rightarrow ~(0,\neq),$
		\item $(0,0) \cdot (0,0) \rightarrow ~(0,0).$
	\end{itemize}
	These nonlinear interactions can be directly observed with the help of Fourier series.
	
	\noindent$\bullet$ \textbf{Energy transfer mechanisms}
	
	In the PKS-NS system, energy transfer mechanisms exist either between the cell density and the velocity field or within each of them individually. These mechanisms, involving both linear and nonlinear interactions, directly impact the nonlinear stability of the system.
	Therefore, analyzing these transfer mechanisms is essential for understanding and predicting the stability of the system. 
	
	Energy transfer mechanisms are classified as follows:
	\begin{itemize}
		\item Linear transfer between the same Fourier modes,
		\item Nonlinear transfer between the same Fourier modes,
		\item Nonlinear transfer between the different Fourier modes.
	\end{itemize}
	It is crucial to note that there is no linear energy transfer between different modes, which is important for us to estimate the zero modes. 
	
	%

	\subsection{Main ideas for constructing the energy functional}\
	
	$\bullet$ {\bf Constructing the energy functional of zero modes:}
	Firstly, the zero modes for $u_{2}$ and $u_3$ satisfy 
	\begin{equation*}
		\left\{
		\begin{array}{lr}
			\partial_tu_{2,0}-\frac{1}{A}\triangle u_{2,0}
			+\frac{1}{A}(u\cdot\nabla u_{2})_0
			+\frac{1}{A}\partial_yP^{N_1}_0
			+\frac{1}{A}\partial_y P^{N_2}_0+\frac{1}{A}\partial_y P^{N_3}_0=\frac{n_0}{A}, \\
			\partial_tu_{3,0}-\frac{1}{A}\triangle u_{3,0}
			+\frac{1}{A}(u\cdot\nabla u_3)_0
			+\frac{1}{A}\partial_zP^{N_1}_0
			+\frac{1}{A}\partial_z P^{N_2}_0
			+\frac{1}{A}\partial_z P^{N_3}_0=0,\\
			\partial_yu_{2,0}+\partial_zu_{3,0}=0.
		\end{array}
		\right.
	\end{equation*}	
	Thus $u_{2,0}$ and $u_{3,0}$ can be regarded as only affected by $n_{(0,\neq)}$
	and not being influenced by $n_{(0,0)},$ since  $$n_0-\partial_yP^{N_2}_{0}
	=\partial_z^2\triangle^{-1}n_{0}=\partial_z^2\triangle^{-1}n_{(0,\neq)}$$
	and
	$$\partial_zP^{N_2}_{0}=\partial_y\partial_z\triangle^{-1}n_0
	=\partial_y\partial_z\triangle^{-1}n_{(0,\neq)}.$$
	As a consequence, we can decompose the zero mode $n_{0}$ into $n_0=n_{(0,0)}+n_{(0,\neq)},$ which satisfy
	\begin{equation}\label{eq:n0'}
		\left\{
		\begin{array}{lr}
			\partial_t n_{(0,0)}-\frac{1}{A}\partial_{yy} n_{(0,0)}=-\frac{1}{A}\partial_{y}\left(n_{(0,0)}
			\partial_{y}c_{(0,0)}\right)+``{\rm good~terms}",\\
			\partial_t n_{(0,\neq)}-\frac{1}{A}\triangle n_{(0,\neq)}
			=-\frac{1}{A}\Big(\partial_y\big(n_{(0,0)}\partial_{y}c_{(0,\neq)})+\partial_z\big(n_{(0,0)}\partial_{z}c_{(0,\neq)}\big)
			+\partial_y(n_{(0,\neq)}\partial_{y}c_{(0,0)})\\
			\qquad\qquad+\partial_y\big(u_{2,(0,\neq)}n_{(0,0)}\big)
			+\partial_z(u_{3,(0,\neq)}n_{(0,0)})\Big)
			+``{\rm good~terms}".
		\end{array}
		\right.
	\end{equation}
	
	{\bf Key observation 1. The introduction of energy estimates for  $\frac{\partial_z}{\sqrt{1-\triangle}}n_{(0,\neq)}$.} Direct energy estimate for $n_{(0,\neq)}$ shows that
\begin{equation*}\label{n0_sub}
		\begin{aligned}
		&\quad
		\frac{A^{2\epsilon}}{2}\|n_{(0,\neq)}\|^2_{L^{\infty}L^2}+
		A^{2\epsilon-1}\|\nabla n_{(0,\neq)}\|_{L^2L^2}^2\\
\leq& ~C+A^{2\epsilon-1}\big(\|n_{(0,0)}\partial_{z}c_{(0,\neq)}\|_{L^2L^2}+\|n_{(0,0)}u_{3,(0,\neq)}\|_{L^2L^2}\big)
		\|\partial_z n_{(0,\neq)}\|_{L^2L^2}\\
		&+A^{2\epsilon-1}\big(\|n_{(0,0)}\partial_{y}c_{(0,\neq)}\|_{L^2L^2}+\|n_{(0,0)}u_{2,(0,\neq)}\|_{L^2L^2}+\|n_{(0,\neq)}\partial_{y}c_{(0,0)}\|_{L^2L^2}\big)
		\|\partial_y n_{(0,\neq)}\|_{L^2L^2}+\cdots
		\end{aligned}
	\end{equation*}
and for example, for the last term by Lemma \ref{lem:ellip_3} and (\ref{eq:1DGN-1}) we have
\begin{equation*}\label{nz_temp3}
		\begin{aligned}
			\|n_{(0,\neq)}\partial_{y}c_{(0,0)}\|_{L^2L^2}&\leq 
			\|\partial_{y}c_{(0,0)}\|_{L^{\infty}L^2}
			\|n_{(0,\neq)}\|_{L^2_{t,x,z}L^{\infty}_y}\\
			&\leq  \frac{1}{\sqrt{2}}\|n_{(0,0)}\|_{L^{\infty}L^2}
			\|\partial_y n_{(0,\neq)}\|^{\frac12}_{L^2L^2}
			\|\partial_z n_{(0,\neq)}\|^{\frac12}_{L^2L^2},
		\end{aligned}
	\end{equation*}
which implies that a smallness condition on $\|n_{(0,0)}\|_{L^{\infty}L^2}$ seems to be necessary for 
	the estimate of $\|\nabla n_{(0,\neq)}\|_{L^2L^2}$.  That is to say, in addition to the condition of the initial mass $M$ in Theorem \ref{result}, we also need to add a condition that requires a smallness of $\|(n_{\rm in})_{(0,0)}\|_{L^2}$. To remove this assumption,
 a new  observation is that  $\|\partial_zn_{(0,\neq)}\|_{L^2L^2}$ is enough to close the estimates of both $\|\triangle u_{2,0}\|_{Y_0}$ and 
	$\|\triangle u_{3,0}\|_{Y_0}$, rather than $\|\nabla n_{(0,\neq)}\|_{L^2L^2}.$ 
In order to estimate $\|\partial_zn_{(0,\neq)}\|_{L^2L^2}$, multiplying $(\ref{eq:n0'})_2$ by  $\frac{\partial_{zz}}{{1-\triangle}}n_{(0,\neq)}$ or $\partial_{zz} c_{(0,\neq)}$ one can obtain the estimate of $\|\partial_zn_{(0,\neq)}\|_{L^2L^2}$.
	With the help of this,  we are able to completely remove the restriction on  $\|(n_{\rm in})_{(0,0)}\|_{L^2}$ (see Lemma \ref{lemma_n002}) and only need a restriction on initial cell mass $M.$

	The velocity $u_{1,0}$ will be affected by the 3D lift-up effect.
	Inspired by \cite{Chen1}, we decompose $u_{1,0}$ into 
	$u_{1,0}=\widehat{u_{1,0}}+\widetilde{u_{1,0}},$ satisfying
	\begin{equation}\label{eq:u10decom}
		\left\{
		\begin{array}{lr}
			\partial_t\widehat{u_{1,0}}-\frac{1}{A}\triangle\widehat{u_{1,0}}
			=-\frac{1}{A}\left(u_{2,0}\partial_y\widehat{u_{1,0}}+u_{3,0}\partial_z\widehat{u_{1,0}}\right)-u_{2,0},\\
			\partial_t\widetilde{u_{1,0}}-\frac{1}{A}\triangle\widetilde{u_{1,0}}
			=-\frac{1}{A}\left(u_{2,0}\partial_y\widetilde{u_{1,0}}+u_{3,0}\partial_z\widetilde{u_{1,0}}\right)-\frac{1}{A}(u_{\neq}\cdot \nabla u_{1,\neq})_0,\\
			\widehat{u_{1,0}}|_{t=0}=0,\quad\widetilde{u_{1,0}}|_{t=0}=(u_{1,\rm in})_0.
		\end{array}
		\right.
	\end{equation}
	In this way, $\widetilde{u_{1,0}}$ will not be affected by the 3D lift-up effect and will also remain unaffected by the linear energy transfer mechanism. 
	More importantly, if $A$ is big enough, it can be regard as a perturbation, and we will explain it in estimating non-zero modes (Lemma \ref{result_0_1}).
	
	Given the above decompositions, it is natural to introduce the first energy functional
	$E_1(t)=E_{1,1}(t)+E_{1,2}(t)+E_{1,3}(t)$ with
	\begin{equation}\label{eq:E1}
		\begin{aligned}
			E_{1,1}(t)=&\|n_{(0,0)}\|_{L^{\infty}L^2}
			+A^{\epsilon}\|\partial_z\nabla c_{(0,\neq)}\|_{L^{\infty}L^2}
			+\frac{\|\partial_zn_{(0,\neq)}\|_{L^2L^2} }{A^{\frac12-\epsilon}}
			+\|\partial_z^2 n_{(0,\neq)}\|_{Y_0},\\
			E_{1,2}(t)=&A^{\epsilon}
			\big(\|u_{2,0}\|_{Y_0}+\|u_{3,0}\|_{Y_0}+\|\nabla u_{2,0}\|_{Y_0}
			+\|\nabla u_{3,0}\|_{Y_0}+\|\triangle u_{2,0}\|_{Y_0}
			\\
			&\quad+\|\min\{(A^{-\frac{2}{3}}+A^{-1}t)^{\frac{1}{2}},1\}\triangle u_{3,0}\|_{Y_0}\big),\\
			E_{1,3}(t)=&A^{\epsilon}(A^{-1}{\|\widehat{u_{1,0}}\|_{L^{\infty}H^4}}
			+A^{-\frac32}{\|\nabla\widehat{u_{1,0}}\|_{L^{2}H^4}}+\|\partial_t\widehat{ u_{1,0}}\|_{L^{\infty}L^{2}}+\|\triangle\partial_t\widehat{ u_{1,0}}\|_{L^{\infty}L^{2}})\\
			&\quad+A^{-\frac13+\epsilon}(\|\widetilde{u_{1,0}}\|_{Y_0}
			+\|\triangle\widetilde{u_{1,0}}\|_{Y_0}),
		\end{aligned}
	\end{equation}
	where $\epsilon$ is a constant defined by $$\epsilon=\left\{\begin{array}{lr}
		\epsilon_0,\quad \epsilon_0\in(\frac13,\frac49],\\
		\frac{4}{9},\quad \epsilon_0>\frac49.
	\end{array}\right.$$
	
	$\bullet$ {\bf Constructing the energy functional of non-zero modes:}
	To facilitate the estimates of non-zero modes, we use the new vorticity  $\omega_2=\partial_zu_1-\partial_xu_3$ and new velocity $\triangle u_2,$ then 
	\begin{equation}\label{ini2}
		\left\{
		\begin{array}{lr}
			\partial_tn+y\partial_x n-\frac{1}{A}\triangle n=-\frac{1}{A}\nabla\cdot(u n)-\frac{1}{A}\nabla\cdot(n\nabla c), \\
			\triangle c+n-c=0, \\
			\partial_t\omega_2+y\partial_x\omega_2
			-\frac{1}{A}\triangle\omega_2+\partial_zu_2=
			-\frac{1}{A}\partial_z(u\cdot\nabla u_1)+\frac{1}{A}\partial_x(u\cdot\nabla u_3), \\
			\partial_t\triangle u_2+y\partial_x \triangle u_2
			-\frac{1}{A}\triangle(\triangle u_2) =\frac{1}{A}\partial_x^2n+\frac{1}{A}\partial_z^2n 
			-\frac{1}{A}(\partial_x^2+\partial_z^2)(u\cdot\nabla u_2)\\
			\qquad
			+\frac{1}{A}\partial_y[\partial_x(u\cdot\nabla u_1)+\partial_z(u\cdot\nabla u_3)], \\
			\nabla \cdot u=0.
		\end{array}
		\right.
	\end{equation}
	Therefore, we can introduce the second energy functional 
	$E_2(t)=E_{2,1}(t)+E_{2,2}(t)$ with 
	\begin{equation*}\label{eq:E2}
		\begin{aligned}
			E_{2,1}(t)=&\|\partial_x^2n_{\neq}\|_{X_a}
			+\|\partial_z^2n_{\neq}\|_{X_a},\\
			E_{2,2}(t)=&A^{\frac{3}{4}\epsilon}(\|\triangle u_{2,\neq}\|_{X_a}+\|\partial_x\omega_{2,\neq}\|_{X_a})
			+A^{-\frac{1}{3}+\frac{3}{4}\epsilon}(\|\partial_y\omega_{2,\neq}\|_{X_a}
			+\|\partial_z\omega_{2,\neq}\|_{X_a}).
		\end{aligned}
	\end{equation*}
	To estimate  non-zero modes of the cell density, the third energy functional is introduced as follows:
	\begin{equation*}\label{eq:E3}
		\begin{aligned}
			E_{3}(t)=\|n\|_{L^{\infty}L^{\infty}}.
		\end{aligned}
	\end{equation*}
	
	$\bullet$ {\bf Constructing the energy functional of non-zero modes with higher weight:}
	Considering 
	\begin{equation*}
		\begin{aligned}
			\partial_t\partial_z^2n_{\neq}+\Big(y+\frac{\widehat{u_{1,0}}}{A}
			\Big)\partial_x\partial_z^2 n_{\neq}-\frac{\triangle \partial_z^2n_{\neq}}{A}=
			-\frac{\partial_z^2\widehat{u_{1,0}}\partial_x n_{\neq}}{A}
			-\frac{2\partial_z\widehat{u_{1,0}}\partial_x\partial_z n_{\neq}}{A}
			+``{\rm good~terms}".
		\end{aligned}
	\end{equation*}  
	Note that it is difficult to close the energy  estimates by using $E_1(t), E_2(t), E_3(t)$ due to the bad terms $\partial_z^2\widehat{u_{1,0}}\partial_x n_{\neq}$, 
	$\partial_z\widehat{u_{1,0}}\partial_x\partial_z n_{\neq}$ and  the 3D lift-up effect. To overcome it, we introduce the fourth energy functional: 
	\begin{equation}\label{eq:E4}
		\begin{aligned}
			E_{4}(t)=\|\partial_{x}^2n_{\neq}\|_{X_{\frac{3}{2}a}}+\|\partial_x\partial_{z}n_{\neq}\|_{X_{\frac{3}{2}a}}.
		\end{aligned}
	\end{equation}
	Lastly, one still needs the fifth energy functional to close the estimates of $E_{4}(t)$:
	\begin{equation}\label{eq:E5}
		\begin{aligned}
			E_{5}(t)=A^{\frac{3}{4}\epsilon}\left(\|\partial^2_{x}u_{2,\neq}\|_{X_{\frac{3}{2}a}}+
			\|\partial_{x}^2u_{3,\neq}\|_{X_{\frac{3}{2}a}}\right).
		\end{aligned}
	\end{equation}
	The fifth energy functional $E_{5}(t)$ serves two important purposes: first, it is used to close the estimates of $E_{4}(t)$, and second, it is used to deal with the 3D lift-up terms in $(\ref{ini2})_3$ and $(\ref{ini2})_4$.
	
	{\bf Key observation 2. A  new quasi-linearized decomposition.} One needs to note that $E_5(t)$ is also related to $E_4(t)$, as there exists linear energy transfer mechanisms between $E_4(t)$ and $E_5(t).$
	To estimate $E_5,$ it is important to
	use the energy functional $E_4$ and the new quantity $W=u_{2,\neq}+ \frac{\partial_z\widehat{u_{1,0}}}{A+\partial_y\widehat{u_{1,0}}} u_{3,\neq}$ as in \cite{wei2}.
	We introduce a quasi-linearized decomposition $W=W^{(1)}+W^{(2)},$
	satisfying 
	\begin{equation*}
		\left\{
		\begin{array}{lr}
			\mathcal{L}_VW^{(1)}
			-2(\partial_y+\kappa\partial_z)
			\triangle^{-1}(\partial_yV\partial_xW^{(1)})
			=2(\partial_y+\kappa\partial_z)\triangle^{-1}
			(\partial_yV\partial_xW^{(2)})
			\\
			+\frac{n_{\neq}-\partial_yP^{N_2}_{\neq}}{A}-\frac{\kappa\partial_zP^{N_2}_{\neq}}{A}+``{\rm  good~part~1}",\\
			\mathcal{L}_VW^{(2)}
			=(\partial_t\kappa-\frac{\triangle\kappa}{A})u_{3,\neq}
			-\frac{2}{A}\nabla\kappa\cdot\nabla u_{3,\neq}+``{\rm  good~part~2}",\\
			W^{(1)}_{\rm in}=W_{\rm in},~~~ W^{(2)}_{\rm in}=0.
		\end{array}
		\right.
	\end{equation*}	
	In this way, we can  estimate $E_5$ directly without any other auxiliary terms: 
	$$E_{5}(t)\leq C
	A^{\frac{3}{4}\epsilon}
	\left(\|\partial_x\nabla W\|_{X_{\frac{3}{2}a}}+
	\|\partial_{x}^2u_{3,\neq}\|_{X_{\frac{3}{2}a}}\right)
	\leq C\big(\|(\partial_x^2n_{\rm in})_{\neq}\|^2_{L^2}
	+\|(\partial_z^2n_{\rm in})_{\neq}\|^2_{L^2}+1\big).$$
	(More details, see Lemma \ref{lemmaw21} and \ref{lemmaw22})	
	\subsection{Main steps}\
	
	\begin{proof}

	{\bf Proof of Theorem \ref{result}.}
	$\bullet$~\textbf{Step~1:} Let's designate $T$ as the terminal point of the largest range $[0, T]$ such that the following
	hypothesis hold
	\begin{equation} \label{assumption}	
		\begin{aligned}
			E_1(t)\leq 2E_1, \
			E_2(t)\leq 2E_2, \
			E_3(t)\leq 2E_3, \
			E_4(t)\leq 2E_4, \
			E_5(t)\leq 2E_5,
		\end{aligned}
	\end{equation}
	for any $t\in[0, T]$, where $E_1$, $E_2$, $E_3$, $E_4$ and $E_5$ are constants independent of
	$t$ and $A$ and
	will be decided during the calculation.
	
	$\bullet$~\textbf{Step~2:} 
	We need to prove the following propositions:
	\begin{proposition}{}\label{pro0}
		Under the conditions of  Theorem \ref{result} and  the assumptions (\ref{assumption}),
		as long as $$M< \frac{24\pi^2}{5},$$
		there exists a positive constant $C_{(1)}$ independent of $A$ and $t,$
		such that if $A>C_{(1)},$
		$$E_1(t)\leq E_1,$$
		for all $t\in(0,T].$
	\end{proposition}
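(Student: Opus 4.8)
\noindent\emph{Proof plan.} The plan is a term‑by‑term bootstrap. Writing $E_1(t)=E_{1,1}(t)+E_{1,2}(t)+E_{1,3}(t)$, I estimate each constituent norm using the zero‑mode equations \eqref{eq:n0'} for $n_{(0,0)}$ and $n_{(0,\neq)}$, the equations for $u_{2,0},u_{3,0}$ displayed right after \eqref{eq:n0'}, the splitting \eqref{eq:u10decom} of $u_{1,0}$, and, at every step, the bootstrap hypotheses \eqref{assumption} together with the freedom to take $A>C_{(1)}$ large. The order of the estimates is dictated by the feedback structure: first $\|n_{(0,0)}\|_{L^\infty L^2}$, since this is the only place where the mass restriction is used and $n_{(0,0)}$ cannot be made small; then the three $n_{(0,\neq)}$ norms in $E_{1,1}$; then the $u_{2,0},u_{3,0}$ norms in $E_{1,2}$, which are forced by $n_0$ only through $\partial_z^2\triangle^{-1}n_{(0,\neq)}$ and $\partial_y\partial_z\triangle^{-1}n_{(0,\neq)}$; and finally the $\widehat{u_{1,0}},\widetilde{u_{1,0}}$ norms in $E_{1,3}$, once $u_{2,0}$ and the nonzero modes are under control.

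\noindent\textbf{The zero‑mode density and the threshold $\tfrac{24}{5}\pi^2$.} An $L^2$ estimate on $\eqref{eq:n0'}_1$, after integrating by parts the Keller–Segel term, produces the self‑aggregation contribution $\tfrac1A\int n_{(0,0)}\,\partial_y c_{(0,0)}\,\partial_y n_{(0,0)}$; since $c_{(0,0)}=(1-\partial_{yy})^{-1}n_{(0,0)}$, the one‑dimensional kernel bound together with the conserved mass $\int_{\mathbb R}n_{(0,0)}\,dy=\tfrac{M}{4\pi^2}$ and a sharp Gagliardo–Nirenberg/HLS‑type inequality lets this term be absorbed by the dissipation $\tfrac1A\|\partial_y n_{(0,0)}\|_{L^2}^2$; the numerology closes precisely when $M<\tfrac{24}{5}\pi^2$, giving a uniform‑in‑time bound $\|n_{(0,0)}\|_{L^\infty L^2}\le C(\|n_{\rm in}\|_{H^2\cap L^1})$. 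The ``good terms'' of $\eqref{eq:n0'}_1$ couple $n_{(0,0)}$ to $n_{(0,\neq)},c_{(0,\neq)},u_{2,(0,\neq)},u_{3,(0,\neq)}$; under \eqref{assumption} each carries either a factor $A^{-\epsilon}$ or an extra $\partial_z$, so after enlarging $A$ they are subordinate. This step mirrors the 2D critical‑mass phenomenon for the two‑dimensional zero mode $n_0$ on $\mathbb T_z\times\mathbb R_y$; the gap between $\tfrac{24}{5}\pi^2$ and the conjectured sharp value $16\pi^2$ is the price of the coupling terms $\partial_y(u_{2,0}n_0),\partial_z(u_{3,0}n_0)$.

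\noindent\textbf{The $n_{(0,\neq)}$ norms and Key observation 1.} As noted after \eqref{eq:n0'}, a naive $L^2$ estimate on $n_{(0,\neq)}$ would force $\|n_{(0,0)}\|_{L^\infty L^2}$ to be small, which we do not have. Instead I test $\eqref{eq:n0'}_2$ against $\tfrac{\partial_{zz}}{1-\triangle}n_{(0,\neq)}$ and against $\partial_{zz}c_{(0,\neq)}$. The extra $\partial_z$ pairs the dangerous term $\partial_y(n_{(0,\neq)}\partial_y c_{(0,0)})$ against $\partial_z n_{(0,\neq)}$, a lower‑order quantity carrying the weight $A^{-\frac12+\epsilon}$ in $E_{1,1}$; crucially I do not ask for a small bound on it, only $\|\partial_z n_{(0,\neq)}\|_{L^2L^2}\lesssim A^{\frac12-\epsilon}$, which can be bootstrapped with an $O(1)$ coefficient coming from $n_{(0,0)}$ thanks to the $A^{2\epsilon-1}$ versus $A^{-1}$ balance. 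The same two test functions yield $A^{\epsilon}\|\partial_z\nabla c_{(0,\neq)}\|_{L^\infty L^2}\lesssim 1$ (via $\eqref{ini11}_2$ and elliptic regularity) and $\|\partial_z^2 n_{(0,\neq)}\|_{Y_0}\lesssim 1$, all remaining nonlinearities being quadratic in $O(A^{-\epsilon})$ quantities; this is Lemma~\ref{lemma_n002}.

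\noindent\textbf{The velocity zero modes and the main obstacle.} For $u_{2,0},u_{3,0}$ I run $L^2$, $\nabla$, and $\triangle$ energy estimates on their forced heat equations; since $n_0-\partial_yP^{N_2}_0=\partial_z^2\triangle^{-1}n_{(0,\neq)}$ and $\partial_zP^{N_2}_0=\partial_y\partial_z\triangle^{-1}n_{(0,\neq)}$, only the $\partial_z$‑controlled norm of $n_{(0,\neq)}$ enters, which is exactly why Key observation~1 suffices. The time weight $\min\{(A^{-2/3}+A^{-1}t)^{1/2},1\}$ on $\triangle u_{3,0}$ absorbs the mild $t=0$ singularity created when one extra derivative lands on the forcing, while the pressures $P^{N_1},P^{N_3}$ are handled by Calderón–Zygmund and are quadratically small. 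Finally, by \eqref{eq:u10decom}, $\widehat{u_{1,0}}$ solves a heat equation with forcing $-u_{2,0}$, so its growth is at most linear on the time scale $t\lesssim A$ and the $A^{-1}$‑weights in $E_{1,3}$ keep it $O(1)$ (the $H^4$ and $\partial_t$ bounds follow by differentiating and repeating, using the $u_{2,0}$ bounds just obtained), while $\widetilde{u_{1,0}}$ solves a heat equation with the genuinely small forcing $-\tfrac1A(u_{\neq}\cdot\nabla u_{1,\neq})_0$ controlled by $E_2$. Collecting the estimates and choosing $E_1,\dots,E_5$ and then $C_{(1)}$ appropriately gives $E_1(t)\le E_1$ on $(0,T]$. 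The genuine difficulty I expect is the $n_{(0,0)}$–$n_{(0,\neq)}$ interplay: $n_{(0,0)}$ is only $O(1)$, hence cannot be treated perturbatively where it appears as a coefficient, and the whole scheme rests on (a) extracting the sharp numerology $M<\tfrac{24}{5}\pi^2$ so that its self‑interaction is absorbed by its own dissipation, and (b) Key observation~1, which replaces the unattainable smallness of $\|\nabla n_{(0,\neq)}\|_{L^2L^2}$ by the attainable bound $\|\partial_z n_{(0,\neq)}\|_{L^2L^2}\lesssim A^{\frac12-\epsilon}$ that the $u_{j,0}$ estimates actually require. A secondary, bookkeeping, difficulty is matching the many powers of $A$ across the coupled estimates and tracking the lift‑up transient of $\widehat{u_{1,0}}$ near $t=0$ and on $t\sim A$.
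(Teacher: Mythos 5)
There is a genuine gap, and it concerns precisely the point your plan identifies as the heart of the matter: where and how the mass threshold $M<\tfrac{24}{5}\pi^2$ enters. In the paper the $n_{(0,0)}$ estimate needs \emph{no} restriction on $M$: testing \eqref{n002} (and then \eqref{n001}) with $n_{(0,0)}$, using $\|\partial_y c_{(0,0)}\|_{L^\infty}^2\le\tfrac12\|n_{(0,0)}\|_{L^2}^2$ and the sharp one–dimensional Nash inequality \eqref{eq:1DGN-2}, one gets a differential inequality of the form $\tfrac12\frac{d}{dt}h\le -\tfrac1A\bigl(c\,M_1^{-4}h^3-C h^2\bigr)$, whose equilibrium analysis yields $\|n_{(0,0)}\|_{L^\infty L^2}^2\le\max\{\|(n_{\rm in})_{(0,0)}\|_{L^2}^2,\tfrac{27M_1^4}{32\pi^2}\}$ for \emph{every} $M$ (the 1D zero-mode problem is subcritical, so there is no critical mass at this step). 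Your claim that ``the numerology closes precisely when $M<\tfrac{24}{5}\pi^2$'' in the $n_{(0,0)}$ estimate therefore attributes the threshold to a step where it is neither needed nor where it actually arises.

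The threshold is needed in the step you then declare closable without it: the weighted estimate of $n_{(0,\neq)}$ (Lemma \ref{lemma_n002}). After multiplying by $A^{2\epsilon}$, the dangerous terms $T_{1,1}$--$T_{1,5}$, in which $n_{(0,0)}$ appears as an $O(1)$ coefficient (including the coupled terms $u_{2,(0,\neq)}n_{(0,0)}$, $u_{3,(0,\neq)}n_{(0,0)}$, which force one to feed the $u_{j,0}$ energy identities back into this estimate), carry exactly the same factor $A^{2\epsilon-1}$ as the dissipation $\|\partial_z\triangle c_{(0,\neq)}\|^2_{L^2L^2}+\|\partial_z\nabla c_{(0,\neq)}\|^2_{L^2L^2}$; there is no ``$A^{2\epsilon-1}$ versus $A^{-1}$ balance'' to exploit, so an $O(1)$ coefficient cannot be bootstrapped away by taking $A$ large. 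The paper closes this estimate by showing, via the sign structure $-\partial_y^2c_{(0,0)}=n_{(0,0)}-c_{(0,0)}$ with $c_{(0,0)}\ge0$, Lemma \ref{lem:ellip_30}, and sharp $L^\infty_yL^2_z$ interpolation, that these terms are bounded by $\|n_{(0,0)}\|_{L^\infty L^1}$ (the $L^1$, not $L^2$, norm) times $\tfrac56$ of the dissipation, so absorption requires $\|n_{(0,0)}\|_{L^\infty L^1}=\tfrac{M}{4\pi^2}<\tfrac65$, i.e.\ $M<\tfrac{24}{5}\pi^2$. As written, your bootstrap for $\|\partial_z n_{(0,\neq)}\|_{L^2L^2}\lesssim A^{\frac12-\epsilon}$ does not close, and the role of the hypothesis $M<\tfrac{24}{5}\pi^2$ in the proposition is left unjustified. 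The remainder of your plan (the $u_{2,0},u_{3,0}$ estimates driven only by $\partial_z$-controlled quantities of $n_{(0,\neq)}$, the weighted $\triangle u_{3,0}$ bound, and the $\widehat{u_{1,0}},\widetilde{u_{1,0}}$ splitting) is consistent with the paper, but it all sits downstream of the step that is missing.
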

	\begin{proposition}{}\label{pro1}
		Under the conditions of Theorem \ref{result} and  the assumptions (\ref{assumption}),
		there exists a positive constant $C_{(2)}$ independent of $A$ and $t,$
		such that if $A>C_{(2)},$
		$$E_2(t)\leq E_2,$$
		for all $t\in(0,T].$
	\end{proposition}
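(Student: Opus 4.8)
The plan is to prove Proposition \ref{pro1}, i.e.\ the nonzero-mode energy bound $E_2(t)\le E_2$, by performing weighted energy estimates on the system (\ref{ini2}) for the quantities $\partial_x^2 n_{\neq}$, $\partial_z^2 n_{\neq}$, $\triangle u_{2,\neq}$ and $\partial_x\omega_{2,\neq}$, $\partial_y\omega_{2,\neq}$, $\partial_z\omega_{2,\neq}$, each paired with the time-weight $\mathrm e^{aA^{-1/3}t}$ that appears in the $X_a$ norm. The key structural fact to exploit is enhanced dissipation: for a nonzero horizontal mode $k_1\ne0$, the operator $\partial_t + y\partial_x - A^{-1}\triangle$ produces, after the change of variables following the shear, dissipation at the enhanced rate $A^{-1/3}$, which is exactly what allows the factor $\mathrm e^{aA^{-1/3}t}$ to be absorbed provided $a$ is chosen small enough relative to the implicit constants. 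Concretely, I would first multiply $(\ref{ini2})_1$ (differentiated in $\partial_x^2$ and in $\partial_z^2$) by $\mathrm e^{2aA^{-1/3}t}$ times the corresponding unknown, integrate, and use the good sign of $-A^{-1}\triangle$ together with the inviscid-damping-type term $\|\nabla\triangle^{-1}\partial_x f\|_{L^2L^2}$ built into $X_a$; then do the same for the velocity/vorticity equations $(\ref{ini2})_3$–$(\ref{ini2})_4$, being careful that the $\partial_z u_2$ term in the $\omega_2$ equation and the $\partial_x^2 n + \partial_z^2 n$ forcing in the $\triangle u_2$ equation are the linear couplings that must be controlled by the $E_{2,1}$ part of the functional — this is the place where the coupling constants force the mass threshold to enter.

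After setting up these six (coupled) differential inequalities, the next step is to estimate every nonlinear term on the right-hand side — the transport nonlinearities $\frac1A\nabla\cdot(un)$, $\frac1A\nabla\cdot(n\nabla c)$, and $\frac1A(u\cdot\nabla u)$ contributions — by splitting each product according to the interaction rules $\neq\cdot\neq\to\neq$, $0\cdot\neq\to\neq$ listed in Section 2, and bounding the $0$-mode factors using $E_1$ (via the assumptions (\ref{assumption})) and the nonzero factors using $E_2$, $E_3$, together with $L^\infty$–$L^2$ interpolation / Gagliardo–Nirenberg and the elliptic regularity for $c$ from $\triangle c + n - c = 0$. The extra powers of $A^{-1}$ in front of every nonlinearity in (\ref{ini11})/(\ref{ini2}) are what make these terms perturbative: each nonlinear contribution will come with a net negative power of $A$ (after paying at most $A^{1/3}$ from enhanced-dissipation bookkeeping and some $A^{\epsilon}$ from the weights in $E_1$, $E_2$), so for $A$ larger than some $C_{(2)}$ they are absorbed into the dissipation and into $\frac12 E_2$. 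The lift-up term $\widehat{u_{1,0}}\partial_x$ appearing through the quasi-linearization must be handled using the $E_{1,3}$ bounds on $\widehat{u_{1,0}}$ in $L^\infty H^4$ with its $A^{-1}$ prefactor.

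The main obstacle I expect is the treatment of the $0\cdot\neq\to\neq$ interactions involving the zero mode of the velocity, especially $u_{1,0}$, because these are \emph{not} small in an absolute sense (only $\widehat{u_{1,0}}$ carries the saving $A^{-1}$, while $\widetilde{u_{1,0}}$ carries $A^{-1/3+\epsilon}$ from $E_{1,3}$) and they transport the nonzero modes without contributing to dissipation; getting the bookkeeping of $A$-powers to close here — i.e.\ verifying that the product of the $A^{1/3}$-type losses from enhanced dissipation, the weight mismatches between $X_a$ and $Y_0$, and the $A^{-1}$ or $A^{-1/3+\epsilon}$ gains from $E_1$ still yields a net negative power of $A$ — is the delicate point and is precisely why the constraint $\epsilon_0>\frac13$ (hence $\epsilon>\frac13$) is imposed. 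A secondary difficulty is that the six energies in $E_2$ are genuinely coupled through the linear terms $\partial_z u_2$ and $\partial_x^2 n+\partial_z^2 n$, so one cannot estimate them one at a time; the resolution is to add the six inequalities with suitably chosen relative weights (the powers $A^{3\epsilon/4}$ and $A^{-1/3+3\epsilon/4}$ built into $E_{2,2}$ are exactly these weights) so that the linear cross terms cancel or become lower order, after which a Grönwall argument on the sum, together with the initial smallness $A^{\epsilon_0}(\|u_{\rm in}\|_{H^2}+\|(n_{\rm in})_{(0,\neq)}\|_{L^2})\le C$, closes the bootstrap to give $E_2(t)\le E_2$.
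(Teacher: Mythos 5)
There is a genuine gap in your treatment of the lift-up terms, and it is exactly the point the paper's proof is organized around. In the $\partial_z^2 n_{\neq}$ equation, after quasi-linearizing the drift into $y+\widehat{u_{1,0}}/A$, one is left with the commutator terms $\frac{1}{A}\partial_z^2\widehat{u_{1,0}}\,\partial_x n_{\neq}$ and $\frac{2}{A}\partial_z\widehat{u_{1,0}}\,\partial_x\partial_z n_{\neq}$ (and, in the vorticity/velocity equations, the transport terms $u_{1,0}\partial_x\nabla u_{\neq}$, $\nabla u_{1,0}\,\partial_x u_{j,\neq}$). You propose to control these using the $E_{1,3}$ bound on $A^{-1}\|\widehat{u_{1,0}}\|_{L^\infty H^4}$, but that only gives $\|\widehat{u_{1,0}}\|_{L^\infty H^2}\lesssim A^{1-\epsilon}$, since $\widehat{u_{1,0}}$ grows linearly in time by the 3D lift-up effect; plugging this into your weighted estimate produces a contribution of size $A^{1-2\epsilon}\|\partial_x^2 n_{\neq}\|_{X_a}^2$ (after paying the $A^{2/3}$ from the time integration), which is a \emph{positive} power of $A$ for every admissible $\epsilon\le\frac49$ and cannot be absorbed. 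The paper's proof (Lemma \ref{result_0_1}, estimates of $T_{4,2}$, $T_{4,3}$, and Lemma \ref{lemma_neq2}) closes these terms only by invoking the higher-weight functionals $E_4$ and $E_5$ from (\ref{eq:E4})--(\ref{eq:E5}): one uses $\|\widehat{u_{1,0}}\|_{H^2}\le\int_0^t\|\partial_s\widehat{u_{1,0}}\|_{H^2}ds\lesssim A^{-\epsilon}t$ and trades the linear growth against the exponential-weight mismatch between $X_{\frac32 a}$ and $X_a$ via $t\,{\rm e}^{-\frac{a}{4}A^{-1/3}t}\lesssim A^{1/3}$, yielding bounds like $A^{\frac43-2\epsilon}\|\partial_x^2 n_{\neq}\|_{X_{\frac32 a}}\|\partial_z^2 n_{\neq}\|_{X_a}$, which do close after division by $A^{5/3}$. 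Your plan never uses $E_4$, $E_5$ (even though the bootstrap (\ref{assumption}) makes them available), so as written the estimate of $E_{2,1}$, and likewise the $u_{1,0}$-transport terms in $E_{2,2}$, does not close.

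Two further corrections. First, the mass threshold $M<\frac{24}{5}\pi^2$ does not enter Proposition \ref{pro1} at all: in the non-zero-mode estimates every constant is simply absorbed by taking $A$ large, and the threshold appears only in the zero-mode estimate of $n_{(0,\neq)}$ (Lemma \ref{lemma_n002}), so tracking sharp coupling constants for $\partial_z u_2$ and $\partial_x^2 n+\partial_z^2 n$ is unnecessary here. Second, a plain weighted energy inequality does not by itself produce the enhanced dissipation rate $A^{-1/3}$ or the inviscid-damping term $\|\nabla\triangle^{-1}\partial_x f\|_{L^2L^2}$ in $X_a$; the paper obtains the $X_a$ bounds from the space-time estimates of Propositions \ref{timespace0}--\ref{timespace2} and \ref{timespace1} (Fourier representation along the shear, with the coupled $(\partial_x\omega_2,\triangle u_2)$ system handled by the integrability of $k_1k_3/r(t)$ rather than by a weighted cancellation), and the $\partial_y\omega_{2,\neq}$, $\partial_z\omega_{2,\neq}$ components are then estimated with an explicit $A^{1/3}$ loss, which is why $E_{2,2}$ carries the prefactor $A^{-\frac13+\frac34\epsilon}$ and why the closure requires $\frac13-\frac34\epsilon>0$, i.e.\ $\epsilon\le\frac49$, in addition to $\epsilon>\frac13$.
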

	
	Due to $n=n_{\neq}+n_{(0,\neq)}+n_{(0,0)},$ 
	by Proposition \ref{pro0} and Proposition \ref{pro1}, 
	when $A>\max\{C_{(1)},C_{(2)}\}$,
	we have $$\|n\|_{L^{\infty}L^2}\leq 
	\|n_{\neq}\|_{L^{\infty}L^2}
	+\|n_{(0,\neq)}\|_{L^{\infty}L^2}+\|n_{(0,0)}\|_{L^{\infty}L^2}\leq E_1+E_2.$$
	Using the Moser's iteration reported as in \cite{CWW1} and elliptic
	estimates in Section \ref{sec_estimate_1},  we can prove that $$E_3(t)\leq C(E_{1}^{6}+E_{2}^{6}+1)(E_1+E_2+||n_{\rm in}||_{L^\infty}+1):=E_3,$$  
	for all $t\in(0,T].$
	\begin{proposition}{}\label{pro3}
		Under the conditions of Theorem \ref{result} and  the assumptions (\ref{assumption}),
		there exists a positive constant $C_{(3)}$ independent of $A$ and $t,$
		such that if $A>C_{(3)},$
		$$E_4(t)\leq E_4,$$
		for all $t\in(0,T].$
	\end{proposition}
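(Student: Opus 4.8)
\noindent\textbf{Strategy for the proof of Proposition \ref{pro3}.}
The idea is to propagate the \emph{higher} weight $e^{\frac{3}{2}aA^{-1/3}t}$ on the two pieces of $E_4(t)$, namely $f:=\partial_x^2n_{\neq}$ and $g:=\partial_x\partial_z n_{\neq}$, by the same scheme used for $E_2$ at weight $a$: treat the linearized operator $\mathcal L_V=\partial_t+(y+\tfrac{\widehat{u_{1,0}}}{A})\partial_x-\tfrac1A\triangle$ as the principal part and everything else as forcing. First I would apply $\partial_x^2$ and $\partial_x\partial_z$ to the non-zero-mode equation $(\ref{ini2})_1$ written in quasi-linear form. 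Because $\widehat{u_{1,0}}=\widehat{u_{1,0}}(t,y,z)$ has no $x$-frequency, the drift commutes with $\partial_x^2$, so $f$ satisfies $\mathcal L_V f=\mathcal R_f$ with $\mathcal R_f$ containing only genuine nonlinear terms; for $g$ the single commutator $[\partial_x\partial_z,\tfrac{\widehat{u_{1,0}}}{A}\partial_x]=\tfrac{\partial_z\widehat{u_{1,0}}}{A}\partial_x^2$ survives, so $\mathcal L_V g=\mathcal R_g-\tfrac{\partial_z\widehat{u_{1,0}}}{A}f$, coupling $g$ back to $f$ through a coefficient of size $\tfrac1A\|\partial_z\widehat{u_{1,0}}\|_{L^\infty}$.

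I would then feed each of these equations into the space--time estimates for $\mathcal L_V$ (Propositions \ref{timespace2} and \ref{timespace4}), which are designed precisely so that the whole $X_{\frac{3}{2}a}$-norm (the enhanced-dissipation factor $e^{\frac32 aA^{-1/3}t}$, the inviscid-damping term $\|\nabla\triangle^{-1}\partial_x(\cdot)\|_{L^2L^2}$, the $A^{-1/3}$-gain $L^2L^2$ term and the parabolic term $\tfrac1A\|\nabla(\cdot)\|_{L^2L^2}^2$) is bounded by the $L^2$-norm of the initial data plus the time integral of the pairing of the forcing with $e^{3aA^{-1/3}t}f$ (resp. $g$). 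The task then reduces to showing that each such pairing is at most $\tfrac14E_4(t)^2+C\big(\|(\partial_x^2n_{\rm in})_{\neq}\|_{L^2}^2+\|(\partial_x\partial_z n_{\rm in})_{\neq}\|_{L^2}^2\big)$ once $A$ is large.

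The forcing splits into three families. (i) The lift-up/commutator terms $\tfrac1A\partial_z^2\widehat{u_{1,0}}\,\partial_x n_{\neq}$, $\tfrac1A\partial_z\widehat{u_{1,0}}\,\partial_x\partial_z n_{\neq}$ and $\tfrac1A\partial_z\widehat{u_{1,0}}\,f$: using $\|\widehat{u_{1,0}}\|_{L^\infty H^4}\lesssim A^{1-\epsilon}E_1$ from $E_{1,3}$, Sobolev embedding, and the elementary bound $\|\partial_x h_{\neq}\|_{X_{\frac32 a}}\le\|\partial_x^2 h_{\neq}\|_{X_{\frac32 a}}$ on non-zero $x$-modes, each of these carries a net factor $A^{-\delta}$ (for some $\delta>0$, using $\epsilon>\tfrac13$) and is absorbed into $\tfrac14E_4(t)^2$. (ii) The transport terms $\tfrac1A\partial_x^2(u\cdot\nabla n)_{\neq}$, $\tfrac1A\partial_x\partial_z(u\cdot\nabla n)_{\neq}$: split $u=u_0+u_{\neq}$, $n=n_0+n_{\neq}$ and use the interaction rules $\neq\!\cdot\!\neq\to\neq$, $0\!\cdot\!\neq\to\neq$; the critical piece $\tfrac1A\partial_x^2(u_{2,\neq}\partial_y n+u_{3,\neq}\partial_z n)$ is exactly where $E_5$ enters, since after Leibniz the top contribution needs $\|\partial_x^2 u_{2,\neq}\|_{X_{\frac32 a}}$, $\|\partial_x^2 u_{3,\neq}\|_{X_{\frac32 a}}\lesssim A^{-\frac34\epsilon}E_5$ paired with $E_3=\|n\|_{L^\infty L^\infty}$ and with $n_{\neq}$-regularity drawn from $E_2$, while the $\widetilde{u_{1,0}}$-, $u_{2,0}$- and $u_{3,0}$-transport pieces carry an extra negative power of $A$ from $E_{1,2},E_{1,3}$ and are harmless. (iii) The chemotactic terms $\tfrac1A\partial_x^2\nabla\!\cdot\!(n\nabla c)_{\neq}$, $\tfrac1A\partial_x\partial_z\nabla\!\cdot\!(n\nabla c)_{\neq}$: closed via the elliptic estimates of Section \ref{sec_estimate_1} (bounding $\nabla c$ and its derivatives by $n$ and its derivatives), the bound $\|n_0\|_{L^\infty L^2}\le E_1+E_2$, $E_3$, $E_4$ and the $\tfrac1A$ prefactor. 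Collecting everything gives $E_4(t)^2\le C\big(\|(\partial_x^2n_{\rm in})_{\neq}\|_{L^2}^2+\|(\partial_x\partial_z n_{\rm in})_{\neq}\|_{L^2}^2\big)+\tfrac12E_4(t)^2+C(E_1,\dots,E_5)A^{-\delta}$; choosing $E_4$ to dominate the data term and then taking $A>C_{(3)}$ closes the bootstrap.

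The main obstacle is maintaining consistency of the higher weight $e^{\frac32 aA^{-1/3}t}$: the self-interaction $u_{2,\neq}\partial_y n_{\neq}$ in the transport term cannot be controlled from $E_2$ alone, because $u_{2,\neq}$ there lives only at weight $e^{aA^{-1/3}t}$; the very purpose of $E_5$ is to furnish the missing velocity control at weight $\tfrac32 a$, and---crucially---the quasi-linearized decomposition $W=W^{(1)}+W^{(2)}$ of Key observation 2 is what lets $E_5$ be estimated \emph{directly from the initial data} (Lemmas \ref{lemmaw21}, \ref{lemmaw22}) rather than from $E_4$, so the apparent circular dependence $E_4\leftrightarrow E_5$ never actually closes a loop. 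A secondary technical subtlety is propagating the weight through the degenerate factor $\min\{(A^{-2/3}+A^{-1}t)^{1/2},1\}$ attached to $\triangle u_{3,0}$ in $E_{1,2}$, which is what prevents the $u_{3,0}$-transport contributions from losing a power of $A$ near $t=0$.
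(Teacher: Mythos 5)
Your proposal follows essentially the same route as the paper: for $\partial_x^2n_{\neq}$ the paper simply reuses the general-weight estimate (\ref{n_xx_result}) with $b=\tfrac32a$, and for $\partial_x\partial_zn_{\neq}$ it writes the quasi-linear equation (\ref{n_neq_temp_1}), applies Proposition \ref{timespace2} at weight $\tfrac32a$, and estimates the commutator term $\tfrac1A\partial_z\widehat{u_{1,0}}\,\partial_x^2n_{\neq}$ against $\|\partial_x^2n_{\neq}\|_{X_{\frac32a}}$ using $E_{1,3}$, exactly as you describe; the zero-mode transport, the $\neq\cdot\neq$ interactions, and the chemotaxis terms are treated with $E_1$, $E_2$, $E_3$, $E_5$ and the elliptic estimates, and the bootstrap closes for $A>A_7$.

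One correction to your accounting of where $E_5$ is indispensable. In the ``main obstacle'' paragraph you claim the self-interaction $u_{2,\neq}\partial_yn_{\neq}$ cannot be controlled from $E_2$ alone because $u_{2,\neq}$ only carries the weight ${\rm e}^{aA^{-1/3}t}$. This is not the issue: in a product of two non-zero-mode factors the weights add, ${\rm e}^{aA^{-1/3}t}\cdot{\rm e}^{aA^{-1/3}t}={\rm e}^{2aA^{-1/3}t}\geq{\rm e}^{\frac32aA^{-1/3}t}$, and indeed the paper bounds all of $T_{5,8}$--$T_{5,10}$ (the $\partial_x\partial_z(u_{\neq}n_{\neq})_{\neq}$ terms) purely by $E_2$ at weight $a$. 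The genuine reason $E_5$ enters is the interaction of the velocity with the \emph{zero mode} of the density: $n_0$ carries no exponential decay at all, so in terms like $n_0\partial_x^3u_{1,\neq}$, $n_0\partial_x^2(u_2,u_3)_{\neq}$ and $\partial_z(n_0\partial_xu_{j,\neq})$ (the paper's $T_{3,4}$, $T_{3,5}$, $T_{5,5}$--$T_{5,7}$) the full weight $\tfrac32a$ must be carried by $\partial_x^2u_{j,\neq}$, which is precisely what $E_5$ provides. Your item (ii) does in effect cover these terms (pairing $\partial_x^2u_{j,\neq}$ at weight $\tfrac32a$ with $\|n\|_{L^\infty L^\infty}$), so the slip is not fatal, but the stated mechanism should be fixed. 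A second minor point: the degenerate factor $\min\{(A^{-2/3}+A^{-1}t)^{1/2},1\}$ on $\triangle u_{3,0}$ plays no role in the paper's $E_4$ estimate; the $u_{2,0}$, $u_{3,0}$ transport terms are handled through the $L^\infty$ bounds coming from $\|u_{2,0}\|_{H^2}$ and $\|u_{3,0}\|_{H^1}$ in (\ref{u23_infty}) and Lemma \ref{lemma_u23_1}.
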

	
	\begin{proposition}{}\label{pro4}
		Under the conditions of  Theorem \ref{result} and  the assumptions (\ref{assumption}),
		there exists a positive constant $C_{(4)}$ independent of $A$ and $t,$
		such that if $A>C_{(4)},$
		$$E_5(t)\leq E_5,$$
		for all $t\in(0,T].$
	\end{proposition}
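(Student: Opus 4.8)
The plan is to control the two pieces of $E_5=A^{\frac34\epsilon}\big(\|\partial_x^2u_{2,\neq}\|_{X_{\frac32a}}+\|\partial_x^2u_{3,\neq}\|_{X_{\frac32a}}\big)$ by passing to the modified unknown $W=u_{2,\neq}+\kappa u_{3,\neq}$ with $\kappa=\frac{\partial_z\widehat{u_{1,0}}}{A+\partial_y\widehat{u_{1,0}}}$ and its quasi-linear splitting $W=W^{(1)}+W^{(2)}$ from Key observation 2. First one records the algebraic reduction: from $u_{2,\neq}=W-\kappa u_{3,\neq}$, and the fact that under (\ref{assumption}) (together with the bounds on $\widehat{u_{1,0}}$ contained in $E_{1,3}$) the multiplier $\kappa$ and all of its derivatives are of size $O(A^{-1})$, one gets
\[
\|\partial_x^2u_{2,\neq}\|_{X_{\frac32a}}\le\|\partial_x\nabla W\|_{X_{\frac32a}}+C(E_1)A^{-1}\big(\|\partial_x^2u_{3,\neq}\|_{X_{\frac32a}}+\text{lower-order terms}\big).
\]
Hence it suffices to estimate separately \textbf{(i)} $\|\partial_x^2u_{3,\neq}\|_{X_{\frac32a}}$, and \textbf{(ii)} $\|\partial_x\nabla W^{(1)}\|_{X_{\frac32a}}$ and $\|\partial_x\nabla W^{(2)}\|_{X_{\frac32a}}$; Lemma \ref{lemmaw21} and Lemma \ref{lemmaw22} then package the $W$-estimates and show that both reduce, up to an $A$-small remainder, to the data $\|(\partial_x^2n_{\rm in})_{\neq}\|_{L^2}^2+\|(\partial_z^2n_{\rm in})_{\neq}\|_{L^2}^2+1$.

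For \textbf{(i)}, using that on the non-zero modes $u_3$ is recovered from $\omega_2$ and $u_2$ through Fourier multipliers controlled by those appearing in $E_2$, one derives from (\ref{ini2}) a closed evolution equation for $\partial_x^2u_{3,\neq}$ and runs the $X_{\frac32a}$-energy estimate exactly as in the $X_a$-estimates behind Proposition \ref{pro1}: the Couette transport $y\partial_x$ combined with $\frac1A\triangle$ produces the enhanced-dissipation gain carrying the weight $\mathrm{e}^{\frac32aA^{-\frac13}t}$ (here $a$ is a small absolute constant so that $\frac32a$ stays below the true $A^{-\frac13}$ dissipation rate); the linear coupling $\partial_zu_{2,\neq}$ in $(\ref{ini2})_3$ is handled together with the $\triangle u_{2,\neq}$-part of $E_2$ and the $u_2$-part of $E_5$; and the remaining forcing --- the $\neq\!\cdot\!\neq\to\neq$ and $0\!\cdot\!\neq\to\neq$ interactions, the pressures $P^{N_1},P^{N_3}$, and the $n_{\neq}$-coupling through $P^{N_2}$ --- is perturbative because of the $A^{-1}$ prefactors, the enhanced dissipation, and Proposition \ref{pro3}, which supplies the $X_{\frac32a}$-control of the $n_{\neq}$-contributions via $E_4$ (one uses that $(\partial_x^2+\partial_z^2)\triangle^{-1}$ is a bounded multiplier, so that $\frac{1}{A}\partial_x^2(\partial_x^2+\partial_z^2)\triangle^{-1}n_{\neq}=\frac{1}{A}(\partial_x^2+\partial_z^2)\triangle^{-1}(\partial_x^2n_{\neq})$ is bounded by $\frac{1}{A}E_4$).

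Step \textbf{(ii)} is the core. For $W^{(2)}$, whose equation $\mathcal{L}_VW^{(2)}=(\partial_t\kappa-\frac{\triangle\kappa}{A})u_{3,\neq}-\frac2A\nabla\kappa\cdot\nabla u_{3,\neq}+\text{``good part 2''}$ has an entirely perturbative right-hand side, one applies $\partial_x\nabla$ and uses that $\partial_t\kappa,\triangle\kappa,\nabla\kappa$ are all $O(A^{-1})$ small --- which is precisely why $\|\partial_t\widehat{u_{1,0}}\|_{L^{\infty}L^2}$ and $\|\triangle\partial_t\widehat{u_{1,0}}\|_{L^{\infty}L^2}$ were placed into $E_{1,3}$ --- while $u_{3,\neq}$ and its derivatives are controlled by $E_2$ and $E_5$; this yields $\|\partial_x\nabla W^{(2)}\|_{X_{\frac32a}}\le CA^{-\delta}\big(1+E_5\cdot(\cdots)\big)$ for some $\delta>0$, a quantity that is small in the bootstrap. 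For $W^{(1)}$ one applies $\partial_x\nabla$ to
\begin{align*}
\mathcal{L}_VW^{(1)}-2(\partial_y+\kappa\partial_z)\triangle^{-1}(\partial_yV\partial_xW^{(1)})&=2(\partial_y+\kappa\partial_z)\triangle^{-1}(\partial_yV\partial_xW^{(2)})\\
&\quad+\frac{n_{\neq}-\partial_yP^{N_2}_{\neq}-\kappa\partial_zP^{N_2}_{\neq}}{A}+\text{``good part 1''}
\end{align*}
and performs the $X_{\frac32a}$-energy estimate. The nonlocal term is treated as in \cite{wei2}: since $\partial_yV=1+\frac1A\partial_y\widehat{u_{1,0}}$ is an $O(A^{-1})$ perturbation of $1$, $2\partial_y\triangle^{-1}\partial_xW^{(1)}$ is the standard Couette nonlocal term, whose contribution is absorbed by the inviscid-damping/enhanced-dissipation estimates already used for $\triangle u_{2,\neq}$, and the $\kappa$- and $\partial_y\widehat{u_{1,0}}$-corrections are again $O(A^{-1})$; on the right, $n_{\neq}-\partial_yP^{N_2}_{\neq}-\kappa\partial_zP^{N_2}_{\neq}=(\partial_x^2+\partial_z^2)\triangle^{-1}n_{\neq}-\kappa\,\partial_y\partial_z\triangle^{-1}n_{\neq}$, so after $\partial_x\nabla$ this is $\le A^{-1}\big(\|\partial_x^2n_{\neq}\|_{X_{\frac32a}}+\|\partial_x\partial_zn_{\neq}\|_{X_{\frac32a}}\big)+\cdots=A^{-1}E_4+\cdots$, while the $W^{(2)}$-forcing was already made small. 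Crucially, the forcing of $W^{(1)}$ involves $E_4$ but not $E_5$, and that of $W^{(2)}$ only $E_2$-type quantities multiplied by $A^{-1}$, so the quasi-linear splitting breaks the linear energy-transfer loop between $E_4$ and $E_5$.

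Collecting \textbf{(i)} and \textbf{(ii)} gives, for some $\delta>0$ and a polynomial $P$,
\[
E_5(t)\le C\big(\|(\partial_x^2n_{\rm in})_{\neq}\|_{L^2}^2+\|(\partial_z^2n_{\rm in})_{\neq}\|_{L^2}^2+1\big)+A^{-\delta}\,P(E_1,E_2,E_3,E_4,E_5).
\]
Choosing $E_5$ larger than twice the first term and then $A>C_{(4)}$ large enough to absorb $A^{-\delta}P$ under (\ref{assumption}) closes the bootstrap and yields $E_5(t)\le E_5$ on $(0,T]$. The main obstacle is the $W^{(1)}$-estimate at the $\partial_x\nabla$ level: one must preserve the sharp $\mathrm{e}^{\frac32aA^{-\frac13}t}$ enhanced-dissipation rate while treating the nonlocal term $2(\partial_y+\kappa\partial_z)\triangle^{-1}(\partial_yV\partial_xW^{(1)})$ with the time-dependent, non-shear coefficient $V=y+\widehat{u_{1,0}}/A$, which is exactly where the smallness of $\widehat{u_{1,0}}/A$ (hence of $A^{\epsilon_0}\|u_{\rm in}\|_{H^2}$) and the fine structure of the $X_{\frac32a}$-norm have to be exploited.
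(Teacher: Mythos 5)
Your step (ii) — the quasi-linear splitting $W=W^{(1)}+W^{(2)}$, the perturbative treatment of the $\kappa$-terms in the $W^{(2)}$ equation, and the $X_{\frac32a}$ estimate of $W^{(1)}$ with forcing controlled by $A^{-1}E_4$ — follows the paper's route (Lemmas \ref{lemmaw21}, \ref{lemmaw22}, Corollary \ref{c1}). The genuine gap is in your step (i), the bound for $\|\partial_x^2u_{3,\neq}\|_{X_{\frac32a}}$. First, the pressure coupling is not perturbative: since $\triangle P^{N_1}=-2A\partial_xu_2$, the term $\frac1A\partial_zP^{N_1}_{\neq}=-2\partial_z\triangle^{-1}\partial_xu_{2,\neq}$ carries no factor of $A^{-1}$; it is exactly the $O(1)$ linear term which, after the rewriting leading to \eqref{ini_6}, becomes $2\partial_z\triangle^{-1}(\partial_yV\partial_xW)$ and which the paper bounds in Lemma \ref{c2} by $\|\partial_x^2W\|_{X_{\frac32a}}\le\|\partial_x\nabla W\|_{X_{\frac32a}}$ from Corollary \ref{c1}; declaring it small "because of the $A^{-1}$ prefactors" would fail. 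Second, there is no closed evolution equation for $\partial_x^2u_{3,\neq}$ coming from \eqref{ini2}: the identity $(\partial_x^2+\partial_z^2)u_{3,\neq}=-(\partial_x\omega_{2,\neq}+\partial_z\partial_yu_{2,\neq})$ only transfers the task to $\partial_x\omega_{2,\neq}$ and $\triangle u_{2,\neq}$ at the weight $\tfrac32a$, which $E_2$ does not supply ($X_a$ control does not imply $X_{\frac32a}$ control), and re-running the Proposition \ref{pro1} machinery at weight $\tfrac32a$ is circular: those estimates themselves invoked $E_5$ (Lemma \ref{lemma_neq2}) to absorb the lift-up growth of $u_{1,0}$, so at weight $\tfrac32a$ you would need yet higher-weighted velocity norms. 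This is precisely why the paper estimates $u_{3,\neq}$ from the momentum equation $(\ref{ini_6})_2$, not from the vorticity system.

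A smaller inaccuracy: $\kappa$ and its derivatives are not $O(A^{-1})$. Because of the lift-up growth one only has $\|\widehat{u_{1,0}}\|_{H^4}\lesssim A^{1-\epsilon}$, hence $\|\kappa\|_{L^{\infty}H^3}\lesssim A^{-\epsilon}$ and $\|\partial_t\kappa\|_{L^{\infty}H^1}\lesssim A^{-1-\epsilon}$, as in \eqref{kappa_estimate}. Since $\epsilon>\frac13$ this smallness still suffices for the reduction $u_{2,\neq}=W-\kappa u_{3,\neq}$ and for Lemma \ref{lemmaw21}, but the weaker rate is what forces the restriction $\epsilon\in(\frac13,\frac49]$ and the size condition on $A$ (e.g. $A>\max\{A_7,E_5^{3/(3\epsilon-1)}\}$) that the paper needs to close Lemma \ref{lemmaw22}; your $O(A^{-1})$ claim hides these constraints.
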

	
	$\bullet$ \textbf{Step~3:} 
By the local well-posedness result in Theorem \ref{thm:local existence},
	 there exists a time $T^*>0$ such that a unique strong solution $(n,u)$ to the system {\rm (\ref{ini11})} exists on the interval $[0,T^*).$ If $T^*<\infty$, by Proposition \ref{pro0}-\ref{pro4}, we have
\beno\label{eq:blow-up condi'}
\sup_{0<t< T^*}
	\|(\partial_{x}^{2},\partial_{z}^{2})n(t,\cdot)\|_{L^{2}}+\|n(t,\cdot)\|_{L^{\infty}}\leq C,
\eeno
and next we will prove 
\ben\label{eq:nabla u2}\sup_{0<t< T^*}A^{-\frac{1}{12}}\|(\nabla u)(t,\cdot)\|_{L^{2}}\leq C.
\een
It follows from Lemma \ref{lemma_u} and the bounded-ness of $E_2$ that
\beno&&\left\|(\partial_x,
				\partial_z)u_{\neq}\right\|_{L^2}\leq C(\|\omega_{2,\neq}\|_{L^2}
				+\|\nabla u_{2,\neq}\|_{L^2}),\\
&&
\left\|(\partial_x,
				\partial_z)\partial_yu_{\neq}\right\|_{L^2}\leq C(\|\partial_y\omega_{2,\neq}\|_{L^2}
				+\|\triangle u_{2,\neq}\|_{L^2}),
\eeno
which imply 
\beno
\left\|\nabla u_{\neq}\right\|_{L^2}\leq C A^{\frac{1}{12}}.
\eeno
By \eqref{eq:u10} we have
\beno\label{eq:u10}
				\|{u_{1,0}}\|_{H^2}
				&\leq \|\widehat{u_{1,0}}\|_{H^2}+\|\widetilde{u_{1,0}}\|_{H^2}\leq \int_0^t \|\partial_s\widehat{u_{1,0}}(s)\|_{H^2}ds
				+\|\widetilde{u_{1,0}}\|_{H^2}\nonumber\\
				&\leq CE_{1,3}A^{\frac13-\epsilon}(1+A^{-\frac13}t),
			\eeno
which and the bounded-ness of $E_1$ imply that $\left\|\nabla u_{0}\right\|_{L^2}\leq C.$ Hence (\ref{eq:nabla u2}) holds.
Consequently, by the blow-up criterion \eqref{eq:blow-up condi} in Theorem \ref{thm:local existence}, the proof of Theorem \ref{result} is complete. \end{proof}

	
	
	\section{Anisotropic Sobolev embeddings and a priori estimates}\label{sec_estimate_1}
	
	\subsection{Anisotropic Sobolev embeddings}
	Here, we give some anisotropic Sobolev inequalities based on the Fourier analysis. These inequalities play important roles in estimating nonlinear interaction terms.
	
	Before the beginning, several important facts must be stated, which can be  proven directly.
	For a given function $f=f(x,y,z)$, by Fourier series there hold
	\begin{equation}\label{eq:fourier ine}
		\begin{aligned}
			&\|f_{\neq}\|^2_{L^2}\leq \|\partial_x^jf_{\neq}\|^2_{L^2},\quad
			\|\partial_xf\|^2_{L^2}\leq \|\partial_x^jf\|^2_{L^2},\\
			&\|f_{(0,\neq)}\|^2_{L^2}\leq \|\partial_z^jf_{(0,\neq)}\|^2_{L^2},\quad
			\|\partial_zf\|^2_{L^2}\leq \|\partial_z^jf\|^2_{L^2},
		\end{aligned}
	\end{equation}
	where $j$ is a positive constant with $j\geq1$ and $\|f\|_{L^2}$ denotes $\|f\|_{L^2(\mathbb{T}\times\mathbb{R}\times\mathbb{T})}$. 
	
	
	\subsubsection{Sobolev inequalities for the $L^{\infty}$ norm}
	
	The following lemma can be used to estimate the $L^{\infty}$ norm for the zero mode.
	\begin{lemma}\label{sob_inf_1}
		For a given function $f(x,y,z)$ and $f_0=\frac{1}{|\mathbb{T}|}\int_{\mathbb{T}}{f}(t,x,y,z)dx,$ we have
		\begin{equation}\label{sob_result_1}
			\begin{aligned}
				&\|f_0\|_{L^{\infty}}\leq C\left(\|\partial_yf_0\|^{\frac{1}{2}}_{L^2}\|f_0\|^{\frac{1}{2}}_{L^2}+\|\partial_y\partial_zf_0\|^{\frac{1}{2}}_{L^2}
				\|\partial_zf_0\|^{\alpha-\frac{1}{2}}_{L^2}
				\|f_0\|^{1-\alpha}_{L^2}\right),\\
				&\|f_0\|_{L^{\infty}}
				\leq 
				C\left(\|\partial_yf_0\|^{\frac{1}{2}}_{L^2}\|f_0\|^{\frac{1}{2}}_{L^2}+\|\partial_y\partial_zf_0\|^{\alpha-\frac{1}{2}}_{L^2}
				\|\partial_zf_0\|^{\frac{1}{2}}_{L^2}
				\|\partial_yf_0\|^{1-\alpha}_{L^2}\right),\\
				&\|f_{0}\|_{L^{\infty}_{z}L^2_y}\leq C\left(\|f_0\|_{L^2}+\|\partial_zf_0\|_{L^2}^{\alpha}\|f_0\|_{L^2}^{1-\alpha}\right),\\
				&\|f_{0}\|_{L^{\infty}_yL^{2}_{z}}
				\leq \|\partial_yf_0\|_{L^2}^{\frac{1}{2}}
				\|f_0\|_{L^2}^{\frac{1}{2}},
			\end{aligned}
		\end{equation}	
		where $\alpha$ is a constant with $\alpha\in(\frac{1}{2},1].$
	\end{lemma}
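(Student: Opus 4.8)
The plan is to prove each of the four anisotropic inequalities in \eqref{sob_result_1} by combining one-dimensional Gagliardo--Nirenberg / Agmon-type estimates with the fact that for functions on $\mathbb{T}\times\mathbb{R}\times\mathbb{T}$ one can freely integrate and take suprema in the variables $x$, $y$, $z$ in whatever order is convenient. Since all quantities involve only the zero mode $f_0$, which is independent of $x$, the effective domain is $\mathbb{R}_y\times\mathbb{T}_z$, so I really only need sharp one-dimensional embeddings in $y$ (over the line) and in $z$ (over the torus), applied in an iterated fashion.

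\textbf{One-dimensional building blocks.} First I would record the two basic facts I will use repeatedly: (i) for $g=g(y)$ on $\mathbb{R}$, the Agmon inequality $\|g\|_{L^\infty_y}\le C\|\partial_y g\|_{L^2_y}^{1/2}\|g\|_{L^2_y}^{1/2}$; (ii) for $h=h(z)$ on $\mathbb{T}$, the interpolation/Agmon inequality $\|h\|_{L^\infty_z}\le C\|h\|_{L^2_z}+C\|\partial_z h\|_{L^2_z}^{\alpha}\|h\|_{L^2_z}^{1-\alpha}$ valid for any $\alpha\in(\tfrac12,1]$ (the first term is needed because constants on the torus have no derivative control; for $\alpha=\tfrac12$ this is standard Agmon, and for $\alpha>\tfrac12$ it follows by interpolating $H^{1}\hookrightarrow L^\infty$ against $L^2$, or simply by Sobolev embedding $H^{\alpha+1/2}_z\hookrightarrow L^\infty_z$ plus interpolation). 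I would also note the 1D Gagliardo--Nirenberg bound used to pass between $\|\partial_z g\|_{L^2}$ and lower/higher order norms when $\alpha\ne\tfrac12$. These are presumably the displays labeled \eqref{eq:1DGN-1} etc.\ referenced elsewhere in the paper.

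\textbf{Assembling the 2D estimates.} For the fourth inequality $\|f_0\|_{L^\infty_y L^2_z}\le \|\partial_y f_0\|_{L^2}^{1/2}\|f_0\|_{L^2}^{1/2}$: apply Agmon in $y$ to the function $y\mapsto \|f_0(y,\cdot)\|_{L^2_z}$, namely $\|f_0\|_{L^\infty_y L^2_z}^2\le 2\|\,\|f_0\|_{L^2_z}\,\|_{L^2_y}\,\|\partial_y\|f_0\|_{L^2_z}\|_{L^2_y}\le 2\|f_0\|_{L^2}\|\partial_y f_0\|_{L^2}$, using $|\partial_y\|f_0(y,\cdot)\|_{L^2_z}|\le \|\partial_y f_0(y,\cdot)\|_{L^2_z}$; this gives the stated bound with constant $1$ after taking square roots (the sharp Agmon constant on the line is exactly the one producing $\|g\|_{L^\infty}^2\le \|g\|_{L^2}\|g'\|_{L^2}$, so no extra $C$ is needed here, matching the clean statement). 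For the third inequality $\|f_0\|_{L^\infty_z L^2_y}\le C(\|f_0\|_{L^2}+\|\partial_z f_0\|_{L^2}^{\alpha}\|f_0\|_{L^2}^{1-\alpha})$: apply the torus estimate (ii) in $z$ to $z\mapsto\|f_0(\cdot,z)\|_{L^2_y}$, giving $\|f_0\|_{L^\infty_z L^2_y}\le C\|\,\|f_0\|_{L^2_y}\|_{L^2_z}+C\|\partial_z\|f_0\|_{L^2_y}\|_{L^2_z}^{\alpha}\|\,\|f_0\|_{L^2_y}\|_{L^2_z}^{1-\alpha}$ and bounding $\|\partial_z\|f_0\|_{L^2_y}\|_{L^2_z}\le \|\partial_z f_0\|_{L^2}$. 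Finally, for the first two (the genuine $L^\infty$ bounds): start from $\|f_0\|_{L^\infty}\le C\|f_0\|_{L^\infty_y L^2_z}^{1/2}\big(\|f_0\|_{L^\infty_y L^2_z}+\|\partial_z f_0\|_{L^\infty_y L^2_z}^{\alpha}\|f_0\|_{L^\infty_y L^2_z}^{1-\alpha}\big)^{1/2}$-type splitting — more precisely, first take $L^\infty_z$ via (ii) then $L^\infty_y$ via (i), i.e. $\|f_0\|_{L^\infty}\le C\|f_0\|_{L^\infty_z L^2_y}^{1/2}\|\partial_y f_0\|_{L^\infty_z L^2_y}^{1/2}$ after an Agmon in $y$, then expand each factor with the third inequality applied to $f_0$ and to $\partial_y f_0$, and collect terms, discarding the products that are dominated by others (using $\|f_0\|_{L^2}\le$ the relevant derivative norms on nonzero $z$-modes and Young's inequality). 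Choosing whether to take $L^\infty_z$ first or $L^\infty_y$ first, and where to spend the $\alpha$ exponent (on $\partial_z\partial_y f_0$ versus on $\partial_y f_0$), yields the two different-looking forms in the first and second lines respectively.

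\textbf{Main obstacle.} The routine part is the iterated 1D embedding; the only genuinely delicate points are (a) getting the \emph{exact} constants (the statement has a bare $1$ in the fourth line and a clean $\tfrac{1}{\sqrt2}$-type constant downstream), which forces me to use the sharp Agmon inequality $\|g\|_{L^\infty}^2\le\|g\|_{L^2}\|g'\|_{L^2}$ on $\mathbb{R}$ rather than a lossy version, and (b) the bookkeeping of the fractional exponents: after inserting the third inequality into the product of square roots one obtains four cross terms, and one must check that each of the two ``extra'' cross terms is indeed absorbed — typically $\|\partial_y f_0\|_{L^2}^{1/2}\|f_0\|_{L^2}^{1/2}$ times a factor $\le 1$ after using $\|f_0\|_{L^2}\le\|\partial_z f_0\|_{L^2}$ (true only on the nonzero $z$-mode, hence the hypothesis $\alpha\in(\tfrac12,1]$ and the structure tailored to $f_{(0,\neq)}$ in the applications) or by Young's inequality against the dominant term. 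I expect the exponent-chasing in the first two lines to be where almost all the actual work sits; everything else is immediate from the three 1D inequalities.
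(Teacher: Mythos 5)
Your handling of $(\ref{sob_result_1})_3$ and $(\ref{sob_result_1})_4$ is correct and is essentially the paper's argument in another guise (the paper runs the same one-dimensional Agmon/interpolation bounds through the Fourier coefficients $\widehat f_{0,k_3}(y)$ instead of through the slice functions $y\mapsto\|f_0(y,\cdot)\|_{L^2_z}$ and $z\mapsto\|f_0(\cdot,z)\|_{L^2_y}$). The gap is in your route to the first two inequalities. After $\|f_0\|_{L^{\infty}}\le\|f_0\|_{L^{\infty}_zL^2_y}^{1/2}\|\partial_yf_0\|_{L^{\infty}_zL^2_y}^{1/2}$ and substitution of $(\ref{sob_result_1})_3$ into both factors, the cross term $\|\partial_zf_0\|_{L^2}^{\alpha/2}\|f_0\|_{L^2}^{(1-\alpha)/2}\|\partial_yf_0\|_{L^2}^{1/2}$ appears, and it is \emph{not} dominated by the right-hand side of $(\ref{sob_result_1})_1$; neither of your absorption devices is available. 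The substitution $\|f_0\|_{L^2}\le\|\partial_zf_0\|_{L^2}$ requires zero $z$-mean, while the lemma is stated and used for general $f_0$ (e.g.\ for $n_0$, $u_{2,0}$, $\nabla u_{1,0}$ in Lemmas \ref{sob_12}, \ref{lemma_neq2}, \ref{lemma_neq3}; the mean-zero case is the separate Lemma \ref{sob_inf_3}). Young's inequality cannot absorb it either: take
\begin{equation*}
f_0(y,z)=g_1(y)+g_2(y)\cos(Nz),\qquad \|g_1\|_{L^2}=\|g_2\|_{L^2}=\|g_1'\|_{L^2}=1,\quad \|g_2'\|_{L^2}=N^{-2\alpha},
\end{equation*}
so that, up to harmless constants, $\|f_0\|_{L^2}\sim\|\partial_yf_0\|_{L^2}\sim1$, $\|\partial_zf_0\|_{L^2}\sim N$, $\|\partial_y\partial_zf_0\|_{L^2}\sim N^{1-2\alpha}$. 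Then the cross term is of size $N^{\alpha/2}\to\infty$, while $\|\partial_yf_0\|^{1/2}_{L^2}\|f_0\|^{1/2}_{L^2}+\|\partial_y\partial_zf_0\|^{1/2}_{L^2}\|\partial_zf_0\|^{\alpha-\frac12}_{L^2}\|f_0\|^{1-\alpha}_{L^2}\sim 1+N^{\frac{1-2\alpha}{2}}N^{\alpha-\frac12}=2$. So the chain of estimates you propose is strictly weaker than the statement, and no bookkeeping of exponents can close it; the same objection applies to your plan for $(\ref{sob_result_1})_2$.

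The paper avoids this problem because it never decouples the two Agmon factors: it expands $f_0=\sum_{k_3}\widehat f_{0,k_3}(y)e^{ik_3z}$, applies the $y$-Agmon inequality to each coefficient, inserts the weight $(1+|k_3|^{\alpha})$ and its reciprocal, and applies Cauchy--Schwarz once in $k_3$ (summability of $(1+|k_3|^{\alpha})^{-2}$ is where $\alpha>\tfrac12$ enters), so that $\|\partial_y\widehat f_{0,k_3}\|^{1/2}\|\widehat f_{0,k_3}\|^{1/2}$ and the frequency weight $|k_3|^{\alpha}$ remain inside a single sum; one further H\"older inequality in $k_3$ then produces exactly the two stated terms, with the $y$-derivative correlated to the $z$-frequency as in $\|\partial_y\partial_zf_0\|^{1/2}\|\partial_zf_0\|^{\alpha-\frac12}\|f_0\|^{1-\alpha}$, and there are no cross terms to absorb; the second line comes from the same weighted sum with the H\"older exponents distributed differently. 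To repair your proof of lines one and two you essentially have to pass to this weighted Fourier-series argument (or at least split $f_0=f_{(0,0)}+f_{(0,\neq)}$ and treat the nonzero $z$-modes by such a correlated estimate); the decoupled iterated mixed-norm product cannot yield the stated bound.
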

	\begin{proof}
		
		{\bf Estimate of $\eqref{sob_result_1}_1$.}
		Thanks to the Fourier series $f_0=\sum_{k_3\in \mathbb{Z}}\widehat{f}_{0,k_3}(t,y)e^{ik_3z},$ there holds
		\begin{equation*}
			\begin{aligned}
				\|f_0\|_{L^{\infty}}&\leq \sum_{k_3\in \mathbb{Z}}\|\widehat{f}_{0,k_3}(t,y)\|_{L^{\infty}}\leq 
				\sum_{k_3\in \mathbb{Z}}\|\partial_y\widehat{f}_{0,k_3}(t,y)\|^{\frac{1}{2}}_{L^{2}}
				\|\widehat{f}_{0,k_3}(t,y)\|^{\frac{1}{2}}_{L^{2}}	\\
				&=\sum_{k_3\in \mathbb{Z}}(1+|k_3|^{\alpha})
				\|\partial_y\widehat{f}_{0,k_3}(t,y)\|^{\frac{1}{2}}_{L^{2}}
				\|\widehat{f}_{0,k_3}(t,y)\|^{\frac{1}{2}}_{L^{2}}\frac{1}{1+|k_3|^{\alpha}},
			\end{aligned}
		\end{equation*}
		where we used (\ref{eq:1DGN-1}) and  $\alpha$ is a constant with $\alpha\in(\frac{1}{2},1].$
		Using H$\rm \ddot{o}$lder's inequality, we obtain 
		\begin{equation}\label{zero_proof_1}
			\begin{aligned}
				\|f_0\|_{L^{\infty}}
				&\leq C\Big(\sum_{k_3\in \mathbb{Z}}
				\|\partial_y\widehat{f}_{0,k_3}(t,y)\|_{L^{2}}
				\|\widehat{f}_{0,k_3}(t,y)\|_{L^{2}}
				+\sum_{k_3\in \mathbb{Z}}|k_3|^{2\alpha}
				\|\partial_y\widehat{f}_{0,k_3}(t,y)\|_{L^{2}}
				\|\widehat{f}_{0,k_3}(t,y)\|_{L^{2}}\Big)^{\frac{1}{2}}\\
				&\leq C\Big(\|\partial_yf_0\|_{L^2}\|f_0\|_{L^2}
				+\sum_{k_3\in \mathbb{Z}}
				\|k_3\partial_y\widehat{f}_{0,k_3}(t,y)\|_{L^{2}}
				\|k_3\widehat{f}_{0,k_3}(t,y)\|^{2\alpha-1}_{L^{2}}
				\|\widehat{f}_{0,k_3}(t,y)\|^{2-2\alpha}_{L^{2}}
				\Big)^{\frac{1}{2}}.
			\end{aligned}
		\end{equation}
		Furthermore, by  H$\rm \ddot{o}$lder's inequality again we get $\eqref{sob_result_1}_1$.
		
		{\bf Estimate of $\eqref{sob_result_1}_2$.}
		If we estimate (\ref{zero_proof_1}) in another way:
		\begin{equation*}
			\begin{aligned}
				\|f_0\|_{L^{\infty}}
				\leq C\Big(\|\partial_yf_0\|_{L^2}\|f_0\|_{L^2}
				+\sum_{k_3\in \mathbb{Z}}
				\|\partial_y\widehat{f}_{0,k_3}(t,y)\|_{L^{2}}^{2-2\alpha}
				\|k_3\partial_y\widehat{f}_{0,k_3}(t,y)\|^{2\alpha-1}_{L^{2}}
				\|k_3\widehat{f}_{0,k_3}(t,y)\|_{L^{2}}
				\Big)^{\frac{1}{2}},
			\end{aligned}
		\end{equation*}
		then one can prove $\eqref{sob_result_1}_2$ directly.
		
		{\bf Estimate of $\eqref{sob_result_1}_3$.}		
		For  $f_{0}=\sum_{k_3\in \mathbb{Z}}\widehat{f}_{0,k_3}(t,y)e^{ik_3z},$
		we have $$\|f_{0}\|_{L^{\infty}_{z}L^2_y}
		\leq \sum_{k_3\in \mathbb{Z}}\|\widehat{f}_{0,k_3}(t,y)\|_{L^2_y}
		= \sum_{k_3\in \mathbb{Z}}(1+|k_3|^{\alpha})\|\widehat{f}_{0,k_3}(t,y)\|_{L^2_y}\frac{1}{1+|k_3|^{\alpha}},$$
		where $\alpha\in(\frac{1}{2},1].$
		Using H$\rm \ddot{o}$lder's inequality, one immediately obtained $\eqref{sob_result_1}_3$.

		{\bf Estimate of $\eqref{sob_result_1}_4$.}
		Due to $\|f_{0}\|_{L^{2}_{z}}^2
		\leq|\mathbb{T}| \sum_{k_3\in \mathbb{Z}}|\widehat{f}_{0,k_3}(t,y)|^2,$ there holds
		\begin{align*}
			\|f_{0}\|_{L^{\infty}_yL^{2}_{z}}^2
			&\leq |\mathbb{T}|\sum_{k_3\in \mathbb{Z}}||\widehat{f}_{0,k_3}(t,y)||^2_{L^{\infty}_y}
			\leq |\mathbb{T}|\sum_{k_3\in \mathbb{Z}}
			\|\partial_y\widehat{f}_{0,k_3}(t,y)\|_{L^2_y}
			\|\widehat{f}_{0,k_3}(t,y)\|_{L^2_y}\\
			&\leq |\mathbb{T}|\Big(\sum_{k_3\in \mathbb{Z}}
			\|\partial_y\widehat{f}_{0,k_3}(t,y)\|_{L^2_y}^2\Big)^{\frac{1}{2}}
			\Big(\sum_{k_3\in \mathbb{Z}}
			\|\widehat{f}_{0,k_3}(t,y)\|_{L^2_y}^2\Big)^{\frac{1}{2}}
			=\|\partial_yf_0\|_{L^2}\|f_0\|_{L^2},
		\end{align*} 
		where we use (\ref{eq:1DGN-1}) and
		$\|\partial_yf_0\|_{L^2}^{2}=|\mathbb{T}|\sum_{k_3\in \mathbb{Z}}
		\|\partial_y\widehat{f}_{0,k_3}(t,y)\|_{L^2_y}^2,~
		\|f_0\|_{L^2}^{2}=|\mathbb{T}|\sum_{k_3\in \mathbb{Z}}
		\|\widehat{f}_{0,k_3}(t,y)\|_{L^2_y}^2.$
		
		The proof is complete.
	\end{proof}
	
	The following lemma can be used to estimate the $L^{\infty}$ norm for the non-zero mode.		
	\begin{lemma}\label{sob_inf_2}
		For a given function $g=g(x,y,z)$, if $g_0=\frac{1}{|\mathbb{T}|}\int_{\mathbb{T}}{g}(t,x,y,z)dx=0,$ then we have
		\begin{equation}\label{sob_result_2}
			\begin{aligned}
				&\|g\|_{L^{\infty}}\leq 
				C\Big(\|\partial_y\partial_zg\|^{\frac{1}{2}}_{L^2}
				\|\partial_x\partial_zg\|^{\alpha-\frac{1}{2}}_{L^2}
				\|\partial_x^2g\|^{\alpha-\frac{1}{2}}_{L^2}
				\|\partial_xg\|^{\frac{3}{2}-2\alpha}_{L^2}+
				\|\partial_x\partial_yg\|^{\frac{1}{2}}_{L^2}
				\|\partial_xg\|^{\alpha-\frac{1}{2}}_{L^2}
				\|g\|^{1-\alpha}_{L^2}
				\Big),\\
				&\|g\|_{L^{\infty}_{y,z}L^2_{x}}\leq C\Big(\|\partial_yg\|^{\frac{1}{2}}_{L^2}\|g\|^{\frac{1}{2}}_{L^2}
				+\|\partial_zg\|^{\frac{1}{2}}_{L^2}
				\|\partial_z\partial_yg\|^{\alpha-\frac{1}{2}}_{L^2}
				\|\partial_yg\|^{1-\alpha}_{L^2}\Big),\\
				&\|g\|_{L^{\infty}_{x,y}L^2_{z}}\leq C
				\|\partial_xg\|^{\frac{1}{2}}_{L^2}
				\|\partial_x\partial_yg\|^{\alpha-\frac{1}{2}}_{L^2}
				\|\partial_yg\|^{1-\alpha}_{L^2},\\
				&\|g\|_{L^{\infty}_{x,z}L^2_{y}}\leq  C\big(\|\partial_xg\|^{\alpha}_{L^2}\|g\|^{1-\alpha}_{L^2}
				+\|\partial_x\partial_zg\|^{\alpha}_{L^2}\|g\|^{1-\alpha}_{L^2}\big),\\
				&\|g\|_{L^{\infty}_{x}L^2_{y,z}}\leq C\|\partial_xg\|_{L^2}^{\alpha}\|g\|_{L^2}^{1-\alpha},\\
				&\|g\|_{L^{\infty}_{z}L^2_{y,x}}\leq C(\|g\|_{L^2}+\|\partial_zg\|_{L^2}^{\alpha}\|g\|_{L^2}^{1-\alpha}),\\
				&\|g\|_{L^{\infty}_{y}L^2_{x,z}}\leq  \|\partial_yg\|_{L^2}^{\frac{1}{2}}\|g\|^{\frac{1}{2}}_{L^2},\\
				&\|g\|_{L^{\infty}_{x,z}L^2_{y}}\leq  C\big(\|\partial_xg\|^{\alpha}_{L^2}\|g\|^{1-\alpha}_{L^2}
				+\|\partial_x\partial_zg\|^{\frac{1}{2}}_{L^2}\|\partial_xg\|^{\alpha-\frac{1}{2}}_{L^2}\|\partial_zg\|^{\alpha-\frac{1}{2}}_{L^2}\|g\|^{\frac{3}{2}-2\alpha}_{L^2}\big),
			\end{aligned}
		\end{equation}	
		where $\alpha\in(\frac{1}{2},\frac{3}{4}]$ for $(\ref{sob_result_2})_1$ and $(\ref{sob_result_2})_8$, and 
		$\alpha\in(\frac{1}{2},1]$ for others.
	\end{lemma}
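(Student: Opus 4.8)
The plan is to prove all nine estimates by the same Fourier-analytic scheme already displayed in the proof of Lemma \ref{sob_inf_1}: expand $g$ in a Fourier series in the periodic variables $x$ and $z$, use $g_0=0$ to force the $x$-frequency $k_1$ to be nonzero, bound the target (mixed) norm by the $\ell^1$-sum over the lattice $(k_1,k_3)$ of the corresponding norm of each coefficient $\widehat g_{k_1,k_3}(t,y)$, estimate each coefficient by the one-dimensional Gagliardo--Nirenberg/Agmon inequality (\ref{eq:1DGN-1}) in $y$, and then sum over $(k_1,k_3)$ by H\"older's inequality after inserting the factors $1=(1+|k_1|^{\alpha})(1+|k_1|^{\alpha})^{-1}$ and/or $1=(1+|k_3|^{\alpha})(1+|k_3|^{\alpha})^{-1}$. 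Gaining a power of $|k_1|$ (resp. $|k_3|$) on a coefficient corresponds via Plancherel to a $\partial_x$ (resp. $\partial_z$) derivative, and the elementary frequency inequalities (\ref{eq:fourier ine}) are used to trade low powers of $|k_1|$ or $|k_3|$ for genuine derivatives, which is what produces the mixed-derivative right-hand sides.

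Concretely, for the $L^{\infty}$ estimate $(\ref{sob_result_2})_1$ I would write $g=\sum_{k_1\neq0,\,k_3}\widehat g_{k_1,k_3}(t,y)e^{i(k_1x+k_3z)}$, so $\|g\|_{L^{\infty}}\le\sum_{k_1,k_3}\|\widehat g_{k_1,k_3}\|_{L^{\infty}_y}\le C\sum_{k_1,k_3}\|\partial_y\widehat g_{k_1,k_3}\|_{L^2_y}^{1/2}\|\widehat g_{k_1,k_3}\|_{L^2_y}^{1/2}$ by (\ref{eq:1DGN-1}); then I split the sum first in $k_3$ against the weight $(1+|k_3|^{\alpha})$ and, inside each resulting piece, again in $k_1$ against $(1+|k_1|^{\alpha})$, applying H\"older twice exactly as in the derivation of $(\ref{sob_result_1})_1$. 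The fractional exponents on $\|\partial_x\partial_z g\|_{L^2}$, $\|\partial_x^2 g\|_{L^2}$, $\|\partial_x g\|_{L^2}$, $\|\partial_x\partial_y g\|_{L^2}$, $\|g\|_{L^2}$ are then forced by two requirements: the two weight series $\sum_k(1+|k|^{\alpha})^{-2}$ must converge, i.e.\ $2\alpha>1$, and the intermediate splittings $|k|^{2\alpha-1}\|\widehat g\|=(|k|\|\widehat g\|)^{2\alpha-1}\|\widehat g\|^{2-2\alpha}$ must be legitimate, i.e.\ $2\alpha-1\in[0,1]$; since this has to be done in both frequencies simultaneously (using $k_1\neq0$ to convert some $\|g\|_{L^2}$'s into $\|\partial_x g\|_{L^2}$'s), the single parameter $\alpha$ ends up constrained to $\alpha\in(\tfrac12,\tfrac34]$. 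Estimate $(\ref{sob_result_2})_8$ is of the same doubly-anisotropic type and is handled identically, hence the same restriction.

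The remaining seven estimates are of single-split type and need only $\alpha\in(\tfrac12,1]$. For the mixed norms $\|g\|_{L^{\infty}_{\ast}L^2_{\ast\ast}}$ one applies Plancherel in the $L^2$-variables, expands in Fourier series only in the variable(s) carrying the $L^{\infty}$, uses (\ref{eq:1DGN-1}) on each coefficient (or, when a $\partial_x$ is the only derivative available, the trivial bound $\|\cdot\|_{L^{\infty}_x}\le C\|\partial_x\cdot\|_{L^2}^{1/2}\|\cdot\|_{L^2}^{1/2}$ together with $\|\cdot\|_{L^2}\le\|\partial_x\cdot\|_{L^2}$ from (\ref{eq:fourier ine}) to obtain a single $\|\partial_x\cdot\|$), and closes with one H\"older application against $\sum_k(1+|k|^{\alpha})^{-2}<\infty$. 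Thus $(\ref{sob_result_2})_5$ and $(\ref{sob_result_2})_7$ follow immediately from (\ref{eq:1DGN-1}) and (\ref{eq:fourier ine}) with $k_1$, respectively $k_3$, as the summation index, while $(\ref{sob_result_2})_2$--$(\ref{sob_result_2})_4$ and $(\ref{sob_result_2})_6$ require one extra weight-split in a single frequency, precisely in the manner of $(\ref{sob_result_1})_1$ and $(\ref{sob_result_1})_3$.

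I expect the only genuine difficulty to lie in the combinatorial bookkeeping for $(\ref{sob_result_2})_1$ and $(\ref{sob_result_2})_8$: one must choose the H\"older exponents in the two successive summations so that all five fractional powers on the right-hand side are nonnegative and sum to $1$ after accounting for the $\tfrac12$'s coming from (\ref{eq:1DGN-1}), while both weight series converge and both intermediate interpolation splittings are admissible; checking that these constraints are jointly satisfiable exactly for $\alpha\in(\tfrac12,\tfrac34]$ is the crux. Everything else is a routine repetition of the argument already carried out for Lemma \ref{sob_inf_1}.
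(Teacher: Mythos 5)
Your proposal is correct and follows essentially the same route as the paper: Fourier expansion in $x,z$ (with $k_1\neq 0$ since $g_0=0$), the one-dimensional inequality (\ref{eq:1DGN-1}) in $y$ on each coefficient, and weighted H\"older summation in the frequencies, with the doubly-weighted split in $k_1$ and $k_3$ producing the exponents and the restriction $\alpha\in(\tfrac12,\tfrac34]$ for $(\ref{sob_result_2})_1$ and $(\ref{sob_result_2})_8$, and Plancherel plus Cauchy--Schwarz (no weight needed) for $(\ref{sob_result_2})_7$. The only part you leave schematic, the explicit distribution of powers of $|k_1|,|k_3|$ in the two-sum split for $(\ref{sob_result_2})_1$, is exactly the computation the paper carries out, so there is no gap in the approach.
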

	\begin{proof}
		
		{\bf Estimates of $\eqref{sob_result_2}_1$ and $\eqref{sob_result_2}_8$.}
		Due to $g_0=0$, we denote $	g(x,y,z)$ by
		$$\sum_{k_1,k_3\in\mathbb{Z},k_1\neq0}\widehat{g}_{k_1,k_3}(y){\rm e}^{i(k_1x+k_3z)},$$
		then by (\ref{eq:1DGN-1})
		\begin{equation*}	
			\begin{aligned}
				&\qquad\|g\|_{L^{\infty}}\leq \sum_{k_1\neq0,k_3\in\mathbb{Z}}\|\widehat{g}_{k_1,k_3}(y)\|_{L^{\infty}_y}
				\leq \sum_{k_1\neq0 ,k_3\in\mathbb{Z}}\|\widehat{g}_{k_1,k_3}(y)\|^{\frac{1}{2}}_{L^2_y}
				\|\partial_y\widehat{g}_{k_1,k_3}(y)\|^{\frac{1}{2}}_{L^2_y}\\
				&= \sum_{k_1\neq0,k_3\in\mathbb{Z}}
				\frac{\|k_1\widehat{g}_{k_1,k_3}(y)\|^{\alpha-\frac{1}{2}}_{L^2_y}
					\|\widehat{g}_{k_1,k_3}(y)\|^{1-\alpha}_{L^2_y}
					\|k_1\partial_y\widehat{g}_{k_1,k_3}(y)\|^{\frac{1}{2}}_{L^2_y}}{|k_1|^{\alpha}(1+|k_3|^{\alpha})}\\
				&+\sum_{k_1\neq0,k_3\in\mathbb{Z}}
				\frac{		\|k_3\partial_y\widehat{g}_{k_1,k_3}(y)\|^{\frac{1}{2}}_{L^2_y}	\|k_1k_3\widehat{g}_{k_1,k_3}(y)\|^{\alpha-\frac{1}{2}}_{L^2_y}
					\|k_1^2\widehat{g}_{k_1,k_3}(y)\|^{\alpha-\frac{1}{2}}_{L^2_y}
					\|k_1\widehat{g}_{k_1,k_3}(y)\|^{\frac{3}{2}-2\alpha}_{L^2_y}}
				{|k_1|^{\alpha}(1+|k_3|^{\alpha})},
			\end{aligned}
		\end{equation*}	
		where $\alpha$ is a constant with $\alpha\in(\frac{1}{2},\frac{3}{4}].$
		Using H$\rm \ddot{o}$lder's inequality, we get
		\begin{equation*}	
			\begin{aligned}
				\|g\|_{L^{\infty}}&\leq 
				C\Big(\|\partial_y\partial_zg\|^{\frac{1}{2}}_{L^2}
				\|\partial_x\partial_zg\|^{\alpha-\frac{1}{2}}_{L^2}
				\|\partial_x^2g\|^{\alpha-\frac{1}{2}}_{L^2}
				\|\partial_xg\|^{\frac{3}{2}-2\alpha}_{L^2}+
				\|\partial_x\partial_yg\|^{\frac{1}{2}}_{L^2}
				\|\partial_xg\|^{\alpha-\frac{1}{2}}_{L^2}
				\|g\|^{1-\alpha}_{L^2}
				\Big).
			\end{aligned}
		\end{equation*}	
		Similarly, one can prove $\eqref{sob_result_2}_8$ without the step of  using (\ref{eq:1DGN-1}).

		{\bf Estimates of $\eqref{sob_result_2}_2$ and $\eqref{sob_result_2}_3$.}
		Rewrite $g$ into $g=\sum_{k_3\in \mathbb{Z}}\widehat{g}_{k_3}(x,y)e^{ik_3z},$
		and we have 
		\begin{align*}
			\|g\|_{L^{\infty}_{y,z}L^2_{x}}
			&\leq \sum_{k_3\in \mathbb{Z}}\|\widehat{g}_{k_3}(x,y)\|_{L^2_{x,y}}^{\frac{1}{2}}
			\|\partial_y\widehat{g}_{k_3}(x,y)\|_{L^2_{x,y}}^{\frac{1}{2}}
			\leq   
			\sum_{k_3\in \mathbb{Z}}\frac{\|\widehat{g}_{k_3}(x,y)\|_{L^2_{x,y}}^{\frac{1}{2}}
				\|\partial_y\widehat{g}_{k_3}(x,y)\|_{L^2_{x,y}}^{\frac{1}{2}}}{1+|k_3|^{\alpha}}\\
			&+\sum_{k_3\in \mathbb{Z}}
			\frac{\|k_3\widehat{g}_{k_3}(x,y)\|_{L^2_{x,y}}^{\frac{1}{2}}
				\|k_3\partial_y\widehat{g}_{k_3}(x,y)\|_{L^2_{x,y}}^{\alpha-\frac{1}{2}}\|\partial_y\widehat{g}_{k_3}(x,y)\|_{L^2_{x,y}}^{1-\alpha}}{1+|k_3|^{\alpha}},
		\end{align*}
		where $\alpha\in(\frac{1}{2},1]$.
		Using H$\rm \ddot{o}$lder's inequality three times, we obtain 
		\begin{align*}
			&\sum_{k_3\in \mathbb{Z}}
			\frac{\|k_3\widehat{g}_{k_3}(x,y)\|_{L^2_{x,y}}^{\frac{1}{2}}
				\|k_3\partial_y\widehat{g}_{k_3}(x,y)\|_{L^2_{x,y}}^{\alpha-\frac{1}{2}}\|\partial_y\widehat{g}_{k_3}(x,y)\|_{L^2_{x,y}}^{1-\alpha}}{1+|k_3|^{\alpha}} \\
			\leq&\Big(\sum_{k_3\in \mathbb{Z}}\|k_3\widehat{g}_{k_3}(x,y)\|_{L^2_{x,y}}^{2}\Big)^{\frac{1}{4}}\Big(\sum_{k_3\in \mathbb{Z}}\|k_3\partial_y\widehat{g}_{k_3}(x,y)\|_{L^2_{x,y}}^{2}\Big)^{\frac{2\alpha-1}{4}}
			\Big(\sum_{k_3\in \mathbb{Z}}\|\partial_y\widehat{g}_{k_3}(x,y)\|_{L^2_{x,y}}^{2}\Big)^{\frac{1-\alpha}{2}},
		\end{align*}
		thus
		$\|g\|_{L^{\infty}_{y,z}L^2_{x}}\leq C\left(\|\partial_yg\|^{\frac{1}{2}}_{L^2}\|g\|^{\frac{1}{2}}_{L^2}
		+\|\partial_zg\|^{\frac{1}{2}}_{L^2}
		\|\partial_z\partial_yg\|^{\alpha-\frac{1}{2}}_{L^2}
		\|\partial_yg\|^{1-\alpha}_{L^2}\right),$
		which gives $(\ref{sob_result_2})_2$.	
		
		By rewriting $g$ into $g(x,y,z)=\sum_{k_1\in\mathbb{Z},k_1\neq 0}\widehat{g}_{k_1}(y,z)e^{ik_1x},$ 
		one can prove $(\ref{sob_result_2})_3$, similarly.

		{\bf Estimate of $\eqref{sob_result_2}_4$.}
		Denote $	g(x,y,z)$ by
		$\sum_{k_1,k_3\in\mathbb{Z}, k_1\neq0}\widehat{g}_{k_1,k_3}(y){\rm e}^{i(k_1x+k_3z)},$
		then
		\begin{equation*}	
			\begin{aligned}
				\|g\|_{L^{\infty}_{x,z}L^2_y}&\leq \sum_{k_1\neq 0,k_3\in\mathbb{Z}}\|\widehat{g}_{k_1,k_3}(y)\|_{L^{2}_y}
				=\sum_{k_1\neq 0,k_3\in\mathbb{Z}}\frac{|k_1|^{\alpha}(1+|k_3|^{\alpha})\|\widehat{g}_{k_1,k_3}(y)\|_{L^2_y}}{|k_1|^{\alpha}(1+|k_3|^{\alpha})}\\
				&=\sum_{k_1\neq 0,k_3\in\mathbb{Z}}\frac{\|k_1\widehat{g}_{k_1,k_3}(y)\|^{\alpha}_{L^2_y}\|\widehat{g}_{k_1,k_3}(y)\|^{1-\alpha}_{L^2_y}
					+\|k_1k_3\widehat{g}_{k_1,k_3}(y)\|^{\alpha}_{L^2_y}\|\widehat{g}_{k_1,k_3}(y)\|^{1-\alpha}_{L^2_y}}
				{|k_1|^{\alpha}(1+|k_3|^{\alpha})}.
			\end{aligned}
		\end{equation*}	
		By H$\rm \ddot{o}$lder's inequality, we get
		$\|g\|_{L^{\infty}_{x,z}L^2_y}
		\leq C\left(\|\partial_xg\|^{\alpha}_{L^2}\|g\|^{1-\alpha}_{L^2}
		+\|\partial_x\partial_zg\|^{\alpha}_{L^2}\|g\|^{1-\alpha}_{L^2}\right).$
		
		{\bf Estimates of $\eqref{sob_result_2}_5$ and $\eqref{sob_result_2}_6$.}
		Rewrite $g$ into $g=\sum_{k_1\in \mathbb{Z},k_1\neq 0}\widehat{g}_{k_1}(y,z)e^{ik_1x},$
		then 
		$$\|g\|_{L^{\infty}_{x}L^2_{y,z}}
		\leq \sum_{k_1\neq 0}\|\widehat{g}_{k_1}(y,z)\|_{L^2_{y,z}}
		= \sum_{k_1\neq0}
		\frac{	|k_1|^{\alpha}\|\widehat{g}_{k_1}(y,z)\|_{L^2_{y,z}}}{|k_1|^{\alpha}},$$
		where $\alpha\in(\frac{1}{2},1].$
		Using H$\rm \ddot{o}$lder's inequality, we obtain 
		$\|g\|_{L^{\infty}_{x}L^2_{y,z}}
		\leq C\|\partial_xg\|_{L^2}^{\alpha}\|g\|_{L^2}^{1-\alpha}.$ 
		
		Similarly, by rewriting $g$ into $g=\sum_{k_3\in \mathbb{Z}}\widehat{g}_{k_3}(x,y)e^{ik_3z},$ one can prove $(\ref{sob_result_2})_6$.

		{\bf Estimate of $\eqref{sob_result_2}_7$.}
		Denote $g(x,y,z)$ by
		$\sum_{k_1\neq 0,k_3\in\mathbb{Z}}\widehat{g}_{k_1,k_3}(y){\rm e}^{i(k_1x+k_3z)},$
		then
		\begin{equation*}	
			\begin{aligned}
				\|g\|^2_{L^{\infty}_yL^{2}_{x,z}}&\leq|\mathbb{T}|^{2} \sum_{k_1\neq0,k_3\in\mathbb{Z}}\|\widehat{g}_{k_1,k_3}(y)\|^2_{L^{\infty}_y}
				&\leq  |\mathbb{T}|^{2}\sum_{k_1\neq 0,k_3\in\mathbb{Z}}\|\widehat{g}_{k_1,k_3}(y)\|_{L^{2}_y}\|\partial_y\widehat{g}_{k_1,k_3}(y)\|_{L^{2}_y}.
			\end{aligned}
		\end{equation*}	
		Using H\"{o}lder's inequality, we have 
		\begin{equation*}	
			\begin{aligned}
				\|g\|^2_{L^{\infty}_yL^{2}_{x,z}}&\leq |\mathbb{T}|^{2} \Big(\sum_{k_1\neq 0,k_3\in\mathbb{Z}}\|\widehat{g}_{k_1,k_3}(y)\|_{L^{2}_y}^2\Big)^\frac{1}{2}\Big(\sum_{k_1\neq0,k_3\in\mathbb{Z}}\|\partial_y\widehat{g}_{k_1,k_3}(y)\|_{L^{2}_y}^2\Big)^\frac{1}{2},
			\end{aligned}
		\end{equation*}	
		which implies $(\ref{sob_result_2})_7$. 
		
		The proof is complete.
	\end{proof}
	
	The following lemma can be used to estimate the $L^{\infty}$ norm for the z-part non-zero mode and the z-part zero mode.		
	\begin{lemma}\label{sob_inf_3}
		For a given function $f(x,y,z)$ and $f_{(0,0)}=\frac{1}{|\mathbb{T}|^2}\int_{\mathbb{T}\times\mathbb{T}}{f}(t,x,y,z)dxdz,$ there hold
		\begin{equation}\label{sob_result_3}
			\begin{aligned}
				&\|f_{(0,\neq)}\|_{L^{\infty}}\leq C\|\partial_y\partial_zf_{(0,\neq)}\|^{\frac{1}{2}}_{L^2}
				\|\partial_zf_{(0,\neq)}\|^{\alpha-\frac{1}{2}}_{L^2}
				\|f_{(0,\neq)}\|^{1-\alpha}_{L^2},\\
				&\|f_{(0,\neq)}\|_{L^{\infty}_{z}L^2_y}\leq C\|\partial_zf_{(0,\neq)}\|_{L^2}^{\alpha}\|f_{(0,\neq)}\|_{L^2}^{1-\alpha},\\
				&\|f_{(0,\neq)}\|_{L^{\infty}_yL^{2}_{z}}
				\leq \|\partial_yf_{(0,\neq)}\|_{L^2}^{\frac{1}{2}}
				\|f_{(0,\neq)}\|_{L^2}^{\frac{1}{2}},\\
				&\|f_{(0,0)}\|_{L^{\infty}}\leq 
				\|\partial_yf_{(0,0)}\|^{\frac{1}{2}}_{L^2}\|f_{(0,0)}\|^{\frac{1}{2}}_{L^2},\\
			\end{aligned}
		\end{equation}	
		where $\alpha$ is a constant with $\alpha\in(\frac{1}{2},1].$
	\end{lemma}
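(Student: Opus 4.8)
The plan is to run the Fourier–series argument already used for Lemmas \ref{sob_inf_1} and \ref{sob_inf_2}, now specialized to a single transversal frequency. Since $f_{(0,\neq)}$ is a function of $(y,z)$ with vanishing $z$–average, I write $f_{(0,\neq)}=\sum_{k_3\in\mathbb{Z},\,k_3\neq0}\widehat{f}_{0,k_3}(t,y)\,e^{ik_3z}$, so that Plancherel in $z$ converts the norms $\|f_{(0,\neq)}\|_{L^2}$, $\|\partial_zf_{(0,\neq)}\|_{L^2}$ and $\|\partial_y\partial_zf_{(0,\neq)}\|_{L^2}$ into the $\ell^2_{k_3}$–sums of $\|\widehat{f}_{0,k_3}\|_{L^2_y}$, $\|k_3\widehat{f}_{0,k_3}\|_{L^2_y}$ and $\|k_3\partial_y\widehat{f}_{0,k_3}\|_{L^2_y}$, exactly as in the proof of Lemma \ref{sob_inf_1}. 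The only analytic input needed is the one–dimensional Gagliardo–Nirenberg inequality (\ref{eq:1DGN-1}) in the $y$ variable.

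\textbf{Estimate of $(\ref{sob_result_3})_1$.} First I would bound $\|f_{(0,\neq)}\|_{L^\infty}\le\sum_{k_3\neq0}\|\widehat{f}_{0,k_3}(t,y)\|_{L^\infty_y}$ and apply (\ref{eq:1DGN-1}) to each summand. Then I insert $1=|k_3|^{\alpha}/|k_3|^{\alpha}$ with $\alpha\in(\tfrac12,1]$ and split $|k_3|^{\alpha}=|k_3|^{1/2}|k_3|^{\alpha-1/2}$, so that the generic summand becomes
$$|k_3|^{-\alpha}\,\|k_3\partial_y\widehat{f}_{0,k_3}\|_{L^2_y}^{1/2}\,\|k_3\widehat{f}_{0,k_3}\|_{L^2_y}^{\alpha-1/2}\,\|\widehat{f}_{0,k_3}\|_{L^2_y}^{1-\alpha}.$$
A four–factor H\"older inequality in $k_3$ with exponents $\big(2,\,4,\,\tfrac{4}{2\alpha-1},\,\tfrac{2}{1-\alpha}\big)$ — whose reciprocals sum to $1$, whose leading factor $\sum_{k_3\neq0}|k_3|^{-2\alpha}$ converges precisely because $2\alpha>1$, and whose other three factors reassemble via Plancherel — then gives $(\ref{sob_result_3})_1$.

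\textbf{Estimates of $(\ref{sob_result_3})_2$, $(\ref{sob_result_3})_3$ and $(\ref{sob_result_3})_4$.} For $(\ref{sob_result_3})_2$ I bound $\|f_{(0,\neq)}\|_{L^\infty_zL^2_y}\le\sum_{k_3\neq0}\|\widehat{f}_{0,k_3}\|_{L^2_y}=\sum_{k_3\neq0}|k_3|^{-\alpha}\,\|k_3\widehat{f}_{0,k_3}\|_{L^2_y}^{\alpha}\,\|\widehat{f}_{0,k_3}\|_{L^2_y}^{1-\alpha}$ and close with a three–factor H\"older inequality in $k_3$ with exponents $\big(2,\tfrac{2}{\alpha},\tfrac{2}{1-\alpha}\big)$ (again using $2\alpha>1$) followed by Plancherel. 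For $(\ref{sob_result_3})_3$, Plancherel in $z$ gives $\|f_{(0,\neq)}(t,y,\cdot)\|_{L^2_z}^2\le|\mathbb{T}|\sum_{k_3\neq0}|\widehat{f}_{0,k_3}(t,y)|^2$; taking $\sup_y$, applying (\ref{eq:1DGN-1}) to each $\|\widehat{f}_{0,k_3}\|_{L^\infty_y}^2\le\|\partial_y\widehat{f}_{0,k_3}\|_{L^2_y}\|\widehat{f}_{0,k_3}\|_{L^2_y}$, and then Cauchy–Schwarz in $k_3$ yields $\|f_{(0,\neq)}\|_{L^\infty_yL^2_z}^2\le\|\partial_yf_{(0,\neq)}\|_{L^2}\|f_{(0,\neq)}\|_{L^2}$, exactly as for $(\ref{sob_result_1})_4$. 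Finally, $(\ref{sob_result_3})_4$ is immediate: $f_{(0,0)}$ depends only on $y$, so it is just (\ref{eq:1DGN-1}) in $y$ after the trivial conversion between the $L^2$ norm over $\mathbb{T}\times\mathbb{R}\times\mathbb{T}$ and the $L^2_y$ norm.

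\textbf{Expected main obstacle.} There is no genuine analytic difficulty; the lemma is a one–frequency specialization of Lemmas \ref{sob_inf_1}–\ref{sob_inf_2} and is in fact strictly simpler, since only $z$–frequencies enter. The one point requiring attention is the exponent bookkeeping in the H\"older steps: the exponents must be chosen so that the interpolation powers sum to $1$, the $|k_3|$–powers placed in the numerator exactly cancel the chosen weight $|k_3|^{-\alpha}$, and the leftover weight $\sum_{k_3\neq0}|k_3|^{-2\alpha}$ is summable. All three are guaranteed by the standing restriction $\alpha>\tfrac12$, which is exactly why that hypothesis appears in the statement.
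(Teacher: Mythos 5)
Your proposal is correct and follows essentially the same route as the paper: expand $f_{(0,\neq)}$ in a Fourier series in $z$, apply the one-dimensional Gagliardo–Nirenberg inequality \eqref{eq:1DGN-1} in $y$ to each mode, insert the weight $|k_3|^{-\alpha}$ (summable since $2\alpha>1$), and close with H\"older/Cauchy–Schwarz in $k_3$, with $\eqref{sob_result_3}_4$ being just \eqref{eq:1DGN-1} for the purely $y$-dependent $f_{(0,0)}$. The only difference is that you spell out the H\"older exponents explicitly, which the paper leaves implicit; no gap.
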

	\begin{proof}
		
		{\bf Estimate of $\eqref{sob_result_3}_1$.}
		Thanks to the Fourier series $f_{(0,\neq)}=\sum_{k_3\in \mathbb{Z},k_3\neq0}\widehat{f}_{0,k_3}(t,y)e^{ik_3z},$ there holds
		\begin{equation*}
			\begin{aligned}
				\|f_{(0,\neq)}\|_{L^{\infty}}&\leq \sum_{k_3\neq0}\|\widehat{f}_{0,k_3}(t,y)\|_{L^{\infty}}
				\leq \sum_{k_3\neq 0}
				\|\partial_y\widehat{f}_{0,k_3}(t,y)\|^{\frac{1}{2}}_{L^{2}}
				\|\widehat{f}_{0,k_3}(t,y)\|^{\frac{1}{2}}_{L^{2}}	\\
				&=\sum_{k_3\neq0}
				\frac{|k_3|^{\alpha-\frac12}\|k_3\partial_y\widehat{f}_{0,k_3}(t,y)\|^{\frac{1}{2}}_{L^{2}}
					\|\widehat{f}_{0,k_3}(t,y)\|^{\frac{1}{2}}_{L^{2}}}{|k_3|^{\alpha}},
			\end{aligned}
		\end{equation*}
		where $\alpha$ is a constant with $\alpha\in(\frac{1}{2},1].$
		Using H\"{o}lder's inequality, there holds
		$$	\|f_{(0,\neq)}\|_{L^{\infty}}\leq C\|\partial_y\partial_zf_{(0,\neq)}\|^{\frac{1}{2}}_{L^2}
		\|\partial_zf_{(0,\neq)}\|^{\alpha-\frac{1}{2}}_{L^2}
		\|f_{(0,\neq)}\|^{1-\alpha}_{L^2},$$ 
		which is $\eqref{sob_result_3}_1$.
		
		{\bf Estimate of $\eqref{sob_result_3}_2$.}
		Due to $\|f_{(0,\neq)}\|_{L^2_y}\leq \sum_{k_3\in \mathbb{Z},k_3\neq0}\|\widehat{f}_{0,k_3}(t,y)\|_{L^2_y},$
		then there holds $$\|f_{(0,\neq)}\|_{L^{\infty}_{z}L^2_y}
		\leq \sum_{k_3\neq{0}}\|\widehat{f}_{0,k_3}(t,y)\|_{L^2_y}
		=\sum_{k_3\neq{0}}
		\frac{\|k_3\widehat{f}_{0,k_3}(t,y)\|^{\alpha}_{L^2_y}\|\widehat{f}_{0,k_3}(t,y)\|^{1-\alpha}_{L^2_y}}{|k_3|^{\alpha}},$$
		where $\alpha\in(\frac{1}{2},1].$
		Using H$\rm \ddot{o}$lder's inequality, we obtain $\eqref{sob_result_3}_2$.
		
		{\bf Estimate of $\eqref{sob_result_3}_3$.}
		Due to $\|f_{(0,\neq)}\|_{L^{2}_{z}}^2
		\leq |\mathbb{T}|\sum_{k_3\in \mathbb{Z},k_3\neq0}|\widehat{f}_{0,k_3}(t,y)|^2,$ there holds
		\begin{align*}
			\|f_{(0,\neq)}\|_{L^{\infty}_yL^{2}_{z}}^2
			&\leq |\mathbb{T}|\sum_{k_3\neq0}||\widehat{f}_{0,k_3}(t,y)||^2_{L^{\infty}_y}
			\leq |\mathbb{T}|\sum_{k_3\neq0}
			\|\partial_y\widehat{f}_{0,k_3}(t,y)\|_{L^2_y}
			\|\widehat{f}_{0,k_3}(t,y)\|_{L^2_y}\\
			&\leq |\mathbb{T}|\Big(\sum_{k_3\neq0}
			\|\partial_y\widehat{f}_{0,k_3}(t,y)\|_{L^2_y}^2\Big)^{\frac{1}{2}}
			\Big(\sum_{k_3\neq0}
			\|\widehat{f}_{0,k_3}(t,y)\|_{L^2_y}^2\Big)^{\frac{1}{2}}
			=\|\partial_yf_{(0,\neq)}\|_{L^2}\|f_{(0,\neq)}\|_{L^2},
		\end{align*}
		which implies $\eqref{sob_result_3}_3$. 
		
		{\bf Estimate of $\eqref{sob_result_3}_4$.} The last result follows from the 1D Gagliardo-Nirenberg inequality \eqref{eq:1DGN-1}.
		
		The proof is complete.\end{proof}
	
	The following lemma can be used to deal with the interaction between zero modes.
	\begin{lemma}\label{sob_12}
		For  given functions $f(x,y,z)$ and $g(x,y,z)$, we have
		\begin{equation}\label{sob_result_4}
			\begin{aligned}
				&\|f_{0}g_{0}\|_{L^{2}}
				\leq C\big(\|\partial_yf_{0}\|^{\frac{1}{2}}_{L^2}\|f_{0}\|^{\frac{1}{2}}_{L^2}+\|\partial_y\partial_zf_{0}\|^{\frac{1}{2}}_{L^2}
				\|\partial_zf_{0}\|^{\alpha-\frac{1}{2}}_{L^2}
				\|f_{0}\|^{1-\alpha}_{L^2}\big)\|g_{0}\|_{L^{2}},\\
				&\|f_{0}g_{0}\|_{L^{2}}
				\leq C\left(\|f_{0}\|_{L^2}+\|\partial_zf_{0}\|_{L^2}^{\alpha}\|f_0\|_{L^2}^{1-\alpha}\right)\|\partial_yg_{0}\|_{L^2}^{\frac{1}{2}}
				\|g_{0}\|_{L^2}^{\frac{1}{2}},\\
				&\|\nabla(f_{0}g_{0})\|_{L^{2}}
				\leq C(\|f_{0}\|_{H^1}+\|\partial_zf_{0}\|_{H^1})\|g_{0}\|_{H^{1}},\\
				&\|\triangle(f_{0}g_{0})\|_{L^{2}}
				\leq C\|f_{0}\|_{H^2}\|g_{0}\|_{H^{2}},\\
				&\|\nabla\triangle(f_{0}g_{0})\|_{L^{2}}
				\leq C\|f_{0}\|_{H^3}\|g_{0}\|_{H^{3}},
			\end{aligned}
		\end{equation}	
		where $ \alpha $ is a constant with $\alpha\in(\frac12,1].$
	\end{lemma}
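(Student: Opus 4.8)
The plan is to prove the five estimates in order, viewing $f_0$ and $g_0$ as functions of $(y,z)\in\R\times\mathbb{T}$ and, in each term, placing the $L^\infty$ or mixed norm on the factor that can afford it. For $\eqref{sob_result_4}_1$ I would simply use H\"older's inequality $\|f_0g_0\|_{L^2}\le\|f_0\|_{L^\infty}\|g_0\|_{L^2}$ and insert $\eqref{sob_result_1}_1$ of Lemma \ref{sob_inf_1}. For $\eqref{sob_result_4}_2$ the key device is the anisotropic H\"older inequality $\|f_0g_0\|_{L^2}\le\|f_0\|_{L^\infty_zL^2_y}\|g_0\|_{L^2_zL^\infty_y}$, which follows by bounding, for each fixed $z$, $\int_{\R}|f_0|^2|g_0|^2\,dy\le\|g_0(\cdot,z)\|_{L^\infty_y}^2\|f_0(\cdot,z)\|_{L^2_y}^2$ and then integrating in $z$ after extracting $\sup_z\|f_0(\cdot,z)\|_{L^2_y}^2$. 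The first factor is controlled by $\eqref{sob_result_1}_3$, while the 1D Gagliardo--Nirenberg inequality (\ref{eq:1DGN-1}) in $y$ combined with Cauchy--Schwarz in $z$ gives $\|g_0\|_{L^2_zL^\infty_y}\le C\|\partial_yg_0\|_{L^2}^{1/2}\|g_0\|_{L^2}^{1/2}$; the product of the two bounds is exactly $\eqref{sob_result_4}_2$.

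For $\eqref{sob_result_4}_3$ I would expand $\nabla(f_0g_0)=g_0\nabla f_0+f_0\nabla g_0$. The pieces $f_0\partial_jg_0$ are bounded by $\|f_0\|_{L^\infty}\|\partial_jg_0\|_{L^2}$, and $\eqref{sob_result_1}_1$ together with Young's inequality gives $\|f_0\|_{L^\infty}\le C(\|f_0\|_{H^1}+\|\partial_zf_0\|_{H^1})$ (the only second-order derivative occurring there is $\partial_y\partial_zf_0$, which is absorbed into $\|\partial_zf_0\|_{H^1}$). The pieces $g_0\partial_jf_0$ are treated with the same anisotropic H\"older inequality as above, $\|g_0\partial_jf_0\|_{L^2}\le\|\partial_jf_0\|_{L^\infty_zL^2_y}\|g_0\|_{L^2_zL^\infty_y}$; applying $\eqref{sob_result_1}_3$ to $\partial_jf_0$ bounds $\|\partial_jf_0\|_{L^\infty_zL^2_y}$ by $C(\|f_0\|_{H^1}+\|\partial_zf_0\|_{H^1})$ (again the lone extra $z$-derivative lands inside $\|\partial_zf_0\|_{H^1}$), and $\|g_0\|_{L^2_zL^\infty_y}\le C\|g_0\|_{H^1}$ as before. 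Summing the four pieces yields $\eqref{sob_result_4}_3$.

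For $\eqref{sob_result_4}_4$ and $\eqref{sob_result_4}_5$ I would just use that $H^2$ and $H^3$ are Banach algebras on the two-dimensional domain $\R\times\mathbb{T}$. Expanding by the Leibniz rule one meets products $(\partial^a f_0)(\partial^b g_0)$ with $|a|+|b|=2$ (resp. $3$); those containing a pure top-order factor are estimated in $L^2\cdot L^\infty$ using the Sobolev embedding $H^2(\R\times\mathbb{T})\hookrightarrow L^\infty$, and the intermediate products ($\nabla f_0\cdot\nabla g_0$ for $\eqref{sob_result_4}_4$, and $\partial^2f_0\cdot\partial g_0$, $\partial f_0\cdot\partial^2g_0$ for $\eqref{sob_result_4}_5$) are estimated in $L^4\cdot L^4$ using $H^1(\R\times\mathbb{T})\hookrightarrow L^4$. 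All contributions are bounded by $C\|f_0\|_{H^2}\|g_0\|_{H^2}$ (resp. $C\|f_0\|_{H^3}\|g_0\|_{H^3}$), which completes the proof.

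The one place needing care is the bookkeeping in $\eqref{sob_result_4}_2$ and $\eqref{sob_result_4}_3$: the $L^2_y$ slot of the anisotropic norm must go on $f_0$, so that the single additional $z$-derivative supplied by $\|\partial_zf_0\|_{H^1}$ (equivalently, the $\partial_z$-interpolation in $\eqref{sob_result_1}_3$) is enough, while the $L^\infty_y$ slot must go on $g_0$, for which only the isotropic $H^1$ norm is at our disposal; the opposite assignment would require control of $\partial_y^2g_0$, which is not available. Apart from this asymmetry, the argument is a routine combination of the anisotropic embeddings of Lemma \ref{sob_inf_1} with standard two-dimensional Sobolev inequalities.
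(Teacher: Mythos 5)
Your argument is correct and matches the paper's proof in all essentials: $(\ref{sob_result_4})_1$--$(\ref{sob_result_4})_3$ are obtained exactly as in the paper, namely by H\"older's inequality in mixed norms on the $(y,z)$-domain combined with the anisotropic bounds of Lemma \ref{sob_inf_1} and the 1D inequality (\ref{eq:1DGN-1}). The only (harmless) deviation is in $(\ref{sob_result_4})_4$--$(\ref{sob_result_4})_5$, where the paper again appeals to Lemma \ref{sob_inf_1} for the mixed-derivative terms, while you use the standard two-dimensional embeddings $H^2(\mathbb{R}\times\mathbb{T})\hookrightarrow L^\infty$ and $H^1(\mathbb{R}\times\mathbb{T})\hookrightarrow L^4$ (algebra property of $H^2$, $H^3$); both yield the same routine product estimates.
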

	\begin{proof}
		{\bf Estimate of $\eqref{sob_result_4}_1$.}
		It follows from  $(\ref{sob_result_1})_1$ that 
		\begin{align*}
			\|f_{0}g_{0}\|_{L^{2}} &\leq C\left(\|\partial_yf_{0}\|^{\frac{1}{2}}_{L^2}\|f_0\|^{\frac{1}{2}}_{L^2}+\|\partial_y\partial_zf_{0}\|^{\frac{1}{2}}_{L^2}
			\|\partial_zf_{0}\|^{\alpha-\frac{1}{2}}_{L^2}
			\|f_0\|^{1-\alpha}_{L^2}\right)\|g_{0}\|_{L^{2}}.
		\end{align*}
		
		{\bf Estimate of $\eqref{sob_result_4}_2$.}	
		By $(\ref{sob_result_1})_3$ and  $(\ref{sob_result_1})_4$, we get
		\begin{equation*}
			\begin{aligned}
				\|f_{0}g_{0}\|_{L^{2}}
				&\leq \big\|\|f_{0}\|_{L^{2}_y}\|g_{0}\|_{L^{\infty}_y}
				\big\|_{L^{2}_{z}}
				\leq  \|f_{0}\|_{L^{2}_yL^{\infty}_z}\|g_{0}\|_{L^{\infty}_yL^{2}_z}	\\
				&\leq  C\left(\|f_{0}\|_{L^2}+\|\partial_zf_{0}\|_{L^2}^{\alpha}\|f_{0}\|_{L^2}^{1-\alpha}\right)\|\partial_yg_{0}\|_{L^2}^{\frac{1}{2}}
				\|g_{0}\|_{L^2}^{\frac{1}{2}},
			\end{aligned}
		\end{equation*}
		which implies $\eqref{sob_result_4}_2$.
		
		{\bf Estimate of $\eqref{sob_result_4}_3$.}	
		Given that $\|\nabla(f_{0}g_{0})\|_{L^{2}}\leq \|\nabla f_{0}g_{0}\|_{L^{2}}+\| f_{0}\nabla g_{0}\|_{L^{2}},$
		using $(\ref{sob_result_1})_1$, we have
		\begin{align*}
			\| f_{0}\nabla g_{0}\|_{L^{2}}
			&\leq C\left(\|\partial_yf_0\|^{\frac{1}{2}}_{L^2}\|f_0\|^{\frac{1}{2}}_{L^2}+\|\partial_y\partial_zf_0\|^{\frac{1}{2}}_{L^2}
			\|\partial_zf_0\|^{\alpha-\frac{1}{2}}_{L^2}
			\|f_0\|^{1-\alpha}_{L^2}\right)\|\nabla g_{0}\|_{L^{2}}\\
			&\leq
			C(\|f_{0}\|_{H^1}+\|\partial_zf_{0}\|_{H^1})\|g_{0}\|_{H^{1}},
		\end{align*}
		using $(\ref{sob_result_1})_3$ and $(\ref{sob_result_1})_4$, there holds
		\begin{align*}
			\|\nabla f_{0}g_{0}\|_{L^{2}}
			&\leq  \big\|\|\nabla f_{0}\|_{L^{2}_y}\|g_{0}\|_{L^{\infty}_y}
			\big\|_{L^{2}_{z}}
			\leq  \|\nabla f_{0}\|_{L^{2}_yL^{\infty}_z}\|g_{0}\|_{L^{\infty}_yL^{2}_z}\\
			&\leq  C\left(\|\nabla f_{0}\|_{L^2}+\|\partial_z \nabla f_{0}\|_{L^2}^{\alpha}\|\nabla f_{0}\|_{L^2}^{1-\alpha}\right)\|\partial_yg_{0}\|_{L^2}^{\frac{1}{2}}
			\|g_{0}\|_{L^2}^{\frac{1}{2}}\\
			&\leq
			C(\|f_{0}\|_{H^1}+\|\partial_zf_{0}\|_{H^1})\|g_{0}\|_{H^{1}},
		\end{align*}
		which implies $\eqref{sob_result_4}_3$.
		
		{\bf Estimate of $\eqref{sob_result_4}_4$.}	
		Thanks to Lemma \ref{sob_inf_1}, we get
		\begin{align*}
			\|\triangle(f_{0}g_{0}) \|_{L^{2}}^2
			&\leq C(\|\triangle f_{0}g_{0} \|_{L^{2}}^2
			+\|\nabla f_{0}\cdot \nabla g_{0} \|_{L^{2}}^2
			+\|f_{0}\triangle g_{0} \|_{L^{2}}^2)\\
			&\leq C(\|\triangle f_{0}\|_{L^{2}}^2\|g_{0}\|_{L^{\infty}}^2
			+\|\nabla f_{0}\|_{L^{2}_yL^{\infty}_z}^2
			\|\nabla g_{0} \|_{L^{\infty}_yL^{2}_z}^2
			+\|\triangle g_{0}\|_{L^{2}}^2\|f_{0} \|_{L^{\infty}}^2)\\
			&\leq C\|f_{0} \|_{H^{2}}^2\|g_{0}\|_{H^{2}}^2.
		\end{align*}

		The proof of $\eqref{sob_result_4}_5$ is similar to $\eqref{sob_result_4}_4$. The proof is complete.
	\end{proof}
	
	The following lemma can be used to deal with the interaction between non-zero modes.
	\begin{lemma}\label{sob_13}
		For  given functions $f(x,y,z)$ and $g(x,y,z)$ satisfying $f_0=\frac{1}{|\mathbb{T}|}\int_{\mathbb{T}}{f}(t,x,y,z)dx=0$ and $g_0=\frac{1}{|\mathbb{T}|}\int_{\mathbb{T}}{g}(t,x,y,z)dx=0$, we have
		\begin{equation}\label{sob_result_5}
			\begin{aligned}
				&\|fg\|_{L^{2}}
				\leq C\big(\|f\|_{L^2}+\|\partial_zf\|_{L^2}^{\alpha}
				\|f\|_{L^2}^{1-\alpha}\big)
				\|\partial_xg\|^{\frac{1}{2}}_{L^2}
				\|\partial_x\partial_yg\|^{\alpha-\frac{1}{2}}_{L^2}
				\|\partial_yg\|^{1-\alpha}_{L^2},\\
				&\|fg\|_{L^{2}}
				\leq C\big(\|\partial_xf\|^{\alpha}_{L^2}\|f\|^{1-\alpha}_{L^2}
				+\|\partial_x\partial_zf\|^{\alpha}_{L^2}\|f\|^{1-\alpha}_{L^2}\big)\|\partial_yg\|_{L^2}^{\frac{1}{2}}\|g\|^{\frac{1}{2}}_{L^2},\\
				&\|fg\|_{L^{2}}
				\leq C\|\partial_xf\|_{L^2}^{\alpha}\|f\|_{L^2}^{1-\alpha}\Big(\|\partial_yg\|^{\frac{1}{2}}_{L^2}\|g\|^{\frac{1}{2}}_{L^2}
				+\|\partial_zg\|^{\frac{1}{2}}_{L^2}
				\|\partial_z\partial_yg\|^{\alpha-\frac{1}{2}}_{L^2}
				\|\partial_yg\|^{1-\alpha}_{L^2}\Big),\\
				&\|fg\|_{L^{2}}\leq C\Big(\|\partial_y\partial_zf\|^{\frac{1}{2}}_{L^2}
				\|(\partial_x,\partial_z)\partial_xf\|^{2\alpha-1}_{L^2}
				\|\partial_xf\|^{\frac{3}{2}-2\alpha}_{L^2}+
				\|\partial_x\partial_yf\|^{\frac{1}{2}}_{L^2}
				\|\partial_xf\|^{\alpha-\frac{1}{2}}_{L^2}
				\|f\|^{1-\alpha}_{L^2}
				\Big)\|g\|_{L^2},
			\end{aligned}
		\end{equation}	
		where $ \alpha\in(\frac12, 1] $ for the first three results and $ \alpha\in(\frac12, \frac34] $ for the last result.
	\end{lemma}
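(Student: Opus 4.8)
The plan is to prove all four inequalities by the same two-step scheme. First I would bound $\|fg\|_{L^{2}}$ by H\"{o}lder's inequality in suitable \emph{mixed} Lebesgue norms, placing the factor $f$ in an $L^{\infty}$-type anisotropic norm and $g$ in the complementary $L^{2}$-type norm; then I would invoke the appropriate anisotropic Sobolev embedding already established in Lemma \ref{sob_inf_2}, which is precisely where the hypotheses $f_0=g_0=0$ enter. The choice of splitting is forced by the right-hand sides: on each $f$-factor only $x$- and/or $z$-derivatives appear while $g$ carries the $y$-derivative(s), so $f$ must be controlled in a norm that is $L^{\infty}$ exactly in the variable(s) in which no derivative of $f$ is taken.

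Concretely, for $(\ref{sob_result_5})_1$ I would write
$$\|fg\|_{L^{2}}\leq\|f\|_{L^{\infty}_{z}L^{2}_{x,y}}\,\|g\|_{L^{2}_{z}L^{\infty}_{x,y}}\leq\|f\|_{L^{\infty}_{z}L^{2}_{x,y}}\,\|g\|_{L^{\infty}_{x,y}L^{2}_{z}},$$
where the second step is Minkowski's integral inequality, and then apply $(\ref{sob_result_2})_6$ to the first factor and $(\ref{sob_result_2})_3$ to the second. For $(\ref{sob_result_5})_2$ I would use
$$\|fg\|_{L^{2}}\leq\|f\|_{L^{\infty}_{x,z}L^{2}_{y}}\,\|g\|_{L^{2}_{x,z}L^{\infty}_{y}}\leq\|f\|_{L^{\infty}_{x,z}L^{2}_{y}}\,\|g\|_{L^{\infty}_{y}L^{2}_{x,z}}$$
together with $(\ref{sob_result_2})_4$ and $(\ref{sob_result_2})_7$. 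For $(\ref{sob_result_5})_3$ I would use
$$\|fg\|_{L^{2}}\leq\|f\|_{L^{\infty}_{x}L^{2}_{y,z}}\,\|g\|_{L^{2}_{x}L^{\infty}_{y,z}}\leq\|f\|_{L^{\infty}_{x}L^{2}_{y,z}}\,\|g\|_{L^{\infty}_{y,z}L^{2}_{x}}$$
together with $(\ref{sob_result_2})_5$ and $(\ref{sob_result_2})_2$. Finally, for $(\ref{sob_result_5})_4$ I would simply take $\|fg\|_{L^{2}}\leq\|f\|_{L^{\infty}}\|g\|_{L^{2}}$ and apply $(\ref{sob_result_2})_1$, using $\|\partial_x\partial_zf\|_{L^{2}}^{\alpha-\frac{1}{2}}\|\partial_x^{2}f\|_{L^{2}}^{\alpha-\frac{1}{2}}\leq\|(\partial_x,\partial_z)\partial_xf\|_{L^{2}}^{2\alpha-1}$ to merge the two second-order norms into the stated combined one. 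The admissible ranges $\alpha\in(\frac{1}{2},1]$ (and $\alpha\in(\frac{1}{2},\frac{3}{4}]$ for the last inequality) are inherited verbatim from Lemma \ref{sob_inf_2}.

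If one prefers to avoid Minkowski's inequality altogether, each norm $\|g\|_{L^{2}_{\cdot}L^{\infty}_{\cdot}}$ above can instead be bounded directly by expanding $g$ in a Fourier series in the variable carrying the outer $L^{2}$, applying a one-dimensional Gagliardo-Nirenberg inequality to each coefficient, and resumming a weighted series exactly as in the proof of Lemma \ref{sob_inf_2}; this reproduces the same estimates.

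I do not expect a genuine obstacle here: the lemma is essentially bookkeeping of H\"{o}lder exponents plus reading off the correct line of Lemma \ref{sob_inf_2}. The only step deserving a moment of care is $(\ref{sob_result_5})_4$, where one should verify that, after merging the second-order norms, the exponents $\frac{1}{2}$, $2\alpha-1$, $\frac{3}{2}-2\alpha$ still sum to $1$ and are all nonnegative, which holds precisely for $\alpha\in[\frac{1}{2},\frac{3}{4}]$.
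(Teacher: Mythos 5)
Your proposal is correct and follows essentially the same route as the paper: each inequality is obtained by H\"older's inequality in the appropriate mixed anisotropic norms (placing $f$ in the $L^\infty$-type norm in the non-differentiated variables and $g$ in the complementary norm) and then invoking the corresponding items of Lemma \ref{sob_inf_2}, with $(\ref{sob_result_2})_3,(\ref{sob_result_2})_6$, $(\ref{sob_result_2})_4,(\ref{sob_result_2})_7$, $(\ref{sob_result_2})_2,(\ref{sob_result_2})_5$ and $(\ref{sob_result_2})_1$ for the four estimates respectively, exactly as in the paper. The extra Minkowski step and the exponent check for the last inequality are harmless elaborations of what the paper leaves implicit.
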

	\begin{proof}
		Using $(\ref{sob_result_2})_3$ and $(\ref{sob_result_2})_6,$
		there holds $$\|fg\|_{L^{2}}\leq \|f\|_{L^{\infty}_{z}L^{2}_{x,y}}
		\|g\|_{L^{\infty}_{x,y}L^{2}_{z}}\leq C\big(\|f\|_{L^2}+\|\partial_zf\|_{L^2}^{\alpha}
		\|f\|_{L^2}^{1-\alpha}\big)
		\|\partial_xg\|^{\frac{1}{2}}_{L^2}
		\|\partial_x\partial_yg\|^{\alpha-\frac{1}{2}}_{L^2}
		\|\partial_yg\|^{1-\alpha}_{L^2},$$
		which implies $\eqref{sob_result_5}_1$.
		
		Moreover, the inequality $\eqref{sob_result_5}_2$ follows from $(\ref{sob_result_2})_4$ and $(\ref{sob_result_2})_7.$
		$\eqref{sob_result_5}_3$ follows from $(\ref{sob_result_2})_2$ and $(\ref{sob_result_2})_5.$
		$\eqref{sob_result_5}_4$  follows from $(\ref{sob_result_2})_1$. 
		
		The proof is complete.	
	\end{proof}
	
	\begin{lemma}\label{lemma_non_zz0}
		For given functions $f(x,y,z)$ and $g(x,y,z)$, it holds
		\begin{equation}\label{sob_result_6}
			\begin{aligned}
				&\|(fg)_{(0,0)}\|_{L^2}\leq C\|f\|_{L^2}
				\|g\|^{\frac{1}{2}}_{L^2}
				\|\partial_yg\|_{L^2}^{\frac{1}{2}},\\
				&\|(fg)_{0}\|_{L^2}\leq C \big(\|f\|_{L^2}+\|\partial_zf\|_{L^2}^{\alpha}\|f\|_{L^2}^{1-\alpha}\big)
				\|g\|^{\frac{1}{2}}_{L^2}
				\|\partial_yg\|_{L^2}^{\frac{1}{2}},\\
				&\|(fg)_{0}\|_{L^2}\leq C \left(\|\partial_yf\|^{\frac{1}{2}}_{L^2}\|f\|^{\frac{1}{2}}_{L^2}+\|\partial_y\partial_zf\|^{\frac{1}{2}}_{L^2}
				\|\partial_zf\|^{\alpha-\frac{1}{2}}_{L^2}
				\|f\|^{1-\alpha}_{L^2}\right)
				\|g\|_{L^2},	
			\end{aligned}	
		\end{equation}
		where $ \alpha\in(\frac12, 1]. $
	\end{lemma}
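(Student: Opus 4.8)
The plan is to prove all three estimates by the same two-step recipe. First, I would exploit that a zero-mode projection is simply an average over the torus variable it annihilates, so the projected quantity can be bounded pointwise in the remaining variables by a product of $L^2$-norms in the integrated variable (Cauchy--Schwarz) --- and, crucially, this costs \emph{no} derivative in that variable, which is precisely what makes the projection help. Second, I would separate $f$ and $g$ by H\"older's inequality in anisotropic Lebesgue spaces and close with the one-dimensional interpolation inequalities already used in this section: the Agmon-type bound $\|h\|_{L^\infty_y}^2\leq 2\|h\|_{L^2_y}\|\partial_yh\|_{L^2_y}$ on $\mathbb{R}$ and $\|h\|_{L^\infty_z}\leq C(\|h\|_{L^2_z}+\|\partial_zh\|_{L^2_z}^{\alpha}\|h\|_{L^2_z}^{1-\alpha})$ on $\mathbb{T}$ for $\alpha\in(\tfrac12,1]$ (i.e. $(\ref{eq:1DGN-1})$), together with $(\ref{sob_result_1})_1$ and $(\ref{sob_result_2})_7$.

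For $(\ref{sob_result_6})_1$ I would note that $(fg)_{(0,0)}$ depends on $y$ only, so $\|(fg)_{(0,0)}\|_{L^2}$ equals, up to a factor of $|\mathbb{T}|$, its $L^2_y$-norm; Cauchy--Schwarz in $(x,z)$ gives $|(fg)_{(0,0)}(y)|\leq C\|f(\cdot,y,\cdot)\|_{L^2_{x,z}}\|g(\cdot,y,\cdot)\|_{L^2_{x,z}}$, whence $\|(fg)_{(0,0)}\|_{L^2}\leq C\|f\|_{L^2}\|g\|_{L^\infty_yL^2_{x,z}}$, and the last factor is $\leq C\|g\|_{L^2}^{1/2}\|\partial_yg\|_{L^2}^{1/2}$ by the same Fourier-series-in-$(x,z)$ plus $1$D-Gagliardo--Nirenberg-in-$y$ computation that proves $(\ref{sob_result_2})_7$ (that argument keeps all Fourier modes in $x$ and never uses $g_0=0$). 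For $(\ref{sob_result_6})_2$ I would instead use that $(fg)_0$ depends on $(y,z)$ only, and Cauchy--Schwarz in $x$ to get $|(fg)_0(y,z)|\leq C\|f(\cdot,y,z)\|_{L^2_x}\|g(\cdot,y,z)\|_{L^2_x}$; H\"older in $(y,z)$, placing the $f$-factor in $L^2_yL^\infty_z$ and the $g$-factor in $L^\infty_yL^2_z$, gives $\|(fg)_0\|_{L^2}\leq C\|f\|_{L^2_{x,y}L^\infty_z}\|g\|_{L^\infty_yL^2_{x,z}}$; the $g$-term is handled as before, while the $1$D-Gagliardo--Nirenberg inequality in $z$ on $\mathbb{T}$ followed by H\"older in $(x,y)$ with exponents $\tfrac1\alpha,\tfrac1{1-\alpha}$ on the cross term bounds $\|f\|_{L^2_{x,y}L^\infty_z}$ by $C(\|f\|_{L^2}+\|\partial_zf\|_{L^2}^{\alpha}\|f\|_{L^2}^{1-\alpha})$. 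For $(\ref{sob_result_6})_3$ I would start from the same pointwise bound for $(fg)_0$ but keep all of $g$ in $L^2$, so H\"older yields $\|(fg)_0\|_{L^2}\leq C\|f\|_{L^2_xL^\infty_{y,z}}\|g\|_{L^2}$; applying $(\ref{sob_result_1})_1$ with $f(x,\cdot,\cdot)$ in the role of its ``$f_0$'' (a function of $(y,z)$) for each fixed $x$, then squaring, integrating in $x$, using Cauchy--Schwarz on the first resulting term and H\"older with exponents $2,\tfrac{2}{2\alpha-1},\tfrac1{1-\alpha}$ on the second, gives
\[
\|f\|_{L^2_xL^\infty_{y,z}}\leq C\Big(\|\partial_yf\|_{L^2}^{1/2}\|f\|_{L^2}^{1/2}+\|\partial_y\partial_zf\|_{L^2}^{1/2}\|\partial_zf\|_{L^2}^{\alpha-1/2}\|f\|_{L^2}^{1-\alpha}\Big),
\]
which is precisely $(\ref{sob_result_6})_3$.

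I do not expect a genuine obstacle: the statement is an exercise of exactly the type already carried out in Lemmas \ref{sob_inf_1}--\ref{sob_13}. The two points that require care are (i) choosing the order of the mixed Lebesgue norms after each H\"older step so that the $L^\infty$ norm always falls on the variable whose derivative is affordable ($y$ for the $\|g\|^{1/2}\|\partial_yg\|^{1/2}$ factors, $z$ for the $\|\partial_zf\|^{\alpha}$ factors, and $(y,z)$ for the last inequality), and (ii) checking that the conjugate exponents in each interpolation step sum to $1$ exactly when $\alpha\in(\tfrac12,1]$. I would write the argument in the compressed style of the preceding lemmas, citing $(\ref{eq:1DGN-1})$, $(\ref{sob_result_1})_1$ and $(\ref{sob_result_2})_7$ rather than re-deriving the one-dimensional facts.
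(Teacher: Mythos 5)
Your proposal is correct and takes essentially the same route as the paper: separate $f$ and $g$ after exploiting that the projections are averages in $x$ (and $z$), then invoke the section's interpolation bounds --- the paper does this by expanding $(fg)_{(0,0)}$ and $(fg)_{0}$ in Fourier series and applying \eqref{eq:1DGN-1}, $(\ref{sob_result_1})_1$ and $(\ref{sob_result_1})_3$--$(\ref{sob_result_1})_4$ to the coefficients, while you run the same estimates in physical space via Cauchy--Schwarz on the averaging integral and anisotropic H\"older. Your two supporting observations, that the proof of $(\ref{sob_result_2})_7$ never uses $g_0=0$ and that $(\ref{sob_result_1})_1$ may be applied slice-wise in $x$ (with the exponents $2,\tfrac{2}{2\alpha-1},\tfrac{1}{1-\alpha}$ summing to $1$ for $\alpha\in(\tfrac12,1]$), are both valid, so there is no gap.
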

	\begin{proof}
		{\bf Estimate of $\eqref{sob_result_6}_1$.}	
		Let $f=\sum_{k_1,k_3\in\mathbb{Z}}\widehat{f}_{k_1,k_{3}}(y)e^{ik_1x+ik_3z}$	
		and
		$g=\sum_{k_1,k_3\in\mathbb{Z}}\widehat{g}_{k_1,k_{3}}(y)e^{ik_1x+ik_3z},$	
		then 
		$(fg)_{(0,0)}=\sum_{k_1,k_3\in\mathbb{Z}}f_{-k_1,-k_3}(y)g_{k_1,k_3}(y).$
		Notice that by (\ref{eq:1DGN-1})
		$$\|\widehat{g}_{k_1,k_{3}}(y)\|_{L^{\infty}}\leq \|\widehat{g}_{k_1,k_{3}}(y)\|_{L^{2}}^{\frac12}\|\partial_{y}\widehat{g}_{k_1,k_{3}}(y)\|_{L^{2}}^{\frac12},$$
		then direct calculations yield that
		\begin{equation*}
			\begin{aligned}
				\|(fg)_{(0,0)}\|_{L^2}\leq& \sum_{k_1,k_3\in\mathbb{Z}}\|\widehat{f}_{-k_1,-k_{3}}(y)\widehat{g}_{k_1,k_{3}}(y)\|_{L^2}
				\leq \sum_{k_1,k_3\in\mathbb{Z}}\|\widehat{f}_{-k_1,-k_3}(y)\|_{L^2}\|\widehat{g}_{k_1,k_3}(y)\|_{L^{\infty}}\\
				\leq& \sum_{k_1,k_3\in\mathbb{Z}}\|\widehat{f}_{-k_1,-k_3}(y)\|_{L^2}\|\widehat{g}_{k_1,k_3}(y)\|^{\frac{1}{2}}_{L^{2}}\|\partial_y\widehat{g}_{k_1,k_3}(y)\|^{\frac{1}{2}}_{L^{2}}
				\leq C \|f\|_{L^{2}}\|g\|_{L^{2}}^{\frac12}
				\|\partial_{y}g\|_{L^{2}}^{\frac12}.
			\end{aligned}
		\end{equation*}
		
		{\bf Estimate of $\eqref{sob_result_6}_2$.}	
		Using $f=\sum_{k_1\in\mathbb{Z}}\widehat{f}_{k_1}(y,z)e^{ik_1x}$	
		and
		$g=\sum_{k_1\in\mathbb{Z}}\widehat{g}_{k_1}(y,z)e^{ik_1x},$	
		we have 
		$$(fg)_{0}=\sum_{k_1\in\mathbb{Z}}\widehat{f}_{-k_1}(y,z)\widehat{g}_{k_1}(y,z).$$
		Using $(\ref{sob_result_1})_3$ and $(\ref{sob_result_1})_4$, there holds
		\begin{equation}\label{tempfg_1}
			\begin{aligned}
				&\quad \|(fg)_{0}\|_{L^2}
				\leq\sum_{k_1\in\mathbb{Z}}
				\|\widehat{f}_{-k_1}(y,z)\widehat{g}_{k_1}(y,z)\|_{L^2}
				\leq\sum_{k_1\in\mathbb{Z}}
				\|\widehat{f}_{-k_1}(y,z)\|_{L^{\infty}_zL^2_y}
				\|\widehat{g}_{k_1}(y,z)\|_{L^{\infty}_yL^2_z}\\
				&\leq C\sum_{k_1\in\mathbb{Z}}
				\big(\|\widehat{f}_{-k_1}(y,z)\|_{L^2}
				+\|\partial_z\widehat{f}_{-k_1}(y,z)\|^{\alpha}_{L^2}
				\|\widehat{f}_{-k_1}(y,z)\|^{1-\alpha}_{L^2}\big)
				\|\widehat{g}_{k_1}(y,z)\|^{\frac{1}{2}}_{L^{2}}\|\partial_y\widehat{g}_{k_1}(y,z)\|^{\frac{1}{2}}_{L^{2}}\\
				&\leq C(\|f\|_{L^2}+\|\partial_zf\|_{L^2}^{\alpha}\|f\|_{L^2}^{1-\alpha})
				\|g\|^{\frac{1}{2}}_{L^2}
				\|\partial_yg\|_{L^2}^{\frac{1}{2}},
			\end{aligned}
		\end{equation}
		which implies $\eqref{sob_result_6}_2$.
		
		{\bf Estimate of $\eqref{sob_result_6}_3$.}	
		For (\ref{tempfg_1}), if we use 
		$\|(fg)_{0}\|_{L^2}\leq\sum_{k_1\in\mathbb{Z}}
		\|\widehat{f}_{-k_1}(y,z)\|_{L^{\infty}}
		\|\widehat{g}_{k_1}(y,z)\|_{L^2},$ we can prove $\eqref{sob_result_6}_3$ with the help of $(\ref{sob_result_1})_1.$
		
		The proof is complete.
	\end{proof}
	
	The following lemma is used to deal the  interactions between the zero mode and the z-part non-zero mode.
	\begin{lemma}\label{lemma_non_zz}
		For given functions $f=f(x,y,z)$ and $g=g(x,y,z)$, there hold
		\begin{equation}\label{sob_result_zz}
			\begin{aligned}
				&\|f_{(0,\neq)}g_{(0,\neq)}\|_{L^2}^2\leq C \big(\|\partial_yg_{(0,\neq)}\|_{L^2}\|g_{(0,\neq)}\|_{L^2}
				+\|\partial_zg_{(0,\neq)}\|_{L^2}
				\|\partial_z\partial_yg_{(0,\neq)}\|^{2\alpha-1}_{L^2}
				\|\partial_yg_{(0,\neq)}\|^{2-2\alpha}_{L^2}\big)\|f_{(0,\neq)}\|^2_{L^2},\\
				&\|f_{(0,0)}g_{(0,\neq)}\|^2_{L^2}\leq \|\partial_yg_{(0,\neq)}\|_{L^2}\|g_{(0,\neq)}\|_{L^2}
				\|f_{(0,0)}\|^2_{L^2},
			\end{aligned}
		\end{equation}
		where $ \alpha $ is a constant with $\alpha\in(\frac{1}{2},1]$.
	\end{lemma}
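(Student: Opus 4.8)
The plan is to treat both inequalities by the same mechanism used throughout this section: freeze the $x$-variable (both functions have $x$-zero-mode, so $f_{(0,\neq)}=f_{(0,\neq)}(y,z)$ and similarly for $g$), apply Hölder in $(y,z)$ by placing one factor in a mixed $L^\infty$–$L^2$ norm and the other in $L^2$, and then convert the $L^\infty$ norms into fractional derivative norms via the anisotropic embeddings already proved in Lemma \ref{sob_inf_3}. Concretely, for $(\ref{sob_result_zz})_1$ I would write
$$
\|f_{(0,\neq)}g_{(0,\neq)}\|_{L^2}^2\le \|f_{(0,\neq)}\|_{L^{\infty}_yL^2_z}^2\,\|g_{(0,\neq)}\|_{L^2_yL^{\infty}_z}^2,
$$
wait—since $f_{(0,\neq)}$ and $g_{(0,\neq)}$ are symmetric here, the cleaner route is to bound $\|f_{(0,\neq)}g_{(0,\neq)}\|_{L^2}\le\|g_{(0,\neq)}\|_{L^\infty}\|f_{(0,\neq)}\|_{L^2}$ and then square, so that the whole $g$-dependence is carried by $\|g_{(0,\neq)}\|_{L^\infty}^2$. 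Invoking $(\ref{sob_result_3})_1$,
$$
\|g_{(0,\neq)}\|_{L^\infty}^2\le C\|\partial_y\partial_z g_{(0,\neq)}\|_{L^2}\|\partial_z g_{(0,\neq)}\|_{L^2}^{2\alpha-1}\|g_{(0,\neq)}\|_{L^2}^{2-2\alpha},
$$
which is one of the two terms on the right-hand side; the other term $\|\partial_y g_{(0,\neq)}\|_{L^2}\|g_{(0,\neq)}\|_{L^2}$ appears because $(\ref{sob_result_3})_1$ in its proof actually starts from the 1D Gagliardo–Nirenberg bound $\|\widehat g_{0,k_3}\|_{L^\infty_y}^2\le\|\partial_y\widehat g_{0,k_3}\|_{L^2_y}\|\widehat g_{0,k_3}\|_{L^2_y}$ before the $k_3$-summation, and summing that bound directly (without extracting $\partial_z$) gives exactly $\|\partial_y g_{(0,\neq)}\|_{L^2}\|g_{(0,\neq)}\|_{L^2}$. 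So I would just redo the short Fourier-series computation of $(\ref{sob_result_3})_1$, splitting $1=\tfrac12+\tfrac12$ in the exponent of $|k_3|$ differently, to land on the stated sum of two pieces.

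For $(\ref{sob_result_zz})_2$, the factor $f_{(0,0)}$ is independent of $z$, so $\|f_{(0,0)}g_{(0,\neq)}\|_{L^2}\le\|f_{(0,0)}\|_{L^{2}_y}\,\|g_{(0,\neq)}\|_{L^{\infty}_yL^2_z}$ after integrating in $z$ first and using that $f_{(0,0)}$ pulls out of the $L^2_z$ norm; hence
$$
\|f_{(0,0)}g_{(0,\neq)}\|_{L^2}^2\le \|f_{(0,0)}\|_{L^2}^2\,\|g_{(0,\neq)}\|_{L^{\infty}_yL^2_z}^2\le \|f_{(0,0)}\|_{L^2}^2\,\|\partial_y g_{(0,\neq)}\|_{L^2}\|g_{(0,\neq)}\|_{L^2},
$$
where the last step is precisely $(\ref{sob_result_3})_3$. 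This is the claimed estimate with no constant loss, matching the absence of $C$ in the statement.

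I do not expect a genuine obstacle here—both bounds are routine consequences of Lemma \ref{sob_inf_3} together with a Hölder split. The only point requiring a little care is the first inequality: one must not simply quote $(\ref{sob_result_3})_1$ verbatim (which would give only the second term on the right), but rather reopen its one-line Fourier proof and keep the alternative, cruder estimate $\sum_{k_3\neq0}\|\partial_y\widehat g_{0,k_3}\|_{L^2}\|\widehat g_{0,k_3}\|_{L^2}\le\|\partial_y g_{(0,\neq)}\|_{L^2}\|g_{(0,\neq)}\|_{L^2}$ as a competing bound, so that the final right-hand side is the \emph{sum} of the two, which is what is needed when this lemma is applied (the $\partial_y$-only term is harmless when $g$ carries an extra $\partial_z$, the $\partial_z$-extracted term is needed otherwise).
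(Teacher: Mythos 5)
Your treatment of the second inequality is exactly the paper's: bound $\|f_{(0,0)}g_{(0,\neq)}\|_{L^2}$ by $\|f_{(0,0)}\|_{L^2}\|g_{(0,\neq)}\|_{L^{\infty}_yL^2_z}$ and invoke $(\ref{sob_result_3})_3$ (equivalently $(\ref{sob_result_1})_4$), with no constant loss. The reduction for the first inequality, $\|f_{(0,\neq)}g_{(0,\neq)}\|_{L^2}\le\|g_{(0,\neq)}\|_{L^{\infty}}\|f_{(0,\neq)}\|_{L^2}$, is also the paper's starting point.

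However, your argument for the first inequality has a genuine gap. Squaring $(\ref{sob_result_3})_1$ gives $\|\partial_y\partial_zg_{(0,\neq)}\|_{L^2}\,\|\partial_zg_{(0,\neq)}\|_{L^2}^{2\alpha-1}\,\|g_{(0,\neq)}\|_{L^2}^{2-2\alpha}$, which is \emph{not} the second term in $(\ref{sob_result_zz})_1$: there the first power sits on $\partial_zg_{(0,\neq)}$, the exponent $2\alpha-1$ on $\partial_z\partial_yg_{(0,\neq)}$, and $2-2\alpha$ on $\partial_yg_{(0,\neq)}$, and the two expressions are not comparable in general. Moreover, your claimed source of the $\|\partial_yg_{(0,\neq)}\|_{L^2}\|g_{(0,\neq)}\|_{L^2}$ piece is incorrect: summing the unweighted 1D Gagliardo--Nirenberg bound $\sum_{k_3\neq0}\|\partial_y\widehat g_{0,k_3}\|_{L^2_y}^{1/2}\|\widehat g_{0,k_3}\|_{L^2_y}^{1/2}$ does not yield $\|\partial_yg_{(0,\neq)}\|_{L^2}^{1/2}\|g_{(0,\neq)}\|_{L^2}^{1/2}$, since an $\ell^1$ sum over $k_3$ is not controlled by the $\ell^2$ quantities without the weight $(1+|k_3|^{\alpha})^{-1}$; both pieces of the stated bound come out \emph{simultaneously} from the single weighted Cauchy--Schwarz in $k_3$, with the factor $|k_3|^{2\alpha}$ and the $y$-interpolation distributed so that the $2-2\alpha$ power lands on $\|\partial_y\widehat g_{0,k_3}\|_{L^2_y}$ rather than on $\|\widehat g_{0,k_3}\|_{L^2_y}$. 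That is precisely the content of $(\ref{sob_result_1})_2$ (as opposed to $(\ref{sob_result_1})_1$), and the paper's proof simply applies $(\ref{sob_result_1})_2$ to $g_{(0,\neq)}$, which is an $x$-zero-mode function, and squares. So the computation you propose to ``redo'' already exists in Lemma \ref{sob_inf_1}; as written, though, your proposal neither cites it nor reproduces it correctly.
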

	\begin{proof}
		Due to $f_{(0,\neq)}$ is independent of the variable $x$, using $(\ref{sob_result_1})_2$, there holds
		\begin{align*}
			&\quad\|f_{(0,\neq)}g_{(0,\neq)}\|^2_{L^2}\leq \|g_{(0,\neq)}\|^2_{L^{\infty}_{y,z}}
			\|f_{(0,\neq)}\|^2_{L^2}\\
			&\leq C \Big(\|\partial_yg_{(0,\neq)}\|_{L^2}\|g_{(0,\neq)}\|_{L^2}
			+\|\partial_zg_{(0,\neq)}\|_{L^2}
			\|\partial_z\partial_yg_{(0,\neq)}\|^{2\alpha-1}_{L^2}
			\|\partial_yg_{(0,\neq)}\|^{2-2\alpha}_{L^2}\Big)
			\|f_{(0,\neq)}\|^2_{L^2},
		\end{align*}
		which implies $\eqref{sob_result_zz}_1$.
		
		Due to $f_{(0,0)}$ is independent of $x$ and $z$, using $(\ref{sob_result_1})_4$, there holds
		\begin{align*}
			\quad\|f_{(0,0)}g_{(0,\neq)}\|^2_{L^2}&=\int_{\mathbb{R}}
			\|g_{(0,\neq)}\|^2_{L^2_{z}}|f_{(0,0)}|^2dy\leq \|g_{(0,\neq)}\|^2_{L^{\infty}_{y}L^2_{z}}
			\|f_{(0,0)}\|^2_{L^2}\\
			&\leq \|\partial_yg_{(0,\neq)}\|_{L^2}\|g_{(0,\neq)}\|_{L^2}
			\|f_{(0,0)}\|^2_{L^2},
		\end{align*}
		which is $\eqref{sob_result_zz}_2$.
	\end{proof}

	\subsection{Elliptic estimates}
	The following  elliptic estimates are necessary.
	\begin{lemma}\label{lem:ellip_0}
		Let $c_0$ and $n_{0}$ be the zero mode of $c$ and $n$, respectively, satisfying
		$$-\triangle c_0+c_0=n_{0},$$
		then there hold
		\ben\label{eq:elliptic}
		&&\|\triangle c_0(t)\|_{L^2}+\|\nabla c_0(t)\|_{L^2}
		\leq C\|n_{0}(t)\|_{L^2},\nonumber\\
		&&\|\partial_z\triangle c_0(t)\|_{L^2}+\|\partial_z\nabla c_0(t)\|_{L^2}
		\leq C\|\partial_zn_{(0,\neq)}(t)\|_{L^2},\nonumber\\
		&&\|\partial_z^2\triangle c_0(t)\|_{L^2}+\|\partial_z^2\nabla c_0(t)\|_{L^2}
		\leq C\|\partial_z^2n_{(0,\neq)}(t)\|_{L^2},\nonumber\\
		&&\|\nabla c_0(t)\|_{L^4}\leq C\|n_{0}(t)\|_{L^2},
		\een
		for any $t\geq0$.
	\end{lemma}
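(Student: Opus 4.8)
\emph{Proof proposal.} The plan is to work in Fourier variables, using that $c_0$ and $n_0$ depend only on $(y,z)\in\mathbb{R}\times\mathbb{T}$. Writing $n_0=\sum_{k_3\in\mathbb{Z}}\frac{1}{2\pi}\int_{\mathbb{R}}\widehat{n}_{0,k_2,k_3}\,e^{ik_2y}\,dk_2\,e^{ik_3z}$ and similarly for $c_0$, the equation $-\triangle c_0+c_0=n_0$ reads $\widehat{c}_{0,k_2,k_3}=(1+k_2^2+k_3^2)^{-1}\widehat{n}_{0,k_2,k_3}$. By Plancherel, $\|\triangle c_0\|_{L^2}^2+\|\nabla c_0\|_{L^2}^2=\sum_{k_3}\int_{\mathbb{R}}\big[(k_2^2+k_3^2)^2+(k_2^2+k_3^2)\big](1+k_2^2+k_3^2)^{-2}|\widehat{n}_{0,k_2,k_3}|^2\,dk_2\le C\|n_0\|_{L^2}^2$, since the multiplier is uniformly bounded. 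This gives the first line of \eqref{eq:elliptic}. The same computation also yields $\|\nabla^2 c_0\|_{L^2}\le C\|\triangle c_0\|_{L^2}\le C\|n_0\|_{L^2}$, which I will reuse for the $L^4$ bound.

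For the second and third lines I would use that $n_{(0,0)}$ is independent of $z$, so $\partial_z n_0=\partial_z n_{(0,\neq)}$ and $\partial_z^2 n_0=\partial_z^2 n_{(0,\neq)}$. Differentiating the elliptic equation once and twice in $z$, the functions $\partial_z c_0$ and $\partial_z^2 c_0$ solve $-\triangle w+w=g$ with $g=\partial_z n_{(0,\neq)}$ and $g=\partial_z^2 n_{(0,\neq)}$ respectively. Applying the estimate of the first line to each of these solves (note $\partial_z c_0$ and $\partial_z^2 c_0$ are still zero modes) gives $\|\partial_z\triangle c_0\|_{L^2}+\|\partial_z\nabla c_0\|_{L^2}\le C\|\partial_z n_{(0,\neq)}\|_{L^2}$ and $\|\partial_z^2\triangle c_0\|_{L^2}+\|\partial_z^2\nabla c_0\|_{L^2}\le C\|\partial_z^2 n_{(0,\neq)}\|_{L^2}$.

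For $\|\nabla c_0\|_{L^4}$, I would exploit that $\nabla c_0$ is a function of the two variables $(y,z)$ only, so $\|\nabla c_0\|_{L^4(\mathbb{T}\times\mathbb{R}\times\mathbb{T})}=|\mathbb{T}|^{1/4}\|\nabla c_0\|_{L^4(\mathbb{R}\times\mathbb{T})}$, and then apply the two-dimensional Gagliardo--Nirenberg (Ladyzhenskaya) inequality $\|h\|_{L^4}\le C\|h\|_{L^2}^{1/2}\|\nabla h\|_{L^2}^{1/2}$ with $h=\nabla c_0$. Combining with the bounds already obtained, $\|\nabla c_0\|_{L^4}\le C\|\nabla c_0\|_{L^2}^{1/2}\|\nabla^2 c_0\|_{L^2}^{1/2}\le C\|n_0\|_{L^2}$, which is the last line of \eqref{eq:elliptic}.

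As for the difficulty: none of these steps is genuinely hard, and the whole lemma is a routine consequence of the Fourier representation together with the uniform boundedness of the relevant multipliers. The only point requiring a little care is the $L^4$ estimate: one must use the effective two-dimensionality of $c_0$ to get the correct exponents (the naive three-dimensional embedding $H^1\hookrightarrow L^4$ would fail), together with the elliptic gain $\|\nabla^2 c_0\|_{L^2}\lesssim\|\triangle c_0\|_{L^2}$.
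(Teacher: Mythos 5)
Your proposal is correct and follows essentially the same route as the paper: energy/Plancherel identities for the first three lines (using $\partial_z^j n_0=\partial_z^j n_{(0,\neq)}$), and a two-dimensional Gagliardo--Nirenberg (Ladyzhenskaya) bound for the $L^4$ estimate, which is exactly the paper's inequality $\|\nabla c_0\|_{L^4}\leq C\|\triangle c_0\|_{L^2}^{1/2}\|\nabla c_0\|_{L^2}^{1/2}$. One minor aside: your parenthetical claim that the naive three-dimensional embedding $H^1\hookrightarrow L^4$ ``would fail'' is not accurate (since $4\leq 6$ it would also give the bound), but this does not affect the validity of your argument.
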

	\begin{proof}
		The basic energy estimates yield
		\begin{equation}
			\begin{aligned}
				\|\triangle c_0(t)\|^2_{L^2}+2\|\nabla c_0(t)\|^2_{L^2}+\|c_0(t)\|^2_{L^2}
				=\|n_{0}(t)\|^2_{L^2},
				\nonumber
			\end{aligned}
		\end{equation}
		which implies $\eqref{eq:elliptic}_1$.
		Similarly, note that  $\|\partial_z^jn_{0}(t)\|_{L^2}=\|\partial_z^jn_{(0,\neq)}(t)\|_{L^2},$ and one can prove 
		$$\|\partial_z^j\triangle c_0(t)\|_{L^2}
		+\|\partial_z^j\nabla c_0(t)\|_{L^2}\leq C\|\partial_z^jn_{(0,\neq)}(t)\|_{L^2},$$
		where $j=1,2.$ Moreover,
		using the Gagliardo-Nirenberg inequality, we have
		$$\|\nabla c_0(t)\|_{L^4}
		\leq C\|\triangle c_0(t)\|^{\frac{1}{2}}_{L^2}
		\|\nabla c_0(t)\|^{\frac{1}{2}}_{L^2}
		\leq C\|n_{0}(t)\|_{L^2},$$	
		which is $\eqref{eq:elliptic}_4$.
	\end{proof}

	\begin{lemma}\label{lem:ellip_2}
		Let $c_{\neq}$ and $n_{\neq}$ be the non-zero mode of $c$ and $n$,
		respectively, satisfying
		$$-\triangle c_{\neq}+c_{\neq}=n_{\neq},$$
		then there hold
		\begin{align*}
			\|\partial_x^j\triangle c_{\neq}(t&)\|_{L^2}
			+\|\partial_x^j\nabla c_{\neq}(t)\|_{L^2}\leq C\|\partial_x^jn_{\neq}(t)\|_{L^2},\\
			\|\partial_z^j\triangle c_{\neq}(t&)\|_{L^2}
			+\|\partial_z^j\nabla c_{\neq}(t)\|_{L^2}\leq C\|\partial_z^jn_{\neq}(t)\|_{L^2},\\
			\|\partial_x\partial_z\triangle c_{\neq}(t&)\|_{L^2}
			+\|\partial_x\partial_z\nabla c_{\neq}(t)\|_{L^2}\leq C\|\partial_x\partial_zn_{\neq}(t)\|_{L^2},
		\end{align*}
		and
		\begin{equation*}
			\begin{aligned}
				\|\partial_x^j\nabla c_{\neq}(t)\|_{L^4}&\leq C\|\partial_x^jn_{\neq}(t)\|_{L^2},\\
				\|\partial_z^j\nabla c_{\neq}(t)\|_{L^4}&\leq C\|\partial_z^jn_{\neq}(t)\|_{L^2},
			\end{aligned}
		\end{equation*}
		where $j=0,1,2.$
	\end{lemma}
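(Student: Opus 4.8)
The plan is to read all of these inequalities off the single fact that $c_{\neq}=(1-\triangle)^{-1}n_{\neq}$, together with the operators $\partial_x$, $\partial_z$, $\partial_x\partial_z$, are Fourier multipliers with uniformly bounded symbols, exactly in the spirit of the proof of Lemma \ref{lem:ellip_0}. Taking the (mixed series–integral) Fourier transform on $\mathbb{T}\times\mathbb{R}\times\mathbb{T}$ gives $\widehat{c}_{\neq}(k)=(1+|k|^2)^{-1}\widehat{n}_{\neq}(k)$ with $k=(k_1,k_2,k_3)$, $k_1\neq0$. Since the symbols
\[
\frac{|k|^2}{1+|k|^2},\qquad \frac{|k_l|}{1+|k|^2}\quad(l=1,2,3)
\]
are bounded by $1$ (the second even by $\tfrac12$, by $1+|k|^2\ge 2|k_l|$), multiplying by the symbol of $\partial_x^j$, $\partial_z^j$ or $\partial_x\partial_z$ and invoking Plancherel yields all the asserted $L^2$ bounds at once. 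Equivalently, and more in the style of Lemma \ref{lem:ellip_0}, one differentiates the equation, e.g. $-\triangle(\partial_x^j c_{\neq})+\partial_x^j c_{\neq}=\partial_x^j n_{\neq}$, tests against $\triangle(\partial_x^j c_{\neq})$, integrates over $\mathbb{T}\times\mathbb{R}\times\mathbb{T}$, and uses Young's inequality to absorb the right-hand side, obtaining
\[
\|\triangle\partial_x^j c_{\neq}\|_{L^2}^2+2\|\nabla\partial_x^j c_{\neq}\|_{L^2}^2+\|\partial_x^j c_{\neq}\|_{L^2}^2\le \|\partial_x^j n_{\neq}\|_{L^2}^2 ,
\]
with the same computation for $\partial_z^j$ and for $\partial_x\partial_z$ in place of $\partial_x^j$.

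For the two $L^4$ lines I would combine the $L^2$ bounds just established with the three-dimensional Gagliardo–Nirenberg inequality $\|f\|_{L^4}\le C\|f\|_{L^2}^{1/4}\|\nabla f\|_{L^2}^{3/4}$ applied to $f=\partial_x^j\nabla c_{\neq}$ (resp. $f=\partial_z^j\nabla c_{\neq}$). This requires control of the full Hessian $\nabla^2\partial_x^j c_{\neq}$; since $\sum_{l,m}\|\partial_l\partial_m g\|_{L^2}^2=\|\triangle g\|_{L^2}^2$ on this product domain, one has $\|\nabla\partial_x^j\nabla c_{\neq}\|_{L^2}\le\|\triangle\partial_x^j c_{\neq}\|_{L^2}\le C\|\partial_x^j n_{\neq}\|_{L^2}$, while $\|\partial_x^j\nabla c_{\neq}\|_{L^2}\le C\|\partial_x^j n_{\neq}\|_{L^2}$ from the first part; hence $\|\partial_x^j\nabla c_{\neq}\|_{L^4}\le C\|\partial_x^j n_{\neq}\|_{L^2}$, and the $z$-version is identical.

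I do not expect a genuine obstacle: the statement is a convenient repackaging of elliptic regularity for $1-\triangle$, and the only points deserving a line of justification are (i) checking that $\partial_x^j$, $\partial_z^j$ and $\partial_x\partial_z$ commute with $(1-\triangle)^{-1}$, so that the multiplier symbols remain uniformly bounded after multiplication by the derivative symbols, and (ii) the identity $\|\nabla^2 g\|_{L^2}=\|\triangle g\|_{L^2}$ used to feed the second-order estimate into Gagliardo–Nirenberg for the $L^4$ bounds. Both are immediate on $\mathbb{T}\times\mathbb{R}\times\mathbb{T}$.
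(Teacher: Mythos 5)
Your proposal is correct and follows essentially the same route as the paper: the $L^2$ bounds come from the elliptic energy identity for $1-\triangle$ (equivalently, bounded Fourier multipliers, using that derivatives commute with $(1-\triangle)^{-1}$), and the $L^4$ bounds come from Gagliardo--Nirenberg interpolation fed by those $L^2$ bounds, the paper merely using the variant $\|\nabla c_{\neq}\|_{L^4}\leq C\|c_{\neq}\|_{L^2}^{1/8}\|\triangle c_{\neq}\|_{L^2}^{7/8}$ instead of your $\|f\|_{L^4}\leq C\|f\|_{L^2}^{1/4}\|\nabla f\|_{L^2}^{3/4}$ with $f=\partial^j\nabla c_{\neq}$. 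Since both interpolation factors are already controlled by $C\|\partial^j n_{\neq}\|_{L^2}$, your version (indeed even the cruder embedding $H^1\hookrightarrow L^4$ on $\mathbb{T}\times\mathbb{R}\times\mathbb{T}$) suffices, so there is no gap.
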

	\begin{proof}
		By integration by parts, note that
		\begin{equation}
			\begin{aligned}
				\|\triangle c_{\neq}(t)\|^2_{L^2}+\|\nabla c_{\neq}(t)\|^2_{L^2}
				+\|c_{\neq}(t)\|^2_{L^2}
				&\leq C\|n_{\neq}(t)\|^2_{L^2}.
				\nonumber
			\end{aligned}
		\end{equation}
		Using the Gagliardo-Nirenberg inequality, we obtain
		$$\|\nabla c_{\neq}(t)\|_{L^4}\leq
		C\| c_{\neq}(t)\|^{\frac{1}{8}}_{L^2}
		\|\triangle c_{\neq}(t)\|^{\frac{7}{8}}_{L^2}
		\leq C\|n_{\neq}(t)\|_{L^2}.$$
		Other results are similar and we omitted it.

	\end{proof}
	
	\begin{lemma}\label{lem:ellip_3}
		Let $c_{(0,0)}$ and $n_{(0,0)}$ be the z-part zero mode of $c_0$ and $n_0$,
		respectively, satisfying
		$$-\partial_{yy} c_{(0,0 )}+c_{(0,0 )}=n_{(0,0)},$$
		then there hold
		\ben\label{eq:c00}
		\|\partial_{yy} c_{(0,0 )}(t)\|^2_{L^2}
		+2\|\partial_{y} c_{(0,0 )}(t)\|^2_{L^2}
		+\|c_{(0,0 )}(t)\|^2_{L^2}=\|n_{(0,0 )}(t)\|^2_{L^2},
		\een
		and
		\ben\label{eq:c00infty}
		\|\partial_{y} c_{(0,0 )}(t)\|^2_{L^{\infty}}
		\leq \frac{1}{2}\|n_{(0,0 )}(t)\|^2_{L^2}.
		\een
	\end{lemma}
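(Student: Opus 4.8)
The plan is to obtain the energy identity \eqref{eq:c00} by testing the one-dimensional elliptic equation against $-\partial_{yy}c_{(0,0)}+c_{(0,0)}$ itself, and then to deduce \eqref{eq:c00infty} from the $1$D Gagliardo--Nirenberg inequality \eqref{eq:1DGN-1} together with Young's inequality, tracking constants carefully so that the factor $\tfrac12$ comes out exactly.

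First I would observe that since $n_{(0,0)}\in L^2(\mathbb{R})$ and $-\partial_{yy}+1$ is an isomorphism from $H^2(\mathbb{R})$ onto $L^2(\mathbb{R})$, we have $c_{(0,0)}\in H^2(\mathbb{R})$; by the embedding $H^1(\mathbb{R})\hookrightarrow C_0(\mathbb{R})$, both $c_{(0,0)}$ and $\partial_y c_{(0,0)}$ decay at $\pm\infty$, so all boundary terms in the integrations by parts below vanish. Then, using $n_{(0,0)}=-\partial_{yy}c_{(0,0)}+c_{(0,0)}$,
\[
\|n_{(0,0)}\|_{L^2}^2=\|{-\partial_{yy}c_{(0,0)}+c_{(0,0)}}\|_{L^2}^2=\|\partial_{yy}c_{(0,0)}\|_{L^2}^2-2\int_{\mathbb{R}}\partial_{yy}c_{(0,0)}\,c_{(0,0)}\,dy+\|c_{(0,0)}\|_{L^2}^2,
\]
and since $-\int_{\mathbb{R}}\partial_{yy}c_{(0,0)}\,c_{(0,0)}\,dy=\int_{\mathbb{R}}|\partial_y c_{(0,0)}|^2\,dy$, this is exactly \eqref{eq:c00}.

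For \eqref{eq:c00infty}, I would apply \eqref{eq:1DGN-1} to $g=\partial_y c_{(0,0)}$, i.e. $\|\partial_y c_{(0,0)}\|_{L^\infty}^2\le\|\partial_y c_{(0,0)}\|_{L^2}\|\partial_{yy}c_{(0,0)}\|_{L^2}$, and then use $ab\le\tfrac12(a^2+b^2)$ followed by \eqref{eq:c00}:
\[
\|\partial_y c_{(0,0)}\|_{L^\infty}^2\le\tfrac12\big(\|\partial_y c_{(0,0)}\|_{L^2}^2+\|\partial_{yy}c_{(0,0)}\|_{L^2}^2\big)\le\tfrac12\big(2\|\partial_y c_{(0,0)}\|_{L^2}^2+\|\partial_{yy}c_{(0,0)}\|_{L^2}^2+\|c_{(0,0)}\|_{L^2}^2\big)=\tfrac12\|n_{(0,0)}\|_{L^2}^2.
\]

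There is no genuine obstacle here. The only point deserving a little care is the constant chasing in the last chain: one must discard $\|c_{(0,0)}\|_{L^2}^2$ and the extra copy of $\|\partial_y c_{(0,0)}\|_{L^2}^2$ in the right order, so that the coefficient is $\tfrac12$ rather than $\tfrac1{\sqrt2}$, which in turn relies on the cited form \eqref{eq:1DGN-1} of the $1$D interpolation inequality on $\mathbb{R}$ carrying constant $1$ (consistent with its use elsewhere in the paper). Time plays no role, since the elliptic problem is solved at each fixed $t$, so the estimates hold pointwise in $t$.
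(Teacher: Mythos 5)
Your proof is correct and follows essentially the same route as the paper: the identity \eqref{eq:c00} by expanding $\|-\partial_{yy}c_{(0,0)}+c_{(0,0)}\|_{L^2}^2$ with one integration by parts, and \eqref{eq:c00infty} from the 1D interpolation inequality \eqref{eq:1DGN-1} applied to $\partial_y c_{(0,0)}$. The only (cosmetic) difference is the last step: you apply $ab\le\tfrac12(a^2+b^2)$ and absorb into the identity, whereas the paper first deduces $\|\partial_y c_{(0,0)}\|_{L^2}^2\le\tfrac14\|n_{(0,0)}\|_{L^2}^2$ and $\|\partial_{yy}c_{(0,0)}\|_{L^2}\le\|n_{(0,0)}\|_{L^2}$ and multiplies; both yield the constant $\tfrac12$.
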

	\begin{proof}
		A direct calculation yields that 
		\begin{align*}
			\|\partial_{yy} c_{(0,0 )}(t)\|^2_{L^2}
			+2\|\partial_{y} c_{(0,0 )}(t)\|^2_{L^2}
			+\|c_{(0,0 )}(t)\|^2_{L^2}=\|n_{(0,0 )}(t)\|^2_{L^2},
		\end{align*}
		which implies \eqref{eq:c00}.
		
		Due to 
		$$\|\partial_{y} c_{(0,0 )}(t)\|^2_{L^2}\leq 
		\|\partial_{yy} c_{(0,0 )}(t)\|_{L^2}\| c_{(0,0 )}(t)\|_{L^2}
		\leq \frac{1}{4}\|n_{(0,0 )}(t)\|^2_{L^2},$$
		thus
		$$\|\partial_{y} c_{(0,0 )}(t)\|^2_{L^{\infty}}\leq \|\partial_{y} c_{(0,0 )}(t)\|_{L^{2}}\|\partial_{yy} c_{(0,0 )}(t)\|_{L^{2}}
		\leq \frac{1}{2}\|n_{(0,0 )}(t)\|^2_{L^2},$$
		which implies \eqref{eq:c00infty}.
		
	\end{proof}	
	\begin{lemma}\label{lem:ellip_30}
		Let $c_{(0,0)}$ and $n_{(0,0)}$ be the z-part zero mode of $c_0$ and $n_0$,
		respectively, satisfying
		$$-\partial_{yy} c_{(0,0 )}+c_{(0,0 )}=n_{(0,0)},$$
		then there holds
		\begin{equation*}
			\begin{aligned}
				\|c_{(0,0)}(t)\|^2_{L^{\infty}}\leq \frac{\|n_{(0,0)}(t)\|^2_{L^1}}{4}.    		
			\end{aligned}
		\end{equation*}
	\end{lemma}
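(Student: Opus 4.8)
The plan is to solve the one-dimensional elliptic equation explicitly through its fundamental solution. Since $n_{(0,0)}$ depends only on $y$, the relation $-\partial_{yy}c_{(0,0)}+c_{(0,0)}=n_{(0,0)}$ is an ODE on $\mathbb{R}$, and the operator $-\partial_{yy}+1$ has the bounded, integrable Green's function $G(y)=\tfrac12 e^{-|y|}$, characterized by $-G''+G=\delta_0$ in the distributional sense. Because $c_{(0,0)}(t,\cdot)$ is the decaying solution (it lies in $H^2(\mathbb{R})$, so the homogeneous modes $e^{\pm y}$ do not appear, consistently with Lemma \ref{lem:ellip_3}), it is represented by the convolution
\[
c_{(0,0)}(t,y)=\big(G*n_{(0,0)}(t,\cdot)\big)(y)=\frac12\int_{\mathbb{R}}e^{-|y-y'|}\,n_{(0,0)}(t,y')\,dy'.
\]

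Next I would simply apply Young's convolution inequality in the form $\|G*h\|_{L^\infty}\le\|G\|_{L^\infty}\|h\|_{L^1}$. Pointwise in $y$,
\[
|c_{(0,0)}(t,y)|\le\int_{\mathbb{R}}|G(y-y')|\,|n_{(0,0)}(t,y')|\,dy'\le\|G\|_{L^\infty(\mathbb{R})}\,\|n_{(0,0)}(t)\|_{L^1}=\frac12\,\|n_{(0,0)}(t)\|_{L^1},
\]
where $\|G\|_{L^\infty(\mathbb{R})}=G(0)=\tfrac12$ and $\|\cdot\|_{L^1}$ is the one-dimensional $L^1(\mathbb{R})$ norm in $y$ (since $n_{(0,0)}$ is a function of $y$ alone). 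Taking the supremum over $y\in\mathbb{R}$ and squaring then gives $\|c_{(0,0)}(t)\|_{L^\infty}^2\le\tfrac14\|n_{(0,0)}(t)\|_{L^1}^2$, which is exactly the asserted bound.

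There is essentially no obstacle in this argument; the only point worth a line of care is the justification of the convolution representation, i.e. that among the solutions of the ODE one selects the one belonging to the function space in which $c_{(0,0)}$ is sought. A maximum-principle argument at a point where $|c_{(0,0)}|$ is maximal would only yield $\|c_{(0,0)}\|_{L^\infty}\le\|n_{(0,0)}\|_{L^\infty}$, not the sharp constant paired with the $L^1$ norm, so the Green's function route is the one to follow. If one prefers to avoid any mention of decay at infinity, an equally short alternative is to multiply the equation by $\tfrac12 e^{-|y-y_0|}$ for fixed $y_0$ and integrate by parts twice over $\mathbb{R}$, which reproduces the representation formula directly and hence the same estimate.
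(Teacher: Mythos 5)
Your argument is correct and reaches the stated bound, but it follows a genuinely different route from the paper. The paper never writes down the kernel: it multiplies the equation by $c_{(0,0)}$ to get the energy identity $\|\partial_y c_{(0,0)}\|_{L^2}^2+\|c_{(0,0)}\|_{L^2}^2=\langle n_{(0,0)},c_{(0,0)}\rangle\le \|c_{(0,0)}\|_{L^{\infty}}\|n_{(0,0)}\|_{L^1}$, then combines the one-dimensional Gagliardo--Nirenberg inequality $\|c_{(0,0)}\|_{L^{\infty}}^2\le\|\partial_y c_{(0,0)}\|_{L^2}\|c_{(0,0)}\|_{L^2}$ with Young's inequality to absorb the quadratic terms and obtain the constant $\tfrac14$. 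You instead invert $-\partial_{yy}+1$ explicitly via the Green's function $G(y)=\tfrac12 e^{-|y|}$ and bound the convolution by $\|G\|_{L^{\infty}}\|n_{(0,0)}\|_{L^1}$; this is a perfectly valid and arguably more transparent derivation of the same constant, and as a bonus it gives pointwise information (and would yield companion bounds such as $\|\partial_y c_{(0,0)}\|_{L^{\infty}}\le\tfrac12\|n_{(0,0)}\|_{L^1}$ or general $L^p$ estimates via Young's convolution inequality). The price is the extra step you correctly flag: one must justify that $c_{(0,0)}$ is the convolution solution, i.e.\ that no homogeneous modes $e^{\pm y}$ occur, which follows since $c_{(0,0)}(t,\cdot)\in L^2(\mathbb{R})$; the paper's energy/duality argument builds this selection in automatically and avoids any explicit representation formula. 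Your reading of $\|n_{(0,0)}\|_{L^1}$ as the one-dimensional norm in $y$ is consistent with the paper's convention ($M_1=M/|\mathbb{T}|^2$), so the constants match.
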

	\begin{proof}
		Multiplying $c_{(0,0)}$ on both sides of $-\partial_{yy} c_{(0,0 )}+c_{(0,0 )}=n_{(0,0)},$ the energy estimate shows that 
		\begin{equation*}
			\|\partial_{y} c_{(0,0 )}(t)\|^2_{L^2}
			+\|c_{(0,0 )}(t)\|_{L^2}^2=<n_{(0,0)}(t),c_{(0,0)}(t)>
			\leq \|c_{(0,0)}(t)\|_{L^{\infty}}\|n_{(0,0)}(t)\|_{L^1}.
		\end{equation*}
		By 1-D Gagliardo-Nirenberg inequality \eqref{eq:1DGN-1}, we
		get  
		\begin{equation}\label{c00_0}
			\|c_{(0,0)}(t)\|_{L^{\infty}}\leq 
			\|\partial_yc_{(0,0)}(t)\|_{L^{2}}^{\frac12}
			\|c_{(0,0)}(t)\|_{L^{2}}^{\frac12}.
		\end{equation}
		Therefore, there holds
		\begin{equation}\label{c00_1}
			\|\partial_{y} c_{(0,0 )}(t)\|^2_{L^2}
			+\|c_{(0,0 )}(t)\|_{L^2}^2
			\leq \|\partial_yc_{(0,0)}(t)\|_{L^{2}}^{\frac12}
			\|c_{(0,0)}\|_{L^{2}}^{\frac12}\|n_{(0,0)}(t)\|_{L^1}.
		\end{equation}
		
		Using Young's inequality, we have
		\begin{equation*}
			\begin{aligned}
				\|\partial_yc_{(0,0)}(t)\|_{L^{2}}^{\frac12}
				\|c_{(0,0)}(t)\|_{L^{2}}^{\frac12}\|n_{(0,0)}(t)\|_{L^1}
				&\leq \|\partial_yc_{(0,0)}(t)\|_{L^{2}}
				\|c_{(0,0)}(t)\|_{L^{2}}+
				\frac{\|n_{(0,0)}(t)\|^2_{L^1}}{4}\\
				&\leq \frac{\|\partial_yc_{(0,0)}(t)\|_{L^{2}}^2}{2}
				+\frac{\|c_{(0,0)}(t)\|_{L^{2}}^2}{2}+
				\frac{\|n_{(0,0)}(t)\|^2_{L^1}}{4},
			\end{aligned}
		\end{equation*}
		which along with (\ref{c00_0}) and (\ref{c00_1}) show that 
		\begin{equation*}
			\|c_{(0,0)}(t)\|^2_{L^{\infty}}\leq 
			\frac{	\|\partial_yc_{(0,0)}(t)\|_{L^{2}}^2}{2}+
			\frac{	\|c_{(0,0)}(t)\|_{L^{2}}^2}{2}\leq \frac{\|n_{(0,0)}(t)\|^2_{L^1}}{4}.
		\end{equation*}
		
		%
	\end{proof}
	
	\begin{lemma}\label{lem:ellip_4}
		Let $c_{(0,\neq)}$ and $n_{(0,\neq)}$ be the z-part non-zero mode of $c_0$ and $n_0$,
		respectively, satisfying
		$$-\triangle c_{(0,\neq )}+c_{(0,\neq )}=n_{(0,\neq )},$$
		then there hold
		\begin{align*}
			&\|\triangle c_{(0,\neq )}(t)\|^2_{L^2}
			+2\|\nabla c_{(0,\neq )}(t)\|^2_{L^2}
			+\|c_{(0,\neq )}(t)\|^2_{L^2}=\|n_{(0,\neq )}(t)\|^2_{L^2},\\
			&\|\nabla c_{(0,\neq )}(t)\|_{L^4}\leq C\|n_{(0,\neq )}(t)\|_{L^2},
		\end{align*}
		and
		\begin{equation*}
			\begin{aligned}
				\|\partial_z^j\triangle c_{(0,\neq )}(t)\|^2_{L^2}
				+2\|\partial_z^j\nabla c_{(0,\neq )}(t)\|^2_{L^2}
				+\|\partial_z^j c_{(0,\neq )}(t)\|^2_{L^2}=\|\partial_z^jn_{(0,\neq )}(t)\|^2_{L^2},
			\end{aligned}
		\end{equation*}
		where $j=1,2.$
	\end{lemma}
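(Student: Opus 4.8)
\medskip
\noindent\textbf{Proof proposal.} The plan is to follow the pattern of Lemma~\ref{lem:ellip_0} and Lemma~\ref{lem:ellip_2}, using that $c_{(0,\neq)}$ and $n_{(0,\neq)}$ depend only on $(y,z)\in\mathbb{R}\times\mathbb{T}$ and that the equation $-\triangle c_{(0,\neq)}+c_{(0,\neq)}=n_{(0,\neq)}$ is posed on a domain without boundary. First I would obtain the first identity by expanding the square: writing $n_{(0,\neq)}=-\triangle c_{(0,\neq)}+c_{(0,\neq)}$ gives
\[
\|n_{(0,\neq)}\|^2_{L^2}=\|\triangle c_{(0,\neq)}\|^2_{L^2}-2\langle\triangle c_{(0,\neq)},c_{(0,\neq)}\rangle+\|c_{(0,\neq)}\|^2_{L^2},
\]
and integrating by parts in $(y,z)$ turns the cross term into $2\|\nabla c_{(0,\neq)}\|^2_{L^2}$, the contributions at $y\to\pm\infty$ vanishing because $c_{(0,\neq)}$ and its first derivatives belong to $L^2$. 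This is precisely the stated identity.

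For the $\partial_z$-weighted identities I would apply $\partial_z^{j}$ ($j=1,2$) to the equation; since $\partial_z$ commutes with $\triangle$ and with the zeroth-order term, $\partial_z^{j}c_{(0,\neq)}$ solves $-\triangle(\partial_z^{j}c_{(0,\neq)})+\partial_z^{j}c_{(0,\neq)}=\partial_z^{j}n_{(0,\neq)}$, still decaying in $y$, so the same expansion with $c_{(0,\neq)}$ replaced by $\partial_z^{j}c_{(0,\neq)}$ yields the claimed identity for each $j$.

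Finally, for the $L^4$ bound I would combine the first identity with the two-dimensional Gagliardo--Nirenberg inequality $\|\nabla c_{(0,\neq)}\|_{L^4}\leq C\|\triangle c_{(0,\neq)}\|^{1/2}_{L^2}\|\nabla c_{(0,\neq)}\|^{1/2}_{L^2}$ on $\mathbb{R}\times\mathbb{T}$ (as in Lemma~\ref{lem:ellip_0}), together with the bound $\|\triangle c_{(0,\neq)}\|_{L^2}+\|\nabla c_{(0,\neq)}\|_{L^2}\leq C\|n_{(0,\neq)}\|_{L^2}$ that the first identity already provides. I do not expect any real obstacle here: the argument is elementary and parallels the earlier elliptic lemmas almost verbatim. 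The only points deserving a line of justification are the vanishing of the boundary terms as $y\to\pm\infty$ (which follows from $c_{(0,\neq)},\nabla c_{(0,\neq)},\partial_z^2 c_{(0,\neq)}\in L^2$) and, if one prefers to phrase the $L^4$ estimate through the full Hessian, the identity $\|\nabla^2 c_{(0,\neq)}\|_{L^2}=\|\triangle c_{(0,\neq)}\|_{L^2}$, valid by Plancherel on the boundary-free geometry.
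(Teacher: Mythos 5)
Your proposal is correct and follows essentially the same route as the paper: the identities come from squaring $n_{(0,\neq)}=-\triangle c_{(0,\neq)}+c_{(0,\neq)}$ (equivalently, energy estimates) with the cross term handled by integration by parts, the weighted versions follow by applying $\partial_z^j$ which commutes with the equation, and the $L^4$ bound follows from Gagliardo--Nirenberg interpolation between $\|\nabla c_{(0,\neq)}\|_{L^2}$ and $\|\triangle c_{(0,\neq)}\|_{L^2}$, both controlled by the first identity. The only cosmetic difference is the choice of interpolation exponents (you use the $\tfrac12,\tfrac12$ form as in Lemma \ref{lem:ellip_0}, the paper uses $\tfrac14,\tfrac34$), which is immaterial since both factors are bounded by $\|n_{(0,\neq)}(t)\|_{L^2}$.
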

	
	\begin{proof}
		Using energy estimates and Gagliardo-Nirenberg inequality
		$$\|\nabla c_{(0,\neq )}(t)\|_{L^4}\leq C\|\nabla c_{(0,\neq )}(t)\|_{L^2}^{\frac14}\|\triangle c_{(0,\neq )}(t)\|_{L^2}^{\frac34} 
		\leq C\|n_{(0,\neq )}(t)\|_{L^2},$$
		we  get the first two inequalities.
		
		Direct calculations yield that 
		\begin{align*}
			\|\partial_z^j\triangle c_{(0,\neq )}(t)\|^2_{L^2}
			+2\|\partial_z^j\nabla c_{(0,\neq )}(t)\|^2_{L^2}
			+\|\partial_z^j c_{(0,\neq )}(t)\|^2_{L^2}
			=\|\partial_z^j n_{(0,\neq )}(t)\|^2_{L^2},
		\end{align*}
		which is the last result.
	\end{proof}
	
	\subsection{Velocity estimates}\label{sec_velocty}
	
	\begin{lemma}[]\label{lemma_u}
		Assume that $u_{\neq}\in H^{2}(\mathbb{T}\times\mathbb{R}\times\mathbb{T}),$  there hold
		\begin{equation}\label{eq:velocity embed}
			\begin{aligned}
				&\left\|(\partial_x,
				\partial_z)u_{\neq}\right\|_{L^2}\leq C(\|\omega_{2,\neq}\|_{L^2}
				+\|\nabla u_{2,\neq}\|_{L^2}),\\
				&\left\|(\partial_x,
				\partial_z)\partial_xu_{\neq}\right\|_{L^2}\leq C(\|\partial_x\omega_{2,\neq}\|_{L^2}
				+\|\triangle u_{2,\neq}\|_{L^2}),\\
				&\left\|(\partial_x,
				\partial_z)\partial_yu_{\neq}\right\|_{L^2}\leq C(\|\partial_y\omega_{2,\neq}\|_{L^2}
				+\|\triangle u_{2,\neq}\|_{L^2}),\\
				&\left\|(\partial_x^2,
				\partial_z^2)u_{3,\neq}\right\|_{L^2}
				\leq C(\|\partial_x\omega_{2,\neq}\|_{L^2}
				+\|\triangle u_{2,\neq}\|_{L^2}),\\
				&\left\|(\partial_x,
				\partial_z)\partial_x\nabla u_{\neq}\right\|_{L^2}\leq C(\|\partial_x\nabla\omega_{2,\neq}\|_{L^2}
				+\|\nabla\triangle u_{2,\neq}\|_{L^2}),\\
				&\left\|(\partial_x,
				\partial_z)\partial_y\nabla u_{\neq}\right\|_{L^2}\leq C(\|\partial_y\nabla\omega_{2,\neq}\|_{L^2}
				+\|\nabla\triangle u_{2,\neq}\|_{L^2}).\end{aligned}
		\end{equation}
	\end{lemma}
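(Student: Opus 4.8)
The plan is to solve for the non-zero modes $u_{1,\neq}$ and $u_{3,\neq}$ in terms of $\omega_{2,\neq}$ and $u_{2,\neq}$ and then reduce every inequality in \eqref{eq:velocity embed} to an elementary Fourier-multiplier bound. First I would combine the divergence-free condition $\partial_1u_1+\partial_2u_2+\partial_3u_3=0$ with the definition $\omega_2=\partial_3u_1-\partial_1u_3$: applying $\partial_x$ to the former and $\partial_z$ to the latter and adding gives $(\partial_x^2+\partial_z^2)u_1=\partial_z\omega_2-\partial_x\partial_yu_2$, and applying $\partial_z$ to the former and $-\partial_x$ to the latter gives $(\partial_x^2+\partial_z^2)u_3=-\partial_x\omega_2-\partial_y\partial_zu_2$. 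Both identities involve only $\partial_x,\partial_z$ derivatives, so they survive the projection onto non-zero modes, where $\partial_x^2+\partial_z^2$ has Fourier symbol $-(k_1^2+k_3^2)$ with $k_1\neq0$; hence $k_1^2+k_3^2\geq1$ there and $(\partial_x^2+\partial_z^2)^{-1}$ is bounded on non-zero modes, giving
\[
u_{1,\neq}=(\partial_x^2+\partial_z^2)^{-1}\!\big(\partial_z\omega_{2,\neq}-\partial_x\partial_yu_{2,\neq}\big),\qquad
u_{3,\neq}=-(\partial_x^2+\partial_z^2)^{-1}\!\big(\partial_x\omega_{2,\neq}+\partial_y\partial_zu_{2,\neq}\big).
\]

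For each of the six inequalities I would apply to these two formulas the differential operator whose symbol appears on the left-hand side, invoke Plancherel, and reduce the claim to a pointwise comparison of Fourier symbols: the multiplier hitting $\widehat{\omega_{2,\neq}}$ must be dominated by the symbol of the $\omega_2$-norm on the right, and the one hitting $\widehat{u_{2,\neq}}$ by the symbol of the $u_2$-norm on the right. Each such comparison follows from the elementary inequalities $2|k_ik_j|\leq k_i^2+k_j^2$, $k_1^2\leq k_1^2+k_3^2$ and $k_3^2\leq k_1^2+k_3^2$, together with $|k_1|\geq1$ on non-zero modes. For example, in the first estimate the $\omega_2$-multiplier is $\tfrac{k_jk_3}{k_1^2+k_3^2}$ with $j\in\{1,3\}$, which is $\leq1$, and the $u_2$-multiplier is $\tfrac{k_jk_1k_2}{k_1^2+k_3^2}$, which after absorbing $\tfrac{k_1^2}{k_1^2+k_3^2}$ or $\tfrac{|k_1k_3|}{k_1^2+k_3^2}$ is $\leq|k_2|\leq|k|$; for the second estimate one meets instead $\tfrac{k_jk_1k_3}{k_1^2+k_3^2}\leq|k_1|$ and $\tfrac{k_jk_1^2k_2}{k_1^2+k_3^2}\leq|k|^2$; and the $\partial_x$-, $\partial_y$- and $\nabla$-weighted variants, including the fifth and sixth estimates, are obtained by inserting one extra factor $|k_1|$, $|k_2|$ or $|k|$ into each symbol. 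The contributions of $u_{2,\neq}$ itself are in all cases controlled directly, using $\|(\partial_x,\partial_z)u_{2,\neq}\|_{L^2}\leq\|\nabla u_{2,\neq}\|_{L^2}$ and the inequalities \eqref{eq:fourier ine}.

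I do not anticipate a genuine obstruction; the one point that requires a little care is the book-keeping for the mixed multipliers such as $\tfrac{k_1^2k_2k_3}{k_1^2+k_3^2}$ or $\tfrac{k_1^3k_2}{k_1^2+k_3^2}$, which must be split so that a bounded factor of the form $\tfrac{k_1^2}{k_1^2+k_3^2}$ or $\tfrac{k_1k_3}{k_1^2+k_3^2}$ is peeled off first and the surviving monomial is then matched against $k_1^2+k_2^2+k_3^2$ (or $|k_1|$, or $|k_2|$) by Young's inequality. Carrying this out term by term and summing the resulting bounds over $u_{1,\neq},u_{2,\neq},u_{3,\neq}$ yields all six estimates in \eqref{eq:velocity embed}.
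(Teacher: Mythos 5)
Your proposal is correct and follows essentially the same route as the paper: both arguments recover $(\partial_x,\partial_z)$-derivatives of $u_{1,\neq},u_{3,\neq}$ from the definition $\omega_2=\partial_z u_1-\partial_x u_3$ and the divergence-free relation $\partial_x u_1+\partial_z u_3=-\partial_y u_2$ via Fourier analysis in $(x,z)$ on non-zero modes (where $k_1\neq 0$), and indeed your identity $(\partial_x^2+\partial_z^2)u_{3,\neq}=-\partial_x\omega_{2,\neq}-\partial_y\partial_z u_{2,\neq}$ is exactly the one the paper uses for the fourth estimate. The only difference is presentational: the paper exploits the exact Plancherel identity $\|\omega_{2,\neq}\|_{L^2}^2+\|\partial_y u_{2,\neq}\|_{L^2}^2=\|(\partial_x,\partial_z)(u_1,u_3)_{\neq}\|_{L^2}^2$, whereas you invert $\partial_x^2+\partial_z^2$ and bound the resulting multipliers term by term, which is equivalent.
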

	
	\begin{proof}
		Recall that
		\begin{equation}\label{velocity1}
			\begin{aligned}
				\omega_{2,\neq}=\partial_zu_{1,\neq}-\partial_xu_{3,\neq},\\
				-\partial_y u_{2,\neq}=\partial_x u_{1,\neq}+\partial_z u_{3,\neq}.
			\end{aligned}
		\end{equation}
		Using the Fourier series, there hold
		\begin{equation*}
			\begin{aligned}
				\omega_{2,\neq}=-\sum_{k_1\neq0,k_3\in\mathbb{Z}}
				\left(ik_1\hat{u}_{3,k_1,k_3}(y)-ik_3\hat{u}_{1,k_1,k_3}(y)\right)
				{\rm e}^{i(k_1x+k_3z)},
			\end{aligned}
		\end{equation*}
		and 
		\begin{equation*}
			\begin{aligned}
				\partial_yu_{2,\neq}=-\sum_{k_1\neq0,k_3\in\mathbb{Z}}
				\left(ik_3\hat{u}_{3,k_1,k_3}(y)+ik_1\hat{u}_{1,k_1,k_3}(y)\right)
				{\rm e}^{i(k_1x+k_3z)}.
			\end{aligned}
		\end{equation*}
		Then
		\begin{equation*}
			\begin{aligned}
				\|\omega_{2,\neq}\|^2_{L^2}+\|\partial_yu_{2,\neq}\|^2_{L^2}&=|\mathbb{T}|^2
				\sum_{k_1\neq0,k_3\in\mathbb{Z}}\left(k_1^2+k_3^2\right)
				\Big(\|\hat{u}_{1,k_1,k_3}(y)\|_{L^2_y}^2+\|\hat{u}_{3,k_1,k_3}(y)\|^2_{L^2_y}\Big)\\
				&=\|\partial_xu_{1,\neq}\|^2_{L^2}+\|\partial_zu_{1,\neq}\|^2_{L^2}
				+\|\partial_xu_{3,\neq}\|^2_{L^2}+\|\partial_zu_{3,\neq}\|^2_{L^2},
			\end{aligned}
		\end{equation*}
		which implies $\eqref{eq:velocity embed}_1$. Moreover, $\eqref{eq:velocity embed}_4$ can be proved by using $$\partial_x\omega_{2,\neq}+\partial_z\partial_yu_{2,\neq}=-(\partial_x^2+\partial_z^2)u_{3,\neq}.$$
		And other inequalities are similar.
		
		The proof is complete.
	\end{proof}

	\begin{lemma}\label{lemma_neq1}
		It holds that 
		\begin{equation}\label{eq:nonlinear-neq0}
			\begin{aligned}
				\|{\rm e}^{2aA^{-\frac{1}{3}}t}|u_{\neq}|^2\|_{L^2L^2}^2&\leq
				CA^{\frac{2}{3}}(\|\partial_x\omega_{2,\neq}\|^4_{X_a}+\|\triangle u_{2,\neq}\|^4_{X_a}),\\
				\|{\rm e}^{2aA^{-\frac{1}{3}}t}u_{\neq}\cdot\nabla u_{\neq}\|_{L^2L^2}^2&\leq
				CA(\|\partial_x\omega_{2,\neq}\|^4_{X_a}+\|\triangle u_{2,\neq}\|^4_{X_a}),\\
				\|{\rm e}^{2aA^{-\frac{1}{3}}t}\partial_x(u_{\neq}\cdot\nabla u_{\neq})\|^2_{L^2L^2}
				&\leq CA(\|\partial_x\omega_{2,\neq}\|^4_{X_a}
				+\|\triangle u_{2,\neq}\|^4_{X_a}),\\		
				\|{\rm e}^{2aA^{-\frac{1}{3}}t}\nabla(u_{\neq}\cdot\nabla u_{2,\neq})\|^2_{L^2L^2}
				&\leq CA\|\triangle u_{2,\neq}\|^2_{X_a}(\|\partial_x\omega_{2,\neq}\|^2_{X_a}
				+\|\triangle u_{2,\neq}\|^2_{X_a}),\\	
				\|{\rm e}^{2aA^{-\frac{1}{3}}t}\partial_z(u_{\neq}\cdot\nabla u_{3,\neq})\|_{L^2L^2}^2	
				&\leq CA(\|\partial_x\omega_{2,\neq}\|^4_{X_a}
				+\|\triangle u_{2,\neq}\|^4_{X_a}), \\		
				\|{\rm e}^{2aA^{-\frac{1}{3}}t}\nabla(u_{\neq}\cdot\nabla u_{1,\neq})\|^2_{L^2L^2}
				&\leq  CA(\|\partial_x\omega_{2,\neq}\|^2_{X_a}
				+\|\triangle u_{2,\neq}\|^2_{X_a})
				(\|\nabla\omega_{2,\neq}\|^2_{X_a}
				+\|\triangle u_{2,\neq}\|^2_{X_a}),\\
				\|{\rm e}^{2aA^{-\frac{1}{3}}t}\nabla(u_{\neq}\cdot\nabla u_{3,\neq})\|^2_{L^2L^2}
				&\leq  CA(\|\partial_x\omega_{2,\neq}\|^2_{X_a}
				+\|\triangle u_{2,\neq}\|^2_{X_a})
				(\|\nabla\omega_{2,\neq}\|^2_{X_a}
				+\|\triangle u_{2,\neq}\|^2_{X_a}).
			\end{aligned}
		\end{equation}
	\end{lemma}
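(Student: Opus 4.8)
The plan is to prove the seven bounds one at a time; they all have a common shape. In each case the left-hand side equals $\int_0^t e^{4aA^{-1/3}s}\|Q(s)\|_{L^2}^2\,ds$, where $Q$ is a quadratic expression in $u_{\neq}$, possibly carrying one or two extra derivatives, and the right-hand side is a fourth power of $X_a$-norms of the ``good unknowns'' $\partial_x\omega_{2,\neq}$, $\triangle u_{2,\neq}$ (together with $\nabla\omega_{2,\neq}$ for the last two). \textbf{Step 1 (reduction to the good unknowns).} Since every factor is a non-zero mode, the Fourier inequalities (\ref{eq:fourier ine}) let me insert a $\partial_x$ for free whenever I estimate an $L^2$ norm; combining this with the velocity estimates of Lemma \ref{lemma_u}, every derivative of $u_{\neq}$ that occurs — all of $\partial_x u_{\neq}$, $\partial_z u_{\neq}$, and, after borrowing a $\partial_x$, all the mixed second (and third) derivatives of the form $(\partial_x,\partial_z)\partial_x u_{\neq}$, $(\partial_x,\partial_z)\partial_y u_{\neq}$, $(\partial_x^2,\partial_z^2)u_{3,\neq}$, $(\partial_x,\partial_z)\partial_y\nabla u_{\neq}$, together with the recovery formulas for $\partial_y u_{1,\neq}$, $\partial_y u_{3,\neq}$ — is bounded in $L^2$ by $\|\omega_{2,\neq}\|_{L^2}$, $\|\nabla u_{2,\neq}\|_{L^2}$ and their higher analogues $\|\partial_x\omega_{2,\neq}\|_{L^2}$, $\|\partial_y\omega_{2,\neq}\|_{L^2}$, $\|\nabla\omega_{2,\neq}\|_{L^2}$, $\|\partial_y\nabla\omega_{2,\neq}\|_{L^2}$, $\|\triangle u_{2,\neq}\|_{L^2}$, $\|\nabla\triangle u_{2,\neq}\|_{L^2}$. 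The output is that $\|Q(s)\|_{L^2}$ is controlled pointwise in $t$ by a product of norms drawn from this list.

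\textbf{Step 2 (anisotropic Sobolev).} To turn a product of two non-zero-mode functions into a product of $L^2$-derivative norms I would use the bilinear anisotropic embeddings of Lemmas \ref{sob_inf_2}, \ref{sob_13} and \ref{lemma_non_zz0}, splitting the three physical directions between the two factors and choosing the interpolation exponent $\alpha$. In three dimensions the total derivative budget is $\tfrac32$ for a pure quadratic, plus one for each $\nabla$ outside $Q$; the only freedom is how this budget is apportioned among the two (or more) resulting scalar factors $\|g(s)\|_{L^2}$. I would apportion it so that the $\partial_x$-derivatives (cheap on non-zero modes) and the one ``expensive'' derivative — a $\partial_y$ on $u_{\neq}$, which is controllable only through $\partial_y\omega_{2,\neq}$ and hence effectively costs a full gradient — are placed where Step 3 can absorb them.

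\textbf{Step 3 (time integration and the powers of $A$).} Inserting the pointwise bound from Steps 1--2 and writing the weight $e^{4aA^{-1/3}s}$ as $e^{aA^{-1/3}s}$ on each scalar factor, I apply H\"older in $t$ with exponents $(\infty,\dots,\infty,2,2)$: the $L^\infty_t$-slots are reserved for \emph{pure} good unknowns ($\partial_x\omega_{2,\neq}$ or $\triangle u_{2,\neq}$), because $X_a$ controls only the function, not its gradient, in $L^\infty L^2$ — so $\|e^{aA^{-1/3}t}f\|_{L^\infty L^2}\le\|f\|_{X_a}$ there; the two $L^2_t$-slots use that, by the definition of $X_a$, $\|e^{aA^{-1/3}t}f\|_{L^2L^2}\le A^{1/6}\|f\|_{X_a}$ for a factor at $L^2$-level, $\|e^{aA^{-1/3}t}\nabla\triangle^{-1}\partial_x f\|_{L^2L^2}\le\|f\|_{X_a}$ (cost $1$), and $\|e^{aA^{-1/3}t}\nabla f\|_{L^2L^2}\le A^{1/2}\|f\|_{X_a}$ for a factor carrying a genuine extra gradient (this is the slot where $\partial_y\omega_{2,\neq}\le\|\nabla\partial_x\omega_{2,\neq}\|$, $\nabla\triangle u_{2,\neq}$, $\partial_y\nabla\omega_{2,\neq}\le\|\nabla^2\omega_{2,\neq}\|$ are absorbed). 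Tallying: for $|u_{\neq}|^2$ the budget $\tfrac32$ leaves one $L^2$-level factor (cost $A^{1/6}$) and one gradient-level factor — the $\partial_y u_{\neq}$-piece — (cost $A^{1/2}$) in the $L^2_tL^2_x$ slots, giving the prefactor $A^{1/6}\cdot A^{1/2}=A^{2/3}$; for every remaining $Q$ both $L^2_tL^2_x$ factors carry an extra derivative and the prefactor is $A^{1/2}\cdot A^{1/2}=A$. In $(\ref{eq:nonlinear-neq0})_6$ and $(\ref{eq:nonlinear-neq0})_7$ the term $u_{\neq}\cdot\nabla^2 u_{j,\neq}$ ($j=1,3$) produces $\partial_y^2 u_{j,\neq}$, which through Lemma \ref{lemma_u} needs $\partial_y\nabla\omega_{2,\neq}$; this lands in an $L^2_tL^2_x$ slot and is absorbed by the gradient term of $\|\nabla\omega_{2,\neq}\|_{X_a}$, which is precisely why $\|\nabla\omega_{2,\neq}\|_{X_a}$ must appear on the right there — whereas in $(\ref{eq:nonlinear-neq0})_4$ the twice-differentiated factor is $u_{2,\neq}$ and $\nabla^2 u_{2,\neq}$ is already $\triangle u_{2,\neq}$-level, and in $(\ref{eq:nonlinear-neq0})_5$ only $\partial_y\partial_z u_{3,\neq}$ (first derivative of $\omega_2$) appears, so no second derivative of $\omega_2$ is created.

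\textbf{Main obstacle.} The delicate part is the bookkeeping in Step 3: one must keep the $L^\infty L^2$ slots occupied exclusively by $\partial_x\omega_{2,\neq}$ or $\triangle u_{2,\neq}$ — never by one of their derivatives, nor by $\partial_y u_{\neq}$ — which in turn dictates the anisotropic splitting chosen in Step 2; and one must check that the accumulated powers of $A^{1/6}$, $A^{1/2}$ and $1$ collapse exactly to $A^{2/3}$ or $A$, with all higher-order pieces confined to $L^2_tL^2_x$ slots where the gradient terms of $\|\triangle u_{2,\neq}\|_{X_a}$ and $\|\nabla\omega_{2,\neq}\|_{X_a}$ can soak them up. This forces one to use the precise form of Lemma \ref{lemma_u} (which second derivatives of $u_{\neq}$ are controlled, and at which order) together with the Poincar\'e gains $\|f_{\neq}\|_{L^2}\le\|\partial_x f_{\neq}\|_{L^2}$ and $\|f_{(0,\neq)}\|_{L^2}\le\|\partial_z f_{(0,\neq)}\|_{L^2}$ in just the right combination.
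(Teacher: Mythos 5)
Your proposal is correct and follows essentially the same route as the paper's proof: term-by-term reduction to the good unknowns via Lemma \ref{lemma_u} and the Fourier inequalities \eqref{eq:fourier ine}, anisotropic Sobolev splittings from Lemma \ref{sob_inf_2}, and then H\"older in time with two $L^\infty_t$ slots for $\partial_x\omega_{2,\neq}$, $\triangle u_{2,\neq}$ and two $L^2_t$ slots absorbed by the $A^{1/6}$ and $A^{1/2}$ components of the $X_a$-norm. Your bookkeeping of the prefactors ($A^{2/3}$ for $|u_{\neq}|^2$, $A$ otherwise) and of why $\|\nabla\omega_{2,\neq}\|_{X_a}$ must appear in $(\ref{eq:nonlinear-neq0})_6$--$(\ref{eq:nonlinear-neq0})_7$ matches the paper exactly.
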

	\begin{proof}
		{\bf Estimate of $\eqref{eq:nonlinear-neq0}_1$.} Recall the relation of \eqref{eq:fourier ine}, and 	
		by using $\eqref{sob_result_2}_4$ in Lemma \ref{sob_inf_2} and $\eqref{eq:velocity embed}_2$ in Lemma \ref{lemma_u} we have
		\begin{align*}
			\|u_{\neq}\|_{L^{\infty}_{x,z}L^2_y}^2
			&\leq C\big(\|\partial_xu_{\neq}\|^{2\alpha}_{L^2}
			\|u_{\neq}\|^{2-2\alpha}_{L^2}
			+\|\partial_x\partial_zu_{\neq}\|^{2\alpha}_{L^2}
			\|u_{\neq}\|^{2-2\alpha}_{L^2}\big)\\
			&\leq C(\|\partial_x\omega_{2,\neq}\|_{L^2}+\|\triangle u_{2,\neq}\|_{L^2})^2.
		\end{align*}
		Therefore, by $\eqref{sob_result_2}_7$ and $\eqref{eq:velocity embed}_3$ we have  
		\begin{equation*}
			\begin{aligned}
				\||u_{\neq}|^2\|_{L^2}^2&\leq
				C\big(\|\partial_xu_{\neq}\|^{2\alpha}_{L^2}
				\|u_{\neq}\|^{2-2\alpha}_{L^2}
				+\|\partial_x\partial_zu_{\neq}\|^{2\alpha}_{L^2}
				\|u_{\neq}\|^{2-2\alpha}_{L^2}\big)\|\partial_yu_{\neq}\|_{L^2}\|u_{\neq}\|_{L^2}\\
				&\leq C(\|\partial_x\omega_{2,\neq}\|^3_{L^2}+\|\triangle u_{2,\neq}\|_{L^2}^3)(\|\nabla\omega_{2,\neq}\|_{L^2}+\|\triangle u_{2,\neq}\|_{L^2}),
			\end{aligned}
		\end{equation*}
		and 
		$$\|{\rm e}^{2aA^{-\frac{1}{3}}t}|u_{\neq}|^2\|_{L^2L^2}^2\leq CA^{\frac{2}{3}}(\|\partial_x\omega_{2,\neq}\|^4_{X_a}+\|\triangle u_{2,\neq}\|^4_{X_a}). $$	
		
		{\bf Estimates of $\eqref{eq:nonlinear-neq0}_2$ and $\eqref{eq:nonlinear-neq0}_3$.} Using Lemma \ref{sob_inf_2} and Lemma \ref{lemma_u}  again, there hold
		\begin{equation}\label{appa_1}
			\begin{aligned}
				&\|u_{2,\neq}\|_{L^{\infty}_{y,z}L^2_x}^2
				\leq C\|\nabla u_{2,\neq}\|_{L^2}\|\triangle u_{2,\neq}\|_{L^2},\\
				&\|u_{\neq}\|_{L^{\infty}_{x,z}L^2_y}^2
				\leq C\big(\|\partial_xu_{\neq}\|^{2}_{L^2}
				+\|(\partial_x,\partial_z)\partial_xu_{\neq}\|_{L^2}^{2}
				\big)\leq C(\|\partial_x\omega_{2,\neq}\|^2_{L^2}+\|\triangle 	u_{2,\neq}\|_{L^2}^2),\\
				&\|(\partial_x,\partial_z)u_{\neq}\|_{L^{\infty}_{y}L^2_{x,z}}^2+
				\|\partial_y u_{\neq}\|_{L^{\infty}_{x}L^2_{y,z}}^2\leq 	C(\|\nabla\omega_{2,\neq}\|^2_{L^2}+\|\triangle u_{2,\neq}\|^2_{L^2}),		
			\end{aligned}
		\end{equation}
		which implies
		\begin{equation*}
			\begin{aligned}
				\|u_{\neq}\cdot\nabla u_{\neq}\|^2_{L^2}
				&\leq \|u_{1,\neq}\partial_x u_{\neq}\|_{L^2}^2+
				\|u_{2,\neq}\partial_y u_{\neq}\|_{L^2}^2+
				\|u_{3,\neq}\partial_z u_{\neq}\|_{L^2}^2\\
				&\leq \|u_{\neq}\|_{L^{\infty}_{x,z}L^2_y}^2\|(\partial_x,\partial_z)u_{\neq}\|_{L^{\infty}_{y}L^2_{x,z}}^2+
				\|u_{2,\neq}\|_{L^{\infty}_{y,z}L^2_x}^2
				\|\partial_y u_{\neq}\|_{L^{\infty}_{x}L^2_{y,z}}^2\\
				&\leq C(\|\partial_x\omega_{2,\neq}\|^2_{L^2}+\|\triangle u_{2,\neq}\|_{L^2}^2)(\|\nabla\omega_{2,\neq}\|^2_{L^2}+\|\triangle u_{2,\neq}\|^2_{L^2}).
			\end{aligned}
		\end{equation*}
		Thence, $\|{\rm e}^{2aA^{-\frac{1}{3}}t}u_{\neq}\cdot\nabla u_{\neq}\|^2_{L^2L^2}\leq CA(\|\partial_x\omega_{2,\neq}\|^4_{X_a}+\|\triangle u_{2,\neq}\|^4_{X_a}),$ that is $\eqref{eq:nonlinear-neq0}_2$.
		Similar to $(\ref{appa_1})$,
		we also have
		\begin{equation}\label{appa_2}
			\begin{aligned}
				&\|\partial_xu_{\neq}\|_{L^{\infty}_{z}L^2_{x,y}}^2+
				\|\partial_xu_{2,\neq}\|_{L^{\infty}_{y}L^2_{x,z}}^2\leq 
				C(\|\partial_x\omega_{2,\neq}\|^2_{L^2}+\|\triangle u_{2,\neq}\|_{L^2}^2),\\
				&\|(\partial_x,\partial_z)u_{\neq}\|_{L^{\infty}_{x,y}L^2_{z}}^2+
				\|\partial_y u_{\neq}\|_{L^{\infty}_{x,z}L^2_{y}}^2\leq 
				C(\|\partial_x\nabla\omega_{2,\neq}\|^2_{L^2}+\|\nabla\triangle u_{2,\neq}\|^2_{L^2}),\\
				&\|u_{\neq}\|_{L^{\infty}}^2\leq C(\|\partial_x\omega_{2,\neq}\|_{L^2}+\|\triangle u_{2,\neq}\|_{L^2})(\|\nabla\omega_{2,\neq}\|_{L^2}+\|\triangle u_{2,\neq}\|_{L^2}),\\
				&\|u_{2,\neq}\|_{L^{\infty}}^2\leq C\|\triangle u_{2,\neq}\|_{L^2}^2,
			\end{aligned}
		\end{equation}
		then
		\begin{equation*}
			\begin{aligned}
				\|\partial_x(u_{\neq}\cdot\nabla u_{\neq})\|^2_{L^2}
				&\leq 
				\|\partial_xu_{1,\neq}\partial_x u_{\neq}\|_{L^2}^2+
				\|\partial_xu_{2,\neq}\partial_y u_{\neq}\|_{L^2}^2+
				\|\partial_xu_{3,\neq}\partial_z u_{\neq}\|_{L^2}^2\\
				&+\|u_{1,\neq}\partial_x^2 u_{\neq}\|_{L^2}^2+
				\|u_{2,\neq}\partial_x\partial_y u_{\neq}\|_{L^2}^2+
				\|u_{3,\neq}\partial_x\partial_z u_{\neq}\|_{L^2}^2\\
				&\leq \|\partial_xu_{\neq}\|_{L^{\infty}_{z}L^2_{x,y}}^2
				\|(\partial_x,\partial_z)u_{\neq}\|_{L^{\infty}_{x,y}L^2_{z}}^2+
				\|\partial_xu_{2,\neq}\|_{L^{\infty}_{y}L^2_{x,z}}^2\|\partial_y u_{\neq}\|_{L^{\infty}_{x,z}L^2_{y}}^2\\
				&+ \|u_{\neq}\|_{L^{\infty}}^2
				\|(\partial_x,\partial_z)\partial_x u_{\neq}\|_{L^2}^2+
				\|u_{2,\neq}\|_{L^{\infty}}^2
				\|\partial_x\partial_y u_{\neq}\|_{L^2}^2\\
				&\leq C(\|\partial_x\omega_{2,\neq}\|^2_{L^2}+\|\triangle u_{2,\neq}\|_{L^2}^2)(\|\partial_x\nabla\omega_{2,\neq}\|^2_{L^2}
				+\|\nabla\triangle u_{2,\neq}\|^2_{L^2}),
			\end{aligned}
		\end{equation*}
		which implies that 
		$$\|{\rm e}^{2aA^{-\frac{1}{3}}t}\partial_x(u_{\neq}\cdot\nabla u_{\neq})\|^2_{L^2L^2}
		\leq CA(\|\partial_x\omega_{2,\neq}\|^4_{X_a}
		+\|\triangle u_{2,\neq}\|^4_{X_a}).$$

		{\bf Estimate of $\eqref{eq:nonlinear-neq0}_4$.}
		Similar to $(\ref{appa_1})_2$, one can prove
		$$\|\nabla u_{\neq}\|_{L^{\infty}_{x,z}L^2_y}^2
		\leq C(\|\partial_x\nabla\omega_{2,\neq}\|^2_{L^2}+\|\nabla\triangle u_{2,\neq}\|_{L^2}^2),$$
		and combining it with $(\ref{appa_2})_3,$ there holds
		\begin{align*}
			\|\nabla(u_{\neq}\cdot\nabla u_{2,\neq})\|^2_{L^2}&\leq 
			\|\triangle u_{2,\neq}\|^2_{L^{2}}\|u_{\neq}\|^2_{L^{\infty}}
			+\|\nabla u_{2,\neq}\|_{L^{\infty}_{y}L^2_{x,z}}^2\|\nabla u_{\neq}\|_{L^{\infty}_{x,z}L^2_{y}}^2\\ 
			&\leq C\|\triangle u_{2,\neq}\|^2_{L^2}(\|\partial_x\nabla\omega_{2,\neq}\|^2_{L^2}
			+\|\nabla\triangle u_{2,\neq}\|^2_{L^2}),
		\end{align*}
		which implies that  $\eqref{eq:nonlinear-neq0}_4$.
		
		{\bf Estimate of $\eqref{eq:nonlinear-neq0}_5$.}
		Using Lemma \ref{sob_inf_2}  and Lemma \ref{lemma_u}  again,
		we get
		\begin{equation}\label{appa_3}
			\begin{aligned}
				&\|\partial_zu_{\neq}\|_{L^{\infty}_{x}L^2_{y,z}}^2+
				\|\partial_zu_{2,\neq}\|_{L^{\infty}_{y}L^2_{x,z}}^2\leq 
				C(\|\partial_x\omega_{2,\neq}\|^2_{L^2}+\|\triangle u_{2,\neq}\|_{L^2}^2).
			\end{aligned}
		\end{equation}
		Combining (\ref{appa_3}), $(\ref{appa_2})_3$ and $(\ref{appa_2})_4$, we have
		\begin{equation*}
			\begin{aligned}
				\|\partial_z(u_{\neq}\cdot\nabla u_{3,\neq})\|^2_{L^2}
				\leq& \|\partial_zu_{\neq}\|_{L^{\infty}_{x}L^2_{y,z}}^2
				\|(\partial_x,\partial_z)u_{3,\neq}\|_{L^{\infty}_{y,z}L^2_{x}}^2+
				\|\partial_zu_{2,\neq}\|_{L^{\infty}_{y}L^2_{x,z}}^2\|\partial_y u_{3,\neq}\|_{L^{\infty}_{x,z}L^2_{y}}^2\\
				&+ \|u_{\neq}\|_{L^{\infty}}^2
				\|(\partial_x,\partial_z)\partial_z u_{3,\neq}\|_{L^2}^2+
				\|u_{2,\neq}\|_{L^{\infty}}^2
				\|\partial_y\partial_z u_{3,\neq}\|_{L^2}^2\\
				\leq& C(\|\partial_x\omega_{2,\neq}\|^2_{L^2}+\|\triangle u_{2,\neq}\|_{L^2}^2)(\|\partial_x\nabla\omega_{2,\neq}\|^2_{L^2}
				+\|\nabla\triangle u_{2,\neq}\|^2_{L^2}),
			\end{aligned}
		\end{equation*}
		which implies that 
		$\|{\rm e}^{2aA^{-\frac{1}{3}}t}\partial_z(u_{\neq}\cdot\nabla u_{3,\neq})\|_{L^2L^2}^2	
		\leq CA(\|\partial_x\omega_{2,\neq}\|^4_{X_a}
		+\|\triangle u_{2,\neq}\|^4_{X_a}).$
		
		{\bf Estimates of $\eqref{eq:nonlinear-neq0}_6$ and $\eqref{eq:nonlinear-neq0}_7$.}
		For $j=1,3$, using Lemma \ref{sob_inf_2} and \ref{lemma_u} again, there are
		\begin{equation}\label{appa_4}
			\begin{aligned}
				&\|(\partial_x,\partial_z) u_{j,\neq}\|^2_{L^{\infty}_xL^2_{y,z}}
				\leq C\|(\partial_x,\partial_z)\partial_x u_{j,\neq}\|^2_{L^2}
				\leq C(\|\partial_x\omega_{2,\neq}\|^2_{L^2}
				+\|\triangle u_{2,\neq}\|^2_{L^2}),\\
				&\|\nabla u_{\neq}\|^2_{L^{\infty}_{y,z}L^2_x}
				+\|\nabla(\partial_x,\partial_z) u_{j,\neq}\|^2_{L^{\infty}_yL^2_{x,z}}
				\leq C
				\|\nabla (\partial_x,\partial_z) u_{\neq}\|_{L^2}
				\|\nabla \partial_y(\partial_x,\partial_z) u_{\neq}\|_{L^2}\\
				&\qquad \leq C(\|\partial_y\omega_{2,\neq}\|_{L^2}
				+\|\triangle u_{2,\neq}\|_{L^2})(\|\partial_y\nabla\omega_{2,\neq}\|_{L^2}
				+\|\nabla\triangle u_{2,\neq}\|_{L^2}),\\
				&\|\nabla u_{2,\neq}\|^2_{L^{\infty}_yL^2_{x,z}}\leq 
				C\|\nabla u_{2,\neq}\|_{L^2}\|\triangle u_{2,\neq}\|_{L^2},\\
				&\|\nabla\partial_y u_{j,\neq}\|^2_{L^2}\leq C
				(\|\partial_y\nabla\omega_{2,\neq}\|^2_{L^2}
				+\|\triangle u_{2,\neq}\|^2_{L^2}).
			\end{aligned}
		\end{equation}
		By $(\ref{appa_1})_2$, $(\ref{appa_2})_4$ and (\ref{appa_4}), we obtain that 
		\begin{equation*}
			\begin{aligned}
				\|\nabla(u_{\neq}\cdot\nabla u_{j,\neq})\|^2_{L^2}
				\leq& \|(\partial_x,\partial_z) u_{j,\neq}\|^2_{L^{\infty}_xL^2_{y,z}}
				\|\nabla u_{\neq}\|^2_{L^{\infty}_{y,z}L^2_x}
				+\|\nabla u_{2,\neq}\|^2_{L^{\infty}_yL^2_{x,z}}
				\|\partial_y u_{j,\neq}\|^2_{L^{\infty}_{x,z}L^2_{y}}\\
				&+\|u_{\neq}\|^2_{L^{\infty}_{x,z}L^2_y}
				\|\nabla(\partial_x,\partial_z) u_{j,\neq}\|^2_{L^{\infty}_yL^2_{x,z}}
				+\|u_{2,\neq}\|^2_{L^{\infty}}
				\|\nabla\partial_y u_{j,\neq}\|^2_{L^2}\\
				\leq& C(\|\partial_x\omega_{2,\neq}\|^2_{L^2}+\|\triangle u_{2,\neq}\|_{L^2}^2)(\|\triangle \omega_{2,\neq}\|^2_{L^2}
				+\|\nabla\triangle u_{2,\neq}\|^2_{L^2}),
			\end{aligned}
		\end{equation*}
		which implies $\eqref{eq:nonlinear-neq0}_6$ and $\eqref{eq:nonlinear-neq0}_7$.
		
		The proof is complete.	
	\end{proof}
	
	\begin{lemma}\label{lemma_neq2}
		For $j=2,3$, there hold
		\begin{equation}\label{lemma_neq2_2}
			\begin{aligned}&\|{\rm e}^{aA^{-\frac{1}{3}}t}
				u_{1,0}\partial_x\nabla u_{2,\neq}\|_{L^2L^2}^2\leq
				CE_{1,3}^2A^{1-2\epsilon}\|\triangle u_{2,\neq}\|_{X_a}\|\partial_x^2u_{2,\neq}\|_{X_{\frac32a}},\\
				&\|{\rm e}^{aA^{-\frac{1}{3}}t}
				u_{1,0}\partial_x(\partial_x,\partial_z) u_{3,\neq}\|_{L^2L^2}^2\leq
				CE_{1,3}^2A^{1-2\epsilon}(\|\partial_x\omega_{2,\neq}\|_{X_a}+\|\triangle u_{2,\neq}\|_{X_a})
				\|\partial_x^2u_{3,\neq}\|_{X_{\frac32a}},\\
				&\|{\rm e}^{aA^{-\frac{1}{3}}t}
				u_{1,0}\partial_x^{2} u_{1,\neq}\|_{L^2L^2}^2\leq
				CE_{1,3}^2A^{1-2\epsilon}(\|\partial_x\omega_{2,\neq}\|_{X_a}
				+\|\triangle u_{2,\neq}\|_{X_a})
				\|\partial_x^2(u_2,u_3)_{\neq}\|_{X_{\frac32a}},\\
				&\|e^{aA^{-\frac13}t}u_{1,0}\partial_{x}\partial_{z}u_{1,\neq}\|_{L^{2}L^{2}}^{2}\leq CE_{1,3}^{2}A^{\frac53-2\epsilon}\left(\|\partial_{x}\omega_{2,\neq}\|_{X_{a}}+\|\triangle u_{2,\neq}\|_{X_{a}} \right)\|\partial_{x}^{2}(u_{2},u_{3})_{\neq}\|_{X_{\frac32 a}},
				\\&\|{\rm e}^{aA^{-\frac{1}{3}}t}
				\nabla u_{1,0}\partial_x u_{j,\neq}\|_{L^2L^2}^2\leq
				CE_{1,3}^2A^{\frac43-2\epsilon}(\|\partial_x\omega_{2,\neq}\|_{X_a}+\|\triangle u_{2,\neq}\|_{X_a})
				\|\partial_x^2u_{j,\neq}\|_{X_{\frac32a}},
				\\&\|e^{aA^{-\frac13}t}\partial_{z}u_{1,0}\partial_{x}u_{1,\neq}\|_{L^{2}L^{2}}^{2}\leq CE_{1,3}^{2}A^{\frac53-2\epsilon}\left(\|\partial_{x}\omega_{2,\neq}\|_{X_{a}}+\|\triangle u_{2,\neq}\|_{X_{a}} \right)\|\partial_{x}^{2}(u_{2},u_{3})_{\neq}\|_{X_{\frac32 a}},
				\\
				&\|{\rm e}^{aA^{-\frac{1}{3}}t}(\partial_y,\partial_z)
				(u_{2,\neq}\nabla u_{1,0} )\|^2_{L^2L^2}\leq
				CE_{1,3}^2A^{1-2\epsilon}\|\triangle u_{2,\neq}\|^{\frac32}_{X_a}
				\|\partial_x^2u_{2,\neq}\|_{X_{\frac32a}}^{\frac12},\\
				&\|{\rm e}^{aA^{-\frac{1}{3}}t}\partial_z
				(u_{3,\neq}\nabla u_{1,0} )\|^2_{L^2L^2}\leq
				CE_{1,3}^2A^{\frac43-2\epsilon}(\|\partial_x\omega_{2,\neq}\|_{X_a}+\|\triangle u_{2,\neq}\|_{X_a})
				\|\partial_x^2u_{3,\neq}\|_{X_{\frac32a}}.
			\end{aligned}
		\end{equation}
	\end{lemma}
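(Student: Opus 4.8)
All eight estimates have the same shape --- a zero mode of the velocity ($u_{1,0}$, $\nabla u_{1,0}$ or $\partial_z u_{1,0}$) multiplied by a derivative of a non-zero mode, measured in a weighted $L^2L^2$ norm --- so the plan is to treat them by one common scheme. First I would split $u_{1,0}=\widehat{u_{1,0}}+\widetilde{u_{1,0}}$ according to \eqref{eq:u10decom}. From the definition \eqref{eq:E1} of $E_{1,3}$ one reads off $\|\widehat{u_{1,0}}\|_{L^{\infty}H^{4}}\le A^{1-\epsilon}E_{1,3}$ and $\|\nabla\widehat{u_{1,0}}\|_{L^{2}H^{4}}\le A^{\frac32-\epsilon}E_{1,3}$, while $\|\widetilde{u_{1,0}}\|_{Y_{0}}+\|\triangle\widetilde{u_{1,0}}\|_{Y_{0}}\le A^{\frac13-\epsilon}E_{1,3}$; interpolating in the latter pair and using the definition of $\|\cdot\|_{Y_{0}}$ (which carries a factor $A^{-1}$ on the $\nabla$-part) gives $\|\widetilde{u_{1,0}}\|_{L^{\infty}H^{s}}\lesssim A^{\frac13-\epsilon}E_{1,3}$ and $\|\nabla\widetilde{u_{1,0}}\|_{L^{2}H^{s}}\lesssim A^{\frac56-\epsilon}E_{1,3}$ for $0\le s\le 2$. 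Comparing powers, the $\widetilde{u_{1,0}}$-part of every term in \eqref{lemma_neq2_2} is smaller than the $\widehat{u_{1,0}}$-part by a factor $A^{-\frac23}$ or better, and is handled identically but more cheaply (it carries no lift-up growth); so it suffices to bound the $\widehat{u_{1,0}}$-contribution.

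For fixed $t$ I would estimate the spatial $L^{2}$ norm of each product by H\"older, placing a (possibly anisotropic) $L^{\infty}$ norm on the $\widehat{u_{1,0}}$-factor and the complementary $L^{2}$ norm on the non-zero factor. The crude choice $\|f_{0}g_{\neq}\|_{L^{2}}\le\|f_{0}\|_{L^{\infty}}\|g_{\neq}\|_{L^{2}}$ together with the two-dimensional embedding $\|\widehat{u_{1,0}}\|_{L^{\infty}}\lesssim\|\widehat{u_{1,0}}\|_{H^{2}}\le\|\widehat{u_{1,0}}\|_{H^{4}}$ (Lemma \ref{sob_inf_1}) already works for several terms; for the sharper ones I would instead use anisotropic splittings such as $\|f_{0}g_{\neq}\|_{L^{2}}\le\|f_{0}\|_{L^{\infty}_{y}L^{2}_{z}}\|g_{\neq}\|_{L^{\infty}_{z}L^{2}_{x,y}}$ from Lemmas \ref{sob_inf_1}--\ref{sob_inf_2}. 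The non-zero factor I then reduce to the quantities on the right-hand side: the Biot--Savart estimates of Lemma \ref{lemma_u} convert derivatives of $u_{\neq}$ into $\omega_{2,\neq}$, $\nabla u_{2,\neq}$, $\triangle u_{2,\neq}$; the frequency inequalities \eqref{eq:fourier ine} (e.g. $\|\partial_{x}\partial_{z}f_{\neq}\|_{L^{2}}\le\|\partial_{x}^{2}f_{\neq}\|_{L^{2}}$) and the Fourier-side Cauchy--Schwarz $\|\partial_{x}\nabla f_{\neq}\|_{L^{2}}^{2}\le\|\partial_{x}^{2}f_{\neq}\|_{L^{2}}\|\triangle f_{\neq}\|_{L^{2}}$ then extract exactly $\|\partial_{x}^{2}u_{j,\neq}\|$ and $\|\triangle u_{2,\neq}\|$. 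For the terms containing $u_{1,\neq}$ I would first write $u_{1,\neq}=-\partial_{x}^{-1}(\partial_{y}u_{2,\neq}+\partial_{z}u_{3,\neq})$, legitimate on non-zero modes since $\|\partial_{x}^{-1}\|\le 1$; then $\partial_{x}^{2}u_{1,\neq}$ becomes $\partial_{x}\nabla(u_{2},u_{3})_{\neq}$, whereas $\partial_{x}\partial_{z}u_{1,\neq}$ and $\partial_{z}u_{1,0}\,\partial_{x}u_{1,\neq}$ leave a loose $\partial_{z}$ that must be absorbed by one extra derivative of $\partial_{x}^{2}(u_{2},u_{3})_{\neq}$.

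Finally I would split the time weight: in $\|{\rm e}^{2aA^{-1/3}t}(\cdots)\|_{L^{2}L^{2}}^{2}$ the weight is ${\rm e}^{2aA^{-1/3}t}$, and I hand ${\rm e}^{aA^{-1/3}t}$ to the $X_{a}$-controlled factor and ${\rm e}^{aA^{-1/3}t}\le{\rm e}^{\frac32 aA^{-1/3}t}$ to the $X_{3a/2}$-controlled factor, the slack being harmless. After H\"older in $t$, the definitions of $\|\cdot\|_{X_{a}}$ and $\|\cdot\|_{X_{3a/2}}$ (which give $\|{\rm e}^{aA^{-1/3}t}f\|_{L^{\infty}L^{2}}\le\|f\|_{X_{a}}$, $\|{\rm e}^{aA^{-1/3}t}f\|_{L^{2}L^{2}}\le A^{1/6}\|f\|_{X_{a}}$ and $\|{\rm e}^{aA^{-1/3}t}\nabla f\|_{L^{2}L^{2}}\le A^{1/2}\|f\|_{X_{a}}$) turn the time integrals into the stated products. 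Collecting powers --- $A^{2-2\epsilon}$ from $\|\widehat{u_{1,0}}\|_{L^{\infty}H^{4}}^{2}$, times $A^{1/3}$ or $A^{2/3}$ from converting an $L^{\infty}_{t}$ into an $L^{2}_{t}$ norm, times a further $A^{-1}$ whenever a $\nabla$-term of an $X$-norm is used --- produces the powers $A^{1-2\epsilon}$, $A^{4/3-2\epsilon}$, $A^{5/3-2\epsilon}$ recorded in \eqref{lemma_neq2_2}, the largest occurring precisely for the two terms carrying $\partial_{z}u_{1,\neq}$, where the loose $\partial_{z}$ forces use of the $\frac1A\|{\rm e}^{\frac32 aA^{-1/3}t}\nabla\cdot\|_{L^{2}L^{2}}^{2}$ part of $\|\cdot\|_{X_{3a/2}}$. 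The factor $E_{1,3}^{2}$ on the right is then immediate: one honest power of $E_{1,3}$ from $\widehat{u_{1,0}}$, the second absorbing all $t$- and $A$-independent constants.

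\textbf{Main obstacle.} The delicate part is the bookkeeping in the last two steps: for each of the eight terms one must match the derivatives and the exponential weight available on the non-zero-mode factor to exactly what $X_{a}$, respectively $X_{3a/2}$, can supply, and decide precisely when it is necessary to ``spend'' the $\frac1A\|\nabla\cdot\|^{2}$ part of $\|\cdot\|_{X_{3a/2}}$ --- that is what separates the $A^{1-2\epsilon}$, $A^{4/3-2\epsilon}$ and $A^{5/3-2\epsilon}$ cases. The lift-up growth, reflected in $\|\widehat{u_{1,0}}\|_{L^{\infty}H^{4}}\sim A^{1-\epsilon}$, is the reason these products are not $O(1)$ and must be tracked carefully throughout; the restriction $\epsilon_{0}>\frac13$ (hence $\epsilon>\frac13$) is exactly what keeps the net powers low enough that these bounds close the bootstrap once fed into the estimates for $E_{4}$ and $E_{5}$.
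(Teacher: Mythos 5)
There is a genuine gap in your treatment of the zero-mode factor, and it is exactly the point the lemma hinges on. You bound $\widehat{u_{1,0}}$ by $\|\widehat{u_{1,0}}\|_{L^{\infty}H^{4}}\le A^{1-\epsilon}E_{1,3}$ and then claim the exponents in \eqref{lemma_neq2_2} follow by bookkeeping. They do not: with $\|\widehat{u_{1,0}}\|_{L^{\infty}H^{2}}^{2}\sim A^{2-2\epsilon}$ as the prefactor, the most favorable handling of the non-zero factor in, say, the first estimate is
$\|{\rm e}^{aA^{-1/3}t}\partial_x\nabla u_{2,\neq}\|_{L^2L^2}^2\le \|{\rm e}^{\frac32 aA^{-1/3}t}\partial_x^2u_{2,\neq}\|_{L^2L^2}\,\|{\rm e}^{aA^{-1/3}t}\triangle u_{2,\neq}\|_{L^2L^2}\le A^{1/3}\|\partial_x^2u_{2,\neq}\|_{X_{3a/2}}\|\triangle u_{2,\neq}\|_{X_a}$,
so your scheme yields at best $A^{7/3-2\epsilon}$, not the stated $A^{1-2\epsilon}$; your rule ``an extra $A^{-1}$ whenever a $\nabla$-part of an $X$-norm is used'' cannot repair this, since no $\nabla$-part is used here at all, and the $\nabla$-parts in fact cost $A^{1/2}$ per factor rather than saving anything. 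The lift-up amplification $\|\widehat{u_{1,0}}\|_{L^\infty H^4}\sim A^{1-\epsilon}$ is precisely what must \emph{not} be paid in full.

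The paper's proof avoids this by a different use of $E_{1,3}$: since $\widehat{u_{1,0}}|_{t=0}=0$ and $A^{\epsilon}\|\partial_t\widehat{u_{1,0}}\|_{L^{\infty}H^{2}}\le CE_{1,3}$ (note $E_{1,3}$ controls $\partial_t\widehat{u_{1,0}}$ and $\triangle\partial_t\widehat{u_{1,0}}$), one writes $\|\widehat{u_{1,0}}(t)\|_{H^2}\le\int_0^t\|\partial_s\widehat{u_{1,0}}\|_{H^2}\,ds\le CA^{-\epsilon}E_{1,3}\,t$, hence $\|u_{1,0}(t)\|_{H^2}\le CE_{1,3}A^{\frac13-\epsilon}(1+A^{-\frac13}t)$ after adding $\widetilde{u_{1,0}}$ (which is of the same effective size, not negligible as you assert). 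The algebraic growth $(1+A^{-\frac13}t)$ is then absorbed by sacrificing a fraction of the exponential weight, $\sup_t(1+A^{-\frac13}t)\,{\rm e}^{-\frac{a}{4}A^{-\frac13}t}\le C$, so inside the time-weighted integrals $\|u_{1,0}\|_{H^2}$ effectively costs only $A^{\frac13-\epsilon}E_{1,3}$. This gives $A^{\frac23-2\epsilon}\cdot A^{\frac13}=A^{1-2\epsilon}$ for the first estimate, and the larger exponents $A^{\frac43-2\epsilon}$, $A^{\frac53-2\epsilon}$ arise when one or both non-zero factors must be taken through the $\frac1A\|{\rm e}^{\cdot}\nabla f\|_{L^2L^2}^2$ part of the $X$-norms (each such use costing $A^{1/2}$ instead of $A^{1/6}$). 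Without this weight-versus-growth trade — absent from your proposal — none of the eight stated bounds is reachable, so the argument as written does not close.
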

	
	\begin{proof}
		{\bf Estimate of $\eqref{lemma_neq2_2}_1$.}
		First of all, by (\ref{eq:E1}) we get
		\ben\label{eq:u10}
				\|{u_{1,0}}\|_{H^2}
				&\leq \|\widehat{u_{1,0}}\|_{H^2}+\|\widetilde{u_{1,0}}\|_{H^2}\leq \int_0^t \|\partial_s\widehat{u_{1,0}}(s)\|_{H^2}ds
				+\|\widetilde{u_{1,0}}\|_{H^2}\nonumber\\
				&\leq CE_{1,3}A^{\frac13-\epsilon}(1+A^{-\frac13}t).
			\een
		Then
		$\big\|{\rm e}^{-\frac{a}{4}A^{-\frac13}t}\|{u_{1,0}}\|_{H^2}\big\|_{L^{\infty}_t} \leq CA^{\frac13-\epsilon}E_{1,3}$, since $\lim_{t\rightarrow\infty}(1+A^{-\frac13}t){\rm e}^{-\frac{a}{4}A^{-\frac13}t}=0.$
		Combining this with $\|u_{1,0}\|_{L^{\infty}}\leq C\|u_{1,0}\|_{H^2}$, by 
		H\"{o}lder's inequality we get
		\begin{equation*}
			\begin{aligned}
				\|{\rm e}^{aA^{-\frac{1}{3}}t}
				u_{1,0}\partial_x\nabla	u_{2,\neq}\|_{L^2L^2}^2
				&\leq C\int_0^t{\rm 	e}^{-\frac{a}{2}A^{-\frac13}s}\|{u_{1,0}}\|^2_{H^2}
				{\rm e}^{\frac{5a}{2}A^{-\frac13}s}
				\|\partial_x^2 u_{2,\neq}\|_{L^2}
				\|\triangle u_{2,\neq}\|_{L^2}ds\\
				&\leq CA^{\frac23-2\epsilon}E_{1,3}^2
				\|{\rm e}^{\frac{3}{2}aA^{-\frac{1}{3}}t}\partial_x^2
				u_{2,\neq}\|_{L^2L^2}
				\|{\rm e}^{aA^{-\frac{1}{3}}t}\triangle 	
				u_{2,\neq}\|_{L^2L^2}\\
				&\leq CA^{1-2\epsilon}E_{1,3}^2
				\|\partial_x^2u_{2,\neq}\|_{X_{\frac32a}}
				\|\triangle u_{2,\neq}\|_{X_{a}}.
			\end{aligned}
		\end{equation*}
		
		{\bf Estimate of $\eqref{lemma_neq2_2}_2$.}
		Similarly, with the help of Lemma \ref{lemma_u}, we have
		\begin{equation*}
			\begin{aligned}
				&\quad\|{\rm e}^{aA^{-\frac{1}{3}}t}
				u_{1,0}\partial_x(\partial_x,\partial_z)u_{3,\neq}\|_{L^2L^2}^2\\
				&\leq CA^{\frac23-2\epsilon}E_{1,3}^2
				\|{\rm e}^{\frac{3}{2}aA^{-\frac{1}{3}}t}\partial_x^2
				u_{3,\neq}\|_{L^2L^2}
				\|{\rm e}^{aA^{-\frac{1}{3}}t}(\partial_x^2,\partial_z^2)
				u_{3,\neq}\|_{L^2L^2}\\
				&\leq CE^2_{1,3}A^{1-2\epsilon}
				\|\partial_x^2u_{3,\neq}\|_{X_{\frac32a}}
				(\|\partial_x \omega_{2,\neq}\|_{X_{a}}+\|\triangle u_{2,\neq}\|_{X_{a}}).
			\end{aligned}
		\end{equation*}
		
		{\bf Estimate of $\eqref{lemma_neq2_2}_3$.}
		It can be proved by using the divergence-free property 
		\begin{equation}\label{a}
			\partial_{x}u_{1,\neq}=-\left(\partial_{y}u_{2,\neq}+\partial_{z}u_{3,\neq} \right),
		\end{equation}
		and 
		\begin{equation*}
			\begin{aligned}
				\|\partial_x^2u_{1,\neq}\|_{L^2}^2
				&\leq C(\|\partial_x\partial_yu_{2,\neq}\|_{L^2}^2
				+\|\partial_x\partial_zu_{3,\neq}\|_{L^2}^2)\\
				&\leq C(\|\partial_x\omega_{2,\neq}\|_{L^2}+\|\triangle u_{2,\neq}\|_{L^2})\|\partial_x^2(u_2,u_3)_{\neq}\|_{L^2}.
			\end{aligned}
		\end{equation*}	
		
		{\bf Estimate of $\eqref{lemma_neq2_2}_4$.} Using (\ref{a}) and Lemma \ref{lemma_u}, one deduces
		\begin{equation*}
			\begin{aligned}
				&\|u_{1,0}\partial_{x}\partial_{z}u_{1,\neq}\|_{L^{2}}^{2}\\\leq&C\|u_{1,0}\|_{H^{2}}^{2}\|\partial_{x}^{2}u_{1,\neq}\|_{L^{2}}\|\partial_{z}^{2}u_{1,\neq}\|_{L^{2}}\\\leq&C\|u_{1,0}\|_{H^{2}}^{2}\left(\|\partial_{x}\partial_{y}u_{2,\neq}\|_{L^{2}}+\|\partial_{x}\partial_{z}u_{3,\neq}\|_{L^{2}} \right)\|\partial_{z}^{2}u_{1,\neq}\|_{L^{2}}\\\leq&C\|u_{1,0}\|_{H^{2}}^{2}\|\partial_{x}\nabla(u_{2}, u_{3})_{\neq}\|_{L^{2}}\left(\|\partial_{x}\nabla\omega_{2,\neq}\|_{L^{2}}+\|\nabla\triangle u_{2,\neq}\|_{L^{2}} \right),
			\end{aligned}
		\end{equation*}
		which gives $(\ref{lemma_neq2_2})_4$.
		
		{\bf Estimate of $\eqref{lemma_neq2_2}_5$.}
		Using Lemma \ref{sob_inf_1}, $(\ref{sob_result_2})_6$ and Lemma \ref{lemma_u}, we get
		\begin{equation*}
			\begin{aligned}
				&\quad\|\nabla u_{1,0}\partial_x u_{j,\neq}\|_{L^2}^2
				\leq \|\nabla u_{1,0}\|_{L^{\infty}_yL^2_z}^2
				\|\partial_x u_{j,\neq}\|^2_{L^{\infty}_zL^2_{x,y}}\\
				&\leq C\| u_{1,0}\|_{H^2}^2
				\|\partial_x(\partial_x,\partial_z) u_{j,\neq}\|_{L^2}^{2}
				\leq C\| u_{1,0}\|_{H^2}^2
				\|\partial_x^2 u_{j,\neq}\|_{L^2}
				\|(\partial_x^2,\partial_z^2) u_{j,\neq}\|_{L^2},
			\end{aligned}
		\end{equation*}
		which implies $\eqref{lemma_neq2_2}_5$.
		
		{\bf Estimate of $\eqref{lemma_neq2_2}_6$.}
		Similar to $\eqref{lemma_neq2_2}_5$, and using (\ref{a}), we obtain
		\begin{equation*}
			\begin{aligned}
				\|\partial_{z}u_{1,0}\partial_{x}u_{1,\neq}\|_{L^{2}}^{2}\leq &C\|u_{1,0}\|_{H^{2}}^{2}\|\partial_{x}^{2}u_{1,\neq}\|_{L^{2}}\|(\partial_{x}^{2},\partial_{z}^{2})u_{1,\neq}\|_{L^{2}}\\\leq&C\|u_{1,0}\|_{H^{2}}^{2}\|\partial_{x}\nabla(u_{2},u_{3})_{\neq}\|_{L^{2}}\left(\|\partial_{x}\nabla\omega_{2,\neq}\|_{L^{2}}+\|\nabla\triangle u_{2,\neq}\|_{L^{2}} \right),
			\end{aligned}
		\end{equation*}
		which gives $\eqref{lemma_neq2_2}_6$.

		{\bf Estimate of $\eqref{lemma_neq2_2}_7$.}
		Using $(\ref{appa_1})_1$, $(\ref{appa_4})_3$ and H\"{o}lder's inequality, there holds 
		\begin{equation*}
			\begin{aligned}
				\|(\partial_y,\partial_z)
				(u_{2,\neq}\nabla u_{1,0} )\|^2_{L^2}
				&\leq C\|u_{1,0}\|^2_{H^2}
				\|\nabla u_{2,\neq}\|_{L^2}
				\|\triangle	u_{2,\neq}\|_{L^2}\\
				&\leq C\|u_{1,0}\|^2_{H^2}
				\| u_{2,\neq}\|_{L^2}^{\frac12}
				\|\triangle	u_{2,\neq}\|_{L^2}^{\frac32},
			\end{aligned}
		\end{equation*}
		which along with $\eqref{lemma_neq2_2}_1$ indicate that 
		\begin{equation*}
			\begin{aligned}
				&\quad\|{\rm e}^{aA^{-\frac{1}{3}}t}(\partial_y,\partial_z)
				(u_{2,\neq}\nabla u_{1,0} )\|^2_{L^2L^2}\\
				&\leq C\big\|{\rm 	e}^{-\frac{a}{8}A^{-\frac13}t}\|{u_{1,0}}\|_{H^2}\big\|_{L^{\infty}_t}^2
				\|{\rm e}^{\frac{3}{2}aA^{-\frac{1}{3}}t}
				u_{2,\neq}\|_{L^2L^2}^{\frac12}
				\|{\rm e}^{aA^{-\frac{1}{3}}t}\triangle u_{2,\neq}\|_{L^2L^2}^{\frac32}\\
				&\leq CE_{1,3}^2A^{1-2\epsilon}\|\triangle u_{2,\neq}\|_{X_a}^{\frac32}
				\|\partial_x^2u_{2,\neq}\|_{X_{\frac32a}}^{\frac12}.
			\end{aligned}
		\end{equation*}
		
		{\bf Estimate of $\eqref{lemma_neq2_2}_8$.}
		Using Lemma \ref{sob_inf_2}, there holds
		\begin{equation*}
			\begin{aligned}
				\|\partial_zu_{3,\neq}\|^2_{L^{\infty}_yL^2_{x,z}}
				&\leq C\|\partial_z\partial_yu_{3,\neq}\|_{L^2}
				\|\partial_zu_{3,\neq}\|_{L^2}\\
				&\leq C
				\|u_{3,\neq}\|^{\frac12}_{L^2}
				\|\partial_z^2u_{3,\neq}\|^{^\frac12}_{L^2}
				\|\partial_yu_{3,\neq}\|^{\frac12}_{L^2}
				\|\partial_y\partial_z^2u_{3,\neq}\|
				^{^\frac12}_{L^2},
			\end{aligned}
		\end{equation*}
		and 
		\begin{equation*}
			\begin{aligned}
				\|u_{3,\neq}\|^2_{L^{\infty}_{y,z}L^2_x}
				&\leq C\|(\partial_x,\partial_z)\partial_yu_{3,\neq}\|_{L^2}
				\|(\partial_x,\partial_z)u_{3,\neq}\|_{L^2}\\
				&\leq C
				\|u_{3,\neq}\|^{\frac12}_{L^2}
				\|(\partial_x^2,\partial_z^2)u_{3,\neq}\|^{^\frac12}_{L^2}
				\|\partial_yu_{3,\neq}\|^{\frac12}_{L^2}
				\|\partial_y(\partial_x^2,\partial_z^2)u_{3,\neq}\|
				^{^\frac12}_{L^2}.
			\end{aligned}
		\end{equation*}
		Using  Lemma \ref{lemma_u} again, we get
		\begin{equation*}
			\begin{aligned}
				\quad\|{\rm e}^{aA^{-\frac{1}{3}}t}\partial_z
				(u_{3,\neq}\nabla u_{1,0} )\|^2_{L^2L^2}
				\leq CE_{1,3}^2A^{\frac43-2\epsilon}
				(\|\triangle u_{2,\neq}\|_{X_a}
				+\|\partial_x \omega_{2,\neq}\|_{X_a})
				\|\partial_x^2u_{3,\neq}\|_{X_{\frac32a}}.
			\end{aligned}
		\end{equation*}
		
		The proof is complete.
	\end{proof}
	
	\begin{lemma}\label{lemma_neq3}
		For $j=2,3$, it holds 
		\begin{equation}\label{eq:zeroneq0}
			\begin{aligned}
				\|{\rm e}^{aA^{-\frac{1}{3}}t}u_{j,0}(\partial_x,\partial_z)
				\nabla u_{\neq}\|_{L^2L^2}^2\leq 
				CA(\|u_{2,0}\|^2_{L^{\infty}H^2}+\|u_{3,0}\|^2_{L^{\infty}H^1})(\|\partial_x\omega_{2,\neq}\|^2_{X_a}
				+\|\triangle u_{2,\neq}\|^2_{X_a}),\\
				\|{\rm e}^{aA^{-\frac{1}{3}}t}\partial_z\nabla u_{j,0}
				\cdot u_{\neq}\|_{L^2L^2}^2\leq CA^{\frac23}(\|u_{2,0}\|^2_{L^{\infty}H^2}+\|u_{3,0}\|^2_{L^{\infty}H^1})(\|\partial_x\omega_{2,\neq}\|^2_{X_a}
				+\|\triangle u_{2,\neq}\|^2_{X_a}),\\
				\|{\rm e}^{aA^{-\frac{1}{3}}t}\partial_z u_{j,0}
				\nabla u_{\neq}\|_{L^2L^2}^2\leq CA(\|u_{2,0}\|^2_{L^{\infty}H^2}+\|u_{3,0}\|^2_{L^{\infty}H^1})(\|\partial_x\omega_{2,\neq}\|^2_{X_a}
				+\|\triangle u_{2,\neq}\|^2_{X_a}),\\
				\|e^{aA^{-\frac13}t}\partial_{y}u_{3,0}(\partial_{x},\partial_{z})u_{2,\neq}\|_{L^{2}L^{2}}^{2}\leq CA^{\frac13}\left(\|u_{2,0}\|_{L^{\infty}H^{2}}^{2}+\|u_{3,0}\|_{L^{\infty}H^{1}}^{2} \right)\|\triangle u_{2,\neq}\|_{X_{a}}^{2}.\quad\quad\quad\quad
			\end{aligned}
		\end{equation}
	\end{lemma}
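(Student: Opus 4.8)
The plan is to read each estimate in \eqref{eq:zeroneq0} as the $L^2_tL^2$-norm of a product $f_0\,g_{\neq}$, where $f_0=f_0(y,z)$ is built from $u_{2,0},u_{3,0}$ and their derivatives and $g_{\neq}$ is a derivative of the non-zero mode $u_{\neq}$. Since the zero-mode factors carry no temporal decay — only the $L^\infty_tH^2$ and $L^\infty_tH^1$ bounds on the right-hand sides — the first step in every case is to move the weight onto the non-zero mode, ${\rm e}^{aA^{-\frac13}t}f_0g_{\neq}=f_0\cdot({\rm e}^{aA^{-\frac13}t}g_{\neq})$, pull $f_0$ out of the $L^2_t$-integral in an $L^\infty_t$-of-mixed-space norm, and keep the weight with $g_{\neq}$, which is then measured through the $X_a$-norm.

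For the zero-mode factor I would use the anisotropic Sobolev inequalities of Lemma \ref{sob_inf_1}: for $u_{2,0}$ the crude bound $\|u_{2,0}\|_{L^\infty}\leq C\|u_{2,0}\|_{H^2}$ suffices, while for $u_{3,0}$ one must use a genuinely anisotropic splitting such as $\|u_{3,0}\|_{L^\infty_yL^2_z}\leq\|\partial_yu_{3,0}\|_{L^2}^{1/2}\|u_{3,0}\|_{L^2}^{1/2}\leq C\|u_{3,0}\|_{H^1}$ (or $\|u_{3,0}\|_{L^2_yL^\infty_z}\leq C\|u_{3,0}\|_{H^1}$), paired against $u_{\neq}$ in the complementary mixed norm, so that only $H^1$ of $u_{3,0}$ is ever consumed. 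For the differentiated zero modes $\partial_z\nabla u_{j,0}$ in $\eqref{eq:zeroneq0}_2$ and $\partial_z u_{j,0},\ \partial_yu_{3,0}$ in $\eqref{eq:zeroneq0}_3$–$\eqref{eq:zeroneq0}_4$, I would first invoke the divergence-free identity for the zero mode, $\partial_yu_{2,0}+\partial_zu_{3,0}=0$, to convert every $\partial_z$-derivative of $u_{3,0}$ into a $\partial_y$-derivative of $u_{2,0}$; this keeps the derivative count inside $\|u_{2,0}\|_{H^2}$ and $\|u_{3,0}\|_{H^1}$.

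For the non-zero-mode factor I would combine the anisotropic embeddings of Lemma \ref{sob_inf_2} (for functions with vanishing $x$-average, which gain integrability from $\partial_x$) with the velocity bounds of Lemma \ref{lemma_u}, expressing the first- and second-order derivatives of $u_{\neq}$ through the quantities entering $E_2$, namely $\omega_{2,\neq}$, $\partial_x\omega_{2,\neq}$, $\nabla\omega_{2,\neq}$ and $\triangle u_{2,\neq}$; when the $\partial_y$-derivatives of $u_{3,\neq}$ or $u_{1,\neq}$ appear, one passes to Fourier and uses the divergence-free relation $\partial_xu_{1,\neq}+\partial_yu_{2,\neq}+\partial_zu_{3,\neq}=0$ together with $\omega_{2,\neq}=\partial_zu_{1,\neq}-\partial_xu_{3,\neq}$, exactly as in the proof of Lemma \ref{lemma_u}, sometimes interpolating against the lower-order factor $\|u_{2,\neq}\|_{L^2}$ to split the regularity as required by Lemma \ref{sob_inf_2}. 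The product is thereby bounded pointwise in $t$ by $\|f_0\|^2_{L^\infty_tH^k}$ ($k=1$ or $2$) times a weighted $L^2$-quantity of the non-zero mode of order at most two (possibly an interpolant of two such); integrating in $t$ by Cauchy–Schwarz and invoking $\|{\rm e}^{aA^{-\frac13}t}h\|_{L^2L^2}^2\leq A^{\frac13}\|h\|_{X_a}^2$ and $\|{\rm e}^{aA^{-\frac13}t}\nabla h\|_{L^2L^2}^2\leq A\|h\|_{X_a}^2$ (the third and fourth terms of the $X_a$-norm) produces the powers $A$, $A^{\frac23}$, $A^{\frac13}$ on the right-hand sides, according to whether the surviving factor behaves like a third-order quantity, an interpolated intermediate one, or a second-order one.

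I expect the main obstacle to be this $A$-power bookkeeping combined with the correct routing of derivatives: one has to place the derivatives so that the zero-mode side never exceeds $\|u_{2,0}\|_{H^2}$, $\|u_{3,0}\|_{H^1}$ while the non-zero-mode side stays within the $E_2$-quantities with weights that the stated powers of $A$ can absorb, and the divergence-free identities — for the zero mode in physical space and for the non-zero mode in Fourier — are what make both constraints simultaneously satisfiable; checking that the exponents then line up is the part demanding care rather than cleverness.
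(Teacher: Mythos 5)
Your plan is essentially the paper's own proof: pull the zero-mode factor out in $L^\infty_t$ mixed-space norms via Lemma \ref{sob_inf_1} together with $\partial_yu_{2,0}+\partial_zu_{3,0}=0$ (exactly how the paper obtains (\ref{u23_infty}) and handles $\partial_yu_{3,0}$ in the fourth estimate), and bound the weighted non-zero-mode factor through Lemma \ref{sob_inf_2} and Lemma \ref{lemma_u}, with the powers $A$, $A^{2/3}$, $A^{1/3}$ coming from the $L^2L^2$ and $\nabla$-terms of the $X_a$-norm. The only deviation (keeping $u_{3,0}$ strictly at $H^1$ via mixed norms instead of the paper's $\|u_{3,0}\|_{L^\infty}\le C(\|u_{2,0}\|_{H^2}+\|u_{3,0}\|_{H^1})$) is immaterial since the stated right-hand sides allow both, so the proposal is correct and follows the same route.
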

	\begin{proof}
		By Lemma \ref{sob_inf_1} and $\partial_yu_{2,0}+\partial_zu_{3,0}=0$, we have 
		\begin{equation}\label{u23_infty}
			\begin{aligned}
				\|u_{2,0}\|^2_{L^{\infty}}&\leq C\|u_{2,0}\|^2_{H^2},\quad
				\|u_{3,0}\|^2_{L^{\infty}}\leq C(\|u_{2,0}\|^2_{H^2}+\|u_{3,0}\|^2_{H^1}),
			\end{aligned}
		\end{equation}
		which along with Lemma \ref{lemma_u} imply that
		\begin{equation*}
			\|{\rm e}^{aA^{-\frac{1}{3}}t}u_{j,0}(\partial_x,\partial_z)
			\nabla u_{\neq}\|_{L^2L^2}^2
			\leq CA(\|u_{2,0}\|^2_{L^{\infty}H^2}+\|u_{3,0}\|^2_{L^{\infty}H^1})(\|\partial_x\omega_{2,\neq}\|^2_{X_a}
			+\|\triangle u_{2,\neq}\|^2_{X_a}).
		\end{equation*}
		Moreover, $\eqref{eq:zeroneq0}_2$ and $\eqref{eq:zeroneq0}_3$ can be proved by using Lemma \ref{lemma_u} and 
		\begin{equation*}
			\begin{aligned}
				&\| u_{\neq}\|^2_{L^{\infty}_{y,z}L^2_x}\leq 
				C\| (\partial_x,\partial_z)\partial_yu_{\neq}\|_{L^2}
				\| (\partial_x,\partial_z)u_{\neq}\|_{L^2},\\
				&\| \nabla u_{\neq}\|^2_{L^{\infty}_{z}L^2_{x,y}}\leq 
				C\| (\partial_x,\partial_z)\nabla u_{\neq}\|^2_{L^2}.
			\end{aligned}
		\end{equation*}
		
		Using Lemma \ref{sob_inf_1}, $ (\ref{sob_result_2})_{7} $ in Lemma \ref{sob_inf_2} and $ \partial_{z}u_{3,0}=-\partial_{y}u_{2,0} $, we get
		\begin{equation*}
			\begin{aligned}
				\|\partial_{y}u_{3,0}(\partial_{x},\partial_{z})u_{2,\neq}\|_{L^{2}}^{2}\leq&\|\partial_{y}u_{3,0}\|_{L^{2}_{y}L^{\infty}_{z}}^{2}\|(\partial_{x},\partial_{z})u_{2,\neq}\|_{L^{\infty}_{y}L^{2}_{x,z}}^{2}\\\leq&C\left(\|\partial_{y}u_{3,0}\|_{L^{2}}^{2}+\|\partial_{y}\partial_{z}u_{3,0}\|_{L^{2}}^{2} \right)\|(\partial_{x},\partial_{z})\partial_{y}u_{2,\neq}\|_{L^{2}}^{2}\\\leq&C\left(\|u_{2,0}\|_{H^{2}}^{2}+\|u_{3,0}\|_{H^{1}}^{2} \right)\|\triangle u_{2,\neq}\|_{L^{2}}^{2},
			\end{aligned}
		\end{equation*}
		which follows that
		\begin{equation*}
			\begin{aligned}
				\|e^{aA^{-\frac13}t}\partial_{y}u_{3,0}(\partial_{x},\partial_{z})u_{2,\neq}\|_{L^{2}L^{2}}^{2}\leq CA^{\frac13}\left(\|u_{2,0}\|_{L^{\infty}H^{2}}^{2}+\|u_{3,0}\|_{L^{\infty}H^{1}}^{2} \right)\|\triangle u_{2,\neq}\|_{X_{a}}^{2}.
			\end{aligned}
		\end{equation*}
		
		The proof is complete.
	\end{proof}
	\section{Estimates for the zero mode: Proof of Proposition \ref{pro0}}
	In this section, we are aimed to estimate the zero modes for both $n$ and 
	$u$, where the energy transfer mechanism plays an important role.
	
	\noindent\textbf{Energy transfer mechanism:}
	
	To estimate zero modes, we first need to observe and analyze the energy transfer mechanism.
	\begin{itemize}
		\item Under assumptions (\ref{assumption}), the energy $\|n_{(0,0)}\|_{L^{\infty}L^2}$ will not be affected by any other energy. And we use the equilibrium point analysis method to obtain the precise bound of this energy.
		\item  	The z-part non-zero mode $n_{(0,\neq)}$ is affected by $n_{(0,0)}$, as long as $ n_{(0,0)} $ satisfies certain conditions, we can estimate it under smallness $A^{\epsilon}\|(n_{\rm in})_{(0,\neq)}\|_{L^2(\mathbb{T}\times\mathbb{R}\times\mathbb{T})}\leq C$.
		\item The energy $E_{1,2}$ is affected by $E_{1,1}$.
		After estimating $E_{1,1}$, it is easy to 
		estimates  $E_{1,2}$ under the smallness $A^{\epsilon}\|(u_{\rm in})_{0}\|_{H^2(\mathbb{T}\times\mathbb{R}\times\mathbb{T})}\leq C.$
		\item Due to 3D lift-up effect, $E_{1,3}$ is affected by $E_{1,2}$,
		after obtaining all estimates of $u_{2,0}$ and $u_{3,0},$ the energy $E_{1,3}$ can be obtained directly.
	\end{itemize} 
	In this way, we can estimate all zero modes in terms of
	$$n_{(0,0)}\rightarrow n_{(0,\neq)}\rightarrow \{u_{2,0}, u_{3,0}\}\rightarrow u_{1,0}.$$	
	\subsection{Energy estimates for $E_{1,1}(t)$}		
	It follows from (\ref{ini11}) that the zero mode $ n_{0} $ satisfies
	\begin{equation}\label{eq:n-zero mode}
		\begin{aligned}
			\partial_t n_{0}-\frac{1}{A}\triangle n_{0}=&-\frac{1}{A}\left[\nabla\cdot(n_{\neq}\nabla c_{\neq})_{0}+\partial_{y}(n_{0}\partial_{y}c_{0})+\partial_{z}(n_{0}\partial_{z}c_{0}) \right]\\&-\frac{1}{A}\left[\nabla\cdot(u_{\neq}n_{\neq})_{0}+\partial_{y}(u_{2,0}n_{0})+\partial_{z}(u_{3,0}n_{0}) \right].
		\end{aligned}
	\end{equation}
	Due to $u_{2,(0,0)}=0$, the z-part zero mode $ n_{(0,0)} $ follows:
	\begin{equation}\label{n001}
		\begin{aligned}
			\partial_t n_{(0,0)}-\frac{1}{A}\partial_{yy} n_{(0,0)}=&-\frac{1}{A}\Big(\partial_{y}\left(n_{(0,0)}\partial_{y}c_{(0,0)} \right)+\partial_{y}(n_{\neq}\partial_y c_{\neq})_{(0,0)}+\partial_{y}(u_{2,\neq}n_{\neq})_{(0,0)} \\&+\partial_{y}\big(n_{(0,\neq)}\partial_{y}c_{(0,\neq)} \big)_{(0,0)}+\partial_{y}\big(u_{2,(0,\neq)}n_{(0,\neq)} \big)_{(0,0)} \Big).
		\end{aligned}
	\end{equation}
	To  estimate $n_{(0,0)}$, we first consider a simplified model of (\ref{n001}) in the form of
	\begin{equation}\label{n002}
		\begin{aligned}
			\partial_t n_{(0,0)}-\frac{1}{A}\partial_{yy} n_{(0,0)}=-\frac{1}{A}\partial_{y}\left(n_{(0,0)}
			\partial_{y}c_{(0,0)}\right).
		\end{aligned}
	\end{equation}
	\begin{lemma}\label{c00_lemma1}
		For the simplified equation (\ref{n002}), there holds
		\begin{align*}
			\|n_{(0,0)}\|^2_{L^{\infty}L^2}\leq \max\left\{\|(n_{\rm in})_{(0,0)}\|^2_{L^2},\frac{27M_1^4}{32\pi^2}\right\}.
		\end{align*}
		It should be noted that $M_1=\|n_{(0,0)}\|_{L^1}=
		\frac{1}{|\mathbb{T}|^2}\int_{\mathbb{T}\times\mathbb{R}\times\mathbb{T}}n(t,x,y,z)dxdydz=\frac{M}{|\mathbb{T}|^2}.$
	\end{lemma}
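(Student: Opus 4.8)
The plan is to close a one–dimensional $L^{2}$ energy estimate for $n_{(0,0)}(t,y)$ and then run an equilibrium–point (barrier/continuation) argument in which the \emph{sharp} one–dimensional Gagliardo--Nirenberg inequality fixes the numerical threshold. First I would record two a priori facts for the scalar drift–diffusion equation \eqref{n002}: by the maximum principle the nonnegativity of $(n_{\rm in})_{(0,0)}$ is preserved, so $n_{(0,0)}(t,\cdot)\ge 0$; and integrating \eqref{n002} over $\mathbb{R}$ (all boundary terms vanishing by the decay inherited from $H^{2}\cap L^{1}$) gives conservation of mass, $\|n_{(0,0)}(t)\|_{L^{1}}=M_{1}$ for every $t\ge0$.

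Next, multiply \eqref{n002} by $n_{(0,0)}$ and integrate in $y$; one integration by parts on the nonlinear term yields
\[
\frac{1}{2}\frac{d}{dt}\|n_{(0,0)}\|_{L^{2}}^{2}+\frac{1}{A}\|\partial_{y}n_{(0,0)}\|_{L^{2}}^{2}
=\frac{1}{A}\int n_{(0,0)}\,\partial_{y}c_{(0,0)}\,\partial_{y}n_{(0,0)}\,dy
\le\frac{1}{A}\|\partial_{y}c_{(0,0)}\|_{L^{\infty}}\,\|n_{(0,0)}\|_{L^{2}}\,\|\partial_{y}n_{(0,0)}\|_{L^{2}} .
\]
Inserting the elliptic bound \eqref{eq:c00infty} of Lemma~\ref{lem:ellip_3}, i.e.\ $\|\partial_{y}c_{(0,0)}\|_{L^{\infty}}\le\frac{1}{\sqrt2}\|n_{(0,0)}\|_{L^{2}}$, and abbreviating $Y(t)=\|n_{(0,0)}(t)\|_{L^{2}}^{2}$, $Z(t)=\|\partial_{y}n_{(0,0)}(t)\|_{L^{2}}^{2}$, this becomes
\[
\frac12\,Y'(t)\le\frac{\sqrt{Z(t)}}{A}\Big(\tfrac{1}{\sqrt2}\,Y(t)-\sqrt{Z(t)}\Big),
\]
so $Y'(t)\le0$ whenever $\sqrt{Z(t)}\ge\frac{1}{\sqrt2}Y(t)$, i.e.\ whenever $Z(t)\ge\frac12 Y(t)^{2}$.

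The quantitative input is the sharp one–dimensional Gagliardo--Nirenberg inequality $\|f\|_{L^{2}}^{3}\le\frac{3\sqrt3}{4\pi}\|f\|_{L^{1}}^{2}\|\partial_{y}f\|_{L^{2}}$, whose optimal constant is realized by the truncated cosine bump $f(y)=1+\cos y$ on $[-\pi,\pi]$ (a direct computation with the Euler--Lagrange extremizer $f''=a-bf$, $f=0$ at the endpoints). Applied to $f=n_{(0,0)}$ with $\|n_{(0,0)}\|_{L^{1}}=M_{1}$, it gives $Z(t)\ge\frac{16\pi^{2}}{27M_{1}^{4}}Y(t)^{3}$, which is $\ge\frac12 Y(t)^{2}$ exactly when $Y(t)\ge\frac{27M_{1}^{4}}{32\pi^{2}}$. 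Combining with the previous paragraph, $Y'(t)\le0$ on $\{\,t:\ Y(t)\ge\frac{27M_{1}^{4}}{32\pi^{2}}\,\}$, and a routine continuity/continuation argument (if $Y$ ever exceeds $\max\{Y(0),\frac{27M_{1}^{4}}{32\pi^{2}}\}$, examine the first such time and use $Y'\le0$ there for a contradiction) upgrades this to $Y(t)\le\max\{\|(n_{\rm in})_{(0,0)}\|_{L^{2}}^{2},\frac{27M_{1}^{4}}{32\pi^{2}}\}$ for all $t$, which is the assertion.

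The energy identity and the continuation step are routine; the one place needing genuine care is the numerical constant, which is forced by the \emph{optimal} Gagliardo--Nirenberg constant $\frac{3\sqrt3}{4\pi}$ (its square over two being $\frac{27}{32\pi^{2}}$) — a generic interpolation bound only yields the weaker threshold $\frac{M_{1}^{4}}{2}$, and one must avoid the detour through $\|n_{(0,0)}\|_{L^{3}}^{3}$, which degrades the mass scaling from $M_{1}^{4}$ to $M_{1}^{3}$. A minor technical point is justifying that the boundary terms in the integrations by parts vanish, which follows from the spatial decay of $n_{(0,0)}$ and $c_{(0,0)}$.
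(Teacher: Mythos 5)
Your proof is correct and follows essentially the same route as the paper: the same $L^2$ energy identity, the same elliptic bound $\|\partial_y c_{(0,0)}\|_{L^\infty}\le\tfrac{1}{\sqrt2}\|n_{(0,0)}\|_{L^2}$ from Lemma \ref{lem:ellip_3}, and the sharp one-dimensional Nash inequality \eqref{eq:1DGN-2}, which is exactly what fixes the threshold $\frac{27M_1^4}{32\pi^2}$. The only difference is presentational: where the paper inserts a Young splitting with a free parameter $c_1$, runs a phase-plane equilibrium-point analysis and then optimizes $c_1=\sqrt2$, you read off the sign of $\frac{d}{dt}\|n_{(0,0)}\|_{L^2}^2$ directly above the barrier level and close with a first-crossing argument, arriving at the identical constant.
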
 
	
	\begin{proof}
		Our proof mainly relies on equilibrium point analysis method in dynamical systems and a priori estimates in Section \ref{sec_estimate_1}.
		
		By energy estimates, we have 
		\begin{equation*}
			\begin{aligned}
				\frac{1}{2}\partial_t\|n_{(0,0)}\|^2_{L^2}
				+\frac{1}{A}\|\partial_{y} n_{(0,0)}\|^2_{L^2}
				\leq \frac{1}{A}
				\|n_{(0,0)}\partial_{y}c_{(0,0)}\|_{L^2}\|\partial_{y}n_{(0,0)}\|_{L^2}.
			\end{aligned}
		\end{equation*}
		Using the Gagliardo-Nirenberg inequality \eqref{eq:1DGN-1} and elliptic estimates in Lemma \ref{lem:ellip_3}, there holds
		\begin{align*}
			\|\partial_{y}c_{(0,0)}\|_{L^{\infty}}^2\leq 
			\|\partial_{y}c_{(0,0)}\|_{L^{2}}
			\|\partial_{yy}c_{(0,0)}\|_{L^{2}}
			\leq \frac{1}{2}\|n_{(0,0)}\|_{L^{2}}^2,
		\end{align*} 
		which implies that 
		\begin{equation}\label{eq:c1}
			\begin{aligned}
				\frac{1}{2}\partial_t\|n_{(0,0)}\|^2_{L^2}
				+\frac{1}{A}\|\partial_{y} n_{(0,0)}\|^2_{L^2}
				&\leq \frac{1}{\sqrt{2}A}
				\|n_{(0,0)}\|^2_{L^2}\|\partial_{y}n_{(0,0)}\|_{L^2}\\
				&\leq\frac{\|n_{(0,0)}\|^4_{L^2}}{2\sqrt{2}c_1A}+\frac{c_1}{2\sqrt{2}A}\|\partial_{y} n_{(0,0)}\|^2_{L^2},
			\end{aligned}
		\end{equation}
		where $c_1$ is a positive constant with $c_1\in(0,2\sqrt{2}).$
		Thanks to the Nash's inequality (\ref{eq:1DGN-2}) with the sharp constant (see also \cite{LW1,Na1})
		$$\|n_{(0,0)}\|_{L^2}\leq \left(\frac{16\pi^2}{27}\right)^{-\frac{1}{6}}
		\|\partial_yn_{(0,0)}\|_{L^2}^{\frac{1}{3}}
		\|n_{(0,0)}\|_{L^1}^{\frac{2}{3}},$$
		we have 
		\begin{equation}\label{n00_proof_1}
			\begin{aligned}
				\frac{1}{2}\partial_t\|n_{(0,0)}\|^2_{L^2}\leq 
				-\frac{1}{A}\Big(\frac{4\sqrt{2}\pi^2(2\sqrt{2}-c_1)}{27M_1^4}\|n_{(0,0)}\|_{L^2}^6-\frac{1}{2\sqrt{2}c_1}\|n_{(0,0)}\|^4_{L^2}\Big),
			\end{aligned}
		\end{equation}
		where $M_1=\|n_{(0,0)}\|_{L^1}.$

		Introduce the auxiliary function $h(t)=\|n_{(0,0)}(t)\|^2_{L^2}$ satisfying $h\geq0,$
		then (\ref{n00_proof_1}) becomes
		\begin{equation}\label{n00_proof_2}
			\left\{
			\begin{array}{lr}
				\frac{1}{2}\frac{dh}{dt}\leq 
				-\frac{1}{A}\Big(\frac{4\sqrt{2}\pi^2(2\sqrt{2}-c_1)}{27M_1^4}h^3-\frac{1}{2\sqrt{2}c_1}h^2\Big):=H(h), \quad h\geq0,\\
				\\
				h(0)=\|(n_{\rm in})_{(0,0)}\|^2_{L^2}.
			\end{array}
			\right.
		\end{equation}
		Let $\mathcal{H}(\frac{dh}{dt},h)=\frac{1}{2}\frac{dh}{dt}-H(h),$
		and the
		bound of $\|n_{(0,0)}(t)\|_{L^2}$ can be obtained by studying the orbits  on the phase plane $(\frac{dh}{dt},h)\in \mathbb{R}\times\mathbb{R}^+.$
		The phase portraits are obtained by plotting the level curves of the function $\mathcal{H}(\frac{dh}{dt},h)$,
		and the phase portrait in the phase plane $(\frac{dh}{dt},h)\in \mathbb{R}\times\mathbb{R}^+$ is shown in Figure \ref{fig_1},
		where the black dotted line is $\frac{dh}{dt}=0.$
		It should be noted that the point where the dashed line intersects with the dotted line is the equilibrium point.
		\begin{figure}[h]
			\centering
			\includegraphics[width=2.8in,height=2.3in]{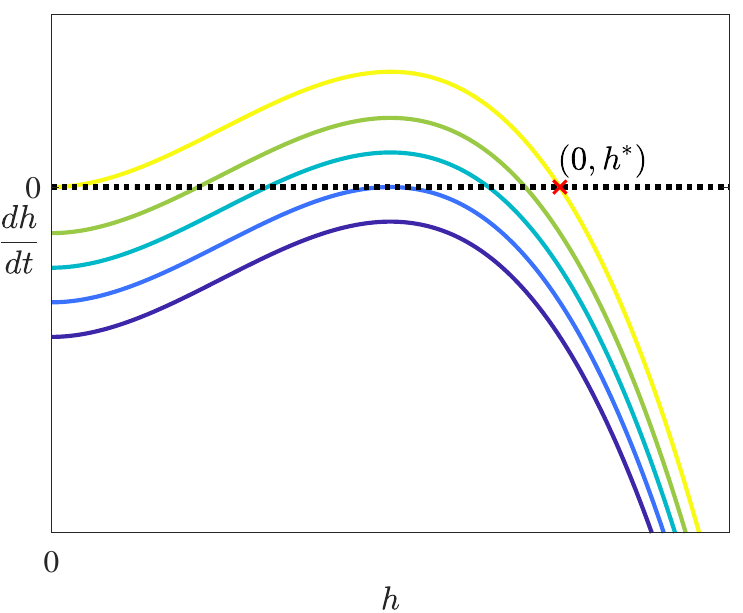}
			\caption{Phase portrait in the phase plane $(\frac{dh}{dt},h)\in \mathbb{R}\times\mathbb{R}^+.$}
			\label{fig_1}
		\end{figure}
		
		In Figure \ref{fig_1}, the yellow solid line is the orbit for $\mathcal{H}(\frac{dh}{dt},h)=0$ corresponding to the case of  
		$$\frac{1}{2}\partial_t\|n_{(0,0)}\|^2_{L^2}= 
		-\frac{1}{A}\Big(\frac{4\sqrt{2}\pi^2(2\sqrt{2}-c_1)}{27M_1^4}\|n_{(0,0)}\|_{L^2}^6-\frac{1}{2\sqrt{2}c_1}\|n_{(0,0)}\|^4_{L^2}\Big),$$
		and other solid lines represent  $\mathcal{H}(\frac{dh}{dt},h)=c$ with
		$c\in(-\infty,0)$, which corresponding to
		\begin{align*}
			\frac{1}{2}\partial_t\|n_{(0,0)}\|^2_{L^2}&= 
			-\frac{1}{A}\Big(\frac{4\sqrt{2}\pi^2(2\sqrt{2}-c_1)}{27M_1^4}\|n_{(0,0)}\|_{L^2}^6-\frac{1}{2\sqrt{2}c_1}\|n_{(0,0)}\|^4_{L^2}\Big)+c\\
			&<	-\frac{1}{A}\Big(\frac{4\sqrt{2}\pi^2(2\sqrt{2}-c_1)}{27M_1^4}\|n_{(0,0)}\|_{L^2}^6-\frac{1}{2\sqrt{2}c_1}\|n_{(0,0)}\|^4_{L^2}\Big).
		\end{align*}
		We only need to consider
		$\mathcal{H}(\frac{dh}{dt},h)=0,$ which corresponding to a dynamic system:
		\begin{equation}\label{n00_proof_3}
			\left\{
			\begin{array}{lr}
				\frac{1}{2}\frac{dh}{dt}=
				-\frac{1}{A}\Big(\frac{4\sqrt{2}\pi^2(2\sqrt{2}-c_1)}{27M_1^4}h^3-\frac{1}{2\sqrt{2}c_1}h^2\Big):=H(h), \quad h\geq0,\\
				\\
				h(0)=\|(n_{\rm in})_{(0,0)}\|^2_{L^2}.
			\end{array}
			\right.
		\end{equation}
		The red cross in Figure \ref{fig_1} is the equilibrium point of system (\ref{n00_proof_3}) satisfying
		$$\frac{dh^*}{dt}=0,\quad h^*=\frac{27M_1^4}{16\pi^2(2\sqrt{2}-c_1)c_1}.$$
		It is easy to get that $\frac{d}{dh}H(h^*)<0,$ thus this equilibrium point is stable.
		Actually, we would like to remind that $h=h^*$ is the largest positive root of the polynomial $H(h).$
		
		A stable equilibrium point means that, as the system evolves over time, the state of the system will gradually approach the stable equilibrium point and eventually stop there.
		On the contrary, the system will not stop at an unstable equilibrium point and cross it.
		For a given initial condition $h(0)=\|(n_{\rm in})_{(0,0)}\|^2_{L^2}$, due to $h=h^*$ is a stable equilibrium point, the system (\ref{n00_proof_3}) will move along the yellow orbit $\mathcal{H}(\frac{dh}{dt},h)=0$ in Figure \ref{fig_1}. When it reaches the equilibrium point (the red cross in Figure \ref{fig_1}), the derivative becomes zero, and the system stops at the equilibrium point without crossing it.
		
		Thus, it's enough to consider two cases:
		\begin{align*}
			h(0)=\|(n_{\rm in})_{(0,0)}\|^2_{L^2}\leq h^*,\quad 
			h(0)=\|(n_{\rm in})_{(0,0)}\|^2_{L^2}>h^*.	
		\end{align*}
		{\bf Case I: $h(0)=\|(n_{\rm in})_{(0,0)}\|^2_{L^2}\leq h^*.$} The initial state of (\ref{n00_proof_3}) falls to the left side of the red cross. It will move to the right along the yellow orbit, and finally stop at the red cross, which implies that
		$$h(t)=\|n_{(0,0)}(t)\|^2_{L^2}\in\left[\|(n_{\rm in})_{(0,0)}\|^2_{L^2}, \frac{27M_1^4}{16\pi^2(2\sqrt{2}-c_1)c_1}\right].$$
		{\bf Case II: $h(0)=\|(n_{\rm in})_{(0,0)}\|^2_{L^2}> h^*.$}  The initial state of (\ref{n00_proof_3}) falls to the right side of the red cross. It will move to the left along the yellow orbit, and finally stop at the red cross, which implies that
		$$h(t)=\|n_{(0,0)}(t)\|^2_{L^2}\in\left[\frac{27M_1^4}{16\pi^2(2\sqrt{2}-c_1)c_1},\|(n_{\rm in})_{(0,0)}\|^2_{L^2}\right].$$
		In this way, we obtain the exact bound for $\|n_{(0,0)}(t)\|^2_{L^2}$ in the system (\ref{n00_proof_3}):
		\begin{align*}
			\|n_{(0,0)}(t)\|^2_{L^{\infty}L^2}
			=\sup_{t\geq0}h(t)&\leq \max\left\{
			\|(n_{\rm in})_{(0,0)}\|^2_{L^2},
			\inf_{c_1\in(0,2\sqrt{2})}\frac{27M_1^4}{16\pi^2(2\sqrt{2}-c_1)c_1}\right\}.
		\end{align*}
		By choosing $c_1=\sqrt{2}$, one can obtain 
		\begin{align*}
			\|n_{(0,0)}(t)\|^2_{L^{\infty}L^2}\leq \max\left\{\|(n_{\rm in})_{(0,0)}\|^2_{L^2},\frac{27M_1^4}{32\pi^2}\right\}.
		\end{align*}
		Through the phase plane analysis and orbit analysis in Figure \ref{fig_1},
		the equal sign strictly holds if and only if the system (\ref{n00_proof_2}) becomes the system (\ref{n00_proof_3}), otherwise, $\|n_{(0,0)}\|^2_{L^{\infty}L^2}$ in the system  (\ref{n00_proof_2}) is less than the system  (\ref{n00_proof_3}).
		
		The proof is complete.
	\end{proof}

	Now, let us consider the full  equation (\ref{n001}).
	\begin{lemma}\label{lemma_n001}
		For the  equation (\ref{n001}), under the assumption (\ref{assumption}), there exists a constant $A_1$ independent of $t$ and $A$, such that if $A>A_1$, there holds
		\begin{align*}
			\|n_{(0,0)}(t)\|^2_{L^{\infty}L^2}\leq \max\left\{\|(n_{\rm in})_{(0,0)}\|^2_{L^2}+\epsilon_1,\frac{27M_1^4}{32\pi^2}(1+\epsilon_1)+\epsilon_1\right\},
		\end{align*}
		where $\epsilon_1$ can be any small positive constant satisfying $\epsilon_1\in(0,1)$.
	\end{lemma}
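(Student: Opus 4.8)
The plan is to treat \eqref{n001} as a small perturbation of the simplified model \eqref{n002}, for which Lemma \ref{c00_lemma1} already provides the sharp bound, and then to re-run the equilibrium-point (phase-plane) argument in the presence of a small, time-integrable forcing generated by the four extra nonlinear terms. First I would test \eqref{n001} against $n_{(0,0)}$ (which depends on $y$ only, since $u_{2,(0,0)}=0$) and integrate by parts to get
\[
\tfrac12\partial_t\|n_{(0,0)}\|_{L^2}^2+\tfrac1A\|\partial_y n_{(0,0)}\|_{L^2}^2\le \tfrac1A\|n_{(0,0)}\partial_y c_{(0,0)}\|_{L^2}\|\partial_y n_{(0,0)}\|_{L^2}+\tfrac1A\sum_{i=1}^4\|R_i\|_{L^2}\|\partial_y n_{(0,0)}\|_{L^2},
\]
with $R_1=(n_{\neq}\partial_y c_{\neq})_{(0,0)}$, $R_2=(u_{2,\neq}n_{\neq})_{(0,0)}$, $R_3=(n_{(0,\neq)}\partial_y c_{(0,\neq)})_{(0,0)}$, $R_4=(u_{2,(0,\neq)}n_{(0,\neq)})_{(0,0)}$. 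For a small $\delta\in(0,\tfrac12)$ I would absorb $\tfrac{\delta}{A}\|\partial_y n_{(0,0)}\|_{L^2}^2$ out of the $R_i$-sum by Young's inequality, while the main term is handled exactly as in the proof of Lemma \ref{c00_lemma1}: the elliptic bound $\|\partial_y c_{(0,0)}\|_{L^\infty}^2\le\tfrac12\|n_{(0,0)}\|_{L^2}^2$ from Lemma \ref{lem:ellip_3} (namely \eqref{eq:c00infty}), a second Young's inequality with weight $\sqrt2$, and the sharp Nash inequality. Writing $h(t)=\|n_{(0,0)}(t)\|_{L^2}^2$ this would give the differential inequality
\[
\tfrac12 h'\le -\tfrac1A\big(\alpha_\delta h^3-\tfrac14 h^2\big)+g(t),\qquad \alpha_\delta=(1-2\delta)\tfrac{8\pi^2}{27M_1^4},\qquad g:=\tfrac{C}{\delta A}\textstyle\sum_{i=1}^4\|R_i\|_{L^2}^2\ge0,
\]
whose unperturbed part has the unique positive equilibrium $h^*_\delta=\tfrac{1}{4\alpha_\delta}=\tfrac{27M_1^4}{32\pi^2(1-2\delta)}$.

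Next I would bound the forcing $g$. Using the anisotropic embeddings of Lemmas \ref{lemma_non_zz0}, \ref{lemma_non_zz} and \ref{sob_12}, the elliptic estimates of Lemmas \ref{lem:ellip_0}, \ref{lem:ellip_2} and \ref{lem:ellip_4}, and the Fourier inequalities \eqref{eq:fourier ine} to control low-order derivatives of the non-zero modes by higher-order ones, each $R_i$ is estimated by products of $L^2$-Sobolev norms of $n_{\neq}$, $u_{2,\neq}$, $n_{(0,\neq)}$ and $u_{2,(0,\neq)}$ (the last two through $c_{(0,\neq)}$ and $u_{2,0}$). Crucially none of the $R_i$ involves $n_{(0,0)}$, so $g$ is a genuine external forcing. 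Combining with the bootstrap hypothesis \eqref{assumption} and the exponential weights together with the $A^{\epsilon}$- and $A^{\frac34\epsilon}$-prefactors built into $E_1,E_2,E_3$ in \eqref{eq:E1}, I expect an estimate of the form
\[
\int_0^T g(t)\,dt=\tfrac{C}{\delta A}\textstyle\sum_{i=1}^4\|R_i\|_{L^2L^2}^2\le \tfrac{C}{\delta}\,A^{-\theta}\big(E_1^4+E_2^4+E_3^4+1\big)
\]
for some $\theta>0$ (one may take $\theta=\min\{\tfrac23,2\epsilon\}$): the factor $A^{1/3}$ coming from $\int_0^\infty e^{-caA^{-1/3}t}\,dt$ in the enhanced-dissipation weights is beaten by the $A^{-1}$ in front of $g$ and by the $A^{-2\epsilon}$ from the prefactors.

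Finally I would redo the phase-plane/orbit analysis of Lemma \ref{c00_lemma1} with this forcing. First fix $\delta=\delta(\epsilon_1)\in(0,\tfrac12)$ so small that $(1-2\delta)^{-1}\le1+\epsilon_1$, hence $h^*_\delta\le\tfrac{27M_1^4}{32\pi^2}(1+\epsilon_1)$; then, by the previous step, choose $A_1$ (depending only on $\epsilon_1,\delta$, the constants $E_i$, and the norms of the initial data, not on $t$ or $A$) so that $2\int_0^T g\le\epsilon_1$ whenever $A>A_1$. Setting $G(t)=2\int_0^t g\in[0,\epsilon_1]$ and $\tilde h=h-G$, one has $\tfrac12\tilde h'\le-\tfrac1A(\alpha_\delta h^3-\tfrac14 h^2)$, whose right-hand side is $\le0$ as soon as $h\ge h^*_\delta$; since $G\ge0$ forces $h\ge\tilde h$, on the set $\{\tilde h\ge h^*_\delta\}$ we get $\tilde h'\le0$. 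A standard barrier argument then yields $\tilde h(t)\le\max\{\tilde h(0),h^*_\delta\}=\max\{\|(n_{\rm in})_{(0,0)}\|_{L^2}^2,h^*_\delta\}$ on $[0,T]$, and therefore $h(t)=\tilde h(t)+G(t)\le\max\{\|(n_{\rm in})_{(0,0)}\|_{L^2}^2,\tfrac{27M_1^4}{32\pi^2}(1+\epsilon_1)\}+\epsilon_1$, which is the stated bound after taking the supremum in $t$.

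The main difficulty will be the second step: one must verify that \emph{each} of the four perturbation terms, after dividing by $A$ and integrating in time, is $o(1)$ as $A\to\infty$. This is not automatic, since a crude estimate of the time integral would cost a full power of $A$; it is precisely the enhanced dissipation (which replaces the $O(A)$ effective time horizon by the $A^{1/3}$ arising from $\int e^{-caA^{-1/3}t}\,dt$) together with the $A^{\epsilon}$-weights in $E_1$ and $E_2$ that make the net exponent $\theta$ positive, and one has to keep careful track of which norms are only $O(1)$ — those involving $n_{\neq}$ and $n_{(0,0)}$ itself, which are not assumed small — and which carry the extra smallness $A^{-\epsilon}$ or $A^{-\epsilon_0}$. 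Once that accounting is in place, the energy estimate and the perturbed barrier argument are routine.
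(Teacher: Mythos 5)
Your proposal is correct and follows essentially the same route as the paper: the same energy identity with the four perturbation terms $R_1,\dots,R_4$ absorbed into a small fraction of the dissipation, the sharp Nash inequality producing the cubic--quadratic ODE with equilibrium $\tfrac{27M_1^4}{32\pi^2}$ (up to the $\delta$- resp.\ $c_1$-bookkeeping), the forcing $\int g$ made $\le\epsilon_1$ for $A>A_1$ via the bootstrap assumption, Lemmas \ref{lemma_non_zz0}, \ref{lem:ellip_2}, \ref{lem:ellip_4}, \ref{lemma_u}, and finally a comparison argument for the shifted quantity $h-G$, which is exactly the paper's analysis of $g-\hat G$. Your barrier formulation (using only the sign of the right-hand side when $h\ge h^*_\delta$) is a slightly cleaner rendering of the paper's phase-plane/equilibrium-point discussion, but it is the same argument in substance.
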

	\begin{proof}
		Energy estimate shows that 
		\begin{equation}\label{n00_1}
			\begin{aligned}
				&\quad\frac{1}{2}\partial_t\|n_{(0,0)}\|^2_{L^2}
				+\frac{1}{A}\|\partial_{y} n_{(0,0)}\|^2_{L^2}\\
				=&-\frac{1}{A}
				<\partial_{y}\big(n_{(0,0)}\partial_{y}c_{(0,0)}\big)
				+\partial_{y}(n_{\neq}\partial_y c_{\neq})_{(0,0)}
				+\partial_{y}\big(n_{(0,\neq)}\partial_{y}c_{(0,\neq)} \big)_{(0,0)} ,n_{(0,0)}>\\&-\frac{1}{A}<\partial_{y}(u_{2,\neq}n_{\neq})_{(0,0)}
				+\partial_{y}\left(u_{2,(0,\neq)}n_{(0,\neq)} \right)_{(0,0)},n_{(0,0)}>.
			\end{aligned}
		\end{equation}
		Direct calculation gives that 	
		\begin{equation}\label{n00_2}
			\begin{aligned}
				<\partial_{y}(n_{\neq}\partial_y c_{\neq})_{(0,0)},n_{(0,0)}>&
				\leq \|(n_{\neq}\partial_y c_{\neq})_{(0,0)}\|_{L^2}\|\partial_{y} n_{(0,0)}\|_{L^2}\\
				&\leq \frac{4}{\epsilon_1}\|(n_{\neq}\partial_y c_{\neq})_{(0,0)}\|^2_{L^2}+\frac{\epsilon_1}{16}\|\partial_{y} n_{(0,0)}\|^2_{L^2},	
			\end{aligned}
		\end{equation}
		where $\epsilon_1$ is a small positive constant with $\epsilon_1\in(0,1).$
		Similarly, we have 
		\begin{equation}\label{n00_3}
			\begin{aligned}
				<\partial_{y}(u_{2,\neq}n_{\neq})_{(0,0)},n_{(0,0)}>
				&\leq \frac{4}{\epsilon_1}\|(u_{2,\neq}n_{\neq})_{(0,0)}\|^2_{L^2}
				+\frac{\epsilon_1}{16}\|\partial_{y} n_{(0,0)}\|^2_{L^2},\\
				<\partial_{y}\big(n_{(0,\neq)}\partial_{y}c_{(0,\neq)} \big)_{(0,0)},n_{(0,0)}>
				&\leq \frac{4}{\epsilon_1}\|\big(n_{(0,\neq)}\partial_{y}c_{(0,\neq)} \big)_{(0,0)}\|^2_{L^2}+\frac{\epsilon_1}{16}\|\partial_{y} n_{(0,0)}\|^2_{L^2},\\
				<\partial_{y}\left(u_{2,(0,\neq)}n_{(0,\neq)}\right)_{(0,0)} ,n_{(0,0)}>
				&\leq \frac{4}{\epsilon_1}\|(u_{2,(0,\neq)}n_{(0,\neq)})_{(0,0)}\|^2_{L^2}
				+\frac{\epsilon_1}{16}\|\partial_{y} n_{(0,0)}\|^2_{L^2}.
			\end{aligned}
		\end{equation}
		With the help of (\ref{n00_2}) and (\ref{n00_3}), as in (\ref{eq:c1}) we rewrite (\ref{n00_1}) into
		\begin{equation*}
			\begin{aligned}
				\frac{1}{2}\partial_t(\|n_{(0,0)}\|^2_{L^2}-\hat{G}(t))
				+\frac{\|\partial_{y} n_{(0,0)}\|^2_{L^2}}{A}
				&\leq\frac{\|n_{(0,0)}\|^2_{L^2}}{2\sqrt{2}c_1A}+\frac{c_1}{2\sqrt{2}A}\|\partial_{y} n_{(0,0)}\|^2_{L^2}+\frac{\epsilon_1\|\partial_{y} n_{(0,0)}\|^2_{L^2}}{4A},
			\end{aligned}
		\end{equation*}
		where 
		\begin{align*}
			\hat{G}(t)=\frac{8}{A\epsilon_1}\Big(&\|(n_{\neq}\partial_y c_{\neq})_{(0,0)}\|^2_{L^2L^2}
			+\|(u_{2,\neq}n_{\neq})_{(0,0)}\|^2_{L^2L^2}
			+\|\big(n_{(0,\neq)}\partial_{y}c_{(0,\neq)} \big)_{(0,0)}\|^2_{L^2L^2}\\
			+&\|(u_{2,(0,\neq)}n_{(0,\neq)})_{(0,0)}\|^2_{L^2L^2}\Big).
		\end{align*}
		Similar to (\ref{n00_proof_1}) in Lemma \ref{c00_lemma1}, we have
		\begin{equation}\label{n00_5}
			\begin{aligned}
				\frac{1}{2}\partial_t(\|n_{(0,0)}\|^2_{L^2}-\hat{G}(t))
				&\leq
				-\frac{1}{A}\Big(\frac{4\sqrt{2}\pi^2(2\sqrt{2}-c_1-\frac{\sqrt{2}}{2}\epsilon_1)}{27M_1^4}\|n_{(0,0)}\|_{L^2}^6-\frac{1}{2\sqrt{2}c_1}\|n_{(0,0)}\|^4_{L^2}\Big).
			\end{aligned}
		\end{equation}
		Using Lemma \ref{lemma_non_zz0}, Lemma \ref{lem:ellip_2}, Lemma \ref{lem:ellip_4} and Lemma \ref{lemma_u}, under the assumption (\ref{assumption}), if $$A\geq{\rm max}\{1,C\epsilon_1^{-\frac{2}{3\epsilon}}(E_1+E_{2})^{\frac{4}{3\epsilon}}\}:=A_1,$$ there holds
		\begin{align*}
			\hat{G}(t)\leq&\frac{C}{A\epsilon_1}\Big(\|\triangle c_{\neq}\|_{L^{\infty}L^2}
			\|\nabla c_{\neq}\|_{L^{\infty}L^2}
			\|n_{\neq}\|^2_{L^2L^2}+
			\|\partial_y u_{\neq}\|_{L^{2}L^2}
			\|u_{\neq}\|_{L^{2}L^2}
			\|n_{\neq}\|^2_{L^{\infty}L^2}\\
			+&(\|\triangle c_{(0,\neq)}\|_{L^{\infty}L^2}
			\|\nabla c_{(0,\neq)}\|_{L^{\infty}L^2}+\|\partial_y u_{2,(0,\neq)}\|_{L^{\infty}L^2}
			\|u_{2,(0,\neq)}\|_{L^{\infty}L^2})
			\|n_{(0,\neq)}\|^2_{L^2L^2}\Big)\\
			\leq&\frac{C}{A\epsilon_1}\Big(A^{\frac{1}{3}}\|n_{\neq}\|^4_{X_a}
			+A^{\frac{2}{3}}\|n_{\neq}\|^2_{X_a}(\|\triangle u_{2,\neq}\|^2_{X_a}
			+\|\partial_x\omega_{2,\neq}\|^2_{X_a})+A\|n_{(0,\neq)}\|^4_{Y_0}\\
			&+A\|\nabla u_{2,(0,\neq)}\|_{Y_0}\|u_{2,(0,\neq)}\|_{Y_0}
			\|n_{(0,\neq)}\|^2_{Y_0}\Big)
			\leq\frac{C(E_1^2+E_2^2)^2}{A^{3\epsilon}\epsilon_1}\leq {\epsilon_1}.
		\end{align*}
		Introducing the auxiliary function $g(t)=\|n_{(0,0)}(t)\|^2_{L^2}$ satisfying $g\geq0,$
		then there holds
		\begin{equation*}
			\begin{aligned}
				&\frac{1}{2}\frac{d}{dt}(g-\hat{G})\leq 
				-\frac{1}{A}\Big(\frac{4\sqrt{2}\pi^2(2\sqrt{2}-c_1
					-\frac{\sqrt{2}}{2}\epsilon_1)}{27M_1^4}g^3
				-\frac{1}{2\sqrt{2}c_1}g^2\Big):=G(g), \quad g\geq0,\\
				&g(0)=\|(n_{\rm in})_{(0,0)}\|^2_{L^2}.
			\end{aligned}
		\end{equation*}
		The largest positive root of $G(g)$ satisfying $G(g^{*})=0$ is
		$g^*=\frac{27M_1^4}{16\pi^2(2\sqrt{2}-c_1-\frac{\sqrt{2}}{2}\epsilon_1)c_1},$
		and the stagnation point  of $G(g)$ satisfying $\frac{d}{dg}G(g^{**})=0$ is 
		$g^{**}=\frac{9M_1^4}{8\pi^2(2\sqrt{2}-c_1-\frac{\sqrt{2}}{2}\epsilon_1)c_1}.$
		An important fact is that, as long as $g(t)-\hat{G}\geq g^{**},$
		there holds $G(g)\leq G(g(t)-\hat{G}).$ 
		
		Thus, we need to consider two cases:
		\begin{equation*}
			g(0)=\|(n_{\rm in})_{(0,0)}\|^2_{L^2}\leq g^*,\quad
			g(0)=\|(n_{\rm in})_{(0,0)}\|^2_{L^2}\geq g^*.	
		\end{equation*}
		
		{\bf $\bullet$ Case I: $\|(n_{\rm in})_{(0,0)}\|^2_{L^2}\leq g^*.$}
		Denote $t=t_0$ as the first time $g(t_0)-{\epsilon_1}=g^{**},$ then one can construct a new system  with $t\in[t_0,T)$:
		\begin{equation*}
			\begin{aligned}
				&\frac{1}{2}\frac{d}{dt}(g-\hat{G})\leq G(g)\leq 
				-\frac{1}{A}\Big(\frac{4\sqrt{2}\pi^2(2\sqrt{2}-c_1-\frac{\sqrt{2}}{2}\epsilon_1)}
				{27M_1^4}(g-\hat{G})^3-\frac{(g-\hat{G})^2}{2\sqrt{2}c_1}\Big)
				=G(g-\hat{G}),\\
				&g(t)-\hat{G}(t)\geq0~{\rm in}~{t\in[t_0,T)},\quad g(t_0)-\hat{G}(t_0)=g^{**}+{\epsilon_1}-\hat{G}(t_0).
			\end{aligned}
		\end{equation*}
		It should be noted that $t_0$ is a non-negative constant and can be zero or positive infinity.
		
		Applying the equilibrium point analysis method in Lemma \ref{c00_lemma1}, 
		for $t\in[t_0,T)$, we get 
		$$g-\hat{G}\leq g^*=\frac{27M_1^4}{16\pi^2(2\sqrt{2}-c_1-\frac{\sqrt{2}}{2}\epsilon_1)c_1},$$
		and one can extend $T$ to $\infty.$
		Combining it with $g(t)-\hat{G}\leq g^{**}+{\epsilon_1}<g^{*}$ in $t\in[0,t_0],$ we obtain that 
		$$g(t)\leq \frac{27M_1^4}{16\pi^2(2\sqrt{2}-c_1-\frac{\sqrt{2}}{2}\epsilon_1)c_1}+\hat{G},
		~{\rm for}~t\in[0,\infty).$$
		
		{\bf $\bullet$ Case II:  $\|(n_{\rm in})_{(0,0)}\|^2_{L^2}\geq g^*.$} Due to $G(g)\leq G(g(t)-\hat{G}),$
		one can construct  another system:
		\begin{equation*}
			\begin{aligned}
				&\frac{1}{2}\frac{d}{dt}(g-\hat{G})\leq G(g)\leq 
				-\frac{1}{A}\Big(\frac{4\sqrt{2}\pi^2(2\sqrt{2}-c_1-\frac{\sqrt{2}}{2}\epsilon_1)}
				{27M_1^4}(g-\hat{G})^3-\frac{(g-\hat{G})^2}{2\sqrt{2}c_1}\Big)
				=G(g-\hat{G}),\\
				&g(0)-\hat{G}(0)=\|(n_{\rm in})_{(0,0)}\|^2_{L^2}.
			\end{aligned}
		\end{equation*}
		Applying the equilibrium point analysis method again, there holds
		$$g(t)\in \left[\frac{27M_1^4}{16\pi^2(2\sqrt{2}-c_1-\frac{\sqrt{2}}{2}\epsilon_1)c_1}+\hat{G}(t),\|(n_{\rm in})_{(0,0)}\|^2_{L^2}+\hat{G}(t)\right] ,
		~{\rm for}~t\in[0,\infty).$$
		For $\epsilon_1\in(0,1),$ thanks to Taylor series, there holds
		$${2}{\Big(\sqrt{2}-\frac{\sqrt{2}}{4}\epsilon_1\Big)^{-2}}
		=1+\frac{1}{2}\epsilon_1+\frac{3}{16}\epsilon_1^2
		+\frac{1}{16}\epsilon_1^3+O(\epsilon_1^4)\leq 1+\epsilon_1.$$
		Using $\hat{G}(t)\leq {\epsilon_1},$ we conclude that
		\begin{align*}
			\|n_{(0,0)}(t)\|^2_{L^{\infty}L^2}
			=\sup_{t\geq0}g(t)
			&\leq \max\left\{\|(n_{\rm in})_{(0,0)}\|^2_{L^2}+{\epsilon_1},\frac{27M_1^4}{32\pi^2}(1+\epsilon_1)+{\epsilon_1}\right\}.
		\end{align*}
		The proof is complete.
	\end{proof}
	
	Then, we are devoted to estimate $n_{(0,\neq)}$ under the assumptions (\ref{assumption}). 
	By \eqref{eq:n-zero mode}, the z-part non-zero mode $ n_{(0,\neq)} $ satisfies 	
	\begin{equation}\label{n_0_neq}
		\begin{aligned}
			&\partial_t n_{(0,\neq)}-\frac{1}{A}\triangle n_{(0,\neq)}
			=-\frac{1}{A}\left[\nabla\cdot(n_{\neq}\nabla c_{\neq})_{(0,\neq)}
			+\nabla\cdot(u_{\neq}n_{\neq})_{(0,\neq)}\right]\\
			&-\frac{1}{A}\left[\partial_y\big(n_{(0,0)}\partial_{y}c_{(0,\neq)}
			+n_{(0,\neq)}\partial_{y}c_{(0,0)}
			+(n_{(0,\neq)}\partial_{y}c_{(0,\neq)})_{(0,\neq)}\big)\right]\\
			&-\frac{1}{A}\left[\partial_z\big(n_{(0,0)}\partial_{z}c_{(0,\neq)}  
			+n_{(0,\neq)}\partial_{z}c_{(0,\neq)}\big)
			+\partial_y\big(u_{2,(0,\neq)}n_{(0,0)}
			+(u_{2,(0,\neq)}n_{(0,\neq)})_{(0,\neq)}\big)\right]\\
			&-\frac{1}{A}\left[\partial_z\big(u_{3,(0,0)}n_{(0,\neq)}
			+u_{3,(0,\neq)}n_{(0,0)}
			+(u_{3,(0,\neq)}n_{(0,\neq)})_{(0,\neq)}\big)\right].
		\end{aligned}
	\end{equation}
	In fact, we can regard (\ref{n_0_neq}) as 
	\begin{equation}\label{n_0_neq_1}
		\begin{aligned}
			\partial_t n_{(0,\neq)}-\frac{1}{A}\triangle n_{(0,\neq)}
			=&-\frac{1}{A}\Big(\partial_y\big(n_{(0,0)}\partial_{y}c_{(0,\neq)})+\partial_z\big(n_{(0,0)}\partial_{z}c_{(0,\neq)}\big)
			+\partial_y(\partial_{y}c_{(0,0)}n_{(0,\neq)})\\
			&+\partial_y\big(n_{(0,0)}u_{2,(0,\neq)}\big)
			+\partial_z(n_{(0,0)}u_{3,(0,\neq)})\Big)
			+``{\rm good~terms}".
		\end{aligned}
	\end{equation}
	For (\ref{n_0_neq_1}), the basic energy estimate shows that  
	$$A^{2\epsilon}\|n_{(0,\neq)}\|_{L^2}^2
	+A^{2\epsilon-1}\|\nabla n_{(0,\neq)}\|_{L^2}^2
	\leq C+C^{(1)}A^{2\epsilon-1}\|n_{(0,0)}\|_{L^{\infty}L^2}^2\|\nabla n_{(0,\neq)}\|_{L^2}^2+``{\rm good~terms}".$$
	Therefore, if we impose the condition $\|n_{(0,0)}\|_{L^{\infty}L^2}^2<\frac{1}{C^{(1)}},$ 
	this energy estimate can be closed successfully, yielding the result $A^{\epsilon}\|n_{(0,\neq)}\|_{Y_0}\leq C$. 
	
	However, since $\|n_{(0,0)}\|_{L^{\infty}L^2}$ is related to $\|(n_{\rm in})_{(0,0)}\|_{L^{\infty}L^2}$ (see Lemma \ref{lemma_n001}), this result is not satisfactory. 
	A preferable outcome would be to impose a restriction on the total mass $M$ without requiring a restriction on $\|(n_{\rm in})_{(0,0)}\|_{L^2}$.
	
	Our important find is that $\|\partial_zn_{(0,\neq)}\|^2_{L^2L^2}$ is enough
	to control the energies $E_{1,2}$ and $E_{1,3}$ under the assumptions
	(\ref{assumption}).
	A fact is that, for the bad nonlinear terms $(0,0)\cdot(0,\neq)$ of (\ref{n_0_neq_1}), the derivative 
	$\partial_z$ acts only on $(0,\neq)$ part, and does not affect  $(0,0)$ part. 
	That is to say, for any given functions $f$ and $g$, we have 
	$\partial_z^j(f_{(0,0)}g_{(0,\neq)})=f_{(0,0)}\partial_z^jg_{(0,\neq)},$
	where $j\geq 1.$ 
	Thus, for (\ref{n_0_neq_1}), we judge that $\partial_z$ is a good derivative, while $\partial_y$ is a bad derivative.  
	Due to the elliptic condition $n_{(0,\neq)}=-\triangle c_{(0,\neq)}+c_{(0,\neq)},$ the bad derivative $\partial_y$ of $n_{(0,\neq)}$ can 
	be moved out by  $c_{(0,\neq)}.$
	
	Naturally, this leads us to introduce a new idea for energy estimates. 
	Taking $\partial_z$ to (\ref{n_0_neq}) and multiplying  $\partial_zc_{(0,\neq)}$ on both sides, 
	with the help of elliptic condition $n_{(0,\neq)}=-\triangle c_{(0,\neq)}+c_{(0,\neq)}$ and some precise elliptic estimates,
	we can obtain that 
	\begin{equation*}
		\begin{aligned}
			&\quad\frac{\|\partial_z\nabla c_{(0,\neq)}\|^2_{L^{\infty}H^1}
			}{2A^{-2\epsilon}}
			+\frac{\|\partial_z\triangle c_{(0,\neq)}\|^2_{L^2L^2}+
				\|\partial_z\nabla c_{(0,\neq)}\|^2_{L^{2}L^2}}{A^{1-2\epsilon}}\\
			&\leq A^{2\epsilon}\|(n_{\rm in})_{(0,\neq)}\|^2_{L^2}
			+\frac{C^{(2)}M(\|\partial_z\triangle c_{(0,\neq)}\|^2_{L^2L^2}+
				\|\partial_z\nabla c_{(0,\neq)}\|^2_{L^{2}L^2})}{A^{1-2\epsilon}}
			+``{\rm good~terms}".
		\end{aligned}
	\end{equation*}
	In this way, as long as we impose the condition $M<\frac{1}{C^{(2)}},$ 
	we can finish the proof by using $\|\partial_zn_{(0,\neq)}\|^2_{L^2L^2}\leq C\|\partial_z\triangle c_{(0,\neq)}\|^2_{L^2L^2}$.
	The next step is to perform precise calculations and get the constant $C^{(2)}.$

	\begin{lemma}\label{lemma_n002}
		Under the conditions of Theorem \ref{result} and the assumptions (\ref{assumption}), 
		as long as $$M< \frac{24}{5}\pi^2,$$
		there exists a constant $A_2$ independent of $t$ and $A$, such that if $A>A_2$, 
		there holds 
		$$
		A^{2\epsilon}\|\partial_z\nabla c_{(0,\neq)}\|^2_{L^{\infty}L^2}+
		\frac{\|\partial_zn_{(0,\neq)}\|^2_{L^2L^2}}{A^{1-2\epsilon}}\leq C.$$
	\end{lemma}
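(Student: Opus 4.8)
The plan is to derive a weighted energy identity for $\partial_z n_{(0,\neq)}$ tested against $\partial_z c_{(0,\neq)}$, using the elliptic smoothing to push every ``bad'' $y$--derivative off of $n_{(0,\neq)}$ and onto $c_{(0,\neq)}$. First I would apply $\partial_z$ to the equation \eqref{n_0_neq} for $n_{(0,\neq)}$, written in the schematic form \eqref{n_0_neq_1}, observe that $n_{(0,0)}$, $c_{(0,0)}$ and $\partial_y c_{(0,0)}$ are independent of $z$, and pair the resulting equation with $A^{2\epsilon}\partial_z c_{(0,\neq)}$ over $\mathbb{T}\times\mathbb{R}\times\mathbb{T}$ (only the $y,z$ variables are active). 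For the linear part I would use the elliptic relation $\partial_z n_{(0,\neq)}=(1-\triangle)\partial_z c_{(0,\neq)}$ from Lemma \ref{lem:ellip_4}: the $\partial_t$ term becomes $\frac{A^{2\epsilon}}{2}\frac{d}{dt}(\|\partial_z c_{(0,\neq)}\|_{L^2}^2+\|\partial_z\nabla c_{(0,\neq)}\|_{L^2}^2)$, while the dissipative term $\frac{A^{2\epsilon}}{A}\|\partial_z n_{(0,\neq)}\|_{L^2}^2$, after absorbing a harmless lower order piece produced by the same identity, leaves the genuine dissipation $\frac{A^{2\epsilon}}{A}(\|\partial_z\triangle c_{(0,\neq)}\|_{L^2}^2+\|\partial_z\nabla c_{(0,\neq)}\|_{L^2}^2)$ on the left-hand side.

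On the right-hand side the only dangerous contributions are the $(0,0)\cdot(0,\neq)$ terms coming from $\partial_y(n_{(0,0)}\partial_y c_{(0,\neq)})$, $\partial_z(n_{(0,0)}\partial_z c_{(0,\neq)})$ and $\partial_y(\partial_y c_{(0,0)}\,n_{(0,\neq)})$. After applying $\partial_z$, integrating by parts in $y$ (respectively $z$), and using $\partial_z n_{(0,0)}=\partial_z c_{(0,0)}=0$, the first two reduce to $\frac{1}{A}\int n_{(0,0)}|\nabla\partial_z c_{(0,\neq)}|^2$, and the third, after trading $\partial_z n_{(0,\neq)}$ for $(1-\triangle)\partial_z c_{(0,\neq)}$, to an expression controlled by $\int|\partial_y c_{(0,0)}|\,|\triangle\partial_z c_{(0,\neq)}|\,|\nabla\partial_z c_{(0,\neq)}|$ plus lower order pieces. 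The key point is that the $z$--independent factors are bounded in $L^\infty_y$ by the total mass: $\|c_{(0,0)}\|_{L^\infty}\le\frac{M}{8\pi^2}$ by Lemma \ref{lem:ellip_30}, and likewise $\|\partial_y c_{(0,0)}\|_{L^\infty}\le\frac{M}{8\pi^2}$ from the one--dimensional resolvent formula for $c_{(0,0)}$ (which moreover allows, via $n_{(0,0)}=-\partial_{yy}c_{(0,0)}+c_{(0,0)}$, one further integration by parts). Combining this with the sharp one--dimensional Gagliardo--Nirenberg inequality $\|f\|_{L^\infty_y}^2\le\|f\|_{L^2_y}\|\partial_y f\|_{L^2_y}$, the bound $\|\partial_y\nabla\partial_z c_{(0,\neq)}\|_{L^2}\le\|\partial_z\triangle c_{(0,\neq)}\|_{L^2}$, the identity in Lemma \ref{lem:ellip_4}, and a Young's inequality tuned so that the resulting $\|\partial_z\triangle c_{(0,\neq)}\|^2$ and $\|\partial_z\nabla c_{(0,\neq)}\|^2$ pieces line up with the dissipation, these bad terms integrate in time to at most $\frac{C^{(2)}MA^{2\epsilon}}{A}(\|\partial_z\triangle c_{(0,\neq)}\|_{L^2L^2}^2+\|\partial_z\nabla c_{(0,\neq)}\|_{L^2L^2}^2)$ with $C^{(2)}=\frac{5}{24\pi^2}$. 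Since $M<\frac{24}{5}\pi^2=\frac{1}{C^{(2)}}$, they can be absorbed into the dissipation with a fixed positive margin.

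The remaining ``good'' terms---the $\neq\cdot\neq$ interactions, the $(0,\neq)$--velocity terms $\partial_y(n_{(0,0)}u_{2,(0,\neq)})$ and $\partial_z(n_{(0,0)}u_{3,(0,\neq)})$, and the lower order resolvent remainders---are handled by the anisotropic embeddings of Section \ref{sec_estimate_1} (Lemmas \ref{sob_inf_1}--\ref{lemma_non_zz}), the velocity estimates of Lemma \ref{lemma_u}, the elliptic lemmas, the bound on $\|n_{(0,0)}\|_{L^\infty L^2}$ from Lemma \ref{lemma_n001}, the bootstrap hypothesis \eqref{assumption}, and the smallness $A^{\epsilon_0}\|u_{\rm in}\|_{H^2}+A^{\epsilon_0}\|(n_{\rm in})_{(0,\neq)}\|_{L^2}\le C$; because of the factor $\frac1A$ in \eqref{ini11} against the weight $A^{2\epsilon}$ with $2\epsilon<1$, each of them gains a strictly negative power of $A$ and is $\le CA^{-\delta}$ for some $\delta>0$ once $A\ge A_2$. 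Integrating the resulting differential inequality in $t$ then gives, on one hand, $A^{2\epsilon}\|\partial_z\nabla c_{(0,\neq)}\|_{L^\infty L^2}^2\le C$, using that at the initial time $\|\partial_z\nabla c_{(0,\neq)}\|_{L^2}\le\|(n_{\rm in})_{(0,\neq)}\|_{L^2}\le CA^{-\epsilon_0}$ together with $\epsilon\le\epsilon_0$; and on the other hand $\frac{A^{2\epsilon}}{A}\int_0^t(\|\partial_z\triangle c_{(0,\neq)}\|_{L^2}^2+\|\partial_z\nabla c_{(0,\neq)}\|_{L^2}^2)\le C$, which converts to $\frac{\|\partial_z n_{(0,\neq)}\|_{L^2L^2}^2}{A^{1-2\epsilon}}\le C$ via the identity of Lemma \ref{lem:ellip_4} together with the $z$--Poincar\'e inequality $\|\partial_z c_{(0,\neq)}\|_{L^2}\le\|\partial_z^2 c_{(0,\neq)}\|_{L^2}\le\|\partial_z\nabla c_{(0,\neq)}\|_{L^2}$.

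The hard part is the constant-chasing in the second paragraph: the integrations by parts and the Young's splitting must be carried out with the sharp Gagliardo--Nirenberg constant and the exact coefficients of Lemma \ref{lem:ellip_4}, so that the aggregate coefficient of the bad term is exactly $\frac{5}{24\pi^2}M$; any lossy step would push the admissible mass below $\frac{24}{5}\pi^2$. A second point requiring care is structural: the elimination of the bad $\partial_y$ must genuinely rely on the elliptic relation (so that only $\partial_z$ ever lands on $n_{(0,\neq)}$), and the $(0,0)$--factors must be controlled by $M=\|n\|_{L^1}$ rather than by $\|n_{(0,0)}\|_{L^2}$, since the whole purpose of this lemma is to avoid any smallness assumption on $\|(n_{\rm in})_{(0,0)}\|_{L^2}$.
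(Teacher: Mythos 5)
There is a genuine gap, and it sits exactly at the point the lemma is designed to handle: the coupling terms $\partial_y\bigl(u_{2,(0,\neq)}\,n_{(0,0)}\bigr)$ and $\partial_z\bigl(u_{3,(0,\neq)}\,n_{(0,0)}\bigr)$. You classify these as ``good terms'' that gain a strictly negative power of $A$ under the bootstrap hypothesis \eqref{assumption}. They do not. After testing with $A^{2\epsilon}\partial_z c_{(0,\neq)}$ and integrating by parts, such a term is of size $A^{2\epsilon-1}\,\|n_{(0,0)}\partial_z u_{2,(0,\neq)}\|_{L^2L^2}\,\|\partial_z\nabla c_{(0,\neq)}\|_{L^2L^2}$; the bootstrap only gives $\|n_{(0,0)}\|_{L^\infty L^2}\le 2E_1$ (bounded, not small) and $\|\partial_z\nabla u_{2,0}\|_{L^2L^2}+\|\partial_z u_{2,0}\|_{L^2L^2}\le CE_1A^{\frac12-\epsilon}$, so the product scales like $A^{2\epsilon-1}\cdot A^{\frac12-\epsilon}\cdot A^{\frac12-\epsilon}=A^{0}$: no negative power of $A$ survives, and after Young's inequality you are left with a constant of order $E_1^4$ multiplying nothing small, which destroys the bootstrap closure ($E_1\ge C(1+E_1^4)$ has no admissible solution). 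In the paper these two terms are instead handled by a separate energy estimate for $\partial_z u_{2,0},\partial_z u_{3,0}$ derived from \eqref{eq:u2u30}, whose forcing is $\partial_z n_{(0,\neq)}=-\partial_z\triangle c_{(0,\neq)}+\partial_z c_{(0,\neq)}$; this converts the velocity dissipation back into the very $c_{(0,\neq)}$ quantities on the left-hand side with explicit $O(1)$ coefficients, so that $T_{1,4}+T_{1,5}$ contributes to the mass-weighted budget rather than being perturbative in $A$.

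This also makes your constant-chasing internally inconsistent. If the only ``bad'' terms were the three you list ($n_{(0,0)}\partial_yc_{(0,\neq)}$, $n_{(0,0)}\partial_zc_{(0,\neq)}$, $\partial_yc_{(0,0)}n_{(0,\neq)}$), the aggregate coefficient of the dissipation would be about $\tfrac12\|n_{(0,0)}\|_{L^\infty L^1}=\tfrac{M}{8\pi^2}$, i.e.\ a threshold $M<8\pi^2$, not $\tfrac{5M}{24\pi^2}$; the extra $\tfrac{3}{8}$-type contributions that bring the coefficient up to $\tfrac56\|n_{(0,0)}\|_{L^1}$ (hence $C^{(2)}=\tfrac{5}{24\pi^2}$ and the threshold $\tfrac{24}{5}\pi^2$) come precisely from the velocity terms you discarded. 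So the number you quote cannot be produced by the argument you describe. Your treatment of the purely chemotactic bad terms is otherwise in the spirit of the paper (the paper integrates by parts using $-\partial_y^2c_{(0,0)}=n_{(0,0)}-c_{(0,0)}$ and $c_{(0,0)}\ge 0$ where you invoke the pointwise bound $\|\partial_yc_{(0,0)}\|_{L^\infty}\le\tfrac12\|n_{(0,0)}\|_{L^1}$ from the resolvent kernel, which is legitimate), but without the auxiliary velocity estimate the lemma as stated, with the stated mass threshold, is not proved.
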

	\begin{proof}
		
		Taking $\partial_z$ to (\ref{n_0_neq}) and multiplying $\partial_zc_{(0,\neq)},$
		by the elliptic condition $n_{(0,\neq)}=-\triangle c_{(0,\neq)}+c_{(0,\neq)},$
		then the energy estimates show that 
		\begin{equation}\label{n0_neq_1}
			\begin{aligned}
				&\partial_t\frac{\|\partial_z\nabla c_{(0,\neq)}\|^2_{L^2}
					+\|\partial_zc_{(0,\neq)}\|^2_{L^2}}{2A^{-2\epsilon}}
				+\frac{\|\partial_z\triangle c_{(0,\neq)}\|^2_{L^2}+
					\|\partial_z\nabla c_{(0,\neq)}\|^2_{L^2}}{A^{1-2\epsilon}}= \\
				&-A^{2\epsilon-1}\big(<\partial_y\partial_z\big(n_{(0,0)}\partial_{y}c_{(0,\neq)}),\partial_zc_{(0,\neq)}>
				+<\partial_z^2\big(n_{(0,0)}\partial_{z}c_{(0,\neq)}\big),\partial_zc_{(0,\neq)}>\big)\\
				&-A^{2\epsilon-1}\big(<\partial_y\partial_z(\partial_{y}c_{(0,0)}n_{(0,\neq)}),\partial_zc_{(0,\neq)}>
				+<\partial_y\partial_z\big(u_{2,(0,\neq)}n_{(0,0)}\big),\partial_zc_{(0,\neq)}>\big)\\	&-A^{2\epsilon-1}\big(<\partial_z^2(u_{3,(0,\neq)}n_{(0,0)}),
				\partial_zc_{(0,\neq)}>
				+<\nabla\cdot\partial_z(u_{\neq}n_{\neq})_{(0,\neq)},
				\partial_zc_{(0,\neq)}>\big)\\
				&-A^{2\epsilon-1}\big(<\nabla\cdot\partial_z(n_{\neq}\nabla c_{\neq})_{(0,\neq)},\partial_zc_{(0,\neq)}>
				+<\partial_y\partial_z(n_{(0,\neq)}
				\partial_{y}c_{(0,\neq)})_{(0,\neq)},
				\partial_zc_{(0,\neq)}>\big)\\
				&-A^{2\epsilon-1}\big(<\partial_z^2(n_{(0,\neq)}
				\partial_{z}c_{(0,\neq)})_{(0,\neq)},\partial_zc_{(0,\neq)}>
				+<\partial_y\partial_z(u_{2,(0,\neq)}n_{(0,\neq)})_{(0,\neq)},
				\partial_zc_{(0,\neq)}>\big)\\
				&-A^{2\epsilon-1}\big(<\partial_z^2(u_{3,(0,0)}n_{(0,\neq)}),
				\partial_zc_{(0,\neq)}>+
				<\partial_z^2(u_{3,(0,\neq)}n_{(0,\neq)})_{(0,\neq)},
				\partial_zc_{(0,\neq)}>\big).
			\end{aligned}
		\end{equation}
		
		After integrating in time, we infer from (\ref{n0_neq_1}) that  
		\begin{equation}\label{n0_neq_2}
			\begin{aligned}
				&\quad\frac{\|\partial_z\nabla c_{(0,\neq)}\|^2_{L^{\infty}L^2}
					+\|\partial_zc_{(0,\neq)}\|^2_{L^{\infty}L^2}}{2A^{-2\epsilon}}
				+\frac{\|\partial_z\triangle c_{(0,\neq)}\|^2_{L^2L^2}+
					\|\partial_z\nabla c_{(0,\neq)}\|^2_{L^{2}L^2}}{A^{1-2\epsilon}}\\
				&\leq A^{2\epsilon}\|(n_{\rm in})_{(0,\neq)}\|^2_{L^2}+T_{1,1}+\cdots+T_{1,12},
			\end{aligned}
		\end{equation}
		where $T_{1,1}-T_{1,5}$ can be regard to bad terms and 
		$T_{1,6}-T_{1,12}$ can be regard to good terms.
		
		\textbf{Estimates of $T_{1,1}$, $T_{1,2}$ and $T_{1,3}$.}	
		First, we need some deformations
		\begin{align}
			&-<\partial_y\partial_z(n_{(0,0)}\partial_yc_{(0,\neq)}),\partial_zc_{(0,\neq)}>=
			<n_{(0,0)},(\partial_y\partial_zc_{(0,\neq)})^2>,
			\label{n0_temp1}\\
			&-<\partial_z^2(n_{(0,0)}\partial_zc_{(0,\neq)}),\partial_zc_{(0,\neq)}>=<n_{(0,0)},(\partial_z^2c_{(0,\neq)})^2>.\label{n0_temp2}
		\end{align}
		Due to $n_{(0,\neq)}=-\triangle c_{(0,\neq)}+c_{(0,\neq)},$ 
		then 
		\begin{equation*}
			\begin{aligned}
				&-<\partial_y\partial_z(\partial_{y}c_{(0,0)}n_{(0,\neq)}),
				\partial_zc_{(0,\neq)}>=
				<\partial_{y}c_{(0,0)}\partial_zn_{(0,\neq)},
				\partial_y\partial_zc_{(0,\neq)}>\\
				=&-<\partial_{y}c_{(0,0)}\partial_z\triangle c_{(0,\neq)},
				\partial_y\partial_zc_{(0,\neq)}>
				+<\partial_{y}c_{(0,0)}\partial_z c_{(0,\neq)},
				\partial_y\partial_zc_{(0,\neq)}>.			
			\end{aligned}
		\end{equation*}
		Using $-\partial_y^2c_{(0,0)}=n_{(0,0)}-c_{(0,0)}$ and $c_{(0,0)}\geq0$, we have 
		\begin{align*}
			&-<\partial_{y}c_{(0,0)}\partial_y^2\partial_z c_{(0,\neq)},
			\partial_y\partial_zc_{(0,\neq)}>=\frac{<\partial_{y}^2c_{(0,0)},(\partial_y\partial_zc_{(0,\neq)})^2>}{2},\\
			&-<\partial_{y}c_{(0,0)}\partial_z^3 c_{(0,\neq)},
			\partial_y\partial_zc_{(0,\neq)}>=
			-\frac{<\partial_{y}^2c_{(0,0)},(\partial_z^2c_{(0,\neq)})^2>}{2}
			\leq \frac{<n_{(0,0)},(\partial_z^2c_{(0,\neq)})^2>}{2}
			,\\
			&<\partial_{y}c_{(0,0)}\partial_z c_{(0,\neq)},
			\partial_y\partial_zc_{(0,\neq)}>=
			-\frac{<\partial_{y}^2c_{(0,0)},(\partial_zc_{(0,\neq)})^2>}{2}
			\leq \frac{<n_{(0,0)},(\partial_zc_{(0,\neq)})^2>}{2},
		\end{align*}	
		which imply that 	
		\begin{equation}\label{n0_temp3}
			\begin{aligned}
				&-<\partial_y\partial_z(\partial_{y}c_{(0,0)}n_{(0,\neq)}),
				\partial_zc_{(0,\neq)}>
				=\frac{<\partial_{y}^2c_{(0,0)},(\partial_y\partial_zc_{(0,\neq)})^2-(\partial_z^2c_{(0,\neq)})^2-(\partial_zc_{(0,\neq)})^2>}{2}
				\\
				&\leq \frac{<\partial_{y}^2c_{(0,0)}
					,(\partial_y\partial_zc_{(0,\neq)})^2>}{2}
				+\frac{<n_{(0,0)},(\partial_z^2c_{(0,\neq)})^2+
					(\partial_zc_{(0,\neq)})^2>}{2}.
			\end{aligned}
		\end{equation}
		
		By (\ref{n0_temp1}), (\ref{n0_temp2}) and (\ref{n0_temp3}), we get
		\begin{equation}\label{cn_temp00}
			\begin{aligned}
				&T_{1,1}+T_{1,2}+T_{1,3}\\
				=&\frac{\int_{0}^t<n_{(0,0)},(\partial_z^2c_{(0,\neq)})^2>dt}{A^{1-2\epsilon}}
				+\frac{\int_{0}^t<\partial_{y}c_{(0,0)},\partial_z c_{(0,\neq)}
					\partial_y\partial_zc_{(0,\neq)}+\partial_z^2 c_{(0,\neq)}
					\partial_y\partial_z^2c_{(0,\neq)}>dt}{A^{1-2\epsilon}}\\
				&+\frac{\int_{0}^t<c_{(0,0)},(\partial_y\partial_zc_{(0,\neq)})^2>dt}{2A^{1-2\epsilon}}
				+\frac{\int_{0}^t<n_{(0,0)},(\partial_y\partial_zc_{(0,\neq)})^2>dt}{2A^{1-2\epsilon}}
				\\\leq&\frac{\int_{0}^t<n_{(0,0)},3(\partial_z^2c_{(0,\neq)})^2
					+(\partial_zc_{(0,\neq)})^2>dt}
				{2A^{1-2\epsilon}}
				+\frac{\int_{0}^t<n_{(0,0)},(\partial_y\partial_zc_{(0,\neq)})^2>dt}{2A^{1-2\epsilon}}\\
				&+\frac{\int_{0}^t<c_{(0,0)},(\partial_y\partial_zc_{(0,\neq)})^2>dt}{2A^{1-2\epsilon}}.
			\end{aligned}
		\end{equation}
		By Lemma \ref{sob_inf_3} and Young's inequality, there hold
		\begin{equation}\label{cn_temp01}
			\begin{aligned}
				<n_{(0,0)},(\partial_z^2c_{(0,\neq)})^2>&\leq \|n_{(0,0)}\|_{L^1}
				\|\partial_z^2c_{(0,\neq)}\|^2_{L^{\infty}_yL^2_z}
				\leq \|n_{(0,0)}\|_{L^1}
				\|\partial_y\partial_z^2c_{(0,\neq)}\|_{L^2}
				\|\partial_z^2c_{(0,\neq)}\|_{L^2}\\
				&\leq \|n_{(0,0)}\|_{L^1}
				\Big(\frac{\|\partial_y\partial_z^2c_{(0,\neq)}\|_{L^2}^2
					+\|\partial_z^2c_{(0,\neq)}\|^2_{L^2}}{2}\Big),\\
				<n_{(0,0)},(\partial_zc_{(0,\neq)})^2>&\leq \|n_{(0,0)}\|_{L^1}
				\|\partial_zc_{(0,\neq)}\|^2_{L^{\infty}_yL^2_z}
				\leq \|n_{(0,0)}\|_{L^1}
				\|\partial_y\partial_zc_{(0,\neq)}\|_{L^2}
				\|\partial_zc_{(0,\neq)}\|_{L^2}\\
				&\leq \|n_{(0,0)}\|_{L^1}
				\Big(\frac{\|\partial_y\partial_zc_{(0,\neq)}\|_{L^2}^2
					+\|\partial_zc_{(0,\neq)}\|^2_{L^2}}{2}\Big),\\
				<n_{(0,0)},(\partial_y\partial_zc_{(0,\neq)})^2>
				&\leq \|n_{(0,0)}\|_{L^1}
				\Big(\frac{\|\partial_y^2\partial_zc_{(0,\neq)}\|_{L^2}^2
					+\|\partial_y\partial_zc_{(0,\neq)}\|^2_{L^2}}{2}\Big).
			\end{aligned}
		\end{equation}
		Using Lemma \ref{lem:ellip_30}, we have 
		\begin{equation}\label{cn_temp02}
			<c_{(0,0)},(\partial_y\partial_zc_{(0,\neq)})^2>
			\leq 	\|c_{(0,0)}\|_{L^{\infty}}
			\|\partial_y\partial_zc_{(0,\neq)}\|_{L^2}^2
			\leq \frac{\|n_{(0,0)}\|_{L^1}
				\|\partial_y\partial_zc_{(0,\neq)}\|_{L^2}^2}{2}.
		\end{equation}
		By $\|\partial_z\triangle c_{(0,\neq)}\|_{L^2L^2}^2=
		\|\partial_y^2\partial_z c_{(0,\neq)}\|_{L^2L^2}^2
		+2\|\partial_y\partial_z^2 c_{(0,\neq)}\|_{L^2L^2}^2
		+\|\partial_z^3 c_{(0,\neq)}\|_{L^2L^2}^2,$
		using (\ref{cn_temp01}), (\ref{cn_temp02}) and  
		$\|\partial_z^2c_{(0,\neq)}\|_{L^2L^2}^2
		\leq \|\partial_z^3c_{(0,\neq)}\|_{L^2L^2}^2,$
		we infer from \eqref{cn_temp00} that 
		\begin{equation*}
			\begin{aligned}
				&T_{1,1}+T_{1,2}+T_{1,3}\leq \frac{\|n_{(0,0)}\|_{L^\infty 	L^1}}{A^{1-2\epsilon}}
				\Big(\frac{\|\partial_z\triangle c_{(0,\neq)}\|_{L^2L^2}^2
					+\|\partial_z\nabla c_{(0,\neq)}\|_{L^2L^2}^2
				}{2}-\frac{\|\partial_y^2\partial_z c_{(0,\neq)}
					\|_{L^2L^2}^2}{4}\Big).
			\end{aligned}
		\end{equation*}

		\textbf{Estimates of $T_{1,4}$ and $T_{1,5}$.}	
		By Lemma \ref{sob_inf_3}, there hold
		\begin{equation}\label{t13_temp1}
			\begin{aligned}
				&\quad<\partial_y\partial_z\big(u_{2,(0,\neq)}n_{(0,0)}\big),\partial_zc_{(0,\neq)}>\leq \|n_{(0,0)}\|_{L^1}
				\|\partial_y\partial_zc_{(0,\neq)}\|_{L^{\infty}_yL^2_z}
				\|\partial_zu_{2,(0,\neq)}\|_{L^{\infty}_yL^2_z}\\
				&\leq \|n_{(0,0)}\|_{L^1}
				\|\partial_y^2\partial_zc_{(0,\neq)}\|^{\frac12}_{L^2}
				\|\partial_y\partial_zc_{(0,\neq)}\|^{\frac12}_{L^2}
				\|\partial_y\partial_zu_{2,(0,\neq)}\|^{\frac12}_{L^2}
				\|\partial_zu_{2,(0,\neq)}\|^{\frac12}_{L^2},
			\end{aligned}
		\end{equation}
		and 
		\begin{equation}\label{t13_temp2}
			\begin{aligned}
				&\quad<\partial_z^2\big(u_{3,(0,\neq)}n_{(0,0)}\big),\partial_zc_{(0,\neq)}>\leq \|n_{(0,0)}\|_{L^1}
				\|\partial_z^2c_{(0,\neq)}\|_{L^{\infty}_yL^2_z}
				\|\partial_zu_{3,(0,\neq)}\|_{L^{\infty}_yL^2_z}\\
				&\leq \|n_{(0,0)}\|_{L^1}
				\|\partial_y\partial_z^2c_{(0,\neq)}\|^{\frac12}_{L^2}
				\|\partial_z^2c_{(0,\neq)}\|^{\frac12}_{L^2}
				\|\partial_y\partial_zu_{3,(0,\neq)}\|^{\frac12}_{L^2}
				\|\partial_zu_{3,(0,\neq)}\|^{\frac12}_{L^2}.
			\end{aligned}
		\end{equation}
		
		It follows from \eqref{ini11} that 
		\begin{equation}\label{eq:u2u30}
			\left\{
			\begin{array}{lr}
				\partial_t\partial_zu_{2,0}-\frac{1}{A}\triangle \partial_zu_{2,0}
				+\frac{1}{A}\partial_z(u\cdot\nabla u_{2})_0
				+\frac{1}{A}\partial_y\partial_zP^{N_1}_0
				+\frac{1}{A}\partial_y\partial_z P^{N_2}_0+\frac{1}{A}\partial_y\partial_z P^{N_3}_0=\frac{\partial_zn_0}{A}, \\
				\partial_t\partial_zu_{3,0}-\frac{1}{A}\triangle\partial_z u_{3,0}
				+\frac{1}{A}\partial_z(u\cdot\nabla u_3)_0
				+\frac{1}{A}\partial_z^2P^{N_1}_0
				+\frac{1}{A}\partial_z^2 P^{N_2}_0
				+\frac{1}{A}\partial_z^2 P^{N_3}_0=0.
			\end{array}
			\right.
		\end{equation}
		Using Lemma \ref{sob_12} and $\partial_zu_{3,0}=-\partial_yu_{2,0}$, 
		for $j=2,3,$
		there holds
		\begin{equation}\label{u23_zero}
			\begin{aligned}	
				\|\nabla(u_{0}\cdot\nabla u_{j,0})\|_{L^2L^2}
				\leq C(\|u_{2,0}\|_{L^{\infty}H^2}+\|u_{3,0}\|_{L^{\infty}H^1})
				(\|\nabla
				u_{2,0}\|_{L^{2}H^2}+\|\nabla u_{3,0}\|_{L^{2}H^1}).
			\end{aligned}
		\end{equation}
		Combining (\ref{u23_zero}) with Lemma \ref{lemma_neq1}, we obtain
		\begin{equation}\label{u23_zero_1}
			\begin{aligned}
				\|\partial_z(u\cdot\nabla u_j)_0\|_{L^2L^2}\leq
				\|\partial_z(u_{\neq}\cdot\nabla u_{j,\neq})\|_{L^2L^2}
				+\|\partial_z(u_{0}\cdot\nabla u_{j,0})\|_{L^2L^2}
				\leq CA^{\frac12-\frac32\epsilon}(E_1^2+E_2^2).
			\end{aligned}
		\end{equation}
		Using  above results, the 
		energy estimates of \eqref{eq:u2u30} show that 
		\begin{equation}\label{u0_ans1}
			\begin{aligned}
				&\frac{A^{2\epsilon}}{2}(\|\partial_z u_{2,0}\|_{L^{\infty}L^2}^2+\|\partial_z u_{3,0}\|_{L^{\infty}L^2}^2)
				+A^{2\epsilon-1}\Big(
				\|\partial_z\nabla u_{2,0}\|_{L^2L^2}^2
				+\|\partial_z\nabla u_{3,0}\|_{L^2L^2}^2\Big)\\
				&\leq C\left(1+\frac{E_1^3+E_2^3}{A^{\frac{\epsilon}{2}}}\right)
				+A^{2\epsilon-1}
				\int_0^t<\partial_zn_{0},\partial_zu_{2,(0,\neq)}>dt.
			\end{aligned}
		\end{equation}
		Due to $\partial_zn_{0}=\partial_zn_{(0,\neq)}=-\partial_z\triangle c_{(0,\neq)}+\partial_zc_{(0,\neq)},$ then 
		\begin{equation}\label{u0_ans2}
			\begin{aligned}
				&\quad<\partial_zn_{0}, \partial_zu_{2,(0,\neq)}>
				=<-\partial_z\triangle c_{(0,\neq)}+\partial_zc_{(0,\neq)}, 	\partial_zu_{2,(0,\neq)}>\\
				&\leq \|\partial_z\partial_y u_{2,(0,\neq)}\|_{L^2}
				\|\partial_y\partial_z c_{(0,\neq)}\|_{L^2}
				+\|\partial_z^2u_{2,(0,\neq)}\|_{L^2}
				(\|\partial_z^2c_{(0,\neq)}\|_{L^2}
				+\|c_{(0,\neq)}\|_{L^2})
				\\
				&\leq \frac{\|\partial_z \nabla u_{2,(0,\neq)}\|_{L^2}^2
				}{2}+\frac{\|\partial_y\partial_z c_{(0,\neq)}\|_{L^2}^2}{2}
				+\|\partial_z^2 c_{(0,\neq)}\|_{L^2}^2
				+\| c_{(0,\neq)}\|_{L^2}^2.
			\end{aligned}
		\end{equation}
		Using (\ref{u0_ans1}) and (\ref{u0_ans2}), we have 
		\begin{equation}\label{u0_ans3}
			\begin{aligned}
				&\quad 
				A^{2\epsilon-1}\Big(\frac{\|\partial_z\nabla u_{2,0}\|_{L^2L^2}^2}{2}
				+\|\partial_z\nabla u_{3,0}\|_{L^2L^2}^2\Big)\\
				&\leq C\Big(1+\frac{E_1^3+E_2^3}{A^{\frac{\epsilon}{2}}}\Big)
				+A^{2\epsilon-1}\Big(\frac{\|\partial_y\partial_z c_{(0,\neq)}\|_{L^2}^2}{2}
				+\|\partial_z^2 c_{(0,\neq)}\|_{L^2}^2
				+\| c_{(0,\neq)}\|_{L^2}^2\Big).
			\end{aligned}
		\end{equation}
		
		For given positive functions $f_j~(j=1,2,3,4),$  the following Young's inequality holds
		\begin{equation}\label{young_1}
			\begin{aligned}
				f_1^{\frac{1}{2}}f_2^{\frac{1}{2}}
				f_3^{\frac{1}{2}}f_4^{\frac{1}{2}}
				\leq \frac{f_1^2}{4c_1c_2}+\frac{c_2f_2^2}{4c_1}
				+\frac{c_1f_3^2}{4}+\frac{c_1f_4^2}{4}, 
			\end{aligned}
		\end{equation}
		where $c_1$ and $c_2$ are positive constants.
		
		For (\ref{t13_temp1}), we use Young's inequality with $\{c_1,c_2\}=\{\frac{1}{2},1\},$ and 
		for (\ref{t13_temp2}), we use Young's inequality with $\{c_1,c_2\}=\{1,\frac12\}.$
		With the help of (\ref{u0_ans3}), 
		there holds
		\begin{equation*}
			\begin{aligned}
				T_{1,4}+T_{1,5}\leq& 
				\frac{\|n_{(0,0)}\|_{L^{\infty}L^1}}{A^{1-2\epsilon}}
				\Big(\frac{\|\partial_y^2\partial_zc_{(0,\neq)}\|^{2}_{L^2L^2}}{2}
				+\frac{\|\partial_y\partial_zc_{(0,\neq)}\|^{2}_{L^2L^2}}{2}
				+\frac{\|\partial_z\nabla u_{2,(0,\neq)}\|^{2}_{L^2L^2}}{8}\\&
				+\frac{\|\partial_y\partial_z^2c_{(0,\neq)}\|^{2}_{L^2L^2}}{2}
				+\frac{\|\partial_z^2c_{(0,\neq)}\|^{2}_{L^2L^2}}{8}
				+\frac{\|\partial_z\nabla u_{3,(0,\neq)}\|^{2}_{L^2L^2}}{4}\Big)\\
				\leq& 
				\frac{\|n_{(0,0)}\|_{L^{\infty}L^1}}{A^{1-2\epsilon}}
				\Big(\frac{\|\partial_y^2\partial_zc_{(0,\neq)}\|^{2}_{L^2L^2}}
				{2}
				+\frac{\|\partial_y\partial_z^2c_{(0,\neq)}\|^{2}_{L^2L^2}}{2}
				+\frac{5\|\partial_y\partial_zc_{(0,\neq)}\|^{2}_{L^2L^2}}{8}
				\\&
				+\frac{3 \|\partial_z^2c_{(0,\neq)}\|^{2}_{L^2L^2}}{8}
				+\frac{\|c_{(0,\neq)}\|^{2}_{L^2L^2}}{4}\Big)+C\Big(1+\frac{E_1^3+E_2^3}{A^{\frac{\epsilon}{2}}}\Big).
			\end{aligned}
		\end{equation*}
		Due to 
		$$\|\partial_y\partial_z c_{(0,\neq)}\|_{L^2}^2
		\leq \|\partial_y\partial_z^2 c_{(0,\neq)}\|_{L^2}^2,~~~
		\|\partial_z c_{(0,\neq)}\|_{L^2}^2
		\leq \|\partial_z^3 c_{(0,\neq)}\|_{L^2}^2,$$
		when $A>A_1,$ we have
		\begin{equation}\label{t14_1}
			\begin{aligned}
				T_{1,4}+T_{1,5}
				&\leq C+
				\frac{\|n_{(0,0)}\|_{L^{\infty}L^1}}{A^{1-2\epsilon}}
				\Big(\frac{\|\partial_z(\partial_y^2,\partial_z^2) c_{(0,\neq)}\|^{2}_{L^2L^2}}{4} 
				+\frac{3\|\partial_z\nabla c_{(0,\neq)}\|^{2}_{L^2L^2}}{8}\\
				&\quad +\frac{3\|\partial_y\partial_z^2 c_{(0,\neq)}\|^{2}_{L^2L^2}}{4}
				+\frac{\|\partial_y^2\partial_z c_{(0,\neq)}
					\|_{L^2L^2}^2}{4}\Big)\\
				&=C+
				\frac{\|n_{(0,0)}\|_{L^{\infty}L^1}}{A^{1-2\epsilon}}
				\Big(
				\frac{3(\|\partial_z\triangle c_{(0,\neq)}\|^{2}_{L^2L^2}	
					+\|\partial_z\nabla c_{(0,\neq)}\|^{2}_{L^2L^2})}{8}\\
				&\quad-\frac{\|\partial_z(\partial_y^2,\partial_z^2) c_{(0,\neq)}\|^{2}_{L^2L^2}}{8}
				+\frac{\|\partial_y^2\partial_z c_{(0,\neq)}\|_{L^2L^2}^2}{4} 
				\Big).
			\end{aligned}
		\end{equation}
		
		{\bf Estimates of $T_{1,6}$ and $T_{1,7}$.}
		Using Lemma \ref{lemma_u} and 
		\begin{equation}\label{n_infty}
			\|n_{\neq}\|_{L^{\infty}L^{\infty}}\leq 
			\|n\|_{L^{\infty}L^{\infty}}+\|n_0\|_{L^{\infty}L^{\infty}}
			\leq
			C\|n\|_{L^{\infty}L^{\infty}}\leq CE_3,
		\end{equation}
		there holds
		\begin{equation}\label{n0_sub1}
			\begin{aligned}
				&\quad A^{\epsilon-\frac{1}{2}}\|(u_{\neq} n_{\neq})_{(0,\neq)}\|_{L^2L^2}
				\leq CA^{\epsilon-\frac{1}{2}}\|u_{\neq} n_{\neq}\|_{L^2L^2}
				\leq CA^{\epsilon-\frac{1}{2}}
				E_3
				\|u_{\neq}\|_{L^2L^2}\\
				&\leq CA^{\epsilon-\frac{1}{3}}E_3(\| \triangle u_{2,\neq}\|_{X_a}
				+\| \partial_x \omega_{2,\neq}\|_{X_a})
				\leq 
				\frac{CE_2E_3}{ A^{\frac13-\frac{\epsilon}{4}}}.
			\end{aligned}
		\end{equation}
		Due to $n_{\neq}=-\triangle c_{\neq}+c_{\neq},$ then for $j=1,2,3,$ there holds
		\begin{equation*}
			\partial_x\partial_j\triangle^{-1}n_{\neq}
			=-\partial_x\partial_j c_{\neq}
			+\partial_x\partial_j\triangle^{-1}c_{\neq}.
		\end{equation*}
		The energy estimates show that 
		\begin{equation*}
			\begin{aligned}
				\|\partial_x\partial_j\triangle^{-1}n_{\neq}\|_{L^2}^2
				&=\|\partial_x\partial_jc_{\neq}\|_{L^2}^2
				-2<\partial_x\partial_j c_{\neq}
				,\partial_x\partial_j\triangle^{-1}c_{\neq}>
				+\|\partial_x\partial_j\triangle^{-1}c_{\neq}\|_{L^2}^2\\
				&=\|\partial_x\partial_jc_{\neq}\|_{L^2}^2
				+2<\partial_x c_{\neq}
				,\partial_x\partial_j^2\triangle^{-1}c_{\neq}>
				+\|\partial_x\partial_j\triangle^{-1}c_{\neq}\|_{L^2}^2,		
			\end{aligned}
		\end{equation*}
		which along with $\partial_x(\partial_x^2+\partial_y^2+\partial_z^2)
		\triangle^{-1}c_{\neq}=\partial_x c_{\neq}$ imply that 
		\begin{equation}\label{n_neq_11}
			\begin{aligned}
				\|\partial_x\nabla\triangle^{-1}n_{\neq}\|_{L^2}^2
				&=\|\partial_x\nabla c_{\neq}\|_{L^2}^2
				+2<\partial_x c_{\neq}
				,\partial_x(\partial_x^2+\partial_y^2+\partial_z^2)
				\triangle^{-1}c_{\neq}>
				+\|\partial_x\nabla\triangle^{-1}c_{\neq}\|_{L^2}^2\\
				&=\|\partial_x\nabla c_{\neq}\|_{L^2}^2
				+2\|\partial_x c_{\neq}\|_{L^2}^2
				+\|\partial_x\nabla\triangle^{-1}c_{\neq}\|_{L^2}^2.		
			\end{aligned}
		\end{equation}
		By  (\ref{n_infty}) and (\ref{n_neq_11}), we get 
		\begin{equation}\label{n0_sub2}
			\begin{aligned}
				&\quad A^{\epsilon-\frac{1}{2}}\|(n_{\neq}\nabla c_{\neq})_{(0,\neq)}\|_{L^2L^2}
				\leq CA^{\epsilon-\frac{1}{2}}E_3
				\|\nabla c_{\neq}\|_{L^2L^2}\leq 
				\frac{CE_3\| n_{\neq}\|_{X_a}}
				{ A^{\frac12-\epsilon}}\leq\frac{CE_2E_3}{ A^{\frac12-\epsilon}},
			\end{aligned}
		\end{equation}
		where we use $\|\nabla c_{\neq}\|_{L^2L^2}\leq 
		\|\partial_x\nabla\triangle^{-1} n_{\neq}\|_{L^2L^2}
		\leq \|n_{\neq}\|_{X_a}.$
		
		Therefore, according to \eqref{n0_sub1} and \eqref{n0_sub1}, we obtain 
		\begin{equation*}
			T_{1,6}
			=\frac{\int_{0}^t<\nabla\cdot\partial_z(u_{\neq}n_{\neq})_{(0,\neq)},
				\partial_zc_{(0,\neq)}>dt}{A^{1-2\epsilon}}
			\leq \frac{\|(u_{\neq} n_{\neq})_{(0,\neq)}\|_{L^2L^2}
				\|\partial_z^2\nabla c_{(0,\neq)}\|_{L^2L^2}}{A^{1-2\epsilon}}\leq 
			\frac{CE_1E_2E_3}{ A^{\frac13-\frac{\epsilon}{4}}},
		\end{equation*}
		and 
		\begin{equation*}
			T_{1,7}
			=\frac{\int_{0}^t<\nabla\cdot\partial_z(n_{\neq}\nabla c_{\neq})_{(0,\neq)},
				\partial_zc_{(0,\neq)}>dt}{A^{1-2\epsilon}}
			\leq \frac{\|(n_{\neq}\nabla c_{\neq} )_{(0,\neq)}\|_{L^2L^2}
				\|\partial_z^2\nabla c_{(0,\neq)}\|_{L^2L^2}}{A^{1-2\epsilon}}\leq 
			\frac{CE_1E_2E_3}{ A^{\frac12-\epsilon}}.
		\end{equation*}

		{\bf  Estimates of $T_{1,8}$ and $T_{1,9}$.}
		Using Lemma \ref{sob_inf_3} and Lemma \ref{lem:ellip_4},
		we have
		\begin{equation*}
			\begin{aligned}
				&\|\nabla c_{(0,\neq)}\|_{L^{\infty}_yL^2_z}
				\leq \|\partial_y\nabla c_{(0,\neq)}\|_{L^2}^{\frac12}
				\|\nabla c_{(0,\neq)}\|_{L^2}^{\frac12}
				\leq C\|n_{(0,\neq)}\|_{L^2}^{\frac12}
				\|\nabla c_{(0,\neq)}\|_{L^2}^{\frac12},\\
				&\|n_{(0,\neq)}\|_{L^{\infty}_zL^2_y}\leq 
				\|\partial_zn_{(0,\neq)}\|_{L^2},
			\end{aligned}
		\end{equation*}
		which imply that
		\begin{align*}
			&\quad <\partial_z\nabla\cdot(n_{(0,\neq)}\nabla c_{(0,\neq)})_{(0,\neq)},\partial_z
			c_{(0,\neq)}>\leq C
			\|n_{(0,\neq)}\nabla c_{(0,\neq)}\|_{L^2}
			\|\partial_z^2\nabla c_{(0,\neq)}\|_{L^2}\\
			&\leq C\|n_{(0,\neq)}\|_{L^2}^{\frac12}
			\|\nabla c_{(0,\neq)}\|_{L^2}^{\frac12}
			\|\partial_zn_{(0,\neq)}\|_{L^2}
			\|\partial_z^2\nabla c_{(0,\neq)}\|_{L^2}.
		\end{align*}
		Therefore
		\begin{equation*}
			T_{1,8}+T_{1,9}\leq 
			\frac{C\|n_{(0,\neq)}\|_{L^{\infty}L^2}^{\frac12}
				\|\nabla c_{(0,\neq)}\|_{L^{\infty}L^2}^{\frac12}
				\|\partial_zn_{(0,\neq)}\|_{L^2L^2}
				\|\partial_z^2\nabla c_{(0,\neq)}\|_{L^2L^2}}{A^{1-2\epsilon}}
			\leq \frac{CE_1^3}{ A^{\frac{1}{2}\epsilon}}.
		\end{equation*}
		
		{\bf Estimates of $T_{1,10}$, $T_{1,11}$ and $T_{1,12}$.}
		We just need to estimate  $<\partial_z\partial_j (u_{j,0}n_{(0,\neq)}),
		\partial_zc_{(0,\neq)}>,$ where $j=2,3.$
		By (\ref{u23_infty}), we get
		\begin{equation*}
			\begin{aligned}
				<\partial_z\partial_j (u_{j,0}n_{(0,\neq)}),
				\partial_zc_{(0,\neq)}>
				&\leq
				C\|u_{j,0}n_{(0,\neq)}\|_{L^2}
				\|\partial_z\triangle c_{(0,\neq)}\|_{L^2}\\
				&\leq C(\|u_{2,0}\|_{H^2}+\|u_{3,0}\|_{H^1})
				\|n_{(0,\neq)}\|_{L^2}
				\|\partial_z\triangle c_{(0,\neq)}\|_{L^2},	
			\end{aligned}
		\end{equation*}
		which along with $\|f_{(0,0)}\|_{L^2}+
		\|f_{(0,\neq)}\|_{L^2}\leq C\|f_{0}\|_{L^2}$ imply that 
		\begin{equation*}
			\begin{aligned}
				&\quad T_{1,10}+T_{1,11}+T_{1,12}\\
				&\leq
				\frac{C(\|u_{2,0}\|_{L^{\infty}H^2}+\|u_{3,0}\|_{L^{\infty}H^1})
					\|n_{(0,\neq)}\|_{L^2L^2}
					\|\partial_z\triangle c_{(0,\neq)}\|_{L^2L^2}}{A^{1-2\epsilon}}
				\leq \frac{CE_1^3}{ A^{\epsilon}}.		
			\end{aligned}
		\end{equation*}

		{\bf  Close the energy estimates.}
		By above calculations, we infer from (\ref{n0_neq_1}) and (\ref{n0_neq_2}) that 
		\begin{equation}\label{n0_neq_3}
			\begin{aligned}
				&\frac{\|\partial_z\nabla c_{(0,\neq)}\|^2_{L^{\infty}H^1}}{2A^{-2\epsilon}}
				+\frac{\|\partial_z\triangle c_{(0,\neq)}\|^2_{L^2L^2}+
					\|\partial_z\nabla c_{(0,\neq)}\|^2_{L^{2}L^2}}{A^{1-2\epsilon}}
				\\
				\leq& A^{2\epsilon}\|(n_{\rm in})_{(0,\neq)}\|^2_{L^2}+C\left(\frac{E_1^3}{ A^{\frac{1}{2}\epsilon}}
				+\frac{E_1^2+E_2^3+E_3^3}{ A^{\frac13-\frac{\epsilon}{4}}}
				+\frac{E_1^3+E_2^3+E_3^3}{ A^{\frac12-\epsilon}}\right)\\&+\|n_{(0,0)}\|_{L^\infty L^1}
				\left[
				\frac{7\left(\|\partial_z\triangle c_{(0,\neq)}\|_{L^2L^2}^2
					+\|\partial_z\nabla c_{(0,\neq)}\|_{L^2L^2}^2\right)
				}{8A^{1-2\epsilon}}-\frac{\|\partial_{z}(\partial_{y}^{2},\partial_{z}^{2})c_{(0,\neq)}\|_{L^{2}L^{2}}^{2}}{8A^{1-2\epsilon}}\right]
				\\\leq&A^{2\epsilon}\|(n_{\rm in})_{(0,\neq)}\|^2_{L^2}+C\left(\frac{E_1^3}{ A^{\frac{1}{2}\epsilon}}
				+\frac{E_1^2+E_2^3+E_3^3}{ A^{\frac13-\frac{\epsilon}{4}}}
				+\frac{E_1^3+E_2^3+E_3^3}{ A^{\frac12-\epsilon}}\right)\\&+\|n_{(0,0)}\|_{L^{\infty}L^{1}}\frac{5\left(\|\partial_z\triangle c_{(0,\neq)}\|_{L^2L^2}^2
					+\|\partial_z\nabla c_{(0,\neq)}\|_{L^2L^2}^2\right)}{6A^{1-2\epsilon}},
			\end{aligned}
		\end{equation}
		where we used the fact of $ \|\partial_{z}\triangle c_{(0,\neq)}\|_{L^{2}L^{2}}^{2}+\|\partial_{z}\nabla c_{(0,\neq)}\|_{L^{2}L^{2}}^{2}\leq 3\|\partial_{z}(\partial_{y}^{2}, \partial_{z}^{2})c_{(0,\neq)}\|_{L^{2}L^{2}}^{2}. $
		The energy estimates can be closed by imposing the condition
		$\|n_{(0,0)}\|_{L^\infty L^1}<\frac65,$
		which along with $\|n_{(0,0)}\|_{L^{\infty}L^1}=\frac{M}{4\pi^2},$
		imply that 
		\begin{equation*}
			M<\frac{24\pi^2}{5}.
		\end{equation*}
		
		In conclusion,
		when $$A\geq\max\{A_1,(E_1^3+E_2^3+E_3^1)^{\frac{\epsilon}{2}},
		(E_1^3+E_2^3+E_3^1)^{\frac{12}{4-3\epsilon}},(E_1^3+E_2^3+E_3^1)^{\frac{2}{1-2\epsilon}}\}:=A_2,$$ as long as $M< \frac{24\pi^2}{5},$
		there holds 
		\begin{equation*}
			\frac{A^{2\epsilon}\|\partial_z\nabla c_{(0,\neq)}\|^2_{L^{\infty}L^2}}{2}+\left(1-\frac56\|n_{(0,0)}\|_{L^{\infty}L^{1}} \right)
			\frac{\|\partial_z\triangle c_{(0,\neq)}\|_{L^2L^2}^2
				+\|\partial_z\nabla c_{(0,\neq)}\|_{L^2L^2}^2}{A^{1-2\epsilon}}
			\leq C.
		\end{equation*}
		We finish the proof with the help of 
		$\|\partial_zn_{(0,\neq)}\|_{L^{2}}^{2}\leq C\|\partial_z\triangle c_{(0,\neq)}\|_{L^{2}}^{2}$.
	\end{proof}
	
	\begin{lemma}\label{lemma_n003}
		Under conditions of Theorem \ref{result}, Lemma \ref{lemma_n002} and the assumptions (\ref{assumption}), 	there exists a constant $A_3$ independent of $t$ and $A$, such that if $A>A_3$, then
		$$\|\partial_z^2n_{(0,\neq)}\|^2_{L^{\infty}L^2}+
		\frac{1}{A}\|\partial_z^2\nabla n_{(0,\neq)}\|^2_{L^2L^2}\leq C(\|(\partial_z^2n_{\rm in})_{(0,\neq)}\|_{L^2}^2
		+1).$$
	\end{lemma}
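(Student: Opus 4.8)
The plan is to run an $L^2$ energy estimate for $\partial_z^2 n_{(0,\neq)}$ using the equation obtained by applying $\partial_z^2$ to \eqref{n_0_neq}. Since $\partial_z$ commutes with all operators here and $n_{(0,0)}$ is $z$-independent, applying $\partial_z^2$ to the bad interaction terms of type $(0,0)\cdot(0,\neq)$ moves both $\partial_z$-derivatives onto the $(0,\neq)$ factor, which is exactly the structure already exploited in Lemma \ref{lemma_n002}. Concretely, multiplying the $\partial_z^2$-differentiated equation by $\partial_z^2 n_{(0,\neq)}$ and integrating over $\mathbb T\times\mathbb R\times\mathbb T$ gives
\begin{equation*}
\frac12\partial_t\|\partial_z^2 n_{(0,\neq)}\|_{L^2}^2+\frac1A\|\partial_z^2\nabla n_{(0,\neq)}\|_{L^2}^2
= -\frac1A\big\langle \partial_z^2\big[\,\text{RHS of \eqref{n_0_neq}}\,\big],\ \partial_z^2 n_{(0,\neq)}\big\rangle,
\end{equation*}
and the task is to bound every term on the right so that, after integrating in time and using the already-established bounds $E_1(t)\le 2E_1$, $E_2(t)\le 2E_2$, $E_3(t)\le 2E_3$, the dissipation term absorbs everything except a constant multiple of $\|(\partial_z^2 n_{\rm in})_{(0,\neq)}\|_{L^2}^2+1$.

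First I would dispatch the genuinely nonlinear terms — those involving $n_{\neq}$, $u_{\neq}$, and products of two $(0,\neq)$ factors — using the anisotropic Sobolev inequalities of Lemmas \ref{sob_inf_1}–\ref{lemma_non_zz} together with the elliptic estimates of Lemmas \ref{lem:ellip_0}, \ref{lem:ellip_4} and the velocity estimates of Lemma \ref{lemma_u}. These are exactly the ``good terms'' in the language of the proof of Lemma \ref{lemma_n002}: each carries an extra negative power of $A$ after the time integration (the enhanced-dissipation gain $A^{-1/3}$ in the $X_a$ norm, or the $A^{-1/3}$ weights in $E_{1,1}$), so for $A$ larger than some threshold they are bounded by $C$ plus a small fraction of the dissipation. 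The terms $\partial_z^2\partial_j(u_{j,0}n_{(0,\neq)})$ with $j=2,3$ are controlled by $\|u_{j,0}\|_{H^2}\|\partial_z^2 n_{(0,\neq)}\|_{L^2}\|\partial_z^2\nabla n_{(0,\neq)}\|_{L^2}$ after integration by parts, then by the $E_{1,2}$-bound on $u_{2,0},u_{3,0}$ one picks up the required smallness in $A$.

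The main obstacle is the same one handled in Lemma \ref{lemma_n002}: the quadratic-in-$c$ terms coming from $\partial_y(n_{(0,0)}\partial_y c_{(0,\neq)})$, $\partial_z(n_{(0,0)}\partial_z c_{(0,\neq)})$ and $\partial_y(\partial_y c_{(0,0)} n_{(0,\neq)})$, which after two $\partial_z$-derivatives and integration by parts produce terms like $\langle n_{(0,0)}, (\partial_z^2\partial_y c_{(0,\neq)})^2\rangle$ and $\langle n_{(0,0)},(\partial_z^3 c_{(0,\neq)})^2\rangle$ — i.e.\ quantities of the size $\|n_{(0,0)}\|_{L^1}$ times the dissipation $\frac1A\|\partial_z^2\nabla n_{(0,\neq)}\|_{L^2L^2}^2$ (using $n_{(0,\neq)}=-\triangle c_{(0,\neq)}+c_{(0,\neq)}$ and Lemma \ref{lem:ellip_4} to convert $c$-derivatives back to $n$-derivatives, and the $L^1\cdot L^\infty_y L^2_z$ Hölder splitting via $\eqref{sob_result_3}_3$ and $\eqref{sob_result_3}_4$). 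Since $\|n_{(0,0)}\|_{L^\infty L^1}=M/(4\pi^2)<6/5$ under the hypothesis $M<\tfrac{24}{5}\pi^2$ already assumed, the coefficient in front of the dissipation is strictly less than $1$, so these terms can be absorbed. The one extra input not present in Lemma \ref{lemma_n002} is that $\|\partial_z n_{(0,\neq)}\|_{L^2L^2}^2/A^{1-2\epsilon}\le C$ from that lemma is needed to close the lower-order pieces; combining this with the $M$-smallness closes the estimate. Finally, integrating the resulting differential inequality in time and invoking $\partial_z^2 n_{(0,\neq)}\big|_{t=0}=(\partial_z^2 n_{\rm in})_{(0,\neq)}$ yields the claimed bound, with $A_3$ chosen as the maximum of $A_2$ and the finitely many thresholds generated above.
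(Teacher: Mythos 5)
Your overall frame (a direct $L^2$ energy estimate for $\partial_z^2 n_{(0,\neq)}$, with the genuinely nonlinear terms handled by the anisotropic Sobolev, elliptic and velocity lemmas) is the same as the paper's, but the way you treat the critical $(0,0)\cdot(0,\neq)$ interactions has a genuine gap. The perfect-square structure $\langle n_{(0,0)},(\partial_y\partial_z^2 c_{(0,\neq)})^2\rangle$, $\langle n_{(0,0)},(\partial_z^3 c_{(0,\neq)})^2\rangle$ that lets the mass $\|n_{(0,0)}\|_{L^\infty L^1}$ appear as a clean coefficient of the dissipation is a feature of Lemma \ref{lemma_n002}, where the equation is tested against $\partial_z c_{(0,\neq)}$. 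Here the test function is $\partial_z^2 n_{(0,\neq)}$, and integration by parts only gives cross terms such as $\tfrac1A\langle n_{(0,0)}\,\partial_y\partial_z^2 c_{(0,\neq)},\,\partial_y\partial_z^2 n_{(0,\neq)}\rangle$, not squares. To manufacture squares you would have to substitute $n_{(0,\neq)}=-\triangle c_{(0,\neq)}+c_{(0,\neq)}$ in the test function and integrate by parts again, which produces commutator terms containing $\partial_y n_{(0,0)}$ paired with third-order derivatives of $c_{(0,\neq)}$ — a quantity not controlled by $E_1$ — and changes all the numerical constants. In particular, the claim that the coefficient in front of $\tfrac1A\|\partial_z^2\nabla n_{(0,\neq)}\|_{L^2L^2}^2$ is strictly less than one "since $\|n_{(0,0)}\|_{L^\infty L^1}=M/(4\pi^2)<6/5$" transfers a constant that was calibrated for the very specific bookkeeping of Lemma \ref{lemma_n002} to a different estimate without verification; as written this step is not justified.

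The point you miss is that no mass-smallness absorption is needed at this order: the paper closes these terms perturbatively in $A$. One bounds $\|n_{(0,0)}\partial_z^2\nabla c_{(0,\neq)}\|_{L^2}\le \|n_{(0,0)}\|_{L^2}\big(\|\partial_y\partial_z^2\nabla c_{(0,\neq)}\|_{L^2}\|\partial_z^2\nabla c_{(0,\neq)}\|_{L^2}\big)^{1/2}$ by Lemma \ref{lemma_non_zz}, converts to $C\|\partial_z n_{(0,\neq)}\|_{L^2}^{1/2}\|\partial_z^2\nabla n_{(0,\neq)}\|_{L^2}^{3/2}$ via Lemma \ref{lem:ellip_4} and Lemma \ref{lemma_n001}, and then the key input $\|\partial_z n_{(0,\neq)}\|_{L^2L^2}^2\le CA^{1-2\epsilon}$ from Lemma \ref{lemma_n002}, together with the bootstrap bound on $\|\partial_z^2 n_{(0,\neq)}\|_{Y_0}$, yields a contribution of size $CE_1^2A^{-\epsilon/2}$; the term $n_{(0,\neq)}\partial_y c_{(0,0)}$ is handled the same way using $\|\partial_y c_{(0,0)}\|_{L^\infty}\le C\|n_{(0,0)}\|_{L^2}$ and the interpolation $\|\partial_z^2 n_{(0,\neq)}\|^2\le\|\partial_z n_{(0,\neq)}\|\|\partial_z^3 n_{(0,\neq)}\|$. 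A second omission: the terms $\partial_y\partial_z^2(u_{2,(0,\neq)}n_{(0,0)})$ and $\partial_z^3(u_{3,(0,\neq)}n_{(0,0)})$ require $A^{-1}\|\partial_z^2\nabla u_{j,0}\|_{L^2L^2}^2$, which for $u_{3,0}$ is not contained in $E_{1,2}$ (only the time-weighted $\triangle u_{3,0}$ is); the paper derives this bound separately from the $\partial_z^2$-differentiated equations for $(u_{2,0},u_{3,0})$ (estimate (\ref{nzz_temp1})), again using Lemma \ref{lemma_n002}. Your sketch lumps these into the "good terms" and does not supply this auxiliary velocity estimate.
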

	\begin{proof}
		Taking $\partial_z^2$ for (\ref{n_0_neq}), we have
		\begin{equation}\label{n_0_neq11}
			\begin{aligned}
				&\partial_t \partial_z^2n_{(0,\neq)}
				-\frac{1}{A}\partial_z^2\triangle n_{(0,\neq)}
				=-\frac{1}{A}\left[\nabla\cdot\partial_z^2(n_{\neq}\nabla c_{\neq})_{(0,\neq)}
				+\nabla\cdot\partial_z^2(u_{\neq}n_{\neq})_{(0,\neq)}\right]\\
				&-\frac{1}{A}\left[\partial_y\partial_z^2\big(n_{(0,0)}\partial_{y}c_{(0,\neq)}
				+n_{(0,\neq)}\partial_{y}c_{(0,0)}
				+(n_{(0,\neq)}\partial_{y}c_{(0,\neq)})_{(0,\neq)}\big)\right]\\
				&-\frac{1}{A}\left[\partial_z^3\big(n_{(0,0)}\partial_{z}c_{(0,\neq)}  
				+n_{(0,\neq)}\partial_{z}c_{(0,\neq)}\big)
				+\partial_y\partial_z^2\big(u_{2,(0,\neq)}n_{(0,0)}
				+(u_{2,(0,\neq)}n_{(0,\neq)})_{(0,\neq)}\big)\right]\\
				&-\frac{1}{A}\left[\partial_z^3\big(u_{3,(0,0)}n_{(0,\neq)}
				+u_{3,(0,\neq)}n_{(0,0)}
				+(u_{3,(0,\neq)}n_{(0,\neq)})_{(0,\neq)}\big)\right].
			\end{aligned}
		\end{equation}
		After integrating in time, we infer from (\ref{n_0_neq11}) that 
		\begin{equation}\label{n_0_neq12}
			\|\partial_z^2n_{(0,\neq)}\|^2_{L^{\infty}L^2}+
			\frac{1}{A}\|\partial_z^2\nabla n_{(0,\neq)}\|^2_{L^2L^2}= \frac{\|(\partial_z^2n_{\rm in})_{(0,\neq)}\|_{L^2}^2}{2}
			+T_{2,1}+T_{2,2}+...+T_{2,12}.
		\end{equation}
		
		{\bf Estimate of $T_{2,1}$.} 
		Using Lemma \ref{sob_inf_2}, Lemma \ref{lemma_non_zz0} 
		and Lemma \ref{lem:ellip_2}, we have
		\begin{equation*}
			\begin{aligned}
				&\|\partial_z^2(n_{\neq}\nabla c_{\neq})_{(0,\neq)}\|_{L^2L^2}\\
				\leq 
				&\|(\partial_z^2n_{\neq}\nabla c_{\neq})_{0}\|_{L^2L^2}+
				2\|(\partial_zn_{\neq}\nabla \partial_zc_{\neq})_{0}\|_{L^2L^2}+
				\|(n_{\neq}\nabla \partial_z^2c_{\neq})_{0}\|_{L^2L^2}\\
				\leq&\|\partial_z^2n_{\neq}\|_{L^{\infty}L^2}
				\|\nabla c_{\neq}\|_{L^2L^{\infty}}
				+2\|\partial_zn_{\neq}\|_{L^{\infty}_{t,z}L^2_{x,y}}
				\|\nabla\partial_zc_{\neq}\|_{L^2_{t,x,z}L^{\infty}_{y}}
				+\|n_{\neq}\|_{L^{\infty}_{t,z}L^2_{x,y}}
				\|\nabla\partial_z^2c_{\neq}\|_{L^2_{t,x,z}L^{\infty}_{y}}\\
				\leq &CA^{\frac16}(\|\partial_x^2n_{\neq}\|_{X_a}^2
				+\|\partial_z^2n_{\neq}\|_{X_a}^2)\leq CA^{\frac16}E_2^2.
			\end{aligned}
		\end{equation*}
		Thus
		\begin{equation*}
			T_{2,1}=\frac{\int_0^t<\nabla\cdot\partial_z^2(n_{\neq}\nabla c_{\neq})_{(0,\neq)},\partial_z^2 n_{(0,\neq)}>dt}{A}\leq C\frac{E_1^3+E_2^3}{A^{\frac13}}.
		\end{equation*}
		
		{\bf Estimate of $T_{2,2}$.} 
		For $j=2~{\rm or}~3$, we  have 
		\begin{equation*}
			\begin{aligned}
				&\|\partial_z^2(u_{j,\neq} n_{\neq})_{(0,\neq)}\|_{L^2L^2}\\
				\leq 
				&\|(\partial_z^2u_{j,\neq} n_{\neq})_{0}\|_{L^2L^2}+
				2\|(\partial_zu_{j,\neq} \partial_zn_{\neq})_{0}\|_{L^2L^2}+	\|(u_{j,\neq} \partial_z^2n_{\neq})_{0}\|_{L^2L^2}\\
				\leq
				&\|\partial_z^2u_{j,\neq}\|_{L^{\infty}L^2}
				\|n_{\neq}\|_{L^2L^{\infty}}+
				2\|\partial_zu_{j,\neq}\|_{L^{\infty}L^2}
				\|\partial_zn_{\neq}\|_{L^2_{t,x}L^{\infty}_{y,z}}
				+\|\partial_z^2n_{\neq}\|_{L^{\infty}L^2}
				\|u_{j,\neq}\|_{L^2L^{\infty}}\\
				\leq &CA^{\frac{5}{12}}(\|\partial_x^2n_{\neq}\|_{X_a}^2
				+\|\partial_z^2n_{\neq}\|_{X_a}^2+\|\triangle u_{2,\neq}\|^2_{X_a}
				+\| \partial_x\omega_{2,\neq}\|^2_{X_a})
				\leq CA^{\frac{5}{12}}E_2^2,
			\end{aligned}
		\end{equation*}
		which implies that 
		\begin{equation*}
			T_{2,2}
			=\frac{\int_0^t<\nabla\cdot
				\partial_z^2(u_{\neq}n_{\neq})_{(0,\neq)},
				\partial_z^2 n_{(0,\neq)}>dt}{A}\leq C\frac{E_1^3+E_2^3}{A^{\frac{1}{12}}}.
		\end{equation*}
		
		{\bf Estimates of $T_{2,3}$ and $T_{2,6}$.} 
		Using Lemma \ref{lemma_non_zz}, Lemma \ref{lem:ellip_4}
		and Lemma \ref{lemma_n001}, 
		we have 
		\begin{equation*}
			\begin{aligned}
				&\quad <\nabla\cdot\partial_z^2(n_{(0,0)}\nabla c_{(0,\neq)})
				,\partial_z^2 n_{(0,\neq)}>
				\leq C\|n_{(0,0)}\partial_z^2\nabla c_{(0,\neq)}\|_{L^2}
				\|\partial_z^2 \nabla n_{(0,\neq)}\|_{L^2}\\
				&\leq C
				\|\partial_z^2\partial_y\nabla c_{(0,\neq)}\|_{L^2}^{\frac12}
				\|\partial_z^2\nabla c_{(0,\neq)}\|_{L^2}^{\frac12}
				\|\partial_z^2 \nabla n_{(0,\neq)}\|_{L^2}\leq C
				\|\partial_z n_{(0,\neq)}\|_{L^2}^{\frac12}
				\|\partial_z^2 \nabla n_{(0,\neq)}\|_{L^2}^{\frac32},
			\end{aligned}
		\end{equation*}
		which implies that 
		\begin{equation*}
			T_{2,3}+T_{2,6}
			=\frac{\int_0^t<\nabla\cdot\partial_z^2(n_{(0,0)}
				\nabla c_{(0,\neq)}),\partial_z^2 n_{(0,\neq)}>dt}{A}
			\leq \frac{CE_1^2}{A^{\frac{\epsilon}{2}}}.
		\end{equation*}
		
		{\bf Estimate of $T_{2,4}$.} 
		Using Lemma \ref{lem:ellip_3}, Lemma \ref{lemma_n001}
		and \begin{equation*}
			\|\partial_z^2n_{(0,\neq)}\|_{L^2}^2
			\leq \|\partial_zn_{(0,\neq)}\|_{L^2}
			\|\partial_z^3n_{(0,\neq)}\|_{L^2},
		\end{equation*}
		we have
		\begin{equation*}
			\begin{aligned}
				&\quad 	<\partial_y\partial_z^2(n_{(0,\neq)}\partial_{y}c_{(0,0)}),
				\partial_z^2n_{(0,\neq)}>
				\leq 
				\|\partial_{y}c_{(0,0)}\partial_z^2n_{(0,\neq)}\|_{L^2}
				\|\partial_z^2\nabla n_{(0,\neq)}\|_{L^2}\\
				&\leq C\|n_{(0,0)}\|_{L^2}\|\partial_z^2n_{(0,\neq)}\|_{L^2}
				\|\partial_z^2\nabla n_{(0,\neq)}\|_{L^2}
				\leq C\|\partial_zn_{(0,\neq)}\|_{L^2}^{\frac12}
				\|\partial_z^2\nabla n_{(0,\neq)}\|_{L^2}^{\frac32},
			\end{aligned}
		\end{equation*}
		which implies that 
		\begin{equation*}
			T_{2,4}=
			\frac{\int_0^t<\partial_y\partial_z^2(n_{(0,\neq)}
				\partial_{y}c_{(0,0)}),\partial_z^2n_{(0,\neq)}>dt}{A}
			\leq \frac{CE_1^2}{A^{\frac{\epsilon}{2}}}.
		\end{equation*}
		
		{\bf Estimates of $T_{2,5}$ and $T_{2,7}$.} 
		Using Lemma \ref{sob_inf_1}, Lemma \ref{lemma_non_zz0}, Lemma \ref{lem:ellip_4}
		and Lemma \ref{lemma_n002}, there holds
		\begin{equation*}
			\begin{aligned}
				&\|\partial_z^2(n_{(0,\neq)}\nabla c_{(0,\neq)})\|_{L^2L^2}\\
				\leq&\|\partial_z^2n_{(0,\neq)}\nabla c_{(0,\neq)}\|_{L^2L^2}
				+2\|\partial_zn_{(0,\neq)}\nabla 	\partial_zc_{(0,\neq)}\|_{L^2L^2}
				+\|n_{(0,\neq)}\nabla \partial_z^2c_{(0,\neq)}\|_{L^2L^2}\\
				\leq&\|\partial_z^2n_{(0,\neq)}\|_{L^{\infty}L^2}\|\nabla 	c_{(0,\neq)}\|_{L^2L^{\infty}}
				+2\|\partial_zn_{(0,\neq)}\|_{L^{\infty}_{t,z}L^2_{y}}
				\|\nabla \partial_zc_{(0,\neq)}\|_{L^2_{t,z}L^{\infty}_y}\\
				&+\|\nabla \partial_z^2c_{(0,\neq)}\|_{L^{\infty}_{t,y}L^2_z}
				\|n_{(0,\neq)}\|_{L^{2}_{t,y}L^{\infty}_z}\\
				\leq &C\|\partial_z^2n_{(0,\neq)}\|_{L^{\infty}L^2}
				\|\partial_z n_{(0,\neq)}\|_{L^2L^{2}},
			\end{aligned}
		\end{equation*}
		which implies that 
		\begin{equation*}
			T_{2,5}+T_{2,7}
			=\frac{\int_0^t<\nabla\cdot\partial_z^2(n_{(0,\neq)}
				\nabla c_{(0,\neq)})_{(0,\neq)},\partial_z^2 n_{(0,\neq)}>dt}{A}
			\leq \frac{CE_1^3}{A^{\epsilon}}.
		\end{equation*}
		
		{\bf Estimates of $T_{2,8}$ and $T_{2,11}$.} 
		It follows from (\ref{eq:u2u30}) that 
		\begin{equation*}
			\left\{
			\begin{array}{lr}
				\partial_t\partial_z^2u_{2,0}-\frac{1}{A}\triangle \partial_z^2 u_{2,0}
				+\frac{1}{A}\partial_z^2(u\cdot\nabla u_{2})_0
				+\frac{1}{A}\partial_y\partial_z^2P^{N_1}_0
				+\frac{1}{A}\partial_y\partial_z^2 P^{N_2}_0
				+\frac{1}{A}\partial_y\partial_z^2 P^{N_3}_0=\frac{\partial_z^2n_0}{A}, \\
				\partial_t\partial_z^2u_{3,0}-\frac{1}{A}\triangle \partial_z^2u_{3,0}
				+\frac{1}{A}\partial_z^2(u\cdot\nabla u_3)_0
				+\frac{1}{A}\partial_z^3P^{N_1}_0
				+\frac{1}{A}\partial_z^3P^{N_2}_0
				+\frac{1}{A}\partial_z^3P^{N_3}_0=0.
			\end{array}
			\right.
		\end{equation*}	
		Using (\ref{u23_zero}), (\ref{u23_zero_1}) and Lemma \ref{lemma_n002}, the energy estimates show that
		\begin{equation}\label{nzz_temp1}
			\begin{aligned}
				&\quad
				\frac{\|\partial_z^2\nabla (u_{2,0},u_{3,0})\|_{L^2L^2}^2}{A}
				\leq C\Big(\frac{1+E_1^3+E_2^3}{A^{3\epsilon}}
				+\frac{\|\partial_z n_{(0,\neq)}\|_{L^2L^2}^2}{2A}\Big)
				\leq C\frac{E_1^3+E_2^3+1}{A^{3\epsilon}}.
			\end{aligned}
		\end{equation}
		For $j=2,3,$ using Lemma \ref{sob_inf_3},
		we have 
		\begin{equation*}
			\begin{aligned}
				<\partial_j\partial_z^2(u_{j,(0,\neq)}n_{(0,0)}),
				\partial_z^2n_{(0,\neq)}>
				\leq C\|n_{(0,0)}\|_{L^2}
				\|\partial_z^2\nabla u_{j,(0,\neq)}\|_{L^2}
				\|\partial_z^2\nabla n_{(0,\neq)}\|_{L^2},
			\end{aligned}
		\end{equation*}
		which along with (\ref{nzz_temp1}) give that 
		\begin{equation*}
			\begin{aligned}
				T_{2,8}+T_{2,11}=\frac{\int_{0}^t\sum_{j=2}^3<\partial_j\partial_z^2(u_{j,(0,\neq)}n_{(0,0)}),
					\partial_z^2n_{(0,\neq)}>dt}{A}\leq C\frac{E_1^3+E_2^3+1}{A^{\epsilon}}.
			\end{aligned}
		\end{equation*}
		
		{\bf Estimate of $T_{2,10}$.} 
		A direct calculation shows that 
		\begin{equation*}
			T_{2,10}=
			\frac{\int_0^t<u_{3,(0,0)},\partial_z^2n_{(0,\neq)}
				\partial_z^3n_{(0,\neq)}>dt}{A}=0.
		\end{equation*}
		
		{\bf Estimates of $T_{2,9}$ and $T_{2,12}$.} 
		Thanks to ${\rm div}~u_{(0,\neq)}=0,$ for $j=2,3$, we have 
		\begin{equation*}
			\begin{aligned}
				\|\partial_z^2(u_{j,(0,\neq)}n_{(0,\neq)})\|_{L^2L^2}&\leq
				C(\|u_{2,(0,\neq)}\|_{L^{\infty}H^2}+\|\partial_zu_{3,(0,\neq)}\|_{L^{\infty}H^1})
				\|\partial_z\nabla n_{(0,\neq)}\|_{L^2L^{2}}\\
				&\leq CA^{-\epsilon}E_1\|\nabla \partial_z n_{(0,\neq)}\|_{L^2L^{2}}
				\leq CA^{-\epsilon}E_1\|\nabla \partial_z^2 n_{(0,\neq)}\|_{L^2L^{2}},
			\end{aligned}
		\end{equation*}
		which implies   that 
		\begin{equation*}
			\begin{aligned}	
				T_{2,8}+T_{2,11}=&\frac{\int_{0}^t\sum_{j=2}^3
					<\partial_j\partial_z^2(u_{j,(0,\neq)}n_{(0,\neq)})_{\neq},
					\partial_z^2n_{(0,\neq)}>dt}{A}\\
				&\leq \frac{CE_1\|\nabla \partial_z^2 n_{(0,\neq)}\|_{L^2L^{2}}^2}{A^{1+\epsilon}}
				\leq \frac{CE_1^3}{A^{\epsilon}}.
			\end{aligned}
		\end{equation*}
		
		When
		$A\geq{\rm max}\{A_2,
		C(1+E_1^3+E_2^3)^{\frac{1}{\epsilon}}\}:=A_3,$ 
		we infer from (\ref{n_0_neq12}) that
		\begin{equation*}
			\begin{aligned}
				\|\partial_{z}^2n_{(0,\neq)}\|^2_{L^{\infty}L^2}+
				\frac{1}{A}\|\nabla \partial_{z}^2n_{(0,\neq)}\|_{L^2L^2}^2\leq C(\|(\partial_z^2n_{\rm in})_{(0,\neq)}\|_{L^2}^2
				+1).
			\end{aligned}
		\end{equation*}
	\end{proof}
	
	\subsection{Energy estimates for $E_{1,2}(t)$ and $E_{1,3}(t)$}
	We recall that 
	\begin{equation}\label{u_zero_1}
		\left\{
		\begin{array}{lr}
			\partial_tu_{2,0}-\frac{1}{A}\triangle u_{2,0}
			+\frac{1}{A}(u\cdot\nabla u_{2})_0
			+\frac{1}{A}\partial_yP^{N_1}_0
			+\frac{1}{A}\partial_y P^{N_2}_0+\frac{1}{A}\partial_y P^{N_3}_0=\frac{n_0}{A}, \\
			\partial_tu_{3,0}-\frac{1}{A}\triangle u_{3,0}
			+\frac{1}{A}(u\cdot\nabla u_3)_0
			+\frac{1}{A}\partial_zP^{N_1}_0
			+\frac{1}{A}\partial_z P^{N_2}_0
			+\frac{1}{A}\partial_z P^{N_3}_0=0, \\
			\partial_yu_{2,0}+\partial_zu_{3,0}=0.
		\end{array}
		\right.
	\end{equation}	
	\begin{lemma}\label{lemma_u23_1}
		Under the conditions of Theorem \ref{result} and the assumptions (\ref{assumption}),
		there exists a constant $A_4$ independent of $t$ and $A$, such that if $A>A_4$, 
		there holds 
		\begin{equation}\label{eq:u20u30}
			\begin{aligned}
				&A^{2\epsilon}(\|u_{2,0}\|^2_{Y_0}+\|u_{3,0}\|^2_{Y_0})\leq C,\\
				&A^{2\epsilon}(\|\nabla u_{2,0}\|^2_{Y_0}+\|\nabla u_{3,0}\|^2_{Y_0})\leq C,\\
				&A^{2\epsilon}\|\triangle u_{2,0}\|^2_{Y_0}\leq C,\\
				&A^{2\epsilon}\|\min\{h_A(t)^{\frac{1}{2}},1\}\triangle u_{3,0}\|^2_{Y_0}\leq C,
			\end{aligned}
		\end{equation}
		where $h_A(t)=A^{-\frac{2}{3}}+A^{-1}t.$
	\end{lemma}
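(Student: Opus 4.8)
The plan is to run weighted $L^{2}$ energy estimates on the coupled system (\ref{u_zero_1}), differentiated in turn by $\nabla$ and by $\triangle$, always pairing each of the $u_{2,0}$- and $u_{3,0}$-equations with its own solution (or its derivative) and adding the two identities, so that the pressures cancel. The first step is to record the structural reductions. Since $(\partial_{x}u_{2})_{0}=0$, the relation $\triangle P^{N_{1}}_{0}=-2A(\partial_{x}u_{2})_{0}=0$ together with decay in $y$ forces $P^{N_{1}}_{0}\equiv0$; because $n_{0}$ is $x$-independent, $n_{0}-\partial_{y}P^{N_{2}}_{0}=(\partial_{x}^{2}+\partial_{z}^{2})\triangle^{-1}n_{0}=\partial_{z}^{2}\triangle^{-1}n_{(0,\neq)}$ and $\partial_{z}P^{N_{2}}_{0}=\partial_{y}\partial_{z}\triangle^{-1}n_{(0,\neq)}$. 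Thus the effective source carried by the $(u_{2,0},u_{3,0})$-system is $\tfrac1A n_{(0,\neq)}$ (entering only the $u_{2,0}$-slot, since $u_{2,(0,0)}=0$ and $n_{(0,0)}$ is $z$-orthogonal to it), together with the transport term $\tfrac1A(u\cdot\nabla u_{j})_{0}$ and the nonlinear pressure $\tfrac1A\partial_{j}P^{N_{3}}_{0}$. Two facts will be used at every derivative level: $u_{1,0}$ never enters because $\partial_{x}u_{j,0}=0$, so the self-interaction is $(u_{2,0},u_{3,0})\cdot\nabla_{y,z}u_{j,0}$ which is genuinely cubic and integrates to zero against $(u_{2,0},u_{3,0})$; and for any multi-index $\beta$, $\langle\partial^{\beta}\partial_{y}P_{0},\partial^{\beta}u_{2,0}\rangle+\langle\partial^{\beta}\partial_{z}P_{0},\partial^{\beta}u_{3,0}\rangle=-\langle\partial^{\beta}P_{0},\partial^{\beta}(\partial_{y}u_{2,0}+\partial_{z}u_{3,0})\rangle=0$, so all three pressures disappear from the combined identity.

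With these reductions, the first two lines of (\ref{eq:u20u30}) follow from the combined $L^{2}$ identity $\tfrac12\partial_{t}(\|u_{2,0}\|_{L^{2}}^{2}+\|u_{3,0}\|_{L^{2}}^{2})+\tfrac1A(\|\nabla u_{2,0}\|_{L^{2}}^{2}+\|\nabla u_{3,0}\|_{L^{2}}^{2})=\tfrac1A\langle n_{(0,\neq)},u_{2,0}\rangle-\tfrac1A\langle(u\cdot\nabla u_{2})_{0},u_{2,0}\rangle-\tfrac1A\langle(u\cdot\nabla u_{3})_{0},u_{3,0}\rangle$ and its $\nabla$-differentiated analogue. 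Here one exploits that $u_{2,0}=u_{2,(0,\neq)}$ obeys the $z$-Poincar\'e inequality $\|u_{2,0}\|_{L^{2}}\le\|\partial_{z}u_{2,0}\|_{L^{2}}\le\|\nabla u_{2,0}\|_{L^{2}}$ and that $\|n_{(0,\neq)}\|_{L^{2}}\le\|\partial_{z}n_{(0,\neq)}\|_{L^{2}}$, so the source is absorbed into the dissipation modulo $\tfrac CA\|\partial_{z}n_{(0,\neq)}\|_{L^{2}L^{2}}^{2}\le CA^{-2\epsilon}$ by Lemma \ref{lemma_n002}; the $\neq\cdot\neq\to0$ part of the transport is bounded by Lemma \ref{lemma_neq1} and Lemma \ref{lemma_u} (the $\tfrac1A$ producing a genuine power gain in $A$ against the bootstrap bounds (\ref{assumption})), while the $0\cdot0\to0$ part cancels at the $L^{2}$-level and is cubic and small at the $\nabla$-level by Lemma \ref{sob_12} and Lemma \ref{sob_inf_1}. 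Multiplying by $A^{2\epsilon}$ and using $A^{\epsilon}\|(u_{\rm in})_{0}\|_{H^{2}}\le A^{\epsilon_{0}}\|u_{\rm in}\|_{H^{2}}\le C$ closes these two lines once $A>A_{4}$; this is in fact the computation already carried out in (\ref{u0_ans1})--(\ref{u0_ans3}). The third line, $A^{2\epsilon}\|\triangle u_{2,0}\|_{Y_{0}}^{2}\le C$, is the $\triangle$-level version: the source of $\triangle u_{2,0}$ is $\tfrac1A\triangle(\partial_{z}^{2}\triangle^{-1}n_{(0,\neq)})=\tfrac1A\partial_{z}^{2}n_{(0,\neq)}$, and integrating one $\partial_{z}$ by parts onto $\partial_{z}\triangle u_{2,0}$ together with $\|\triangle u_{2,0}\|_{L^{2}}\le\|\nabla\triangle u_{2,0}\|_{L^{2}}$ again reduces its contribution to $\tfrac CA\|\partial_{z}n_{(0,\neq)}\|_{L^{2}L^{2}}^{2}\le CA^{-2\epsilon}$, while the $\triangle$-differentiated transport terms are treated by Lemma \ref{lemma_neq1}, Lemma \ref{sob_12}, Lemma \ref{sob_inf_1} and the $u_{j,0}$-bounds just obtained, with the top-order piece $\tfrac1A u_{j,0}\cdot\nabla\triangle u_{j,0}$ integrating to zero.

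The last line, $A^{2\epsilon}\|\min\{h_{A}(t)^{1/2},1\}\triangle u_{3,0}\|_{Y_{0}}^{2}\le C$, is the main difficulty. Unlike $u_{2,0}$, the field $u_{3,0}$ has a nontrivial $z$-average $u_{3,(0,0)}(t,y)$, whose second $y$-derivative admits no Poincar\'e gain on $\mathbb R$, and whose equation is driven only by the quadratic self-interaction $\tfrac1A(u\cdot\nabla u_{3})_{(0,0)}=\tfrac1A\partial_{y}^{2}(u_{2}u_{3})_{(0,0)}$ (the $n$-source and the pressures having no $(0,0)$-part); running the Duhamel formula for $\partial_{y}^{2}u_{3,(0,0)}$ one loses a $y$-derivative and picks up a factor $\sim(t/A)^{1/2}$ from the heat-semigroup smoothing $\|\partial_{y}e^{\tau A^{-1}\partial_{y}^{2}}\|_{L^{2}\to L^{2}}\lesssim(A/\tau)^{1/2}$, so $\|\triangle u_{3,0}\|_{L^{\infty}L^{2}}$ cannot be bounded by $A^{-\epsilon}$ uniformly in time. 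The resolution is a time-weighted energy estimate with weight $\varphi(t)=\min\{h_{A}(t),1\}$, $h_{A}(t)=A^{-2/3}+A^{-1}t$: in $\tfrac12\partial_{t}(\varphi\|\triangle u_{3,0}\|_{L^{2}}^{2})+\tfrac{\varphi}{A}\|\nabla\triangle u_{3,0}\|_{L^{2}}^{2}=\tfrac12\varphi'\|\triangle u_{3,0}\|_{L^{2}}^{2}+\varphi\langle\text{source}+\text{nonlin},\triangle u_{3,0}\rangle$, the initial term $\varphi(0)\|\triangle(u_{\rm in})_{3,0}\|_{L^{2}}^{2}\le A^{-2/3}\|u_{\rm in}\|_{H^{2}}^{2}$ is small, the commutator has $\varphi'\le A^{-1}$ and is controlled by interpolating $\|\triangle u_{3,0}\|_{L^{2}}^{2}\le\|\nabla u_{3,0}\|_{L^{2}}\|\nabla\triangle u_{3,0}\|_{L^{2}}$ against the already-proven $\|\nabla u_{3,0}\|_{Y_{0}}$ and a $\varphi$-weighted fraction of the dissipation, the source $-\tfrac1A\partial_{y}\partial_{z}n_{(0,\neq)}$ is handled as at the $\triangle u_{2,0}$-level, and the quadratic terms are estimated by Lemma \ref{sob_12}, Lemma \ref{sob_inf_1} and the divergence-free identity $\partial_{z}u_{3,0}=-\partial_{y}u_{2,0}$, which transfers the $\partial_{z}$-component of $\triangle u_{3,0}$ onto the fully controlled $\triangle u_{2,0}$. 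Choosing $A>A_{4}$ so large that all the residual factors $A^{-\kappa}(E_{1}^{a}+E_{2}^{b}+1)$ are $\le1$ then closes this estimate and completes the proof.
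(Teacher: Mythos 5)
Your treatment of the first two lines of \eqref{eq:u20u30} follows the paper's route (combined energy identities so the pressures cancel, $z$-Poincar\'e on $u_{2,(0,\neq)}$ and on $n_{(0,\neq)}$, Lemma \ref{lemma_n002} for the source, Lemma \ref{lemma_neq1} for the $\neq\cdot\neq\to 0$ forcing), and that part is fine. The first genuine gap is at the third line. Your pressure cancellation $\langle\partial^\beta\partial_yP_0,\partial^\beta u_{2,0}\rangle+\langle\partial^\beta\partial_zP_0,\partial^\beta u_{3,0}\rangle=0$ is only available for the \emph{combined} identity, but the third line is necessarily a single-equation $H^2$ estimate for $u_{2,0}$: if you run the combined unweighted $H^2$ estimate (as your phrase ``$u_{j,0}\cdot\nabla\triangle u_{j,0}$'' suggests), the $u_{3,0}$-forcing $\tfrac1A\nabla(u_{\neq}\cdot\nabla u_{3,\neq})$ is, by $\eqref{eq:nonlinear-neq0}_7$ and the bootstrap weights in $E_{2,2}$ (where $\partial_y\omega_{2,\neq},\partial_z\omega_{2,\neq}$ carry an extra $A^{1/3}$ relative to $\partial_x\omega_{2,\neq},\triangle u_{2,\neq}$), of size $A^{\frac53-3\epsilon}E_2^4$ in $L^2L^2$, so after multiplying by $A^{2\epsilon-1}$ you are left with $A^{\frac23-\epsilon}E_2^4$, which grows in $A$ for every admissible $\epsilon\le\frac49$; the combined unweighted estimate cannot close. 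If instead you estimate $u_{2,0}$ alone, the nonlinear pressure $\partial_yP^{N_3}_0$ does \emph{not} cancel and you have said nothing about it; the paper must bound $\|\triangle P^{N_3}_0\|_{L^2L^2}=\|{\rm div}\,(u\cdot\nabla u)_0\|_{L^2L^2}$ using the structure \eqref{u23_result4}, which is exactly what makes the dangerous $x$-derivative term disappear and lets $\eqref{eq:nonlinear-neq0}_4$ and $\eqref{eq:nonlinear-neq0}_5$ apply; see \eqref{u23_result3}--\eqref{u23_result41}. The same omission recurs in your fourth-line estimate, which is again a single-equation estimate for $u_{3,0}$ and again carries $\partial_zP^{N_3}_0$.

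The second, more serious gap is your fourth line. Your diagnosis of why a time weight is needed --- the nontrivial $z$-average $u_{3,(0,0)}$, absence of Poincar\'e in $y$ on $\mathbb{R}$, and a Duhamel/heat-smoothing loss of $(t/A)^{1/2}$ --- is not the actual obstruction: the zero-mode self-interaction is bounded without any weight via \eqref{u23_zero} and the already-proven lines (the paper's \eqref{u23_result6} puts the weight only on the non-zero-mode term), and incidentally $(u\cdot\nabla u_3)_{(0,0)}=\partial_y(u_2u_3)_{(0,0)}$, not $\partial_y^2(u_2u_3)_{(0,0)}$, and this average contains $\neq\cdot\neq\to(0,0)$ contributions, not only zero-mode products. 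The real difficulty is the one quantified above: $\|{\rm e}^{2aA^{-\frac13}t}\nabla(u_{\neq}\cdot\nabla u_{3,\neq})\|^2_{L^2L^2}$ is $A^{2/3}$ too large because it involves $\nabla\omega_{2,\neq}$, and the entire purpose of the weight $\min\{h_A(t),1\}$ is to be played against this term through $\|h_A(t){\rm e}^{-aA^{-\frac13}t}\|_{L^\infty_t}\le CA^{-\frac23}$, as in \eqref{u23_result7}. In your weighted identity you use the smallness of $\varphi$ only on the initial data and on the commutator $\varphi'\|\triangle u_{3,0}\|^2$, and you propose to treat the quadratic forcing by Lemma \ref{sob_12}, Lemma \ref{sob_inf_1} and the identity $\partial_zu_{3,0}=-\partial_yu_{2,0}$; none of these supplies the missing factor $A^{-2/3}$ (and the identity only converts $\partial_z\triangle u_{3,0}$ into $\partial_y\triangle u_{2,0}$, which is not controlled in $L^\infty L^2$ by line three). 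As written, the weighted estimate does not close; you need to insert the step in which $\varphi$ multiplies the exponentially decaying $X_a$-bound of $\nabla(u_{\neq}\cdot\nabla u_{3,\neq})$ and yields the gain $A^{-2/3}$, exactly as the paper does.
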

	\begin{proof}
		Recall that 
		\begin{equation}\label{eq:u2u300}
			\left\{
			\begin{array}{lr}
				\partial_tu_{2,0}-\frac{1}{A}\triangle u_{2,0}
				+\frac{1}{A}(u\cdot\nabla u_{2})_0
				+\frac{1}{A}\partial_yP^{N_1}_0
				+\frac{1}{A}\partial_y P^{N_2}_0+\frac{1}{A}\partial_y P^{N_3}_0=\frac{n_0}{A}, \\
				\partial_tu_{3,0}-\frac{1}{A}\triangle u_{3,0}
				+\frac{1}{A}(u\cdot\nabla u_3)_0
				+\frac{1}{A}\partial_zP^{N_1}_0
				+\frac{1}{A}\partial_z P^{N_2}_0
				+\frac{1}{A}\partial_z P^{N_3}_0=0.
			\end{array}
			\right.
		\end{equation}		
		Due to 
		$u_{2,(0,0)}=0,$
		we have 
		$$<n_0,u_{2,0}>=<n_{(0,\neq)}+n_{(0,0)},u_{2,(0,\neq)}>=<n_{(0,\neq)},u_{2,(0,\neq)}>\leq
		\|n_{(0,\neq)}\|_{L^2}\|\partial_z u_{2,(0,\neq)}\|_{L^2},$$
		and combining it with $<u_0\cdot\nabla u_{2,0},u_{2,0}>+<u_0\cdot\nabla u_{3,0},u_{3,0}>=0$ and Lemma \ref{lemma_neq1}, 
		energy estimates of \eqref{eq:u2u300} show that 
		\begin{equation}\label{u0_ans11}
			\begin{aligned}
				&\frac{A^{2\epsilon}}{2}(\|u_{2,0}\|_{L^{\infty}L^2}^2+\|u_{3,0}\|_{L^{\infty}L^2}^2)
				+A^{2\epsilon-1}(\|\partial_y u_{2,0}\|_{L^2L^2}^2
				+\frac{1}{2}\|\partial_z u_{2,0}\|_{L^2L^2}^2
				+\|\nabla u_{3,0}\|_{L^2L^2}^2)\\
				&\leq C+A^{2\epsilon-1}\||u_{\neq}|^2\|_{L^2L^2}
				(\|\nabla u_{2,0}\|_{L^2L^2}+\|\nabla u_{3,0}\|_{L^2L^2})
				+\frac{A^{2\epsilon-1}}{2}\|\partial_z n_{(0,\neq)}\|_{L^2L^2}^2\\
				&\leq C+\frac{CE_1E_2^2}{A^{\frac{1}{6}+\frac{\epsilon}{2}}}
				+\frac{A^{2\epsilon-1}}{2}\|\partial_zn_{(0,\neq)}\|_{L^2L^2}^2,
			\end{aligned}
		\end{equation}
		
		It follows from (\ref{u0_ans1}) and Lemma \ref{lemma_n002} that 
		$$A^{2\epsilon}(\|u_{2,0}\|^2_{Y_0}+\|u_{3,0}\|^2_{Y_0})\leq C(1+A^{2\epsilon-1}\|n_{(0,\neq)}\|^2_{L^2L^2})\leq C.$$
		%
		
		{\bf Estimate of $\eqref{eq:u20u30}_2$.}
		Multiplying $\triangle u_{2,0}$ on $(\ref{u_zero_1})_1$ and 
		$\triangle u_{3,0}$ on $(\ref{u_zero_1})_2,$ energy estimates give that 
		\begin{equation}\label{u23_result00}
			\begin{aligned}
				&\quad A^{2\epsilon}(\|\nabla u_{2,0}\|^2_{L^{\infty}L^2}+\|\nabla u_{3,0}\|^2_{L^{\infty}L^2})+A^{2\epsilon-1}
				(\|\triangle u_{2,0}\|^2_{L^{2}L^2}+\|\triangle u_{3,0}\|^2_{L^{2}L^2})\\ 
				&\leq C+C A^{2\epsilon-1}
				\Big(\|u_0\cdot\nabla u_{2,0}\|^2_{L^2L^2}
				+\|(u_{\neq}\cdot\nabla u_{2,\neq})_0\|^2_{L^2L^2}
				+\|\partial_z n_{(0,\neq)}\|^2_{L^2L^2}\\
				&\quad+\|u_0\cdot\nabla u_{3,0}\|^2_{L^2L^2}
				+\|(u_{\neq}\cdot\nabla u_{3,\neq})_0\|^2_{L^2L^2}\Big),	
			\end{aligned}
		\end{equation}
		where we use $A^{\epsilon}\|(u_{\rm in})_{0}\|_{H^2}\leq C.$
		By (\ref{u23_infty}), we get
		\begin{equation}\label{temp_u23_1}
			\begin{aligned}
				A^{2\epsilon-1}\|u_0\cdot\nabla u_{3,0}\|^2_{L^2L^2}
				\leq CA^{2\epsilon-1}(\|u_{2,0}\|^2_{L^{\infty}H^2}
				+\|u_{3,0}\|^2_{L^{\infty}H^1})\|\nabla u_{3,0}\|^2_{L^2L^2}
				\leq CA^{-2\epsilon}E_1^4,\\
				A^{2\epsilon-1}\|u_0\cdot\nabla u_{2,0}\|^2_{L^2L^2}
				\leq CA^{2\epsilon-1}(\|u_{2,0}\|^2_{L^{\infty}H^2}
				+\|u_{3,0}\|^2_{L^{\infty}H^1})\|\nabla u_{2,0}\|^2_{L^2L^2}
				\leq CA^{-2\epsilon}E_1^4.
			\end{aligned}
		\end{equation}
		Using (\ref{temp_u23_1}), Lemma \ref{lemma_neq1} and Lemma \ref{lemma_n002},
		if $A\geq \max\{A_3, C(E_1^4+E_2^4)^{\frac{1}{\epsilon}}\}:=A_4$, we 
		obtain that 
		\begin{equation}\label{u23_result1}
			\begin{aligned}
				A^{2\epsilon}(\|\nabla u_{2,0}\|^2_{Y_0}+\|\nabla u_{3,0}\|^2_{Y_0})
				\leq& C\big(A^{-\epsilon}(E_1^4+E_2^4)+1\big)\leq C.
			\end{aligned}
		\end{equation}
		
		{\bf Estimate of $\eqref{eq:u20u30}_3$.}
		The $H^2$ energy estimate shows that 
		\begin{equation}\label{u23_result3}
			\begin{aligned}
				&\quad A^{2\epsilon}(\|\triangle u_{2,0}\|^2_{L^{\infty}L^2}+A^{-1}
				\|\nabla \triangle u_{2,0}\|^2_{L^{2}L^2})\\ 
				&\leq C+C A^{2\epsilon-1}\Big(\|\nabla(u_0\cdot\nabla u_{2,0})\|^2_{L^2L^2}+\|\nabla(u_{\neq}\cdot\nabla u_{2,\neq})\|^2_{L^2L^2}+\|\partial_z n_{(0,\neq)}\|^2_{L^2L^2}+\|\triangle P^{N_3}_0\|^2_{L^2L^2}\Big).	
			\end{aligned}
		\end{equation}
		Due to ${\rm div}~(u\cdot\nabla u)=\partial_x(u\cdot\nabla u_{1})
		+\partial_y(u\cdot\nabla u_{2})+\partial_z(u\cdot\nabla u_{3}),$
		thus 
		\begin{equation}\label{u23_result4}
			\begin{aligned}
				&\|{\rm div}~(u\cdot\nabla u)_0\|_{L^2}^2\\\leq& 
				\|\partial_y(u\cdot\nabla u_{2})_0\|_{L^2}^2+\|\partial_z(u\cdot\nabla u_{3})_0\|_{L^2}^2\\
				\leq& \|\partial_y(u_0\cdot\nabla u_{2,0})\|_{L^2}^2
				+\|\partial_z(u_0\cdot\nabla u_{3,0})\|_{L^2}^2
				+\|\partial_y(u_{\neq}\cdot\nabla u_{2,\neq})\|_{L^2}^2
				+\|\partial_z(u_{\neq}\cdot\nabla u_{3,\neq})\|_{L^2}^2.
			\end{aligned}
		\end{equation}
		Using (\ref{u23_zero}), (\ref{u23_result4}), Lemma \ref{lemma_neq1} and Lemma \ref{lemma_n002},
		there are  
		\begin{equation}\label{u23_result41}
			A^{2\epsilon-1}\|\triangle P^{N_3}_0\|^2_{L^2L^2}=A^{2\epsilon-1}\|{\rm div}~(u\cdot\nabla u)_0\|_{L^2L^2}^2\leq  CA^{-\epsilon}(E_1^4+E_2^4)\leq C,
		\end{equation}
		and 
		\begin{equation*}\label{u23_result5}
			\begin{aligned}
				A^{2\epsilon}\|\triangle u_{2,0}\|^2_{Y_0}
				\leq&C+C\big(A^{-\epsilon}(E_1^4+E_2^4)+1\big)\leq C.
			\end{aligned}
		\end{equation*}
		
		{\bf Estimate of $\eqref{eq:u20u30}_4$.}
		Taking $H^2$ energy estimate on $(\ref{u_zero_1})_2$, then we obtain 
		\begin{equation*}
			\begin{aligned}
				&\quad \partial_t\|\triangle u_{3,0}\|^2_{L^2}+A^{-1}
				\|\nabla \triangle u_{3,0}\|^2_{L^2}\\ 
				&\leq \frac{C}{A}\Big(\|\nabla(u_0\cdot\nabla u_{3,0})\|^2_{L^2}+\|\nabla(u_{\neq}\cdot\nabla u_{3,\neq})\|^2_{L^2}+\|\partial_z n_{(0,\neq)}\|^2_{L^2}+\|\triangle P^{N_3}_0\|^2_{L^2}\Big), 
			\end{aligned}
		\end{equation*}
		which implies that 
		\begin{equation}\label{u23_result6}
			\begin{aligned}
				&\quad \partial_t(\min\{h_A(t),1\}\|\triangle u_{3,0}\|^2_{L^2})
				+\frac{\min\{h_A(t),1\}}{A}
				\|\nabla \triangle u_{3,0}\|^2_{L^2}\\ 
				&\leq \frac{C}{A}\Big(\|\nabla(u_0\cdot\nabla u_{3,0})\|^2_{L^2}+h_A(t)\|\nabla(u_{\neq}\cdot\nabla u_{3,\neq})\|^2_{L^2}+\|\partial_z n_{(0,\neq)}\|^2_{L^2}+\|\triangle P^{N_3}_0\|^2_{L^2}+\|\triangle u_{3,0}\|^2_{L^2}\Big),
			\end{aligned}
		\end{equation}
		where $h_A(t)=A^{-\frac{2}{3}}+A^{-1}t.$
		
		By (\ref{u23_zero}), (\ref{u23_result1}), (\ref{u23_result41}), Lemma \ref{lemma_neq1}, Lemma \ref{lemma_n002} and $\|h_A(t){\rm e}^{-aA^{-\frac13}t}\|_{L^{\infty}_t}\leq CA^{-\frac{2}{3}},$ we get  
		\begin{equation}\label{u23_result7}
			\begin{aligned}
				&A^{2\epsilon}\|\min\{h_A(t)^{\frac{1}{2}},1\}\triangle u_{3,0}\|^2_{Y_0}
				\leq C+\frac{C}{A^{1-2\epsilon}}\int_{0}^t h_A(s)\|\nabla(u_{\neq}\cdot\nabla u_{3,\neq})\|^2_{L^2}ds\\
				&+\frac{C}{A^{1-2\epsilon}}\Big(\|\nabla(u_0\cdot\nabla u_{3,0})\|^2_{L^2L^2}+\|\partial_z n_{(0,\neq)}\|^2_{L^2L^2}+\|\triangle P^{N_3}_0\|^2_{L^2L^2}+\|\triangle u_{3,0}\|^2_{L^2L^2}\Big)\\
				&\leq C+\frac{C}{A^{1-2\epsilon}}\|h_A(t){\rm e}^{-aA^{-\frac13}t}\|_{L^{\infty}_t}
				\|{\rm e}^{2aA^{-\frac{1}{3}}t}\nabla(u_{\neq}\cdot\nabla u_{3,\neq})\|^2_{L^2L^2}\leq C(1+\frac{E_2^4}{A^{\epsilon}})\leq C.
			\end{aligned}
		\end{equation}
		
		The proof is complete.
	\end{proof}

	Recall that 
	\begin{equation}\label{u_zero_2}
		\left\{
		\begin{array}{lr}
			\partial_t\widehat{u_{1,0}}-\frac{1}{A}\triangle\widehat{u_{1,0}}
			=-\frac{1}{A}(u_{2,0}\partial_y\widehat{u_{1,0}}+u_{3,0}\partial_z\widehat{u_{1,0}})-u_{2,0},\\
			\partial_t\widetilde{u_{1,0}}-\frac{1}{A}\triangle\widetilde{u_{1,0}}
			=-\frac{1}{A}(u_{2,0}\partial_y\widetilde{u_{1,0}}+u_{3,0}\partial_z\widetilde{u_{1,0}})-\frac{1}{A}(u_{\neq}\cdot \nabla u_{1,\neq})_0,
		\end{array}
		\right.
	\end{equation}
	and 
	\begin{equation*}
		\begin{aligned}
			\widehat{u_{1,0}}|_{t=0}=0,~~~ 
			\widetilde{u_{1,0}}|_{t=0}=(u_{1,\rm in})_0.
		\end{aligned}
	\end{equation*}

	\begin{lemma}\label{u1_hat1} 
		Under the conditions of Theorem \ref{result} and the assumptions (\ref{assumption}), if $A>A_4$, 
		there holds 
		$$A^{2\epsilon}(\|\widetilde{u_{1,0}}\|^2_{L^{\infty}H^2}+\frac{1}{A}\|\nabla\widetilde{u_{1,0}}\|^2_{L^{2}H^2})\leq C(1+A^{\frac{2}{3}}).$$
	\end{lemma}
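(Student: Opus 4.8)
The plan is to run weighted energy estimates for $\widetilde{u_{1,0}}$ at the levels $\|\widetilde{u_{1,0}}\|_{L^2}$, $\|\nabla\widetilde{u_{1,0}}\|_{L^2}$ and $\|\triangle\widetilde{u_{1,0}}\|_{L^2}$ directly on the equation \eqref{u_zero_2}$_2$, and then feed in the a priori control already obtained: the bounds for $u_{2,0}$ and $u_{3,0}$ from Lemma \ref{lemma_u23_1}, the bound for $n_{(0,\neq)}$ from Lemma \ref{lemma_n002}, and the nonlinear non-zero-mode bounds from Lemma \ref{lemma_neq1}, all combined with the bootstrap hypotheses \eqref{assumption} and the smallness $A^{\epsilon}\|(u_{\rm in})_0\|_{H^2}\le C$. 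The key structural observation is the incompressibility relation $\partial_yu_{2,0}+\partial_zu_{3,0}=0$, so that $(u_{2,0},u_{3,0})$ acts as a divergence-free transport field in the $(y,z)$-plane. Hence the leading-order contribution of the quadratic term $-\tfrac1A(u_{2,0}\partial_y+u_{3,0}\partial_z)\widetilde{u_{1,0}}$ vanishes in every energy estimate, leaving only commutator terms carrying one or two derivatives of $u_{2,0},u_{3,0}$.

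First I would treat the $L^2$ and $\dot H^1$ estimates. For the forcing term I would use $\nabla\cdot u_\neq=0$ to write $(u_\neq\cdot\nabla u_{1,\neq})_0=\nabla\cdot(u_\neq u_{1,\neq})_0$, integrate by parts so that it pairs against the dissipation, and bound what remains, $\tfrac1A\|(u_\neq u_{1,\neq})_0\|_{L^2L^2}^2$ together with its gradient analogue, by the first and sixth estimates of Lemma \ref{lemma_neq1}; after inserting \eqref{assumption} these are $O(A^{-1/3})$ and hence harmless. The transport commutators at this level only involve $\nabla u_{2,0},\nabla u_{3,0}$, and are absorbed into the dissipation after a Gagliardo--Nirenberg interpolation in $(y,z)$, the resulting Grönwall exponent being $\tfrac CA\int_0^\infty(\|\nabla u_{2,0}\|_{L^2}^2+\|\nabla u_{3,0}\|_{L^2}^2)\,ds\le C(\|u_{2,0}\|_{Y_0}^2+\|u_{3,0}\|_{Y_0}^2)\le CA^{-2\epsilon}$. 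Together with $A^{\epsilon}\|(u_{\rm in})_0\|_{H^2}\le C$ this already yields $A^{2\epsilon}(\|\widetilde{u_{1,0}}\|_{L^\infty H^1}^2+\tfrac1A\|\nabla\widetilde{u_{1,0}}\|_{L^2H^1}^2)\le C$.

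The $\dot H^2$ estimate is where the loss $A^{2/3}$ on the right-hand side appears. Applying $\triangle$ to \eqref{u_zero_2}$_2$ and testing against $\triangle\widetilde{u_{1,0}}$, the top-order transport term again vanishes by $\partial_yu_{2,0}+\partial_zu_{3,0}=0$; the surviving commutators are schematically $\nabla u_{j,0}\cdot\nabla^2\widetilde{u_{1,0}}$ and $\triangle u_{j,0}\,\partial_j\widetilde{u_{1,0}}$ ($j=2,3$), tested against $\triangle\widetilde{u_{1,0}}$, plus the forcing $\tfrac1A\nabla(u_\neq\cdot\nabla u_{1,\neq})_0$ tested against $\nabla\triangle\widetilde{u_{1,0}}$ after one integration by parts. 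The forcing piece is $\lesssim\tfrac1A\|\nabla(u_\neq\cdot\nabla u_{1,\neq})\|_{L^2L^2}^2$, which by Lemma \ref{lemma_neq1} and \eqref{assumption} is $O(A^{2/3-3\epsilon})$, contributing $O(A^{2/3-\epsilon})\le CA^{1/3}$ after multiplication by $A^{2\epsilon}$. The $u_{2,0}$-commutators are small since $\|u_{2,0}\|_{L^2_tH^3}^2\le CA^{1-2\epsilon}$ by Lemma \ref{lemma_u23_1}. The $u_{3,0}$-commutators are the delicate ones: the top norm $\|\triangle u_{3,0}\|_{L^2}$ is only available with the time-degenerate weight $\min\{h_A(t)^{1/2},1\}$, which behaves like $A^{1/3}$ near $t=0$ (where $h_A(t)\sim A^{-2/3}$ on an interval of length $\sim A^{1/3}$). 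Tracking this weight through the time integration — using the anisotropic Sobolev inequalities of Lemma \ref{sob_inf_1} and a $\dot H^3$-Young absorption so as never to require $\nabla u_{3,0}\in L^\infty$ uniformly — produces precisely the factor $A^{2/3}$; a Grönwall argument then closes the estimate.

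\noindent\textbf{Main obstacle.} The hard part is the last step: organizing the $\dot H^2$ estimate so that every commutator involving $\triangle u_{3,0}$ is handled with only the weighted bound $A^{2\epsilon}\|\min\{h_A(t)^{1/2},1\}\triangle u_{3,0}\|_{Y_0}^2\le C$ at one's disposal, rather than a clean $L^2L^2$ control. This requires splitting the time interval into the short initial layer $t\lesssim A^{1/3}$ (worst weight, short duration) and the regime $t\gtrsim A$ (harmless weight), interpolating carefully in the $(y,z)$-variables against the dissipation, and checking that the accumulated loss is no worse than $A^{2/3}$ — which is exactly the budget allotted by the weight $A^{-1/3+\epsilon}$ placed on $\|\triangle\widetilde{u_{1,0}}\|_{Y_0}$ in the energy functional $E_{1,3}$, so that the present lemma suffices to close $E_1(t)\le E_1$.
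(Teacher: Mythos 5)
Your $L^2$-level argument coincides with the paper's (cancellation of the transport term by $\partial_yu_{2,0}+\partial_zu_{3,0}=0$, integration by parts on $(u_{\neq}\cdot\nabla u_{1,\neq})_0$, and $(\ref{eq:nonlinear-neq0})_1$), but at the $H^2$ level your route diverges and, as written, contains a genuine gap: the step you yourself label the ``main obstacle'' --- closing the commutator $\tfrac1A\langle\triangle u_{3,0}\,\partial_z\widetilde{u_{1,0}},\triangle\widetilde{u_{1,0}}\rangle$ using only the degenerately weighted bound $A^{2\epsilon}\|\min\{h_A(t)^{\frac12},1\}\triangle u_{3,0}\|^2_{Y_0}\leq C$ --- is never carried out, and the sketch does not obviously close. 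Undoing the weight costs $\min\{h_A(t),1\}^{-1}$, whose time integral grows like $A+t$, so no uniform-in-time unweighted $L^2L^2$ control of $\triangle u_{3,0}$ follows; your two-regime splitting ($t\lesssim A^{\frac13}$ and $t\gtrsim A$) also ignores the intermediate range $A^{\frac13}\lesssim t\lesssim A$, and making the interpolation against the dissipation quantitative there is precisely the unproven crux. Your accounting of the loss is likewise off: the $A^{\frac23}$ in the statement is not generated by the degenerate weight on $\triangle u_{3,0}$ at all.

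The paper avoids this difficulty entirely. It does not exploit the transport cancellation at the $H^2$ level; it simply places $\nabla\big(u_{2,0}\partial_y\widetilde{u_{1,0}}+u_{3,0}\partial_z\widetilde{u_{1,0}}\big)$ on the right-hand side and bounds it by the anisotropic product estimate $(\ref{sob_result_4})_3$ of Lemma \ref{sob_12}, $\|\nabla(f_0g_0)\|_{L^2}\leq C(\|f_0\|_{H^1}+\|\partial_zf_0\|_{H^1})\|g_0\|_{H^1}$, applied with $f_0=u_{j,0}$, $g_0=\partial_j\widetilde{u_{1,0}}$ and $\partial_zu_{3,0}=-\partial_yu_{2,0}$, so that only $\|u_{2,0}\|_{L^{\infty}H^2}\lesssim A^{-\epsilon}E_1$ and $\|u_{3,0}\|_{L^{\infty}H^1}\lesssim A^{-\epsilon}E_1$ enter and $\triangle u_{3,0}$ never appears. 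The factor $A^{\frac23}$ then comes from estimating $\|\nabla\widetilde{u_{1,0}}\|^2_{L^2H^1}\lesssim A(\|\widetilde{u_{1,0}}\|^2_{Y_0}+\|\triangle\widetilde{u_{1,0}}\|^2_{Y_0})\lesssim A^{\frac53-2\epsilon}E_1^2$ through the bootstrap weight $A^{-\frac13+\epsilon}$ on $\|\triangle\widetilde{u_{1,0}}\|_{Y_0}$ in $E_{1,3}$, giving the transport contribution $CA^{\frac23-2\epsilon}E_1^4$, together with the forcing contribution $A^{2\epsilon-1}\|\nabla(u_{\neq}\cdot\nabla u_{1,\neq})\|^2_{L^2L^2}\leq CA^{\frac23-\epsilon}E_2^4$ from $(\ref{eq:nonlinear-neq0})_6$; the lemma then follows from $\|\nabla\widetilde{u_{1,0}}\|^2_{L^2}\leq\|\widetilde{u_{1,0}}\|_{L^2}\|\triangle\widetilde{u_{1,0}}\|_{L^2}$. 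To repair your proof you should either adopt this product estimate (which removes $\triangle u_{3,0}$ from the argument altogether) or supply the missing weighted-commutator analysis in full detail.
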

	\begin{proof}
		
		Due to ${\rm div}~u_0=0$, we have 
		$<u_{2,0}\partial_y\widetilde{u_{1,0}}
		+u_{3,0}\partial_z\widetilde{u_{1,0}}, \widetilde{u_{1,0}}>=0.$
		Taking $L^2$ product with $(\ref{u_zero_2})_2$, then we get 
		\begin{equation*}
			\frac{1}{2}\partial_t\|\widetilde{u_{1,0}}\|_{L^2}^2+\frac{\|\nabla \widetilde{u_{1,0}}\|^2_{L^2}}{A}=
			-\frac{<u_{2,0}\partial_y\widetilde{u_{1,0}}
				+u_{3,0}\partial_z\widetilde{u_{1,0}}, \widetilde{u_{1,0}}>}{A}
			-\frac{<(u_{\neq}\cdot \nabla u_{1,\neq})_0, \widetilde{u_{1,0}}>}{A}.
		\end{equation*}
		Integrating about $t$ and using Lemma \ref{lemma_neq1}, thanks to ${\rm div}~u_{\neq}=0$, if $A>A_4,$ we have
		\begin{equation*}
			\begin{aligned}
				A^{2\epsilon}\|\widetilde{u_{1,0}}\|_{Y_0}^2
				\leq C\Big(1+\frac{\||u_{\neq}|^2\|_{L^2L^2}^2}{A}\Big)\leq C,
			\end{aligned}
		\end{equation*}
		where we use $A^{2\epsilon}\|\widetilde{(u_{1,\rm in})_0}\|_{L^2}^2
		=A^{2\epsilon}\|(u_{1,\rm in})_0\|_{L^2}^2\leq C.$
		
		The $H^2$ energy estimate for $(\ref{u_zero_2})_2$ yields that 
		\begin{equation}\label{u1_h2_1}
			A^{2\epsilon}\|\triangle\widetilde{u_{1,0}}\|_{Y_0}^2
			\leq C\Big(1+\frac{\|\nabla(u_{2,0}\partial_y\widetilde{u_{1,0}}
				+u_{3,0}\partial_z\widetilde{u_{1,0}})\|_{L^2L^2}^2}{A^{1-2\epsilon}}
			+\frac{\|\nabla(u_{\neq}\cdot \nabla u_{1,\neq})
				\|_{L^2L^2}^2}{A^{1-2\epsilon}}\Big).
		\end{equation}
		Using Lemma \ref{sob_12} and $\partial_zu_{3,0}=-\partial_yu_{2,0}$, 
		if $A>A_4,$
		there holds 
		\begin{equation*}
			\begin{aligned}
				A^{2\epsilon-1}\|\nabla(u_{2,0}\partial_y\widetilde{u_{1,0}}
				+u_{3,0}\partial_z\widetilde{u_{1,0}})\|_{L^2L^2}^2
				&\leq 
				CA^{2\epsilon-1}(\|u_{3,0}\|_{L^{\infty}H^1}^2
				+\|u_{2,0}\|^2_{L^{\infty}H^2})
				\|\nabla\widetilde{u_{1,0}}\|^2_{L^2H^{1}}\\
				&\leq CA^{\frac{2}{3}-2\epsilon}E_1^4 \leq CA^{\frac{2}{3}}.
			\end{aligned}
		\end{equation*}
		By Lemma \ref{lemma_neq1}, we have 
		\begin{equation*}
			\begin{aligned}
				A^{2\epsilon-1}\|\nabla(u_{\neq}\cdot \nabla u_{1,\neq})
				\|_{L^2L^2}^2
				&\leq 
				CA^{2\epsilon}(\|\partial_x\omega_{2,\neq}\|^2_{X_a}
				+\|\triangle u_{2,\neq}\|^2_{X_a})
				(\|\nabla\omega_{2,\neq}\|^2_{X_a}
				+\|\triangle u_{2,\neq}\|^2_{X_a})\\
				&\leq CA^{\frac{2}{3}-\epsilon}E_2^4\leq CA^{\frac{2}{3}}.
			\end{aligned}
		\end{equation*}
		Therefore, it follows from (\ref{u1_h2_1}) that
		\begin{equation*}
			\begin{aligned}
				A^{2\epsilon}\|\triangle\widetilde{u_{1,0}}\|_{Y_0}^2\leq 	C(1+A^{\frac{2}{3}}).
			\end{aligned}
		\end{equation*}
		Due to $\|\nabla \widetilde{u_{1,0}}\|_{L^2}^2\leq 
		\| \widetilde{u_{1,0}}\|_{L^2}
		\|\triangle \widetilde{u_{1,0}}\|_{L^2}\leq 
		\frac{1}{2}(\| \widetilde{u_{1,0}}\|^2_{L^2}+
		\|\triangle \widetilde{u_{1,0}}\|^2_{L^2}),$ we complete this proof.
	\end{proof}

	\begin{lemma}\label{u1_hat2}
		Under the conditions of Theorem \ref{result} and the assumptions (\ref{assumption}), if $A>A_4$, 
		there holds 
		\begin{equation}
			\begin{aligned}
				A^{2\epsilon}
				\Big(\frac{\|\widehat{u_{1,0}}\|^2_{L^{\infty}H^4}}{A^{2}}
				+\frac{\|\nabla\widehat{u_{1,0}}\|^2_{L^{2}H^4}}{A^{3}}
				+\|\partial_t\widehat{u_{1,0}}\|^2_{L^{\infty}H^2}\Big)
				\leq C.  
			\end{aligned}
		\end{equation}
	\end{lemma}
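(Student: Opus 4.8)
The plan is to run an $H^4$ energy estimate on the equation $(\ref{u_zero_2})_1$ for $\widehat{u_{1,0}}$, using the dissipation $\frac1A\triangle$ to trade the linear‑in‑time growth that a naive Duhamel or energy argument produces (the source $-u_{2,0}$ is only order zero) for the \emph{uniform} bound $\|\widehat{u_{1,0}}\|_{L^\infty H^4}\lesssim A^{1-\epsilon}$. The key structural fact is that, since $u_{2,(0,0)}=0$, the source is $u_{2,0}=u_{2,(0,\neq)}$; correspondingly the relevant part of $\widehat{u_{1,0}}$ lives in the $z$‑nonzero sector, where the Poincar\'e inequality $\|f_{(0,\neq)}\|_{L^2}\le\|\partial_z f_{(0,\neq)}\|_{L^2}$ from \eqref{eq:fourier ine} is available (its $(0,0)$‑part solves a heat equation on $\mathbb R_y$ forced only by $O(A^{-1})$ quadratic terms and contributes at a strictly lower order, so I treat it directly). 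First I will record, from Lemma \ref{lemma_u23_1} together with the $z$‑Poincar\'e inequality, that $u_{2,0}$ is controlled \emph{uniformly in $t$} in $L^2_tH^3$: the bounds $\|u_{2,0}\|_{Y_0},\ \|\nabla u_{2,0}\|_{Y_0},\ \|\triangle u_{2,0}\|_{Y_0}\lesssim A^{-\epsilon}$ give $\|\nabla^j u_{2,0}\|_{L^2L^2}^2\lesssim A^{1-2\epsilon}$ for $j=1,2,3$, hence also $\|u_{2,0}\|_{L^2L^2}^2\le\|\nabla u_{2,0}\|_{L^2L^2}^2\lesssim A^{1-2\epsilon}$.

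Second, for the $H^4$ estimate I write
$$\tfrac12\partial_t\|\widehat{u_{1,0}}\|_{H^4}^2+\tfrac1A\|\nabla\widehat{u_{1,0}}\|_{H^4}^2=-\langle u_{2,0},\widehat{u_{1,0}}\rangle_{H^4}-\tfrac1A\big\langle u_{2,0}\partial_y\widehat{u_{1,0}}+u_{3,0}\partial_z\widehat{u_{1,0}},\widehat{u_{1,0}}\big\rangle_{H^4}.$$
In the source pairing, at each derivative level $0\le k\le4$ I integrate by parts so that exactly five derivatives fall on $\widehat{u_{1,0}}$ (using the $z$‑Poincar\'e inequality on the $(0,\neq)$‑sector, where $|\xi|\ge1$ makes $\|\widehat{u_{1,0}}_{(0,\neq)}\|_{\dot H^b}\le\|\widehat{u_{1,0}}_{(0,\neq)}\|_{\dot H^5}$ for $b\le5$ free of charge) and at most three on $u_{2,0}$, and then apply Young's inequality against $\tfrac1A\|\nabla\widehat{u_{1,0}}\|_{H^4}^2$ with an $A$‑weight. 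This turns the source into a contribution $\lesssim A\big(\|u_{2,0}\|_{L^2L^2}^2+\|\nabla u_{2,0}\|_{L^2L^2}^2+\|\nabla\triangle u_{2,0}\|_{L^2L^2}^2\big)\lesssim A\cdot A^{1-2\epsilon}=A^{2-2\epsilon}$ after integrating in time, and — crucially — this bound is independent of $t$. The transport term has vanishing top‑order part by $\nabla\cdot u_0=0$, and the remaining commutators are controlled by the product estimates of Section \ref{sec_estimate_1}, the smallness $\|u_{2,0}\|_{H^2}+\|u_{3,0}\|_{H^2}\lesssim A^{-\epsilon}$, and the bootstrap bound $E_{1,3}(t)\le2E_{1,3}$, so they are absorbed for $A$ large. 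This yields $\|\widehat{u_{1,0}}\|_{L^\infty H^4}^2+\tfrac1A\|\nabla\widehat{u_{1,0}}\|_{L^2H^4}^2\lesssim A^{2-2\epsilon}$, which is precisely the first two claimed bounds after multiplying by $A^{2\epsilon-2}$ and $A^{2\epsilon-3}$ respectively (using $A^{2\epsilon-3}\cdot A\cdot A^{2-2\epsilon}=1$).

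Third, the estimate for $\partial_t\widehat{u_{1,0}}$ follows by reading it off the equation: $\|\partial_t\widehat{u_{1,0}}\|_{H^2}\le\tfrac1A\|\widehat{u_{1,0}}\|_{H^4}+\|u_{2,0}\|_{H^2}+\tfrac1A\big\|u_{2,0}\partial_y\widehat{u_{1,0}}+u_{3,0}\partial_z\widehat{u_{1,0}}\big\|_{H^2}$, where the first term is $\lesssim A^{-\epsilon}$ by the bound just proved, the second is $\lesssim A^{-\epsilon}$ by Lemma \ref{lemma_u23_1}, and the third is $\lesssim\tfrac1A\big(\|u_{2,0}\|_{H^2}+\|u_{3,0}\|_{H^2}\big)\|\widehat{u_{1,0}}\|_{H^3}\lesssim A^{-2\epsilon}$; multiplying by $A^{2\epsilon}$ completes the proof.

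The main obstacle is the very first step: a direct energy (or Duhamel) estimate only gives growth like $\|\widehat{u_{1,0}}(t)\|\lesssim tA^{-\epsilon}$, since the source $-u_{2,0}$ is order zero and $\|u_{2,0}\|_{L^\infty L^2}$ does not decay. The uniform bound must be extracted by playing the $z$‑Poincar\'e inequality against the $\tfrac1A$‑dissipation so that what actually gets integrated in time is the \emph{square‑integrable‑in‑time} quantity $\|\nabla u_{2,0}\|_{L^2L^2}^2\lesssim A^{1-2\epsilon}$ rather than the merely bounded $\|u_{2,0}\|_{L^\infty L^2}^2$. A secondary technical point is that Lemma \ref{lemma_u23_1} provides only $L^2_tH^3$ control of $u_{2,0}$ (no $H^4$), which is exactly why the regularity hierarchy for $\widehat{u_{1,0}}$ must stop at $H^4$ and why the "one integration by parts" bookkeeping in the source term is unavoidable.
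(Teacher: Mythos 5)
Your overall route coincides with the paper's: the $L^2$-level trick of using $u_{2,(0,0)}=0$ so that $\langle u_{2,0},\widehat{u_{1,0}}\rangle=\langle u_{2,(0,\neq)},\widehat{u_{1,(0,\neq)}}\rangle\le\|\partial_zu_{2,(0,\neq)}\|_{L^2}\|\partial_z\widehat{u_{1,0}}\|_{L^2}$, Young'd against the $\tfrac1A$-dissipation so that only $\|\nabla u_{2,0}\|_{L^2L^2}^2\lesssim A^{1-2\epsilon}$ is integrated in time, is exactly the paper's estimate \eqref{u_1_result0}; the $H^4$ energy estimate and the read-off of $\partial_t\widehat{u_{1,0}}$ from the equation are also the same. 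The gap is in your treatment of the transport terms involving $u_{3,0}$. You invoke the smallness $\|u_{3,0}\|_{H^2}\lesssim A^{-\epsilon}$ uniformly in time, but Lemma \ref{lemma_u23_1} gives only $\|u_{3,0}\|_{Y_0},\|\nabla u_{3,0}\|_{Y_0}\lesssim A^{-\epsilon}$ unweighted; $\triangle u_{3,0}$ (and a fortiori $\nabla\triangle u_{3,0}$) is controlled only through the time-weighted norm $\|\min\{h_A(t)^{1/2},1\}\triangle u_{3,0}\|_{Y_0}$, so the unweighted bound $\|u_{3,0}\|_{L^\infty H^2}\lesssim A^{-\epsilon}$ you use is simply not available (at best one gets $A^{1/3-\epsilon}$ after paying the weight). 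Moreover, your $H^4$ step never confronts the commutator term $\nabla\triangle u_{3,0}\,\partial_z\widehat{u_{1,0}}$ (three derivatives on $u_{3,0}$), which cannot be handled by $H^2$-smallness of $u_{3,0}$ plus "absorption for $A$ large" as stated, and which integration by parts cannot remove (any rearrangement still leaves three $y$-derivatives on $u_{3,0}$; the divergence-free relation $\partial_zu_{3,0}=-\partial_yu_{2,0}$ only converts derivatives containing a $\partial_z$).

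This term is exactly where the paper's proof invests its main effort: it keeps the weight $\min\{h_A(t)^{1/2},1\}$ from Lemma \ref{lemma_u23_1} and compensates the loss at small times via $\|\widehat{u_{1,0}}\|_{H^2}\le t\,\|\partial_t\widehat{u_{1,0}}\|_{L^{\infty}H^2}$ (crucially using $\widehat{u_{1,0}}|_{t=0}=0$), so that $\tfrac{\|\widehat{u_{1,0}}\|^2_{H^3}}{A^{-1}t}\le A\|\partial_t\widehat{u_{1,0}}\|_{L^{\infty}H^2}\|\widehat{u_{1,0}}\|_{H^4}$, together with the bootstrap bound on $\partial_t\widehat{u_{1,0}}$ from $E_{1,3}$ (see \eqref{u_0_temp0} and \eqref{u_1_temp3}). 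A cruder alternative — removing the weight at the cost of a factor $A^{1/3}$ (since $\min\{h_A(t)^{1/2},1\}\ge A^{-1/3}$) and absorbing the resulting $A^{2/3-2\epsilon}$-sized contribution into the left side using $\epsilon>\tfrac13$ — could plausibly be made to work for the stated bound, but your proposal carries out neither this bookkeeping nor the paper's device, so as written the commutator and $\partial_t$ estimates do not close.
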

	\begin{proof}\  
		
		\noindent{\bf Estimates of $\|\widehat{u_{1,0}}\|^2_{L^{\infty}H^4}$ and $\|\nabla\widehat{u_{1,0}}\|^2_{L^{2}H^4}$.}
		Taking $L^2$ product with $(\ref{u_zero_2})_1$, we get
		\begin{equation*}
			\frac{1}{2}\partial_t\|\widehat{u_{1,0}}\|_{L^2}^2+\frac{1}{A}\|\nabla \widehat{u_{1,0}}\|^2_{L^2}=-
			\frac{1}{A}<u_{2,0}\partial_y\widehat{u_{1,0}}+u_{3,0}\partial_z\widehat{u_{1,0}}, \widehat{u_{1,0}}>-<u_{2,0}, \widehat{u_{1,0}}>.
		\end{equation*}
		Thanks to $u_{2,(0,0)}=0,$ there holds
		\begin{equation*}
			\begin{aligned}
				&<u_{2,0}, \widehat{u_{1,0}}>
				=<u_{2,(0,\neq)}, \widehat{u_{1,(0,\neq)}}>,
			\end{aligned}
		\end{equation*}
		where $\widehat{u_{1,(0,\neq)}}$ denotes the non-zero mode of $z$ direction of $\widehat{u_{1,0}}$,
		and using H\"{o}lder's inequality, we have
		\begin{equation*}
			<u_{2,0}, \widehat{u_{1,0}}>
			\leq \|u_{2,(0,\neq)}\|_{L^2}\|\widehat{u_{1,(0,\neq)}}\|_{L^2}
			\leq \|\partial_zu_{2,(0,\neq)}\|_{L^2}\|\partial_z\widehat{u_{1,0}}\|_{L^2}.
		\end{equation*}
		Furthermore, ${\rm div}~u_0=0$ implies that 
		$<u_{2,0}\partial_y\widehat{u_{1,0}}+u_{3,0}\partial_z\widehat{u_{1,0}}, \widehat{u_{1,0}}>=0.$
		Thus, by Lemma \ref{lemma_u23_1}, we get 
		\begin{equation}\label{u_1_result0}
			\frac{\|\widehat{u_{1,0}}\|_{L^{\infty}L^2}^2}{A^{2-2\epsilon}}
			+\frac{\|\nabla \widehat{u_{1,0}}\|^2_{L^2L^2}}{A^{3-2\epsilon}}
			\leq\frac{\|(\widehat{u_{1,\rm in}})_0\|_{L^2}^2 }{A^{2-2\epsilon}}
			+\frac{\|\nabla u_{2,0}\|_{L^2L^2}^2}{A^{1-2\epsilon}}\leq C.
		\end{equation}
		The $H^4$ energy estimate shows that 
		\begin{equation}\label{u_1_temp}
			\frac{\|\triangle^2\widehat{u_{1,0}}\|^2_{L^{\infty}L^2}}{A^{2-2\epsilon}}+\frac{\|\nabla\triangle^2\widehat{u_{1,0}}\|^2_{L^{2}L^2}}{A^{3-2\epsilon}}
			\leq 
			\frac{C\|\nabla\triangle(u_{2,0}\partial_y\widehat{u_{1,0}}
				+u_{3,0}\partial_z\widehat{u_{1,0}})\|^2_{L^{2}L^2}}{A^{3-2\epsilon}}
			+\frac{C\|\nabla\triangle u_{2,0}\|^2_{L^{2}L^2}}{A^{1-2\epsilon}}.	
		\end{equation}
		Thanks to Lemma \ref{sob_12}, if $A>A_4,$ we have 
		\begin{equation}
			\begin{aligned}
				\frac{\|\nabla\triangle(u_{2,0}\partial_y\widehat{u_{1,0}})\|^2_{L^{2}L^2}}{A^{3-2\epsilon}}
				\leq C
				\frac{\|\nabla u_{2,0}\|^2_{L^{2}H^2}\|\widehat{u_{1,0}}\|^2_{L^{\infty}H^4}+\|u_{2,0}\|^2_{L^{\infty}H^2}\|\partial_y\widehat{u_{1,0}}\|^2_{L^{2}H^3}}{A^{3-2\epsilon}}
				\leq \frac{CE_1^4}{A^{2\epsilon}}\leq C,
			\end{aligned}
		\end{equation}
		\begin{equation}
			\begin{aligned}
				\frac{\|u_{3,0}\nabla\triangle\partial_z\widehat{u_{1,0}}+\nabla u_{3,0}\triangle\partial_z\widehat{u_{1,0}}\|^2_{L^{2}L^2}}{A^{3-2\epsilon}}
				\leq C
				\frac{\|u_{3,0}\|^2_{L^{\infty}H^1}
					\|\partial_z\widehat{u_{1,0}}\|^2_{L^{2}H^4}}{A^{3-2\epsilon}}
				\leq \frac{CE_1^4}{A^{2\epsilon}}\leq C,
			\end{aligned}
		\end{equation}
		and
		\begin{equation}
			\begin{aligned}
				\frac{\|\triangle u_{3,0}
					\nabla\partial_z\widehat{u_{1,0}}\|^2_{L^{2}L^2}}{A^{3-2\epsilon}}
				\leq C
				\frac{\|\triangle u_{3,0}\|^2_{L^{2}L^2}
					\|\widehat{u_{1,0}}\|^2_{L^{\infty}H^4}}{A^{3-2\epsilon}}
				\leq \frac{CE_1^4}{A^{2\epsilon}}\leq C.
			\end{aligned}
		\end{equation}
		By $\|\widehat{u_{1,0}}\|^2_{H^3}\leq \|\widehat{u_{1,0}}\|_{H^2}
		\|\widehat{u_{1,0}}\|_{H^4},$ there holds
		$$\frac{\|\widehat{u_{1,0}}\|^2_{H^3}}{A^{-1}t}\leq \frac{\|\widehat{u_{1,0}}\|_{H^2}\|\widehat{u_{1,0}}\|_{H^4}}{A^{-1}t}
		\leq A\|\partial_t\widehat{u_{1,0}}\|_{L^{\infty}H^2}
		\|\widehat{u_{1,0}}\|_{H^4},$$
		where we use
		$$\|\widehat{u_{1,0}}\|_{H^2}\leq \int_0^t \|\partial_s\widehat{u_{1,0}}(s)\|_{H^2}ds\leq t\|\partial_t\widehat{u_{1,0}}\|_{L^{\infty}H^2}.$$
		Combining it with  $\|\nabla\triangle u_{3,0}\partial_z\widehat{u_{1,0}}\|^2_{L^2}\leq 
		C\|\nabla\triangle u_{3,0}\|^2_{L^2}\|\partial_z\widehat{u_{1,0}}\|^2_{H^2},$
		we get
		\begin{equation*}
			\begin{aligned}
				\|\nabla\triangle u_{3,0}\partial_z\widehat{u_{1,0}}\|^2_{L^2}
				&\leq C\|\min\{h_A(t)^{\frac{1}{2}},1\}\nabla\triangle u_{3,0}\|^2_{L^2}
				\Big(\frac{\|\widehat{u_{1,0}}\|^2_{H^3}}{A^{-1}t}
				+\|\widehat{u_{1,0}}\|^2_{H^4}\Big)\\
				&\leq C\|\min\{h_A(t)^{\frac{1}{2}},1\}\triangle u_{3,0}\|^2_{L^2}
				\Big(A^2\|\partial_t\widehat{u_{1,0}}\|^2_{L^{\infty}H^2}
				+\|\widehat{u_{1,0}}\|^2_{H^4}\Big),
			\end{aligned}
		\end{equation*}
		which implies that 
		\begin{equation}\label{u_0_temp0}
			\begin{aligned}			
				\frac{\|\nabla\triangle u_{3,0}\partial_z\widehat{u_{1,0}}\|^2_{L^{2}L^2}}{A^{3-2\epsilon}}
				&\leq CA^{2\epsilon}\frac{\|\min\{h_A(t)^{\frac{1}{2}},1\}
					\nabla\triangle u_{3,0}\|^2_{L^{2}L^2}}{A}
				\Big(\|\partial_t\widehat{u_{1,0}}\|^2_{L^{\infty}H^2}
				+\frac{\|\widehat{u_{1,0}}\|^2_{L^{\infty}H^4}}{A^2}\Big)\\
				&\leq C\frac{E_1^4}{A^{2\epsilon}}\leq C.
			\end{aligned}
		\end{equation}
		Using (\ref{u_1_temp})-(\ref{u_0_temp0}) and Lemma \ref{lemma_u23_1}, we have
		$$\frac{\|\triangle^2\widehat{u_{1,0}}\|^2_{L^{\infty}L^2}}{A^{2-2\epsilon}}+\frac{\|\nabla\triangle^2\widehat{u_{1,0}}\|^2_{L^{2}L^2}}{A^{3-2\epsilon}}
		\leq C.$$
		By $\|\widehat{u_{1,0}}\|^2_{H^4}
		\leq C(\|\widehat{u_{1,0}}\|^2_{L^2}+\|\triangle^2\widehat{u_{1,0}}\|^2_{L^2}),$
		we obtain that 
		\begin{equation}\label{u_1_result2}
			\frac{\|\widehat{u_{1,0}}\|^2_{L^{\infty}H^4}}{A^{2-2\epsilon}}
			+\frac{\|\nabla\widehat{u_{1,0}}\|^2_{L^{2}H^4}}{A^{3-2\epsilon}}
			\leq C.
		\end{equation}
		
		\noindent{\bf Estimate of $\|\partial_t\widehat{u_{1,0}}\|^2_{L^{\infty}H^2}$.}
		Taking $\triangle$ for $(\ref{u_zero_2})_1$, there holds
		$$\partial_t\triangle\widehat{u_{1,0}}-\frac{1}{A}\triangle^2\widehat{u_{1,0}}
		=\frac{1}{A}\triangle(u_{2,0}\partial_y\widehat{u_{1,0}}
		+u_{3,0}\partial_z\widehat{u_{1,0}})+\triangle u_{2,0},$$
		which implies that 
		\begin{equation}\label{u_1_temp0}
			A^{2\epsilon}\|\partial_t\triangle\widehat{u_{1,0}}\|^2_{L^{\infty}L^2}
			\leq\frac{\|\triangle^2\widehat{u_{1,0}}\|^2_{L^{\infty}L^2}}{A^{2-2\epsilon}}
			+\frac{\|\triangle(u_{2,0}\partial_y\widehat{u_{1,0}}
				+u_{3,0}\partial_z\widehat{u_{1,0}})\|^2_{L^{\infty}L^2}}{A^{2-2\epsilon}}
			+A^{2\epsilon}\|\triangle u_{2,0}\|^2_{L^{\infty}L^2}.	
		\end{equation}
		Thanks to Lemma \ref{sob_12}, if $A>A_4,$ we have 
		\begin{equation}\label{u_1_temp1}
			\begin{aligned}
				\frac{\|\triangle(u_{2,0}\partial_y\widehat{u_{1,0}})\|^2_{L^{\infty}L^2}}{A^{2-2\epsilon}}
				\leq C\|u_{2,0}\|^2_{L^{\infty}H^2}
				\frac{\|\widehat{u_{1,0}}\|^2_{L^{\infty}H^4}}{A^{2-2\epsilon}}
				\leq C\frac{E_1^4}{A^{2\epsilon}}\leq C,
			\end{aligned}
		\end{equation}
		and 
		\begin{equation}\label{u_1_temp2}
			\begin{aligned}
				\frac{\|u_{3,0}\triangle\partial_z\widehat{u_{1,0}}+\nabla u_{3,0}\cdot \nabla\partial_z\widehat{u_{1,0}}\|^2_{L^{\infty}L^2}}{A^{2-2\epsilon}}
				\leq C\|u_{3,0}\|^2_{L^{\infty}H^1}
				\frac{\|\widehat{u_{1,0}}\|^2_{L^{\infty}H^4}}{A^{2-2\epsilon}}
				\leq \frac{CE_1^4}{A^{2\epsilon}}\leq C.
			\end{aligned}
		\end{equation}
		Combining 	$\frac{\|\widehat{u_{1,0}}\|^2_{H^3}}{A^{-1}t}
		\leq A\|\partial_t\widehat{u_{1,0}}\|_{L^{\infty}H^2}
		\|\widehat{u_{1,0}}\|_{H^4},$ with  $\|\triangle u_{3,0}\partial_z\widehat{u_{1,0}}\|^2_{L^2}\leq 
		C\|\triangle u_{3,0}\|^2_{L^2}\|\widehat{u_{1,0}}\|^2_{H^3},$
		we get
		\begin{equation*}
			\begin{aligned}
				\|\triangle u_{3,0}\partial_z\widehat{u_{1,0}}\|^2_{L^2}
				&\leq C\|\min\{h_A(t)^{\frac{1}{2}},1\}\triangle u_{3,0}\|^2_{L^2}
				\Big(\frac{\|\widehat{u_{1,0}}\|^2_{H^3}}{A^{-1}t}
				+\|\triangle\partial_z\widehat{u_{1,0}}\|^2_{L^2}\Big)\\
				&\leq C\|\min\{h_A(t)^{\frac{1}{2}},1\}\triangle u_{3,0}\|^2_{L^2}
				\big(A^2\|\partial_t\widehat{u_{1,0}}\|^2_{L^{\infty}H^2}
				+\|\widehat{u_{1,0}}\|^2_{H^4}\big),
			\end{aligned}
		\end{equation*}
		which implies that 
		\begin{equation}\label{u_1_temp3}
			\begin{aligned}			
				\frac{\|\triangle u_{3,0}\partial_z\widehat{u_{1,0}}\|^2_{L^{\infty}L^2}}{A^{2-2\epsilon}}
				&\leq CA^{2\epsilon}\|\min\{h_A(t)^{\frac{1}{2}},1\}
				\triangle u_{3,0}\|^2_{L^{\infty}L^2}
				\Big(\|\partial_t\widehat{u_{1,0}}\|^2_{L^{\infty}H^2}
				+\frac{\|\widehat{u_{1,0}}\|^2_{L^{\infty}H^4}}{A^2}\Big)
				\leq C.
			\end{aligned}
		\end{equation}
		Using (\ref{u_1_result2}), (\ref{u_1_temp0})-(\ref{u_1_temp3}) and Lemma \ref{lemma_u23_1}, one obtains
		\begin{equation}\label{u_1_result1}
			A^{2\epsilon}\|\partial_t\triangle\widehat{u_{1,0}}\|^2_{L^{\infty}L^2}
			\leq \frac{\|\triangle^2\widehat{u_{1,0}}\|^2_{L^{\infty}L^2}}
			{A^{2-2\epsilon}}
			+ C\leq C.	
		\end{equation}
		Due to $\partial_t\widehat{u_{1,0}}-\frac{1}{A}\triangle\widehat{u_{1,0}}
		=-\frac{1}{A}(u_{2,0}\partial_y\widehat{u_{1,0}}+u_{3,0}\partial_z\widehat{u_{1,0}})-u_{2,0},$
		using (\ref{u_1_result2}) and 
		\begin{equation*}
			\frac{\|u_{2,0}\partial_y\widehat{u_{1,0}}
				+u_{3,0}\partial_z\widehat{u_{1,0}}\|_{L^{\infty}L^2}}{A^{1-\epsilon}}
			\leq (\|u_{2,0}\|_{L^{\infty}L^{\infty}}
			+\|u_{3,0}\|_{L^{\infty}L^{\infty}})
			\frac{\|\nabla\widehat{u_{1,0}}\|_{L^{\infty}L^2}}{A^{1-\epsilon}}
			\leq CA^{-\epsilon}E_1^2,
		\end{equation*} 
		we have 
		\begin{equation*}
			\begin{aligned}
				A^{\epsilon}\|\partial_t\widehat{u_{1,0}}\|_{L^{\infty}L^2}
				\leq& A^{\epsilon}
				\Big(\frac{\|\triangle\widehat{u_{1,0}}\|_{L^{\infty}L^2}}{A}
				+\frac{\|u_{2,0}\partial_y\widehat{u_{1,0}}
					+u_{3,0}\partial_z\widehat{u_{1,0}}\|_{L^{\infty}L^2}}{A}
				+\|u_{2,0}\|_{L^{\infty}L^2}\Big)
				\leq C,
			\end{aligned}
		\end{equation*}
		which along with (\ref{u_1_result1}) imply that 
		\begin{equation*}
			\begin{aligned}
				A^{\epsilon}\|\partial_t\widehat{u_{1,0}}\|_{L^{\infty}H^2}
				\leq C.
			\end{aligned}
		\end{equation*}
		The proof is complete.
	\end{proof}
	
	\begin{corollary}
		Under the conditions of Theorem \ref{result} and the assumptions (\ref{assumption}),
		according to Lemma \ref{lemma_n001}, Lemma \ref{lemma_n002}, Lemma \ref{lemma_u23_1}, Lemma \ref{u1_hat1},  Lemma \ref{u1_hat2}, when $A\geq \max\{A_1,A_2,A_3,A_4\}:=C_{(1)},$
		there holds
		\begin{equation}
			E_1(t)\leq C(\|(n_{\rm in})_{(0,0)}\|^2_{L^2}+\|(\partial_z^2n_{\rm in})_{(0,\neq)}\|_{L^2}^2+1):=E_1.
		\end{equation}
	\end{corollary}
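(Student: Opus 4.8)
The plan is to obtain this Corollary as a pure assembly step: recall the definition \eqref{eq:E1} of $E_1(t)=E_{1,1}(t)+E_{1,2}(t)+E_{1,3}(t)$, bound each summand by the corresponding estimate proved in Lemmas \ref{lemma_n001}--\ref{u1_hat2}, and check that the powers of $A$ attached to each term in \eqref{eq:E1} exactly absorb whatever $A$-growth appears on the right-hand sides. First I would set $C_{(1)}=\max\{A_1,A_2,A_3,A_4\}$ (in fact $C_{(1)}=A_4$ already works, since $A_2\geq A_1$, $A_3\geq A_2$, $A_4\geq A_3$ in their definitions), so that the hypotheses of all those lemmas hold for $A>C_{(1)}$, and fix the auxiliary parameter in Lemma \ref{lemma_n001} to a harmless value, say $\epsilon_1=\tfrac12$.

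For $E_{1,1}(t)$: the term $\|n_{(0,0)}\|_{L^{\infty}L^2}$ is controlled by Lemma \ref{lemma_n001}; since $\|n_{(0,0)}\|_{L^{\infty}L^1}=M_1=M/(4\pi^2)<\tfrac65$ under the mass hypothesis $M<\tfrac{24}{5}\pi^2$, the quantity $\tfrac{27M_1^4}{32\pi^2}$ is an absolute constant, so $\|n_{(0,0)}\|_{L^{\infty}L^2}\leq C(\|(n_{\rm in})_{(0,0)}\|_{L^2}^2+1)^{1/2}$. The terms $A^{\epsilon}\|\partial_z\nabla c_{(0,\neq)}\|_{L^{\infty}L^2}$ and $A^{-(1/2-\epsilon)}\|\partial_z n_{(0,\neq)}\|_{L^2L^2}$ are exactly what Lemma \ref{lemma_n002} bounds by $C$, while $\|\partial_z^2 n_{(0,\neq)}\|_{Y_0}^2=\|\partial_z^2 n_{(0,\neq)}\|_{L^{\infty}L^2}^2+A^{-1}\|\nabla\partial_z^2 n_{(0,\neq)}\|_{L^2L^2}^2$ is bounded by Lemma \ref{lemma_n003}, contributing $C(\|(\partial_z^2 n_{\rm in})_{(0,\neq)}\|_{L^2}^2+1)$. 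Hence $E_{1,1}(t)\leq C(\|(n_{\rm in})_{(0,0)}\|_{L^2}^2+\|(\partial_z^2 n_{\rm in})_{(0,\neq)}\|_{L^2}^2+1)$.

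For $E_{1,2}(t)$: each of the six summands $\|u_{2,0}\|_{Y_0}$, $\|u_{3,0}\|_{Y_0}$, $\|\nabla u_{2,0}\|_{Y_0}$, $\|\nabla u_{3,0}\|_{Y_0}$, $\|\triangle u_{2,0}\|_{Y_0}$ and $\|\min\{h_A(t)^{1/2},1\}\triangle u_{3,0}\|_{Y_0}$ is, after multiplication by $A^{\epsilon}$, bounded by $C$ via the four displays \eqref{eq:u20u30} of Lemma \ref{lemma_u23_1}; so $E_{1,2}(t)\leq C$. For $E_{1,3}(t)$: Lemma \ref{u1_hat2} yields $A^{-1}\|\widehat{u_{1,0}}\|_{L^{\infty}H^4}+A^{-3/2}\|\nabla\widehat{u_{1,0}}\|_{L^2H^4}+\|\partial_t\widehat{u_{1,0}}\|_{L^{\infty}L^2}+\|\triangle\partial_t\widehat{u_{1,0}}\|_{L^{\infty}L^2}\leq CA^{-\epsilon}$ (using $\|\partial_t\widehat{u_{1,0}}\|_{L^{\infty}L^2}+\|\triangle\partial_t\widehat{u_{1,0}}\|_{L^{\infty}L^2}\leq C\|\partial_t\widehat{u_{1,0}}\|_{L^{\infty}H^2}$), so the $\widehat{u_{1,0}}$ block times $A^{\epsilon}$ is $\leq C$; and Lemma \ref{u1_hat1} gives $\|\widetilde{u_{1,0}}\|_{Y_0}+\|\triangle\widetilde{u_{1,0}}\|_{Y_0}\leq CA^{-\epsilon}(1+A^{1/3})$, whence $A^{-1/3+\epsilon}(\|\widetilde{u_{1,0}}\|_{Y_0}+\|\triangle\widetilde{u_{1,0}}\|_{Y_0})\leq CA^{-1/3}(1+A^{1/3})\leq C$. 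Thus $E_{1,3}(t)\leq C$, and summing the three pieces gives $E_1(t)\leq C(\|(n_{\rm in})_{(0,0)}\|_{L^2}^2+\|(\partial_z^2 n_{\rm in})_{(0,\neq)}\|_{L^2}^2+1)=:E_1$, which is the claim.

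Since every input has already been proved, there is no genuine obstacle here; the only point requiring care is the bookkeeping of $A$-exponents, most notably that the $A^{2/3}$ loss caused by the 3D lift-up effect in the estimate for $\widetilde{u_{1,0}}$ (Lemma \ref{u1_hat1}) is precisely compensated by the $A^{-1/3+\epsilon}$ weight attached to $\widetilde{u_{1,0}}$ in \eqref{eq:E1}, and that the time-weight $\min\{h_A(t)^{1/2},1\}$ on $\triangle u_{3,0}$ in $E_{1,2}$ is exactly the one appearing in $(\ref{eq:u20u30})_{4}$. Both are immediate from the definitions, so the proof reduces to this short verification together with recording the threshold $C_{(1)}=\max\{A_1,A_2,A_3,A_4\}$.
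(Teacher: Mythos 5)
Your proposal is correct and matches the paper's intent: the corollary is stated there without a separate proof precisely because it is the immediate assembly of Lemmas \ref{lemma_n001}--\ref{lemma_n003}, \ref{lemma_u23_1}, \ref{u1_hat1} and \ref{u1_hat2} against the definition \eqref{eq:E1}, with the $A$-exponent bookkeeping you carry out (including the cancellation of the $A^{2/3}$ lift-up growth by the $A^{-1/3+\epsilon}$ weight on $\widetilde{u_{1,0}}$). Your additional observations — that $C_{(1)}=A_4$ suffices since the thresholds are nested, and that Lemma \ref{lemma_n003} must be invoked for the $\|\partial_z^2 n_{(0,\neq)}\|_{Y_0}$ term even though the corollary's citation list omits it — are both accurate.
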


	\section{Estimates for the non-zero mode: Proof of Proposition \ref{pro1}}\label{sec_pro}
	\subsection{Energy estimates for $E_{2,1}(t)$}
	\begin{lemma}\label{result_0_1}
		Under the conditions of Theorem \ref{result} and the assumptions (\ref{assumption}),
		there exists a constant $A_5$ independent of $t$ and $A$, such that
		if $A\geq A_5,$ there holds 
		$$E_{2,1}(t)\leq C\big(\|(\partial_x^2n_{\rm in})_{\neq}\|_{L^2}
		+\|(\partial_z^2n_{\rm in})_{\neq}\|_{L^2}+1\big).$$
	\end{lemma}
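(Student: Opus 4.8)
The plan is to differentiate the non-zero-mode projection of $(\ref{ini2})_1$ twice in $x$ and twice in $z$, and to run the standard Couette-type weighted energy estimate in the $X_a$ norm (enhanced dissipation $+$ inviscid damping), treating the nonlinearity and the lift-up terms as forcing. Concretely, writing the $x$-transport in the quasi-linear form $\big(y+\frac{\widehat{u_{1,0}}}{A}\big)\partial_x$ and using that the zero mode $\widehat{u_{1,0}}$ is independent of $x$, the operator $\partial_x^2$ commutes with the transport and
\begin{equation*}
\partial_t\partial_x^2 n_{\neq}+\Big(y+\frac{\widehat{u_{1,0}}}{A}\Big)\partial_x\partial_x^2 n_{\neq}-\frac{1}{A}\triangle\partial_x^2 n_{\neq}=-\frac{1}{A}\partial_x^2\Big(\nabla\cdot(u_{\neq}n)_{\neq}+\nabla\cdot(n\nabla c)_{\neq}+\widetilde{u_{1,0}}\partial_x n_{\neq}+u_{2,0}\partial_y n_{\neq}+u_{3,0}\partial_z n_{\neq}\Big),
\end{equation*}
whereas $\partial_z^2$ produces in addition the two commutator terms $-\frac{1}{A}\partial_z^2\widehat{u_{1,0}}\,\partial_x n_{\neq}$ and $-\frac{2}{A}\partial_z\widehat{u_{1,0}}\,\partial_x\partial_z n_{\neq}$ on the right, exactly as recorded before $(\ref{eq:E4})$. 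Denote the full right-hand sides by $F_x$ and $F_z$.

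Next I would pair each equation with $e^{2aA^{-\frac13}t}$ times a time-dependent Fourier multiplier of the Bedrossian--Germain--Masmoudi/Wei--Zhang type for the linearized Couette operator $\partial_t+y\partial_x-\frac1A\triangle$ — the one that simultaneously produces the enhanced-dissipation rate $A^{-\frac13}$ and the inviscid-damping gain $\|\nabla\triangle^{-1}\partial_x(\cdot)\|_{L^2L^2}$. The transport terms contribute nothing to the $L^2$ balance: $y\partial_x$ is skew-adjoint and, since $\widehat{u_{1,0}}$ does not depend on $x$, so is $\frac1A\widehat{u_{1,0}}\partial_x$ (up to an $O(A^{-1})$ commutator with the multiplier, which is harmless). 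Integrating in time this yields
\begin{equation*}
\|\partial_x^2 n_{\neq}\|_{X_a}^2+\|\partial_z^2 n_{\neq}\|_{X_a}^2\le C\big(\|(\partial_x^2 n_{\rm in})_{\neq}\|_{L^2}^2+\|(\partial_z^2 n_{\rm in})_{\neq}\|_{L^2}^2\big)+CA^{\frac13}\big(\|e^{aA^{-\frac13}t}F_x\|_{L^2L^2}^2+\|e^{aA^{-\frac13}t}F_z\|_{L^2L^2}^2\big),
\end{equation*}
after absorbing the diagonal term $A^{-\frac13}\|e^{aA^{-\frac13}t}(\partial_x^2,\partial_z^2)n_{\neq}\|_{L^2L^2}^2$ from the pairing into the enhanced-dissipation part of the left side. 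Everything then reduces to bounding the forcing in this weighted $L^2L^2$ norm.

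For the genuinely quadratic pieces $\frac1A\partial_x^2\nabla\cdot(n\nabla c)_{\neq}$ and $\frac1A\partial_x^2\nabla\cdot(u_{\neq}n)_{\neq}$ (and their $z$-analogues) I would use the elliptic bounds of Lemma \ref{lem:ellip_2}, the pointwise bound $\|n\|_{L^\infty L^\infty}\le 2E_3$, the velocity embeddings of Lemma \ref{lemma_u} and the nonlinear estimates of Lemma \ref{lemma_neq1}; thanks to the $\frac1A$ prefactor against the at most $A^{1/2}$-type growth of the quadratic terms, each contributes $CA^{-\delta}(E_1+E_2+E_3)^4$ for some $\delta>0$. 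The zero-mode transport terms $\frac1A\partial_x^2(u_{2,0}\partial_y n_{\neq}+u_{3,0}\partial_z n_{\neq})$ are small because $\|u_{2,0}\|_{L^\infty H^2}+\|u_{3,0}\|_{L^\infty H^1}\le CA^{-\epsilon}$ by Lemma \ref{lemma_u23_1}, and the $\widetilde{u_{1,0}}$-transport term is self-referential: using $\|\widetilde{u_{1,0}}\|_{L^\infty H^2}\le CA^{\frac13-\epsilon}$ from Lemma \ref{u1_hat1} together with the diffusive part $\frac1A\|e^{aA^{-\frac13}t}\nabla\partial_x^2 n_{\neq}\|_{L^2L^2}^2$ of $\|\partial_x^2 n_{\neq}\|_{X_a}^2$, it is bounded by $CA^{-2\epsilon}\|\partial_x^2 n_{\neq}\|_{X_a}^2$ and absorbed for $A$ large. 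The main point is the two bad commutator terms in $F_z$: using $\|\partial_z^j\widehat{u_{1,0}}\|_{L^\infty}\le C\|\widehat{u_{1,0}}\|_{H^4}\le CA^{1-\epsilon}E_{1,3}$ and $a\le\frac{3a}{2}$, one bounds for instance
\begin{equation*}
A^{\frac13}\|e^{aA^{-\frac13}t}A^{-1}\partial_z\widehat{u_{1,0}}\,\partial_x\partial_z n_{\neq}\|_{L^2L^2}^2\le CA^{\frac13}A^{-2}(A^{1-\epsilon}E_{1,3})^2\|e^{\frac{3a}{2}A^{-\frac13}t}\partial_x\partial_z n_{\neq}\|_{L^2L^2}^2\le CA^{\frac23-2\epsilon}E_{1,3}^2E_4^2,
\end{equation*}
the last step because $\partial_x\partial_z n_{\neq}$ sits in $E_4$ with the heavier weight $X_{3a/2}$; the term $\frac1A\partial_z^2\widehat{u_{1,0}}\,\partial_x n_{\neq}$ is treated identically via $\|\partial_x n_{\neq}\|_{L^2}\le\|\partial_x^2 n_{\neq}\|_{L^2}$ (see $(\ref{eq:fourier ine})$) and the $X_{3a/2}$-bound on $\partial_x^2 n_{\neq}$ in $E_4$. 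Since $\epsilon\ge\epsilon_0>\frac13$ and $E_{1,3}(t)\le 2E_1$, $E_4(t)\le 2E_4$ under $(\ref{assumption})$, all these contributions are $o(1)$ as $A\to\infty$; choosing $A$ large and $E_{2,1}=C\big(\|(\partial_x^2 n_{\rm in})_{\neq}\|_{L^2}+\|(\partial_z^2 n_{\rm in})_{\neq}\|_{L^2}+1\big)$ then closes the estimate.

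The hard part is precisely these lift-up/commutator terms $\partial_z^{\,j}\widehat{u_{1,0}}\,\partial_x\partial_z^{\,2-j}n_{\neq}$: they cannot be closed inside $E_2$ alone, which is why the heavier-weight energy $E_4$ (and, downstream, $E_5$) must be introduced, and the whole balance — the $A^{1-\epsilon}$ growth of $\widehat{u_{1,0}}$ against the $A^{-1}$ prefactor, the $A^{1/3}$ coming from the enhanced-dissipation normalization, and the loss in the exponential weight — is exactly what forces the restriction $\epsilon_0>\frac13$.
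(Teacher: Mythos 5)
Your proposal is correct and follows essentially the same route as the paper: differentiate the quasi-linearized equations (transport $y+\frac{\widehat{u_{1,0}}}{A}$) by $\partial_x^2$ and $\partial_z^2$, apply the Wei--Zhang type space-time estimate (the paper simply invokes Proposition \ref{timespace2}, which is exactly the multiplier energy argument you sketch), bound the forcing with the anisotropic Sobolev, elliptic and velocity lemmas under the bootstrap (\ref{assumption}), and absorb the lift-up commutators $\partial_z^{j}\widehat{u_{1,0}}\,\partial_x\partial_z^{2-j}n_{\neq}$ through the heavier-weight energy $E_4$ with $\epsilon>\frac13$. The only deviations are cosmetic: you use the uniform bound $\|\widehat{u_{1,0}}\|_{L^{\infty}H^4}\leq CA^{1-\epsilon}$ and push the full weight onto the $X_{\frac32 a}$-controlled quantities, whereas the paper uses $\|\widehat{u_{1,0}}\|_{H^2}\leq CA^{-\epsilon}t$ together with $t\,{\rm e}^{-\frac{a}{4}A^{-1/3}t}\lesssim A^{1/3}$ and splits between $X_{\frac32 a}$ and $X_a$, and it exploits the divergence structure of the nonlinear forcing (the $A\|f_3\|^2$ term in Proposition \ref{timespace2}) rather than your cruder $A^{1/3}\|F\|^2$ treatment; both bookkeepings close for $A$ large.
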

	\begin{proof} 
		By \eqref{eq:fourier ine}, it is sufficient  to estimate 
		$\|\partial_x^2n_{\neq}\|^2_{X_a}$ and $\|\partial_z^2n_{\neq}\|^2_{X_a}.$
		
		{\bf Step I: Estimate of $\|\partial_x^2n_{\neq}\|^2_{X_a}$.} According to  
		$(\ref{ini2})_1$, the non-zero mode $\partial_x^2n_{\neq}$ satisfies 
		\begin{equation}\label{c_temp_11}
			\begin{aligned}
				\partial_t\partial_x^2n_{\neq}+(y+\frac{\widehat{u_{1,0}}}{A})\partial_x^3 n_{\neq}-\frac{1}{A}\triangle \partial_x^2n_{\neq}=-\frac{1}{A}\widetilde{u_{1,0}}\partial_x^3 n_{\neq}
				-\frac{1}{A}(u_{2,0}\partial_y\partial_x^2n_{\neq}+u_{3,0}\partial_z\partial_x^2n_{\neq})\\
				-\frac{1}{A}\nabla\cdot(\partial_x^2u_{\neq}n_0)
				-\frac{1}{A}\nabla\cdot\partial_x^2(u_{\neq}n_{\neq})_{\neq}
				-\frac{1}{A}\nabla\cdot\partial_x^2(n\nabla c)_{\neq}.
			\end{aligned}
		\end{equation}
		Applying Proposition \ref{timespace2} to (\ref{c_temp_11}), we have
		\begin{equation}\label{c_temp_12}
			\begin{aligned}
				&\quad\|\partial_x^2n_{\neq}\|^2_{X_b}\leq C\Big( \|(\partial_x^2n_{\rm in})_{\neq}\|^2_{L^2}
				+\frac{1}{A^{\frac{5}{3}}}
				\|{\rm e}^{bA^{-\frac{1}{3}}t}\widetilde{u_{1,0}}\partial_x^3 n_{\neq}\|_{L^2L^2}^2+
				\frac{1}{A^{\frac{5}{3}}}\|{\rm e}^{bA^{-\frac{1}{3}}t}u_{2,0}\partial_x^2\partial_yn_{\neq}\|_{L^2L^2}^2\\
				&+\frac{1}{A^{\frac{5}{3}}}\|{\rm e}^{bA^{-\frac{1}{3}}t}u_{3,0}\partial_x^2\partial_zn_{\neq}\|_{L^2L^2}^2
				+\frac{1}{A^{\frac{5}{3}}}\|{\rm e}^{bA^{-\frac{1}{3}}t}n_{0}\partial_x^3u_{1,\neq}\|_{L^2L^2}^2
				+\frac{1}{A}\|{\rm e}^{bA^{-\frac{1}{3}}t}n_{0}
				\partial_x^2(u_{2},u_{3})_{\neq}\|_{L^2L^2}^2\\
				&+\frac{1}{A}\|{\rm e}^{bA^{-\frac{1}{3}}t}
				\partial_x^2(u_{\neq}n_{\neq})_{\neq}\|^2_{L^2L^2}
				+\frac{1}{A}\|{\rm e}^{bA^{-\frac{1}{3}}t}
				\partial_x^2(n\nabla c)_{\neq}\|^2_{L^2L^2}\Big)\\
				&:= C\left( \|(\partial_x^2n_{\rm in})_{\neq}\|^2_{L^2} +T_{3,1}+\cdots+T_{3,7}\right).
			\end{aligned}
		\end{equation}
		
		\noindent{\bf Estimate of $T_{3,1}$.} According to Lemma \ref{u1_hat1}, there holds $A^{\epsilon}\|\widetilde{u_{1,0}}\|_{L^{\infty}H^2}\leq CA^{\frac13},$ as long as $A$ is big enough, then
		\begin{equation*}
			\begin{aligned}
				\|{\rm e}^{bA^{-\frac{1}{3}}t}\widetilde{u_{1,0}}\partial_x^3 n_{\neq}\|_{L^2L^2}^2
				\leq C\|\widetilde{u_{1,0}}\|_{L^{\infty}H^2}^2
				\|{\rm e}^{bA^{-\frac{1}{3}}t}\partial_x^3 n_{\neq}\|_{L^2L^2}^2\leq C{A^{\frac53-2\epsilon}}\|\partial_x^2 n_{\neq}\|_{X_b}^2 ,
			\end{aligned}
		\end{equation*}
		which implies that $\widetilde{u_{1,0}}$ can be regard as a perturbation.
		
		\noindent{\bf Estimates of $T_{3,2}$ and $T_{3,3}$.} Thanks to (\ref{u23_infty}) and Lemma \ref{lemma_u23_1}, for $j=2,3,$ we have
		\begin{equation*}
			\begin{aligned}
				{\|{\rm e}^{bA^{-\frac{1}{3}}t}u_{j,0}\partial_x^2\nabla n_{\neq}\|_{L^2L^2}^2}
				\leq 	{\|u_{j,0}\|_{L^{\infty}L^{\infty}}\|{\rm e}^{bA^{-\frac{1}{3}}t}\partial_x^2\nabla n_{\neq}\|_{L^2L^2}^2}
				\leq CA\|\partial_x^2n_{\neq}\|^2_{X_b}.
			\end{aligned}
		\end{equation*}
		
		\noindent{\bf Estimates of $T_{3,4}$ and $T_{3,5}$.} Using $\|n_0\|_{L^{\infty}L^{\infty}}\leq \|n\|_{L^{\infty}L^{\infty}}\leq 2E_3$ and $\partial_xu_{1,\neq}+\partial_yu_{2,\neq}+\partial_zu_{3,\neq}=0$, there are
		\begin{equation*}
			\begin{aligned}
				{\|{\rm e}^{bA^{-\frac{1}{3}}t}n_{0}\partial_x^3u_{1,\neq}\|_{L^2L^2}^2}
				\leq {CE_3^2\|{\rm e}^{bA^{-\frac{1}{3}}t}\partial_x^3u_{1,\neq}\|_{L^2L^2}^2}
				\leq {CAE_3^2\|\partial_x^2(u_{2,\neq},u_{3,\neq})\|^2_{X_b}},
			\end{aligned}
		\end{equation*}
		and	\begin{equation*}
			\begin{aligned}
				\|{\rm e}^{bA^{-\frac{1}{3}}t}n_{0}
				\partial_x^2(u_{2},u_{3})_{\neq}\|_{L^2L^2}^2
				\leq CE_3^2\|{\rm e}^{bA^{-\frac{1}{3}}t}
				\partial_x^2(u_{2},u_{3})_{\neq}\|_{L^2L^2}^2
				\leq CA^{\frac13}E_3^2\|\partial_x^2(u_2,u_3)_{\neq}\|^2_{X_b}.
			\end{aligned}
		\end{equation*}

		\noindent{\bf Estimate of $T_{3,6}$.} By Lemma \ref{sob_inf_2},  there holds
		\begin{equation*}
			\begin{aligned}
				&\quad\|\partial_x^2(u_{\neq}n_{\neq})\|^2_{L^2}
				\leq C(\|\partial_x^2u_{\neq}\|^2_{L^2}
				\|n_{\neq}\|^2_{L^{\infty}}
				+\|\partial_x^2n_{\neq}\|^2_{L^2}\|u_{\neq}\|^2_{L^{\infty}}
				+\|\partial_xu_{\neq}\|^2_{L^{\infty}_xL^2_{y,z}}
				\|\partial_xn_{\neq}\|^2_{L^{\infty}_{y,z}L^2_{x}})\\
				&\leq C\big(\|\partial_x^2u_{\neq}\|^2_{L^2}
				\|(\partial_z,1)\partial_xn_{\neq}\|_{L^2}
				\|(\partial_z,1)\partial_xn_{\neq}\|_{H^1}
				+\|\partial_x^2n_{\neq}\|^2_{L^2}
				\|(\partial_z,1)\partial_xu_{\neq}\|_{L^2}
				\|(\partial_z,1)\partial_xu_{\neq}\|_{H^1}\big),
			\end{aligned}
		\end{equation*}
		which along with velocity estimates in Lemma \ref{lemma_u} imply that 
		\begin{equation}\label{un_1}
			\begin{aligned}
				{\|{\rm e}^{bA^{-\frac{1}{3}}t}
					\partial_x^2(u_{\neq}n_{\neq})_{\neq}\|^2_{L^2L^2}}
				\leq {CA^{\frac23}(\|\triangle u_{2,\neq}\|^2_{X_a}+\|\partial_x \omega_{2,\neq}\|^2_{X_a})\|(\partial_x^2,\partial_z^2)n_{\neq}\|^2_{X_a}}\leq {CA^{\frac23-\frac{3}{2}\epsilon}E_2^4}.
			\end{aligned}
		\end{equation}
		
		\noindent{\bf Estimate of $T_{3,7}$.} According to nonlinear interaction, we have
		\begin{equation}\label{nc_1}
			\begin{aligned}
				&\quad\|{\rm e}^{bA^{-\frac{1}{3}}t}\partial_x^2(n\nabla c)_{\neq}\|^2_{L^2L^2} \\
				&\leq C\Big(
				\|{\rm e}^{bA^{-\frac{1}{3}}t}n_0\partial_x^2\nabla c_{\neq}\|^2_{L^2L^2}
				+\|{\rm e}^{bA^{-\frac{1}{3}}t}\partial_x^2n_{\neq}\nabla c_{0}\|^2_{L^2L^2}
				+\|{\rm e}^{bA^{-\frac{1}{3}}t}\partial_x^2(n_{\neq} \nabla c_{\neq})_{\neq}\|^2_{L^2L^2}\Big).
			\end{aligned}
		\end{equation}
		Using Lemma \ref{lem:ellip_0} and Lemma \ref{lemma_n002}, it follows that
		\begin{equation}\label{nc_2}
			\begin{aligned}	
				&\quad\|{\rm e}^{bA^{-\frac{1}{3}}t}\partial_x^2n_{\neq}\nabla c_{0}\|^2_{L^2L^2}
				\leq
				\|\nabla c_{0}\|^2_{L^{\infty}L^{4}}
				\|{\rm e}^{bA^{-\frac{1}{3}}t}\partial_x^2n_{\neq}\|^2_{L^2L^4} \\
				&\leq C
				\|\nabla c_{0}\|_{L^{\infty}L^{4}}^2
				\|{\rm e}^{bA^{-\frac{1}{3}}t}\partial_x^2n_{\neq}\|^{\frac12}_{L^2L^2}
				\|{\rm e}^{bA^{-\frac{1}{3}}t}\partial_x^2\nabla n_{\neq}\|^{\frac32}_{L^2L^2}
				\leq CA^{\frac{5}{12}}
				\|\partial_x^2n_{\neq}\|^2_{X_b}.
			\end{aligned}
		\end{equation}
		Using Lemma \ref{lem:ellip_2} and $\|n_0\|_{L^{\infty}L^{\infty}}\leq \|n\|_{L^{\infty}L^{\infty}}\leq 2E_3$, there holds 
		\begin{equation}\label{nc_3}
			\begin{aligned}
				\|{\rm e}^{bA^{-\frac{1}{3}}t}n_0\partial_x^2\nabla c_{\neq}\|_{L^2L^2}^2
				&\leq ||n_0||_{L^{\infty}L^{\infty}}^2
				\|{\rm e}^{bA^{-\frac{1}{3}}t}\partial_x^2\nabla c_{\neq}\|^2_{L^2L^2}
				\leq CA^{\frac{1}{3}}E_3^2||\partial_x^2n_{\neq}||^2_{X_b}.
			\end{aligned}
		\end{equation}
		Similar to the proof for (\ref{un_1}), using Lemma \ref{lem:ellip_2}, we can prove that 
		\begin{equation}\label{nc_4}
			\begin{aligned}
				\|{\rm e}^{bA^{-\frac{1}{3}}t}\partial_x^2(n_{\neq} \nabla c_{\neq})_{\neq}\|_{L^2L^2}^2
				&\leq CA^{\frac{2}{3}}\|(\partial_x^2,\partial_z^2)n_{\neq}\|^4_{X_a}
				\leq CA^{\frac{2}{3}}E_2^4.
			\end{aligned}
		\end{equation}
		Combining (\ref{nc_1})-(\ref{nc_4}) give
		\begin{equation}\label{nc_end}
			\begin{aligned}
				\|{\rm e}^{aA^{-\frac{1}{3}}t}\partial_x^2(n\nabla c)_{\neq}\|^2_{L^2L^2}
				\leq CA^{\frac{2}{3}}(||\partial_x^2n_{\neq}||^2_{X_b}
				+E_2^4
				+E_3^2||\partial_x^2n_{\neq}||^2_{X_b}).
			\end{aligned}
		\end{equation}
		Using above, we infer from (\ref{c_temp_12}) that 
		\begin{equation}\label{n_xx_result}
			\begin{aligned}
				&\quad\|\partial_x^2n_{\neq}\|^2_{X_b}\leq C\Big( \|(\partial_x^2n_{\rm in})_{\neq}\|^2_{L^2}
				+\frac{E_2^4+(1+E_3^2)\big(\|\partial_x^2n_{\neq}\|^2_{X_b}
					+\|\partial_x^2u_{2,\neq}\|^2_{X_b}
					+\|\partial_x^2u_{3,\neq}\|^2_{X_b}\big)}{A^{\frac{1}{3}}}\Big).
			\end{aligned}
		\end{equation}

		{\bf Step II: Estimate of $\|\partial_z^2n_{\neq}\|^2_{X_a}$.}
		The non-zero mode $\partial_z^2n_{\neq}$ satisfies 
		\begin{equation}\label{c_temp_21}
			\begin{aligned}
				\partial_t\partial_z^2n_{\neq}+\Big(y+\frac{\widehat{u_{1,0}}}{A}
				\Big)\partial_x\partial_z^2 n_{\neq}-\frac{\triangle \partial_z^2n_{\neq}}{A}=-\frac{\partial_z^2(\widetilde{u_{1,0}}\partial_x n_{\neq})}{A}
				-\frac{\partial_z^2\widehat{u_{1,0}}\partial_x n_{\neq}}{A}
				-\frac{2\partial_z\widehat{u_{1,0}}\partial_x\partial_z n_{\neq}}{A}
				\\-\frac{\partial_z^2\partial_y(u_{2,0}n_{\neq})}{A}
				-\frac{\partial_z^3(
					u_{3,0}n_{\neq})}{A}
				-\frac{\nabla\cdot\partial_z^2(u_{\neq}n_0)}{A}
				-\frac{\nabla\cdot\partial_z^2(u_{\neq}n_{\neq})_{\neq}}{A}
				-\frac{\nabla\cdot\partial_z^2(n\nabla c)_{\neq}}{A}.
			\end{aligned}
		\end{equation}
		Applying Proposition \ref{timespace2} to (\ref{c_temp_21}), we get
		\begin{equation}\label{c_temp_22}
			\begin{aligned}
				&\quad\|\partial_z^2n_{\neq}\|^2_{X_a}\leq C
				\Big( \|(\partial_z^2n_{\rm in})_{\neq}\|^2_{L^2}
				+\frac{	\|{\rm e}^{aA^{-\frac{1}{3}}t}\partial_z^2(\widetilde{u_{1,0}}\partial_x n_{\neq})\|_{L^2L^2}^2}{A^{\frac{5}{3}}}
				+\frac{\|{\rm e}^{aA^{-\frac{1}{3}}t}
					\partial_z\widehat{u_{1,0}}\partial_x\partial_z n_{\neq}\|_{L^2L^2}^2}{A^{\frac{5}{3}}}\\
				&+\frac{\|{\rm e}^{aA^{-\frac{1}{3}}t}
					\partial_z^2\widehat{u_{1,0}}\partial_x n_{\neq}\|_{L^2L^2}^2}{A^{\frac{5}{3}}}		
				+\frac{\|{\rm e}^{aA^{-\frac{1}{3}}t}\partial_z^2(u_{2,0}
					n_{\neq})\|_{L^2L^2}^2}{A}
				+\frac{\|{\rm e}^{aA^{-\frac{1}{3}}t}\partial_z^2(u_{3,0}
					n_{\neq})\|_{L^2L^2}^2}{A}\\
				&+\frac{\|{\rm e}^{aA^{-\frac{1}{3}}t}
					\partial_x\partial_z(u_{1,\neq}n_{0})\|_{L^2L^2}^2}{A}
				+\frac{\|{\rm e}^{aA^{-\frac{1}{3}}t}
					\partial_z^2(u_{2,\neq}n_{0})\|_{L^2L^2}^2}{A}
				+\frac{\|{\rm e}^{aA^{-\frac{1}{3}}t}
					\partial_z^2(u_{3,\neq}n_0)\|_{L^2L^2}^2}{A}\\
				&+\frac{\|{\rm e}^{aA^{-\frac{1}{3}}t}
					\partial_x\partial_z(u_{1,\neq}n_{\neq})_{\neq}\|^2_{L^2L^2}}{A}
				+\frac{\|{\rm e}^{aA^{-\frac{1}{3}}t}
					\partial_z^2(u_{2,\neq}n_{\neq})_{\neq}\|^2_{L^2L^2}}{A}
				+\frac{\|{\rm e}^{aA^{-\frac{1}{3}}t}
					\partial_z^2(u_{3,\neq}n_{\neq})_{\neq}\|^2_{L^2L^2}}{A}\\
				&+\frac{\|{\rm e}^{aA^{-\frac{1}{3}}t}
					\partial_z^2(n\nabla c)_{\neq}\|^2_{L^2L^2}}{A}\Big).
			\end{aligned}
		\end{equation}
		For convenience, we rewrite (\ref{c_temp_22}) as 
		$$\quad\|\partial_z^2n_{\neq}\|^2_{X_a}\leq C(\|(\partial_z^2n_{\rm in})_{\neq}\|^2_{L^2}+T_{4,1}+\cdots+T_{4,12}),$$
		where $T_{4,1}$ can be regarded as a perturbation, $T_{4,2}$ and $T_{4,3}$ are difficult terms. 
		
		\noindent\textbf{Estimate of $T_{4,1}$:}
		Due to $A^{\epsilon}\|\widetilde{u_{1,0}}\|_{L^{\infty}H^2}\leq CA^{\frac13}$ in Lemma \ref{u1_hat1},  using Lemma \ref{sob_inf_1} and Lemma \ref{sob_inf_2}, we get 
		\begin{equation*}
			\begin{aligned}
				&\|\partial_z^2\widetilde{u_{1,0}}\partial_x n_{\neq}\|_{L^2}^2\leq
				\|\partial_z^2\widetilde{u_{1,0}}\|_{L^2}^2
				\|\partial_x n_{\neq}\|_{L^{\infty}}^2
				\leq CA^{\frac23-2\epsilon}
				\|(\partial_x^2,\partial_z^2) n_{\neq}\|_{L^2}
				\|(\partial_x^2,\partial_z^2)\nabla n_{\neq}\|_{L^2},\\
				&\|\partial_z\widetilde{u_{1,0}}\partial_x\partial_z n_{\neq}\|_{L^2}^2\leq
				\|\partial_z\widetilde{u_{1,0}}\|_{L^{\infty}_zL^2_y}^2
				\|\partial_x\partial_z n_{\neq}\|_{L^{\infty}_yL^2_{x,z}}^2
				\leq CA^{\frac23-2\epsilon}
				\|(\partial_x^2,\partial_z^2) n_{\neq}\|_{L^2}
				\|(\partial_x^2,\partial_z^2)\nabla n_{\neq}\|_{L^2},\\	
				&\|\widetilde{u_{1,0}}\partial_x\partial_z^2 n_{\neq}\|_{L^2}^2
				\leq\|\widetilde{u_{1,0}}\|_{L^{\infty}}^2
				\|\partial_x\partial_z^2 n_{\neq}\|_{L^2}^2
				\leq CA^{\frac23-2\epsilon}
				\|\partial_z^2\nabla n_{\neq}\|^2_{L^2},	
			\end{aligned}
		\end{equation*}
		which imply that 
		\begin{equation}\label{temp1_1}
			\begin{aligned}
				\frac{\|{\rm e}^{aA^{-\frac{1}{3}}t}
					\partial_z^2(\widetilde{u_{1,0}}\partial_x n_{\neq})\|_{L^2L^2}^2}{A^{\frac{5}{3}}}
				\leq C\Big(\frac{\|\partial_z^2 n_{\neq}\|^2_{X_a}}{A^{2\epsilon}}
				+\frac{\|\partial_x^2 n_{\neq}\|^2_{X_a}
					+\|\partial_z^2 n_{\neq}\|^2_{X_a}}{A^{\frac13+2\epsilon}}\Big).
			\end{aligned}
		\end{equation}
		
		\noindent\textbf{Estimate of $T_{4,2}$:}
		Using Lemma \ref{u1_hat2},  it holds
		\begin{equation*}
			\begin{aligned}
				\|\widehat{u_{1,0}}\|_{H^2}\leq \int_0^t \|\partial_s\widehat{u_{1,0}}(s)\|_{H^2}ds
				\leq CA^{-\epsilon}t.
			\end{aligned}
		\end{equation*}
		A direct calculation shows that $\|A^{-\frac13}t{\rm e}^{-\frac{a}{4}A^{-\frac13}t}\|_{L^{\infty}_t}\leq C,$
		thus
		$\big\|{\rm e}^{-\frac{a}{4}A^{-\frac13}t}\|\widehat{u_{1,0}}\|_{H^2}\big\|_{L^{\infty}_t} \leq CA^{\frac13-\epsilon}.$
		Combining Lemma \ref{sob_inf_1} and Lemma \ref{sob_inf_2}, we get
		\begin{equation}\label{temp1_2}
			\begin{aligned}
				&\quad\|{\rm e}^{aA^{-\frac{1}{3}}t}
				\partial_z\widehat{u_{1,0}}\partial_x\partial_z 	n_{\neq}\|_{L^2L^2}^2
				\leq C\int_0^t{\rm 	e}^{-\frac{a}{2}A^{-\frac13}s}\|\widehat{u_{1,0}}\|^2_{H^2}
				{\rm e}^{\frac{5a}{2}A^{-\frac13}s}
				\|\partial_x\partial_z n_{\neq}\|_{L^2}
				\|\partial_x\partial_z \nabla n_{\neq}\|_{L^2}ds\\
				&\leq CA^{\frac23-2\epsilon}
				\|{\rm 	e}^{\frac{3}{2}aA^{-\frac{1}{3}}t}
				\partial_x^2n_{\neq}\|_{L^2L^2}^{\frac12}
				\|{\rm 	e}^{aA^{-\frac{1}{3}}t}
				\partial_z^2n_{\neq}\|_{L^2L^2}^{\frac12}
				\|{\rm e}^{\frac{3}{2}aA^{-\frac{1}{3}}t}\partial_x^2\nabla n_{\neq}\|^{\frac12}_{L^2L^2}
				\|{\rm e}^{aA^{-\frac{1}{3}}t}\partial_z^2\nabla 	n_{\neq}\|^{\frac12}_{L^2L^2}\\
				&\leq CA^{\frac{4}{3}-2\epsilon}
				\|\partial_x^2n_{\neq}\|_{X_{\frac32a}}
				\|\partial_z^2n_{\neq}\|_{X_{a}}.
			\end{aligned}
		\end{equation}

		\noindent\textbf{Estimate of $T_{4,3}$:}
		By Lemma \ref{sob_inf_2} and setting $\alpha=1$, we have 
		\begin{equation*}
			\begin{aligned}
				&\quad\|\partial_z^2\widehat{u_{1,0}}\partial_x n_{\neq}\|_{L^2}^2
				\leq \|\partial_z^2\widehat{u_{1,0}}\|_{L^2}^2
				\|\partial_x n_{\neq}\|_{L^{\infty}_{y,z}L^2_x}^2\\
				&\leq C\|\widehat{u_{1,0}}\|_{H^2}^2
				(\|\partial_x\partial_zn_{\neq}\|_{L^2}
				\|\partial_x\partial_z\partial_yn_{\neq}\|_{L^2}
				+\|\partial_x^2n_{\neq}\|_{L^2}
				\|\partial_x^2\partial_yn_{\neq}\|_{L^2}),
			\end{aligned}
		\end{equation*}
		which gives that 
		\begin{equation}\label{temp1_3}
			\begin{aligned}
				&\quad\|{\rm e}^{aA^{-\frac{1}{3}}t}
				\partial_z^2\widehat{u_{1,0}}\partial_x n_{\neq}\|_{L^2L^2}^2\\
				&\leq C\int_0^t{\rm e}^{-\frac{a}{2}A^{-\frac13}s}\|\widehat{u_{1,0}}\|^2_{H^2}
				{\rm e}^{\frac{5a}{2}A^{-\frac13}s}
				\|\partial_x\partial_z n_{\neq}\|_{L^2}
				\|\partial_x\partial_z \nabla n_{\neq}\|_{L^2}ds
				+C\|\widehat{u_{1,0}}\|^2_{L^{\infty}H^2}A^{\frac23}
				\|\partial_x^2n_{\neq}\|^2_{X_{\frac32a}}\\
				&\leq CA^{\frac{4}{3}-2\epsilon}
				\big(\|\partial_x^2n_{\neq}\|_{X_{\frac32a}}
				\|\partial_z^2n_{\neq}\|_{X_{a}}
				+\|\partial_x^2n_{\neq}\|^2_{X_{\frac32a}}\big).
			\end{aligned}
		\end{equation}
		
		\noindent\textbf{Estimates of $T_{4,4}$ and $T_{4,5}$:}
		For $j=2,3$, there holds $$\|
		\partial_z^2(u_{j,0}n_{\neq})\|_{L^2}^2
		\leq C(
		\|u_{j,0}\partial_z^2n_{\neq}\|^2_{L^2}
		+\|\partial_zu_{j,0}\partial_zn_{\neq}\|^2_{L^2}
		+\|\partial_z^2u_{j,0}n_{\neq}\|^2_{L^2}).$$
		By (\ref{u23_infty}), we obtain 
		\begin{equation*}
			\begin{aligned}
				\|{\rm e}^{aA^{-\frac{1}{3}}t}u_{j,0}\partial_z^2n_{\neq}\|^2_{L^2L^2}
				\leq \|u_{j,0}\|^2_{L^{\infty}L^{\infty}}\|{\rm e}^{aA^{-\frac{1}{3}}t}\partial_z^2n_{\neq}\|^2_{L^2L^2}
				\leq CA^{\frac13-2\epsilon}E_1^2
				\|\partial_z^2n_{\neq}\|^2_{X_a}.
			\end{aligned}
		\end{equation*}
		Using Lemma \ref{sob_inf_1} and Lemma \ref{sob_inf_2}, there hold
		\begin{equation*}
			\begin{aligned}						&\|\partial_zu_{j,0}\partial_zn_{\neq}\|^2_{L^2}\leq 
				\|\partial_zu_{j,0}\|^2_{L^{\infty}_yL^2_z}
				\|\partial_zn_{\neq}\|^2_{L^{\infty}_{z}L^2_{x,y}}
				\leq C\|\partial_zu_{j,0}\|^2_{H^1}
				\|\partial_z^2n_{\neq}\|_{L^2}^2,\\
				&\|\partial_z^2u_{j,0}n_{\neq}\|^2_{L^2}\leq 
				\|\partial_z^2u_{j,0}\|^2_{L^2}
				\|n_{\neq}\|^2_{L^{\infty}_{y,z}L^2_{x}}
				\leq C\|\partial_z^2u_{j,0}\|^2_{L^2}
				\|(\partial_x^2,\partial_z^2)n_{\neq}\|_{L^2}
				\|(\partial_x^2,\partial_z^2)\partial_yn_{\neq}\|_{L^2},
			\end{aligned}
		\end{equation*}
		which along with  (\ref{u23_infty}) imply that 
		\begin{equation*}
			\begin{aligned}
				&\|{\rm e}^{aA^{-\frac{1}{3}}t}\partial_zu_{j,0}
				\partial_zn_{\neq}\|^2_{L^2L^2}
				\leq CA^{\frac13}\|\partial_zu_{j,0}\|^2_{L^{\infty}H^1}
				\|\partial_z^2n_{\neq}\|^2_{X_a}\leq CA^{\frac13-2\epsilon}E_1^2
				\|\partial_z^2n_{\neq}\|^2_{X_a},\\
				&\|{\rm e}^{aA^{-\frac{1}{3}}t}\partial_z^2u_{j,0}
				n_{\neq}\|^2_{L^2L^2}
				\leq CA^{\frac23}\|\partial_z^2u_{j,0}\|^2_{L^{\infty}L^2}
				\|(\partial_x^2,\partial_z^2)n_{\neq}\|^2_{X_a}\leq CA^{\frac23-2\epsilon}E_1^2
				\|(\partial_x^2,\partial_z^2)n_{\neq}\|^2_{X_a}.
			\end{aligned}
		\end{equation*}
		Thus, we get 
		\begin{equation}\label{temp1_4}
			\begin{aligned}
				\|{\rm e}^{aA^{-\frac{1}{3}}t}\partial_z^2(u_{j,0}
				n_{\neq})\|_{L^2L^2}^2\leq 
				CA^{\frac23-2\epsilon}E_1^2\|(\partial_x^2,\partial_z^2)n_{\neq}\|^2_{X_a}.
			\end{aligned}
		\end{equation}
		
		\noindent\textbf{Estimate of $T_{4,6}$:}
		By Lemma \ref{sob_inf_1}, Lemma \ref{sob_inf_2} and Lemma \ref{lemma_n003}, 
		we get 
		\begin{equation}\label{temp1_5}
			\begin{aligned}
				&\quad \|{\rm e}^{aA^{-\frac{1}{3}}t}
				\partial_x\partial_z(u_{1,\neq}n_{0})\|_{L^2L^2}^2
				\leq C\big(\|n_0\|^2_{L^{\infty} L^{\infty}}\|{\rm 	e}^{aA^{-\frac{1}{3}}t}
				\partial_x\partial_zu_{1,\neq}\|_{L^2L^2}^2\\
				&+\|\partial_z^2n_{(0,\neq)}\|^2_{L^{\infty} L^{2}}
				\|{\rm e}^{aA^{-\frac{1}{3}}t}
				\partial_xu_{1,\neq}\|_{L^2L^2}
				\|{\rm e}^{aA^{-\frac{1}{3}}t}
				\partial_x\partial_yu_{1,\neq}\|_{L^2L^2}\big)\\
				\leq&CA^{\frac23}
				\big(\|(\partial_z^2n_{\rm in})_{(0,\neq)}\|_{L^2}^2
				+E_3^2+1\big)
				\big(\|\triangle u_{2,\neq}\|^2_{X_a}
				+\|\partial_x\omega_{2,\neq}\|^2_{X_a}\big),
			\end{aligned}
		\end{equation}
		where we use 	
		$\|\partial_xu_{1,\neq}\partial_zn_{0}\|_{L^2}\leq 
		\|\partial_zn_{0}\|_{L^{\infty}_zL^2_y}
		\|\partial_xu_{1,\neq}\|_{L^{\infty}_yL^2_{x,z}}
		\leq\|\partial_z^2n_{(0,\neq)}\|_{L^2}
		\|\partial_xu_{1,\neq}\|_{L^{\infty}_yL^2_{x,z}}.$
		
		\noindent\textbf{Estimates of $T_{4,7}$ and $T_{4,8}$:}
		For $j=2,3$, there holds $$\|
		\partial_z^2(u_{j,\neq}n_0)\|_{L^2}^2
		\leq C(\|\partial_z^2u_{j,\neq}n_0\|^2_{L^2}
		+\|\partial_zu_{j,\neq}\partial_zn_0\|^2_{L^2}
		+\|u_{j,\neq}\partial_z^2n_0\|^2_{L^2}).$$
		Due to $\|n_0\|_{L^{\infty}L^{\infty}}\leq 2E_3,$ thanks to  Lemma \ref{lemma_u}, thus 
		\begin{equation*}
			\begin{aligned}
				\|{\rm e}^{aA^{-\frac{1}{3}}t}\partial_z^2u_{j,\neq}n_0\|^2_{L^2L^2}
				\leq CE_3^2\|{\rm e}^{aA^{-\frac{1}{3}}t}\partial_z^2u_{j,\neq}\|^2_{L^2L^2}
				\leq CA^{\frac13}E_3^2(\|\triangle u_{2,\neq}\|^2_{X_a}
				+\|\partial_x\omega_{2,\neq}\|^2_{X_a}).
			\end{aligned}
		\end{equation*}
		Using Lemma \ref{sob_inf_1}, Lemma \ref{sob_inf_2}
		and $\|\partial_zn_{0}\|_{L^2}=\|\partial_zn_{(0,\neq)}\|_{L^2}\leq \|\partial_z^2n_{(0,\neq)}\|_{L^2}$, we have
		\begin{equation*}
			\begin{aligned}						\|\partial_zu_{j,\neq}\partial_zn_{0}\|^2_{L^2}\leq 
				\|\partial_zn_0\|^2_{L^{\infty}_zL^2_y}
				\|\partial_zu_{j,\neq}\|^2_{L^{\infty}_{y}L^2_{x,z}}
				\leq C\|\partial_z^2n_{(0,\neq)}\|^2_{L^2}
				\|\partial_zu_{j,\neq}\|_{L^2}
				\|\partial_z\partial_yu_{j,\neq}\|_{L^2},\\
				\|u_{j,\neq}\partial_z^2n_0\|^2_{L^2}\leq 
				\|\partial_z^2n_0\|^2_{L^2}\|u_{j,\neq}\|^2_{L^{\infty}_{y,z}L^2_x}
				\leq C\|\partial_z^2n_{(0,\neq)}\|^2_{L^2}
				\|(\partial_x,\partial_z)u_{j,\neq}\|_{L^2}
				\|(\partial_x,\partial_z)\partial_yu_{j,\neq}\|_{L^2},
			\end{aligned}
		\end{equation*}
		which along with Lemma \ref{lemma_u} and Lemma \ref{lemma_n003} give that 
		\begin{equation*}
			\begin{aligned}
				&\quad\|{\rm e}^{aA^{-\frac{1}{3}}t}\partial_zu_{j,\neq}
				\partial_zn_0\|^2_{L^2L^2}
				+\|{\rm e}^{aA^{-\frac{1}{3}}t}u_{j,\neq}
				\partial_z^2n_0\|^2_{L^2L^2}\\
				&\leq CA^{\frac23}\big(\|(\partial_z^2n_{\rm in})_{(0,\neq)}\|^2_{L^2}+1\big)\big(\|\triangle u_{2,\neq}\|^2_{X_a}
				+\|\partial_x\omega_{2,\neq}\|^2_{X_a}\big).
			\end{aligned}
		\end{equation*}
		Therefore, for $j=2,3$, we obtain that 
		\begin{equation}\label{temp1_6}
			\begin{aligned}
				{\|{\rm e}^{aA^{-\frac{1}{3}}t}
					\partial_z^2(u_{j,\neq}n_0)\|_{L^2L^2}^2}
				\leq{CA^{\frac23-\frac32\epsilon}
					\big(\|(\partial_z^2n_{\rm in})_{(0,\neq)}
					\|_{L^2}^2+E_3^2+1\big)E_2^2}.
			\end{aligned}
		\end{equation}
		\noindent\textbf{Estimate of $T_{4,9}$:}
		Using $(\ref{appa_1})_2$ and Lemma \ref{sob_inf_2},
		there holds 
		\begin{equation*}
			\begin{aligned}
				\|u_{1,\neq}\partial_x\partial_zn_{\neq}\|^2_{L^2}
				\leq C(\|\partial_x\omega_{2,\neq}\|^2_{L^2}
				+\|\triangle u_{2,\neq}\|_{L^2}^2)
				\|\partial_x\partial_zn_{\neq}\|_{L^2}
				\|\partial_x\partial_z\partial_yn_{\neq}\|_{L^2},
			\end{aligned}
		\end{equation*}
		where we use  $\|u_{1,\neq}\partial_x\partial_zn_{\neq}\|^2_{L^2}
		\leq \|u_{1,\neq}\|^2_{L^{\infty}_{x,z}L^2_y}
		\|\partial_x\partial_zn_{\neq}\|^2_{L^2_{x,z}L^{\infty}_y}.$
		By $(\ref{appa_4})_1$ and Lemma \ref{sob_inf_2}, there holds 
		\begin{equation*}
			\begin{aligned}
				&\quad\|\partial_xu_{1,\neq}\partial_zn_{\neq}\|^2_{L^2}
				+\|\partial_zu_{1,\neq}\partial_xn_{\neq}\|^2_{L^2}\\
				&\leq C(\|\partial_x\omega_{2,\neq}\|^2_{L^2}+\|\triangle 		u_{2,\neq}\|_{L^2}^2)\|(\partial_x^2,\partial_z^2)n_{\neq}\|_{L^2}
				\|(\partial_x^2,\partial_z^2)\partial_yn_{\neq}\|_{L^2}.
			\end{aligned}
		\end{equation*}
		Lemma \ref{sob_inf_2} show that $\|n_{\neq}\|^2_{L^{\infty}}\leq C \|(\partial_x^2,\partial_z^2)n_{\neq}\|_{L^{2}}\|(\partial_x^2,\partial_z^2)\partial_yn_{\neq}\|_{L^{2}},$ thus
		\begin{equation*}
			\begin{aligned}
				\|\partial_x\partial_zu_{1,\neq}n_{\neq}\|^2_{L^2}
				\leq C(\|\partial_x\omega_{2,\neq}\|^2_{L^2}
				+\|\triangle u_{2,\neq}\|_{L^2}^2)
				\|(\partial_x^2,\partial_z^2)n_{\neq}\|_{L^{2}}
				\|(\partial_x^2,\partial_z^2)\partial_yn_{\neq}\|_{L^{2}}.
			\end{aligned}
		\end{equation*}
		Therefore, we obtain that 
		\begin{equation}\label{temp1_7}
			\begin{aligned}
				&\quad\|{\rm e}^{aA^{-\frac{1}{3}}t}
				\partial_x\partial_z(u_{1,\neq}n_{\neq})_{\neq}\|^2_{L^2L^2}
				\leq \|{\rm e}^{2aA^{-\frac{1}{3}}t}
				\partial_x\partial_z(u_{1,\neq}n_{\neq})\|^2_{L^2L^2}\\
				&\leq CA^{\frac23}(\|\partial_x\omega_{2,\neq}\|^2_{X_a}
				+\|\triangle u_{2,\neq}\|_{X_a}^2)
				\|(\partial_x^2,\partial_z^2)n_{\neq}\|^2_{X_a}\leq CA^{\frac23-\frac32\epsilon}E_2^4.
			\end{aligned}
		\end{equation}
		
		\noindent\textbf{Estimates of $T_{4,10}$ and $T_{4,11}$:}
		By $(\ref{appa_1})_2$, Lemma \ref{sob_inf_2} and $$\|u_{j,\neq}\partial_z^2n_{\neq}\|^2_{L^2}
		\leq \|u_{j,\neq}\|^2_{L^{\infty}_{x,z}L^2_y}
		\|\partial_z^2n_{\neq}\|^2_{L^2_{x,z}L^{\infty}_y},$$
		there is 
		\begin{equation*}
			\begin{aligned}
				\|u_{j,\neq}\partial_z^2n_{\neq}\|^2_{L^2}
				\leq C(\|\partial_x\omega_{2,\neq}\|^2_{L^2}
				+\|\triangle u_{2,\neq}\|_{L^2}^2)
				\|\partial_z^2n_{\neq}\|_{L^2}
				\|\partial_z^2\partial_yn_{\neq}\|_{L^2},
			\end{aligned}
		\end{equation*} 
		where $j=2,3.$
		Combining Lemma \ref{lemma_u} with $\|n_{\neq}\|^2_{L^{\infty}}\leq C \|(\partial_x^2,\partial_z^2)n_{\neq}\|_{L^{2}}\|(\partial_x^2,\partial_z^2)\partial_yn_{\neq}\|_{L^{2}},$ then
		\begin{equation*}
			\begin{aligned}
				\|\partial_z^2u_{j,\neq}n_{\neq}\|^2_{L^2}
				\leq C(\|\partial_x\omega_{2,\neq}\|^2_{L^2}
				+\|\triangle u_{2,\neq}\|_{L^2}^2)
				\|(\partial_x^2,\partial_z^2)n_{\neq}\|_{L^{2}}
				\|(\partial_x^2,\partial_z^2)\partial_yn_{\neq}\|_{L^{2}}.
			\end{aligned}
		\end{equation*}	
		Using (\ref{appa_3}), Lemma \ref{sob_inf_2}
		and $\|\partial_zu_{j,\neq}\partial_zn_{\neq}\|_{L^2}\leq 
		\|\partial_zu_{j,\neq}\|_{L^{\infty}_xL^2_{y,z}}
		\|\partial_zn_{\neq}\|_{L^{\infty}_{y,z}L^2_{x}}$, we have 
		\begin{equation*}
			\begin{aligned}
				\|\partial_zu_{j,\neq}\partial_zn_{\neq}\|^2_{L^2}
				\leq C(\|\partial_x\omega_{2,\neq}\|^2_{L^2}
				+\|\triangle u_{2,\neq}\|_{L^2}^2)
				\|\partial_z^2n_{\neq}\|_{L^{2}}
				\|\partial_z^2\partial_yn_{\neq}\|_{L^{2}}.
			\end{aligned}
		\end{equation*}	
		
		Thus, for $j=2,3,$ we get that 
		\begin{equation}\label{temp1_8}
			\begin{aligned}
				&\quad\|{\rm e}^{aA^{-\frac{1}{3}}t}
				\partial_z^2(u_{j,\neq}n_{\neq})_{\neq}\|^2_{L^2L^2}
				\leq 
				\|{\rm e}^{2aA^{-\frac{1}{3}}t}
				\partial_z^2(u_{j,\neq}n_{\neq})\|^2_{L^2L^2}\\
				&\leq CA^{\frac23}\big(\|\partial_x\omega_{2,\neq}\|^2_{X_a}
				+\|\triangle u_{2,\neq}\|_{X_a}^2\big)
				\|(\partial_x^2,\partial_z^2)n_{\neq}\|^2_{X_a}\leq CA^{\frac23-\frac32\epsilon}E_2^4.
			\end{aligned}
		\end{equation}
		
		\noindent\textbf{Estimate of $T_{4,12}$:}
		According to nonlinear interaction, there holds 
		\begin{equation*}
			\|\partial_z^2(n\nabla c)_{\neq}\|^2_{L^2}\leq C
			\big(\|\partial_z^2(n_0\nabla c_{\neq})\|^2_{L^2}+
			\|\partial_z^2(n_{\neq}\nabla c_{0})\|^2_{L^2}+
			\|\partial_z^2(n_{\neq}\nabla c_{\neq})\|^2_{L^2}\big).
		\end{equation*}
		First, for the term of $\|\partial_z^2(n_0\nabla c_{\neq})\|^2_{L^2}$,
		using  Lemma \ref{sob_inf_2} and elliptic estimates in Lemma \ref{lem:ellip_2}, we have
		\begin{equation}\label{c_temp1}
			\begin{aligned}
				&\|\nabla c_{\neq}\|^2_{L^{\infty}}
				\leq C\|(\partial_x^2,\partial_z^2)\triangle c_{\neq}\|^2_{L^{2}}
				\leq C\|(\partial_x^2,\partial_z^2)n_{\neq}\|^2_{L^{2}},\\
				&\|\partial_z\nabla c_{\neq}\|^2_{L^{\infty}_{x,y}L^2_z}
				\leq C\|\partial_x\partial_z\triangle c_{\neq}\|^2_{L^{2}}
				\leq C\|(\partial_x^2,\partial_z^2)n_{\neq}\|^2_{L^{2}},\\
				&\|\partial_z^2\nabla c_{\neq}\|^2_{L^{\infty}_{y}L^2_{x,z}}
				\leq C\|\partial_z^2\triangle c_{\neq}\|^2_{L^{2}}
				\leq C\|\partial_z^2n_{\neq}\|^2_{L^{2}},
			\end{aligned}
		\end{equation}
		which imply that  
		\begin{equation*}
			\begin{aligned}
				&\quad\|\partial_z^2(n_0\nabla c_{\neq})\|^2_{L^2}\\
				&\leq C\big(\|\partial_zn_0\|^2_{L^{\infty}_{z}L^2_{y}}
				\|\partial_z\nabla c_{\neq}\|^2_{L^{\infty}_{x,y}L^2_{z}}
				+\|\partial_z^2n_{0}\|^2_{L^2}
				\|\nabla c_{\neq}\|^2_{L^{\infty}}
				+\|n_0\|^2_{L^{\infty}_{z}L^2_{y}}
				\|\partial_z^2\nabla c_{\neq}\|^2_{L^{\infty}_{y}L^2_{x,z}}
				\big)\\
				&\leq C(\|\partial_z^2n_{(0,\neq)}\|^2_{L^2}+
				\|n_{0}\|^2_{L^2})
				\|(\partial_x^2,\partial_z^2)n_{\neq}\|^2_{L^{2}},
			\end{aligned}
		\end{equation*}
		where we use $\|n_0\|^2_{L^{\infty}_{z}L^2_{y}}+\|\partial_zn_0\|^2_{L^{\infty}_{z}L^2_{y}}
		\leq C(\|\partial_z^2n_{(0,\neq)}\|^2_{L^2}+
		\|n_{0}\|^2_{L^2}).$
		Combining it with Lemma \ref{lemma_n001}, Lemma \ref{lemma_n002} and Lemma \ref{lemma_n003},  we get
		\begin{equation*}
			\begin{aligned}
				\|{\rm e}^{aA^{-\frac{1}{3}}t}
				\partial_z^2(n_0\nabla c_{\neq})\|^2_{L^2L^2}\leq 
				CA^{\frac13}\big(\|(\partial_z^2n_{\rm in})_{(0,\neq)}\|_{L^2}^2
				+1\big)
				\|(\partial_x^2,\partial_z^2)n_{\neq}\|^2_{X_a}.
			\end{aligned}
		\end{equation*}
		
		Second, for the term of $\|\partial_z^2(n_{\neq}\nabla c_{0})\|^2_{L^2}$,  by Lemma \ref{sob_inf_1} and Lemma \ref{lem:ellip_0}, it holds 
		\begin{equation*}
			\begin{aligned}
				&\|\nabla c_{0}\|^2_{L^{\infty}}
				\leq C(\|\partial_z\triangle c_{0}\|^2_{L^{2}}+\|\nabla c_{0}\|^2_{H^{1}})
				\leq C(\|\partial_zn_{(0,\neq)}\|^2_{L^{2}}
				+\|n_{0}\|^2_{L^{2}}),\\
				&\|\partial_z\nabla c_{0}\|^2_{L^{\infty}_{y}L^2_z}
				\leq C(\|\partial_z\triangle c_{0}\|^2_{L^{2}}+
				\|\partial_z\nabla c_{0}\|^2_{L^{2}})
				\leq C\|\partial_zn_{(0,\neq)}\|^2_{L^{2}},\\
				&\|\partial_z^2\nabla c_{0}\|^2_{L^{\infty}_yL^2_z}
				\leq C\|\partial_z^2\triangle c_{0}\|^2_{L^{2}}
				\leq C\|\partial_z^2 n_{(0,\neq)}\|^2_{L^{2}}.
			\end{aligned}
		\end{equation*}
		Using Lemma \ref{sob_inf_2}, we get 
		\begin{equation*}
			\begin{aligned}
				&\|\partial_zn_{\neq}\|^2_{L^{\infty}_{z}L^2_{x,y}}\leq C\|\partial_z^2n_{\neq}\|^2_{L^2},~~
				&\|n_{\neq}\|^2_{L^{\infty}_{z}L^2_{x,y}}\leq C\|(\partial_x,\partial_z)n_{\neq}\|^2_{L^{2}}.
			\end{aligned}
		\end{equation*}
		Combining above, there is   
		\begin{equation*}
			\begin{aligned}
				&\quad\|\partial_z^2(n_{\neq}\nabla c_{0})\|^2_{L^2}\\
				&\leq C\big(\|\partial_zn_{\neq}\|^2_{L^{\infty}_{z}L^2_{x,y}}
				\|\partial_z\nabla c_{0}\|^2_{L^{\infty}_{y}L^2_{z}}
				+\|\partial_z^2n_{\neq}\|^2_{L^2}
				\|\nabla c_{0}\|^2_{L^{\infty}}
				+\|n_{\neq}\|^2_{L^{\infty}_{z}L^2_{x,y}}
				\|\partial_z^2\nabla c_{0}\|^2_{L^{\infty}_{y}L^2_{z}}
				\big)\\
				&\leq C(\|\partial_z^2n_{(0,\neq)}\|^2_{L^2}+
				\|n_{0}\|^2_{L^2})
				\|(\partial_x^2,\partial_z^2)n_{\neq}\|^2_{L^{2}},
			\end{aligned}
		\end{equation*}
		which implies that 
		\begin{equation*}
			\begin{aligned}
				\|{\rm e}^{aA^{-\frac{1}{3}}t}
				\partial_z^2(n_{\neq}\nabla c_{0})\|^2_{L^2L^2}\leq 
				CA^{\frac13}\big(\|(\partial_z^2n_{\rm in})_{(0,\neq)}\|_{L^2}^2
				+1\big)
				\|(\partial_x^2,\partial_z^2)n_{\neq}\|^2_{X_a}.
			\end{aligned}
		\end{equation*}
		
		Last, for the term of $
		\|\partial_z^2(n_{\neq}\nabla c_{\neq})\|^2_{L^2}$,  using (\ref{c_temp1}) and 
		\begin{equation*}
			\begin{aligned}
				\|\partial_zn_{\neq}\|^2_{L^{\infty}_{z}L^2_{x,y}}\leq 
				C\|\partial_z^2n_{\neq}\|^2_{L^2},~~
				\|n_{\neq}\|^2_{L^{\infty}_{x,z}L^2_{y}}\leq 
				C\|(\partial_x^2,\partial_z^2)n_{\neq}\|^2_{L^{2}},\\
			\end{aligned}
		\end{equation*}		
		we get 
		\begin{equation*}
			\begin{aligned}
				&\quad\|\partial_z^2(n_{\neq}\nabla c_{\neq})\|^2_{L^2}\\
				&\leq C\big(\|\partial_zn_{\neq}\|^2_{L^{\infty}_{z}L^2_{x,y}}
				\|\partial_z\nabla c_{\neq}\|^2_{L^{\infty}_{x,y}L^2_{z}}
				+\|\partial_z^2n_{\neq}\|^2_{L^2}
				\|\nabla c_{\neq}\|^2_{L^{\infty}}
				+\|n_{\neq}\|^2_{L^{\infty}_{x,z}L^2_{y}}
				\|\partial_z^2\nabla c_{\neq}\|^2_{L^{\infty}_{y}L^2_{x,z}}
				\big)\\
				&\leq C
				\|(\partial_x^2,\partial_z^2)n_{\neq}\|^4_{L^{2}},
			\end{aligned}
		\end{equation*}
		which indicates 
		\begin{equation}\label{nc_neq1}
			\begin{aligned}
				\|{\rm e}^{aA^{-\frac{1}{3}}t}
				\partial_z^2(n_{\neq}\nabla c_{\neq})\|^2_{L^2L^2}\leq 
				\|{\rm e}^{2aA^{-\frac{1}{3}}t}
				\partial_z^2(n_{\neq}\nabla c_{\neq})\|^2_{L^2L^2}
				\leq CA^{\frac13}
				\|(\partial_x^2,\partial_z^2)n_{\neq}\|^4_{X_a}.
			\end{aligned}
		\end{equation}
		Thus, we conclude  
		\begin{equation}\label{temp1_9}
			\begin{aligned}
				\|{\rm e}^{aA^{-\frac{1}{3}}t}
				\partial_z^2(n\nabla c)_{\neq}\|^2_{L^2L^2}\leq 
				CA^{\frac13}\big(\|(\partial_z^2n_{\rm in})_{(0,\neq)}\|^2_{L^2}
				+E_2^2+1\big)E_2^2.
			\end{aligned}
		\end{equation}
		Using (\ref{temp1_1}),(\ref{temp1_2})-(\ref{temp1_8}),(\ref{temp1_9}) and Proposition 2.1,
		we  get 
		\begin{equation}\label{n_zz_result}
			\begin{aligned}
				\|\partial_z^2n_{\neq}\|^2_{X_a}\leq 
				C\Big(\|(\partial_z^2n_{\rm in})_{\neq}\|^2_{L^2}+\frac{(1
					+\|n_{\rm in}\|_{H^2}^2
					+E_2^2+E_3^2+E_4^2)E_2^2}{A^{\frac13}}\Big).
			\end{aligned}
		\end{equation}
		
		Combining (\ref{n_xx_result}) and (\ref{n_zz_result}) and setting $b=a$, 
		when $$A>\max\{A_4, C(1+\|n_{\rm in}\|_{H^2}^2
		+E_2^2+E_3^2+E_4^2)^3E_2^6\}:=A_5,$$
		there is 
		$E_{2,1}(t)\leq C(\|\partial_x^2n_{\neq}\|_{X_a}+\|\partial_z^2n_{\neq}\|_{X_a})
		\leq C(\|(\partial_x^2n_{\rm in})_{\neq}\|_{L^2}
		+\|(\partial_z^2n_{\rm in})_{\neq}\|_{L^2}+1).$
		
		The proof is complete.
	\end{proof}
	
	\subsection{Energy estimates for $E_{2,2}(t)$}
	\begin{lemma}\label{result_0_2}
		Under the conditions of Theorem \ref{result} and the assumptions (\ref{assumption}),
		there exists a constant $A_6$ independent of  $t$ and $A$, such that
		if $A\geq A_6,$ there holds
		$$E_{2,2}(t)\leq C.$$
	\end{lemma}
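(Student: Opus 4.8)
The plan is to estimate $E_{2,2}(t)=A^{\frac34\epsilon}(\|\triangle u_{2,\neq}\|_{X_a}+\|\partial_x\omega_{2,\neq}\|_{X_a})+A^{-\frac13+\frac34\epsilon}(\|\partial_y\omega_{2,\neq}\|_{X_a}+\|\partial_z\omega_{2,\neq}\|_{X_a})$ by applying the space-time linear estimates (Propositions \ref{timespace2}, \ref{timespace4}) to the $\triangle u_{2,\neq}$ and $\omega_{2,\neq}$ equations in $(\ref{ini2})_3$--$(\ref{ini2})_4$, and bounding every nonlinear and coupling term on the right-hand side using the velocity estimates of Section \ref{sec_velocty} together with the already-established bounds on the zero modes ($E_1\le E_1$ from Proposition \ref{pro0}), on $E_{2,1}$ (Lemma \ref{result_0_1}), on $E_3$, and on the higher-weight energies $E_4,E_5$. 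First I would write out the equations for $\triangle u_{2,\neq}$ and for $\partial_x\omega_{2,\neq}$, $\partial_y\omega_{2,\neq}$, $\partial_z\omega_{2,\neq}$ in the quasi-linearized form with transport coefficient $y+\widehat{u_{1,0}}/A$, isolating (i) the lift-up/stretching term $\partial_z u_{2,\neq}$ in the $\omega_2$ equation, (ii) the forcing terms $\frac1A\partial_x^2 n_{\neq}+\frac1A\partial_z^2 n_{\neq}$ and $\frac1A\partial_z(u\cdot\nabla u_1)$, $\frac1A\partial_x(u\cdot\nabla u_3)$, and the mixed $\partial_y[\partial_x(u\cdot\nabla u_1)+\partial_z(u\cdot\nabla u_3)]$ term, and (iii) the transport corrections $\frac1A\widehat{u_{1,0}}\partial_x(\cdot)$, $\frac1A\widetilde{u_{1,0}}\partial_x(\cdot)$, $\frac1A u_{j,0}\partial_j(\cdot)$ and the terms where derivatives of $\widehat{u_{1,0}}$ hit the non-zero modes.

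Next I would estimate each class of terms. The source terms $\frac1A\|(\partial_x^2,\partial_z^2)n_{\neq}\|$ are controlled by $E_{2,1}\le C$ after multiplying by the $A$-weights; the key point is that the $\frac1A$ prefactor combined with the $X_a$-norm time integration produces an extra $A^{-\frac13}$ (or better), so these close. The genuinely nonlinear terms $u_{\neq}\cdot\nabla u_{j,\neq}$ and their $\partial_x,\partial_y,\partial_z$ derivatives are handled by Lemma \ref{lemma_neq1}, which gives bounds of the form $CA(\|\partial_x\omega_{2,\neq}\|^2_{X_a}+\|\triangle u_{2,\neq}\|^2_{X_a})(\cdots)$; after dividing by the appropriate power of $A$ from the space-time estimate and using the bootstrap hypothesis $E_2(t)\le 2E_2$, these contribute a factor $A^{-\delta}E_2^{\text{(power)}}$ which is absorbed for $A$ large. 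The coupling terms involving $u_{1,0}$ (split as $\widehat{u_{1,0}}+\widetilde{u_{1,0}}$) are handled by Lemma \ref{lemma_neq2}: here $\widetilde{u_{1,0}}$ is a perturbation because $A^{\epsilon}\|\widetilde{u_{1,0}}\|_{L^\infty H^2}\le CA^{1/3}$ (Lemma \ref{u1_hat1}) so the associated term carries $A^{\frac53-2\epsilon}$ which after the $\frac1A$ and the space-time gain is a negative power of $A$ since $\epsilon>\frac13$; the $\widehat{u_{1,0}}$ terms require the higher-weight energies $E_4,E_5$ (this is exactly why $E_4,E_5$ and the $X_{\frac32 a}$ norms were introduced — see the $\partial_z^2\widehat{u_{1,0}}\partial_x n_{\neq}$ bad term discussion), and the $E_5$-estimate (Lemma \ref{lemmaw21}, \ref{lemmaw22}) lets one close the $\partial_x^2 u_{j,\neq}$ pieces. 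The lift-up term $\partial_z u_{2,\neq}$ in the $\omega_2$ equation is the structurally dangerous one: it is handled using the new quantity $W=u_{2,\neq}+\frac{\partial_z\widehat{u_{1,0}}}{A+\partial_y\widehat{u_{1,0}}}u_{3,\neq}$ and the enhanced dissipation/inviscid damping built into the $X_a$ norm, so that $\partial_z u_{2,\neq}$ transfers into a controlled quantity rather than feeding growth.

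I then collect all the estimates: each term on the right is bounded by $C(1+ A^{-\delta}(\text{polynomial in }E_1,E_2,E_3,E_4,E_5))$ for some $\delta=\delta(\epsilon)>0$, using $\epsilon>\frac13$ to guarantee $\delta>0$ in every instance (this is where the case distinction defining $\epsilon$ and the choice of exponents in $E_{2,2}$, namely $\frac34\epsilon$ and $-\frac13+\frac34\epsilon$, are used to keep the weights consistent). Choosing $A_6$ larger than $A_5$ and larger than a suitable power of $1+E_1+E_2+E_3+E_4+E_5$ makes all the $A^{-\delta}$ contributions at most $\frac12 E_2$-sized, yielding $E_{2,2}(t)\le C$, and hence $E_2(t)=E_{2,1}(t)+E_{2,2}(t)\le E_2$ after fixing $E_2$ appropriately. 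The main obstacle I expect is bookkeeping the exact powers of $A$: one must check that for every nonlinear term the loss from the anisotropic Sobolev embeddings (which cost powers of $A^{1/6}$, $A^{1/3}$, $A^{1/2}$ coming from converting $L^2_{t,x,y,z}$ norms to $X_a$ norms) is strictly beaten by the dissipative gain from $\frac1A$ and the $A^{-1/3}$-rate in the $X_a$ norm, uniformly in $\epsilon\in(\frac13,\frac49]$; the $\widehat{u_{1,0}}$-coupling terms are the tightest, since there the lift-up growth $\|\widehat{u_{1,0}}\|_{H^2}\lesssim A^{-\epsilon}t$ only barely leaves room, and this is precisely what forces the introduction of $E_4$ and $E_5$ and the quasi-linearized decomposition of $W$.
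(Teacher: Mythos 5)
Your overall scaffolding (space-time estimates plus Lemmas \ref{lemma_neq1}--\ref{lemma_neq3}, the bootstrap constants $E_1,\dots,E_5$, and the role of $\epsilon\in(\tfrac13,\tfrac49]$ in making $\tfrac13-\tfrac34\epsilon\geq 0$) matches the paper, but there is a genuine gap at the structurally decisive point: the treatment of the coupling term $\partial_z u_{2,\neq}$ in the $\omega_2$ equation. You propose to handle it through the quantity $W=u_{2,\neq}+\kappa u_{3,\neq}$ and ``the enhanced dissipation/inviscid damping built into the $X_a$ norm,'' but $W$ plays no role in this lemma: in the paper it is introduced only to close the higher-weight energy $E_5$ (Lemmas \ref{lemmaw21}--\ref{lemmaw22}, i.e. the pressure-induced coupling $2(\partial_y+\kappa\partial_z)\triangle^{-1}(\partial_yV\partial_x W)$ in the $u_{2,\neq},u_{3,\neq}$ equations), and it does not control $\partial_x\partial_z u_{2,\neq}$ as a forcing for $\partial_x\omega_{2,\neq}$ at the $X_a$ level. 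If instead you feed $\partial_x\partial_z u_{2,\neq}$ into Proposition \ref{timespace2} as a source $f_2$, the resulting contribution is of size $A^{\frac13}\|{\rm e}^{aA^{-1/3}t}\partial_x\partial_z u_{2,\neq}\|^2_{L^2L^2}\lesssim A^{\frac23}\|\triangle u_{2,\neq}\|^2_{X_a}$ (even using $E_5$ and interpolation one gains nothing better), which is the same-order unknown multiplied by a positive power of $A$, so the estimate cannot close. The paper's mechanism is different: after applying $\partial_x$ to $(\ref{ini2})_3$ one treats $(\partial_x\omega_{2,\neq},\triangle u_{2,\neq})$ as a coupled pair and invokes the dedicated coupled space-time estimate, Proposition \ref{timespace1}, whose proof absorbs the term $\partial_x\partial_z\triangle^{-1}h_1$ (with $h_1=\triangle u_2$) using the time-integrability $\int_0^T |k_1|\,r(s)^{-1}ds\lesssim (k_1^2+k_3^2)^{-1/2}$; this inviscid-damping structure, not $W$, is what neutralizes the lift-up coupling here.

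A secondary mismatch: you quasi-linearize the transport to $y+\widehat{u_{1,0}}/A$ and cite Propositions \ref{timespace2} and \ref{timespace4}, but \ref{timespace4} is tailored to the nonlocal $W^{(1)}$-type equation and does not apply to $(\ref{ini2})_3$--$(\ref{ini2})_4$, and there is no coupled analogue of Proposition \ref{timespace1} with the modified transport, so your route would still be missing the tool needed for the coupling term. The paper instead keeps the plain transport $y\partial_x$ for this lemma (Proposition \ref{timespace1} for the pair, Proposition \ref{timespace0} for $\partial_y\omega_{2,\neq}$ and $\partial_z\omega_{2,\neq}$) and controls the $u_{1,0}\partial_x u_{j,\neq}$ contributions perturbatively through the decomposition $u_{1,0}=\widehat{u_{1,0}}+\widetilde{u_{1,0}}$ and Lemma \ref{lemma_neq2}, which is where $E_5$ and the $X_{\frac32 a}$ norms actually enter. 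Your bookkeeping of the remaining source and nonlinear terms is consistent with the paper, but without the coupled estimate the argument as proposed does not go through.
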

	
	\begin{proof}
		First, taking $\partial_x$ to $(\ref{ini2})_3,$ 
		one obtains a coupled system 
		\begin{equation}\label{ini_3}
			\left\{
			\begin{array}{lr}
				\partial_t\partial_x\omega_2+y\partial_x^2\omega_2
				-\frac{1}{A}\triangle\partial_x\omega_2+\partial_x\partial_zu_2=
				-\frac{1}{A}\partial_x\partial_z(u\cdot\nabla u_1)+\frac{1}{A}\partial_x^2(u\cdot\nabla u_3), \\
				\partial_t\triangle u_2+y\partial_x \triangle u_2
				-\frac{1}{A}\triangle(\triangle u_2) =\frac{1}{A}\partial_x^2n+\frac{1}{A}\partial_z^2n 
				-\frac{1}{A}(\partial_x^2+\partial_z^2)(u\cdot\nabla u_2)\\
				\qquad\qquad
				+\frac{1}{A}\partial_y[\partial_x(u\cdot\nabla u_1)+\partial_z(u\cdot\nabla u_3)].
			\end{array}
			\right.
		\end{equation}
		Applying Proposition \ref{timespace1} to (\ref{ini_3}) and setting $b=a$, we get 
		\begin{equation*}
			\begin{aligned}
				&\|\partial_x\omega_{2,\neq}\|_{X_a}^2+
				\|\triangle u_{2,\neq}\|_{X_a}^2
				\leq C\Big(	\|(\partial_x\omega_{2,{\rm in}})_{\neq}\|_{L^2}^2+
				\|(\triangle u_{2,{\rm in}})_{\neq}\|_{L^2}^2+\frac{\|{\rm e}^{aA^{-\frac{1}{3}}t}
					(\partial_x,\partial_z)n_{\neq}\|_{L^2L^2}^2}{A}\\
				&+\frac{
					\|{\rm e}^{aA^{-\frac{1}{3}}t}\partial_x(u\cdot\nabla u_1)_{\neq}\|_{L^2L^2}^2}{A}
				+\frac{
					\|{\rm e}^{aA^{-\frac{1}{3}}t}(\partial_x,\partial_z)
					(u\cdot\nabla u_2)_{\neq}\|_{L^2L^2}^2}{A}
				+\frac{
					\|{\rm e}^{aA^{-\frac{1}{3}}t}(\partial_x,\partial_z)
					(u\cdot\nabla u_3)_{\neq}\|_{L^2L^2}^2}{A}\Big).
			\end{aligned}
		\end{equation*}
		According to the nonlinear interaction, for $j=1,2,3,$ there holds 
		\begin{equation*}
			(u\cdot\nabla u_j)_{\neq}=u_0\cdot\nabla u_{j,\neq}+u_{\neq}\cdot\nabla u_{j,0}+(u_{\neq}\cdot\nabla u_{j,\neq})_{\neq},
		\end{equation*}
		
		Under the  assumptions (\ref{assumption}), using   Proposition \ref{pro0}, Lemma \ref{lemma_neq1}, Lemma \ref{lemma_neq2} and Lemma \ref{lemma_neq3}, we get  
		\begin{equation}\label{u_neq_1}
			\begin{aligned}
				\|\partial_x\omega_{2,\neq}\|_{X_a}^2+
				\|\triangle u_{2,\neq}\|_{X_a}^2
				\leq& C\Big(	\|(u_{\rm in})_{\neq}\|_{H^2}^2+\frac{\|
					(\partial_x^2,\partial_z^2)n_{\neq}\|_{X_a}^2}{A^{\frac23}}
				+\frac{E_1^4+E_2^4+E_3^4+E_5^4}{A^{3\epsilon}}\Big).
			\end{aligned}
		\end{equation}
		Taking $\partial_y$ and $\partial_z$ to $(\ref{ini2})_3,$ there are
		\begin{equation}\label{ini_4}
			\begin{aligned}
				&\partial_t\partial_y\omega_2+y\partial_x\partial_y\omega_2
				-\frac{1}{A}\triangle\partial_y\omega_2+\partial_z\partial_yu_2
				+\partial_x\omega_2=
				-\frac{1}{A}\partial_y\partial_z(u\cdot\nabla 	u_1)+\frac{1}{A}\partial_x\partial_y(u\cdot\nabla u_3),\\
				&\partial_t\partial_z\omega_2+y\partial_x\partial_z\omega_2
				-\frac{1}{A}\triangle\partial_z\omega_2+\partial_z^2u_2=
				-\frac{1}{A}\partial_z^2(u\cdot\nabla 	u_1)+\frac{1}{A}\partial_x\partial_z(u\cdot\nabla u_3).
			\end{aligned}
		\end{equation}
		Applying Proposition \ref{timespace0} to $(\ref{ini_4})$, we have
		\begin{equation}\label{u_neq_2}
			\begin{aligned}
				\frac{\|\partial_y\omega_{2,\neq}\|_{X_a}^2}{A^{\frac23}}
				\leq &C\Big(\frac{\|(\partial_y\omega_{2,{\rm in}})_{\neq}\|_{L^2}^2}{A^{\frac23}}
				+\|\partial_x\omega_{2,\neq}\|_{X_a}^2
				+\|\triangle u_{2,\neq}\|_{X_a}^2
				+\frac{
					\|{\rm e}^{aA^{-\frac{1}{3}}t}\partial_z(u\cdot\nabla u_1)_{\neq}\|_{L^2L^2}^2}{A^{\frac53}}\\
				&+\frac{
					\|{\rm e}^{aA^{-\frac{1}{3}}t}\partial_x(u\cdot\nabla u_3)_{\neq}\|_{L^2L^2}^2}{A^{\frac53}}\Big),
			\end{aligned}
		\end{equation}
		and 
		\begin{equation}\label{u_neq_3}
			\begin{aligned}
				\frac{\|\partial_z\omega_{2,\neq}\|_{X_a}^2}{A^{\frac23}}
				\leq &C\Big(\frac{\|(\partial_z\omega_{2,{\rm in}})_{\neq}\|_{L^2}^2}{A^{\frac23}}
				+\|\triangle u_{2,\neq}\|_{X_a}^2
				+\frac{
					\|{\rm e}^{aA^{-\frac{1}{3}}t}\partial_z(u\cdot\nabla u_1)_{\neq}\|_{L^2L^2}^2}{A^{\frac53}}\\
				&+\frac{
					\|{\rm e}^{aA^{-\frac{1}{3}}t}\partial_x(u\cdot\nabla u_3)_{\neq}\|_{L^2L^2}^2}{A^{\frac53}}\Big).
			\end{aligned}
		\end{equation}	
		Due to $\partial_xu_{1,\neq}+\partial_yu_{2,\neq}
		+\partial_zu_{3,\neq}=0,$ Lemma \ref{lemma_neq2} is enough to estimate 
		$\|{\rm e}^{aA^{-\frac{1}{3}}t}\partial_z(u_{1,0}\partial_x u_{1,\neq})\|_{L^2L^2}^2$.
		Using Lemma \ref{lemma_neq1}, Lemma \ref{lemma_neq2}, Lemma \ref{lemma_neq3} and Proposition \ref{pro0}, we
		have
		\begin{equation*}
			\begin{aligned}
				\|{\rm e}^{aA^{-\frac{1}{3}}t}\partial_z(u\cdot\nabla u_1)_{\neq}\|_{L^2L^2}^2
				\leq CA^{\frac43-3\epsilon}(1+E_2^4+E_3^4+E_5^4),\\
				\|{\rm e}^{aA^{-\frac{1}{3}}t}\partial_x(u\cdot\nabla u_3)_{\neq}\|_{L^2L^2}^2\leq CA^{1-3\epsilon}(1+E_2^4+E_3^4+E_5^4).
			\end{aligned}
		\end{equation*}
		Using (\ref{u_neq_1}), (\ref{u_neq_2}), (\ref{u_neq_3}) and Lemma \ref{result_0_1}, when 
		$$A>\max\{A_5, C(1+E_2^2+E_3^2+E_5^2)^{\frac{4}{3\epsilon}}\}:=A_6,$$
		one conclude that 
		\begin{equation}\label{result_u1}
			E_{2,2}\leq C\left(A^{\frac34\epsilon}\|(u_{\rm in})_{\neq}\|_{H^2}+1+\frac{\|(n_{\rm in})_{\neq}\|_{H^2}+1}{A^{\frac13-\frac34\epsilon}}+\frac{1+E_2^2+E_3^2+E_5^2}
			{A^{\frac34\epsilon}}\right)\leq C.
		\end{equation} 
		
		Finally, as long as $\epsilon\leq\frac49$ is impoesd, we have 
		$\frac{1}{3}-\frac{3}{4}\epsilon>0,$ then energy $E_{2,2}$ can be closed
		according to (\ref{result_u1}).
		The proof is complete.	
	\end{proof}
	
	\begin{corollary}
		Under the conditions of Theorem \ref{result} and the assumptions (\ref{assumption}),
		according to Lemma \ref{result_0_1} and Lemma \ref{result_0_2}, when $A\geq \max\{A_5,A_6\}:=C_{(2)},$
		there holds
		\begin{equation}
			E_2(t)=E_{2,1}(t)+E_{2,2}(t)\leq 
			C(\|(\partial_x^2n_{\rm in})_{\neq}\|_{L^2}
			+\|(\partial_z^2n_{\rm in})_{\neq}\|_{L^2}+1):=E_2.
		\end{equation}
	\end{corollary}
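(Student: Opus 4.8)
The plan is to obtain the bound for $E_2(t)$ by simply combining the two preceding lemmas, since by construction $E_2(t)=E_{2,1}(t)+E_{2,2}(t)$ and both summands have already been controlled; the genuinely analytic work is entirely contained in Lemma \ref{result_0_1} and Lemma \ref{result_0_2}, so at this stage there is nothing left to do beyond assembling those two inequalities and recording a constant.

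Concretely, I would first set $C_{(2)}:=\max\{A_5,A_6\}$ and restrict to $A\geq C_{(2)}$; this is harmless because $A_5$ (from Lemma \ref{result_0_1}) and $A_6$ (from Lemma \ref{result_0_2}) are constants depending only on the initial data and on the bootstrap constants $E_1,E_3,E_4,E_5$, hence independent of $t$ and $A$. For such $A$, Lemma \ref{result_0_1} yields $E_{2,1}(t)\leq C\big(\|(\partial_x^2n_{\rm in})_{\neq}\|_{L^2}+\|(\partial_z^2n_{\rm in})_{\neq}\|_{L^2}+1\big)$ and Lemma \ref{result_0_2} yields $E_{2,2}(t)\leq C$, both for all $t\in(0,T]$. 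Adding these and absorbing the $O(1)$ term into the first bound gives $E_2(t)\leq C\big(\|(\partial_x^2n_{\rm in})_{\neq}\|_{L^2}+\|(\partial_z^2n_{\rm in})_{\neq}\|_{L^2}+1\big)$, and I would then \emph{define} the constant $E_2$ to be exactly this right-hand side, which is manifestly independent of $t$ and $A$.

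The one point that needs care -- and it is bookkeeping rather than analysis -- is internal consistency of the bootstrap: the value $E_2$ produced here must coincide with the $E_2$ that already appears inside the thresholds $A_5,A_6$ and inside the standing hypothesis (\ref{assumption}). Since the estimates in Lemmas \ref{result_0_1}--\ref{result_0_2} only ever generate a dependence on the initial-data norms on the right-hand side (there is no hidden self-dependence on $E_2$), this is consistent: one fixes $E_2$ once at the above value, the constants $A_5,A_6$ are then determined, and the chain closes. The companion constants $E_3,E_4$ are likewise treated as already fixed, in the order $E_1\to E_2\to E_3\to E_4\to E_5$ dictated by Section \ref{sec_pro}.

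If I were forced to name a ``hard step'', it would not lie in this corollary at all but one level down: the closing argument in Lemma \ref{result_0_2}, where the linear terms $\partial_z\partial_y u_2$ and the lift-up contributions in (\ref{ini_4}) must be absorbed using the time weights built into $X_a$, together with the restriction $\epsilon\leq\frac49$ that guarantees $\frac13-\frac34\epsilon>0$ so that $E_{2,2}$ can be closed. Granting Lemmas \ref{result_0_1} and \ref{result_0_2}, however, the corollary is immediate and requires only the summation described above.
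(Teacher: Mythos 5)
Your proposal is correct and matches the paper's own argument: the corollary is obtained exactly by taking $A\geq\max\{A_5,A_6\}=:C_{(2)}$, applying Lemma \ref{result_0_1} to bound $E_{2,1}(t)$ and Lemma \ref{result_0_2} to bound $E_{2,2}(t)$, and adding the two estimates. Your remarks on the bootstrap bookkeeping are consistent with how the paper fixes the constants, so nothing further is needed.
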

	
	\section{
		The density estimates with higher weight: Proof of Proposition \ref{pro3}}
	\begin{lemma}\label{result_0_3}
		Under the conditions of Theorem \ref{result} and the assumptions (\ref{assumption}),
		there exists a constant $A_7$ independent of $t$ and $A,$ such that 
		if $A\geq A_7,$ then
		$$E_{4}(t)\leq C\big(\|(\partial_x^2n_{\rm in})_{\neq}\|_{L^2}
		+1\big).$$
	\end{lemma}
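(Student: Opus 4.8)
## Proof Proposal for Lemma \ref{result_0_3}

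\textbf{Overall strategy.} The plan is to estimate the two pieces of $E_4(t)=\|\partial_x^2 n_{\neq}\|_{X_{\frac32 a}}+\|\partial_x\partial_z n_{\neq}\|_{X_{\frac32 a}}$ by writing down the transport-diffusion equations they satisfy and applying the space-time estimate Proposition \ref{timespace2} (and its $X_{\frac32 a}$ version) with the weight $\frac32 a$ in place of $a$. The key structural point, already flagged in the discussion around \eqref{eq:E4} and \eqref{eq:E5}, is that the equations for $\partial_x^2 n_{\neq}$ and $\partial_x\partial_z n_{\neq}$ contain the ``bad'' 3D lift-up terms $\partial_z^2\widehat{u_{1,0}}\,\partial_x n_{\neq}$ and $\partial_z\widehat{u_{1,0}}\,\partial_x\partial_z n_{\neq}$, which cannot be absorbed using only $E_1, E_2, E_3$; these force us to introduce $E_4$ (self-referentially, via the higher weight $\frac32 a$) and, crucially, to call on $E_5$ from Proposition \ref{pro4} to handle $\|\partial_x^2 u_{j,\neq}\|_{X_{\frac32 a}}$ terms that arise when the lift-up velocity $\widehat{u_{1,0}}$ multiplies first derivatives of $n_{\neq}$.

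\textbf{Step 1: Derive the equations.} First I would apply $\partial_x^2$ and $\partial_x\partial_z$ to $(\ref{ini2})_1$, organizing the right-hand side exactly as in \eqref{c_temp_11} and \eqref{c_temp_21}: the quasi-linear transport term $(y+\widehat{u_{1,0}}/A)\partial_x(\cdot)$ on the left, then the collection of terms $-\frac1A\widetilde{u_{1,0}}\partial_x(\cdot)$ (a perturbation, since $A^\epsilon\|\widetilde{u_{1,0}}\|_{L^\infty H^2}\le CA^{1/3}$ by Lemma \ref{u1_hat1}), the zero-mode transport terms $-\frac1A(u_{2,0}\partial_y+u_{3,0}\partial_z)(\cdot)$ (controlled via $\|u_{j,0}\|_{L^\infty L^\infty}\lesssim\|u_{j,0}\|_{L^\infty H^2}$ and Lemma \ref{lemma_u23_1}), the genuinely dangerous commutator terms $\frac1A\partial_z^2\widehat{u_{1,0}}\,\partial_x n_{\neq}$ and $\frac1A\partial_z\widehat{u_{1,0}}\,\partial_x\partial_z n_{\neq}$, and finally the nonlinear contributions $\frac1A\nabla\cdot\partial_x^2(u_{\neq}n)$, $\frac1A\nabla\cdot\partial_x^2(n\nabla c)$ and their $\partial_x\partial_z$ analogues. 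Applying Proposition \ref{timespace2} with weight $\frac32 a$ converts each of these into an $L^2L^2$ space-time norm divided by an appropriate power of $A$.

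\textbf{Step 2: Estimate the terms.} For the lift-up terms I would reuse verbatim the estimates \eqref{temp1_2}, \eqref{temp1_3} from the proof of Lemma \ref{result_0_1}: using $\|\widehat{u_{1,0}}\|_{H^2}\le CA^{-\epsilon}t$ (Lemma \ref{u1_hat2}) together with $\|A^{-1/3}t\,e^{-\frac{a}{4}A^{-1/3}t}\|_{L^\infty_t}\le C$ and Lemma \ref{sob_inf_2}, these produce bounds of the shape $CA^{4/3-2\epsilon}\|\partial_x^2 n_{\neq}\|_{X_{\frac32 a}}\|\partial_z^2 n_{\neq}\|_{X_a}$ and $CA^{4/3-2\epsilon}\|\partial_x^2 n_{\neq}\|^2_{X_{\frac32 a}}$ — note that $\|\partial_z^2 n_{\neq}\|_{X_a}\le E_2$ is already known, so after dividing by $A^{5/3}$ these are $O(A^{-1/3-2\epsilon})$ times bounded quantities. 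The crucial new point, however, is that when $\widehat{u_{1,0}}$ multiplies $\partial_x(\text{first derivatives of }u_{j,\neq})$ coming from $n_{\neq}=-\triangle c_{\neq}+c_{\neq}$ reductions or from the divergence-free substitution $\partial_x u_{1,\neq}=-(\partial_y u_{2,\neq}+\partial_z u_{3,\neq})$, we must control $\|\partial_x^2 u_{j,\neq}\|_{X_{\frac32 a}}$ by $E_5$ (Proposition \ref{pro4}) — this is precisely the role of $E_5$ advertised after \eqref{eq:E5}. The nonlinear $u_{\neq}n$ and $n\nabla c$ terms are handled as in \eqref{un_1}–\eqref{nc_end} but now at weight $2a$ ``absorbing'' weight $\frac32 a$, giving gains of $A^{2/3}$ against the loss $A^{5/3}$ or $A$, hence $O(A^{-1/3})$ or $O(A^{-1})$ times products of $E_1,\dots,E_5$.

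\textbf{Main obstacle and closing.} The hard part is the self-referential, coupled structure: the bound for $\|\partial_x^2 n_{\neq}\|_{X_{\frac32 a}}$ contains $\|\partial_x^2 n_{\neq}\|_{X_{\frac32 a}}$ itself (from the lift-up term) and $\|\partial_x^2 u_{j,\neq}\|_{X_{\frac32 a}}=E_5$-type terms, so one cannot close $E_4$ in isolation — one must track the powers of $A$ carefully so that all feedback coefficients are $o(1)$ as $A\to\infty$, and one must already have Proposition \ref{pro4} available (which in turn uses $E_4$, so the bootstrap hypothesis \eqref{assumption} on $E_4,E_5$ is what breaks the circularity). Concretely, after collecting everything I expect an inequality of the form
\begin{equation*}
\|\partial_x^2 n_{\neq}\|^2_{X_{\frac32 a}}+\|\partial_x\partial_z n_{\neq}\|^2_{X_{\frac32 a}}\le C\|(\partial_x^2 n_{\rm in})_{\neq}\|^2_{L^2}+\frac{C(1+\|n_{\rm in}\|_{H^2}^2+E_1^4+E_2^4+E_3^4+E_5^4)}{A^{\delta}}+\frac{C}{A^{\delta}}\big(\|\partial_x^2 n_{\neq}\|^2_{X_{\frac32 a}}+\|\partial_x\partial_z n_{\neq}\|^2_{X_{\frac32 a}}\big)
\end{equation*}
for some $\delta>0$ (using $\epsilon\le\frac49$ so that exponents like $\frac13-2\epsilon$, etc., have the right sign after the divisions), which absorbs the last term into the left side once $A\ge A_7:=\max\{A_6, C(1+\|n_{\rm in}\|_{H^2}^2+E_1^4+E_2^4+E_3^4+E_5^4)^{1/\delta}, (2C)^{1/\delta}\}$, yielding $E_4(t)\le C(\|(\partial_x^2 n_{\rm in})_{\neq}\|_{L^2}+1)$ as claimed.
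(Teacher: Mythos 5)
Your overall strategy coincides with the paper's: for $\|\partial_x^2 n_{\neq}\|_{X_{\frac32 a}}$ the paper simply takes $b=\frac32 a$ in the already-derived estimate (\ref{n_xx_result}), bounding the velocity norms through $E_5$ and the self-referential term through the bootstrap hypothesis (\ref{assumption}); for $\|\partial_x\partial_z n_{\neq}\|_{X_{\frac32 a}}$ it applies Proposition \ref{timespace2} at weight $\frac32 a$ to the equation (\ref{n_neq_temp_1}) and estimates the resulting terms $T_{5,1}$--$T_{5,11}$ much as you outline, closing by taking $A$ large with $\epsilon>\frac13$. So the route is essentially the same as the paper's.

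There is, however, one concrete slip in your treatment of the lift-up terms. Since $\widehat{u_{1,0}}$ is a zero mode (independent of $x$), the $\partial_x^2 n_{\neq}$ equation contains no commutator with $\frac{\widehat{u_{1,0}}}{A}\partial_x$ at all, and the $\partial_x\partial_z n_{\neq}$ equation contains only the single term $\frac1A\partial_z\widehat{u_{1,0}}\,\partial_x^2 n_{\neq}$; the terms $\partial_z^2\widehat{u_{1,0}}\,\partial_x n_{\neq}$ and $\partial_z\widehat{u_{1,0}}\,\partial_x\partial_z n_{\neq}$ that you propose to handle by reusing \eqref{temp1_2}--\eqref{temp1_3} ``verbatim'' sit in the $\partial_z^2 n_{\neq}$ equation and were already disposed of in Lemma \ref{result_0_1} (that is exactly why $E_4$ was introduced). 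More importantly, the mechanism of \eqref{temp1_2}--\eqref{temp1_3} --- trading the linear-in-$t$ growth of $\|\widehat{u_{1,0}}\|_{H^2}\leq CA^{-\epsilon}t$ against the weight gap between $X_{\frac32 a}$ and $X_a$ --- is not available for the term that actually appears here, since in the $E_4$ estimate both factors already carry the top weight $\frac32 a$ and there is no slack left to absorb the time growth. The paper instead estimates $T_{5,1}$ crudely via $\|\widehat{u_{1,0}}\|_{L^{\infty}H^4}\leq CA^{1-\epsilon}$ (Lemma \ref{u1_hat2}), which after dividing by $A^{\frac53}$ yields $CA^{\frac23-2\epsilon}\|\partial_x^2 n_{\neq}\|^2_{X_{\frac32 a}}$, harmless precisely because $\epsilon>\frac13$; this is the feedback term you correctly anticipate in your final inequality, so the repair is local, but as written this step of your argument would not go through.
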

	\begin{proof}

		{\bf Estimate of $\|\partial_x^2n_{\neq}\|_{X_{\frac32a}}.$	}
		When $A\geq \max\{A_6, C(1+E_2^4+E_3^4+E_4^4+E_5^4)^6\}:=A_7,$ setting $b=\frac32a,$
		it follows from $(\ref{n_xx_result})$ that  
		\begin{equation}\label{n_xx_result1}
			\begin{aligned}
				\|\partial_x^2n_{\neq}\|^2_{X_{\frac32a}}\leq C\Big( \|(\partial_x^2n_{\rm in})_{\neq}\|^2_{L^2}
				+\frac{E_2^4+(1+E_3^2)(E_4^2+E_5^2)}{A^{\frac{1}{3}}}\Big)
				\leq C( \|(\partial_x^2n_{\rm in})_{\neq}\|^2_{L^2}+1).
			\end{aligned}
		\end{equation}
		
		{\bf Estimate of $\partial_x\partial_zn_{\neq}$.}
		The non-zero mode $\partial_x\partial_zn_{\neq}$ satisfies 
		\begin{equation}\label{n_neq_temp_1}
			\begin{aligned}
				\partial_t\partial_x\partial_zn_{\neq}+\Big(y+\frac{\widehat{u_{1,0}}}{A}
				\Big)\partial_x^2\partial_z n_{\neq}-\frac{\triangle \partial_x\partial_zn_{\neq}}{A}=
				-\frac{\partial_z(\widetilde{u_{1,0}}\partial_x^2 n_{\neq})}{A}
				-\frac{\partial_z\widehat{u_{1,0}}\partial_x^2 n_{\neq}}{A}
				-\frac{\partial_x\partial_y\partial_z(u_{2,0}n_{\neq})}{A}\\
				-\frac{\partial_x\partial_z^2(
					u_{3,0}n_{\neq})}{A}
				-\frac{\nabla\cdot\partial_x\partial_z(u_{\neq}n_0)}{A}
				-\frac{\nabla\cdot\partial_x\partial_z(u_{\neq}n_{\neq})_{\neq}}{A}
				-\frac{\nabla\cdot\partial_x\partial_z(n\nabla c)_{\neq}}{A}.
			\end{aligned}
		\end{equation}
		Applying Proposition \ref{timespace2} to (\ref{n_neq_temp_1}), we get
		\begin{equation}\label{n_neq_temp_2}
			\begin{aligned}
				&\quad\|\partial_x\partial_zn_{\neq}\|^2_{X_{\frac32a}}\leq C\Big( 	\|(\partial_x\partial_zn_{\rm in})_{\neq}\|^2_{L^2}		
				+\frac{\|{\rm e}^{\frac32aA^{-\frac{1}{3}}t}
					\partial_z\widehat{u_{1,0}}\partial_x^2 	n_{\neq}\|_{L^2L^2}^2}{A^{\frac{5}{3}}}
				+\frac{	\|{\rm e}^{\frac32aA^{-\frac{1}{3}}t}
					\partial_z(\widetilde{u_{1,0}}\partial_x^2 	n_{\neq})\|_{L^2L^2}^2}{A^{\frac{5}{3}}}\\
				&+\frac{\|{\rm e}^{\frac32aA^{-\frac{1}{3}}t}\partial_z(u_{2,0}
					\partial_xn_{\neq})\|_{L^2L^2}^2}{A}
				+\frac{\|{\rm e}^{\frac32aA^{-\frac{1}{3}}t}\partial_z(u_{3,0}
					\partial_xn_{\neq})\|_{L^2L^2}^2}{A}
				+\frac{\|{\rm e}^{\frac32aA^{-\frac{1}{3}}t}
					n_{0}\partial_x^2u_{1,\neq}\|_{L^2L^2}^2}{A}\\
				&+\frac{\|{\rm e}^{\frac32aA^{-\frac{1}{3}}t}
					\partial_z(n_{0}\partial_xu_{2,\neq})\|_{L^2L^2}^2}{A}
				+\frac{\|{\rm e}^{\frac32aA^{-\frac{1}{3}}t}
					\partial_z(n_0\partial_xu_{3,\neq})\|_{L^2L^2}^2}{A}
				+\frac{\|{\rm e}^{\frac32aA^{-\frac{1}{3}}t}
					\partial_x\partial_z(u_{1,\neq}n_{\neq})_{\neq}\|^2_{L^2L^2}}{A}\\
				&+\frac{\|{\rm e}^{\frac32aA^{-\frac{1}{3}}t}
					\partial_x\partial_z(u_{2,\neq}n_{\neq})_{\neq}\|^2_{L^2L^2}}{A}
				+\frac{\|{\rm e}^{\frac32aA^{-\frac{1}{3}}t}
					\partial_x\partial_z(u_{3,\neq}n_{\neq})_{\neq}\|^2_{L^2L^2}}{A}
				+\frac{\|{\rm e}^{\frac32aA^{-\frac{1}{3}}t}
					\partial_x\partial_z(n\nabla c)_{\neq}\|^2_{L^2L^2}}{A}\Big).
			\end{aligned}
		\end{equation}
		For convenience, we mark (\ref{n_neq_temp_2}) as 
		$$\|\partial_x\partial_zn_{\neq}\|^2_{X_{\frac32a}}\leq C(\|(\partial_x\partial_zn_{\rm in})_{\neq}\|^2_{L^2}+T_{5,1}+\cdots+T_{5,11}).$$
		
		\noindent\textbf{Estimate of $T_{5,1}$:}
		Using Lemma \ref{u1_hat2}, there holds 
		\begin{equation*}
			\|{\rm e}^{\frac32aA^{-\frac{1}{3}}t}
			\partial_z\widehat{u_{1,0}}\partial_x^2 n_{\neq}\|_{L^2L^2}^2	
			\leq CA^{\frac13}
			\|\widehat{u_{1,0}}\|_{L^{\infty}H^4}^2	
			\|\partial_x^2 n_{\neq}\|_{X_{\frac32a}}^2
			\leq CA^{\frac73-2\epsilon}\|\partial_x^2 n_{\neq}\|_{X_{\frac32a}}^2.
		\end{equation*}
		
		\noindent\textbf{Estimate of $T_{5,2}$:}
		Using Lemma \ref{u1_hat1}, there holds 
		\begin{equation*}
			\begin{aligned}
				\|{\rm e}^{\frac32aA^{-\frac{1}{3}}t}
				\partial_z(\widetilde{u_{1,0}}\partial_x^2 	n_{\neq})\|_{L^2L^2}^2
				&\leq C\|\widetilde{u_{1,0}}\|_{L^{\infty}H^2}^2
				\|{\rm e}^{\frac32aA^{-\frac{1}{3}}t}
				\partial_x^2\nabla 	n_{\neq}\|_{L^2L^2}^2\\
				&\leq CA\|\widetilde{u_{1,0}}\|_{L^{\infty}H^2}^2
				\|\partial_x^2 n_{\neq}\|_{X_{\frac32a}}^2
				\leq CA^{\frac53-2\epsilon}
				\|\partial_x^2 n_{\neq}\|_{X_{\frac32a}}^2.			
			\end{aligned}
		\end{equation*}
		
		\noindent\textbf{Estimates of $T_{5,3}$ and $T_{5,4}$:}
		Using (\ref{u23_infty}) and Lemma \ref{lemma_u23_1}, we obtain 
		\begin{equation*}
			\begin{aligned}
				\|{\rm e}^{\frac32aA^{-\frac{1}{3}}t}u_{j,0}\partial_x\partial_zn_{\neq}\|^2_{L^2L^2}
				\leq \|u_{j,0}\|^2_{L^{\infty}L^{\infty}}\|{\rm e}^{\frac32aA^{-\frac{1}{3}}t}\partial_x\partial_zn_{\neq}\|^2_{L^2L^2}
				\leq CA^{\frac13-2\epsilon}
				\|\partial_x\partial_zn_{\neq}\|^2_{X_{\frac32a}}.
			\end{aligned}
		\end{equation*}
		By Lemma \ref{sob_inf_1} and Lemma \ref{sob_inf_2}, there holds
		\begin{equation*}
			\begin{aligned}						\|\partial_zu_{j,0}\partial_xn_{\neq}\|^2_{L^2}\leq 
				\|\partial_zu_{j,0}\|^2_{L^{\infty}_yL^2_z}
				\|\partial_xn_{\neq}\|^2_{L^{\infty}_{z}L^2_{x,y}}
				\leq C\|\partial_zu_{j,0}\|^2_{H^1}
				\|(\partial_x,\partial_z)\partial_xn_{\neq}\|_{L^2}^2,
			\end{aligned}
		\end{equation*}
		which along with Lemma \ref{lemma_u23_1} imply that 
		\begin{equation*}
			\begin{aligned}
				&\|{\rm e}^{\frac32aA^{-\frac{1}{3}}t}\partial_zu_{j,0}
				\partial_zn_{\neq}\|^2_{L^2L^2}
				\leq CA^{\frac13}\|\partial_zu_{j,0}\|^2_{L^{\infty}H^1}
				\|(\partial_x,\partial_z)\partial_xn_{\neq}\|^2_{X_{\frac32a}}\leq CA^{\frac13-2\epsilon}
				\|(\partial_x,\partial_z)\partial_xn_{\neq}\|^2_{X_{\frac32a}}.
			\end{aligned}
		\end{equation*}
		Thus, we get  
		\begin{equation*}
			\begin{aligned}
				\|{\rm e}^{\frac32aA^{-\frac{1}{3}}t}\partial_z(u_{j,0}
				\partial_xn_{\neq})\|_{L^2L^2}^2\leq 
				CA^{\frac13-2\epsilon}
				\|(\partial_x,\partial_z)\partial_xn_{\neq}\|^2_{X_{\frac32a}}.
			\end{aligned}
		\end{equation*}

		\noindent\textbf{Estimate of $T_{5,5}$:}
		Due to ${\rm div}~u_{\neq}=0,$ thus
		\begin{equation*}
			\begin{aligned}
				\|{\rm e}^{\frac32aA^{-\frac{1}{3}}t}
				n_{0}\partial_x^2u_{1,\neq}\|_{L^2L^2}^2
				&\leq CE_3^2(  \|{\rm e}^{\frac32aA^{-\frac{1}{3}}t}
				\partial_x\partial_yu_{2,\neq}\|_{L^2L^2}^2+  		
				\|{\rm e}^{\frac32aA^{-\frac{1}{3}}t}
				\partial_x\partial_zu_{3,\neq}\|_{L^2L^2}^2)
				\\ &\leq CAE_3^2(\|\partial_x^2 u_{2,\neq}\|_{X_{\frac32a}}^2
				+\|\partial_x^2 u_{3,\neq}\|_{X_{\frac32a}}^2)
				\leq CA^{1-\frac32\epsilon}E_2^2E_3^2.
			\end{aligned}
		\end{equation*}
		
		\noindent\textbf{Estimates of $T_{5,6}$ and $T_{5,7}$:}
		For $j=2,3$, there holds $$\|
		\partial_z(n_0\partial_xu_{j,\neq})\|_{L^2}^2
		\leq C(\|n_0\partial_x\partial_zu_{j,\neq}\|^2_{L^2}
		+\|\partial_zn_0\partial_xu_{j,\neq}\|^2_{L^2}).$$
		Due to $\|n_0\|_{L^{\infty}L^{\infty}}\leq 2E_3,$ thanks to  Lemma \ref{lemma_u}, thus 
		\begin{equation*}
			\begin{aligned}
				\|{\rm e}^{\frac32aA^{-\frac{1}{3}}t}
				n_0\partial_x\partial_zu_{j,\neq}\|^2_{L^2L^2}
				\leq CE_3^2\|{\rm e}^{aA^{-\frac{1}{3}}t}\partial_x\partial_zu_{j,\neq}\|^2_{L^2L^2}
				\leq CAE_3^2\|\partial_x^2u_{j,\neq}\|^2_{X_{\frac32a}}.
			\end{aligned}
		\end{equation*}
		Using Lemma \ref{sob_inf_1}, Lemma \ref{sob_inf_2}
		and $\|\partial_zn_{0}\|_{L^2}=\|\partial_zn_{(0,\neq)}\|_{L^2}\leq \|\partial_z^2n_{(0,\neq)}\|_{L^2}$, we have
		\begin{equation*}
			\begin{aligned}						\|\partial_xu_{j,\neq}\partial_zn_{0}\|^2_{L^2}\leq 
				\|\partial_zn_0\|^2_{L^{\infty}_zL^2_y}
				\|\partial_xu_{j,\neq}\|^2_{L^{\infty}_{y}L^2_{x,z}}
				\leq C\|\partial_z^2n_{(0,\neq)}\|^2_{L^2}
				\|\partial_x\nabla u_{j,\neq}\|_{L^2}^2.
			\end{aligned}
		\end{equation*}
		Combining above with Lemma \ref{lemma_u} and Lemma \ref{lemma_n003},  we have
		\begin{equation*}
			\begin{aligned}
				{\|{\rm e}^{\frac32aA^{-\frac{1}{3}}t}
					\partial_z(n_0\partial_xu_{j,\neq})\|_{L^2L^2}^2}
				&\leq CA
				\big(\|(\partial_z^2n_{\rm in})_{(0,\neq)}
				\|_{L^2}^2+E_3^2+1\big)
				\|\partial_x^2u_{j,\neq}\|^2_{X_{\frac32a}}\\
				&\leq CA^{1-\frac32\epsilon}
				\big(\|(\partial_z^2n_{\rm in})_{(0,\neq)}
				\|_{L^2}^2+E_3^2+1\big)
				E_2^2.
			\end{aligned}
		\end{equation*}
		
		\noindent\textbf{Estimates of $T_{5,8}-T_{5,10}$:}
		By $(\ref{appa_1})_2$, Lemma \ref{sob_inf_2} and $$\|u_{\neq}\partial_x\partial_zn_{\neq}\|^2_{L^2}
		\leq \|u_{\neq}\|^2_{L^{\infty}_{x,z}L^2_y}
		\|\partial_x\partial_zn_{\neq}\|^2_{L^2_{x,z}L^{\infty}_y},$$
		there holds
		\begin{equation*}
			\begin{aligned}
				\|u_{\neq}\partial_x\partial_zn_{\neq}\|^2_{L^2}
				\leq C(\|\partial_x\omega_{2,\neq}\|^2_{L^2}
				+\|\triangle u_{2,\neq}\|_{L^2}^2)
				\|\partial_x\partial_zn_{\neq}\|_{L^2}
				\|\partial_x\partial_z\partial_yn_{\neq}\|_{L^2}.
			\end{aligned}
		\end{equation*} 
		Combining Lemma \ref{lemma_u} with $\|n_{\neq}\|^2_{L^{\infty}}\leq C \|(\partial_x^2,\partial_z^2)n_{\neq}\|_{L^{2}}\|(\partial_x^2,\partial_z^2)\partial_yn_{\neq}\|_{L^{2}},$ then
		\begin{equation*}
			\begin{aligned}
				\|\partial_x\partial_zu_{\neq}n_{\neq}\|^2_{L^2}
				\leq C(\|\partial_x\omega_{2,\neq}\|^2_{L^2}
				+\|\triangle u_{2,\neq}\|_{L^2}^2)
				\|(\partial_x^2,\partial_z^2)n_{\neq}\|_{L^{2}}
				\|(\partial_x^2,\partial_z^2)\partial_yn_{\neq}\|_{L^{2}}.
			\end{aligned}
		\end{equation*}	
		Using $(\ref{appa_2})_1$, (\ref{appa_3}), Lemma \ref{sob_inf_2}
		and 
		\begin{equation*}
			\begin{aligned}
				\|\partial_zu_{\neq}\partial_xn_{\neq}\|_{L^2}\leq 
				\|\partial_zu_{\neq}\|_{L^{\infty}_xL^2_{y,z}}
				\|\partial_xn_{\neq}\|_{L^{\infty}_{y,z}L^2_{x}},\\
				\|\partial_xu_{\neq}\partial_zn_{\neq}\|_{L^2}\leq 
				\|\partial_xu_{\neq}\|_{L^{\infty}_zL^2_{y,z}}
				\|\partial_zn_{\neq}\|_{L^{\infty}_{x,y}L^2_{z}},
			\end{aligned}
		\end{equation*}
		we have 
		\begin{equation*}
			\begin{aligned}
				&\quad\|\partial_zu_{\neq}\partial_xn_{\neq}\|^2_{L^2}
				+\|\partial_xu_{\neq}\partial_zn_{\neq}\|^2_{L^2}\\
				&\leq C(\|\partial_x\omega_{2,\neq}\|^2_{L^2}
				+\|\triangle u_{2,\neq}\|_{L^2}^2)
				\|(\partial_x^2,\partial_z^2)n_{\neq}\|_{L^{2}}
				\|(\partial_x^2,\partial_z^2)\partial_yn_{\neq}\|_{L^{2}}.
			\end{aligned}
		\end{equation*}	
		Thus,  we get that 
		\begin{equation*}
			\begin{aligned}
				&\quad\|{\rm e}^{\frac32aA^{-\frac{1}{3}}t}
				\partial_x\partial_z(u_{\neq}n_{\neq})_{\neq}\|^2_{L^2L^2}
				\leq 
				\|{\rm e}^{2aA^{-\frac{1}{3}}t}
				\partial_x\partial_z(u_{\neq}n_{\neq})\|^2_{L^2L^2}\\
				&\leq CA^{\frac23}\big(\|\partial_x\omega_{2,\neq}\|^2_{X_a}
				+\|\triangle u_{2,\neq}\|_{X_a}^2\big)
				\|(\partial_x^2,\partial_z^2)n_{\neq}\|^2_{X_a}\leq CA^{\frac23-\frac32\epsilon}E_2^4.
			\end{aligned}
		\end{equation*} 
		
		\noindent\textbf{Estimate of $T_{5,11}$:}
		According to nonlinear interactions, we get
		\begin{equation*}
			\|\partial_x\partial_z(n\nabla c)_{\neq}\|^2_{L^2}\leq C
			\big(\|\partial_z(n_0\partial_x\nabla c_{\neq})\|^2_{L^2}+
			\|\partial_z(\partial_xn_{\neq}\nabla c_{0})\|^2_{L^2}+
			\|\partial_x\partial_z(n_{\neq}\nabla c_{\neq})\|^2_{L^2}\big).
		\end{equation*}
		
		First, 
		using  Lemma \ref{sob_inf_2} and Lemma \ref{lem:ellip_2}, there hold
		\begin{equation*}
			\begin{aligned}
				&\|\partial_x\nabla c_{\neq}\|^2_{L^{\infty}_{x,y}L^2_z}
				\leq C\|\partial_x^2\triangle c_{\neq}\|^2_{L^{2}}
				\leq C\|\partial_x^2n_{\neq}\|^2_{L^{2}},\\
				&\|\partial_x\partial_z\nabla c_{\neq}\|^2_{L^2}
				\leq C\|\partial_x\triangle c_{\neq}\|^2_{L^{2}}
				\leq C\|\partial_xn_{\neq}\|^2_{L^{2}},
			\end{aligned}
		\end{equation*}
		which imply that  
		\begin{equation*}
			\begin{aligned}
				\quad\|\partial_z(n_0\partial_x\nabla c_{\neq})\|^2_{L^2}
				&\leq C\big(\|\partial_zn_0\|^2_{L^{\infty}_{z}L^2_{y}}
				\|\partial_x\nabla c_{\neq}\|^2_{L^{\infty}_{x,y}L^2_{z}}
				+\|n_0\|^2_{L^{\infty}L^{\infty}}
				\|\partial_z^2\nabla c_{\neq}\|^2_{L^{\infty}_{y}L^2_{x,z}}
				\big)\\
				&\leq C(\|\partial_z^2n_{(0,\neq)}\|^2_{L^2}+E_3^2+1)
				\left(\|\partial_x^2n_{\neq}\|^2_{L^{2}}+\|\partial_{x}\partial_{z}n_{\neq}\|^2_{L^{2}}\right),
			\end{aligned}
		\end{equation*}
		where we use $\|\partial_zn_0\|^2_{L^{\infty}_{z}L^2_{y}}
		\leq C\|\partial_z^2n_{(0,\neq)}\|^2_{L^2}.$
		Using Lemma \ref{lemma_n003} and $\|n_0\|_{L^{\infty}L^{\infty}}\leq C E_3$,  we get
		\begin{equation*}
			\begin{aligned}
				\|{\rm e}^{\frac32aA^{-\frac{1}{3}}t}\partial_x
				\partial_z(n_0\nabla c_{\neq})\|^2_{L^2L^2}\leq 
				CA^{\frac13}\big(\|(\partial_z^2n_{\rm in})_{(0,\neq)}\|_{L^2}^2
				+1\big)
				\|(\partial_x,\partial_z)\partial_xn_{\neq}\|^2_{X_{\frac32a}}.
			\end{aligned}
		\end{equation*}
		
		Second, by Lemma \ref{sob_inf_1} and Lemma \ref{lem:ellip_0}, it holds 
		\begin{equation*}
			\begin{aligned}
				&\|\nabla c_{0}\|^2_{L^{\infty}}
				\leq C(\|\partial_z\triangle c_{0}\|^2_{L^{2}}+\|\nabla c_{0}\|^2_{H^{1}})
				\leq C(\|\partial_zn_{(0,\neq)}\|^2_{L^{2}}
				+\|n_{0}\|^2_{L^{2}}),\\
				&\|\partial_z\nabla c_{0}\|^2_{L^{\infty}_{y,z}}
				\leq C(\|\partial_z^2\triangle c_{0}\|^2_{L^{2}}+
				\|\partial_z\nabla c_{0}\|^2_{L^{2}})
				\leq C\|\partial_z^2n_{(0,\neq)}\|^2_{L^{2}}.
			\end{aligned}
		\end{equation*}
		Thence,  
		\begin{equation*}
			\begin{aligned}
				\quad\|\partial_z(\nabla c_{0}\partial_xn_{\neq})\|^2_{L^2}
				&\leq C\big(
				\|\partial_x\partial_zn_{\neq}\|^2_{L^2}
				\|\nabla c_{0}\|^2_{L^{\infty}}
				+\|\partial_xn_{\neq}\|^2_{L^2}
				\|\partial_z\nabla c_{0}\|^2_{L^{\infty}_{y,z}}
				\big)\\
				&\leq C(\|\partial_z^2n_{(0,\neq)}\|^2_{L^2}+
				\|n_{0}\|^2_{L^2})
				\|(\partial_x,\partial_z)\partial_xn_{\neq}\|^2_{L^{2}},
			\end{aligned}
		\end{equation*}
		which along with Lemma \ref{lemma_n001},  Lemma \ref{lemma_n002},  Lemma \ref{lemma_n003} imply that 
		\begin{equation*}
			\begin{aligned}
				\|{\rm e}^{\frac32aA^{-\frac{1}{3}}t}
				\partial_x\partial_z(n_{\neq}\nabla c_{0})\|^2_{L^2L^2}\leq 
				CA^{\frac13}\big(\|(\partial_z^2n_{\rm in})_{(0,\neq)}\|_{L^2}^2
				+1\big)
				\|(\partial_x,\partial_z)\partial_xn_{\neq}\|^2_{X_{\frac32a}}.
			\end{aligned}
		\end{equation*}
		Similar to (\ref{nc_neq1}),
		one can prove 
		\begin{equation*}
			\begin{aligned}
				\|{\rm e}^{\frac32aA^{-\frac{1}{3}}t}
				\partial_x\partial_z(n_{\neq}\nabla c_{\neq})\|^2_{L^2L^2}\leq 
				\|{\rm e}^{2aA^{-\frac{1}{3}}t}
				\partial_x\partial_z(n_{\neq}\nabla c_{\neq})\|^2_{L^2L^2}
				\leq CA^{\frac13}
				\|(\partial_x^2,\partial_z^2)n_{\neq}\|^4_{X_a}.
			\end{aligned}
		\end{equation*}
		
		Thus, one obtains  that 
		\begin{equation*}
			\begin{aligned}
				\|{\rm e}^{\frac32aA^{-\frac{1}{3}}t}
				\partial_x\partial_z(n\nabla c)_{\neq}\|^2_{L^2L^2}\leq 
				CA^{\frac13}\left(\|(\partial_z^2n_{\rm in})_{(0,\neq)}\|^4_{L^2}
				+1+E_2^4+E_4^4\right).
			\end{aligned}
		\end{equation*}
		
		\noindent\textbf{Close the energy estimate:}
		In this way, we have completed the estimate of $E_4$, when $$A>\max\{A_6, C(1+\|n_{\rm in}\|_{H^2}^4+E_2^4+E_4^2+E_4^4+E_5^4)^{3}\}:=A_7,$$  
		due to $\epsilon>\frac13,$
		we conclude  that 
		\begin{equation*}
			\begin{aligned}
				&\|\partial_x^2n_{\neq}\|^2_{X_{\frac32a}}
				\leq C( \|(\partial_x^2n_{\rm in})_{\neq}\|^2_{L^2}+1),\\
				&\|\partial_x\partial_zn_{\neq}\|^2_{X_{\frac32a}}
				\leq 
				C\Big( \|(\partial_x\partial_zn_{\rm in})_{\neq}\|^2_{L^2}+
				\frac{\|\partial_x^2 n_{\neq}\|_{X_{\frac32a}}^2}
				{A^{2\epsilon-\frac{2}{3}}}+\frac{1
					+\|n_{\rm in}\|_{H^2}^4
					+E_2^4+E_3^4+E_4^4+E_5^4}{A^{\frac13}}\Big).
			\end{aligned}
		\end{equation*}
		and
		\begin{equation*}
			\begin{aligned}
				E_4(t)=\|\partial_x^2n_{\neq}\|_{X_{\frac32a}}+
				\|\partial_x\partial_zn_{\neq}\|_{X_{\frac32a}}
				\leq C\big(\|(\partial_x^2n_{\rm in})_{\neq}\|^2_{L^2}
				+\|(\partial_x\partial_zn_{\rm in})_{\neq}\|^2_{L^2}+1\big),
			\end{aligned}
		\end{equation*}
		The proof is complete.
	\end{proof}
	
	\begin{corollary}
		Under the conditions of Theorem \ref{result} and the assumptions (\ref{assumption}),
		according to Lemma \ref{result_0_3}, when $A\geq A_7:=C_{(3)},$
		there holds
		\begin{equation}
			E_4(t)\leq C\big(\|(\partial_x^2n_{\rm in})_{\neq}\|^2_{L^2}
			+\|(\partial_x\partial_zn_{\rm in})_{\neq}\|^2_{L^2}+1\big):=E_4.
		\end{equation}
	\end{corollary}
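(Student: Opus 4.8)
\textbf{Proof proposal for Proposition \ref{pro3} (estimate of $E_4$).}
The plan is to control the two pieces of $E_4(t)=\|\partial_x^2n_{\neq}\|_{X_{\frac32a}}+\|\partial_x\partial_zn_{\neq}\|_{X_{\frac32a}}$ separately, exactly in the spirit of Lemma \ref{result_0_1} but with the heavier Gevrey-type weight ${\rm e}^{\frac32aA^{-1/3}t}$ in place of ${\rm e}^{aA^{-1/3}t}$, and with $E_5$ available to absorb the $3$D lift-up terms. For $\|\partial_x^2n_{\neq}\|_{X_{\frac32a}}$ there is essentially nothing new: the evolution equation $(\ref{c_temp_11})$ is invariant under changing the exponent of the weight, and revisiting the bound $(\ref{n_xx_result})$ with $b=\frac32a$ gives
\begin{equation*}
	\|\partial_x^2n_{\neq}\|^2_{X_{\frac32a}}\leq C\Big(\|(\partial_x^2n_{\rm in})_{\neq}\|^2_{L^2}+\frac{E_2^4+(1+E_3^2)(E_4^2+E_5^2)}{A^{1/3}}\Big),
\end{equation*}
since the right-hand side of $(\ref{n_xx_result})$ now produces $\|\partial_x^2u_{2,\neq}\|_{X_{\frac32a}}^2+\|\partial_x^2u_{3,\neq}\|_{X_{\frac32a}}^2$, which is controlled by $E_5^2$. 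This is immediately closed for $A$ large, using $\epsilon>\tfrac13$.

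The substantive work is the estimate of $\partial_x\partial_zn_{\neq}$. First I would write down its evolution equation $(\ref{n_neq_temp_1})$: applying $\partial_x\partial_z$ to $(\ref{ini2})_1$ and using the quasi-linear splitting $y\partial_x n_{\neq}=(y+\widehat{u_{1,0}}/A)\partial_x n_{\neq}-\widehat{u_{1,0}}\partial_xn_{\neq}/A$ and the decomposition $\widehat{u_{1,0}}=\widehat{u_{1,0}}+\widetilde{u_{1,0}}$ separates the transported part from the genuine $3$D lift-up term $\partial_z\widehat{u_{1,0}}\partial_x^2n_{\neq}/A$. Then I apply Proposition \ref{timespace2} with the weight ${\rm e}^{\frac32aA^{-1/3}t}$, obtaining $(\ref{n_neq_temp_2})$, i.e. $\|\partial_x\partial_zn_{\neq}\|^2_{X_{\frac32a}}\leq C(\|(\partial_x\partial_zn_{\rm in})_{\neq}\|^2_{L^2}+T_{5,1}+\cdots+T_{5,11})$. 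The term $T_{5,1}$ (the lift-up term $\partial_z\widehat{u_{1,0}}\partial_x^2n_{\neq}$) is bounded using $\|\widehat{u_{1,0}}\|_{L^\infty H^4}\le CA^{4/3-\epsilon}$ from Lemma \ref{u1_hat2} against $\|\partial_x^2n_{\neq}\|_{X_{\frac32a}}$, which is already closed from Step~I; $T_{5,2}$ (the $\widetilde{u_{1,0}}$ term) is treated as a perturbation via $A^\epsilon\|\widetilde{u_{1,0}}\|_{L^\infty H^2}\le CA^{1/3}$ from Lemma \ref{u1_hat1}; $T_{5,3},T_{5,4}$ use $(\ref{u23_infty})$ and Lemma \ref{lemma_u23_1}; $T_{5,5}$ uses ${\rm div}\,u_\neq=0$ to replace $\partial_x^2u_{1,\neq}$ by $\partial_x\partial_yu_{2,\neq}+\partial_x\partial_zu_{3,\neq}$ and hence $E_5$; $T_{5,6},T_{5,7}$ use the anisotropic embeddings of Lemma \ref{sob_inf_1}, \ref{sob_inf_2} together with $\|\partial_zn_0\|_{L^2}=\|\partial_zn_{(0,\neq)}\|_{L^2}\le\|\partial_z^2n_{(0,\neq)}\|_{L^2}$ from Lemma \ref{lemma_n003}; $T_{5,8}$--$T_{5,10}$ are the $\neq\cdot\neq\to\neq$ interactions handled as in $(\ref{temp1_7})$--$(\ref{temp1_8})$; and $T_{5,11}$ splits $(n\nabla c)_\neq$ into $n_0\nabla c_\neq$, $n_\neq\nabla c_0$, $(n_\neq\nabla c_\neq)_\neq$, treated by the elliptic estimates of Lemmas \ref{lem:ellip_0}, \ref{lem:ellip_2} exactly as in $(\ref{c_temp1})$--$(\ref{nc_neq1})$.

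Collecting the bounds yields
\begin{equation*}
	\|\partial_x\partial_zn_{\neq}\|^2_{X_{\frac32a}}\leq C\Big(\|(\partial_x\partial_zn_{\rm in})_{\neq}\|^2_{L^2}+\frac{\|\partial_x^2n_{\neq}\|_{X_{\frac32a}}^2}{A^{2\epsilon-2/3}}+\frac{1+\|n_{\rm in}\|_{H^2}^4+E_2^4+E_3^4+E_4^4+E_5^4}{A^{1/3}}\Big),
\end{equation*}
where the coupling to $\|\partial_x^2n_{\neq}\|_{X_{\frac32a}}$ is controlled because $2\epsilon-\tfrac23>0$ (again $\epsilon>\tfrac13$), and the $E_4^4/A^{1/3}$ self-coupling is absorbed by choosing $A\ge C(1+\|n_{\rm in}\|_{H^2}^4+E_2^4+E_3^4+E_4^4+E_5^4)^3=:A_7$. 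Then $E_4(t)\le C(\|(\partial_x^2n_{\rm in})_{\neq}\|_{L^2}+\|(\partial_x\partial_zn_{\rm in})_{\neq}\|_{L^2}+1)=:E_4$, which is exactly the conclusion $E_4(t)\le E_4$ once $A\ge C_{(3)}:=A_7$. I expect the main obstacle to be the term $T_{5,1}$: the lift-up growth of $\widehat{u_{1,0}}$ in $H^4$ is only beaten by the weight-gap between $X_a$ and $X_{\frac32a}$ (so that $\|{\rm e}^{\frac32aA^{-1/3}t}\partial_x^2n_\neq\|_{L^2L^2}$ is paired against an extra ${\rm e}^{-\frac12aA^{-1/3}t}$ factor coming from $\|\widehat{u_{1,0}}\|_{H^4}$ which grows at most polynomially in $t$), and keeping that balance — together with making sure $\|\partial_x^2n_{\neq}\|_{X_{\frac32a}}$ really has been closed \emph{before} it is used here — is the delicate point.
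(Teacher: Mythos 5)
Your proposal follows essentially the same route as the paper's Lemma \ref{result_0_3}: first close $\|\partial_x^2 n_{\neq}\|_{X_{\frac32a}}$ by rerunning (\ref{n_xx_result}) with $b=\frac32a$ and absorbing $\|\partial_x^2(u_2,u_3)_{\neq}\|_{X_{\frac32a}}$ into $E_5$, then apply Proposition \ref{timespace2} to the equation for $\partial_x\partial_z n_{\neq}$ and estimate the same eleven terms $T_{5,1}$--$T_{5,11}$ with the same lemmas, ending with the same coupling term $A^{-(2\epsilon-\frac23)}\|\partial_x^2n_{\neq}\|^2_{X_{\frac32a}}$ and the same choice of $A_7$. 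One slip worth noting: Lemma \ref{u1_hat2} gives $\|\widehat{u_{1,0}}\|_{L^{\infty}H^4}\leq CA^{1-\epsilon}$, not $CA^{4/3-\epsilon}$ as you quote --- with your exponent $T_{5,1}$ would yield $A^{\frac43-2\epsilon}\|\partial_x^2n_{\neq}\|^2_{X_{\frac32a}}$, which does not close for $\epsilon\in(\frac13,\frac49]$; your final displayed inequality already carries the correct exponent, so this is only a misquote, and (contrary to your closing remark) no weight gap between $X_a$ and $X_{\frac32a}$ is needed for $T_{5,1}$, since the uniform-in-time bound on $\|\widehat{u_{1,0}}\|_{H^4}$ suffices there.
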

	
	\section{The velocity estimates with higher weight: Proof of Proposition \ref{pro4}}
	To estimate $\|\partial^2_{x}u_{2,\neq}\|_{X_{\frac{3}{2}a}}$ and 
	$\|\partial_{x}^2u_{3,\neq}\|_{X_{\frac{3}{2}a}},$ it is important to 
	introduce the new quantity $W$ defined by 
	$$W=u_{2,\neq}+\kappa u_{3,\neq},$$
	where $$V=y+\frac{\widehat{u_{1,0}}}{A},\quad{\rm and}~~ \kappa=\frac{\partial_zV}{\partial_yV}.$$
	\textcolor[rgb]{0,0,0}{
		The similar quality was first proposed by Wei-Zhang in \cite{wei2} and further applied in \cite{Chen1}. 
		Here, we make a new observation: by introducing a new quasi-linear decomposition of $W,$  $\|\partial_x\nabla W\|_{X_{\frac{3}{2}a}}$ and $\|\partial_x^2 u_{3,\neq}\|_{X_{\frac{3}{2}a}}$ are enough to close the  estimates for $E_{5},$  without relying on any additional terms.}

	For convenience, we denote $\mathcal{L}_V=\partial_t+V\partial_x -\frac{1}{A}\triangle.$
	For $j=2,3,$ there holds
	$(u\cdot\nabla u_{j})_{\neq}=u_0\cdot\nabla u_{j,\neq}
	+u_{\neq}\cdot\nabla u_{j,0}+(u_{\neq}\cdot\nabla u_{j,\neq})_{\neq},$
	and we infer from (\ref{ini1}) that   
	\begin{equation}\label{ini_5}
		\left\{
		\begin{array}{lr}
			\mathcal{L}_Vu_{2,\neq}
			+\frac{\widetilde{u_{1,0}}\partial_xu_{2,\neq}}{A}
			+\frac{g_{2,1}+g_{2,2}+{G_{2,3}}}{A}
			+\frac{\partial_y(P^{N_1}_{\neq}+P^{N_3}_{\neq})}{A}=\frac{n_{\neq}-\partial_y P^{N_2}_{\neq}}{A},\\
			\mathcal{L}_Vu_{3,\neq}
			+\frac{\widetilde{u_{1,0}}\partial_xu_{3,\neq}}{A}
			+\frac{g_{3,1}+g_{3,2}+{G_{3,3}}}{A}
			+\frac{\partial_z(P^{N_1}_{\neq}+P^{N_3}_{\neq})}{A}=
			\frac{-\partial_z P^{N_2}_{\neq}}{A},
		\end{array}
		\right.
	\end{equation}
	where 
	\begin{equation*}
		\begin{aligned}
			g_{j,1}={u_{2,0}\partial_yu_{j,\neq}+u_{3,0}\partial_zu_{j,\neq}},
			~~g_{j,2}={u_{\neq}\cdot\nabla u_{j,0}},
			~~G_{j,3}={(u_{\neq}\cdot\nabla u_{j,\neq})_{\neq}}.
		\end{aligned}
	\end{equation*}
	Due to ${\rm div}~u=0,$ we have
	\begin{equation*}
		\begin{aligned}
			{\rm div}~(u\cdot\nabla u)_{\neq}
			&	=\partial_x(u\cdot\nabla u_1)_{\neq}
			+\partial_y(u\cdot\nabla u_2)_{\neq}
			+\partial_z(u\cdot\nabla u_3)_{\neq}\\
			&={\rm div}~(u_{\neq}\cdot\nabla u_{\neq})_{\neq}+
			2(\partial_yu_{1,0}\partial_xu_{2,\neq}+
			\partial_zu_{1,0}\partial_xu_{3,\neq})+2\partial_yg_{2,2}
			+2\partial_zg_{3,2},
		\end{aligned}
	\end{equation*}
	which along with $\partial_yV=1+\frac{\partial_y\widehat{ u_{1,0}}}{A}$ imply that 
	\begin{equation*}
		\begin{aligned}
			\frac{P^{N_1}_{\neq}+P^{N_3}_{\neq}}{A}
			=&-2\triangle^{-1}\left(\partial_xu_{2,\neq}+
			\frac{{\rm div}~(u\cdot\nabla u)_{\neq}}{2A}\right)\\
			=&-2\triangle^{-1}\Big((1+\frac{\partial_y\widehat{ u_{1,0}}}{A})
			\partial_xu_{2,\neq}
			+\frac{\partial_z\widehat{ u_{1,0}}}{A}
			\partial_xu_{3,\neq}\\
			&+\frac{{\rm div}~(u_{\neq}\cdot\nabla u_{\neq})_{\neq}}{2A}
			+\frac{\partial_y\widetilde{ u_{1,0}}\partial_xu_{2,\neq}+
				\partial_z\widetilde{u_{1,0}}\partial_xu_{3,\neq}}{A}
			+\frac{\partial_yg_{2,2}+\partial_zg_{3,2}}{A}\Big)\\
			=&-2\triangle^{-1}\left(\partial_yV\partial_xW
			+\frac{\partial_yg_{2,2}+\partial_zg_{3,2}}{A}
			+\frac{P_{1,1}+P_{1,2}}{A}\right),
		\end{aligned}
	\end{equation*}
	where $$P_{1,1}=\frac{{\rm div}~(u_{\neq}\cdot\nabla u_{\neq})_{\neq}}{2},
	\quad P_{1,2}=\partial_y\widetilde{ u_{1,0}}\partial_xu_{2,\neq}+
	\partial_z\widetilde{u_{1,0}}\partial_xu_{3,\neq}.$$
	Therefore, we rewrite (\ref{ini_5}) into 
	\begin{equation}\label{ini_6}
		\left\{
		\begin{array}{lr}
			&\mathcal{L}_Vu_{2,\neq}
			-2\partial_y\triangle^{-1}(\partial_yV\partial_xW)
			+\frac{\widetilde{u_{1,0}}\partial_xu_{2,\neq}}{A}
			+\frac{g_{2,1}+g_{2,2}+{G_{2,3}}}{A}
			=\frac{2\partial_y\triangle^{-1}
				(\partial_yg_{2,2}+\partial_zg_{3,2})}{A}
			\\
			&\qquad+\frac{2\partial_y\triangle^{-1}(P_{1,1}+P_{1,2})}{A}
			+\frac{n_{\neq}-\partial_y P^{N_2}_{\neq}}{A},\\
			&\mathcal{L}_Vu_{3,\neq}-2\partial_z\triangle^{-1}(\partial_yV\partial_xW)
			+\frac{\widetilde{u_{1,0}}\partial_xu_{3,\neq}}{A}
			+\frac{g_{3,1}+g_{3,2}+{G_{3,3}}}{A}
			=\frac{2\partial_z\triangle^{-1}
				(\partial_yg_{2,2}+\partial_zg_{3,2})}{A}\\
			&\qquad+\frac{2\partial_z\triangle^{-1}(P_{1,1}+P_{1,2})}{A}
			-\frac{\partial_z P^{N_2}_{\neq}}{A}.
		\end{array}
		\right.
	\end{equation}
	Then $W=u_{2,\neq}+\kappa u_{3,\neq}$ satisfies 
	\begin{equation*}
		\begin{aligned}
			&\quad \mathcal{L}_VW
			-2(\partial_y+\kappa\partial_z)\triangle^{-1}(\partial_yV\partial_xW)+\frac{\widetilde{u_{1,0}}\partial_x W}{A}
			+\frac{G^{(1)}+G^{(2)}}{A}\\
			&=(\partial_t\kappa-\frac{\triangle\kappa}{A})u_{3,\neq}-\frac{2\nabla\kappa\cdot\nabla u_{3,\neq}}{A}+\frac{n_{\neq}-\partial_yP^{N_2}_{\neq}}{A}-\frac{\kappa\partial_zP^{N_2}_{\neq}}{A},
		\end{aligned}
	\end{equation*}
	where
	\begin{equation}\label{G11}
		\begin{aligned}
			&G^{(1)}=G_{2,3}+\kappa G_{3,3}-2(\partial_y+\kappa\partial_z)
			\triangle^{-1}(P_{1,1}+P_{1,2}),\\
			&G^{(2)}=g_{2,1}+g_{2,2}+\kappa(g_{3,1}+g_{3,2})
			-2(\partial_y+\kappa\partial_z)
			\triangle^{-1}(\partial_yg_{2,2}+\partial_zg_{3,2}).
		\end{aligned}
	\end{equation} 
	
	We introduce the following decomposition $W=W^{(1)}+W^{(2)},$
	satisfying 
	\begin{equation}\label{w_equ1}
		\left\{
		\begin{array}{lr}
			\mathcal{L}_VW^{(1)}
			-2(\partial_y+\kappa\partial_z)
			\triangle^{-1}(\partial_yV\partial_xW^{(1)})
			+\frac{\widetilde{u_{1,0}}\partial_x W}{A}
			=2(\partial_y+\kappa\partial_z)\triangle^{-1}
			(\partial_yV\partial_xW^{(2)})
			\\
			+\frac{n_{\neq}-\partial_yP^{N_2}_{\neq}}{A}-\frac{\kappa\partial_zP^{N_2}_{\neq}}{A}-\frac{G^{(1)}}{A},\\
			\mathcal{L}_VW^{(2)}
			=(\partial_t\kappa-\frac{\triangle\kappa}{A})u_{3,\neq}
			-\frac{2}{A}\nabla\kappa\cdot\nabla u_{3,\neq}-\frac{G^{(2)}}{A},\\
			W^{(1)}_{\rm in}=W_{\rm in},~~~ W^{(2)}_{\rm in}=0.
		\end{array}
		\right.
	\end{equation}	
	By Lemma \ref{u1_hat2}, when $A>A_4,$ there holds
	$$A^{2\epsilon}
	\Big(\frac{\|\widehat{u_{1,0}}\|^2_{L^{\infty}H^4}}{A^{2}}
	+\frac{\|\nabla\widehat{u_{1,0}}\|^2_{L^{2}H^4}}{A^{3}}
	+\|\partial_t\widehat{u_{1,0}}\|^2_{L^{\infty}H^2}\Big)
	\leq C,$$
	which implies that 
	\begin{equation}\label{kappa_estimate}
		\begin{aligned}
			&\|\partial_t\kappa\|^2_{L^{\infty}H^1}\leq CA^{-2}\|\partial_t\widehat{u_{1,0}}\|^2_{L^{\infty}H^2}
			\leq CA^{-2(1+\epsilon)},\\
			&\|\kappa\|^2_{L^{\infty}H^3}\leq CA^{-2}
			\|\widehat{u_{1,0}}\|^2_{L^{\infty}H^4}\leq CA^{-2\epsilon}.
		\end{aligned}
	\end{equation}
	
	\begin{lemma}\label{result_51}
		Under the conditions of Theorem \ref{result} and the assumptions (\ref{assumption}),
		if $A\geq A_7,$ then
		\begin{equation}\label{eq:g1g2}
			\begin{aligned}
				&\|{\rm e}^{\frac32aA^{-\frac{1}{3}}t}
				\nabla G^{(1)}\|^2_{L^2L^2}
				\leq CA^{\frac53-\frac72\epsilon}(E_2^4+E_5^2),\\
				&\|{\rm e}^{\frac32aA^{-\frac{1}{3}}t}
				\partial_x^2 G^{(2)}\|^2_{L^2L^2}
				\leq CA^{1-\frac{7}{2}\epsilon}E_5^2,
			\end{aligned}		
		\end{equation}
		where $ \epsilon\in(\frac13, \frac49] $ is a positive constant.
		
	\end{lemma}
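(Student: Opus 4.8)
\textbf{Proof plan for Lemma \ref{result_51}.}
The plan is to estimate each constituent of $G^{(1)}$ and $G^{(2)}$ separately, using the nonlinear estimates already assembled in Lemmas \ref{lemma_neq1}--\ref{lemma_neq3} together with the weight-$\frac32 a$ bounds coming from $E_5(t)$ and the zero-mode bounds from Proposition \ref{pro0}. Recall from \eqref{G11} that
\begin{equation*}
	G^{(1)}=G_{2,3}+\kappa G_{3,3}-2(\partial_y+\kappa\partial_z)\triangle^{-1}(P_{1,1}+P_{1,2}),\qquad
	G^{(2)}=g_{2,1}+g_{2,2}+\kappa(g_{3,1}+g_{3,2})-2(\partial_y+\kappa\partial_z)\triangle^{-1}(\partial_yg_{2,2}+\partial_zg_{3,2}).
\end{equation*}
First I would handle $G^{(1)}$. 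The terms $G_{2,3}=(u_{\neq}\cdot\nabla u_{2,\neq})_{\neq}$ and $G_{3,3}=(u_{\neq}\cdot\nabla u_{3,\neq})_{\neq}$ are purely $\neq\cdot\neq$ interactions; applying $\nabla$ and using $\eqref{eq:nonlinear-neq0}_4$, $\eqref{eq:nonlinear-neq0}_6$, $\eqref{eq:nonlinear-neq0}_7$ of Lemma \ref{lemma_neq1} (after inserting the weight ${\rm e}^{\frac32aA^{-1/3}t}\le {\rm e}^{2aA^{-1/3}t}$) gives a bound of the form $CA\|(\partial_x\omega_{2,\neq},\triangle u_{2,\neq})\|_{X_a}^2\|(\nabla\omega_{2,\neq},\triangle u_{2,\neq})\|_{X_a}^2$, which is $\le CA^{1-\frac32\epsilon}E_2^4$ since $E_{2,2}$ carries the prefactor $A^{\frac34\epsilon}$ on $\|\triangle u_{2,\neq}\|_{X_a}$ and $A^{-\frac13+\frac34\epsilon}$ on $\|\partial_y\omega_{2,\neq}\|_{X_a}$; for the $\kappa G_{3,3}$ term one uses \eqref{kappa_estimate} to move $\|\kappa\|_{L^\infty H^3}\le CA^{-\epsilon}$ out and also uses $\|\nabla\kappa\|_{L^\infty L^\infty}\lesssim A^{-\epsilon}$. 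For the pressure-type pieces, $\nabla(\partial_y+\kappa\partial_z)\triangle^{-1}$ is an order-zero operator up to the $\kappa$-commutator, so $\|\nabla(\partial_y+\kappa\partial_z)\triangle^{-1}(P_{1,1}+P_{1,2})\|_{L^2}\lesssim \|P_{1,1}\|_{L^2}+\|P_{1,2}\|_{L^2}+\|\nabla\kappa\|_{L^\infty}\|\nabla\triangle^{-1}(P_{1,1}+P_{1,2})\|_{L^2}$; here $P_{1,1}=\frac12{\rm div}(u_{\neq}\cdot\nabla u_{\neq})_{\neq}$ is controlled by $\eqref{eq:nonlinear-neq0}_3$ and $P_{1,2}=\partial_y\widetilde{u_{1,0}}\partial_xu_{2,\neq}+\partial_z\widetilde{u_{1,0}}\partial_xu_{3,\neq}$ by Lemma \ref{lemma_u} together with $A^\epsilon\|\widetilde{u_{1,0}}\|_{L^\infty H^2}\le CA^{1/3}$ from Lemma \ref{u1_hat1} and the weight-$\frac32 a$ quantities $\|\partial_x^2u_{2,\neq}\|_{X_{\frac32a}},\|\partial_x^2u_{3,\neq}\|_{X_{\frac32a}}$ appearing in $E_5$. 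Summing these produces the claimed $CA^{\frac53-\frac72\epsilon}(E_2^4+E_5^2)$ — note the weaker power $A^{5/3}$ rather than $A$ is forced precisely by the $P_{1,2}$ term, whose $\widetilde{u_{1,0}}$ factor costs an extra $A^{2/3}$.

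Next I would handle $G^{(2)}$, which is the genuinely linear-in-$u_{j,\neq}$ part. Apply $\partial_x^2$ and expand. The terms $g_{j,1}=u_{2,0}\partial_yu_{j,\neq}+u_{3,0}\partial_zu_{j,\neq}$ are zero$\cdot$nonzero interactions: after $\partial_x^2$ one distributes derivatives, bounds the $u_{2,0},u_{3,0}$ factors in $L^\infty$ using \eqref{u23_infty} and Lemma \ref{lemma_u23_1} (which supply $A^\epsilon$-smallness), and bounds the remaining $\partial_x^2\partial_j u_{j,\neq}$-type factors via Lemma \ref{lemma_u} against the weight-$\frac32 a$ norms in $E_5$; the structure here is exactly parallel to $T_{4,4},T_{4,5}$ in Lemma \ref{result_0_1} but with $\partial_z^2$ replaced by $\partial_x^2$ and weight $a$ replaced by $\frac32 a$. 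The terms $g_{j,2}=u_{\neq}\cdot\nabla u_{j,0}$ are handled by Lemma \ref{lemma_neq3} (or the $\partial_x^2$-analogue), again with the $A^{\epsilon}$-smallness of the zero modes. The final pressure-correction term $2(\partial_y+\kappa\partial_z)\triangle^{-1}(\partial_yg_{2,2}+\partial_zg_{3,2})$ is an order-zero operator applied to $\partial_x^2(\partial_yg_{2,2}+\partial_zg_{3,2})$ up to a $\nabla\kappa$-commutator controlled by \eqref{kappa_estimate}, so it is bounded by the $g_{j,2}$ estimates already obtained. All contributions carry at least $A^{-\frac{7}{2}\epsilon}$ relative to $A$ because every summand contains exactly one zero-mode factor supplying $A^{-\epsilon}$ or $A^{-2\epsilon}$ (from $u_{j,0}$ or $\kappa$) and the $E_5$-factors themselves come with negative powers of $A$; collecting gives $\le CA^{1-\frac72\epsilon}E_5^2$.

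The main obstacle I anticipate is bookkeeping the powers of $A$ so that the exponents $\frac53-\frac72\epsilon$ and $1-\frac72\epsilon$ come out exactly, since each term mixes (i) the intrinsic $A^{-1}$ or $A^{-5/3}$ loss from the $\frac1A$ in front of $G^{(1)},G^{(2)}$ in \eqref{w_equ1} (which is absorbed into the statement's normalization), (ii) the $A^{1/3}$ or $A^{2/3}$ or $A$ losses in converting $\|{\rm e}^{cA^{-1/3}t}f\|_{L^2L^2}$ to $\|f\|_{X_b}$-type norms, (iii) the $A^{1/3-\epsilon}$ cost of each $\widetilde{u_{1,0}}$ factor, and (iv) the $A^{-\epsilon}$ or $A^{-2\epsilon}$ smallness of the other zero-mode factors and of $\kappa$. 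The delicate point is that the $P_{1,2}$ piece of $G^{(1)}$ is the one that saturates the $A^{5/3}$ power — one must check it does not secretly need $A^{7/3}$, which would break the closure — and that the pure nonzero$\cdot$nonzero terms (which a priori only lose $A$) do not, after inserting the weight-$\frac32 a$ norms, force a larger exponent. Verifying Hölder splittings in the anisotropic norms of Lemmas \ref{sob_inf_1}--\ref{sob_inf_2} is routine but must be done carefully so that the $L^\infty$-in-$y$ (bad-direction) factor always lands on the zero mode or on $\widetilde{u_{1,0}}$, never on $n_{\neq}$ or $u_{\neq}$ in a way that would need an uncontrolled derivative. Once the exponent bookkeeping is pinned down, the estimate \eqref{eq:g1g2} follows, and the hypothesis $\epsilon\le\frac49$ (equivalently $\frac13-\frac34\epsilon>0$ and $\epsilon>\frac13$) is exactly what makes all these negative powers of $A$ genuinely decaying.
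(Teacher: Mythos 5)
Your plan is essentially the paper's own proof: the same term-by-term treatment of $G^{(1)}$ and $G^{(2)}$, using Lemma \ref{lemma_neq1} for the nonzero-nonzero quadratic pieces, \eqref{kappa_estimate} for the $\kappa$ factors, the boundedness of $(\partial_y+\kappa\partial_z)\triangle^{-1}\nabla$ for the pressure corrections, Lemma \ref{u1_hat1} for the $\widetilde{u_{1,0}}$ factor in $P_{1,2}$, and Lemma \ref{lemma_u23_1} (with \eqref{u23_infty}) for the zero-mode factors in $g_{j,1},g_{j,2}$. Your identification of $P_{1,2}$ as the term saturating $A^{\frac53-\frac72\epsilon}$ and of the zero-mode smallness producing $A^{1-\frac72\epsilon}$ for $G^{(2)}$ matches exactly how the paper's bookkeeping closes, so the remaining work you flag is indeed only routine exponent tracking.
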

	\begin{proof}
		By Lemma \ref{lemma_neq1} and  the assumptions (\ref{assumption}), 
		there holds
		\begin{equation*}
			\|{\rm e}^{\frac32aA^{-\frac{1}{3}}t}
			\nabla G_{2,3}\|^2_{L^2L^2}\leq 
			\|{\rm e}^{\frac32aA^{-\frac{1}{3}}t}
			\nabla (u_{\neq}\cdot\nabla u_{2,\neq})_{\neq}\|^2_{L^2L^2}
			\leq 
			CA^{1-3\epsilon}E_2^4.
		\end{equation*}
		Using Lemma \ref{lemma_neq1} and \eqref{kappa_estimate}, we have
		\begin{equation*}
			\|{\rm e}^{\frac32aA^{-\frac{1}{3}}t}
			\nabla(\kappa G_{2,3})\|^2_{L^2L^2}
			\leq C\|\kappa \|_{L^{\infty}H^3}^2
			\|{\rm e}^{\frac32aA^{-\frac{1}{3}}t}
			\nabla (u_{\neq}\cdot\nabla u_{3,\neq})_{\neq}\|^2_{L^2L^2}
			\leq 
			CA^{\frac53-5\epsilon}E_2^4.
		\end{equation*}
		Using Lemma \ref{lemma_neq1} and \eqref{kappa_estimate} again,  one obtains that
		\begin{equation}\label{p11}
			\begin{aligned}
				&\quad\|{\rm e}^{\frac32aA^{-\frac{1}{3}}t}
				\nabla \big((\partial_y+\kappa\partial_z)
				\triangle^{-1}P_{1,1}\big) \|^2_{L^2L^2}\\
				&\leq C\|{\rm e}^{\frac32aA^{-\frac{1}{3}}t}P_{1,1} \|^2_{L^2L^2}
				\leq C\big(\|{\rm e}^{2aA^{-\frac{1}{3}}t}\partial_x(u_{\neq}\cdot\nabla u_{1,\neq})\|^2_{L^2L^2}\\
				&+\|{\rm e}^{2aA^{-\frac{1}{3}}t}\partial_y(u_{\neq}\cdot\nabla u_{2,\neq})\|^2_{L^2L^2}
				+\|{\rm e}^{2aA^{-\frac{1}{3}}t}\partial_z(u_{\neq}\cdot\nabla u_{3,\neq})\|^2_{L^2L^2}\big)\leq CA^{1-3\epsilon}E_2^4.
			\end{aligned}
		\end{equation}
		Similarly, by Lemma \ref{sob_inf_1}, Lemma \ref{sob_inf_2} and Lemma \ref{u1_hat1}, we get
		\begin{equation}\label{p12}
			\begin{aligned}
				&\quad\|{\rm e}^{\frac32aA^{-\frac{1}{3}}t}
				\nabla \big((\partial_y+\kappa\partial_z)
				\triangle^{-1}P_{1,2}\big) \|^2_{L^2L^2}\\
				&\leq C\|{\rm e}^{\frac32aA^{-\frac{1}{3}}t}P_{1,2} \|^2_{L^2L^2}
				\leq C\|{\rm e}^{\frac32aA^{-\frac{1}{3}}t} \nabla \widetilde{u_{1,0}}\partial_x(u_{2,\neq},u_{3,\neq}) \|^2_{L^2L^2}
				\\
				&\leq 
				C\|\widetilde{u_{1,0}}\|^2_{L^{\infty}H^2}
				\|{\rm e}^{\frac32aA^{-\frac{1}{3}}t}  \partial_x\nabla(u_{2,\neq},u_{3,\neq}) \|^2_{L^2L^2}
				\leq CA^{\frac53-\frac72\epsilon}E_5^2,
			\end{aligned}
		\end{equation}
		where we use 
		$$\|\nabla \widetilde{u_{1,0}}\partial_xu_{j,\neq} \|^2_{L^2}
		\leq 
		\|\nabla \widetilde{u_{1,0}}\|_{L^{\infty}_zL^2_y}
		\|\partial_xu_{j,\neq} \|^2_{L^{\infty}_yL^2_{x,z}}
		\leq 
		\| \widetilde{u_{1,0}}\|_{H^2}
		\|\partial_x\nabla u_{j,\neq} \|^2_{L^2}.$$
		Thus the proof of $\eqref{eq:g1g2}_1$
		is complete.
		
		For $j=2,3,$  by (\ref{u23_infty}) and Lemma \ref{lemma_u23_1}, there holds
		\begin{equation}\label{g31_1}
			\begin{aligned}
				\|{\rm e}^{\frac32aA^{-\frac{1}{3}}t}
				\partial_x^2 g_{j,1}\|^2_{L^2L^2}
				&\leq C(\|u_{2,0}\|^2_{L^{\infty}H^2}
				+\|u_{3,0}\|^2_{L^{\infty}H^1})
				\|{\rm e}^{\frac32aA^{-\frac{1}{3}}t}  \partial_x^2\nabla(u_{2,\neq},u_{3,\neq}) \|^2_{L^2L^2}\\
				&\leq CA^{1-\frac{7}{2}\epsilon}E_5^2.
			\end{aligned}
		\end{equation}
		Due to $\partial_yu_{2,0}+\partial_zu_{3,0}=0,$
		we have 
		$$\|\nabla u_{2,0}\|^2_{L^{\infty}}
		+\|\nabla u_{3,0}\|^2_{L^{\infty}}
		\leq C(\|\nabla\triangle u_{2,0}\|^2_{L^{2}}
		+\|\triangle u_{3,0}\|^2_{L^{2}}),$$
		which along with Lemma \ref{lemma_u23_1} show that 
		\begin{equation}\label{g22_1}
			\begin{aligned}
				\|{\rm e}^{\frac32aA^{-\frac{1}{3}}t}
				\partial_x^2 g_{j,2}\|^2_{L^2L^2}
				&\leq C(\|\nabla u_{2,0}\|^2_{L^{2}H^2}
				+\|\nabla u_{3,0}\|^2_{L^{2}H^1})
				\|{\rm e}^{\frac32aA^{-\frac{1}{3}}t}  \partial_x^2(u_{2,\neq},u_{3,\neq}) \|^2_{L^{\infty}L^2}\\
				&\leq CA^{1-\frac{7}{2}\epsilon}E_5^2.
			\end{aligned}
		\end{equation}
		One can finish the proof of the second result by using (\ref{g22_1}) and 
		$$\|{\rm e}^{\frac32aA^{-\frac{1}{3}}t}
		(\partial_y+\kappa\partial_z)\triangle^{-1}(\nabla f)\|^2_{L^2L^2}
		\leq C\|{\rm e}^{\frac32aA^{-\frac{1}{3}}t}f \|^2_{L^2L^2},$$
		where $f$ is selected form $g_{2,2}$ and $g_{3,2}.$
	\end{proof}

	\begin{lemma}\label{lemmaw21}
		Under the assumptions of Theorem \ref{result} and the assumptions (\ref{assumption}),
		there hold 
		\begin{equation}\label{eq:derivative of W}
			A^{\frac32\epsilon}\|\partial_x^2W^{(2)}\|^2_{X_{\frac32a}}
			\leq  \frac{CE_5^2}{A^{\frac23+2\epsilon}},
			\quad 		A^{\frac32\epsilon}\|\partial_x\nabla W^{(2)}\|^2_{X_{\frac32a}}
			\leq  \frac{CE_5^2}{A^{2\epsilon}}.
		\end{equation}
	\end{lemma}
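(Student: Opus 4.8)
\textbf{Proof strategy for Lemma \ref{lemmaw21}.} The plan is to apply the linear space-time estimate (Proposition \ref{timespace2}, or its analogue for the $\partial_x^k$-weighted unknowns) to the transport-diffusion equation $(\ref{w_equ1})_2$ for $W^{(2)}$, whose right-hand side consists of the three source terms $(\partial_t\kappa - \frac{1}{A}\triangle\kappa)u_{3,\neq}$, $-\frac{2}{A}\nabla\kappa\cdot\nabla u_{3,\neq}$ and $-\frac{1}{A}G^{(2)}$. Since $W^{(2)}_{\rm in}=0$, the only contributions to $\|\partial_x^2 W^{(2)}\|^2_{X_{\frac32a}}$ and $\|\partial_x\nabla W^{(2)}\|^2_{X_{\frac32a}}$ come from these forcing terms, each hit with two $x$-derivatives (or one $x$ and one $\nabla$), multiplied by the appropriate exponential weight ${\rm e}^{\frac32aA^{-1/3}t}$, and measured in $L^2L^2$ with the right powers of $A$ dictated by $\mathcal{L}_V$ (namely $A^{-5/3}$ or $A^{-1}$ in front of the squared $L^2L^2$ norms, as in the statements of Lemma \ref{result_0_1} and Lemma \ref{result_0_3}).

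First I would bound the $\kappa$-dependent terms. From (\ref{kappa_estimate}) we have $\|\kappa\|_{L^\infty H^3}^2\le CA^{-2\epsilon}$ and $\|\partial_t\kappa\|_{L^\infty H^1}^2\le CA^{-2(1+\epsilon)}$, and $\triangle\kappa$ loses only two more derivatives so $\frac1A\|\triangle\kappa\|_{L^\infty H^1}\lesssim A^{-1-\epsilon}$. Multiplying these against $\|\partial_x^2 u_{3,\neq}\|$ or $\|\partial_x^2\nabla u_{3,\neq}\|$ and using $E_5$ (which controls $A^{\frac34\epsilon}\|\partial_x^2 u_{3,\neq}\|_{X_{\frac32a}}$) together with the velocity embeddings of Lemma \ref{lemma_u} to pass from $u_{3,\neq}$-derivatives to $\partial_x^2 u_{3,\neq}$ and $\nabla\partial_x^2 u_{3,\neq}$, one sees that each $\kappa$ term carries a spare factor of at least $A^{-2\epsilon}$ relative to $E_5^2$, after the time-space estimate converts the $L^2L^2$ norm of $\partial_x^2\nabla u_{3,\neq}$ into $A^{1/2}\|\partial_x^2 u_{3,\neq}\|_{X_{\frac32a}}$ against the $A^{-5/3}$ or $A^{-1}$ prefactor. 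For the $G^{(2)}$ term I would invoke the bound $\|{\rm e}^{\frac32aA^{-1/3}t}\partial_x^2 G^{(2)}\|_{L^2L^2}^2\le CA^{1-\frac72\epsilon}E_5^2$ just proved in Lemma \ref{result_51}; divided by $A$ (the $\mathcal{L}_V$ prefactor) this gives $CA^{-\frac72\epsilon}E_5^2$, which is $O(A^{-2\epsilon-\frac23}E_5^2)$ for $\epsilon\le\frac49$ — compatible with both claimed estimates after multiplication by the overall $A^{\frac32\epsilon}$. For the $\partial_x\nabla W^{(2)}$ estimate, which loses one derivative less in $x$ but one more in $y,z$, the $G^{(2)}$ contribution is slightly larger, which is exactly why the second inequality in (\ref{eq:derivative of W}) has $A^{-2\epsilon}$ rather than $A^{-2\epsilon-2/3}$ on the right; I would track this by applying the $\partial_x\nabla$ version of Proposition \ref{timespace2} and noting $\|\partial_x\nabla G^{(2)}\|_{L^2L^2}\lesssim A^{1/3}\|\partial_x^2 G^{(2)}\|_{L^2L^2}$-type gains from Fourier support, or by estimating $\nabla G^{(2)}$ directly with one $x$-derivative as in Lemma \ref{result_51}.

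The main obstacle, as always in this part of the argument, is the coupling: $G^{(2)}$ secretly contains $\partial_x u_{2,\neq}$ and $\partial_x u_{3,\neq}$ through the pressure terms $g_{j,2}$ and the $\triangle^{-1}(\partial_y g_{2,2}+\partial_z g_{3,2})$ piece, so the bound on $W^{(2)}$ is not self-contained but feeds on $E_5$ — and $E_5$ in turn involves $W$ through the decomposition $W=W^{(1)}+W^{(2)}$. This circularity is resolved precisely by the quasi-linear splitting: $W^{(2)}$ is engineered so that its forcing is \emph{small} (every term carries at least $A^{-\epsilon}$ from $\kappa$ or from $\widetilde{u_{1,0}}$-type factors buried in $G^{(2)}$), so the $E_5$ on the right comes with a genuinely negative power of $A$ and can be absorbed under the bootstrap assumption (\ref{assumption}) once $A>A_7$. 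Thus no fixed-point within this lemma is needed; one only has to be careful that after applying the linear estimate and the various $A$-powers, the net exponent on $A$ is strictly negative, which the computation above confirms for $\epsilon\in(\frac13,\frac49]$. The routine part is then just collecting the constants and invoking Lemma \ref{lemma_u23_1}, Lemma \ref{u1_hat1}, Lemma \ref{u1_hat2} and Lemma \ref{result_51} to name $A_7$.
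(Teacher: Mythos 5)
Your overall route is the paper's: apply the space-time estimate of Proposition \ref{timespace2} to $(\ref{w_equ1})_2$ with $W^{(2)}_{\rm in}=0$, bound the $\kappa$-terms through (\ref{kappa_estimate}), Lemma \ref{sob_inf_1}, Lemma \ref{sob_inf_2} and $E_5$, and feed in Lemma \ref{result_51} for $G^{(2)}$. However, as written your bookkeeping does not deliver the first inequality of (\ref{eq:derivative of W}). For the $\partial_x^2$ estimate the paper treats $\frac{1}{A}\partial_x^2 G^{(2)}$ as an $f_2$-type forcing, so its contribution enters with the prefactor $A^{\frac13}\cdot A^{-2}=A^{-\frac53}$, and then $A^{\frac32\epsilon}\cdot A^{-\frac53}\cdot A^{1-\frac72\epsilon}E_5^2=A^{-\frac23-2\epsilon}E_5^2$, exactly the claimed bound. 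You instead divide by $A^{-1}$, obtaining $CA^{-\frac72\epsilon}E_5^2$, and assert this is $O(A^{-2\epsilon-\frac23}E_5^2)$ for $\epsilon\le\frac49$; the inequality goes the other way, since $A^{-\frac72\epsilon}\le A^{-2\epsilon-\frac23}$ requires $\epsilon\ge\frac49$. For $\epsilon\in(\frac13,\frac49)$ your computation only yields $A^{\frac32\epsilon}\|\partial_x^2W^{(2)}\|^2_{X_{\frac32a}}\le CE_5^2A^{-2\epsilon}$, which is weaker than the statement. The fix is simply to use the $A^{-\frac53}$ prefactor (which you list as an option but do not apply here).

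For the mixed derivative $\partial_x\partial_jW^{(2)}$, $j=2,3$, two points are missing. First, commuting $\partial_j$ with $V\partial_x$ produces the term $-\partial_jV\,\partial_x^2W^{(2)}$, which must be kept and is controlled by $\|\nabla V\|_{L^\infty L^\infty}\le C$ (see (\ref{v1})) together with the just-proved bound on $\|\partial_x^2W^{(2)}\|^2_{X_{\frac32a}}$; your proposal never mentions this commutator. Second, your suggested shortcut $\|\partial_x\nabla G^{(2)}\|_{L^2L^2}\lesssim A^{1/3}\|\partial_x^2G^{(2)}\|_{L^2L^2}$ ``from Fourier support'' is false, since $\nabla$ contains $\partial_y$, which cannot be dominated by $\partial_x$ on the Fourier side. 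The paper avoids ever estimating $\partial_x\nabla G^{(2)}$: it writes the forcing $\frac{1}{A}\partial_x\partial_jG^{(2)}=\partial_j\bigl(\frac{1}{A}\partial_xG^{(2)}\bigr)$ in divergence form, so Proposition \ref{timespace2} charges it $A\cdot A^{-2}\|\partial_xG^{(2)}\|^2_{L^2L^2}$, and then $\|\partial_xG^{(2)}\|_{L^2}\le\|\partial_x^2G^{(2)}\|_{L^2}$ by (\ref{eq:fourier ine}) reduces everything to the quantity already bounded in Lemma \ref{result_51}. With these two corrections your argument coincides with the paper's proof.
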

	\begin{proof}
		Taking $\partial_x^2$ to $(\ref{w_equ1})_2,$ we have 
		\begin{equation*}
			\mathcal{L}_V\partial_x^2W^{(2)}
			=\partial_x^2\Big((\partial_t\kappa-\frac{\triangle\kappa}{A})u_{3,\neq}\Big)
			-\frac{2}{A}\partial_x^2\Big(\nabla\kappa\cdot\nabla u_{3,\neq}\Big)-\frac{\partial_x^2G^{(2)}}{A}.
		\end{equation*}
		Applying Proposition \ref{timespace2}, there is 
		\begin{equation}\label{w22_ans_1}
			\begin{aligned}
				\|\partial_x^2W^{(2)}\|^2_{X_{\frac32a}}
				\leq &CA^{\frac13}\Big(\|{\rm e}^{\frac32aA^{-\frac{1}{3}}t}
				\partial_x^2(\partial_t\kappa u_{3,\neq})\|^2_{L^2L^2}
				+\|{\rm e}^{\frac32aA^{-\frac{1}{3}}t}
				\partial_x^2\big(\frac{{\triangle\kappa}}{A^2}u_{3,\neq}\big)\|^2_{L^2L^2}\\
				&+\|{\rm e}^{\frac32aA^{-\frac{1}{3}}t}\partial_x^2
				\big(\frac{\nabla\kappa}{A^2}\cdot\nabla u_{3,\neq}\big)\|^2_{L^2L^2}\Big)
				+\frac{C\|{\rm e}^{\frac32aA^{-\frac{1}{3}}t}
					\partial_x^2 G^{(2)}\|^2_{L^2L^2}}{A^{\frac53}}.
			\end{aligned}
		\end{equation}
		Using Lemma \ref{sob_inf_1} and Lemma \ref{sob_inf_2}, we get
		\begin{equation*}
			\begin{aligned}
				&\|\partial_x^2(\partial_t\kappa u_{3,\neq})\|^2_{L^2}
				\leq \|\partial_t\kappa\|^2_{L^{\infty}_zL^2_y}
				\|\partial_x^2u_{3,\neq}\|^2_{L^{\infty}_yL^2_{x,z}}
				\leq  C\|\partial_t\kappa\|^2_{H^1}
				\|\partial_x^2u_{3,\neq}\|_{L^2}
				\|\partial_x^2\partial_yu_{3,\neq}\|_{L^2},\\
				&\|\partial_x^2({\triangle\kappa}
				u_{3,\neq})\|^2_{L^2}
				\leq \|\triangle \kappa\|^2_{L^{\infty}_zL^2_y}
				\|\partial_x^2u_{3,\neq}\|^2_{L^{\infty}_yL^2_{x,z}}
				\leq  C\|\kappa\|^2_{H^3}
				\|\partial_x^2u_{3,\neq}\|_{L^2}
				\|\partial_x^2\partial_yu_{3,\neq}\|_{L^2},\\
				&\|\partial_x^2(\nabla\kappa\cdot\nabla
				u_{3,\neq})\|^2_{L^2}
				\leq \|\nabla \kappa\|^2_{L^{\infty}}
				\|\partial_x^2\nabla u_{3,\neq}\|^2_{L^2}
				\leq  C\|\kappa\|^2_{H^3}
				\|\partial_x^2\nabla u_{3,\neq}\|^2_{L^2},
			\end{aligned}
		\end{equation*}
		which along with (\ref{kappa_estimate}) imply that 
		\begin{equation}\label{u33_1}
			\begin{aligned}
				&\quad \|{\rm e}^{\frac32aA^{-\frac{1}{3}}t}
				\partial_x^2(\partial_t\kappa u_{3,\neq})\|^2_{L^2L^2}
				+\|{\rm e}^{\frac32aA^{-\frac{1}{3}}t}\partial_x^2
				\big(\frac{{\triangle\kappa}}{A^2}u_{3,\neq}\big)\|^2_{L^2L^2}
				\\
				&\leq C\big(\|\partial_t\kappa\|^2_{L^{\infty}H^1}+
				A^{-2}\|\kappa\|^2_{L^{\infty}H^3}\big)
				\|{\rm e}^{\frac32aA^{-\frac{1}{3}}t}
				\partial_x^2 u_{3,\neq}\|_{L^2L^2}
				\|{\rm e}^{\frac32aA^{-\frac{1}{3}}t}
				\partial_x^2\partial_y u_{3,\neq}\|_{L^2L^2}\\
				&\leq CA^{-\frac43-2\epsilon}\|\partial_x^2u_{3,\neq}\|_{X_{\frac32a}}^2,
			\end{aligned}
		\end{equation}
		and 
		\begin{equation}\label{u33_2}
			\begin{aligned}
				&\quad\|{\rm e}^{\frac32aA^{-\frac{1}{3}}t}\partial_x^2
				\big(\frac{\nabla\kappa}{A^2}\cdot\nabla u_{3,\neq}\big)\|^2_{L^2L^2}
				\leq C\frac{\|\kappa\|^2_{L^{\infty}H^3}\|{\rm e}^{\frac32aA^{-\frac{1}{3}}t}
					\partial_x^2\nabla u_{3,\neq}\|_{L^2L^2}^2}{A^2}
				\leq\frac{C\|\partial_x^2u_{3,\neq}\|_{X_{\frac32a}}^2}
				{A^{1+2\epsilon}}.
			\end{aligned}
		\end{equation}
		Using (\ref{u33_1}), (\ref{u33_2}) and Lemma \ref{result_51}, we infer from  (\ref{w22_ans_1}) that
		\begin{equation}\label{w22_ans_2}
			\begin{aligned}
				A^{\frac32\epsilon}\|\partial_x^2W^{(2)}\|^2_{X_{\frac32a}}\leq \frac{CA^{\frac32\epsilon}\|\partial_x^2u_{3,\neq}\|_{X_{\frac32a}}^2}{A^{\frac23+2\epsilon}}+\frac{CA^{\frac32\epsilon}\|e^{\frac32aA^{-\frac13}t}\partial_{x}^{2}G^{(2)}\|_{L^{2}L^{2}}^{2}}{A^{\frac53}}
				\leq  \frac{CE_5^2}{A^{\frac23+2\epsilon}}.
			\end{aligned}
		\end{equation}
		For $j=2,3,$ there holds 
		$$\mathcal{L}_V\partial_x\partial_jW^{(2)}
		=\partial_x\partial_j\Big((\partial_t\kappa-\frac{\triangle\kappa}{A})u_{3,\neq}\Big)
		-\frac{2\partial_x\partial_j
			(\nabla\kappa\cdot\partial_x^2\nabla u_{3,\neq})}{A}
		-\partial_jV\partial_x^2W^{(2)}-\frac{\partial_x\partial_jG^{(2)}}{A}.$$
		Applying Proposition \ref{timespace2} and setting $b=\frac32a$, we have 
		\begin{equation*}\label{w22_ans_3}
			\begin{aligned}
				&\quad\|\partial_x\partial_jW^{(2)}\|^2_{X_{\frac32a}}\\
				&\leq CA\Big(\|{\rm e}^{\frac32aA^{-\frac{1}{3}}t}
				\partial_x(\partial_t\kappa u_{3,\neq})\|^2_{L^2L^2}
				+\|{\rm e}^{\frac32aA^{-\frac{1}{3}}t}
				\partial_x\big(\frac{{\triangle\kappa}}{A^2}u_{3,\neq}\big)\|^2_{L^2L^2}
				+\|{\rm e}^{\frac32aA^{-\frac{1}{3}}t}\partial_x
				\big(\frac{\nabla\kappa}{A^2}\cdot\nabla u_{3,\neq}\big)\|^2_{L^2L^2}\Big)\\
				&+C\Big(A^{\frac{1}{3}}\|{\rm e}^{\frac32aA^{-\frac{1}{3}}t}
				\partial_jV\partial_x^2W^{(2)}\|^2_{L^2L^2}
				+\frac{\|{\rm e}^{\frac32aA^{-\frac{1}{3}}t}
					\partial_x^2 G^{(2)}\|^2_{L^2L^2}}{A}\Big).
			\end{aligned}
		\end{equation*}
		Due to 
		\begin{equation}\label{v1}
			\|\nabla V\|_{L^{\infty}L^{\infty}}\leq C(1+\frac{\|\nabla\widehat{u_{1,0}}\|_{L^{\infty}L^{\infty}}}{A})
			\leq C(1+\frac{\|\widehat{u_{1,0}}\|_{L^{\infty}H^3}}{A})\leq C,
		\end{equation}
		using (\ref{u33_1}), (\ref{u33_2}), (\ref{w22_ans_2}) and Lemma \ref{result_51}, the proof of \eqref{eq:derivative of W} is complete.
		
	\end{proof}
	\begin{lemma}\label{lemmaw22}
		Under the conditions of Theorem \ref{result} and the assumptions \eqref{assumption}, there exists a constant $A_8$ independent of $t$ and $A,$ such that 
		if $A\geq A_8,$ then 
		\begin{equation*}
			A^{\frac32\epsilon}\|\triangle W^{(1)}\|^2_{X_{\frac32a}}\leq C\big(\|(\partial_x^2n_{\rm in})_{\neq}\|^2_{L^2}
			+\|(\partial_z^2n_{\rm in})_{\neq}\|^2_{L^2}+1\big).
		\end{equation*}
	\end{lemma}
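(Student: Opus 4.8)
The plan is to run the space--time estimate of Proposition \ref{timespace2} on the equation satisfied by $\triangle W^{(1)}$, treating the transport--Laplacian part $\mathcal{L}_V$ together with the reaction term $-2(\partial_y+\kappa\partial_z)\triangle^{-1}(\partial_yV\partial_xW^{(1)})$ as the principal operator (this is precisely the structure for which the enhanced--dissipation/inviscid--damping estimates were designed, as in \cite{wei2, Chen1}), and then bound every remaining term on the right-hand side by the already-controlled energies $E_1,\dots,E_5$ and by $\|\partial_x\nabla W^{(2)}\|_{X_{\frac32a}}$, which Lemma \ref{lemmaw21} makes $O(A^{-\epsilon})$ small. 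First I would apply $\triangle$ to $(\ref{w_equ1})_1$ and write out the resulting forcing: the source $\frac{n_{\neq}-\partial_yP^{N_2}_{\neq}}{A}-\frac{\kappa\partial_zP^{N_2}_{\neq}}{A}$, the transport-commutator terms coming from $\triangle$ hitting $V\partial_x$ and from $\frac{\widetilde{u_{1,0}}\partial_xW}{A}$, the coupling term $2(\partial_y+\kappa\partial_z)\triangle^{-1}(\partial_yV\partial_xW^{(2)})$, and the nonlinear/interaction bundle $-\frac{G^{(1)}}{A}$. Then Proposition \ref{timespace2} gives
\begin{equation*}
	\|\triangle W^{(1)}\|^2_{X_{\frac32a}}\leq C\Big(\|(\triangle W_{\rm in})_{\neq}\|^2_{L^2}+\text{(weighted $L^2L^2$ norms of all the forcing terms)}\Big),
\end{equation*}
and the point is that each forcing term carries a genuinely negative power of $A$ after the weighted-norm bookkeeping.

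The key steps, in order, are as follows. (i) Control the main source: by Lemma \ref{lem:ellip_2} and the elliptic identity $n_{\neq}-\partial_yP^{N_2}_{\neq}=n_{\neq}+\partial_y^2\triangle^{-1}n_{\neq}$ (so that at most one derivative is lost), together with $\|\kappa\|_{L^\infty H^3}\le CA^{-\epsilon}$ from \eqref{kappa_estimate}, the weighted norm of $\nabla$ of $\frac{1}{A}(n_{\neq}-\partial_yP^{N_2}_{\neq})-\frac{\kappa}{A}\partial_zP^{N_2}_{\neq}$ is bounded in terms of $\|(\partial_x^2,\partial_z^2)n_{\neq}\|_{X_{\frac32a}}$, hence by $E_4$ and Lemma \ref{result_0_3}; this is the term that produces the $\|(\partial_x^2n_{\rm in})_{\neq}\|^2_{L^2}+\|(\partial_z^2n_{\rm in})_{\neq}\|^2_{L^2}+1$ on the right. (ii) Absorb the coupling term: $2(\partial_y+\kappa\partial_z)\triangle^{-1}(\partial_yV\partial_xW^{(2)})$ differentiated twice reduces, using $\|\nabla V\|_{L^\infty L^\infty}\le C$ (see \eqref{v1}) and $\|\kappa\|_{L^\infty H^3}\le CA^{-\epsilon}$, to $\|\partial_x\nabla W^{(2)}\|_{X_{\frac32a}}$ and $\|\partial_x^2W^{(2)}\|_{X_{\frac32a}}$, which by Lemma \ref{lemmaw21} are $O(A^{-\frac34\epsilon}E_5/A^{\frac34\epsilon})$-type small. (iii) Treat $\frac{1}{A}\widetilde{u_{1,0}}\partial_xW$ and its derivatives as a perturbation: $A^{\epsilon}\|\widetilde{u_{1,0}}\|_{L^\infty H^2}\le CA^{1/3}$ from Lemma \ref{u1_hat1}, so after extracting the $A^{1/3}$ from Proposition \ref{timespace2} and the weight loss it still costs a negative power of $A$ times $\|\partial_x\nabla W\|_{X_{\frac32a}}\le CA^{-\frac34\epsilon}E_5\cdot(\dots)+\|\triangle W^{(1)}\|_{X_{\frac32a}}$, so this piece is absorbed into the left side once $A$ is large. (iv) Handle the transport commutators from $\triangle(V\partial_xW^{(1)})$: these are $\nabla V\cdot\nabla\partial_xW^{(1)}$ and $\triangle V\,\partial_xW^{(1)}$ type terms; the first uses $\|\nabla V\|_{L^\infty L^\infty}\le C$ and is absorbed, the second uses $\|\triangle V\|\le CA^{-1}\|\widehat{u_{1,0}}\|_{H^3}\le CA^{-\epsilon}$. (v) Bound $\frac{1}{A}\nabla G^{(1)}$ directly by Lemma \ref{result_51}: $\|{\rm e}^{\frac32aA^{-1/3}t}\nabla G^{(1)}\|^2_{L^2L^2}\le CA^{\frac53-\frac72\epsilon}(E_2^4+E_5^2)$, and dividing by $A$ and multiplying by the $X$-structure powers gives a net negative power of $A$ since $\epsilon>\frac13$.

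The main obstacle I anticipate is step (iii)--(iv): the terms $\frac{1}{A}\widetilde{u_{1,0}}\partial_xW$ and the $\nabla V$-commutator both contain $\partial_xW^{(1)}$ with one or two extra derivatives, so they are not obviously of lower order than $\triangle W^{(1)}$ itself, and one must be careful that the power of $A$ gained (from $A^{-1}$ times the weighted norm, against the $A^{1/3}$ loss in Proposition \ref{timespace2}) is strictly negative so the term can be moved to the left-hand side and absorbed — this is where the hypothesis $\epsilon\le\frac49$ and the precise weight $X_{\frac32a}$ versus $X_a$ bookkeeping matter, exactly as in the estimates of $E_{2,1}$ and $E_4$. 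Once all five groups are bounded by $CA^{-\delta}(\text{polynomial in }E_i)+\frac12\|\triangle W^{(1)}\|^2_{X_{\frac32a}}$ for some $\delta>0$, choosing $A\ge A_8$ large enough absorbs the last term and yields $A^{\frac32\epsilon}\|\triangle W^{(1)}\|^2_{X_{\frac32a}}\le C\big(\|(\partial_x^2n_{\rm in})_{\neq}\|^2_{L^2}+\|(\partial_z^2n_{\rm in})_{\neq}\|^2_{L^2}+1\big)$, using finally that $\|(\triangle W_{\rm in})_{\neq}\|_{L^2}$ is controlled by $\|(u_{\rm in})_{\neq}\|_{H^2}$ together with $\|\kappa(0)\|_{H^3}\le CA^{-\epsilon}$ from the identity $W=u_{2,\neq}+\kappa u_{3,\neq}$.
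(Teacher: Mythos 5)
Your inventory of forcing terms and the bounds you assign to them --- the source via $\triangle(n_{\neq}-\partial_yP^{N_2}_{\neq})=(\partial_x^2+\partial_z^2)n_{\neq}$ and $E_4$ (Lemma \ref{result_0_3}), the $\kappa\partial_zP^{N_2}_{\neq}$ term via \eqref{kappa_estimate}, the $W^{(2)}$-coupling via Lemma \ref{lemmaw21}, $\nabla G^{(1)}$ via Lemma \ref{result_51}, and $\widetilde{u_{1,0}}\partial_xW$ via Lemma \ref{u1_hat1} --- is essentially the accounting the paper performs in (\ref{w1_equ1}). But there is a genuine gap in how you set up the space--time estimate. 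You declare that the nonlocal term $-2(\partial_y+\kappa\partial_z)\triangle^{-1}(\partial_yV\partial_xW^{(1)})$ belongs to the principal operator, yet the proposition you then invoke, Proposition \ref{timespace2}, does not contain it: its principal part is only $\partial_t+(y+\widehat{u_{1,0}}/A)\partial_x-\frac1A\triangle$. The statement that does carry the nonlocal term in its principal part is Proposition \ref{timespace4}, and it delivers the $\|\triangle W^{(1)}\|^2_{X_{\frac32 a}}$ bound directly from the \emph{undifferentiated} equation $(\ref{w_equ1})_1$; this is exactly what the paper does, so no transport commutators ever arise there.

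If instead you really apply $\triangle$ to $(\ref{w_equ1})_1$ and feed the result into Proposition \ref{timespace2}, your step (iv) fails. Since $\partial_xV=0$ and $\partial_yV=1+A^{-1}\partial_y\widehat{u_{1,0}}$, the commutator $2\nabla V\cdot\nabla\partial_xW^{(1)}$ has leading part $2\partial_x\partial_yW^{(1)}$, an order-one term, not a perturbation. In the $X_{\frac32 a}$ bookkeeping it enters as an $f_2$-type forcing with prefactor $A^{\frac13}$ (or as $\partial_xf_1$, resp.\ $\nabla\cdot f_3$, with prefactors $1$, resp.\ $A$, acting on norms of $\partial_y\nabla W^{(1)}$, resp.\ $\partial_xW^{(1)}$), whereas the left-hand side only controls $\|{\rm e}^{\frac32aA^{-\frac13}t}\partial_x\nabla W^{(1)}\|^2_{L^2L^2}$ with prefactor $1$, together with $A^{-\frac13}\|{\rm e}^{\frac32aA^{-\frac13}t}\triangle W^{(1)}\|^2_{L^2L^2}$ and $A^{-1}\|{\rm e}^{\frac32aA^{-\frac13}t}\nabla\triangle W^{(1)}\|^2_{L^2L^2}$; the mismatch is at least a factor $A^{\frac13}$ and worsens as $A$ grows, so no largeness of $A$ permits absorption, and $\|\nabla V\|_{L^{\infty}L^{\infty}}\leq C$ from \eqref{v1} is of no help. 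This commutator can only be removed through its leading-order cancellation against $\triangle$ of the nonlocal term $-2(\partial_y+\kappa\partial_z)\triangle^{-1}(\partial_yV\partial_xW^{(1)})$ --- that cancellation is precisely what Proposition \ref{timespace4} encodes and the reason the quantity $W$ and the decomposition $W=W^{(1)}+W^{(2)}$ were introduced. The fix is to drop the manual differentiation and apply Proposition \ref{timespace4} directly to $(\ref{w_equ1})_1$; after that your bounds (i), (ii), (v) go through, and for $\widetilde{u_{1,0}}\partial_xW$ it is cleaner to follow the paper and estimate $\frac1A\|{\rm e}^{\frac32aA^{-\frac13}t}\nabla(\widetilde{u_{1,0}}\partial_xW)\|^2_{L^2L^2}$ directly by $E_5$ through $W=u_{2,\neq}+\kappa u_{3,\neq}$ as in (\ref{widetilde u10}), rather than by absorption, since $\partial_x\triangle W^{(2)}$ is not controlled by Lemma \ref{lemmaw21}.
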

	\begin{proof}
		Applying Proposition \ref{timespace4} to $(\ref{w_equ1})_1$, we get 
		\begin{equation}\label{w1_equ1}
			\begin{aligned}
				\|\triangle W^{(1)}\|^2_{X_{\frac32a}}
				\leq &C\Big(\| \triangle W_{\rm in}\|^2_{L^2}
				+\frac{\|{\rm e}^{\frac32aA^{-\frac{1}{3}}t} \nabla(n_{\neq}-\partial_yP^{N_2}_{\neq})\|^2_{L^2L^2}}{A}
				+\frac{\|{\rm e}^{\frac32aA^{-\frac{1}{3}}t} \nabla(\kappa\partial_zP^{N_2}_{\neq})\|^2_{L^2L^2}}{A}\\
				&+A^{\frac13}\|{\rm e}^{\frac32aA^{-\frac{1}{3}}t} 
				\triangle\big((\partial_y+\kappa\partial_z)\triangle^{-1}
				(\partial_yV\partial_xW^{(2)})\big)\|^2_{L^2L^2}
				\\&+\frac{\|{\rm e}^{\frac32aA^{-\frac{1}{3}}t} \nabla G^{(1)}\|^2_{L^2L^2}}{A}+\frac{\|e^{\frac32aA^{-\frac13}t}\nabla(\widetilde{ u_{1,0}}\partial_{x}W)\|_{L^{2}L^{2}}^{2}}{A}
				\Big).
			\end{aligned}
		\end{equation}
		Due to $\triangle P^{N_2}=\partial_yn,$ 
		then  
		$\triangle(n_{\neq}-\partial_yP^{N_2}_{\neq})
		=(\partial_x^2+\partial_z^2)n_{\neq},$
		which implies that 
		\begin{equation*}
			\|\nabla(n_{\neq}-\partial_yP^{N_2}_{\neq})\|_{L^2}
			\leq C\|(\partial_x,\partial_z)n_{\neq}\|_{L^2}
			\leq C\|(\partial_x^2,\partial_x\partial_z)n_{\neq}\|_{L^2},
		\end{equation*}
		and 
		\begin{equation}\label{w1_equ2}
			\|{\rm e}^{\frac32aA^{-\frac{1}{3}}t} \nabla(n_{\neq}-\partial_yP^{N_2}_{\neq})\|^2_{L^2L^2}\leq C A^{\frac13}\left(\|\partial_x^2n_{\neq}\|^2_{X_{\frac32a}}
			+\|\partial_x\partial_zn_{\neq}\|^2_{X_{\frac32a}}\right).
		\end{equation}
		Due to (\ref{kappa_estimate}) and 
		$\|\partial_z\nabla P^{N_2}_{\neq}\|^2_{L^2}\leq \|\partial_zn_{\neq}\|^2_{L^2}\leq \|\partial_x\partial_zn_{\neq}\|^2_{L^2},$
		therefore
		\begin{equation*}
			\begin{aligned}
				\|\nabla(\kappa\partial_zP^{N_2}_{\neq})\|^2_{L^2}\leq 
				C\|\kappa\|^2_{H^3}\|\partial_z\nabla P^{N_2}_{\neq}\|^2_{L^2}
				\leq CA^{-2\epsilon}\|\partial_z\nabla P^{N_2}_{\neq}\|^2_{L^2}
				\leq CA^{-2\epsilon}\|\partial_x\partial_zn_{\neq}\|^2_{L^2},
			\end{aligned}
		\end{equation*}
		which implies that 
		\begin{equation}\label{w1_equ3}
			\|{\rm e}^{\frac32aA^{-\frac{1}{3}}t} \nabla(\kappa\partial_zP^{N_2}_{\neq})\|^2_{L^2L^2}\leq 
			CA^{\frac13-2\epsilon}\|\partial_x\partial_zn_{\neq}\|^2_{X_{\frac32a}} .
		\end{equation}	
		By \eqref{kappa_estimate}, \eqref{v1} and 
		\begin{equation}\label{v3}
			\begin{aligned}
				\|(\partial_y,\partial_z)\nabla V\|_{L^{\infty}L^{\infty}}+
				\|\triangle V\|_{L^{\infty}L^{\infty}}
				\leq CA^{-1}\|\widehat{u_{1,0}}\|_{L^{\infty}H^4}
				\leq CA^{-\epsilon},
			\end{aligned}
		\end{equation}
		there holds
		\begin{equation}\label{result_temp1}
			A^{\frac13}\|{\rm e}^{\frac32aA^{-\frac{1}{3}}t} 
			\triangle\big((\partial_y+\kappa\partial_z)\triangle^{-1}
			(\partial_yV\partial_xW^{(2)})\big)\|^2_{L^2L^2}
			\leq CA^{\frac23}\|\partial_x\nabla W^{(2)}\|_{X_{\frac32a}}^2.
		\end{equation}
		Recalling $ W=u_{2,\neq}+\kappa u_{3,\neq}, $ then using Lemma \ref{sob_inf_1}, Lemma \ref{sob_inf_2}, Lemma \ref{u1_hat1} and (\ref{kappa_estimate}), we get
		\begin{equation}\label{widetilde u10}
			\begin{aligned}
				&\|e^{\frac32aA^{-\frac13}t}\nabla(\widetilde{ u_{1,0}}\partial_{x}W)\|_{L^{2}L^{2}}^{2}\\
				\leq&\|\nabla\widetilde{ u_{1,0}}\|_{L^{\infty}_{t,z}L^{2}_{y}}^{2}\|e^{\frac32aA^{-\frac13}t}\partial_{x}(u_{2,\neq}+\kappa u_{3,\neq})\|_{L^{\infty}_{y}L^{2}_{t,x,z}}^{2}+\|\widetilde{ u_{1,0}}\|_{L^{\infty}L^{\infty}}^{2}\|e^{\frac32aA^{-\frac13}t}\nabla\partial_{x}(u_{2,\neq}+\kappa u_{3,\neq})\|_{L^{2}L^{2}}^{2}\\\leq&C\|\widetilde{ u_{1,0}}\|_{L^{\infty}H^{2}}^{2}\|e^{\frac32aA^{-\frac13}t}\nabla\partial_{x}(u_{2,\neq}, u_{3,\neq})\|_{L^{2}L^{2}}^{2}\\\leq&CA^{\frac53-\frac72\epsilon}E_{5}^{2}.
			\end{aligned}
		\end{equation}
		
		By (\ref{w1_equ1}), (\ref{w1_equ2}), (\ref{w1_equ3}), (\ref{result_temp1}), (\ref{widetilde u10}), Lemma \ref{result_0_3}, Lemma \ref{result_51} and Lemma \ref{lemmaw21}, as long as 
		$$A>\max\{A_7,E_5^{\frac{3}{3\epsilon-1}}\}:=A_8,$$
		we conclude that 
		\begin{equation*}
			\begin{aligned}
				A^{\frac32\epsilon}\|\triangle W^{(1)}\|^2_{X_{\frac32a}}
				&\leq C\Big(A^{\frac32\epsilon}\| u_{\rm in}\|^2_{H^2}
				+\frac{\|\partial_x^2n_{\neq}\|^2_{X_{\frac32a}}
					+\|\partial_x\partial_zn_{\neq}\|^2_{X_{\frac32a}} }{A^{\frac23-\frac32\epsilon}} 
				+\frac{E_2^4+E_5^2}{A^{\frac32\epsilon}}
				+\frac{E_5^2}{A^{2\epsilon-\frac23}}\Big)\\
				&\leq C\left(A^{\frac32\epsilon}\| u_{\rm in}\|^2_{H^2}+\|(\partial_x^2n_{\rm in})_{\neq}\|^2_{L^2}
				+
				\|(\partial_z^2n_{\rm in})_{\neq}\|^2_{L^2}
				+1\right)\\
				&\leq C\big(\|(\partial_x^2n_{\rm in})_{\neq}\|^2_{L^2}
				+\|(\partial_z^2n_{\rm in})_{\neq}\|^2_{L^2}+1\big).
			\end{aligned}
		\end{equation*}
		Here, to close the energy estimate successfully, $\epsilon$ must satisfy 
		$\epsilon\in(\frac13,\frac49]$.
	\end{proof}

	\begin{corollary}\label{c1}
		According to Lemma \ref{lemmaw21} and Lemma \ref{lemmaw22}, 
		when $A>A_8,$
		there holds 
		\begin{equation*}
			\begin{aligned}
				A^{\frac32\epsilon}\|\partial_x\nabla W\|^2_{X_{\frac32a}}
				&\leq CA^{\frac32\epsilon}\|\partial_x\nabla (W^{(1)},W^{(2)})\|^2_{X_{\frac32a}}
				\leq C\big(\|(n_{\rm in})_{\neq}\|^2_{H^2}+1\big).		
			\end{aligned}
		\end{equation*} 
	\end{corollary}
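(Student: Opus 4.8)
The plan is to establish Corollary \ref{c1} as a direct consequence of the two preceding lemmas, by reconstructing $W$ from its quasi-linear decomposition $W=W^{(1)}+W^{(2)}$ and then relating the weighted norm $A^{\frac32\epsilon}\|\partial_x\nabla W\|^2_{X_{\frac32a}}$ of the full quantity to the weighted norms of the two pieces. First I would observe that since $W=W^{(1)}+W^{(2)}$, the triangle inequality in the $X_{\frac32a}$ norm gives
\begin{equation*}
\|\partial_x\nabla W\|_{X_{\frac32a}}\leq \|\partial_x\nabla W^{(1)}\|_{X_{\frac32a}}+\|\partial_x\nabla W^{(2)}\|_{X_{\frac32a}},
\end{equation*}
so that $\|\partial_x\nabla W\|^2_{X_{\frac32a}}\leq 2\|\partial_x\nabla W^{(1)}\|^2_{X_{\frac32a}}+2\|\partial_x\nabla W^{(2)}\|^2_{X_{\frac32a}}$, and multiplying by $A^{\frac32\epsilon}$ reduces everything to bounding the two summands.

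For the $W^{(2)}$ piece, Lemma \ref{lemmaw21} already yields $A^{\frac32\epsilon}\|\partial_x\nabla W^{(2)}\|^2_{X_{\frac32a}}\leq CA^{-2\epsilon}E_5^2$, which under the running assumptions (\ref{assumption}) is bounded by a constant (in fact it decays in $A$), so this term is harmless. For the $W^{(1)}$ piece, since $\partial_x\nabla$ is controlled by $\triangle$ on the non-zero modes — using the elementary Fourier facts in \eqref{eq:fourier ine} together with the fact that $\partial_x\nabla W^{(1)}$ involves only one power of $\partial_x$ while $\triangle W^{(1)}$ carries full second-order regularity, so that $\|\partial_x\nabla W^{(1)}\|_{L^2}\leq C\|\triangle W^{(1)}\|_{L^2}$ pointwise in $t$, and this propagates through each constituent norm of $X_{\frac32a}$ — I would bound $\|\partial_x\nabla W^{(1)}\|_{X_{\frac32a}}\leq C\|\triangle W^{(1)}\|_{X_{\frac32a}}$. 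Then Lemma \ref{lemmaw22} gives $A^{\frac32\epsilon}\|\triangle W^{(1)}\|^2_{X_{\frac32a}}\leq C(\|(\partial_x^2n_{\rm in})_{\neq}\|^2_{L^2}+\|(\partial_z^2n_{\rm in})_{\neq}\|^2_{L^2}+1)$, and since the right-hand side is dominated by $C(\|(n_{\rm in})_{\neq}\|^2_{H^2}+1)$, combining the two estimates completes the proof.

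The only mild subtlety — and the step I would be most careful about — is the reduction of $\partial_x\nabla$ to $\triangle$ within the anisotropic time-space norm $X_{\frac32a}$: one must check that each of the four ingredients defining $\|\cdot\|_{X_a}$ (the $L^\infty L^2$ term, the $\nabla\triangle^{-1}\partial_x$ term, the $A^{-1/3}$-weighted $L^2L^2$ term, and the $A^{-1}$-weighted $\|\nabla\cdot\|_{L^2L^2}$ term) is monotone under replacing $\partial_x\nabla$ by $\triangle$ on the non-zero modes, which follows since $W^{(1)}=W^{(1)}_{\neq}$ is a non-zero mode in $x$ and Fourier multipliers of the form $i k_1\cdot(\text{degree-one})/(\text{degree-two})$ are bounded on functions supported in $k_1\neq 0$. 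This is routine but should be stated. No genuine obstacle arises here because all the hard analytic work — the space-time estimates, the control of the nonlinear and lift-up terms, and the smallness in $A$ — has already been carried out in Lemmas \ref{result_51}, \ref{lemmaw21}, and \ref{lemmaw22}; Corollary \ref{c1} is purely an assembly step, repackaging $\|\triangle W^{(1)}\|$ and $\|\partial_x\nabla W^{(2)}\|$ into a single bound on $\|\partial_x\nabla W\|$ in the form needed to close the estimate for $E_5$ in Proposition \ref{pro4}.
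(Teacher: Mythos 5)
Your proposal is correct and follows exactly the route the paper intends: split $W=W^{(1)}+W^{(2)}$ by the triangle inequality, control $\partial_x\nabla W^{(2)}$ directly by Lemma \ref{lemmaw21} (with the $A^{-2\epsilon}E_5^2$ factor harmless for $A>A_8$ under the bootstrap assumption), and control $\partial_x\nabla W^{(1)}$ by $\triangle W^{(1)}$ via the Fourier multiplier bound $|k_1|\,|k|\leq |k|^2$ (valid in every component of the $X_{\frac32a}$ norm) together with Lemma \ref{lemmaw22}, then dominate the initial-data terms by $\|(n_{\rm in})_{\neq}\|_{H^2}^2$. The paper offers no separate argument for this corollary, so your careful verification of the multiplier step through each piece of the $X$-norm simply makes explicit what the paper leaves implicit.
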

	
	\begin{lemma}\label{c2}
		Under the conditions of Theorem \ref{result} and the assumptions \eqref{assumption}, 
		if $A\geq A_8,$ then 
		\begin{equation*}
			A^{\frac32\epsilon}\|\partial_x^2u_{3,\neq}\|^2_{X_{\frac32a}}\leq C\big(\|(\partial_x^2n_{\rm in})_{\neq}\|^2_{L^2}
			+\|(\partial_z^2n_{\rm in})_{\neq}\|^2_{L^2}+1\big).
		\end{equation*}
	\end{lemma}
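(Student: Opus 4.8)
The goal is to bound $\|\partial_x^2 u_{3,\neq}\|_{X_{\frac32 a}}$ by data. The quantity $W = u_{2,\neq}+\kappa u_{3,\neq}$ has already been controlled via the decomposition $W=W^{(1)}+W^{(2)}$ in Lemmas \ref{lemmaw21}--\ref{lemmaw22} and Corollary \ref{c1}, so the plan is to recover $u_{3,\neq}$ from $W$ together with the divergence-free constraint.

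\textbf{Step 1: the elliptic relation for $u_{3,\neq}$.}
Using $\mathrm{div}\,u_{\neq}=0$, i.e. $\partial_x u_{1,\neq}+\partial_y u_{2,\neq}+\partial_z u_{3,\neq}=0$, together with $\omega_{2,\neq}=\partial_z u_{1,\neq}-\partial_x u_{3,\neq}$, one can express $(\partial_x^2+\partial_z^2)u_{3,\neq}$ in terms of $\partial_x\partial_y u_{2,\neq}$ and $\partial_x\omega_{2,\neq}$ (this is exactly relation $\eqref{eq:velocity embed}_4$ in Lemma \ref{lemma_u}). The idea I would use is to substitute $u_{2,\neq}=W-\kappa u_{3,\neq}$ into this relation: then $\partial_x\partial_y u_{2,\neq}=\partial_x\partial_y W - \partial_x\partial_y(\kappa u_{3,\neq})$, so
\begin{equation*}
(\partial_x^2+\partial_z^2)u_{3,\neq}+\partial_x\partial_y\big(\kappa u_{3,\neq}\big)=\partial_x\partial_y W-\partial_x\omega_{2,\neq}.
\end{equation*}
On Fourier side in $x$ this is an elliptic problem for $u_{3,\neq}$ in $(y,z)$ with a lower-order perturbation governed by $\kappa$; since $\|\kappa\|_{L^\infty H^3}\le CA^{-\epsilon}$ by \eqref{kappa_estimate}, the perturbation term can be absorbed for $A$ large. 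Applying $\partial_x$ once more and using $\|\partial_x^2 u_{3,\neq}\|\le C\|\partial_x(\partial_x^2+\partial_z^2)u_{3,\neq}\|$ (via $\eqref{eq:fourier ine}$, since $\partial_x^2+\partial_z^2$ is invertible on nonzero modes and $\partial_x^2\le\partial_x^2+\partial_z^2$ on frequencies), I would reduce the bound on $\|\partial_x^2u_{3,\neq}\|_{X_{\frac32a}}$ to bounds on $\|\partial_x^2\nabla W\|_{X_{\frac32a}}$ (equivalently $\|\partial_x\nabla W\|_{X_{\frac32 a}}$ up to an $x$-derivative, controlled by Corollary \ref{c1}), $\|\partial_x^2\omega_{2,\neq}\|_{X_{\frac32a}}$, and the $\kappa$-perturbation $\|\partial_x^2\partial_y(\kappa u_{3,\neq})\|_{X_{\frac32a}}$.

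\textbf{Step 2: closing the loop.}
The term $\|\partial_x^2\partial_y(\kappa u_{3,\neq})\|_{X_{\frac32a}}$ should be handled by the product estimates of Lemma \ref{sob_inf_1}--\ref{sob_inf_2}, distributing derivatives and using \eqref{kappa_estimate} to gain a factor $A^{-\epsilon}$; this produces a term like $CA^{-\epsilon}\|\partial_x^2 u_{3,\neq}\|_{X_{\frac32a}}$ plus harmless terms, so it can be moved to the left-hand side once $A\ge A_8$ is large enough. The $\partial_x\omega_{2,\neq}$-type contribution needs $\|\partial_x^2\omega_{2,\neq}\|_{X_{\frac32 a}}$; however, $E_2$ already controls $\|\partial_x\omega_{2,\neq}\|_{X_a}$ only at weight $a$ (not $\frac32 a$), so I expect one actually needs the relation $\partial_x\omega_{2,\neq}=-\partial_x\partial_y u_{2,\neq}\cdot(\ldots)$-type identity, or more precisely to re-run the argument using $\partial_x^2 W$ and the already-proven $E_4$, $E_5$ bounds. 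The cleaner route: note $\partial_x\omega_{2,\neq}+\partial_z\partial_y u_{2,\neq}=-(\partial_x^2+\partial_z^2)u_{3,\neq}$, i.e. the $\omega$-contribution is itself expressible through $W$ and $u_{3,\neq}$; substituting $u_{2,\neq}=W-\kappa u_{3,\neq}$ again yields a closed elliptic identity for $(\partial_x^2+\partial_z^2)u_{3,\neq}$ purely in terms of $\partial_x\nabla W$ and $\kappa$-perturbations. Then only Corollary \ref{c1} and \eqref{kappa_estimate} are needed, and the $E_2$-weight mismatch is avoided entirely.

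\textbf{Main obstacle.}
The delicate point is the weight $\frac32 a$: the estimates $E_2$ and $E_1$ are only at weight $a$, so any step that re-introduces $\omega_{2,\neq}$ or $u_{2,\neq}$ at the higher weight must route through $W$, $E_4$, or $E_5$ rather than through $E_2$. I therefore expect the main work to be verifying that the elliptic recovery of $u_{3,\neq}$ from $W$ can be done with \emph{only} $\partial_x\nabla W$ (at weight $\frac32 a$, from Corollary \ref{c1}) on the right side, with all error terms involving $\kappa$ carrying a genuine $A^{-\epsilon}$ (or better) smallness from \eqref{kappa_estimate} so they close by absorption. A secondary technical nuisance is controlling the commutator between $\partial_x\partial_y$ and multiplication by $\kappa$ in anisotropic norms; this is routine given Lemmas \ref{sob_inf_1}--\ref{sob_inf_2} but must be done carefully to keep the $A$-powers subcritical. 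Once these are in place, the corollary $E_5(t)\le E_5$ follows by combining Lemma \ref{c2} with Lemma \ref{lemmaw21} and the definition \eqref{eq:E5}.
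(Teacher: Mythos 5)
There is a genuine gap: your strategy tries to recover $u_{3,\neq}$ from $W$ \emph{kinematically}, and this cannot work. The only algebraic relations available are the divergence-free condition and the definition $\omega_{2,\neq}=\partial_zu_{1,\neq}-\partial_xu_{3,\neq}$, which combine into the single identity $\partial_x\omega_{2,\neq}+\partial_z\partial_yu_{2,\neq}=-(\partial_x^2+\partial_z^2)u_{3,\neq}$ (note it is $\partial_z\partial_y u_{2,\neq}$, not $\partial_x\partial_y u_{2,\neq}$ as in your display). Substituting $u_{2,\neq}=W-\kappa u_{3,\neq}$ turns this into an elliptic relation for $u_{3,\neq}$ whose right-hand side still contains $\partial_x\omega_{2,\neq}$, and, as you yourself note, no functional in the scheme controls $\omega_{2,\neq}$ at the weight $\tfrac32 a$ ($E_2$ only gives weight $a$). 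Your proposed fix — ``the $\omega$-contribution is itself expressible through $W$ and $u_{3,\neq}$'' by the same identity — is circular: re-substituting the identity into the relation it generated produces a tautology, not a closed equation. More fundamentally, knowing $W=u_{2,\neq}+\kappa u_{3,\neq}$ together with ${\rm div}\,u_{\neq}=0$ constrains only two of the three velocity components; $u_{3,\neq}$ remains a free function, so no purely elliptic argument from $W$ can bound it. The information that closes the estimate is dynamical, not kinematic.

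The paper's proof instead goes through the \emph{evolution equation} for $u_{3,\neq}$, namely $\eqref{ini_6}_2$: take $\partial_x^2$ and apply the space-time estimate of Proposition \ref{timespace2}. In that equation the only linear coupling at the higher weight enters through the pressure term $-2\partial_z\triangle^{-1}(\partial_yV\partial_xW)$, which is bounded by $\|{\rm e}^{\frac32aA^{-1/3}t}\partial_x^2W\|_{L^2L^2}\le \|\partial_x\nabla W\|_{X_{\frac32a}}$ and hence by Corollary \ref{c1}; the term $\partial_x^2\partial_zP^{N_2}_{\neq}$ is controlled by $\|\partial_x^2n_{\neq}\|_{X_{\frac32a}}$ (i.e. $E_4$) since $\triangle P^{N_2}=\partial_yn$; the nonlinear terms $g_{2,2},g_{3,1},g_{3,2},P_{1,1},P_{1,2},\partial_xG_{3,3}$ are bounded at weight $\tfrac32 a$ by \eqref{g31_1}, \eqref{g22_1}, \eqref{p11}, \eqref{p12} and Lemma \ref{lemma_neq1}; and $\widetilde{u_{1,0}}\partial_x^3u_{3,\neq}$ is absorbed as a perturbation using Lemma \ref{u1_hat1} and the assumption $E_5(t)\le 2E_5$, for $A\ge A_8$. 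This is exactly the route that avoids $\omega_{2,\neq}$ and $E_2$ at the higher weight, which your ``main obstacle'' paragraph correctly identifies as the crux but which your elliptic substitution does not actually resolve.
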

	\begin{proof}
		For $\eqref{ini_6}_2$, taking $\partial_x^2$ and applying Proposition
		\ref{timespace2}, by \eqref{v1}, we get 
		\begin{equation*}
			\begin{aligned}
				A^{\frac32\epsilon}\|\partial_x^2u_{3,\neq}\|^2_{X_{\frac32a}}
				\leq& CA^{\frac32\epsilon}\Bigg(\|(u_{3,\rm in})_{\neq}\|_{H^2}^2+\|{\rm e}^{\frac32aA^{-\frac{1}{3}}t}
				\partial_x^2W\|^2_{L^2L^2}
				+\frac{\|{\rm e}^{\frac32aA^{-\frac{1}{3}}t}
					\partial_x^2\partial_zP^{N_2}_{\neq}\|^2_{L^2L^2}}
				{A^{\frac{5}{3}}}\\
				&+\frac{\|{\rm e}^{\frac32aA^{-\frac{1}{3}}t}
					\partial_x^2(g_{2,2},g_{3,1},g_{3,2})\|^2_{L^2L^2}}
				{A^{\frac53}}
				+\frac{\|{\rm e}^{\frac32aA^{-\frac{1}{3}}t}
					(P_{1,1},P_{1,2},\partial_{x}G_{3,3})\|^2_{L^2L^2}}{A}\\&+\frac{\|e^{\frac32aA^{-\frac13}t}\widetilde{ u_{1,0}}\partial_{x}^{3}u_{3,\neq}\|_{L^{2}L^{2}}^{2}}{A^{\frac53}}\Bigg).
			\end{aligned}	
		\end{equation*}
		Using Corollary \ref{c1}, one obtains
		\begin{equation*}
			A^{\frac32\epsilon}\|{\rm e}^{\frac32aA^{-\frac{1}{3}}t}
			\partial_x^2W\|^2_{L^2L^2}\leq 
			A^{\frac32\epsilon}\|\partial_x\nabla W\|^2_{X_{\frac32a}}
			\leq C\big(\|(n_{\rm in})_{\neq}\|^2_{H^2}+1\big).
		\end{equation*}
		By  (\ref{p11}), (\ref{p12}), \eqref{g31_1}, \eqref{g22_1}
		and Lemma \ref{lemma_neq1}, when $A>A_8,$
		there holds
		\begin{equation*}
			\frac{\|{\rm e}^{\frac32aA^{-\frac{1}{3}}t}
				\partial_x^2(g_{2,2},g_{3,1},g_{3,2})\|^2_{L^2L^2}}
			{A^{\frac53-\frac32\epsilon}}\\
			+\frac{\|{\rm e}^{\frac32aA^{-\frac{1}{3}}t}
				(P_{1,1},P_{1,2},\partial_xG_{3,3})\|^2_{L^2L^2}}
			{A^{1-\frac32\epsilon}}
			\leq C\frac{E_5^2+E_2^4}{A^{\frac32\epsilon}}\leq C.
		\end{equation*}
		Due to  $\triangle P^{N_2}=\partial_yn,$  then 
		\begin{equation*}
			\frac{\|{\rm e}^{\frac32aA^{-\frac{1}{3}}t}
				\partial_x^2\partial_zP^{N_2}_{\neq}\|^2_{L^2L^2}}
			{A^{\frac{5}{3}-\frac32\epsilon}}\leq 
			\frac{\|{\rm e}^{\frac32aA^{-\frac{1}{3}}t}
				\partial_x^2 n_{\neq}\|^2_{L^2L^2}}
			{A^{\frac{5}{3}-\frac32\epsilon}}\leq 
			\frac{CE_4^2}
			{A^{\frac{4}{3}-\frac32\epsilon}}\leq C.
		\end{equation*}
		Using Lemma \ref{u1_hat1}, when $ A>A_{8}, $ we have
		\begin{equation*}
			\frac{\|e^{\frac32aA^{-\frac13}t}\widetilde{ u_{1,0}}\partial_{x}^{3}u_{3,\neq}\|_{L^{2}L^{2}}^{2}}{A^{\frac53-\frac32\epsilon}}\leq\frac{CA\|\widetilde{ u_{1,0}}\|_{L^{\infty}H^{2}}^{2}\|\partial_{x}^{2}u_{3,\neq}\|_{X_{\frac32a}}^{2}}{A^{\frac53-\frac32\epsilon}}\leq\frac{CE_{5}^{2}}{A^{2\epsilon}}\leq C.
		\end{equation*}

		Combining above, we conclude that 
		$$A^{\frac32\epsilon}\|\partial_x^2u_{3,\neq}\|^2_{X_{\frac32a}}\leq C\big(\|(\partial_x^2n_{\rm in})_{\neq}\|^2_{L^2}
		+\|(\partial_z^2n_{\rm in})_{\neq}\|^2_{L^2}+1\big).$$
	\end{proof}
	So far, due to $W=u_{2,\neq}+\kappa u_{3,\neq}$, we can complete the energy estimation of $E_5$ with 
	\begin{equation*}
		E_{5}(t)=A^{\frac{3}{4}\epsilon}\left(\|\partial^2_{x}u_{2,\neq}\|_{X_{\frac{3}{2}a}}+
		\|\partial_{x}^2u_{3,\neq}\|_{X_{\frac{3}{2}a}}\right)\leq C
		A^{\frac{3}{4}\epsilon}
		\left(\|\partial_x\nabla W\|_{X_{\frac{3}{2}a}}+
		\|\partial_{x}^2u_{3,\neq}\|_{X_{\frac{3}{2}a}}\right).
	\end{equation*}
	
	\begin{corollary}
		Under the conditions of Theorem \ref{result} and the assumptions (\ref{assumption}),
		according to Corollary \ref{c1} and Lemma \ref{c2}, when $A\geq A_8:=C_{(4)},$
		there holds
		\begin{equation}
			E_5(t)\leq  C\big(\|(\partial_x^2n_{\rm in})_{\neq}\|^2_{L^2}
			+\|(\partial_z^2n_{\rm in})_{\neq}\|^2_{L^2}+1\big):=E_5.
		\end{equation}
	\end{corollary}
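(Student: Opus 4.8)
The statement to be proved is the final \emph{corollary}, namely that under the hypotheses of Theorem~\ref{result} and the bootstrap assumptions \eqref{assumption}, once $A\geq A_8=:C_{(4)}$ one has
\begin{equation*}
E_5(t)\leq C\big(\|(\partial_x^2n_{\rm in})_{\neq}\|^2_{L^2}+\|(\partial_z^2n_{\rm in})_{\neq}\|^2_{L^2}+1\big)=:E_5 .
\end{equation*}
The plan is to read off this bound directly from the two results just established, Corollary~\ref{c1} and Lemma~\ref{c2}, together with the structural decomposition of $W$. First I would recall that $W=u_{2,\neq}+\kappa u_{3,\neq}$, so that $\partial_x^2 u_{2,\neq}=\partial_x^2 W-\partial_x^2(\kappa u_{3,\neq})$; since $\kappa$ is $y,z$-dependent only and $\|\kappa\|_{L^\infty H^3}\leq CA^{-\epsilon}$ by \eqref{kappa_estimate}, the commutator term $\partial_x^2(\kappa u_{3,\neq})$ is controlled by $C\|\kappa\|_{L^\infty H^3}$ times $\partial_x^2$-derivatives of $u_{3,\neq}$, hence is a lower-order perturbation absorbed into $\|\partial_x^2 u_{3,\neq}\|_{X_{\frac32a}}$ up to a factor $A^{-\epsilon}$. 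This reduces the estimate of $\|\partial_x^2 u_{2,\neq}\|_{X_{\frac32a}}$ to the estimates of $\|\partial_x\nabla W\|_{X_{\frac32a}}$ (which dominates $\|\partial_x^2 W\|_{X_{\frac32a}}$ via \eqref{eq:fourier ine}) and $\|\partial_x^2 u_{3,\neq}\|_{X_{\frac32a}}$.

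Next I would combine the two quantitative inputs. Corollary~\ref{c1} gives $A^{\frac32\epsilon}\|\partial_x\nabla W\|^2_{X_{\frac32a}}\leq C(\|(n_{\rm in})_{\neq}\|^2_{H^2}+1)$, and Lemma~\ref{c2} gives $A^{\frac32\epsilon}\|\partial_x^2 u_{3,\neq}\|^2_{X_{\frac32a}}\leq C(\|(\partial_x^2n_{\rm in})_{\neq}\|^2_{L^2}+\|(\partial_z^2n_{\rm in})_{\neq}\|^2_{L^2}+1)$. By the definition
\begin{equation*}
E_5(t)=A^{\frac34\epsilon}\big(\|\partial_x^2 u_{2,\neq}\|_{X_{\frac32a}}+\|\partial_x^2 u_{3,\neq}\|_{X_{\frac32a}}\big)\leq CA^{\frac34\epsilon}\big(\|\partial_x\nabla W\|_{X_{\frac32a}}+\|\partial_x^2 u_{3,\neq}\|_{X_{\frac32a}}\big),
\end{equation*}
where the last inequality uses the reduction of the previous paragraph; taking square roots in the two displayed bounds yields $A^{\frac34\epsilon}\|\partial_x\nabla W\|_{X_{\frac32a}}\leq C(\|(n_{\rm in})_{\neq}\|_{H^2}+1)$ and similarly for $\|\partial_x^2 u_{3,\neq}\|_{X_{\frac32a}}$, since $\|(n_{\rm in})_{\neq}\|_{H^2}\lesssim \|(\partial_x^2 n_{\rm in})_{\neq}\|_{L^2}+\|(\partial_z^2 n_{\rm in})_{\neq}\|_{L^2}+\|(n_{\rm in})_{\neq}\|_{L^2}$ and the last term is again bounded by $\|(\partial_x^2 n_{\rm in})_{\neq}\|_{L^2}$ via \eqref{eq:fourier ine}. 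Adding the two contributions gives exactly $E_5(t)\leq C(\|(\partial_x^2n_{\rm in})_{\neq}\|^2_{L^2}+\|(\partial_z^2n_{\rm in})_{\neq}\|^2_{L^2}+1)=:E_5$, which is independent of $t$ and of $A$, as required.

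There is essentially no genuine obstacle left at this stage: all the analytic difficulty — the quasi-linear decomposition $W=W^{(1)}+W^{(2)}$, the time-space estimates of Propositions~\ref{timespace2}, \ref{timespace4}, and the bounds on $G^{(1)},G^{(2)}$ and on $\kappa$ — has already been carried out in Lemmas~\ref{result_51}, \ref{lemmaw21}, \ref{lemmaw22} and Corollary~\ref{c1} and Lemma~\ref{c2}. The only point requiring a line of care is verifying that $E_5$ genuinely closes the bootstrap, i.e.\ that the constant $C$ appearing here does not itself depend on $E_5$: this is guaranteed because the smallness threshold $A_8=\max\{A_7,E_5^{3/(3\epsilon-1)}\}$ was chosen precisely so that every occurrence of $E_5$ on the right-hand sides of Lemmas~\ref{lemmaw21}--\ref{lemmaw22} and Lemma~\ref{c2} is multiplied by a negative power of $A$ and hence absorbed once $A\geq A_8$; this uses $\epsilon>\frac13$. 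I would therefore simply state that combining Corollary~\ref{c1} and Lemma~\ref{c2}, together with $W=u_{2,\neq}+\kappa u_{3,\neq}$ and \eqref{kappa_estimate}, proves the corollary with $C_{(4)}:=A_8$.
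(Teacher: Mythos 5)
Your argument is, in substance, the paper's own: write $u_{2,\neq}=W-\kappa u_{3,\neq}$, use that $\kappa$ is independent of $x$ together with \eqref{kappa_estimate} so that $\partial_x^2(\kappa u_{3,\neq})=\kappa\,\partial_x^2u_{3,\neq}$ is absorbed into $\|\partial_x^2u_{3,\neq}\|_{X_{\frac32a}}$, bound $\|\partial_x^2W\|_{X_{\frac32a}}$ by $\|\partial_x\nabla W\|_{X_{\frac32a}}$, and then invoke Corollary \ref{c1} and Lemma \ref{c2}; the closure of the bootstrap for $A\geq A_8$ is handled exactly as in the paper. The only incorrect step is the auxiliary inequality $\|(n_{\rm in})_{\neq}\|_{H^2}\leq C\big(\|(\partial_x^2n_{\rm in})_{\neq}\|_{L^2}+\|(\partial_z^2n_{\rm in})_{\neq}\|_{L^2}+\|(n_{\rm in})_{\neq}\|_{L^2}\big)$: this cannot hold, since the right-hand side contains no $y$-derivatives (a datum supported on a single nonzero $x$-mode but oscillating rapidly in $y$ violates it). Fortunately this inequality is not needed: it only serves to massage the constant from Corollary \ref{c1} into the stated form, and you can instead quote Lemmas \ref{lemmaw21} and \ref{lemmaw22} directly, which for $A\geq A_8$ already give $A^{\frac32\epsilon}\|\partial_x\nabla W\|^2_{X_{\frac32a}}\leq C\big(\|(\partial_x^2n_{\rm in})_{\neq}\|^2_{L^2}+\|(\partial_z^2n_{\rm in})_{\neq}\|^2_{L^2}+1\big)$ (the $W^{(2)}$ contribution being $CE_5^2A^{-2\epsilon}\leq C$ by the choice of $A_8$), or simply allow $E_5$ to contain $\|(n_{\rm in})_{\neq}\|^2_{H^2}$, which is equally admissible since $E_5$ need only be independent of $A$ and $t$. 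With that repair the proof is complete and coincides with the paper's.
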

	\appendix

	\section{Space-time estimates and Gagliardo-Nirenberg inequalities}
	
	First, the following one-dimensional Gagliardo-Nirenberg interpolation inequalities are frequently used, which can be found in \cite{Na1} and \cite{LW1}.
	\begin{lemma} For $\tau\in \mathbb{R}$, there hold
		\ben\label{eq:1DGN-1}
		\|h(\tau)\|_{L^{\infty}}\leq \|h(\tau)\|_{L^{2}}^{\frac12}\|h'(\tau)\|_{L^{2}}^{\frac12},
		\een
		\ben\label{eq:1DGN-2}
		\|h(\tau)\|_{L^{2}}\leq  \left(\frac{16\pi^2}{27}\right)^{-\frac{1}{6}}\|h(\tau)\|_{L^{1}}^{\frac23}\|h'(\tau)\|_{L^{2}}^{\frac13}.
		\een
	\end{lemma}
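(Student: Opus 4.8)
\textbf{Proof proposal for the 1D Gagliardo--Nirenberg inequalities \eqref{eq:1DGN-1} and \eqref{eq:1DGN-2}.}
The plan is to establish these two sharp one-dimensional interpolation inequalities by elementary means and then refer to the cited sources \cite{Na1,LW1} for the explicit value of the constant in \eqref{eq:1DGN-2}. Since the statements are for functions on $\mathbb{R}$ (after the Fourier reduction used throughout the paper, each mode $\widehat{f}_{k_1,k_2,k_3}$ is a function of the single variable $\tau=y$), it suffices to treat $h\in C_c^\infty(\mathbb{R})$ or, more generally, $h\in H^1(\mathbb{R})$ with $h,h'\in L^2$, and then pass to the general case by density; for \eqref{eq:1DGN-2} one also needs $h\in L^1$.

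For \eqref{eq:1DGN-1}, first I would record the pointwise identity
\begin{equation*}
h(\tau)^2=\int_{-\infty}^{\tau}\frac{d}{ds}\big(h(s)^2\big)\,ds=2\int_{-\infty}^{\tau}h(s)h'(s)\,ds,
\end{equation*}
together with the analogous integral from $+\infty$; averaging the two and applying the Cauchy--Schwarz inequality gives $h(\tau)^2\le \int_{\mathbb{R}}|h||h'|\le \|h\|_{L^2}\|h'\|_{L^2}$, hence $\|h\|_{L^\infty}\le \|h\|_{L^2}^{1/2}\|h'\|_{L^2}^{1/2}$, which is exactly \eqref{eq:1DGN-1}. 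This is the standard Agmon/Gagliardo--Nirenberg argument in one dimension and requires no further input.

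For \eqref{eq:1DGN-2}, the approach is the classical scaling-plus-optimization argument. First I would prove the non-sharp version: combining $\|h\|_{L^2}^2\le \|h\|_{L^\infty}\|h\|_{L^1}$ with \eqref{eq:1DGN-1} yields $\|h\|_{L^2}^2\le \|h\|_{L^2}^{1/2}\|h'\|_{L^2}^{1/2}\|h\|_{L^1}$, i.e. $\|h\|_{L^2}\le \|h'\|_{L^2}^{1/3}\|h\|_{L^1}^{2/3}$, already of the right homogeneity. To obtain the sharp constant $(16\pi^2/27)^{-1/6}$ one must identify the extremal profile, which solves the Euler--Lagrange equation $h''=c\,(h-\text{const})$ on the region where $h>0$ and leads to a compactly supported ``bump'' built from a cosine arc; plugging this profile into the ratio $\|h\|_{L^2}^3\big/\big(\|h'\|_{L^2}\|h\|_{L^1}^2\big)$ gives the stated constant. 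Since this computation of the best constant is precisely the content of Nagai's and Liu--Wang's work, I would simply cite \cite{Na1,LW1} for it rather than reproduce the optimization; the only obstacle worth flagging is the verification that the cosine bump is genuinely the global maximizer (existence of an optimizer via a concentration-compactness or rearrangement argument, then uniqueness via the ODE), but as this is already in the literature it poses no real difficulty here. Finally, density of smooth compactly supported functions in the relevant space extends both inequalities to all $h$ with the stated finite norms, completing the proof.
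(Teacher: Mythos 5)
Your proposal is correct, and it in fact does slightly more than the paper, which states this lemma without any proof at all and simply refers to \cite{Na1} and \cite{LW1} for both inequalities. Your derivation of \eqref{eq:1DGN-1} by writing $h(\tau)^2$ as an integral of $(h^2)'$ from $-\infty$ and from $+\infty$, averaging, and applying Cauchy--Schwarz is the standard argument and yields exactly the constant $1$ claimed in the statement (the averaging step is what avoids the spurious factor $\sqrt{2}$, so it is good that you included it). For \eqref{eq:1DGN-2}, your chain $\|h\|_{L^2}^2\leq\|h\|_{L^\infty}\|h\|_{L^1}$ combined with \eqref{eq:1DGN-1} gives the inequality with constant $1$, which is weaker than the stated sharp constant $(16\pi^2/27)^{-1/6}\approx 0.745$; since the paper's Lemma \ref{c00_lemma1} genuinely uses the sharp constant (the threshold $M<\frac{24}{5}\pi^2$ depends on it), the optimization over the extremal profile cannot be dispensed with, but deferring that computation to \cite{Na1,LW1} is exactly what the paper itself does, so citing them for the sharp constant is entirely adequate. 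In short: same ultimate reliance on the cited references as the paper, with the elementary parts written out rather than omitted, and no gap.
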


	Next, we recall the space-time estimate of the following equation (see Proposition 4.1 in \cite{wei2}).
	Let
	\begin{equation}\label{time_11}
		\begin{aligned}
			\partial_t f-\frac{1}{A}\triangle f+y\partial_xf=\partial_xf_{1}+f_{2}
			+\nabla\cdot f_{3},\quad t\in[0,T]
		\end{aligned}
	\end{equation}
	where  $f_{1}$, $f_{2}$ and $f_{3}$ are given functions.
	\begin{proposition}\label{timespace0}
		Let $f$	be a solution of (\ref{time_11}) and $ P_{0}f=P_{0}f_{1}=P_{0}f_{2}=P_{0}f_{3}=0, $ then there holds
		\begin{equation*}
			\begin{aligned}
				&\|{\rm e}^{aA^{-\frac{1}{3}}t}f_{\neq}\|^2_{L^{\infty}L^{2}}
				+\frac{1}{A^{\frac{1}{3}}}\|{\rm e}^{aA^{-\frac{1}{3}}t} f_{\neq}\|^2_{L^{2}L^{2}}
				+\frac{1}{A}\|{\rm e}^{aA^{-\frac{1}{3}}t}\nabla f_{\neq}\|^2_{L^{2}L^{2}}
				+\|{\rm e}^{aA^{-\frac13}t}\nabla\triangle^{-1}\partial_{x}f_{\neq}\|_{L^{2}L^{2}}^{2} \\
				&\quad \leq C\left(\|f_{\rm in,\neq}\|_{L^2}^2
				+\|{\rm e}^{aA^{-\frac{1}{3}}t}\nabla f_{1,\neq}\|_{L^2L^2}^2
				+A^{\frac{1}{3}}\|{\rm e}^{aA^{-\frac{1}{3}}t}f_{2,\neq}\|_{L^2L^2}^2 +A\|{\rm e}^{aA^{-\frac{1}{3}}t}f_{3,\neq}\|_{L^2L^2}^2\right),
			\end{aligned}
		\end{equation*}
		where ``a'' is a given non-negative constant.
	\end{proposition}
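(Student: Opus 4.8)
\textbf{Proof proposal for Proposition \ref{timespace0}.}
The plan is to use the energy method adapted to the rescaled Couette flow, mimicking the strategy behind Proposition 4.1 of \cite{wei2} but for the vorticity-type component $\omega_2$ (whence the extra derivative loss compared with the basic heat equation). First I would pass to Fourier variables in $(x,z)$ and to the moving frame: for a nonzero mode $(k_1,k_3)$ with $k_1\neq 0$, set $g_{k_1,k_3}(t,y)=\widehat{f}_{k_1,k_3}(t,y+k_1t/\cdots)$ so that the transport term $y\partial_x f$ is absorbed; equivalently one works directly with the time-dependent multiplier. The key quantity is a weighted, $k_1$-dependent energy built from $\|f_{\neq}\|_{L^2}^2$ together with a carefully chosen cross term involving $\nabla\triangle^{-1}\partial_x f_{\neq}$ whose time derivative, after integration by parts in $y$, produces the enhanced-dissipation gain: the commutator of $\partial_t$ with $y\partial_x$ creates a term proportional to $\partial_x^2\triangle^{-2}\partial_x f$ that, combined with $\frac1A\|\nabla f\|^2$, yields control of $A^{-1/3}\|f_{\neq}\|_{L^2}^2$ and of $\|\nabla\triangle^{-1}\partial_x f_{\neq}\|_{L^2L^2}^2$ (the inviscid-damping norm). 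This is the standard hypocoercivity/Villani-type scheme for the linearized operator around Couette.

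Next I would set up the differential inequality. Taking the real part of the $L^2$ inner product of \eqref{time_11} with the appropriately weighted combination of $f_{\neq}$ and $\triangle^{-1}\partial_x f_{\neq}$, one obtains, schematically,
\begin{equation*}
\frac{d}{dt}\mathcal{E}(t)+\frac{1}{A}\|\nabla f_{\neq}\|_{L^2}^2+\frac{c}{A^{1/3}}\|f_{\neq}\|_{L^2}^2+c\|\nabla\triangle^{-1}\partial_x f_{\neq}\|_{L^2}^2\leq \big|\langle \partial_x f_{1,\neq}+f_{2,\neq}+\nabla\cdot f_{3,\neq},\ \cdot\rangle\big|,
\end{equation*}
where $\mathcal E(t)\sim \|f_{\neq}\|_{L^2}^2$ is coercive. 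The forcing terms are then handled by duality and Young's inequality: $\langle \partial_x f_{1,\neq},f_{\neq}\rangle=-\langle f_{1,\neq},\partial_x f_{\neq}\rangle$ is absorbed into $\tfrac1A\|\nabla f_{\neq}\|_{L^2}^2$ at the cost of $\|\nabla f_{1,\neq}\|_{L^2}^2$ (after another integration by parts to move the derivative); $\langle f_{2,\neq},f_{\neq}\rangle$ against the $A^{-1/3}\|f_{\neq}\|^2$ term costs $A^{1/3}\|f_{2,\neq}\|_{L^2}^2$; and $\langle \nabla\cdot f_{3,\neq},f_{\neq}\rangle=-\langle f_{3,\neq},\nabla f_{\neq}\rangle$ against $\tfrac1A\|\nabla f_{\neq}\|^2$ costs $A\|f_{3,\neq}\|_{L^2}^2$. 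Multiplying through by the integrating factor ${\rm e}^{2aA^{-1/3}t}$ (legitimate since $a\geq 0$ and $a$ can be taken small relative to the hypocoercive constant $c$, so the exponential weight is dominated by the dissipation) and integrating in time on $[0,T]$ yields exactly the four norms on the left with the four data norms on the right.

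The main obstacle I expect is making the hypocoercive cross term uniform in $A$ and in the frequency $k_1$ while simultaneously carrying the inviscid-damping norm $\|\nabla\triangle^{-1}\partial_x f_{\neq}\|_{L^2L^2}^2$: one must choose the relative weights of the terms in $\mathcal E$ (typically of the form $\alpha\|f\|^2+\beta A^{-2/3}\mathrm{Re}\langle \text{i}k_1\triangle^{-1}f,\partial_y f\rangle+\dots$) so that $\mathcal E$ stays positive-definite and its dissipation controls all four quantities at once, with constants independent of $A$ and $(k_1,k_3)$; the powers of $A^{1/3}$ in the statement are precisely dictated by this balance. A secondary technical point is the low-frequency-in-$y$ regime where $\triangle^{-1}$ is only bounded using $k_1\neq 0$ (i.e. $\|\triangle^{-1}\partial_x f_{\neq}\|\lesssim \|f_{\neq}\|$ via \eqref{eq:fourier ine}), which must be invoked to close the estimate. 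Once the weights are fixed, the remainder is the routine Grönwall/duality bookkeeping sketched above, and one recovers the claimed inequality.
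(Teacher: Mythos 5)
Your route (a hypocoercivity functional with cross terms, closed by a differential inequality) is genuinely different from the paper's: Proposition \ref{timespace0} is simply quoted from Proposition 4.1 of \cite{wei2}, and the sibling Proposition \ref{timespace1} is proved in the appendix by passing to the moving frame $(x-ty,y,z)$, taking Fourier transforms, writing the explicit Duhamel solution, and using the kernel bound $A^{-1}(r_1(t)-r_1(s))\geq \frac{(t-s)^3k_1^2}{12A}\geq (b+1)A^{-\frac13}(t-s)-C$ together with $\int_0^T k_1^2\,r(s)^{-1}ds\leq \pi|k_1|(k_1^2+k_3^2)^{-\frac12}$. Measured against the stated result, your sketch has two concrete gaps. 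First, the forcing $\partial_xf_1$: you claim $\langle \partial_xf_{1,\neq},f_{\neq}\rangle=-\langle f_{1,\neq},\partial_xf_{\neq}\rangle$ can be absorbed into $\frac1A\|\nabla f_{\neq}\|_{L^2}^2$ at cost $\|\nabla f_{1,\neq}\|_{L^2}^2$; absorbing into $\frac1A\|\nabla f_{\neq}\|^2$ costs $A\|f_{1,\neq}\|^2$, not $\|\nabla f_{1,\neq}\|^2$. The $A$-independent cost in the statement comes from pairing with the inviscid-damping norm, $\langle \partial_xf_1,f\rangle=\langle \nabla f_1,\nabla\triangle^{-1}\partial_xf\rangle\leq \|\nabla f_1\|\,\|\nabla\triangle^{-1}\partial_xf\|$, absorbed by the $O(1)$-coefficient term $\|\nabla\triangle^{-1}\partial_xf_{\neq}\|^2_{L^2L^2}$. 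This exposes the second issue: you never explain how that damping norm appears on the left with an $A$-independent constant. It is not produced by the dissipation nor by a commutator term ``proportional to $\partial_x^2\triangle^{-2}\partial_xf$''; in the paper it comes from the time-integrability of the sheared symbol, namely $\|k_1r(t)^{-\frac12}{\rm e}^{bA^{-\frac13}t}\widehat f\|_{L^2(0,T)}^2\leq C|k_1|(k_1^2+k_3^2)^{-\frac12}\|{\rm e}^{bA^{-\frac13}t}\widehat f\|_{L^{\infty}(0,T)}^2$, i.e.\ it is controlled by the $L^{\infty}_tL^2$ bound, a mechanism your energy functional as described does not encode.

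There is also a mismatch in the exponential weight. You allow $a$ only ``small relative to the hypocoercive constant $c$,'' but the proposition asserts the bound for a given, arbitrary $a\geq 0$ (with $C=C(a)$), and the paper indeed uses weights $a$ and $\tfrac32a$ fixed independently of any hypocoercivity constant. The explicit-solution proof handles this painlessly because $\frac{(t-s)^3k_1^2}{12A}\geq (a+1)A^{-\frac13}(t-s)-C(a)$, and the additive constant exponentiates into a harmless multiplicative one; a plain Gr\"onwall argument of the form $\frac{d}{dt}\mathcal E\leq -2cA^{-\frac13}\mathcal E+\dots$ cannot recover such a constant, so as written your scheme proves the estimate only for $a\leq c$ (unless you iterate on time intervals of length $\sim A^{\frac13}$ or work with the explicit Fourier representation as the paper does). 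Fixing the $\partial_xf_1$ bookkeeping, supplying the source of the damping norm, and removing the smallness restriction on $a$ are all needed before the argument matches the proposition.
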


	To estimate the coupled terms $(\triangle u_2,\partial_x\omega_2),$ we have 
	the following proposition, which will be proved later.
	\begin{proposition}\label{timespace1}
		Assume that $(h_1,h_2)$ satisfy
		\begin{equation*}\label{h1}
			\left\{
			\begin{array}{lr}
				\partial_th_1-\frac{1}{A}\triangle h_1+y\partial_x  h_1=
				\nabla\cdot g_1 , \\
				
				\\
				\partial_th_2-\frac{1}{A}\triangle h_2+y\partial_x  h_2+\partial_x\partial_z\triangle^{-1}h_1
				=\nabla\cdot g_2,
			\end{array}
			\right.
		\end{equation*}
		for $ t\in[0,T], $ where $ h_{1},h_{2}, g_{1} $ and $ g_{2} $ are given functions and $ P_{0}h_{1}=P_{0}h_{2}=P_{0}g_{1}=P_{0}g_{2}=0.$ Then for $ b\geq 0, $ it holds that 
		\begin{equation*}
			\|h_{1,\neq}\|^2_{X_b}+	\|h_{2,\neq}\|^2_{X_b}\leq C\left(\|(h_{1,\rm in})_{\neq}\|^2_{L^2}+
			\|(h_{2,\rm in})_{\neq}\|^2_{L^2}
			+A\|{\rm e}^{bA^{-\frac{1}{3}}t}g_{1,\neq}\|_{L^2L^2}^2
			+A\|{\rm e}^{bA^{-\frac{1}{3}}t}g_{2,\neq}\|_{L^2L^2}^2
			\right).
		\end{equation*}
	\end{proposition}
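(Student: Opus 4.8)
\textbf{Proof proposal for Proposition \ref{timespace1}.}
The plan is to treat the coupled system as a perturbation of two decoupled advection--diffusion equations, each of which is governed by Proposition \ref{timespace0}. First I would apply Proposition \ref{timespace0} directly to the equation for $h_1$, which is already in the form \eqref{time_11} with $f_1 = f_2 = 0$ and $f_3 = g_1$. This immediately yields
\begin{equation*}
\|h_{1,\neq}\|^2_{X_b}\leq C\left(\|(h_{1,\rm in})_{\neq}\|^2_{L^2}
+A\|{\rm e}^{bA^{-\frac{1}{3}}t}g_{1,\neq}\|_{L^2L^2}^2\right),
\end{equation*}
where I am using that $\|f\|_{X_b}^2$ is precisely the combination of the four norms controlled by Proposition \ref{timespace0} (the $L^\infty L^2$ norm, the $\nabla\triangle^{-1}\partial_x$ norm, the $A^{-1/3}$-weighted $L^2L^2$ norm, and the $A^{-1}$-weighted $\nabla$ norm), after replacing the weight $a$ by $b$. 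This handles the first summand with no loss.

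Next I would turn to the equation for $h_2$. The point is to absorb the coupling term $\partial_x\partial_z\triangle^{-1}h_1$ into the source terms of the form covered by Proposition \ref{timespace0}. Writing $\partial_x\partial_z\triangle^{-1}h_1 = \partial_x(\partial_z\triangle^{-1}h_1)$, I can regard it as a contribution of type $\partial_x f_1$ with $f_1 = -\partial_z\triangle^{-1}h_1$; alternatively, and more favorably for the weights, I can write it as $\nabla\cdot f_3$ with $f_3 = -(\partial_z\triangle^{-1}\partial_x^{-1}\cdot)$-type object — but the cleanest route is to use the $\partial_x f_1$ slot, since then Proposition \ref{timespace0} asks to control $\|{\rm e}^{bA^{-1/3}t}\nabla f_{1,\neq}\|_{L^2L^2}^2 = \|{\rm e}^{bA^{-1/3}t}\nabla\partial_z\triangle^{-1}h_{1,\neq}\|_{L^2L^2}^2$, which by boundedness of $\nabla\partial_z\triangle^{-1}$ on $L^2$ (a zeroth-order Fourier multiplier on non-zero modes) is bounded by $\|{\rm e}^{bA^{-1/3}t}h_{1,\neq}\|_{L^2L^2}^2 \le A^{1/3}\|h_{1,\neq}\|_{X_b}^2$. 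Applying Proposition \ref{timespace0} to the $h_2$ equation with this choice of $f_1$ and with $f_3 = g_2$ then gives
\begin{equation*}
\|h_{2,\neq}\|^2_{X_b}\leq C\left(\|(h_{2,\rm in})_{\neq}\|^2_{L^2}
+A^{\frac13}\|{\rm e}^{bA^{-\frac13}t}\nabla\partial_z\triangle^{-1}h_{1,\neq}\|_{L^2L^2}^2
+A\|{\rm e}^{bA^{-\frac13}t}g_{2,\neq}\|_{L^2L^2}^2\right).
\end{equation*}
Wait — a small check is needed here: the coupling term carries no extra factor of $A$, so its contribution to the right side is genuinely of lower order only if the $A^{1/3}$ from $\|{\rm e}^{bA^{-1/3}t}h_{1,\neq}\|_{L^2L^2}^2 \le A^{1/3}\|h_{1,\neq}\|_{X_b}^2$ combines acceptably. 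Indeed the resulting bound is $\le CA^{1/3}\cdot A^{1/3}\|h_{1,\neq}\|_{X_b}^2 = CA^{2/3}\|h_{1,\neq}\|_{X_b}^2$, which together with the $h_1$ estimate is still controlled by the stated right-hand side since the $h_1$ bound already contributes a factor $A$ on $g_1$. Actually the careful bookkeeping shows the coupling term contributes $C\|h_{1,\neq}\|_{X_b}^2$ after exploiting that $\partial_x\partial_z\triangle^{-1}$ is a nonexpansive multiplier and using the $\nabla\triangle^{-1}\partial_x$ norm already present inside $\|h_{1,\neq}\|_{X_b}$; this is the step I expect to require the most care.

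Combining the two estimates and substituting the $h_1$ bound into the $h_2$ bound yields
\begin{equation*}
\|h_{1,\neq}\|^2_{X_b}+\|h_{2,\neq}\|^2_{X_b}\leq C\left(\|(h_{1,\rm in})_{\neq}\|^2_{L^2}+
\|(h_{2,\rm in})_{\neq}\|^2_{L^2}
+A\|{\rm e}^{bA^{-\frac{1}{3}}t}g_{1,\neq}\|_{L^2L^2}^2
+A\|{\rm e}^{bA^{-\frac{1}{3}}t}g_{2,\neq}\|_{L^2L^2}^2\right),
\end{equation*}
as claimed. The main obstacle, as indicated, is verifying that the coupling term $\partial_x\partial_z\triangle^{-1}h_1$ is truly absorbable without generating a power of $A$ that the final inequality cannot afford; the resolution is that this term is best matched against the $\|{\rm e}^{bA^{-1/3}t}\nabla\triangle^{-1}\partial_x h_{1,\neq}\|_{L^2L^2}^2$ piece of $\|h_{1,\neq}\|_{X_b}^2$ rather than against the crude $A^{1/3}$-weighted $L^2L^2$ piece, since $\partial_z\triangle^{-1}$ applied to something of the form $\nabla\triangle^{-1}\partial_x h_1$ remains an order-zero multiplier. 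A secondary point to keep straight is that all operators $\triangle^{-1}$, $\nabla\triangle^{-1}$, $\partial_x\partial_z\triangle^{-1}$ are bounded on non-zero modes uniformly in $A$ (they are $A$-independent Fourier multipliers in $(x,y,z)$), so no hidden $A$-dependence creeps in through the elliptic inversions. Once these multiplier bounds are in hand, the proof is a two-line consequence of Proposition \ref{timespace0} applied twice.
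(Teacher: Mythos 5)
Your treatment of the $h_1$ equation is fine, but the absorption of the coupling term $\partial_x\partial_z\triangle^{-1}h_1$ in the $h_2$ equation has a genuine gap, and it is exactly the point where the proposition is nontrivial. If you feed the coupling into the $\partial_x f_1$ slot of Proposition \ref{timespace0} with $f_1=-\partial_z\triangle^{-1}h_1$, you must pay $\|{\rm e}^{bA^{-1/3}t}\nabla\partial_z\triangle^{-1}h_{1,\neq}\|_{L^2L^2}^2$, and the only piece of $\|h_{1,\neq}\|_{X_b}^2$ that controls this is the $A^{-1/3}$-weighted $L^2L^2$ norm, which costs a factor $A^{1/3}$ (your own bookkeeping even produces $A^{2/3}$). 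Substituting the $h_1$ bound then yields terms like $A^{4/3}\|{\rm e}^{bA^{-1/3}t}g_{1,\neq}\|_{L^2L^2}^2$ and $A^{1/3}\|(h_{1,\rm in})_{\neq}\|_{L^2}^2$ on the right, which is strictly weaker than the stated inequality; since these propositions are used later with sharp powers of $A$ (e.g.\ in the $E_{2,2}$ estimate), such a loss is not affordable. Your proposed repair --- matching the coupling against the inviscid-damping piece $\|{\rm e}^{bA^{-1/3}t}\nabla\triangle^{-1}\partial_x h_{1,\neq}\|_{L^2L^2}^2$ of $\|h_{1,\neq}\|_{X_b}$ --- does not work as a multiplier argument: the symbol of $\nabla\partial_z\triangle^{-1}$ relative to that of $\nabla\triangle^{-1}\partial_x$ is $k_3/k_1$, which is unbounded on the nonzero modes, so $\partial_z\triangle^{-1}$ applied to the coupling is \emph{not} an order-zero multiple of the available norm.

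The paper closes this step differently, and the difference is essential. It passes to the sheared variables, works mode by mode in Fourier, where the coupling becomes $-k_1k_3\,r(t)^{-1}\widehat{h}_1$ with $r(t)=k_1^2+(k_2-tk_1)^2+k_3^2$, treats it as an $ik_1\widehat f^1$ forcing with $\widehat f^1=ik_3 r(t)^{-1}\widehat h_1$, and then bounds the resulting weighted norm by pulling out $\|{\rm e}^{bA^{-1/3}t}\widehat h_1\|_{L^\infty(0,T)}$ and using the time-integrability
$\int_0^T |k_1|\,r(s)^{-1}ds\le \pi (k_1^2+k_3^2)^{-1/2}$,
so that the coupling contributes $C\|{\rm e}^{bA^{-1/3}t}\widehat h_1\|_{L^\infty(0,T)}^2$ with a constant independent of $A$; summing the mode-by-mode inequalities gives the stated result. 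This exploitation of the $L^\infty$-in-time norm of each mode of $h_1$ together with the decay of $1/r(s)$ along the shear is not reachable by applying Proposition \ref{timespace0} as a black box, because that proposition demands an $L^2$-in-time weighted norm of the forcing and offers no mechanism to trade it for the $L^\infty_t$ piece without losing $A^{1/3}$. So the proposal as written does not prove the proposition; you would need to reopen the frequency-space argument (or prove a variant of Proposition \ref{timespace0} with an $f_1$-slot measured in $L^\infty_t$ against $\int_0^T|k_1|r(s)^{-1}ds$) to recover the $A$-independent constant.
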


	We also need the following proposition (see Proposition 4.4 in \cite{wei2}).
	\begin{proposition}\label{timespace2}
		Assume that $f$ satisfies 
		$$\partial_t f-\frac{1}{A}\triangle f+\left(y+\frac{\widehat{u_{1,0}}}{A}\right)\partial_xf=\partial_xf_{1}+f_{2}
		+\nabla\cdot f_{3},$$
		for $ t\in[0,T], $ where $f$, $f_{1}$, $f_{2}$ and $f_{3}$ are given functions and $ P_{0}f=P_{0}f_{1}=P_{0}f_{2}=P_{0}f_{3}=0 $.
		As long as $$\frac{\|\widehat{u_{1,0}}\|_{H^4}}{A}+\|\partial_t\widehat{u_{1,0}}\|_{H^2}<c,$$
		for some small $c$ independent of $A$ and $T,$ then for $ b\geq0 $ there holds  
		\begin{equation*}
			\begin{aligned}
				\|f_{\neq}\|^2_{X_b}
				\leq C\left(\|(f_{\rm in})_{\neq}\|_{L^2}^2
				+\|{\rm e}^{bA^{-\frac{1}{3}}t}\nabla f_{1,\neq}\|_{L^2L^2}^2
				+A^{\frac{1}{3}}\|{\rm e}^{bA^{-\frac{1}{3}}t}f_{2,\neq}\|_{L^2L^2}^2 +A\|{\rm e}^{bA^{-\frac{1}{3}}t}f_{3,\neq}\|_{L^2L^2}^2\right).
			\end{aligned}
		\end{equation*}
	\end{proposition}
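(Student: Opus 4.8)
\textbf{Proof proposal for Proposition \ref{timespace2}.}
The plan is to reduce the statement to the constant-coefficient space-time estimate of Proposition 4.1 in \cite{wei2} (our Proposition \ref{timespace0} with the extra $\nabla\triangle^{-1}\partial_x$ term, or rather its analogue with pure $\partial_x$ advection, recorded as Proposition 4.1 of \cite{wei2}) by treating the correction $\frac{\widehat{u_{1,0}}}{A}\partial_x f$ as a perturbative forcing term. First I would rewrite the equation as
\begin{equation*}
\partial_t f-\frac{1}{A}\triangle f+y\partial_x f=\partial_x f_1+f_2+\nabla\cdot f_3-\frac{\widehat{u_{1,0}}}{A}\partial_x f,
\end{equation*}
so that the last term plays the role of an additional $f_2$-type source (after commuting $\partial_x$, or by keeping it as a genuine $f_2$ after writing $\frac{\widehat{u_{1,0}}}{A}\partial_x f=\partial_x(\frac{\widehat{u_{1,0}}}{A}f)-\frac{\partial_x\widehat{u_{1,0}}}{A}f$, whichever gives the cleaner bound). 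Applying the baseline estimate with weight $\mathrm{e}^{bA^{-1/3}t}$ then yields
\begin{equation*}
\|f_{\neq}\|^2_{X_b}\leq C\Big(\|(f_{\rm in})_{\neq}\|_{L^2}^2+\|{\rm e}^{bA^{-\frac{1}{3}}t}\nabla f_{1,\neq}\|_{L^2L^2}^2+A^{\frac13}\|{\rm e}^{bA^{-\frac{1}{3}}t}f_{2,\neq}\|_{L^2L^2}^2+A\|{\rm e}^{bA^{-\frac{1}{3}}t}f_{3,\neq}\|_{L^2L^2}^2+A\big\|{\rm e}^{bA^{-\frac13}t}\tfrac{\widehat{u_{1,0}}}{A}\partial_xf_{\neq}\big\|_{L^2L^2}^2\Big),
\end{equation*}
and the whole point is to absorb the last term into the left-hand side.

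The key step is therefore to estimate $\big\|{\rm e}^{bA^{-1/3}t}\frac{\widehat{u_{1,0}}}{A}\partial_xf_{\neq}\big\|_{L^2L^2}^2$ by the ingredients already sitting inside $\|f_{\neq}\|_{X_b}$. Using $\|\widehat{u_{1,0}}\|_{L^\infty}\leq C\|\widehat{u_{1,0}}\|_{H^2}\leq C\|\widehat{u_{1,0}}\|_{H^4}$ together with the hypothesis $\frac{\|\widehat{u_{1,0}}\|_{H^4}}{A}+\|\partial_t\widehat{u_{1,0}}\|_{H^2}<c$, one gets a factor $c^2$ in front of $A\|{\rm e}^{bA^{-1/3}t}\partial_xf_{\neq}\|_{L^2L^2}^2$; but $\|{\rm e}^{bA^{-1/3}t}\partial_x f_{\neq}\|_{L^2L^2}^2=\|{\rm e}^{bA^{-1/3}t}\nabla\triangle^{-1}\partial_x^2 f_{\neq}\|_{L^2L^2}^2$ is controlled, up to the $A^{-1}\|\nabla f_{\neq}\|_{L^2L^2}^2$ piece, by the inviscid-damping term $\|{\rm e}^{bA^{-1/3}t}\nabla\triangle^{-1}\partial_xf_{\neq}\|_{L^2L^2}^2$ and the enhanced-dissipation term $A^{-1}\|{\rm e}^{bA^{-1/3}t}\nabla f_{\neq}\|_{L^2L^2}^2$ appearing in $\|f_{\neq}\|_{X_b}^2$ — more precisely one interpolates $A\|\partial_x f_{\neq}\|^2\lesssim A\|\nabla\triangle^{-1}\partial_x f_{\neq}\|\,\|\partial_x\nabla f_{\neq}\|\lesssim A^{1/2}\|\nabla\triangle^{-1}\partial_x f_{\neq}\|\cdot A^{1/2}\|\nabla f_{\neq}\|$. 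Choosing $c$ small enough (independent of $A$ and $T$, as in the hypothesis) makes the coefficient of this term strictly less than the coefficient it carries on the left side, so it can be absorbed, leaving exactly the claimed inequality. One must be slightly careful that the $\partial_t\widehat{u_{1,0}}$ bound, not just the $H^4$ bound, is genuinely used: it enters when one commutes the modified transport operator $\mathcal L_V=\partial_t+V\partial_x-A^{-1}\triangle$ with the Fourier-in-$x$ multiplier machinery underlying the baseline estimate, since the characteristics are now $y+\widehat{u_{1,0}}/A$ rather than $y$, and controlling the time derivative of the shear is what keeps the change of variables $C^1$-close to the identity.

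The main obstacle I anticipate is precisely this last bookkeeping: verifying that the perturbed transport term does not destroy the structure (the commutator between $\partial_t+(y+\widehat{u_{1,0}}/A)\partial_x$ and the resolvent-type estimates used in \cite{wei2}) and that every contribution it generates is either of lower order in $A$ or absorbable by smallness of $c$. Concretely, after the change of variable straightening the characteristics, the Laplacian picks up cross terms involving $\partial_y\widehat{u_{1,0}}/A$ and $\partial_t\widehat{u_{1,0}}/A$, and one needs $\frac{\|\widehat{u_{1,0}}\|_{H^4}}{A}+\|\partial_t\widehat{u_{1,0}}\|_{H^2}<c$ to treat all of them as small perturbations of the flat-shear problem; since this is exactly Proposition 4.4 of \cite{wei2}, the cleanest route is to invoke that proposition directly, and the ``proof'' here amounts to checking that our hypotheses match theirs and rescaling the weight $a\mapsto b$. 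I would present it that way rather than redoing the resolvent analysis: state the reduction to \cite{wei2}, note that the hypothesis on $\widehat{u_{1,0}}$ is the one required there, and record the weight change as immediate.
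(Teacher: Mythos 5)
Your final recommendation --- reduce the statement to Proposition 4.4 of \cite{wei2}, check that the hypothesis $\frac{\|\widehat{u_{1,0}}\|_{H^4}}{A}+\|\partial_t\widehat{u_{1,0}}\|_{H^2}<c$ is exactly the smallness condition required there, and relabel the weight $a\mapsto b$ --- is precisely what the paper does: it states the proposition with a bare citation of \cite{wei2} and gives no independent proof. So the route you ultimately advocate coincides with the paper's.

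However, the self-contained ``absorption'' argument that occupies most of your proposal does not work, and it is worth seeing why, since it explains why Wei--Zhang prove their Proposition 4.4 by straightening the characteristics of $V=y+\widehat{u_{1,0}}/A$ rather than by perturbation off the flat Couette estimate. First, your interpolation step uses $\|\partial_x\nabla f_{\neq}\|_{L^2}\lesssim\|\nabla f_{\neq}\|_{L^2}$, which is backwards: for the nonzero-in-$x$ modes one has $|k_1|\geq 1$, so $\|\nabla f_{\neq}\|_{L^2}\leq\|\partial_x\nabla f_{\neq}\|_{L^2}$, and $X_b$ contains no control of second derivatives of $f$ at all. Second, and more fundamentally, the smallness constant $c$ is independent of $A$, while the crude bound you need to absorb is (after using $\|\widehat{u_{1,0}}/A\|_{L^\infty}\leq Cc$) of the form $c^2A^{\frac13}\|{\rm e}^{bA^{-1/3}t}\partial_xf_{\neq}\|_{L^2L^2}^2$ if the term is treated as an $f_2$-source, or $c^2A\|{\rm e}^{bA^{-1/3}t}\partial_xf_{\neq}\|_{L^2L^2}^2$ in your divergence-form bookkeeping. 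On the Fourier side the $X_b$ norm only provides $\int k_1^2r(t)^{-1}|\hat f|^2$, $A^{-\frac13}\int|\hat f|^2$ and $A^{-1}\int r(t)|\hat f|^2$, and any way of reconstituting $\int k_1^2|\hat f|^2$ from these loses positive powers of $A$ (and of $|k_1|$), e.g.\ $A^{\frac13}k_1^2|\hat f|^2\leq A^{\frac13}|k_1|\,\bigl(|k_1|r^{-\frac12}|\hat f|\bigr)\bigl(r^{\frac12}|\hat f|\bigr)$ costs an uncontrolled factor $A^{\frac13+\frac12}|k_1|$ against the product of the inviscid-damping and dissipation pieces. No $A$-independent smallness of $c$ can beat such losses, so the perturbative closure fails; the hypothesis on $\widehat{u_{1,0}}$ (including the $\partial_t\widehat{u_{1,0}}$ part, as you correctly guessed) is instead consumed inside the nonlinear change of variables in \cite{wei2}. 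In short: keep only the citation; drop the absorption sketch or flag it as heuristic.
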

	
	The following proposition can be derived from Proposition 4.9 in \cite{wei2} by the same way, and we omitted it.
	\begin{proposition}\label{timespace4}
		Assume that $f$ satisfies 
		$$\partial_t f-\frac{1}{A}\triangle f+\left(y+\frac{\widehat{u_{1,0}}}{A}\right)\partial_xf-2(\partial_y+\kappa\partial_z)
		\triangle^{-1}(\partial_yV\partial_xf)=f_{1}+f_{2},$$
		for $ t\in[0,T], $ where $ f, f_{1} $ and $ f_{2} $ are given functions and $ P_{0}f=P_{0}f_{1}=P_{0}f_{2}=0. $
		As long as $$\frac{\|\widehat{u_{1,0}}\|_{H^4}}{A}+\|\partial_t\widehat{u_{1,0}}\|_{H^2}<c,$$
		for some small $c$ independent of $A$ and $T,$ then for $ b\geq 0, $ there holds  
		\begin{equation*}
			\begin{aligned}
				\|\triangle f_{\neq}\|^2_{X_b}
				\leq C\left(\|f_{\rm in,\neq}\|_{H^2}^2
				+A\|{\rm e}^{bA^{-\frac{1}{3}}t}\nabla f_{1,\neq}\|_{L^2L^2}^2+A^{\frac13}\|e^{bA^{-\frac13}t}\triangle f_{2,\neq}\|_{L^{2}L^{2}}^{2}\right).
			\end{aligned}
		\end{equation*}
	\end{proposition}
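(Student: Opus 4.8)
The plan is to follow the proof of Proposition 4.9 in \cite{wei2}, reducing the claim to the linear space--time estimate of Proposition \ref{timespace2}. Write $g=\triangle f_{\neq}$ and recall $V=y+\frac{\widehat{u_{1,0}}}{A}$, $\kappa=\frac{\partial_z V}{\partial_y V}$, so that $\partial_y V=1+\frac{\partial_y\widehat{u_{1,0}}}{A}$ and, since $\widehat{u_{1,0}}$ is a zero mode, $\partial_x\kappa=0$. The first step is to derive the equation for $g$ by applying $\triangle$ to the equation for $f$. Using $[\triangle,V\partial_x]h=(\triangle V)\partial_x h+2\nabla V\cdot\nabla\partial_x h$ and the identity $\triangle\big[(\partial_y+\kappa\partial_z)\triangle^{-1}\psi\big]=(\partial_y+\kappa\partial_z)\psi+(\triangle\kappa)\partial_z\triangle^{-1}\psi+2\nabla\kappa\cdot\nabla\partial_z\triangle^{-1}\psi$ with $\psi=\partial_y V\partial_x f$, the only commutator term that is not multiplied by a small quantity is $2\partial_y\partial_x f$, and this is cancelled exactly by the corresponding term $2\partial_x\partial_y f$ produced by $2\triangle\big[(\partial_y+\kappa\partial_z)\triangle^{-1}(\partial_y V\partial_x f)\big]$. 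Consequently $g$ solves
\begin{equation*}
\partial_t g-\tfrac1A\triangle g+\Big(y+\tfrac{\widehat{u_{1,0}}}{A}\Big)\partial_x g=\triangle f_1+\triangle f_2+\mathcal R,
\end{equation*}
where $\mathcal R$ gathers the residual terms, each carrying a factor among $\kappa$, $\nabla\kappa$, $\triangle\kappa$, $\partial_t\kappa$, $\tfrac1A\nabla\widehat{u_{1,0}}$, $\tfrac1A\triangle\widehat{u_{1,0}}$, all of which are small by the hypothesis $\tfrac{\|\widehat{u_{1,0}}\|_{H^4}}{A}+\|\partial_t\widehat{u_{1,0}}\|_{H^2}<c$ together with \eqref{kappa_estimate}.

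The second step is to apply Proposition \ref{timespace2} to $g$ (its smallness assumption coincides with the one assumed here). Treating $\triangle f_1=\nabla\cdot(\nabla f_1)$ as a divergence source produces the term $A\|{\rm e}^{bA^{-\frac13}t}\nabla f_{1,\neq}\|^2_{L^2L^2}$, treating $\triangle f_2$ as a zeroth--order source produces $A^{\frac13}\|{\rm e}^{bA^{-\frac13}t}\triangle f_{2,\neq}\|^2_{L^2L^2}$, and the datum contributes $\|\triangle f_{\rm in,\neq}\|^2_{L^2}\le\|f_{\rm in,\neq}\|^2_{H^2}$, which is exactly the asserted bound. Each residual term in $\mathcal R$ is then written in one of the three admissible forms $\partial_x(\cdot)$, $(\cdot)$, $\nabla\cdot(\cdot)$ of Proposition \ref{timespace2}; for instance $2\kappa\,\partial_z\triangle^{-1}\partial_x g=\partial_x\big(2\kappa\,\partial_z\triangle^{-1}g\big)$ because $\partial_x\kappa=0$, so it is of divergence-in-$x$ type. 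The forms are chosen so that, after using the smallness of $c$ and the fact that ${\rm e}^{bA^{-\frac13}t}\nabla\triangle^{-1}\partial_x g$ and $A^{-\frac16}{\rm e}^{bA^{-\frac13}t}g$ are controlled by $\|g\|_{X_b}$, the corresponding contribution is a small multiple of $\|g\|^2_{X_b}$; a continuity argument in $T$ absorbs these contributions into the left--hand side and yields $\|\triangle f_{\neq}\|^2_{X_b}=\|g\|^2_{X_b}$ bounded as claimed.

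The main obstacle is precisely this absorption of $\mathcal R$: a naive bound for terms such as $2\kappa\,\partial_z\triangle^{-1}\partial_x g$, $\tfrac1A\nabla\widehat{u_{1,0}}\cdot\nabla\partial_x f$ or $\tfrac1A(\triangle\widehat{u_{1,0}})\partial_x f$ generates positive powers of $A$ that the smallness of $c$ does not kill, so Proposition \ref{timespace2} cannot be used as a black box for them. Overcoming this requires the refined argument of Proposition 4.9 in \cite{wei2}: one combines the basic $L^2$ energy identity for $g$, the hypocoercive enhanced--dissipation functional built from $\langle\partial_x g,\partial_y g\rangle$ and $\|\partial_y g\|^2_{L^2}$, and the inviscid--damping estimate for $\triangle^{-1}\partial_x g$ into a single weighted differential inequality, and pairs each residual against whichever of these pieces makes the $A$--scaling balance (pairing the $\triangle^{-1}$--type factors with the inviscid--damping term and the remaining derivatives with the dissipation term). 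The commutator identities, the elliptic bounds $\|\triangle^{-1}\partial_x\partial_j\|_{L^2\to L^2}\le1$, and the bounds on $\kappa$ in \eqref{kappa_estimate} are routine and I will not carry them out here.
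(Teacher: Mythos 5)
Your proposal follows the same route as the paper: the paper gives no independent proof of Proposition \ref{timespace4}, stating only that it ``can be derived from Proposition 4.9 in \cite{wei2} by the same way,'' and your reduction (apply $\triangle$, use the exact cancellation of the $2\partial_x\partial_y f$ terms between the transport commutator and the nonlocal pressure term, feed $\triangle f_1$ as a divergence source and $\triangle f_2$ as a zeroth-order source, and defer the absorption of the small residual terms to the hypocoercive machinery of Wei--Zhang's Proposition 4.9) is exactly that adaptation. Your diagnosis that the residual cannot be handled by Proposition \ref{timespace2} as a black box, and must instead be paired against the inviscid-damping and dissipation pieces of the weighted functional, matches the intended argument, so the proposal is correct and essentially coincides with the paper's (omitted) proof.
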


	\begin{proof}[Proof of Proposition \ref{timespace1}] We follow the same route as in \cite{wei2}.
		By using new variables $(\tilde{x},\tilde{y},\tilde{z})=(x-ty,y,z)$,  let $\tilde{h}_{k,\neq}(\tilde{x},\tilde{y},\tilde{z})
		=h_{k,\neq}(x,y,z)$
		and $\tilde{g}_k(\tilde{x},\tilde{y},\tilde{z})=g_k(x,y,z),$ where $k=1,2$.
		After dropping the superscript, we get
		\begin{align*}
			&\partial_th_1-\frac{1}{A}\triangle_{L} 
			h_1=\nabla_{L}\cdot g_1,\\
			&\partial_th_2-\frac{1}{A}\triangle_{L} 
			h_2=-\partial_x\partial_z\triangle_{L}^{-1}h_1+\nabla_{L}\cdot g_2,
		\end{align*}
		where $\nabla_{L}=(\partial_x,\partial_y-t\partial_x,\partial_z).$
		
		Doing the Fourier transform, we get 
		\begin{equation}\label{f_time_1}
			\begin{array}{lr}
				\partial_t\widehat{h_1}+\frac{r(t)}{A}\widehat{h_1}=
				i(k_1,k_2-tk_1,k_3)\cdot \widehat{g}_1,\\
				\partial_t\widehat{h_2}+\frac{r(t)}{A}
				\widehat{h_2}=-k_1k_3r(t)^{-1}\widehat{h_1}
				+i(k_1,k_2-tk_1,k_3)\cdot \widehat{g_2},
			\end{array}
		\end{equation}
		where $r(t)={k_1^2}+\left(k_2-tk_1\right)^2+{k_3^2}.$
		
		First, we study the following equation:
		\begin{equation}\label{f_time_2}
			\begin{array}{lr}
				\partial_t\widehat{f}+\frac{1}{A} r(t) \widehat{f}={ik_1}\widehat{f}^1+\widehat{f}^2.
			\end{array}
		\end{equation}	
		The solution of (\ref{f_time_2}) is given by
		\begin{equation*}
			\begin{aligned}
				\widehat{f}(t)={\rm e}^{-\frac{1}{A} r_1(t)}\widehat{f}(0)+\int_0^t {\rm e}^{-\frac{1}{A}(r_1(t)-r_1(s))}\left(ik_1\widehat{f}^1(s)
				+\widehat{f}^2(s)\right)ds:=F_{(0)}+F_{(1)}+F_{(2)},
			\end{aligned}
		\end{equation*}
		where 
		$r_1(t)=\int_0^t\left(k_1^2+\left(k_2-sk_1\right)^2+{k_3^2}\right)ds.$
		For $t_1>t_2$, there holds
		\begin{equation*}
			\begin{aligned}
				{r_1(t_1)-r_1(t_2)}&=\int_{t_2}^{t_1}
				\left(k_1^2+\left(k_2-sk_1\right)^2+k_3^2\right)ds
				\geq \frac{(t_1-t_2)^3k_1^2}{12}.
			\end{aligned}
		\end{equation*}
		Thus, for $k_1\in\mathbb{Z}$ and $k_1\neq 0$, we have
		\begin{equation}\label{nu_s}
			{A}^{-1}\left(r_1(t)-r_1(s)\right)\geq {A}^{-1}\frac{(t-s)^3 k_1^2}{12}
			\geq (b+1)A^{-\frac{1}{3}}(t-s)-C,	
		\end{equation}
		where $b$ is a positive constant.
		Thanks to (\ref{nu_s}), thus   
		$$|F_{(0)}|=|{\rm e}^{-{A}^{-1} r_1(t)}\widehat{f}(0)|\leq C{\rm e}^{-(b+1)A^{-\frac{1}{3}}t}|\widehat{f}(0)|,$$
		which implies that 
		\begin{equation}\label{f0_result}
			\begin{aligned}
				&\|{\rm e}^{bA^{-\frac{1}{3}}t}F_{(0)}\|_{L^{\infty}(0,T)}\leq C|\widehat{f}(0)|,~~~
				&\|{\rm e}^{bA^{-\frac{1}{3}}t}F_{(0)}\|^2_{L^{2}(0,T)}\leq CA^{\frac{1}{3}}|\widehat{f}(0)|^2.
			\end{aligned}
		\end{equation}
		Using H$\rm \ddot{o}$lder's inequality, we obtain that 
		\begin{equation}\label{F1_equ1}
			\begin{aligned}
				|F_{(1)}|&\leq C\int_0^t {\rm e}^{(b+1)A^{-\frac{1}{3}}(s-t)}
				|k_1\widehat{f}^1(s)|ds
				\\&\leq C\|k_1r(s)^{-\frac{1}{2}}\|_{L^2(0,t)}
				\|{\rm e}^{(b+1)A^{-\frac{1}{3}}(s-t)}r(s)^{\frac{1}{2}}
				\widehat{f}^1(s)\|_{L^2(0,t)}\\
				&\leq C\|{\rm e}^{(b+1)A^{-\frac{1}{3}}(s-t)}r(s)^{\frac{1}{2}}
				\widehat{f}^1(s)\|_{L^2(0,t)}
				|k_1|^\frac{1}{2}{(k_1^2+k_3^2)^{-\frac{1}{4}}},	
			\end{aligned}
		\end{equation}
		where we use 
		$\|k_1r(s)^{-\frac{1}{2}}\|^2_{L^2(0,T)}=
		\int_0^T \frac{k_1^2}{k_1^2+(k_2-sk_1)^2+k_3^2}ds
		\leq \frac{|k_1|\pi}{(k_1^2+k_3^2)^{\frac{1}{2}}}.$
		Using (\ref{F1_equ1}), 
		we have 
		\begin{equation}\label{f1_result}
			\begin{aligned}
				&\|{\rm e}^{bA^{-\frac{1}{3}}t}F_{(1)}\|^2_{L^{\infty}(0,T)}\leq C{|k_1|(k_1^2+k_3^2)^{-\frac{1}{2}}}
				\|{\rm e}^{bA^{-\frac{1}{3}}t}r(t)^{\frac{1}{2}}\widehat{f}^1(t)\|_{L^2(0,T)}^{2},\\
				&\|{\rm e}^{bA^{-\frac{1}{3}}t}F_{(1)}\|^2_{L^2(0,T)} 
				\leq C{|k_1|(k_1^2+k_3^2)^{-\frac{1}{2}}}
				\int_{0}^{T}\|{\rm e}^{bA^{-\frac{1}{3}}s
					+bA^{-\frac{1}{3}}(s-t)}r(s)^{\frac{1}{2}}\widehat{f}^1(s)\|^2_{L^2(0,t)}dt\\
				&\leq C{|k_1|(k_1^2+k_3^2)^{-\frac{1}{2}}}
				\int_{0}^{T}\int_{s}^{T}{\rm e}^{2bA^{-\frac{1}{3}}s+2bA^{-\frac{1}{3}}(s-t)}r(s)|\widehat{f}^1(s)|^2dtds\\
				&\leq 
				CA^{\frac{1}{3}}\|{\rm e}^{bA^{\frac{1}{3}}t}r(t)^{\frac{1}{2}}\widehat{f}^1(t)\|_{L^2(0,T)}^2
				{|k_1|(k_1^2+k_3^2)^{-\frac{1}{2}}}.
			\end{aligned}
		\end{equation}
		Using integration by parts and (\ref{nu_s}), we get
		\begin{align*}
			&\qquad\|{\rm e}^{-{A}^{-1}\left(r_1(t)-r_1(s)\right)}
			{\rm e}^{(b+\frac12)A^{-\frac13 }(t-s)}r(s)^\frac{1}{2}\|_{L^2(0,t)}^2\\
			&=\int_0^t {\rm e}^{-2{A}^{-1}\left(r_1(t)-r_1(s)\right)}
			{\rm e}^{(2b+1)A^{-\frac13 }(t-s)}r(s)ds
			=\frac{A}{2}\int_0^t 
			{\rm e}^{(2b+1)A^{-\frac13 }(t-s)}d{\rm e}^{-(\frac{2}{A}\left(r_1(t)-r_1(s)\right)}\\
			&=\frac{A}{2}{\rm e}^{(2b+1)A^{-\frac13 }(t-s)-2A^{-1}\left(r_1(t)-r_1(s)\right)}|_{s=0}^{s=t}
			-\frac{A}{2}\int_0^t {\rm e}^{-2A^{-1}\left(r_1(t)-r_1(s)\right)}
			d{\rm e}^{(2b+1)A^{-\frac13 }(t-s)}\\
			&\leq \frac{A}{2}+\frac{CA}{2}(2b+1)A^{-\frac13}\int_{0}^{t}e^{-2(b+1)A^{-\frac13}(t-s)}e^{(2b+1)A^{-\frac13}(t-s)}ds\leq CA,
		\end{align*}
		then we have 
		\begin{align*}
			|F_{(2)}|&\leq 
			\int_0^t {\rm e}^{-A^{-1}\left(r_1(t)-r_1(s)\right)}|\widehat{f}^2(s)|ds
			=\int_0^t {\rm e}^{-A^{-1}\left(r_1(t)-r_1(s)\right)}
			{\rm e}^{(b+\frac12)A^{-\frac13 }(t-s)}
			{\rm e}^{(b+\frac12)A^{-\frac13 }(s-t)}|\widehat{f}^2(s)|ds\\
			&\leq \|{\rm e}^{-A^{-1}\left(r_1(t)-r_1(s)\right)}
			{\rm e}^{(b+\frac12)A^{-\frac13 }(t-s)}r(s)^\frac{1}{2}\|_{L^2(0,t)}
			\|{\rm e}^{(b+\frac12)A^{-\frac13 }(s-t)}
			r(s)^{-\frac{1}{2}}\widehat{f}^2(s)\|_{L^2(0,t)}\\
			&\leq CA^{\frac{1}{2}}	\|{\rm e}^{(b+\frac12)A^{-\frac13 }(s-t)}
			r(s)^{-\frac{1}{2}}\widehat{f}^2(s)\|_{L^2(0,t)}
			\leq CA^{\frac{1}{2}}	\|{\rm e}^{bA^{-\frac13 }(s-t)}
			r(s)^{-\frac{1}{2}}\widehat{f}^2(s)\|_{L^2(0,t)},
		\end{align*}
		which implies that 
		\begin{equation}\label{f3_result}
			\begin{aligned}
				&\|{\rm e}^{bA^{-\frac13 }t}F_{(2)}\|_{L^{\infty}(0,T)}\leq 
				CA^{\frac{1}{2}}
				\|{\rm e}^{bA^{-\frac13 }t}
				r(t)^{-\frac{1}{2}}\widehat{f}^2(t)\|_{L^2(0,T)},\\
				&\|{\rm e}^{bA^{-\frac13 }t}F_{(2)}\|^2_{L^{2}(0,T)}
				\leq CA^{\frac{4}{3}}\|{\rm e}^{bA^{-\frac13 }t}
				r(t)^{-\frac{1}{2}}\widehat{f}^2(t)\|^2_{L^2(0,T)}.
			\end{aligned}	
		\end{equation}
		Combining (\ref{f0_result}), (\ref{f1_result}), (\ref{f3_result}) 
		and 
		\begin{align}\label{eq:k1estimate}
			\|k_1r(t)^{-\frac{1}{2}}{\rm e}^{bA^{-\frac13 }t}\widehat{f}\|_{L^2(0,T)}^2
			&\leq\|k_1r(t)^{-\frac{1}{2}}\|_{L^2(0,T)}^2
			\|{\rm e}^{bA^{-\frac13 }t}\widehat{f}\|_{L^{\infty}(0,T)}^2\nonumber\\
			&\leq C|k_1|(k_1^2+k_3^2)^{-\frac{1}{2}}
			\|{\rm e}^{bA^{-\frac13 }t}\widehat{f}\|_{L^{\infty}(0,T)}^2,
		\end{align}
		we get
		\begin{equation}\label{f_result_1}
			\begin{aligned}
				&\|{\rm e}^{bA^{-\frac13 }t}\widehat{f}\|_{L^{\infty}(0,T)}^2+
				A^{-\frac{1}{3}}\|{\rm e}^{bA^{-\frac13 }t}\widehat{f}\|_{L^{2}(0,T)}^2+
				\|k_1r(t)^{-\frac{1}{2}}{\rm e}^{bA^{-\frac13 }t}\widehat{f}\|_{L^2(0,T)}^2
				\\ \leq& C\left(|\widehat{f}(0)|^2
				+\frac{|k_{1}|}{(k_{1}^{2}+k_{3}^{2})^{\frac12}}\|{\rm e}^{bA^{-\frac{1}{3}}t}r(t)^{\frac{1}{2}}\widehat{f}^1(t)\|^2_{L^2(0,T)}
				+A\|{\rm e}^{bA^{-\frac13 }t}
				r(t)^{-\frac{1}{2}}\widehat{f}^2(t)\|^2_{L^2(0,T)}\right).
			\end{aligned}
		\end{equation}
		Due to 
		\begin{equation*}
			\partial_t\left({\rm e}^{bA^{-\frac13 }t}\widehat{f}\right)+A^{-1} r(t){\rm e}^{bA^{-\frac13 }t}\widehat{f}={ik_1}{\rm e}^{bA^{-\frac13 }t}\widehat{f}^1+{\rm e}^{bA^{-\frac13 }t}\widehat{f}^2+bA^{-\frac{1}{3}}{\rm e}^{bA^{-\frac13 }t}\widehat{f},
		\end{equation*}
		multiplying the complex conjugate of ${\rm e}^{bA^{-\frac13 }t}\widehat{f}$ and using $\rm H\ddot{o}$lder's inequality,  one obtains 
		\begin{equation*}
			\begin{aligned}
				&\quad \quad A^{-1} \|r(t)^{\frac12}{\rm e}^{bA^{-\frac13 }t}\widehat{f}\|^2_{L^2(0,T)}\\
				&\leq 
				C\Big(|\widehat{f}(0)|^2+A^{-\frac{1}{3}}\|{\rm e}^{bA^{-\frac13 }t}\widehat{f}\|^2_{L^2(0,T)}+\|r(t)^{\frac12}{\rm e}^{bA^{-\frac13 }t}\widehat{f}^1\|_{L^2(0,T)}
				\|{k_1}r(t)^{-\frac12}{\rm e}^{bA^{-\frac13 }t}\widehat{f}\|_{L^2(0,T)}\\
				&\quad 
				+\|r(t)^{-\frac12}{\rm e}^{bA^{-\frac13 }t}\widehat{f}^2\|_{L^2(0,T)}
				\|e^{bA^{-\frac13}t}r(t)^{\frac12}{\rm e}^{bA^{-\frac13 }t}\widehat{f}\|_{L^2(0,T)}
				\Big),
			\end{aligned}
		\end{equation*}
		which along with \eqref{eq:k1estimate} and (\ref{f_result_1}) give that 
		\begin{equation}\label{f_result_2}
			\begin{aligned}
				\|\widehat{f}\|^2_{Z_b}\leq C\Big(|\widehat{f}(0)|^2
				+\|{\rm e}^{bA^{-\frac{1}{3}}t}r(t)^{\frac{1}{2}}\widehat{f}^1(t)\|^2_{L^2(0,T)}{|k_1|(k_1^2+k_3^2)^{-\frac{1}{2}}}
				+A\|{\rm e}^{bA^{-\frac13 }t}
				r(t)^{-\frac{1}{2}}\widehat{f}^2(t)\|^2_{L^2(0,T)}\Big),
			\end{aligned}
		\end{equation}
		where  $$\|\widehat{f}\|^2_{Z_b}=
		\|{\rm e}^{bA^{-\frac13 }t}\widehat{f}\|_{L^{\infty}(0,T)}^2+
		\frac{\|{\rm e}^{bA^{-\frac13 }t}\widehat{f}\|_{L^{2}(0,T)}^2}{A^{\frac13}}+
		\|k_1r(t)^{-\frac{1}{2}}{\rm e}^{bA^{-\frac13 }t}\widehat{f}\|_{L^2(0,T)}^2
		+\frac{\|r(t)^{\frac12}{\rm e}^{bA^{-\frac13 }t}\widehat{f}\|^2_{L^2(0,T)}}{A}.$$

		Then, for (\ref{f_time_1}), we have 
		\begin{equation}\label{h12_time}
			\begin{aligned}
				&\|\widehat{h_1}\|^2_{Z_b}
				\leq C(|\widehat{h_1}(0)|^2
				+A\|{\rm e}^{bA^{-\frac13 }t}
				\widehat{g_1}(t)\|^2_{L^2(0,T)}),\\
				&\|\widehat{h_2}\|^2_{Z_b}
				\leq C(|\widehat{h_2}(0)|^2
				+A\|{\rm e}^{bA^{-\frac13 }t}
				\widehat{g_2}(t)\|^2_{L^2(0,T)}
				+\frac{|k_1|\|k_3r(t)^{-\frac{1}{2}}{\rm e}^{bA^{-\frac13 }t}\widehat{h}_1\|_{L^{2}(0,T)}^2}
				{(k_1^2+k_3^2)^{\frac{1}{2}}}).
			\end{aligned}
		\end{equation}
		Due to $ \int_0^T \frac{|k_1|}{k_1^2+(k_2-sk_1)^2+k_3^2}ds
		\leq \frac{\pi}{(k_1^2+k_3^2)^{\frac{1}{2}}}, $ there holds
		\begin{align*}
			&\qquad|k_1|(k_1^2+k_3^2)^{-\frac{1}{2}}
			\|k_3r(t)^{-\frac{1}{2}}{\rm e}^{bA^{-\frac13 }t}\widehat{h}_1\|_{L^{2}(0,T)}^2
			\\
			&\leq\|{\rm e}^{bA^{-\frac13 }t}\widehat{h}_1\|_{L^{\infty}(0,T)}^2
			|k_1|(k_1^2+k_3^2)^{-\frac{1}{2}}
			\|k_3r(t)^{-\frac{1}{2}}\|_{L^{2}(0,T)}^2\\
			&\leq\|{\rm e}^{bA^{-\frac13 }t}\widehat{h}_1\|_{L^{\infty}(0,T)}^2
			\frac{k_3^2}{(k_1^2+k_3^2)^{\frac{1}{2}}}
			\int_0^T \frac{|k_1|}{k_1^2+(k_2-sk_1)^2+k_3^2}ds\\
			&\leq C\|{\rm e}^{bA^{-\frac13 }t}\widehat{h}_1\|_{L^{\infty}(0,T)}^2
			\frac{k_3^2}{k_1^2+k_3^2}
			\leq C\|{\rm e}^{bA^{-\frac13 }t}\widehat{h}_1\|_{L^{\infty}(0,T)}^2,
		\end{align*}	
		which along with (\ref{h12_time}) imply that 
		\begin{equation}\label{h12_1}
			\|\widehat{h_1}\|^2_{Z_b}+\|\widehat{h_2}\|^2_{Z_b}
			\leq C\Big(|\widehat{h_1}(0)|^2+|\widehat{h_2}(0)|^2
			+A\|{\rm e}^{bA^{-\frac13 }t}\widehat{g_1}(t)\|^2_{L^2(0,T)}
			+A\|{\rm e}^{bA^{-\frac13 }t}\widehat{g_2}(t)\|^2_{L^2(0,T)}\Big).
		\end{equation}
		
		For \eqref{h12_1}, with the help of Plancherel's theorem, integrating $k_2$ in $\mathbb{R}$ and summing up in 
		$k_1,k_3\in\mathbb{Z}$ with $k_1\neq0,$ we complete the proof.
	\end{proof}

	\section{Local well-posedness and blow-up criterion}
	 In this section, we aim to establish the local well-posedness results of strong solutions to the system (\ref{ini11}), which state as follows.
	 \begin{theorem}\label{thm:local existence}
	Assume that the non-negative initial data $ n_{\rm in}(x,y,z)\in H^2\cap L^1(\mathbb{T}\times\mathbb{R}\times\mathbb{T})$
	and $ u_{\rm in}(x,y,z)\in H^2
	(\mathbb{T}\times\mathbb{R}\times\mathbb{T})$.
	Then there exist a positive constant $T^{*}=T^{*}(\|n_{\rm in}\|_{H^2\cap L^1}, \|u_{\rm in}\|_{H^2} )$  and a unique strong solution $(n,u)\in C([0,T^*); H^2(\mathbb{T}\times\mathbb{R}\times\mathbb{T}) )$  for the system of {\rm (\ref{ini11})}, with the initial data $(n_{\rm in},u_{\rm in})$. Moreover, if the strong solution of {\rm (\ref{ini11})} can be uniquely continued to a maximal existence
time $T^*$, then 
\ben\label{eq:blow-up condi}
\lim_{t\nearrow T^*}A^{-\frac{1}{12}}\|(\nabla u)(t,\cdot)\|_{L^{2}}+
	\|(\partial_{x}^{2},\partial_{z}^{2})n(t,\cdot)\|_{L^{2}}+\|n(t,\cdot)\|_{L^{\infty}}=+\infty.
\een
\end{theorem}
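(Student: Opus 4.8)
The plan is to establish local existence and uniqueness by a standard iteration-plus-energy-estimate scheme, and then to obtain the blow-up criterion by a bootstrap argument: boundedness of the three listed quantities is shown to force the full $H^2$ norm to remain bounded, so by the local theory the solution can be continued past any finite maximal time, which is the desired contradiction.

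For local existence I would run the iteration $(n^0,u^0)=(n_{\rm in},u_{\rm in})$ and, given a divergence-free $(n^k,u^k)\in C([0,T];H^2)$, solve successively the elliptic problem $-\triangle c^k+c^k=n^k$ (so $\|c^k\|_{H^4}\le C\|n^k\|_{H^2}$), the linear drift--diffusion equation for $n^{k+1}$ with coefficients $u^k,\nabla c^k$ and the shear $y\partial_x$, and the linear Stokes-type system for $(u^{k+1},P^{k+1})$ driven by $n^{k+1}e_2$, the lift-up term $u_2^{k+1}e_1$ and $\tfrac1A u^k\cdot\nabla u^{k+1}$, with $P^{k+1}$ recovered from the divergence. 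Each linear step is solvable by classical parabolic/Stokes theory; the only non-routine feature is the unbounded coefficient $y$, which I would handle either by the change of variables $X=x-ty$ (turning $y\partial_x$ into a shear Laplacian whose coefficients are controlled by $1+T^2$) or simply by using that $y\partial_x$ is skew-adjoint in $L^2$ and that $[\partial^{\alpha},y\partial_x]$ is genuinely of order $|\alpha|$ since $\partial_j y\in\{0,1\}$. The key a priori bound is the $H^2$ energy inequality $\frac{d}{dt}\big(\|n^{k+1}\|_{H^2}^2+\|u^{k+1}\|_{H^2}^2\big)+\tfrac1A\big(\|\nabla n^{k+1}\|_{H^2}^2+\|\nabla u^{k+1}\|_{H^2}^2\big)\le C_A(1+\|n^k\|_{H^2}^2+\|u^k\|_{H^2}^2)^2(\|n^{k+1}\|_{H^2}^2+\|u^{k+1}\|_{H^2}^2+1)$, in which the convection term, the chemotactic term ($\|\nabla c\|_{H^3}\lesssim\|n\|_{H^2}$, $H^2$ an algebra in three dimensions), and the pressure terms ($\|\nabla P^{N_1}\|_{H^k}\lesssim A\|\partial_x u_2\|_{H^k}$, $\|\nabla P^{N_2}\|_{H^k}\lesssim\|n\|_{H^k}$, $\|\nabla P^{N_3}\|_{H^k}\lesssim\|u\cdot\nabla u\|_{H^k}$) are closed by moving derivatives onto the dissipation together with Young's inequality, the genuinely super-linear factor always falling on the $k$-th iterate. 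A Gronwall argument on a short interval $T=T(\|n_{\rm in}\|_{H^2},\|u_{\rm in}\|_{H^2},A)$ gives a uniform bound $\|n^k\|_{C([0,T];H^2)}^2+\|u^k\|_{C([0,T];H^2)}^2\le R$; the differences $(\delta n^k,\delta u^k)$ then satisfy a contraction estimate in $C([0,T];L^2)$, so after possibly shrinking $T$ the sequence converges in $C([0,T];L^2)$, interpolation with the uniform $H^2$ bound yields convergence in $C([0,T];H^s)$ for $s<2$ and a solution in $L^{\infty}(0,T;H^2)$, and $(n,u)\in C([0,T];H^2)$ follows by the standard time-continuity argument. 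Nonnegativity of $n$ and conservation $\|n(t)\|_{L^1}=\|n_{\rm in}\|_{L^1}$ come from the maximum principle and the divergence form of the $n$-equation (note $y\partial_x n=\nabla\cdot(yn\,e_1)$). Uniqueness is the same $L^2$ difference estimate applied to two solutions.

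For the blow-up criterion I would argue by contrapositive: assume $\sup_{[0,T^*)}\big(A^{-1/12}\|\nabla u\|_{L^2}+\|(\partial_x^2,\partial_z^2)n\|_{L^2}+\|n\|_{L^{\infty}}\big)=:K<\infty$ with $T^*<\infty$, and bootstrap in layers. First, $\|n(t)\|_{L^1}=M$ together with $\|n\|_{L^{\infty}}\le K$ give $\|n\|_{L^p}$ for all $p$, hence $\|c\|_{W^{2,p}}$, and in particular $\|\nabla c\|_{L^{\infty}}$ and $\|\nabla c\|_{W^{1,4}}$ bounded by some $\Phi(K,M)$. Next, the $L^2$ energy identities (in which $y\partial_x$, the pressure and $u\cdot\nabla u$ integrate to zero) with the assumed $\|\nabla u\|_{L^{\infty}L^2}$ bound give $\|u\|_{L^{\infty}L^2}$ bounded and $\tfrac1A\|\nabla n\|_{L^2L^2}^2$ bounded. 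Since $\|\partial_x^2n\|_{L^2},\|\partial_z^2n\|_{L^2}$ are bounded, integration by parts also bounds $\|(\partial_x,\partial_z)n\|_{L^2}$ and $\|\partial_x\partial_z n\|_{L^2}$, so the only uncontrolled second-order objects for $n$ are those involving $\partial_y$; I would then run the $\partial_y^{j}$-differentiated $n$-equations for $j=1,2$ (and the mixed ones), noting that the new commutator $[\partial_y^{j},y\partial_x]n$ is lower order, absorbing every top-order term into the parabolic dissipation, and bounding the remainder by $\Phi(K,M)$ times $(1+\|\partial_y n\|_{L^2}^2+\|\partial_y^2 n\|_{L^2}^2)$ once $\|u\|_{H^2}$ is available. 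In parallel I would close the $H^2$ estimate for $u$ using $\|\nabla u\|_{L^{\infty}L^2}$ as a bounded Gronwall coefficient — the three-dimensional Navier--Stokes-type criterion $u\in L^{\infty}H^1\Rightarrow$ regularity, adapted to the shear, the lift-up coupling and the $n$-forcing (the latter handled by moving a derivative onto the dissipation and invoking $\|\nabla n\|_{L^2L^2}<\infty$). Running these as one coupled Gronwall inequality for $\|n\|_{H^2}^2+\|u\|_{H^2}^2$ on $[0,T^*)$ gives the sought bound, contradicting maximality and hence proving \eqref{eq:blow-up condi}.

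The main obstacle is precisely this last recovery step: the criterion lists only $\partial_x^2n$ and $\partial_z^2 n$, so one must genuinely regain all $\partial_y$-derivatives of $n$ up to second order and all of $\nabla^2 u$ from the equations, using the viscous dissipation to absorb every top-order contribution while keeping all surviving coefficients expressible through the bounded quantities $\{\|\nabla u\|_{L^2},\,\|(\partial_x^2,\partial_z^2)n\|_{L^2},\,\|n\|_{L^{\infty}},\,M\}$. This requires tracking simultaneously the transport commutators (benign because $\partial_j y\in\{0,1\}$), the pressure estimates with their $A$-weights, the chemotactic nonlinearity, and the lift-up coupling; by contrast the linear solvability of each iteration step, the product and interpolation inequalities, mass conservation, and the divergence-free and nonnegativity properties are all routine.
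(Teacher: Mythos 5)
Your proposal is correct and follows essentially the same route as the paper: an $H^2$ energy estimate plus a standard approximation/iteration for local existence, an $L^2$ difference estimate for uniqueness, positivity of $n$ by a maximum-principle-type argument, and a contrapositive bootstrap for the blow-up criterion in which the bounds on $\|n\|_{L^\infty}$, $\|(\partial_x^2,\partial_z^2)n\|_{L^2}$, $A^{-\frac{1}{12}}\|\nabla u\|_{L^2}$ and the conserved mass are used to recover first the $\partial_y$-derivatives of $n$ and then a coupled Gronwall bound on $\|\nabla^2 n\|_{L^2}^2+\|\nabla^2 u\|_{L^2}^2$. The only differences are presentational: the paper compresses the construction into a citation of the standard approximation scheme and organizes the bootstrap as a first-order $\partial_y n$ estimate followed by the combined second-order estimate, exactly in the spirit of your plan.
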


\begin{proof} 
We sketch the proof and it is divided into four parts.

{\bf Step I. A priori estimates.} By Cauchy-Lipschitz theorem for the standard approximation system (for example, see Sec 3.5 in \cite{Vicol2022}), it is enough to close the energy estimates for the terms of the higher order derivatives. For $A>1$, claim that
\ben\label{eq:higher order}
&&\partial_{t}\left(\|\nabla^{2}n\|_{L^{2}}^{2}+\|\nabla^{2}u\|_{L^{2}}^{2} \right)+\frac{1}{A}\left(\|\nabla^{3}n\|_{L^{2}}^{2}+\|\nabla^{3}u\|_{L^{2}}^{2} \right)\nonumber\\&\leq&\frac{C}{A}\|\nabla n\|_{L^{2}}^{4}\|\nabla^{2}u\|_{L^{2}}^{2}+\frac{C}{A}\|\nabla n\|_{L^{2}}^{2}\|\nabla u\|_{L^{2}}^{8}+\frac{C}{A}\|\nabla^{2}n\|_{L^{2}}^{2}\|n\|_{L^{2}}^{\frac12}\|\nabla n\|_{L^{2}}^{\frac32}\nonumber\\&&+\frac{C}{A}\|\nabla^{2}n\|_{L^{2}}^{2}\|n\|_{L^{2}}\|\nabla n\|_{L^{2}}+\frac{C}{A}\|n\|_{L^{2}}^{\frac12}\|\nabla^{2}n\|_{L^{2}}^{\frac32}\|\nabla n\|_{L^{2}}^{2}\nonumber\\&&+\frac{C}{A}\|\nabla u\|_{L^{2}}^{10}+C\|\nabla^{2}n\|_{L^{2}}^{2}+C\|\nabla^{2}u\|_{L^{2}}^{2}.
\een

It follows from the first equation of (\ref{ini11}) that
		\begin{equation*}
			\partial_{t}\partial_{ij}n+\partial_{ij}(y\partial_{x}n)+\frac{1}{A}\partial_{ij}(u\cdot\nabla n)-\frac{1}{A}\partial_{ij}\triangle n=-\frac{1}{A}\partial_{ij}\nabla\cdot(n\nabla c),
		\end{equation*}
		with $ i,j=1,2,3. $ Multiplying it by $ \partial_{ij}n $ and integrating  over $ \mathbb{T}\times\mathbb{R}\times\mathbb{T} $, we obtain
		\begin{equation}\label{eq B}
			\begin{aligned}
			&\frac12\partial_{t}\|\nabla^{2}n\|_{L^{2}}^{2}+\frac{1}{A}\|\nabla^{3}n\|_{L^{2}}^{2}\\ \leq &-\frac{1}{A}\int_{\mathbb{T}\times\mathbb{R}\times\mathbb{T}}\partial_{ij}(u\cdot\nabla n)\partial_{ij}n-\frac{1}{A}\int_{\mathbb{T}\times\mathbb{R}\times\mathbb{T}}\partial_{ij}\nabla\cdot(n\nabla c)\partial_{ij}n+ C\|\nabla^{2}n\|_{L^{2}}^{2}\\
:=&B_{1}+B_{2}+C\|\nabla^{2}n\|_{L^{2}}^{2}.
			\end{aligned}
		\end{equation}
		By integration by parts, Gagliardo-Nirenberg inequality and $ \nabla\cdot u=0, $ we have
		\begin{equation*}
			\begin{aligned}
			B_{1}=&-\frac{1}{A}\int_{\mathbb{T}\times\mathbb{R}\times\mathbb{T}}\partial_{ij}u\cdot\nabla n\partial_{ij}n-\frac{1}{A}\int_{\mathbb{T}\times\mathbb{R}\times\mathbb{T}}u\cdot\nabla\partial_{ij}n\partial_{ij}n-\frac{2}{A}\int_{\mathbb{T}\times\mathbb{R}\times\mathbb{T}}\partial_{j}u\cdot\nabla\partial_{i}n\partial_{ij}n
\\=&\frac{1}{A}\int_{\mathbb{T}\times\mathbb{R}\times\mathbb{T}}n\partial_{ij}u\cdot\nabla\partial_{ij}n+\frac{2}{A}\int_{\mathbb{T}\times\mathbb{R}\times\mathbb{T}}\partial_{i}n\partial_{j}u\cdot\nabla\partial_{ij}n
\\\leq&\frac{1}{4A}\|\nabla^{3}n\|_{L^{2}}^{2}+\frac{C}{A}\|n\|_{L^{6}}^{2}\|\nabla^{2}u\|_{L^{3}}^{2}+\frac{C}{A}\|\nabla n\|_{L^{\infty}}^{2}\|\nabla u\|_{L^{2}}^{2}\\\leq&\frac{1}{4A}\|\nabla^{3}n\|_{L^{2}}^{2}+\frac{C}{A}\|\nabla n\|_{L^{2}}^{2}\|\nabla^{2}u\|_{L^{2}}\|\nabla^{3}u\|_{L^{2}}+\frac{C}{A}\|\nabla n\|_{L^{2}}^{\frac12}\|\nabla^{3}n\|_{L^{2}}^{\frac32}\|\nabla u\|_{L^{2}}^{2}\\\leq&\frac{1}{2A}\|\nabla^{3}n\|_{L^{2}}^{2}+\frac{1}{2A}\|\nabla^{3}u\|_{L^{2}}^{2}+\frac{C}{A}\|\nabla n\|_{L^{2}}^{4}\|\nabla^{2}u\|_{L^{2}}^{2}+\frac{C}{A}\|\nabla n\|_{L^{2}}^{2}\|\nabla u\|_{L^{2}}^{8}.
			\end{aligned}
		\end{equation*}
For $ B_{2}, $	note that  similar estimates as in  Lemma \ref{lem:ellip_2} imply that 
\ben\label{eq:elliptic-c}
\sum_{j=0}^2\|\nabla^{j+k}c(t)\|_{L^2}^2
			\leq C\|\nabla^k n(t)\|_{L^2}^2,\quad k=0,1,2.
\een
By \eqref{eq:elliptic-c} and Gagliardo-Nirenberg inequality we get
		\begin{equation*}
		\begin{aligned}
		B_{2}=&\frac{1}{A}\int_{\mathbb{T}\times\mathbb{R}\times\mathbb{T}}\partial_{ij}(n\nabla c)\cdot\nabla\partial_{ij}n\\=&\frac{1}{A}\int_{\mathbb{T}\times\mathbb{R}\times\mathbb{T}}\partial_{ij}n\nabla c\cdot\nabla\partial_{ij}n+\frac{2}{A}\int_{\mathbb{T}\times\mathbb{R}\times\mathbb{T}}\partial_{i}n\partial_{j}\nabla c\cdot\nabla\partial_{ij}n+\frac{1}{A}\int_{\mathbb{T}\times\mathbb{R}\times\mathbb{T}}n\partial_{ij}\nabla c\cdot\nabla\partial_{ij}n\\\leq&\frac{1}{2A}\|\nabla^{3}n\|_{L^{2}}^{2}+\frac{C}{A}\|\nabla^{2}n\|_{L^{2}}^{2}\|\nabla c\|_{L^{\infty}}^{2}+\frac{C}{A}\|\nabla n\|_{L^{6}}^{2}\|\triangle c\|_{L^{3}}^{2}+\frac{C}{A}\|n\|_{L^{\infty}}^{2}\|\partial_{ij}\nabla c\|_{L^{2}}^{2}\\\leq&\frac{1}{2A}\|\nabla^{3}n\|_{L^{2}}^{2}+\frac{C}{A}\|\nabla^{2}n\|_{L^{2}}^{2}\|n\|_{L^{2}}^{\frac12}\|\nabla n\|_{L^{2}}^{\frac32}+\frac{C}{A}\|\nabla^{2}n\|_{L^{2}}^{2}\|n\|_{L^{2}}\|\nabla n\|_{L^{2}}+\frac{C}{A}\|n\|_{L^{2}}^{\frac12}\|\nabla^{2}n\|_{L^{2}}^{\frac32}\|\nabla n\|_{L^{2}}^{2}.
		\end{aligned}
		\end{equation*}
		Collecting the estimates of $ B_{1}-B_{2}, $ we get by $ (\ref{eq B}) $ that
		\begin{equation}\label{n''}
			\begin{aligned}
			&\partial_{t}\|\nabla^{2}n\|_{L^{2}}^{2}+\frac{1}{A}\|\nabla^{3}n\|_{L^{2}}^{2}\\\leq&\frac{1}{2A}\|\nabla^{3}u\|_{L^{2}}^{2}+\frac{C}{A}\|\nabla n\|_{L^{2}}^{4}\|\nabla^{2}u\|_{L^{2}}^{2}+\frac{C}{A}\|\nabla n\|_{L^{2}}^{2}\|\nabla u\|_{L^{2}}^{8}+C\|\nabla^{2}n\|_{L^{2}}^{2}\\&+\frac{C}{A}\|\nabla^{2}n\|_{L^{2}}^{2}\|n\|_{L^{2}}^{\frac12}\|\nabla n\|_{L^{2}}^{\frac32}+\frac{C}{A}\|\nabla^{2}n\|_{L^{2}}^{2}\|n\|_{L^{2}}\|\nabla n\|_{L^{2}}+\frac{C}{A}\|n\|_{L^{2}}^{\frac12}\|\nabla^{2}n\|_{L^{2}}^{\frac32}\|\nabla n\|_{L^{2}}^{2}.
			\end{aligned}
		\end{equation}
		
	Rewrite the third equation of (\ref{ini11}) as follows	
	\begin{equation*}
		\begin{aligned}
		\partial_{t}\partial_{ij}u+\partial_{ij}(y\partial_{x}u)+\left(
		\begin{array}{c}
		\partial_{ij}u_{2} \\
		0 \\
		0 \\
		\end{array}
		\right)-\frac{1}{A}\partial_{ij}\triangle u+\frac{1}{A}\partial_{ij}(u\cdot\nabla u)+\frac{1}{A}\partial_{ij}\nabla P=\left(
		\begin{array}{c}
		0 \\
		\frac{\partial_{ij}n}{A} \\
		0 \\
		\end{array}
		\right),
		\end{aligned}
	\end{equation*}
	with $ i,j=1,2,3. $ Multiplying it by $ \partial_{ij}u $ and integrating 
	over $ \mathbb{T}\times\mathbb{R}\times\mathbb{T} $, we obtain
	\begin{equation}\label{eq C}
		\begin{aligned}
		&\frac12\partial_{t}\|\nabla^{2}u\|_{L^{2}}^{2}+\frac{1}{A}\|\nabla^{3}u\|_{L^{2}}^{2}\\\leq &-\int_{\mathbb{T}\times\mathbb{R}\times\mathbb{T}}\partial_{ij}u_{2}\partial_{ij}u_{1}-\frac{1}{A}\int_{\mathbb{T}\times\mathbb{R}\times\mathbb{T}}\partial_{ij}(u\cdot\nabla u)\partial_{ij }u\\&-\frac{1}{A}\int_{\mathbb{T}\times\mathbb{R}\times\mathbb{T}}\partial_{ij}n\partial_{ij}u_{2}+C\|\nabla^{2}u\|_{L^{2}}^{2}:=C_{1}+C_{2}+C_{3}+C\|\nabla^{2}u\|_{L^{2}}^{2}.
		\end{aligned}
	\end{equation}
	For $ C_{1} $ and $ C_{3}, $  direct calculations indicate that
	\begin{equation*}
		C_{1}\leq \|\nabla^{2}u\|_{L^{2}}^{2},\quad C_{3}\leq\frac{1}{2A}\|\nabla^{2}n\|_{L^{2}}^{2}+\frac{1}{2A}\|\nabla^{2}u\|_{L^{2}}^{2}.
	\end{equation*}
	The estimate of $ C_{2} $ is similar as $ B_{1}, $ and we arrive
	\begin{equation*}
		C_{2}\leq \frac{1}{2A}\|\nabla^{3}u\|_{L^{2}}^{2}+\frac{C}{A}\|\nabla u\|_{L^{2}}^{10}.
	\end{equation*}	
		Collecting $ C_{1}-C_{3}, $ then (\ref{eq C}) yields that
		\begin{equation}\label{u''}
			\begin{aligned}
			\partial_{t}\|\nabla^{2}u\|_{L^{2}}^{2}+\frac{3}{2A}\|\nabla^{3}u\|_{L^{2}}^{2}\leq&\frac{C}{A}\|\nabla u\|_{L^{2}}^{10}+\frac{C}{A}\|\nabla^{2}n\|_{L^{2}}^{2}+C\|\nabla^{2}u\|_{L^{2}}^{2}.
			\end{aligned}
		\end{equation}
		
		Combining (\ref{n''}) with (\ref{u''}), the proof of \eqref{eq:higher order} is complete.

	 {\bf{Step II. Uniqueness.}}
	Let $ \tilde{n}=n^{1}-n^{2}, \tilde{c}=c^{1}-c^{2} $ and $ \tilde{u}=u^{1}-u^{2}, $ then direct calculation shows that $ (\tilde{n},\tilde{c},\tilde{u}) $ satisfies
	\begin{equation}\label{eq:tilde}
	\left\{
	\begin{array}{lr}
	\partial_{t}\tilde{n}+y\partial_{x}\tilde{n}+\frac{1}{A}\tilde{u}\cdot\nabla n^{1}+\frac{1}{A}u^{2}\cdot\nabla\tilde{n}-\frac{1}{A}\triangle\tilde{n}=-\frac{1}{A}\nabla\cdot(\tilde{n}\nabla c^{1})-\frac{1}{A}\nabla\cdot(n^{2}\nabla\tilde{c}), \\
	\triangle\tilde{c}+\tilde{n}-\tilde{c}=0, \\
	\partial_t\tilde{u}+y\partial_x \tilde{u}+\left(
	\begin{array}{c}
	\tilde{u}_{2} \\
	0 \\
	0 \\
	\end{array}
	\right)
	-\frac{1}{A}\triangle\tilde{u}+\frac{1}{A}\tilde{u}\cdot\nabla u^{1}+\frac{1}{A}u^{2}\cdot\nabla\tilde{u}+\frac{1}{A}\nabla \tilde{P}=\left(
	\begin{array}{c}
	0 \\
	\frac{\tilde{n}}{A} \\
	0 \\
	\end{array}
	\right).
	\end{array}
	\right.
	\end{equation}
	As $ \nabla\cdot u^{2}=\nabla\cdot\tilde{u}=0, $ the basic energy estimates give
	\begin{equation}\label{tilde n}
		\begin{aligned}
		\frac12\partial_{t}\|\tilde{n}\|_{L^{2}}^{2}+\frac{1}{A}\|\nabla\tilde{n}\|_{L^{2}}^{2}=&\frac{1}{A}\int_{\mathbb{T}\times\mathbb{R}\times\mathbb{T}}n^{1}\tilde{u}\cdot\nabla\tilde{n}\\&+\frac{1}{A}\int_{\mathbb{T}\times\mathbb{R}\times\mathbb{T}}\tilde{n}\nabla c^{1}\cdot\nabla\tilde{n}+\frac{1}{A}\int_{\mathbb{T}\times\mathbb{R}\times\mathbb{T}}n^{2}\nabla\tilde{c}\cdot\nabla\tilde{n}\\:=&I_{1}+I_{2}+I_{3}.
		\end{aligned}
	\end{equation}
	For $ I_{1}, $ using Sobolev inequality, we get
	\begin{equation*}
		I_{1}\leq\frac{1}{A}\|n^{1}\|_{L^{\infty}}\|\tilde{u}\|_{L^{2}}\|\nabla\tilde{n}\|_{L^{2}}\leq\frac{1}{4A}\|\nabla\tilde{n}\|_{L^{2}}^{2}+\frac{C}{A}\|n^{1}\|_{H^{2}}^{2}\|\tilde{u}\|_{L^{2}}^{2}.
	\end{equation*} 
	  For $ I_{2}, $ using \eqref{eq:elliptic-c} we have
	\begin{equation}\label{c1 n1}
		\|\nabla c^{1}\|_{L^{6}}\leq C\|\triangle c^{1}\|_{L^{2}}\leq C\|n^{1}\|_{L^{2}},
	\end{equation}
	and by Gagliardo-Nirenberg inequality, we obtain
	\begin{equation*}
		\begin{aligned}
		I_{2}\leq&\frac{1}{A}\|\nabla\tilde{n}\|_{L^{2}}\|\tilde{n}\|_{L^{3}}\|\nabla c^{1}\|_{L^{6}}\leq \frac{C}{A}\|\nabla\tilde{n}\|_{L^{2}}\|\tilde{n}\|_{L^{2}}^{\frac12}\|\nabla\tilde{n}\|_{L^{2}}^{\frac12}\|n^{1}\|_{L^{2}}\leq\frac{1}{4A}\|\nabla\tilde{n}\|_{L^{2}}^{2}
+\frac{C}{A}\|n^{1}\|_{L^{2}}^{4}\|\tilde{n}\|_{L^{2}}^{2}.
		\end{aligned}
	\end{equation*}
	Similarly,  $ I_{3} $ is bounded by
	\begin{equation*}
		\begin{aligned}
		I_{3}\leq&\frac{1}{A}\|n^{2}\|_{L^{\infty}}\|\nabla\tilde{c}\|_{L^{2}}\|\nabla\tilde{n}\|_{L^{2}}\leq\frac{C}{A}\|n^{2}\|_{L^{\infty}}\|\tilde{n}\|_{L^{2}}
\|\nabla\tilde{n}\|_{L^{2}}\\\leq&\frac{1}{4A}\|\nabla\tilde{n}\|_{L^{2}}^{2}+\frac{C}{A}\|n^{2}\|_{H^{2}}^{2}\|\tilde{n}\|_{L^{2}}^{2}.
		\end{aligned}
	\end{equation*}
	Collecting $ I_{1}-I_{3}, $ then (\ref{tilde n}) yields that
	\begin{equation}\label{end n}
		\begin{aligned} \frac12\partial_{t}\|\tilde{n}\|_{L^{2}}^{2}+\frac{1}{4A}\|\nabla\tilde{n}\|_{L^{2}}^{2}\leq\frac{C}{A}\left(\|n^{1}\|_{H^{2}}^{2}+\|n^{1}\|_{L^{2}}^{4}+\|n^{2}\|_{H^{2}}^{2} \right)\left(\|\tilde{n}\|_{L^{2}}^{2}+\|\tilde{u}\|_{L^{2}}^{2} \right).
		\end{aligned}
	\end{equation}
	
	Using $ \nabla\cdot u^{2}=\nabla\cdot\tilde{u}=0 $ again, then multiplying $ (\ref{eq:tilde})_{3} $ by $ \tilde{u} $, the energy estimates give
	\begin{equation}\label{tilde u}
		\begin{aligned}
		\frac12\partial_{t}\|\tilde{u}\|_{L^{2}}^{2}+\frac{1}{A}\|\nabla\tilde{u}\|_{L^{2}}^{2}=&-\int_{\mathbb{T}\times\mathbb{R}\times\mathbb{T}}\tilde{u}_{2}\tilde{u}_1
+\frac{1}{A}\int_{\mathbb{T}\times\mathbb{R}\times\mathbb{T}}u^{1}\tilde{u}\cdot\nabla\tilde{u}\\&+\frac{1}{A}\int_{\mathbb{T}\times\mathbb{R}\times\mathbb{T}}\tilde{n}\tilde{u}_2:=J_{1}+J_{2}+J_{3}.
		\end{aligned}
	\end{equation}
	For $ J_{1} $ and $ J_{3}, $ it holds that
	\begin{equation*}
		J_{1}\leq\|\tilde{u}\|_{L^{2}}^{2},\quad 	J_{3}\leq \frac{1}{2A}\|\tilde{n}\|_{L^{2}}^{2}+\frac{1}{2A}\|\tilde{u}\|_{L^{2}}^{2}.
	\end{equation*}
	For $ J_{2}, $ using Sobolev inequality, we have
	\begin{equation*}
		J_{2}\leq\frac{1}{A}\|\nabla\tilde{u}\|_{L^{2}}\|u^{1}\|_{L^{\infty}}\|\tilde{u}\|_{L^{2}}\leq\frac{1}{4A}\|\nabla\tilde{u}\|_{L^{2}}^{2}
+\frac{C}{A}\|u^{1}\|_{H^{2}}^{2}\|\tilde{u}\|_{L^{2}}^{2}.
	\end{equation*}
From the estimates of $ J_{1}-J_{3}, $ we get by $ (\ref{tilde u}) $ that
\begin{equation}\label{end u}
	\begin{aligned}
	\frac{1}{2}\partial_{t}\|\tilde{u}\|_{L^{2}}^{2}+\frac{3}{4A}\|\nabla\tilde{u}\|_{L^{2}}^{2}\leq C\left(\frac{1}{A}\|u^{1}\|_{H^{2}}^{2}+1 \right)\left(\|\tilde{n}\|_{L^{2}}^{2}+\|\tilde{u}\|_{L^{2}}^{2} \right).
	\end{aligned}
\end{equation}
Combining (\ref{end n}) with (\ref{end u}), we get
\begin{equation*}
	\begin{aligned}
	\partial_{t}\left(\|\tilde{n}\|_{L^{2}}^{2}+\|\tilde{u}\|_{L^{2}}^{2} \right)\leq& C\left(\frac{1}{A}\|n^{1}\|_{H^{2}}^{2}+\frac{1}{A}\|n^{1}\|_{L^{2}}^{4}+\frac{1}{A}\|n^{2}\|_{H^{2}}^{2}+\frac{1}{A}\|u^{1}\|_{H^{2}}^{2}+1 \right)\left(\|\tilde{n}\|_{L^{2}}^{2}+\|\tilde{u}\|_{L^{2}}^{2}\right)\\\leq&C\eta(t)\left(\|\tilde{n}\|_{L^{2}}^{2}+\|\tilde{u}\|_{L^{2}}^{2} \right).
	\end{aligned}
\end{equation*}
Notice that $ \eta\in L^{1}([0,T^{*})) $ due to $ (n^{1},u^{1}) $ and $ (n^{2}, u^{2}) $ are strong solutions. Then Gronwall's inequality implies $ (\tilde{n}_{\rm in}, \tilde{u}_{\rm in})=(0,0), $ and $ \tilde{c}=0 $ due to \eqref{eq:elliptic-c} . 
Thus, the uniqueness result is established.

   {\bf{ Step III. The positivity of $ n $.}}
	    Setting $ n_{+}=\max\{0, n\}, -n_{-}=\min\{0, n\}, $ then $ n=n_{+}-n_{-}. $ Multiplying the first equation of (\ref{ini11}) by $ n_{-}, $ we arrive
	  \begin{equation*}
	  	\begin{aligned}
	  	\partial_{t}n_{-}n_{-}+y\partial_{x}n_{-}n_{-}+\frac{1}{A}u\cdot\nabla n_{-}n_{-}-\frac{1}{A}\triangle n_{-}n_{-}=-\frac{1}{A}\nabla\cdot(n\nabla c)n_{-}.
	  	\end{aligned}
	  \end{equation*}
	  Then the energy estimates give
	  \beno
	  	\frac12\partial_{t}\|n_{-}\|_{L^{2}}^{2}+\frac{1}{A}\|\nabla n_{-}\|_{L^{2}}^{2}&=&-\frac{1}{2A}\int_{\mathbb{T}\times\mathbb{R}\times\mathbb{T}}n_{-}^2\triangle c\\
&\leq & \frac{C}{A}\|n_{-}\|_{L^{2}}^{2}(\|c\|_{H^2}+ \|n\|_{H^2} ),
	  \eeno
	  which follows that
	  \begin{equation*}
	  	\|n_{-}(t)\|_{L^{2}}^{2}\leq \|n_{-}(0,\cdot)\|_{L^{2}}^{2}e^{\int_0^t\frac{C}{A}(\|c\|_{H^2}+ \|n\|_{H^2} )ds}.
	  \end{equation*}
	  Note that $ n_{-}(0,\cdot)=0 $ due to $ n(0,\cdot)\geq 0, $ which means $ n_{-}=0. $ Thus $ n=n_{+}\geq 0. $

 {\bf Step IV. Blow-up criterion.}

Assume that (\ref{eq:blow-up condi}) fails. That is 
\ben\label{eq:blow-up condi'}
\sup_{0<t< T^*}A^{-\frac{1}{12}}\|(\nabla u)(t,\cdot)\|_{L^{2}}+
	\|(\partial_{x}^{2},\partial_{z}^{2})n(t,\cdot)\|_{L^{2}}+\|n(t,\cdot)\|_{L^{\infty}}\leq C.
\een
For $0<t<T^*$, claim that
	\begin{equation}\label{partial y n}
	\partial_{t}\|\partial_{y}n\|_{L^{2}}^{2}+\frac{1}{A}\|\partial_{y}\nabla n\|_{L^{2}}^{2}\leq C\left(\|\partial_{y}n\|_{L^{2}}^{2}+1 \right), 
	\end{equation}
	and
	\begin{equation}\label{n'' u''}
		\partial_{t}\left(\|\nabla^{2}n\|_{L^{2}}^{2}+\|\nabla^{2}u\|_{L^{2}}^{2} \right)+\frac{1}{A}\left(\|\nabla^{3}n\|_{L^{2}}^{2}+\|\nabla^{3}u\|_{L^{2}}^{2} \right)\leq C\left(\|\nabla^{2}n\|_{L^{2}}^{2}+\|\nabla^{2}u\|_{L^{2}}^{2}+1 \right),
	\end{equation}
which implies that the strong solution  can be extended to time $T^*$.

Note that $ \partial_{y}n $ satisfies
	\begin{equation*}
		\partial_{t}\partial_{y}n+\partial_{x}n+y\partial_{y}\partial_{x}n+\frac{1}{A}\partial_{y}(u\cdot\nabla n)-\frac{1}{A}\partial_{y}\triangle n=-\frac{1}{A}\partial_{y}\nabla\cdot(n\nabla c),
	\end{equation*}
 then the energy estimates give
	\begin{equation}\label{n'}
		\begin{aligned}
		\frac12\partial_{t}\|\partial_{y}n\|_{L^{2}}^{2}+\frac{1}{A}\|\partial_{y}\nabla n\|_{L^{2}}^{2}=&-\int_{\mathbb{T}\times\mathbb{R}\times\mathbb{T}}\partial_{x}n\partial_{y}n-\frac{1}{A}\int_{\mathbb{T}\times\mathbb{R}\times\mathbb{T}}\partial_{y}(u\cdot\nabla n)\partial_{y}n\\&-\frac{1}{A}\int_{\mathbb{T}\times\mathbb{R}\times\mathbb{T}}\partial_{y}\nabla\cdot(n\nabla c)\partial_{y}n:=K_{1}+K_{2}+K_{3}.
		\end{aligned}
	\end{equation}
	For $ K_{1}, $ using (\ref{eq:blow-up condi'}), we have
	\begin{equation*}
		K_{1}\leq C\|\partial_{x}^{2}n\|_{L^{2}}\|\partial_{y}n\|_{L^{2}}\leq C\left(\|\partial_{y}n\|_{L^{2}}^{2}+1 \right).
	\end{equation*}
	Using $ \nabla\cdot u=0 $, integration by parts and (\ref{eq:blow-up condi'}), we can estimate $ K_{2} $ by
	\begin{equation*}
		\begin{aligned}
		K_{2}=\frac{1}{A}\int_{\mathbb{T}\times\mathbb{R}\times\mathbb{T}}n\partial_{y}u\cdot\partial_{y}\nabla n\leq \frac{1}{2A}\|\partial_{y}\nabla n\|_{L^{2}}^{2}+\frac{C}{A}\|n\|_{L^{\infty}}^{2}\|\nabla u\|_{L^{2}}^{2}\leq\frac{1}{2A}\|\partial_{y}\nabla n\|_{L^{2}}^{2}+C.
		\end{aligned}
	\end{equation*}
	Recall that $ \|n\|_{L^{1}}=\|n_{\rm in}\|_{L^{1}}\leq C. $ Then
	due to elliptic estimates \eqref{eq:elliptic-c}, Gagliardo-Nirenberg inequality and (\ref{eq:blow-up condi'}), there holds
	\begin{equation*}
		\begin{aligned}
		K_{3}\leq&\frac{1}{4A}\|\partial_{y}\nabla n\|_{L^{2}}^{2}+\frac{C}{A}\|n\|_{L^{\infty}}^{2}\|\partial_{y}\nabla c\|_{L^{2}}^{2}+\frac{C}{A}\|\partial_{y}n\|_{L^{4}}^{2}\|\nabla c\|_{L^{4}}^{2}\\\leq&\frac{1}{4A}\|\partial_{y}\nabla n\|_{L^{2}}^{2}+\frac{C}{A}\|\partial_{y}n\|_{L^{2}}^{2}+\frac{C}{A}\|\partial_{y}n\|_{L^{2}}^{\frac12}\|\partial_{y}\nabla n\|_{L^{2}}^{\frac32}\|n\|_{L^{\infty}}\|n\|_{L^{1}}\\\leq&\frac{1}{2A}\|\partial_{y}\nabla n\|_{L^{2}}^{2}+\frac{C}{A}\|\partial_{y}n\|_{L^{2}}^{2}.
		\end{aligned}
	\end{equation*}
	Collecting $ K_{1}-K_{3}, $ we get by $ (\ref{n'}) $ that
	\begin{equation*}
		\partial_{t}\|\partial_{y}n\|_{L^{2}}^{2}+\frac{1}{A}\|\partial_{y}\nabla n\|_{L^{2}}^{2}\leq C\left(\|\partial_{y}n\|_{L^{2}}^{2}+1 \right),
	\end{equation*}
	which gives (\ref{partial y n}).
	
	Moreover, recall \eqref{eq:higher order},  and  by using (\ref{eq:blow-up condi'}) and (\ref{partial y n}), we get 
	\begin{equation*}
		\partial_{t}\left(\|\nabla^{2}n\|_{L^{2}}^{2}+\|\nabla^{2}u\|_{L^{2}}^{2} \right)+\frac{1}{A}\left(\|\nabla^{3}n\|_{L^{2}}^{2}+\|\nabla^{3}u\|_{L^{2}}^{2} \right)\leq C\left(\|\nabla^{2}n\|_{L^{2}}^{2}+\|\nabla^{2}u\|_{L^{2}}^{2}+1\right),
	\end{equation*}
	which gives (\ref{n'' u''}).
	
To sum up, the proof is complete.
\end{proof}

	\section*{Acknowledgement}
	
	The authors would like to thank Professors Zhifei Zhang, Weiren Zhao and Ruizhao Zi for some helpful communications. W. Wang was supported by National Key R\&D Program of China (No. 2023YFA1009200) and NSFC under grant 12471219 and 12071054.
	Part of S. Cui's work was conducted during Ph.D studies
	while visiting McMaster University. He is grateful for discussions with Professor Dmitry Pelinovsky and thanks the CSC for financial support.


\end{document}